\numberwithin{equation}{section}
\newtheorem{theorem}{Theorem}[section]
\newtheorem{definition}{Definition}
\newtheorem{prop}[theorem]{Proposition}
\newtheorem{cor}[theorem]{Corollary}
\newtheorem{lemma}[theorem]{Lemma}
\theoremstyle{remark}
\theoremstyle{definition}
\newcommand{\tC}{\widetilde{\chi}}
\newcommand{\tS}{\widetilde{S}}
\newcommand{\Ee}{\mathscr{E}}
\newcommand{\Cc}{\mathscr{C}}
\newcommand{\Eee}{\widetilde{\Ee}}
\newcommand{\Eev}{\E_\ve}
\newcommand{\tV}{\widetilde{V}}
\newcommand{\oE}{\overline{E}}
\newcommand{\w}{\widehat}
\newcommand{\R}{\mathbb{R}}
\newcommand{\Rr}{\mathcal{R}}
\newcommand{\E}{\mathcal{E}}
\newcommand{\W}{\mathcal{W}}
\newcommand{\N}{\mathbb{N}}
\newcommand{\X}{\mathcal{X}}
\renewcommand{\H}{\mathcal{H}}
\newcommand{\T}{{\mathcal{T}^{}}}
\newcommand{\A}{\mathcal{A}}
\newcommand{\C}{\mathcal{C}}
\newcommand{\oC}{\overline{\mathcal{C}}}
\newcommand{\K}{\mathcal{K}}
\newcommand{\coord}{\mathcal{D}} %coordinate vector fields
\DeclareMathOperator{\tr}{tr}
\newcommand{\D}{\mathcal{D}}
\newcommand{\tD}{\widetilde{\D}}
\newcommand{\hrho}{\widehat{\rho}}
\newcommand{\hD}{\widehat{\D}}
\newcommand{\hphi}{\widehat{\phi}}
\newcommand{\U}{\mathcal{V}}
\newcommand{\V}{\mathcal{V}}
\newcommand{\pa}{\partial}
\newcommand{\p}{\partial}
\newcommand{\ve}{\varepsilon}
\newcommand{\fd}{\langle \pa_{\theta{}_{\!}} \rangle}
\newcommand{\fdh}{\fd^{\!1{}_{\!}/2}}
\newcommand{\fdm}{\fd_\mu}
\newcommand{\fdhm}{\fd^{\!1_{\!}/2}_\mu}
\newcommand{\fdhn}{\fd^{1/2}_\nu}
\newcommand{\Dve}{\widetilde{\Delta}}
\newcommand{\Dht}{\widehat{\Delta}}
\newcommand{\Odo}{\Omega^{d_0}}
\newcommand{\I}{{}_{\!}I} %shortened I and II for aesthetic purposes
\newcommand{\II}{{}_{\!}I{\!}I}
\newcommand{\Dveu}{\Dve_{\I}}
\newcommand{\Dvew}{\Dve_{\II}}
\newcommand{\pave}{\widetilde{\pa}}
\newcommand{\paht}{\widehat{\pa}}
\newcommand{\xve}{\widetilde{x}}
\newcommand{\xveu}{\,\widetilde{\!x}_{{}_{\!}I}}
\newcommand{\xvew}{\,\widetilde{\!x}_{{}_{\!}I\!I}}
\newcommand{\Vu}{V_{{}_{\!}\I}}
\newcommand{\Vw}{V_{\II}}
\newcommand{\FD}{\mathfrak{D}}
\newcommand{\Vht}{\widehat{V}}
\newcommand{\Vhtu}{\widehat{V}_{\I}}
\newcommand{\Vhtw}{\widehat{V}_{\II}}
\newcommand{\paveu}{\widetilde{\pa}_{\I}}
\newcommand{\pavew}{\widetilde{\pa}_{\II}}
\newcommand{\wpa}{\widetilde{\pa}}
\newcommand{\pahtu}{\widehat{\pa}_{\I}}
\newcommand{\pahtw}{\widehat{\pa}_{\II}}
\newcommand{\uht}{\widehat{A}}
\newcommand{\q}{\quad}
\newcommand{\wta}{\alpha}
\newcommand{\sm}{J_\ve} %The smoothing operator
\newcommand{\ssm}{S_\ve} %The square of the smoothing operator
\newcommand{\Vve}{\ssm V}
\newcommand{\gve}{\widetilde{g}}
\newcommand{\kve}{\widetilde{\kappa}}
\newcommand{\kveu}{\kve_{I}}
\newcommand{\kvew}{\kve_{\II}}
\newcommand{\khtu}{\widehat{\kappa}_{I}}
\newcommand{\khtw}{\widehat{\kappa}_{\II}}
\newcommand{\gveu}{\gve_{\I}}
\newcommand{\gvew}{\gve_{\II}}
\newcommand{\G}{\mathcal{G}}
\newcommand{\F}{F}
\renewcommand{\P}{\mathcal{P}}
\newcommand{\xht}{\widehat{x}}
\newcommand{\xhtu}{\widehat{x}_{\I}}
\newcommand{\xhtw}{\widehat{x}_{\II}}
\newcommand{\zhtu}{\widehat{z}_{\I}}
\newcommand{\m}{\,\,}
\newcommand{\esssup}{\textrm{ess sup}}
\newcommand{\eprime}{\sigma} %this is what will eventually be e'(h) or e'(h^{\nu-1}).
\newcommand{\eprimee}{\sigma} %this is what will eventually be (e'(h))^{-1} or e'(h^{\nu-1}).
\newcommand{\dm}{d_{m}} %used in construction of strong solutions
\newcommand{\dd}{\dot{d}_{m}} %used in construction of strong solutions
\newcommand{\varphim}{\varphi^{m}} %used in construction of strong solutions
\let\div\relax %The default for ``div'' is the division symbol
\DeclareMathOperator{\curl}{curl}
\DeclareMathOperator{\curlu}{curl_{I}}
\DeclareMathOperator{\curlw}{curl_{II}}
\DeclareMathOperator{\div}{div}
\DeclareMathOperator{\divu}{div_{I}}
\DeclareMathOperator{\divw}{div_{II}}
\DeclareMathOperator{\defo}{def}
\DeclareMathOperator{\tsum}{{\textstyle{\sum}}}
\DeclareMathOperator{\tint}{{\textstyle{\int}}}
\newcommand*{\bigtwo}[1]{\vcenter{\hbox{\scalebox{1.4}{\ensuremath#1}}}}
\title[Compressible, Self-Gravitating Liquid with Free Surface Boundary]{Local Well-Posedness for the Motion of a Compressible, Self-Gravitating Liquid with Free Surface Boundary}
\author{Daniel Ginsberg}\address[D.G.]{Johns Hopkins University, Department of Mathematics, 3400 N.\@ Charles St., Baltimore, MD 21218, USA}
\email{dginsbe5@math.jhu.edu}
\author{Hans Lindblad} \address[H.L.]{Johns Hopkins University, Department of Mathematics, 3400 N.\@ Charles St., Baltimore, MD 21218, USA}
\email{lindblad@math.jhu.edu}
\author{Chenyun Luo}\address[C.L.]{Vanderbilt University, Department of Mathematics, 1326 Stevenson Ctr. Ln., Nashville, TN 37212, USA}
\email{chenyun.luo@vanderbilt.edu}
\thanks{D.G. and H.L. were supported in part by NSF Grant DMS-1500925}
\begin{document}

\mathtoolsset{showonlyrefs=true}
\nocite{*}
\maketitle

\vspace{-0.25in}
\begin{abstract} We establish the local well-posedness for the free boundary
  problem for the compressible
  Euler equations describing the motion of liquid under the influence of
  Newtonian self-gravity.
  We do this by solving a tangentially-smoothed version of Euler's equations in
  Lagrangian coordinates which satisfies uniform
  energy estimates as the smoothing parameter goes to zero.
  The main technical tools are delicate energy estimates and optimal elliptic estimates in terms of boundary regularity, for the Dirichlet
  problem and Green's function.
\end{abstract}

\tableofcontents

\vspace{-0.45in}

\section{Introduction}
\label{intsection}
The motion of a barotropic, self-gravitating fluid occupying a region
$\D = \cup_{0 \leq t \leq T} \{ t \} \times \D_t,
\D_t \subset \R^3$, of space time, is described by the velocity $V = (V^1, V^2, V^3)$,
a non-negative function
$\rho$ known as the density, and an equation of state
$p = p(\rho)$ which specifies the pressure $p$ as a function of $\rho$,
and which is assumed to be non-negative and strictly increasing.
The equations of motion are then given by
Euler's equations:
\begin{equation}
 \rho (\pa_t + V^k\pa_k) v_i + \pa_i p + \rho \pa_i \phi = 0,
 \textrm{ for } i = 1,2,3 \textrm{ in } \D,
 \label{mom}
\end{equation}
and the continuity equation:
\begin{equation}
 (\pa_t + V^k\pa_k) \rho + \rho \div V = 0 \textrm{ in } \D,
 \label{mass}
\end{equation}
where repeated upper and lower indices are summed over, $\pa_i \!=\! {\pa}{}_{\!}/{}_{\!}{\pa x^i}\!$,
$v_i \!=\!\! \delta_{ij\!} V^j\!\!$ and $\div \!V \!\!=\! \pa_{i\!} V^i\!\!$.
Here, with $\chi_{\D_t}$ the characteristic function of $\D_t$,
the Newtonian gravity potential $\phi$ is defined to be the  solution to:
\begin{equation}
 \Delta \phi = -\rho \chi_{\D_t},\qquad  \textrm{ in } \R^3,\quad\text{with}\quad
 {\lim}_{\,|x| \to \infty\,} \phi(x) = 0,
 \quad\text{i.e.}\quad
 \phi(t,x) =
  \frac{1}{4\pi} \int_{\D_t}
  \frac{\rho(t,x')\,dx'}{|x- x'|}\, .
 \label{eq:gravityintro}
\end{equation}

Particles on the boundary $\pa \D_t$ move with the velocity
of the fluid,
and if the body moves
in vacuum then the pressure vanishes outside of $\D$, so
we also require the boundary conditions:
\begin{align}
 (\pa_t + V^k\pa_k)\big|_{\pa \D} &\in T(\pa\D),\label{freebdy}\\
 p = 0,\qquad &\textrm{ on } \pa \D_t,
 \label{vaccuum}
\end{align}
where $\pa \D\!=\!\cup_{0\leq t\leq T}\pa \D_t$ is the space time boundary.
Since the equation of state $p(\rho)$ is strictly increasing, we can alternatively think of the
density as a function of the pressure and then
\eqref{vaccuum} implies that $\rho|_{\pa \D_t}\! =\! \overline{\rho}$
for some constant $\overline{\rho}$, with $p(\overline{\rho})\!=\!0$. We consider the case $\overline{\rho}\! > \!0$,
in which case the fluid is said to be a liquid.

Given an open set $\D_0\subset \R^3$ and a diffeomorphism $x_0:\Omega\to \D_0$ from the unit ball $\Omega$, a function $\rho_0$ which is strictly positive on $\D_0$
so that $p(\rho_0) = 0$ on $\pa \D_0$, and a vector field $V_0$ on
$\D_0$, the \emph{free boundary problem for the compressible
Euler equations in a bounded domain} is to find a domain $\D = \cup_{0\leq t \leq T}
\{t\} \times \D_t$, a vector field $V$ and a function $\rho$ satisfying
\eqref{mom}-\eqref{vaccuum}
 as well as the initial conditions:
\begin{equation}\label{eq:initialconditionseulercoord}
 \{ x : (0, x) \in \D\} = \D_0,\qquad\text{and}\qquad
 V(0, x) = V_0(x),\qquad  \rho(0, x) = \rho_0(x), \quad \textrm{ in } \D_0.
\end{equation}

Since $\rho$ is constant on the boundary it follows that
$(\pa_t \!+\! V^k\pa_k) \rho\!=\!0$  on the boundary so by \eqref{mass}
at $t\! =\! 0$ we must have that $\div\! V_0\! =\! 0$ on the boundary.
Similarly $(\pa_t\! +\! V^k\pa_k)^{2 \!}\rho\!=\!0$ on the boundary, which by \eqref{mass}
implies that $(\pa_t \!+ \!V^k\pa_k)\div \!V\!\!=\!0$ on the boundary, but
taking the divergence of \eqref{mom} gives an expression for
$(\pa_t \!+\! V^k\pa_k)\div \!V\!$ in terms of space derivatives of
$V\!$ and $\rho$, and this expression must vanish on the boundary.
We say that the initial data $V_0, \rho_0$ satisfy
the {\it compatibility condition of order $m$} if there are formal power series
in $t$,  $\hat{\rho}(t,x), \hat{V}\!(t,x),\hat{\phi}(t,x)$ that satisfy \eqref{mom}-\eqref{eq:gravityintro}
 with $\hat{V}\!(0,x) \!=\! V_{{}_{\!}0\,}\!(x)$,  $\hat{\rho}(0,x)\! =\! \rho_0(x)$ and:
\begin{equation}
  (\pa_t + \hat{V}^k\pa_k)^j(\hat{\rho} - \overline{\rho}) \in H^1_0(\D_0),
  \quad\text{for }j = 0,\dots, m.
  \label{int:compat2}
\end{equation}

In addition, this problem is ill-posed (see \cite{E1}) unless the physical (Taylor) sign condition holds:
\begin{equation}
 -\nabla_{\!N} p \geq \delta > 0, \textrm{ on } \pa \D_t,
 \qquad\textrm{ where } \nabla_{\!N} = N^i \nabla_i.
 \label{int:tsc}
\end{equation}
Our apriori bounds hold in general but for the existence of a solution satisfying the compatibility conditions we need to assume that
we are close to the incompressible case, i.e. $\rho(p)$ is close to the constant function:
\begin{equation}\label{eq:closetoincompressible}
0<\rho^\prime(p)\leq \delta_0,\quad \text{and}\quad
|\rho^{(k)}(p)| \leq \delta_0/E_0^{k-1},\quad k=2,\dots ,r,\quad\text{where} \quad
E_0= ||V_{0_{\!}}||_{H^r}\!+\! ||\rho_{0_{\!}}||_{H^r}.
\end{equation}
Our main result is:
\begin{theorem}
  \label{mainthm} There is a constant $\delta_0\!>\! 0$ such that if $\rho(p)$ is a smooth function satisfying $\rho(0)\!>\!0$ and \eqref{eq:closetoincompressible} for $k\!=\!1$ the following hold.
 Suppose that there is a diffeomorphism $x_0\!{}_{\!}:\!\Omega\!\to \!\D_{0\!}$ in $H^{r+1\!}(\Omega)$ and that $V_{{}_{\!}0}, \rho_0\! \in\! H^{r\!}({}_{\!}\D_{0\!}){}_{\!}$
  satisfy the compatibility conditions to order $r -{}_{\!}1\!\geq \!7{}_{\!}$,
  and that \eqref{int:tsc} holds at $t \!=\! 0$ and that  \eqref{eq:closetoincompressible}
  hold for $k\!=\!2,\!...,{}_{\!}r{}_{\!}$.
  Then there is  $T\!{}_{\!}=\! T(||x_{0_{\!}}||_{H^r}\!,\!||V_{0_{\!}}||_{H^r}\!,\! ||\rho_{0_{\!}}||_{H^r}\!)\!
  > \!0$ so that \eqref{mom}-\eqref{vaccuum}
  has a solution $(V\!{}{\!},{}_{\!} \rho,{\!}\D)\!$ with diffeomorphisms  $x(t,\cdot)\!:\!\Omega \!\to\! \D_{t_{\!}}$ in $ H^{r\!}(\Omega)$ and
  $V\!(t,_{\!}\cdot_{{}_{\!}}) \in{}_{\!}
  H^{(r-1,1{}_{\!}/2)}(_{{}_{\!}}\D_{t_{\!}})_{\!}$,
  $\rho(t,_{\!}\cdot_{{}_{\!}})\!\in{}_{\!}
  H^{r\!}(_{{}_{\!}}\D_{t_{\!}})_{\!}$,
  for $0 \!\leq \! t \!\leq\! T\!$.\!
\end{theorem}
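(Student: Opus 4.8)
The plan is to fix the moving domain by passing to Lagrangian coordinates, regularize the resulting quasilinear system by a tangential smoothing, prove energy estimates that are uniform as the smoothing parameter $\ve \to 0$, and pass to the limit. Introduce the Lagrangian flow map $x = x(t,y)$, $y \in \Omega$, solving $\pa_t x(t,y) = V(t, x(t,y))$ with $x(0,\cdot) = x_0$, so that $\D_t = x(t,\Omega)$ and every unknown becomes a function on the fixed ball $\Omega$; in these variables \eqref{mom}--\eqref{mass} become a quasilinear system for $(x, V, \rho)$ with the boundary condition $p(\rho) = 0$ on $\pa\Omega$, coupled to the Poisson equation \eqref{eq:gravityintro} for $\phi$ now posed on $x(t,\Omega)$. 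To construct solutions I would replace the coefficients of the spatial derivatives by versions smoothed in the directions tangent to $\pa\Omega$ via an operator $\ssm$ (convolution in tangential variables after flattening the boundary by a partition of unity); the smoothed system is a well-behaved quasilinear problem on a fixed domain, solvable for each $\ve > 0$ on some $[0,T_\ve]$ by a Picard iteration in which each step requires only a linear transport equation for $\rho$, a linear elliptic problem for the pressure and for $\phi$, and an ODE for $x$.

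The heart of the proof is a closed energy estimate, uniform in $\ve$, for a high-order energy $E_r(t)$ comprising $\|V(t,\cdot)\|_{H^{(r-1,1/2)}}^2$, $\|\rho(t,\cdot) - \overline{\rho}\|_{H^r}^2$, a norm measuring the regularity of $x(t,\cdot)$, and a boundary term --- a weighted $L^2$ norm of $r$ tangential derivatives of the free surface --- which is coercive precisely because of the Taylor sign condition \eqref{int:tsc}. I would differentiate the equations $r$ times tangentially, use the div--curl decomposition to recover full Sobolev norms of $V$ from control of $\div V$, $\curl V$, and the boundary trace, propagate $\curl V$ via the transported vorticity equation obtained from the curl of \eqref{mom}, and integrate by parts so that the worst term --- roughly $r+1$ derivatives of the pressure at the boundary --- is absorbed by the time derivative of the boundary energy. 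The self-gravity term $\rho\,\pa_i\phi$ is treated with optimal elliptic estimates for the Newtonian potential on the only finitely regular domain $x(t,\Omega)$: $\phi$ gains two derivatives over $\rho$, but the dependence on the boundary must not lose regularity, so one needs sharp bounds for the Green's function and the layer potentials in terms of the boundary norms already present in $E_r$. The hypotheses \eqref{eq:closetoincompressible} enter to control the coefficients $\rho'(p), \rho''(p), \dots$ and to absorb error terms, and they are what makes the compatibility conditions \eqref{int:compat2} satisfiable; the latter in turn let the smoothed initial data be chosen so that time derivatives at $t = 0$ are as regular as required. The conclusion is $\tfrac{d}{dt} E_r \leq C(1 + E_r)^N$ with $C, N$ independent of $\ve$ (the assumption $r - 1 \geq 7$ supplying the Sobolev embeddings and Moser-type product estimates that close the nonlinear terms), whence $E_r(t) \leq 2E_r(0)$ on a uniform interval $[0,T]$, which extends every $T_\ve$ to this common $T$.

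With uniform bounds in hand, I would pass $\ve \to 0$: uniform control of $E_r$ together with the equations, which bound $\pa_t$ of lower-order quantities, gives, by standard compactness, a subsequence converging strongly in lower-order norms and weak-$*$ at the top order to a limit $(V, \rho, x)$ that solves the original system \eqref{mom}--\eqref{vaccuum}, since the commutators generated by $\ssm$ vanish in the limit; weak lower semicontinuity yields the stated regularity $x(t,\cdot) \in H^r(\Omega)$, $V(t,\cdot) \in H^{(r-1,1/2)}(\D_t)$, $\rho(t,\cdot) \in H^r(\D_t)$. The Taylor sign condition \eqref{int:tsc} and the coercivity of the boundary energy, which hold at $t = 0$, persist on a possibly smaller interval by continuity, and uniqueness follows from an $L^2$-type estimate for the difference of two solutions.

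\textbf{Main obstacle.} The principal difficulty is closing the energy estimate at the optimal regularity \emph{uniformly in} $\ve$. Commuting the tangential smoothing $\ssm$ past the material derivative and past the Lagrangian vector fields produces terms that naively lose a derivative; these must be tamed by exploiting the (anti)symmetry of $\ssm$ and by adding carefully chosen lower-order correction terms to $E_r$ so that the dangerous contributions either cancel in pairs or are controlled by the boundary energy through \eqref{int:tsc}. Intertwined with this is the demand that the elliptic estimates for both the pressure and the self-gravitational potential lose no boundary regularity --- the delicate half-derivative anisotropy of $V$ near $\pa\D_t$ must be tracked throughout --- and that the Newtonian potential on the moving domain be estimated sharply in terms of the norms carried by $E_r$; this sharp Dirichlet/Green's-function analysis is the technical core of the argument.
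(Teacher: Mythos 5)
Your architecture coincides with the paper's: Lagrangian coordinates on the fixed ball, tangential smoothing of the flow map, a high-order energy whose boundary piece is made coercive by the Taylor sign condition (with the half tangential derivative tracked throughout), div--curl recovery of full norms, propagation of the vorticity, sharp estimates for the Newtonian potential on the finitely regular moving domain, and a compactness passage $\ve\to 0$ after extending the $\ve$-dependent existence time to a uniform one via the a priori bound. All of that is exactly how the paper proceeds.

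There is, however, one genuine gap, and it sits at the heart of your iteration scheme: you propose that each Picard step requires ``a linear transport equation for $\rho$'' and ``a linear elliptic problem for the pressure.'' For a \emph{compressible} liquid neither version of this closes. If $\rho$ is obtained by transporting $D_t\rho=-\rho\div V^{(N-1)}$ and the pressure is then read off algebraically from the equation of state, the new velocity requires one more spatial derivative of $\rho$ than the transport equation provides (since $\rho$ inherits only the regularity of $\div V^{(N-1)}$), so the iteration loses a derivative at every step --- this is precisely the loss that forced a Nash--Moser scheme in the earlier compressible work. If instead you try to determine the pressure elliptically, the correct equation obtained by taking the divergence of \eqref{mom} and a material derivative of \eqref{mass} is not elliptic but a \emph{wave} equation, $D_t^2 e(h)-\Delta h=(\pa_iV^j)(\pa_jV^i)-\rho(h)$ with $h=0$ on the boundary (see \eqref{wavedef}); treating $D_t^2e(h)$ as a lagged source destroys the consistency of the scheme in time. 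The missing idea is the enthalpy reformulation: at each iteration step one must solve the full initial--boundary value problem for this wave equation (its smoothed version \eqref{smwavedef}), whose energy controls $D_t^{k}h$ and whose elliptic part regains the spatial derivative of $\pave h$ needed to define the next velocity without loss. The compatibility conditions \eqref{int:compat2}, \eqref{maincomp0} are then exactly the conditions for this wave equation to admit solutions of the required regularity, and a separate construction is needed to produce smoothed data compatible for the regularized problem. Without this piece the $\ve$-dependent existence step of your argument does not go through, even though your uniform energy estimate and limiting argument are otherwise sound.
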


The space $H^{(r\!-\!1,1{}_{\!}/2)}$ defined in \eqref{sobspacedef} controls
$r_{\!}-\!1$ full derivatives and half a tangential derivative. While
 $\rho$ is as regular at later times as initially,
%at $t \!= \!0$,
we need to assume  more regularity of $V\!$ and $x$ initially than we get   later. However we prove energy estimates
controlling $x(t,\cdot)\!\in\! H^r(\Omega)$ and $V(t,\cdot)\!\in\! H^{(r-1,1{}_{\!}/2)}(\D_t)$, $\rho(t,\cdot) \!\in\! H^r(\D_t)$ for $t \!\leq \! T$
in terms of these quantities at $t\! =\! 0$.
In the incompressible case, where the compatibility conditions are satisfied, one can regularize the initial data and use this to prove existence in the energy space, see \cite{Nordgren2008}.

Related problems without self gravity have previously been solved using different methods.
In \cite{W99}, Wu proved local well-posedness for
 the incompressible ($\div V \!\!=\! 0$) irrotational ($\curl V\! \!= \!0$)
case,  using complex analysis and spinors.
Lindblad \cite{L05a, Lindblad2005} used a Nash-Moser iteration scheme
to solve the case with $\curl V\!\!\not= \!0$ without self-gravity
in  the incompressible case and the case of a compressible liquid. Later,
Coutand-Shkoller \cite{CS07} and Coutand-Hole-Shkoller \cite{CHS13},
 were able to use tangential smoothing
together with surface tension and
artificial viscosity, and elliptic estimates of the type proven in \cite{Cheng2016}
to avoid the use of a Nash-Moser iteration.

Lindblad-Nordgren \cite{Lindblad2008} proved apriori bounds for an incompressible liquid with self gravity in the two dimensional case.
Nordgren \cite{Nordgren2008} proved local existence for an incompressible liquid with self gravity in three dimensions.
His proof built on the approach of \cite{CS07} but he was able to avoid
the need for artificial viscosity and surface tension using elliptic estimates from \cite{Lindblad2008}. Here we give a complete proof of the local well-posedness for a compressible liquid with self gravity, building on ideas from
\cite{L05a,Lindblad2005,CS07,Lindblad2008,Nordgren2008,CHS13,Lindblad2016}.
In particular we use tangential smoothing but we avoid any extra smoothing by surface tension or artificial viscosity, by using improved elliptic estimates and estimates for a wave equation on a bounded domain. We also include existence proofs for the smoothed Euler and wave equations that are not written in detail elsewhere.

%We include existence proofs for the smoothed and linear problems that are not written %in detail elsewhere.

\subsection{The setup for the proof}
We fix $\Omega$ to be the unit ball in $\R^3$
and a diffeomorphism $x_0 : \Omega \to \D_0$.
We introduce Lagrangian coordinates, see Section \ref{lagsec}, so the boundary is fixed:
\begin{equation}
\frac{d x}{dt}=V(t,x),\qquad x(0,y)=x_0(y),\qquad y\in \Omega.
\end{equation}
We express Euler's equations in these coordinates, using the enthalpy, $h^{\prime\!}(\rho) \!=\! p^{\prime\!}(\rho) /\rho$, $h(\overline{\rho})\!=\!0$,
\begin{equation}\label{eq:eulerlagrangiancoordintro}
D_t V^i=-\delta^{ij}(\partial_j h+\pa_j \phi),\quad\text{in}\quad [0,T]\times\Omega,\quad \text{where}\quad
D_t=\partial_t\big|_{y =const},\qquad \partial_i =\frac{\pa y^a}{\pa x^i} \frac{\pa}{\pa y^a}.
\end{equation}
 If we take the material derivative $D_t$ of the continuity equation
$ D_t\rho=-\rho\div V$ and the divergence of Euler's equations \eqref{eq:eulerlagrangiancoordintro}, using \eqref{eq:gravityintro}, we obtain, with  $e(h) = \log \rho(h)$,
\begin{equation}\label{eq:waveequationintro}
 D_t^2 e(h) - \Delta h = (\pa_i V^j)(\pa_j V^i) -\rho(h),\quad \textrm{ in }
 [0, T]\! \times\! \Omega, \quad\text{with}\quad
 h|_{[0,T]\times\pa \Omega}=0, \quad\text{where}\quad {\Delta}\!=\delta^{ij}\pa_i\pa_j.
\end{equation}
Here $\phi$ is given by
\vspace{-0.1in}
\begin{equation}
  \qquad\quad\phi(t,y) =  \frac{1}{4\pi} \int_{\Omega}
  \frac{ \rho(t, y'){\kappa}(t, y')\,dy'}{|x(t,y)- x(t,y')|}\, ,
 \qquad\text{where}\qquad {\kappa}=|\det{(\pa {x}/\pa y)}|.
\end{equation}

It is possible to obtain  apriori energy bounds for the system
\eqref{eq:eulerlagrangiancoordintro}-\eqref{eq:waveequationintro}
but it is difficult to come up with an iteration
scheme that doesn't lose regularity. We will first smooth out the equations.
Let $S_\varepsilon \!=\!T_\varepsilon^* T_\varepsilon$ be a regularization in directions tangential to the boundary that is self adjoint, see Section
\ref{smoothingsec}.
Given a velocity vector field $V\!$, we define the tangentially regularized velocity
and the regularized coordinates by
\begin{equation}\label{eq:eulerlagrangiancoordxsmoothed}
\widetilde{V}=S_\varepsilon V,\qquad
\frac{d \widetilde{x}}{dt}=\widetilde{V}(t,y),\qquad \xve(0,y)=x_0(y),\qquad y\in \Omega.
\end{equation}
Using these regularized coordinates we defined the smoothed out equations by
\begin{equation}\label{eq:eulerlagrangiancoordsmoothed}
D_t V^i=-\delta^{ij}(\widetilde{\partial}_j h+\pave_j \phi),\quad\text{in}\quad [0,T]\times\Omega,\quad \text{where}\quad
D_t=\partial_t\big|_{y =const},\qquad \widetilde{\partial}_i =\frac{\pa y^a}{\pa \widetilde{x}^i} \frac{\pa}{\pa y^a},
\end{equation}
where $h$ is given by
\begin{equation}\label{eq:waveequationsmoothed}
  D_t^2 e(h) - \widetilde{\Delta} h = (\widetilde{\pa}_i \widetilde{V}^j)(\widetilde{\pa}_j V^i) - \rho(h)\quad \textrm{ in }
 [0, T]\! \times\!\Omega, \quad\text{with}\quad
 h\big|_{ [0, T] \times\pa \Omega}=0,\quad\text{where}\quad \widetilde{\Delta}\!=\delta^{ij}\pave_i\pave_j,\!
\end{equation}
and $\phi$ is given by
\vspace{-0.1in}
\begin{equation}
  \qquad\phi(t,y) =  \frac{1}{4\pi} \int_{\Omega}
  \frac{ \rho(t, y')\widetilde{\kappa}(t, y')\,dy'}{|\xve(t,y)- \xve(t,y')|}\, ,
 \qquad\text{where}\qquad \widetilde{\kappa}=|\det{(\pa \widetilde{x}/\pa y)}|.
\end{equation}
Taking the divergence of \eqref{eq:eulerlagrangiancoordsmoothed} and subtracting
it from \eqref{eq:waveequationsmoothed} shows that $D_t \rho \!= \!-\rho \div \!V\!$ if this holds at $t \!= \!0$.

One can prove uniform apriori energy bounds for the system \eqref{eq:eulerlagrangiancoordxsmoothed}-\eqref{eq:waveequationsmoothed} up to a time $T\!>\!0$, independent of $\varepsilon$.
Moreover, one can prove $\varepsilon$ dependent bounds for the iteration scheme: given
$V$, define $\tilde{V}$ and $\widetilde{x}$ by  \eqref{eq:eulerlagrangiancoordxsmoothed}, and then $h$ and the new $V$ by solving the system  \eqref{eq:eulerlagrangiancoordsmoothed}-\eqref{eq:waveequationsmoothed}.
We will show in Theorem \ref{vlwpthm} that this system has a
unique solution on a time interval of size $\ve$. In Theorem
 \ref{eneestthm}, we show
the solutions  satisfy energy estimates which are uniform in $\ve$.
This allows us to extend the solution to a time
independent of $\ve$, and by taking the limit as $\ve\!\to\! 0$ obtain a solution to the original system \eqref{eq:eulerlagrangiancoordintro}-\eqref{eq:waveequationintro}; see Section \ref{tangsec}.

\subsection{Energy estimates} Let $E$ be the energy for Euler's equations:
\begin{equation}\label{eq:TheEnergy}
E(t)= \int_{\D_t}\!\!\!\big( |V|^2\! +Q(\rho) +\phi \big)\, \rho \, dx
=\int_{\Omega}\!\big( |V|^2\! +Q(\rho) +\phi \big)\,  \rho
\kappa dy,\quad\text{where}\quad Q(\rho)\!=\!2\!\int\! p(\rho) \rho^{-2} d\rho,\quad D_t (\kappa\rho)\!=\!0.
\end{equation}
If we take the time derivative of the integral expressed in the fixed Lagrangian coordinates we get $D_t$ applied to the integrand. We then use Euler's equation $D_t V=-\rho^{-1} \pa p-\pa \phi$ and integrate by parts:
\begin{equation}
\frac{ dE\!}{dt}
=\!\int_{{}_{\!}\D_{t}}\!\!\! 2{V^{{}^{\!}}}^{i{}_{\!}} ({}_{\!}-\pa_i p-\!\rho
\pa_i\phi)\!- Q^{\prime\!}(_{\!}\rho_{\!})\rho D_{t} \rho+{}_{\!}\rho D_{t} \phi \,  dx
=\!\int_{\!\D_{{}_{\!}t}}\!\! 2\div\! V \, p -Q^{\prime\!}(_{\!}\rho_{\!})\rho D_{{}_{\!}t} \rho +({}_{\!} D_{{}_{\!}t} \phi-{}_{\!}2{V^{{}^{\!}}}^{i\!}\pa_i\phi \big) \rho \, dx
+\!\int_{\!\pa\D_{{}_{\!}t}} \!\!\!\! 2  v_i  p {N^{{}^{\!}}}^{i\!} dS.
\end{equation}
Using the continuity equation $D_t \rho\!=\!-\rho\div V\!\!$ and the boundary condition $p\!=\!0$ only terms with $\phi$ remain.
Let us for simplicity assume that $\rho\kappa\!=\!1$, and let $\Phi(z)\!=\!(4\pi |z|)^{-1}\!\!$. With $\Phi_i(z)\!=\!\pa_i \Phi(z)\!=\!-\Phi_i(-z)$ we have
\begin{equation}
\int_{\!\D_t}\!\! \!\! D_t \phi\, \rho \, dx
=\!\! \int\!\!\!\!\int_{\Omega{}_{\!}\times{}_{\!}\Omega}
\!\!\!\!\!\!\!\! D_{t} \Phi\big(x(t,{}_{\!}y)\!-\!x(t,{}_{\!}y')\big) dy dy'\!\!=\!\! \int\!\!\!\!\int_{\Omega{}_{\!}\times{}_{\!}\Omega}
\!\!\!\!\!\!\!\! \big(V(t,{}_{\!}y)\!-\!V(t,{}_{\!}y')\big) \Phi_i\big(x(t,{}_{\!}y)\!-\!x(t,{}_{\!}y')\big) dy dy'\!\!
=\!\!\int_{\!\D_t}\!\! \!\! 2{V^{{}^{\!}}}^{i\!}\pa_i\phi \,  \rho \, dx.
\end{equation}
It follows that $E'(t)\!=\! 0$.
This energy for the smoothed problem with $\!\D_t,dx$ replaced by $\!\tD_t,d\widetilde{x}$ is almost conserved apart from that $D_t(\rho\widetilde{\kappa})\!=\!
\rho\widetilde{\kappa}(\div\!\widetilde{V}\!\!-\div\! V\!)$.
We will obtain energies for derivatives of the smoothed problem which will contain a boundary term where the symmetry of the smoothing matters, see section \ref{energy}.

\section{Lagrangian Coordinates and the wave equation for the enthalphy}
\label{lagsec}
Let $\Omega$ be the unit ball in $\R^3\!$ and $x_0\!_{\!} :\! \Omega \!\to\!
\D_0\!$ be a diffeomorphism.
Suppose that $V(t,_{\!}x_{\!}), p(t,_{\!}x_{\!}), \rho(t,_{\!}x_{\!})$ satisfy \eqref{mom}-\eqref{vaccuum}.
The Lagrangian coordinates $x(t,_{\!}y_{{}_{\!}})$ are given by:
\begin{equation}\label{eq:thelagraniancoord}
  \frac{d}{dt}x(t,y) = V(t, x(t,y)),\qquad
  x(0,y) = x_0(y),\qquad y \in \Omega.
\end{equation}
We define the material derivative:
\begin{equation}
  D_t f(t,y) = \frac{\pa}{\pa t}\Big|_{y = constant\, } f(t,y).
\end{equation}
We will use the letters $i,j,k\dots$ to refer to quantities expressed in
terms of the usual Eulerian coordinates and $a,b,c,\dots$ to refer to
quantities expressed in Lagrangian coordinates, e.g.
\begin{equation}
  \pa_i = \frac{\pa y^a}{\pa x^i} \frac{\pa}{\pa y^a} = \frac{\pa y^a}{\pa x^i}
  \pa_a.
  \label{pai}
\end{equation}
In these coordinates we can now write Euler's equations \eqref{mom} and the continuity equation \eqref{mass} as
\begin{alignat}{2}
  \rho D_t V^i &= -\delta^{ij} (\pa_i p+\rho\pa_i \phi),
  \quad && \text{ on } [0,T] \times \Omega, \label{lagmom}\\
  D_t \rho &= -\rho \div V, \quad && \text{ on } [0,T] \times \Omega,
   \label{lagmass}
\end{alignat}
where $\div V = \pa_i V^i$ and $\pa_i$ acts on functions defined on $\Omega$ by \eqref{pai}, where $x$ is obtained from $V$ by \eqref{eq:thelagraniancoord}. Writing $\kappa \!=\! \det(\pa x/{}_{\!}\pa y)$,
by \eqref{lagmass} and the formula for the derivative of the determinant,
we have $D_t \kappa\! =\! \kappa \div \!V \!\!$.

%We also define $\D_t = x(t,\Omega)$ and write
%$y(t,x)$ for the inverse of the map $y \mapsto x(t,y)$.
The gravitational potential is then given in terms of the fundamental solution
of the Laplacian by:
\begin{align}
 \phi(t,y) =
  \frac{1}{4\pi} \int_{\D_t}
  \frac{\rho(t, y(t,x'))\,dx'}{|x(t,y)- x'|}\,
  =  \frac{1}{4\pi} \int_{\Omega}
  \frac{ \rho(t, y')\kappa(t, y')\,dy'}{|x(t,y)- x(t,y')|}
  =  \frac{1}{4\pi} \int_{\Omega}
  \frac{ \rho_0(y')\kappa_0(y')\,dy'}{|x(t,y)- x(t,y')|}\, .
 \label{phidefsec2}
\end{align}

\subsection{The enthalpy formulation}
\label{enthalpysec}
The pressure is determined from the
mass density, $p = p(\rho)$ for a smooth, increasing function
$p$ and we can alternatively think of $\rho\! =\! \rho(p)$.
With $\bar{\rho}\! = \!\rho^{-1}(0)$, we define the enthalpy by:
\begin{equation}
  h(\rho) = \int_{\bar{\rho}}^\rho \frac{p'(\lambda)}{\lambda} d\lambda.
  \label{hdefi}
\end{equation}
We then have $\pa_i p = \rho \,\pa_i h$, so \eqref{lagmom} becomes:
\begin{equation}
  D_t V^i = -\delta^{ij}(\pa_j h + \pa_j \phi).
  \label{hmom}
\end{equation}
Since we assume that $p'(\lambda)\! >\! 0$,
the function $\rho\! \to\! h(\rho)$ is invertible.
We can then write $\rho \!=\! \rho(h)$ and think of $h$ as the
fundamental thermodynamic quantity. Defining $e(h)\! = \!\log \rho(h)$,
we can re-write \eqref{lagmass} in terms of $h$:
\begin{equation}
  D_t e(h) + \div V = 0.
  \label{hmass}
\end{equation}
Taking the divergence of \eqref{hmom} and the time derivative of \eqref{hmass} using that
$[D_t, \pa_j] = -(\pa_jV^k)\pa_k $,
we get:
\begin{equation}
 D_t^2 e(h) - \Delta h = (\pa_i V^j)(\pa_j V^i)-\rho(h),  \textrm{ in }
 [0, T]\! \times\! \Omega, \quad\text{with}\quad h = 0, \textrm{ on }  [0,T]
 \!\times\! \pa \Omega, \label{wavedef}
\end{equation}
Here, $\pa_i$ is given by \eqref{pai},
$\Delta$ is the Laplacian on $\Omega$ induced by the coordinates
$x$ and the flat metric on $\R^3$:
\begin{equation}
  \Delta h =\delta^{ij}\pa_i\pa_j h= \kappa^{-1}\pa_a\big(\kappa g^{ab}\pa_b h\big),\qquad\text{where}\qquad
  g^{ab} = \delta^{ij}\frac{\pa y^a}{\pa x^i}\frac{\pa y^b}{\pa x^j},\quad\text{and}\quad \kappa = \det(\pa x/\pa y).
\end{equation}
On the other hand, starting with \eqref{wavedef}
and taking the divergence of \eqref{hmom}, \eqref{hmass} is automatically
satisfied if it is satisfied at $t = 0$.
By \eqref{hdefi} respectively \eqref{hmass}, we have that:
\begin{equation}
 h\big|_{t = 0} = e^{-1}(\log \rho_0) \equiv h_0,
\quad\text{and}\quad
 D_t h\big|_{t = 0} = - \div V_0/e'(h_0) \equiv h_1,\qquad \textrm{ in } \Omega.
 \label{wavedefic}
\end{equation}
Assuming that the initial-boundary value problem
\eqref{wavedef}-\eqref{wavedefic}
has a unique solution $h$ for given $V$, the initial-free boundary problem for Euler's equations \eqref{mom}-\eqref{eq:initialconditionseulercoord}
is equivalent to the fixed boundary problem:
\begin{align}
  D_t V^i &= -\delta^{ij}\pa_j h - \delta^{ij}\pa_j\phi, \quad\text{and}\quad D_t x^i =V^i, \qquad\textrm{ in } [0,\! T] \!\times \!\Omega,\\
 D_t^2 e(h) - \Delta h &= (\pa_i V^j)(\pa_j V^i)-\rho(h),\quad\textrm{ in } [0,T]
 \times \Omega,\quad\text{with}\quad
 h = 0, \textrm{ on } [0, T] \!\times\! \pa\Omega,\\
  x(0, y) &= x_0(y),\quad D_t x(0, y) = V_0(y),\quad \text{and}\quad
 h(0, y) =h_0, \quad D_th(0, y) = h_1.
 \end{align}
\subsubsection{Assumptions on the equation of state}
With $\delta_0\!>\!0$ as in Appendix \ref{compat} let $c_1\!>\!0$ be a constant such that
\begin{equation}\label{eq:closetoincompressible2}
0<c_1\leq e^\prime(h)\leq \delta_0,\quad \text{and}\quad
|e^{(k)}(h)| \leq \delta_0/E_0^{k-1},\quad k=2,\dots ,r,\quad\text{where} \quad
E_0= ||V_{0_{\!}}||_{H^r}\!+\! ||\rho_{0_{\!}}||_{H^r}.
\end{equation}

\subsection{Higher order commutators}
\label{hocommutators}
Repeatedly using that $[D_t, \pa_j] = - (\pa_j V^\ell) \pa_\ell$ it follows that
\begin{equation}\label{eq:highercommutatorssec2}
D_t^k \pa_i = {\sum}_{\ell\leq k}S^{jk}_{i\ell} \pa_j D_t^\ell,
\end{equation}
where $S^{jk}_{ik}\!=\delta^{j}_i$,
and for $\ell<k$, we have for some constants $c^{kn}_{\ell \ell_1\dots \ell_n}$
\begin{equation}\label{eq:highercommutatorssec2exp}
S^{jk}_{i\ell}\!= S^{jk}_{i\ell}(\pa V,\dots,\pa D_t^{k-\ell-1} V)=c^{kn}_{\ell \ell_1\dots \ell_n}
(\pa_i D_t^{\ell_1} V^{i_2})\cdots(\pa_{i_n} D_t^{\ell_n} V^j),
\end{equation}
where the sum is over $\ell_1+\cdots+\ell_n=k-\ell-n$ and
$n=1,\dots k$. Here the terms with $n=1$ should be interpreted as
$c^{k1}_{\ell \ell'}(\pa_i D_t^{\ell'}V^j)$ and the terms with $n=2$ should be interpreted as
$c^{k2}_{\ell \ell'\ell''}(\pa_i D_t^{\ell'}V^{i''})(\pa_{i''} D_t^{\ell''}V^j)$.

The potential $\phi=\Phi[\rho \kappa] $ can be expressed in terms an integral operator
\begin{equation}\label{eq:potentialkernel}
\Phi[f](t,y)= \! \!\int_{\Omega}\!\! K(t,y,y') f(t,y')\,dy'\!,\!\!\quad\text{with kernel}\quad
 K(t,y,y')=\frac{1}{4\pi}
  \frac{1 }{|x(t,y)- x(t,y')|} .
\end{equation}
$D_t^k \Phi[f]$ is a sum of integral operators $\Phi_\ell[D_t^{k-\ell\!}\!f]$,
$\ell\!\leq \!k$, with kernels that  are sums over $\ell_1\!+\!\cdots\!+\!\ell_n\!=\!\ell$ of
\begin{equation}\label{eq:Dtkpotentialkernel}
{}_{\!}K_{\!\ell {}_{\!}}( {}_{\!}\delta  x,_{\!}\delta V\!{}_{\!},{\!}\dots{}_{\!},\delta D_t^{\ell\!-\!1\!} V {}_{\!})\!=\!
  d^{\,\ell}_{\ell_{\!1}{\!}\dots{\!} \ell_{\!n}}\!\!\frac{\!\!\!(_{\!}\delta D_t^{\ell_{\!1}} \!x_{\!}\cdot_{\!} \delta D_t^{\ell_2}\!x {}_{\!})\cdots (_{\!}\delta D_t^{{}_{\!}\ell_{\!n\!-\!1}}\!x {}_{\!}\cdot {}_{\!}\delta D_t^{\ell_{{}_{\!}n}}\!x {}_{\!})\!\!\! }{|x(t,y)- x(t,y')|},
  \quad\,\text{where}\quad\,   \delta W{}_{\!}( {}_{\!}t,{}_{\!}y, {}_{\!}y' {}_{\!})\!= \frac{\!\!\! W{}_{\!}( {}_{\!}t, {}_{\!}y {}_{\!})\!-\!W{}_{\!}( {}_{\!}t, {}_{\!}y' {}_{\!} {}_{\!})\!\!\!}{\!|x( {}_{\!}t,{}_{\!}y {}_{\!})\!-\! x( {}_{\!}t,{}_{\!}y' {}_{\!})|\!}.\!
\end{equation}

\subsection{The compatibility conditions}
\label{compcondsec}
The compatibility condition of order $m$ \eqref{int:compat2} can now be expressed in the Lagrangian coordinates as that the formal power series solution in $t$: $\hat{V}(t,y)\!=\!\sum V_k(y) t^k\!/k! $ and $\hat{h}(t,y)\!=\!\sum h_k(y) t^k\!/k!$ and $\hat{\phi}(t,y)\!=\!\sum \phi_k(y) t^k\!/k!$ to the system \eqref{hmom}-\eqref{hmass},\eqref{phidefsec2} satisfy $h_k\big|_{\pa\Omega}\!=0$, for $k=0,\dots,m$. However, since we are looking for solutions in Sobolev spaces this has to be expressed in a weak form:
\begin{equation}
 h_k(y) \in H^1_0(\Omega), \qquad k = 0,..., m.
 \label{maincomp0}
\end{equation}
We would like to think of \eqref{wavedef}-\eqref{wavedefic} as determining
$h$ uniquely as a functional of $V$\!. In order for the initial value problem for the wave equation
\eqref{wavedef}-\eqref{wavedefic} to have a regular enough solution, the
 initial data needs to satisfy the compatibility
conditions \eqref{maincomp0}. These compatibility conditions for $h$ will however depend on the formal power series for $V$\!. We now calculate the formal power series for the coupled system and hence the compatibility conditions. These power series are uniquely determined by
$x_0,V_0,h_0$.  By \eqref{hmom}, using \eqref{eq:highercommutatorssec2}:
\begin{equation}
V_{k+1}= {\sum}_{\ell\leq k} S^{jk}_{i\ell}(\pa V_0,\dots, \pa V_{k-\ell-1}) \pa_j H_\ell,\qquad H_k=h_k+\phi_k.
\end{equation}
Similarly by \eqref{hmass} we have for some function $G_k$:
\begin{equation}
e^\prime(h_0) h_{k+1}= {\sum}_{\ell\leq k} S^{jk}_{i\ell}(\pa V_0,\dots, \pa V_{k-\ell-1}) \pa_j V_\ell+G_k(h_0,\dots,h_k) .
\end{equation}
The relation for $\phi_k$ is not as direct but it is clear from
\eqref{eq:Dtkpotentialkernel} that for some non local functional $\Phi_k$:
\begin{equation}
\phi_k=\Phi_k[ x_0,V_0,\dots,  V_{k-1},h_0,\dots,h_k].
\end{equation}

\section{Tangential smoothing, tangential operators and tangential vector fields }
There is a family of open sets $V_\mu $, $\mu\!=\!1,\dots,N$ that cover $\pa \Omega$ and  onto diffeomorphisms
$\Phi_\mu\!: (-1,1)^2 \!\to V_\mu $. We fix a collection of cutoff functions $\chi_{\mu}\!:\!\pa \Omega
\to \R$ so that $\chi_\mu^2$ form a partition of unity subordinate to the cover $\{V_\mu\}_{\mu = 1}^N$,
 as well as another family of
``fattened'' cutoff functions $\tC_\mu$ so that the support of
$\tC_\mu$ is contained in $V_\mu$ and so that
$\tC_\mu \equiv 1$ on the support of $\chi_\mu$. Recalling that
$\Omega$ is the unit ball, we set $W_{\!\mu} \!=\! \{r\omega, r \in (1/2, 1], \omega
\in V_{\!\mu}\}$
for $\mu = 1,\dots, N$
and let $W_0$ be the ball of radius $3/4$ so that the collection
$\{W_\mu\}_{\mu = 0}^N$ covers $\Omega$.
Writing $\Psi_\mu(z,z_3) = z_3 \Phi_\mu(z)$,
$\Psi_\mu$ is a diffeomorphism from $(-1,1)^2 \times (1/2,1]$ to
$W_\mu$.
Let $\eta:[0,1] \to \R$ be
a bump function so that $\eta(r) = 1$ when $1/2 \leq r \leq 1$ and
$\eta(r) = 0$ when $r < 1/4$.
We define cutoff functions on $\Omega$ by setting
$
\chi_\mu = \chi_\mu \eta .
$

For a linear operator $T'$ defined in coordinate charts we define a global operator $T$ by
\begin{equation}\label{eq:localtoglobal}
Tf=\sum T_\mu f,\quad\text{where}\quad
T_\mu f =
\chi_\mu\big(m_\mu^{-1}T'\big[m_\mu (\chi_\mu f)\circ \Psi_\mu \big]\big)\circ\Psi_\mu^{-1}\!\!,\qquad
m_\mu =|\det{\Phi_\mu^\prime}|^{1/2}r.
\end{equation}
Then $T$ is symmetric with the measure $dy$ if $T'$ is with the measure $dz$ is since $dS(\omega)=m_\mu^2 dz$.

\subsection{Tangential smoothing}
\label{smoothingsec}
 Let
$\varphi \!:\! \R^2 \!\!\to\! \R$ be even, supported in $R = (-1,1)^2$ with $\int_{\R^2}
 \!\varphi =\! 1$ and let
\begin{equation}
 T_\ve f(z) = \int_{\R^2} \varphi_{\ve}(z-z') f(z') dz',\qquad\text{where}\qquad
 \varphi_{\ve}(z)\! =\! {\ve^{-2}} \varphi\big({z}\!/{\ve}\big).
\end{equation}
be a smoothing operator.
Because $\varphi$ is even, $T_\ve$ is symmetric; for any functions
$f, g: \R^2 \to \R$ we have:
\begin{equation}
 \tint T_\ve f(z) g(z)\, dz =
 %\int \int_{\R^2} \varphi_\ve(z-z') f(z') g(z)\, dz'\,  dz =
 \tint f(z) T_\ve g(z)\, dz.
\end{equation}
Furthermore, by Appendix A have:
\begin{equation}
\qquad\qquad \quad |T_\ve(fg)(z) - fT_\ve(g)(z)|
 \leq C \ve ||f||_{C^1(R)} ||g||_{L^2(R)}.
\end{equation}

With notation as in \eqref{eq:localtoglobal}, the smoothing operators we consider on $\Omega$ or $\pa \Omega$ are then defined by:
\begin{align}
 \sm f = {\sum}_{\mu = 0}^N
  T_{\ve,\mu} f,
 \qquad
 \ssm f = \sm \sm f={\sum}_{\mu,\nu = 0}^N
 T_{\ve,\nu} T_{\ve,\mu} f.
 \label{smoothing}
\end{align}
Since $T_\ve $ is symmetric $J_\ve$ is as well.
The following estimates are proved in Section \ref{tangentialsmoothingappendix}:
\begin{lemma}
 With $\sm$ defined by \eqref{smoothing}, if $k \geq m$ then:
 \begin{align}
  ||\sm f||_{H^k(\pa \Omega)} \lesssim \ve^{m-k} ||f||_{H^{m}(\pa \Omega)},
  \qquad
  ||\sm f - f||_{H^k(\pa \Omega)} \lesssim \ve ||f||_{H^{k+1}(\pa \Omega)},
  \label{}
 \end{align}
 and, with $\Sigma = \pa \Omega$ or $\Omega$:
 \begin{equation}
   ||\sm(fg) -f \sm g||_{L^2(\Sigma)} \lesssim \ve ||f||_{C^1(\Sigma)} ||g||_{L^2(\Sigma)}.
 \end{equation}
\end{lemma}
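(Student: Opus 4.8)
The plan is to reduce all three estimates to the corresponding statements for the model operator $T_\ve$ on $\R^2$, which are either already recorded in the excerpt (the commutator bound and symmetry) or are completely standard mollifier estimates. The global operators $\sm$ and $\ssm$ are finite sums of the localized pieces $T_{\ve,\mu}$ built from $T_\ve$ via the conjugation \eqref{eq:localtoglobal}, so it suffices to control each $T_{\ve,\mu}$ and then sum over the finitely many $\mu=0,\dots,N$, absorbing the (fixed, $\ve$-independent) constants coming from the charts $\Psi_\mu$, the weights $m_\mu$, and the cutoffs $\chi_\mu$ into the implicit constant.

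For the first two estimates, working on $\pa\Omega$, I would first establish the model-case inequalities on $\R^2$: for $k\ge m$, $\|T_\ve f\|_{H^k(R)}\lesssim \ve^{m-k}\|f\|_{H^m(R)}$ and $\|T_\ve f - f\|_{H^k(R)}\lesssim \ve\|f\|_{H^{k+1}(R)}$. The first follows because differentiating $T_\ve f$ costs a derivative of $\varphi_\ve$, i.e. a factor $\ve^{-1}$ in $L^1$ per derivative via Young's inequality; after $m$ derivatives land on $f$, the remaining $k-m$ land on the kernel, producing $\ve^{m-k}$. The second follows from the standard $\int\varphi=1$ cancellation: writing $T_\ve f(z)-f(z)=\int\varphi_\ve(z-z')\bigl(f(z')-f(z)\bigr)\,dz'$ and expanding in $z'-z$, using that $\varphi$ has vanishing zeroth moment difference (the first moment vanishes too by evenness, but one order suffices), one gains a factor $\ve$ at the cost of one extra derivative of $f$; differentiating $k$ times and repeating the argument gives the claimed bound. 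Transferring these to $\pa\Omega$ requires commuting $T_\ve$ past the diffeomorphisms $\Psi_\mu$ and the weights $m_\mu^{\pm 1}$, which are smooth and $\ve$-independent, so they only contribute bounded multiplicative constants and lower-order commutator terms of the same or better order in $\ve$; the partition-of-unity property $\sum\chi_\mu^2=1$ together with the fattened cutoffs $\tC_\mu\equiv 1$ on $\operatorname{supp}\chi_\mu$ ensures the pieces reassemble into the global $H^k(\pa\Omega)$ and $H^m(\pa\Omega)$ norms with equivalence constants independent of $\ve$.

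For the third estimate, the commutator bound, the key input is already stated in the excerpt: on $\R^2$, $|T_\ve(fg)-fT_\ve g|\le C\ve\|f\|_{C^1(R)}\|g\|_{L^2(R)}$, hence the same in $L^2(R)$. For $\Sigma=\pa\Omega$ one localizes exactly as above; the only new point is that when $f$ is multiplied outside versus inside the localization, the difference $\chi_\mu f\cdot(\text{stuff}) - f\cdot\chi_\mu(\text{stuff})$ vanishes since scalar multiplication commutes, and the conjugating weights $m_\mu$ are $C^1$ so that $m_\mu^{-1}T'[m_\mu(\cdot)]$ differs from $T'$ by an operator bounded on $L^2$ with an $O(\ve)$ commutator, by the same model estimate applied with $f\rightsquigarrow m_\mu$. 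For $\Sigma=\Omega$ (the solid ball), the only difference is that the charts are $\Psi_\mu(z,z_3)=z_3\Phi_\mu(z)$ and the smoothing acts only in the tangential $z$-variables with $z_3$ a parameter; the model estimate holds uniformly in the parameter $z_3\in(1/2,1]$, and integrating the squared $L^2$-in-$z$ bound against $dz_3$ (with the interior cutoff $W_0$ handled by a trivial flat chart where $J_\ve$ either acts by a standard mollifier or by the identity) yields the $L^2(\Omega)$ estimate.

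The main obstacle is purely bookkeeping rather than conceptual: keeping track of the commutators generated by passing $T_\ve$ through the chart maps $\Psi_\mu$ and the weights $m_\mu$, and verifying that every such commutator is either $O(1)$ on the relevant Sobolev space (so it can be iterated a bounded number of times, once per derivative, without degrading the power of $\ve$) or is itself $O(\ve)$ in operator norm. Once one checks that the geometry is fixed and $\ve$-independent, so that all these constants are uniform, the three inequalities follow from the flat-space mollifier estimates by summation over the finite cover.
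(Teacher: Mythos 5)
Your proposal is correct and follows essentially the same route as the paper: the flat-space mollifier estimates ($\|\pa^{k-m}\varphi_\ve\|_{L^1}\lesssim\ve^{m-k}$ plus Young's inequality for the smoothing gain, the first-order cancellation $T_\ve f-f=\int\varphi_\ve(z')(f(z-z')-f(z))\,dz'$ with the difference-quotient bound and Minkowski for the approximation estimate, and the already-recorded kernel computation for the commutator), followed by transfer to $\pa\Omega$ and $\Omega$ through the finite chart cover with $\ve$-independent weights and cutoffs. The bookkeeping of commutators with $m_\mu$ and $\chi_\mu$ that you flag is exactly what the paper's appendix handles, via repeated use of the same $O(\ve)$ product-commutator estimate.
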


\subsection{The tangential fractional derivatives}
We will need to use fractional tangential derivatives to control our solution
and we will define these operators
in coordinates.
If $F: \R^2 \to \R$, we define:
\begin{equation}
 \fd^s F(z) = \int_{\R^2} e^{iz\cdot \xi} \langle \xi \rangle^s \hat{F}(\xi)\, d\xi,
 \quad \text{where}\quad
 \hat{F}(\xi) = \int_{\R^2} e^{-iz\cdot \xi} F(z)\, dz,
\end{equation}
and we define fractional tangential derivatives on $\Omega$ by:
\begin{equation}
 \fd_\mu^{s} f = \widetilde{\chi}_\mu (\fd^{s} f_\mu)\circ\Psi_{\mu}^{-1},\quad f_\mu=(\chi_\mu f)\circ\Phi, \qquad \mu = 1,..., N.
 \label{fdmudef}
\end{equation}
We also set $\fd_0^s f \!=\!\chi_0( \langle \pa \rangle^{s} f_0)\!\circ\!\Psi_{0}^{-1}\!\!$,
where $\langle \pa \rangle^{s}$ is defined by taking the Fourier transform
in all directions.

For $s \in \R$, $k \in \mathbb{N}$, we define:
\begin{equation}
 || f ||_{H^{(k, s)}(\Omega)}
  = {\sum}_{\mu = 0}^N ||\fdm^s f||_{H^k(\Omega)},
  \quad\text{and}\quad
  || f ||_{H^s(\pa \Omega)} = {\sum}_{\mu = 1}^N ||\fdm^s f ||_{L^2(\pa \Omega)}.
 \label{sobspacedef}
\end{equation}
In Appendix A we prove:
\begin{lemma}
If $T \in \T$, then:
\begin{equation}
 \Big| \int_{\pa\Omega} f T g\, dS(y)\Big|
 \leq C||f||_{H^{1/2}(\pa \Omega)} ||g||_{H^{1/2}(\pa \Omega)},
 \qquad
 \Big| \int_{\Omega} f T g\, dy \Big|
 \leq C||f||_{H^{(0,1/2)}(\Omega)} ||g||_{H^{(0,1/2)}(\Omega)}.
\end{equation}
In addition, with $\Sigma = \pa \Omega$ or $\Omega$,
 \begin{equation}
  ||\fdhm (fg) - f \fdhm g||_{L^2(\Sigma)}
  \leq C ||f||_{H^2(\Sigma)} ||g||_{L^2(\Sigma)}.
 \end{equation}
\end{lemma}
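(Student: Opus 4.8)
The plan is to reduce everything to the corresponding flat estimates in the coordinate charts, where the operators $T \in \T$ and $\fdhm$ are honest Fourier multipliers, and then sum over the charts. Recall from \eqref{fdmudef} that $\fdm^s f = \tC_\mu (\fd^s f_\mu)\circ\Psi_\mu^{-1}$ with $f_\mu = (\chi_\mu f)\circ\Phi$, so $\fd^{1/2}$ in the plane is the model case. I would first record the elementary planar pairing bound: for $F,G$ supported in $R$, Parseval gives $\big|\int F\, G\, dz\big| = \big|\int \w F(\xi)\overline{\w G(\xi)}\, d\xi\big| \leq \int \langle\xi\rangle^{1/2}|\w F|\cdot\langle\xi\rangle^{-1/2}|\w G|\, d\xi \leq \|\fd^{1/2}F\|_{L^2}\|\fd^{-1/2}G\|_{L^2}$, and more generally that any $T\in\T$, being a tangential operator of order zero, maps $\fd^{-1/2}$-bounded functions into themselves, so $\big|\int f\, Tg\big| \lesssim \|\fd^{1/2}f\|_{L^2}\|\fd^{1/2}g\|_{L^2}$ in the plane (write $Tg$, bound $\|\fd^{-1/2}Tg\|_{L^2}\lesssim\|\fd^{1/2}g\|_{L^2}$ using that $\langle\xi\rangle^{-1/2}$ times the symbol of $T$ times $\langle\xi\rangle^{1/2}$ is a bounded symbol).

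Next I would transfer this to $\pa\Omega$. Insert the partition of unity $\sum_\mu \chi_\mu^2 = 1$, so $\int_{\pa\Omega} f\, Tg\, dS = \sum_\mu \int_{\pa\Omega} (\chi_\mu f)\, T(\chi_\mu g)\, dS + (\textrm{commutator terms})$; pulled back by $\Phi_\mu$ the leading term becomes a planar pairing of $f_\mu$ against a zero-order tangential operator applied to $g_\mu$, which the previous step bounds by $\|\fd^{1/2}f_\mu\|_{L^2}\|\fd^{1/2}g_\mu\|_{L^2} \lesssim \|f\|_{H^{1/2}(\pa\Omega)}\|g\|_{H^{1/2}(\pa\Omega)}$ after summing in $\mu$ and using \eqref{sobspacedef}. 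The commutators between $T$ and the cutoffs $\chi_\mu$ are again zero-order tangential operators (in fact smoothing of order one in the relevant sense), hence harmless by the same argument or by a cruder $L^2\times L^2$ bound. The interior estimate over $\Omega$ is identical: use the charts $\Psi_\mu$ and the $W_\mu$-cover, pull back, apply the planar bound in the two tangential variables for each fixed value of the transverse variable $r\in(1/2,1]$ (or $z_3$), and integrate in $r$, so that $\|\fdm^{1/2}\cdot\|_{L^2(\Omega)}$ replaces $\|\fd^{1/2}\cdot\|_{L^2(\pa\Omega)}$; on the chart $W_0$ one simply uses $\langle\pa\rangle^{1/2}$ in all three variables, which only helps. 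Summing over $\mu=0,\dots,N$ gives the claimed $\big|\int_\Omega f\, Tg\, dy\big| \lesssim \|f\|_{H^{(0,1/2)}(\Omega)}\|g\|_{H^{(0,1/2)}(\Omega)}$.

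For the commutator estimate $\|\fdhm(fg) - f\fdhm g\|_{L^2(\Sigma)} \leq C\|f\|_{H^2(\Sigma)}\|g\|_{L^2(\Sigma)}$, the point is that $[\fd^{1/2}, f\cdot]$ is, in the plane, a pseudodifferential operator of order $1/2 - 1 = -1/2 \leq 0$ with symbol built from derivatives of $f$; the standard Kato--Ponce / Coifman--Meyer commutator estimate gives $\|[\fd^{1/2}, f]g\|_{L^2} \lesssim \|\nab f\|_{L^\infty}\|g\|_{L^2} + (\textrm{lower})$, and $\|\nab f\|_{L^\infty(\R^2)} \lesssim \|f\|_{H^2(\R^2)}$ by Sobolev embedding in two dimensions. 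One then localizes exactly as above: $\fdhm(fg) - f\fdhm g = \tC_\mu\big((\fd^{1/2}(f_\mu g_\mu)) - f_\mu \fd^{1/2} g_\mu\big)\circ\Psi_\mu^{-1}$ up to terms where $\fd^{1/2}$ hits the cutoff $\tC_\mu$ or where the difference between $(fg)_\mu$ and $f_\mu g_\mu$ (which involves $\chi_\mu$ versus $\chi_\mu^2$, handled by $\tC_\mu\equiv 1$ on $\mathrm{supp}\,\chi_\mu$) matters; all such terms are controlled by $\|f\|_{H^2}\|g\|_{L^2}$. I expect the main technical nuisance to be the bookkeeping of these localization commutators — making sure every term that is not the "model" term is genuinely of order $\leq 0$ in the tangential variables and therefore absorbable — rather than any single hard inequality; the analytic core is just Parseval plus a standard symbol-calculus commutator bound, and the two-dimensionality of the charts is what makes $H^2 \hookrightarrow C^1$ (equivalently $W^{1,\infty}$) available.
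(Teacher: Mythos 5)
Your treatment of the two pairing estimates is essentially the paper's argument: insert the partition of unity $\sum_\mu\chi_\mu^2=1$, note that the commutators with the cutoffs are crude $L^2\times L^2$ terms, pull back to the plane, and split the single derivative in $T$ into two half-derivatives via Plancherel, with the chart $W_0$ and the radial integration handled exactly as you describe. Two slips there are worth noting, though they do not sink the argument: a vector field $T\in\T$ is a \emph{first}-order tangential operator, not zero-order, and the symbol that must be bounded for $\|\fd^{-1/2}Tg\|_{L^2}\lesssim\|\fdh g\|_{L^2}$ is $\langle\xi\rangle^{-1/2}\sigma_T\langle\xi\rangle^{-1/2}$, not $\langle\xi\rangle^{-1/2}\sigma_T\langle\xi\rangle^{1/2}$ (the latter has order one). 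The paper avoids the variable-coefficient symbol calculus altogether by absorbing the coefficient $T^\alpha|\det\Phi_\mu'|$ into $f_\mu$ and then removing it from inside $\fdh$ with the planar commutator lemma; your route is workable but needs that pseudodifferential boundedness stated correctly.

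The genuine gap is in the commutator estimate. You reduce to $\|[\fd^{1/2},f]g\|_{L^2}\lesssim\|\nabla f\|_{L^\infty}\|g\|_{L^2}$ and then assert $\|\nabla f\|_{L^\infty(\R^2)}\lesssim\|f\|_{H^2(\R^2)}$ ``by Sobolev embedding in two dimensions.'' That embedding is false: $\nabla f\in H^1(\R^2)$, and $H^1(\R^2)$ is precisely the critical space that fails to embed into $L^\infty$; a pointwise bound on $\nabla f$ requires $f\in H^{2+\epsilon}$. So the chain as written does not close with the stated $H^2(\Sigma)$ norm. The repair --- and what the paper actually does --- is to prove the planar bound $\|\fdh(FG)-F\fdh G\|_{L^2}\leq C\|F\|_{H^2}\|G\|_{L^2}$ directly in frequency space: the elementary inequality $|\langle\xi\rangle^{1/2}-\langle\xi-\eta\rangle^{1/2}|\leq C\langle\eta\rangle^{1/2}$ gives $|\langle\xi\rangle^{1/2}\widehat{FG}(\xi)-\widehat{F\fdh G}(\xi)|\lesssim\int\langle\eta\rangle^{1/2}|\hat F(\eta)|\,|\hat G(\xi-\eta)|\,d\eta$, and Cauchy--Schwarz with the split $\langle\eta\rangle^{1/2}=\langle\eta\rangle^{2}\cdot\langle\eta\rangle^{-3/2}$ yields the claim because $\langle\eta\rangle^{-3}$ is integrable on $\R^2$. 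This uses the two derivatives of $f$ only in $L^2$-weighted frequency form, never pointwise, which is exactly why $H^2$ suffices in two dimensions. Your localization bookkeeping (replacing $f$ by $\overline{\chi}_\mu f$ with $\overline{\chi}_\mu\equiv1$ on the relevant supports, etc.) is fine once this core inequality is in place.
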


\subsection{The tangential derivatives and tangential norms}
\label{def T and FD}
Since $\Omega$ is the unit ball, the vector fields
\begin{equation}
\Omega_{ab} =  y^a \pa_{y^b} - y^b \pa_{y^a} , \qquad a,b = 1,2,3,
\end{equation}
are
tangent to $\pa\Omega$ and span the tangent space there.
With $\eta$ the cutoff function defined above, we let:
\begin{equation}
 \T = \cup_{a,b = 1,2,3}\{ \eta\, \Omega_{ab}, (1-\eta) \pa_{y^a}\}.
\end{equation}
In analogy with the two dimensional case, when $\T$ is just the derivative with respect to the angle in polar coordinates, we will now introduce some simplified notation for the norms.
Suppose that $\T\!=\!\{T_1,\dots,T_{N'}\!\}$.
If $V\!\!:\!\Omega\!\to\!\bold{R}^3$ is a vector field we will let $\T V$ stand for the map
$\T V\!\!:\!\Omega\to \!\bold{R}^{3N'}\!\!\!$, whose components are $T_jV^i\!\!$, for $i\!={}_{\!}\!1,2,3$, $j\!{}_{\!}={}_{\!}\!1,{}_{\!}...,{}_{\!}N'\!\!$.
Moreover let $\T^r\!\!{}_{\!}={}_{\!}\!\T\!\!\times\!{}_{\!}\cdots\!\times\! \T$($r$ times) and let $T^I\!\!\in\! \T^r\!$ stand
for a product of $r$ vector fields in $\T\!$, where $I\!=\!(i_1,{}_{\!}...,{}_{\!}i_{r\!})\!\in\! [1,N']\!\times\!\cdots\!\times\! [1,N']$ is a multiindex of length $|I|\!=\!r$. Let $\T^r V\!$ stand for the map $\T^r V\!\!:\!\Omega\!\to \!\bold{R}^{3N' \!r}\!\!\!$, whose components are $T^I V^i\!\!$, for $i\!=\!1,2,3$, $1\!\leq\! i_j\!\leq\! N'\!\!$, $j\!=\!1,{}_{\!}...{}_{{}_{\!}},r$. The norm of $\T^r V\!$ is
\begin{equation}\label{eq:simplifiedtangentialnotation}
|\T^r V|^2= \delta_{ij}\T^r V^i \cdot \T^r V^j,\quad\text{where}\quad
\T^r V^i \cdot \T^r V^j= {\sum}_{|I|=r,\,\,T^I\in\T^r} T^I V^i \, T^I V^j.
\end{equation}
We will use similar notation for space time vector fields tangential to the boundary.
Let $\mathfrak{D}\!=\!\T\!\cup\! D_t$, and
$\mathfrak{D}^r\!\! =\!\mathfrak{D}\!{}_{\!}\times \!\!\cdots\!\!\times \!\mathfrak{D}$($r{\!}$ times), $\mathfrak{D}^{r\!,k}\!{}_{\!}=\!\T^r {}_{\!}\!\!\times \!{}_{\!} D_{{}_{\!}t}^k$\!.
For $\!K\!{}_{\!}=\!(_{\!}I{}_{\!},{}_{\!}k{}_{{}_{\!}})$ a multiindex with $|I|\!{}_{\!}=\!r$, we write
$D^K\!\!=\!T^I \!D_{{}_{\!}t}^k\!$, $T^I\!\!\in\!\T^r$\!\!.

\section{The smoothed Euler's equations}
\label{tangsec}
In this section, we introduce the smoothed problem we will use to
construct solutions to \eqref{mom}-\eqref{vaccuum}. This in the incompressible case goes back to Coutand-Shkoller\cite{CS07}, with important improvements due to Nordgren\cite{Nordgren2008}.

\subsection{Tangential smoothing of the coordinates}
With the tangential smoothing operator $\ssm$ defined as in
\eqref{smoothing}, given a vector field
$V$, we define the smoothed coordinate $\xve(t,y)$ by:
\begin{equation}\label{xvedef}
 \xve^i(t,y) = x_0(y) + \int_0^t \ssm V^i(s, y) ds.
\end{equation}
We then define:
\begin{equation}\label{udef}
 A^i_{\m a} =
 \frac{\pa \xve^i}{\pa y^a} , \qquad
 A^a_{\m i} = \big(A^{-1}\big)_{\m i}^a=\frac{\pa y^a}{\pa \xve^i}.
 \end{equation}
 We define $\tD_t = \xve(t, \Omega)$
 and we use the letters $i,j,k,...$ to denote coordinate derivatives
  $\pave_i={\pa}/{\pa \xve^i}$ on $\tD_t$: However all our functions will be functions of $y$ so we will think of $\pave$ as a differential operator on $\Omega$
 \begin{equation}
  \pave_i f(t, y) = A_{\m i}^a(t,y) \pa_a f(t, y),\quad\text{where}\quad
  \pa_a ={\pa}/{\pa y^a}.
  \label{pavedef}
 \end{equation}
 The coordinate $\xve(t,y)$ and Euclidean metric on $\tD_t$ induces a time-dependent metric on $\Omega$:
  $\gve_{ab} =
 \delta_{ij} A^i_{\m a} A^j_{\m b}$.
 We let $\kve dy =\det{(\pa\xve/\pa y)} dy$ be the volume element on $\Omega$
 induced by the volume element $d\xve$ on $\tD_t$.
 Let
\begin{equation}
 \Dve f =\delta^{ij}\wpa_i\wpa_jf = \kve^{-1}\pa_a \big(\kve \gve^{ab} \pa_b f\big),\quad\text{where}\quad \gve^{ab} = \delta^{ij} A_{\m i}^a A_{\m j}^b,\qquad \kve=\det{(\pa\xve/\pa y)},
 \label{laplsm}
\end{equation}
denote the Laplacian.
Given a one-form $\alpha = \alpha_i d\xve^i$ on $\tD_t$, we also write:
\begin{equation}
 \div \alpha =\pave_i( \delta^{ij}\alpha_j),
 \qquad \curl \alpha_{ij} =  \pave_i \alpha_j
 - \pave_j \alpha_i.
 \label{divdef}
\end{equation}
Here,  $a,b,c,...$
correspond to quantities in  the $y$ variables and
$i,j,k...$ to quantities  in  the
$\xve$ variables.

\subsection{The smoothed problem}
\label{approxpbmsec}

Given initial data $(V_0, h_0)$ which are compatible with
\eqref{mom}-\eqref{freebdy} in the sense of \eqref{int:compat2},
we now introduce the smoothed problem we will consider.
Given a vector field $V: [0,T] \times \Omega \to \R^3$, we
define
the tangentially smoothed Lagrangian coordinate
$\xve = \xve[V]$ by \eqref{xvedef} and  $A, \pave$ and $\Dve$ as in
\eqref{udef}-\eqref{laplsm}.

We would like to define
$h = h[V]$ to be the unique solution to:
\begin{align}
 D_t^2e(h) - \Dve h+ \rho(h) &= (\pave_i \ssm V^j)(\pave_j V^i), \qquad
 \textrm{ in } [0, T] \times \Omega,\quad\text{with}\quad    h = 0, \textrm{ on } [0, T] \times \pa \Omega,\label{smwavedef}\\
 h(0,y) &= h^\ve_0(y), \qquad D_t h(0,y) = h^\ve_1(y),
  \textrm{ in } \Omega \label{smwaveic},
\end{align}
for some choice of initial data $h^\ve_0, h^\ve_1$. However,
there are compatibility conditions
that must be satisfied in order for this to have a sufficiently regular
solution. We will define these conditions
momentarily but for now suppose that $V\!$ and $h_0^\ve,h_1^\ve$ are such that
this problem has a unique solution $h$.
We then abuse notation slightly and write
$\rho(t,y)$ instead of $\rho(h(t,y))$.
We also define $\tD_t = \xve(t,\Omega)$ and write
$\widetilde{y}(t,\xve)$ for the inverse of the map $y \mapsto \xve(t,y)$. Next,
we define the gravitational potential $\phi = \phi[V]$ by:
\begin{equation}
 \phi(t, y) =
  \frac{1}{4\pi} \int_{\tD_t}
  \frac{\rho(t, \widetilde{y}(t,x'))\,dx'}{|\xve(t,y)- x'|}\,
  =  \frac{1}{4\pi} \int_{\Omega}
  \frac{ \rho(t, y')\widetilde{\kappa}(t, y')\,dy'}{|\xve(t,y)- \xve(t,y')|}\, ,
  \quad\text{so that}\quad
  \widetilde{\triangle} \phi=-\rho(h).
 \label{phidef}
\end{equation}
Note that $\phi$ depends on $V$ both because $\rho = \rho[V]$
and also through the domain $\tD_t$.

With the above definitions of $\pave[V], h[V], \phi[V]$ in mind:
\begin{definition}
 Given a  vector field $V\!\!$,
  suppose that $h_0^\ve, h_1^\ve$ are given such that the
  the system \eqref{smwavedef}-\eqref{smwaveic}
  has a unique sufficiently regular solution $h = h[V]$.
    We say that $V$ is a \emph{solution to the smoothed problem}
     if:
    \begin{equation}
     D_t V^i = -\delta^{ij}\pave_j h[V] -
     \delta^{ij}\pave_j \phi[V],  \qquad\textrm{ in } [0,T] \!\times\!
     \Omega,\qquad\text{and}\qquad
     V^i(0, y) = V_0^i(y).
     \label{smpbmdef}
    \end{equation}
\end{definition}
Note that subtracting the divergence of
\eqref{smpbmdef} from
\eqref{smwavedef} using \eqref{phidef} gives that $D_t\big(D_t e(h) + \div V\big)\! =\! 0$ so:
\begin{equation}
 D_t e(h) + \div V = 0, \qquad \textrm{ in } [0,T] \times \Omega,
 \label{resultofproj}
\end{equation}
provided that this holds at $t = 0$.

In Section \ref{uniform}, we prove the following a priori estimate
for the problem \eqref{smpbmdef}-\eqref{resultofproj}.
Let $\delta_0$ denote the largest number so that \eqref{int:tsc} holds
with $\delta= \delta_0$ at $t = 0$. Also set:
\begin{equation}
 E_0^r = ||V_0||_{H^{(r-1,1/2)}(\Omega)}^2 +
 ||x_0||_{H^r(\Omega)}^2 + ||\pave h_0||_{H^{r-1}(\Omega)}^2
 + \ve^2 (||V_0||_{H^r(\Omega)}^2
 + ||\pave h_0||_{H^r(\Omega)}^2).
\end{equation}
Writing $H^k = H^k(\Omega), H^{(k,1/2)} = H^{(k,1/2)}(\Omega)$,
in Corollary \ref{useful}, we prove:
\begin{theorem}
  \label{intenergy}
 Let $r \geq 8$ and fix $\ve$ sufficiently small.
 There are strictly positive, continuous functions $\mathscr{T}_r,
 \Cc_r$ with $\mathscr{T}_r$ independent of $\ve$,
 and so that if $V \in C([0,T]; H^r(\Omega))$
 solves the smoothed Euler equations \eqref{smpbmdef}-\eqref{resultofproj}
  for $0 \!\leq\! t \leq T$ with $T \leq \mathscr{T}_r(E_0^r,1/\delta_0)$, then,
  with $||\pave h||_r \!=\! \sum_{k + \ell \leq r} ||D_t^k \pave h||_{H^{\ell}}$:
 \begin{equation}
  ||V(t)||_{H^{(r\!-\!1,1\!/2)}\!}^2
  + ||\xve(t)||_{H^{r\!}}^2 +
  ||\pave h(t)||_{r{}_{\!}-{}_{\!}1\!}^2 +
  ||D_t^r h(t)||_{L^{2\!}}^2
  + \ve^2 \big(||V(t)||_{H^r\!}^2 + ||\pave h(t)||_{r}^2\big)
  \leq \Cc_r(E_0^{{}_{\,}r{}_{\!}-{}_{\!}1}\!\!, \delta_0^{-1\!}) E_0^{{}_{\,}r}.
  \label{intenergyest}
 \end{equation}
\end{theorem}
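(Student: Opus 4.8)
The plan is to derive the estimate \eqref{intenergyest} by constructing an energy functional adapted to the smoothed system \eqref{smpbmdef}--\eqref{resultofproj}, differentiating it along the flow, and controlling the resulting error terms by the elliptic and wave estimates promised in the introduction. First I would commute the tangential operators $T^I \in \T^r$ and time derivatives $D_t^k$ through the equations \eqref{smpbmdef} and \eqref{smwavedef}, using the higher-order commutator identities \eqref{eq:highercommutatorssec2}--\eqref{eq:highercommutatorssec2exp} for $[D_t,\pa_i]$ and the analogous expansion \eqref{eq:Dtkpotentialkernel} for $D_t^k$ applied to the gravitational potential $\phi=\Phi[\rho\widetilde\kappa]$. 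For a multiindex $K=(I,k)$ with $|K|=r$ this produces equations of the schematic form $D_t V_K = -\delta^{ij}\pave_j h_K - \delta^{ij}\pave_j \phi_K + F_K$, where $V_K = D^K V$, $h_K = D^K h$, and $F_K$ collects lower-order commutator terms; the delicate point, exactly as in \cite{CS07,Nordgren2008}, is that the top-order commutator between the smoothing $\ssm$ and the tangential fields is where the \emph{symmetry} of $S_\ve = T_\ve^* T_\ve$ must be exploited to integrate by parts without losing a derivative.

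The energy I would use is the natural higher-order analogue of \eqref{eq:TheEnergy}: something like
\[
\mathscr{E}_r(t) = \sum_{|K|\le r}\int_\Omega \Big( |D^K V|^2 + e'(h)\,|D^K h|^2 \Big)\,\widetilde\kappa\,\rho\, dy
 + \text{(half-derivative tangential term)} + \text{(curl term)} + \text{(gravitational term)},
\]
where the half-derivative piece is measured with the $H^{(r-1,1/2)}$ norm via the fractional operators $\fdhm$ of \eqref{fdmudef}, the curl term handles the part of $V$ not controlled by $\div V$ and $\pa_t V$, and the gravitational contribution is estimated using the kernel bounds \eqref{eq:Dtkpotentialkernel} together with the bilinear boundary/interior estimates of the two lemmas after \eqref{sobspacedef}. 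Differentiating $\mathscr{E}_r$ in $t$, the principal terms cancel by the same divergence-theorem computation sketched after \eqref{eq:TheEnergy} (the $p=0$ boundary condition, now $h|_{\pa\Omega}=0$, kills the pressure boundary integral; the self-gravity terms cancel in pairs by the antisymmetry $\Phi_i(z) = -\Phi_i(-z)$). What remains is $\frac{d}{dt}\mathscr{E}_r \lesssim P(\mathscr{E}_r) + (\text{genuine error})$, and one must show the error is bounded by a continuous function of the lower-order energy $E_0^{r-1}$ times $E_0^r$, plus terms absorbed by $\mathscr{E}_r$ itself; the $\ve^2$-weighted top-order terms in \eqref{intenergyest} appear because the smoothing only gains back a full derivative at the cost of an $\ve^{-1}$, so one full extra derivative of $V$ and $\pave h$ survives multiplied by $\ve^2$.

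A crucial ingredient, and the step I expect to be the main obstacle, is the \emph{elliptic estimate for $h$ in terms of $V$}. Because $h$ is only defined implicitly through the wave equation \eqref{smwavedef} with Dirichlet data, one needs optimal estimates — sharp in boundary regularity — for the Dirichlet problem for $\widetilde\Delta$ and for the Green's function, of the type announced in the abstract and in \cite{Cheng2016,Lindblad2008,Lindblad2016}: given the bound on $D_t^2 e(h) - \widetilde\Delta h$ one recovers $\|\pave h\|_{H^{r-1}} + \ve\|\pave h\|_{H^r}$ controlled by the velocity norms and the energy, and the half-tangential-derivative trace estimate must be exactly matched so that no derivative is lost when feeding $h$ back into the equation for $V$. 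Once this closes, the differential inequality $\frac{d}{dt}\mathscr{E}_r \le \Cc_r(E_0^{r-1},\delta_0^{-1})\,\mathscr{E}_r$ together with $\mathscr{E}_r(0) \lesssim E_0^r$ and a continuity/bootstrap argument on the time interval $[0,\mathscr{T}_r]$ — where $\mathscr{T}_r$ is chosen so that the Taylor sign condition \eqref{int:tsc} and the nondegeneracy of $\widetilde\kappa$ persist — yields \eqref{intenergyest} and hence Theorem \ref{intenergy} via Corollary \ref{useful}.
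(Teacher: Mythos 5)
Your overall architecture (commute tangential derivatives and $D_t$, build an $r$-th order energy, integrate by parts using the symmetry of $\sm$, close with elliptic estimates, then Gronwall and a continuity/bootstrap argument) is indeed the paper's strategy, but there are two concrete gaps, the first of which is fatal as stated.

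First and most seriously, your claim that ``the $h|_{\pa\Omega}=0$ boundary condition kills the pressure boundary integral'' is only true at order zero. After commuting $T^I\in\T^r$ through, the commutator $[T^I,\pave_i]h$ generates terms of the form $(T^I\xve^k)\pave_k h$ which do \emph{not} vanish on $\pa\Omega$; upon integration by parts against $T^I V$ one gets the boundary integral $\int_{\pa\Omega}(T^I\xve^k)(T^I V^i)N_i\pave_k h\,\widetilde\nu\,dS$, and the boundary condition $h=0$ only converts $\pave_k h$ to $-N_k|\pave h|$. Using the symmetry $\ssm=\sm^*\sm$, this boundary term is (to highest order) a total $D_t$ of a nonnegative quadratic form
$\tfrac12\int_{\pa\Omega}\big(T^I\fdhm\sm x\cdot N\big)^2|\pave h|\,\widetilde\nu\,dS$,
which is coercive precisely because of the Taylor sign condition $-\pave_N h\geq\delta_0>0$. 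That boundary energy is what controls $\|\T^{r-1}\sm x\cdot N\|_{H^{1/2}(\pa\Omega)}$ and hence, via the div--curl estimates of Proposition \ref{ftprop}, $\|\sm x\|_{H^r}$ and $\|\xve\|_{H^r}$. Your proposed $\mathscr{E}_r$ has no boundary piece at all, so it provides no control on $\|\xve(t)\|_{H^r}$, one of the quantities you must bound in \eqref{intenergyest}; and $\delta_0^{-1}$ appears in your $\Cc_r$ without any mechanism that actually uses the sign condition. The paper's energy is really a sum $\Ee^s=\A^s+\W^s+\E^s+\ve^2\Eev^s$, where $\A^s$ tracks $\div$ and $\curl$ of $\pa_y\sm x$, $\W^s$ is the $D_t$-only wave energy for $h$, $\E^s$ contains both the interior quadratic form and the boundary term just described, and $\Eev^s$ is the $\ve$-weighted tangential boundary energy for $V$ needed to extract the $\ve^2\|V\|^2_{H^r}$ piece; a single interior integral cannot replicate this bookkeeping.

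Second, at top order the gravitational potential is not handled by the antisymmetry cancellation $\Phi_i(z)=-\Phi_i(-z)$ that works for $E'(t)=0$ in \eqref{eq:TheEnergy}. Once $T^I$ and $D_t^k$ are applied, the paper treats $\pave\phi$ as a lower-order source, bounding $\|\fdhm T^s\pave\phi\|_{L^2}$ and $\|D_t^{k-1}\pave\phi\|_{H^\ell}$ through the Newton-potential elliptic estimates of Section \ref{gravitysection} (Theorems \ref{main theorem, ell est of phi}, \ref{main theorem, ell est D_t phi}, \ref{r-0.5 estimate for phi}), which is what the ``estimates for the Green's function'' in the abstract refer to. Replacing that by a hoped-for cancellation at higher order is an essentially different and, to the best of my knowledge, unworked-out route. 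Fix the boundary energy term and separate the role of the Newton-potential estimates, and your outline becomes the paper's proof.
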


Before proving existence for the smoothed Euler equations \eqref{smpbmdef}
-\eqref{resultofproj}, we need to ensure that given sufficiently
regular $V$, the wave equation
\eqref{smwavedef}-\eqref{smwaveic} has a unique sufficiently regular solution.

\subsection{Compatibility conditions for the smoothed problem}
\label{wavecompatcondn}
We now define $h_0^\ve, h_1^\ve$ and give a condition that guarantees
that the initial-boundary value problem \eqref{smwavedef}-\eqref{smwaveic} is well-posed.

We say that the initial data $V_0^\ve, h_0^\ve$ satisfy the compatibility
conditions of order $m$ if there is a formal power series solution
$\hat{V}(t,y) = \sum V_k^\ve(y) t^k\!/k!$, along with
$\hat{h}(t,y) = \sum h_k^\ve(y) t^k\!/k!$ and
$\hat{\phi}(t,y) = \sum \phi_k^\ve(y) t^k\!/k!$ which satisfy
\eqref{smpbmdef} and \eqref{resultofproj} at $t = 0$, and moreover
so that:
\begin{equation}
 h_k^\ve \in  H^1_0(\Omega),\quad k = 0,..., m.
 \label{smcompatdef}
\end{equation}

As in Section \ref{hocommutators}, repeatedly using that
$[D_t, \pave_j] = -(\pave_j \ssm V^\ell)\pave_\ell$, we have:
\begin{equation}
 D_t^k \pave_i = \widetilde{S}_{i\ell}^{jk} \pave_j D_t^\ell,
 \label{dtpaSdefsmooth}
\end{equation}
where the sum is over $\ell \leq k$, $\widetilde{S}_{ik}^{jk} = \delta_i^j$
and for $\ell \leq k$,
with the same constants $c^{kn}_{\ell \ell_1 ... \ell_n}$
as in \eqref{eq:highercommutatorssec2exp}, we have:
\begin{equation}
\widetilde{S}_{i\ell}^{jk}  =
\widetilde{S}_{i\ell}^{jk}
(\pave \tV,..., D_t^{k-\ell-1}\tV)  =
c^{kn}_{\ell \ell_1\cdots \ell_n}
(\pave_i D_t^{\ell_1}\tV^{i_1})\cdots
(\pave_{i_n}D_t^{\ell_n}\tV^j),
 \label{sepsdef}
\end{equation}
where the sum is over $\ell_1 + \cdots + \ell_n = k - \ell - n$ and
$n = 1,..., k$, and where we are writing $\tV = \ssm V$.

Using that $\hat{V}$ solves the smoothed-out Euler equations
\eqref{smpbmdef} at $t = 0$, the coefficients $V_\ell^\ve$
must satisfy:
\begin{equation}
 V_{k+1}^\ve = {\sum}_{\ell \leq k} \widetilde{S}_{i\ell}^{jk}(\pave \tV_0^\ve,...,
 \pave \tV_{k-\ell-1}^\ve) \pave_j H_{\ell}^\ve,
 \label{vkepsdef}
\end{equation}
with $H_\ell^\ve = h_\ell^\ve + \phi_\ell^\ve$. Similarly, the condition
that $\hat{h}$ solves the continuity equation \eqref{resultofproj} at $t = 0$ becomes:
\begin{equation}
 e'(h_0^\ve) h_{k+1}^\ve
 = {\sum}_{\ell \leq k}S^{jk}_{i\ell}(\pave \ssm V_0^\ve,...,
 \pave \ssm V_k^\ve)
 + G_k(h_0^\ve,....,h_{k}^\ve),
 \label{hkepsdef}
\end{equation}
for a function $M_k$. We note the explicit formula for $k = 0$:
\begin{equation}
 e'(h_0^\ve) h_1^\ve = -\div V_0^\ve,
 \label{h1epsdef}
\end{equation}
and we take this to be the definition of $h^\ve_1$.
In addition we have that there is a non-local function
$\Phi_k$ so that:
\begin{equation}
 \phi_k^\ve = \Phi_k[x_0, \tV_0^\ve, ..., \tV_{k-1}^\ve, h_0^\ve, ...,
 h_k^\ve].
 \label{phikepsdef}
\end{equation}

To construct a solution to the smoothed Euler's equations
\eqref{smpbmdef}, we will need to consider only vector fields
$V$ whose Taylor expansions in $t$ at $t = 0$ agree with \eqref{vkepsdef}
and we make the following definition:
\begin{definition}
    A vector field $V$ is called \emph{admissible} to order $m$ if, for
    $k = 0,..., m$:
    \begin{equation}
     D_t^k V|_{t = 0} = {\sum}_{\ell \leq k} S^{jk}_{i\ell}(\pave \ssm V_0^\ve,...
     \pave \ssm V_{k-1}^\ve)\pave_j H_\ell^\ve,
     \label{intadmissible}
    \end{equation}
    where the
    $\widetilde{S}^{jk}_{i\ell}$ are defined by \eqref{sepsdef} and
    where $H_\ell^\ve = h_\ell^\ve + \phi_\ell^\ve$,
    with $V^\ve_\ell, h^\ve_\ell,\phi^\ve_\ell$ defined by \eqref{vkepsdef}-
    \eqref{phikepsdef}.
\end{definition}
\noindent In other words, $V\!$ is admissible to order $m$ if it solves the smoothed
Euler equations \eqref{smpbmdef} to order $m$ at $t \!=\! 0$.

In Theorem \ref{nllwpthm} we prove that if $(V_0^\ve\!, h_0^\ve)$ are
compatible to order $m$ in the sense of \eqref{smcompatdef} and $V \!\!=\! V(t,y)$ is
a fixed vector field satisfying \eqref{intadmissible} to order $m$, then
the system \eqref{smwavedef}-\eqref{smwaveic} has a unique solution
$h = h[V]$ on a time interval $[0,T]$ so that $h(t) \!\in\! H^1_0(\Omega)$ for
$t \in [0,T]$ and so that
$D_t^kh\! \in \!C([0,T]; H^{m-k}(\Omega))$ for $k = 0,..., m$.
By Theorem \ref{compatthm}, given $(V_0, h_0)$ which
are compatible
to order $m$, see \eqref{maincomp0}, there is a function $h_0^\ve$ so that if $V$ is
admissible to order $r$, then $(V_0, h_0^\ve)$ are compatible
to order $r$, see \eqref{smcompatdef}, and so that
$h_0^\ve\! \to \!h_0$ as $\ve \!\to \!0$.

\subsection{Solving the smoothed problem}
Suppose that $(V_0, h_0)$ are given and are compatible in the sense of
\eqref{int:compat2} (i.e. for the full nonlinear problem)
to order $r$.
In Appendix \ref{compat}, we construct a sequence
$h_0^\ve$ with $h_0^\ve \to h_0$ as $\ve \to 0$ and so that if $V$ is any vector
field satisfying \eqref{intadmissible} for $k = 1,..., r$,
then $(V_0, h_0^\ve)$ are
compatible to order $r$ in the sense of \eqref{smcompatdef}.
Given initial data $(V_0, h_0^\ve)$ which are compatible
to order $r$ in the sense of \eqref{smcompatdef}
and an admissible vector field $V$, we define
a functional:
\begin{equation}
 \Lambda^i[V](t,y) = V_0^i(y) - \int_0^t \delta^{ij} \pave_j h(s,y) \,ds- \int_0^t \delta^{ij} \pave_j \phi(s,y) \,ds.
 \label{lambdadef}
\end{equation}
with $(\pave,h, \phi) = (\pave[V], h[V],\phi[V])$ as in the previous
section. It is clear that if $V$ is a fixed point of $\Lambda$
then $V$ is a solution of the smoothed problem.
To construct a fixed point of $\Lambda$,
we will use the following norms:
\begin{equation}
 ||V||_{\X^s(T)} =
 {\sup}_{\,0 \leq t \leq T\,} ||V(t)||_{\X^s},
 \quad \text{where}\quad
 ||V(t)||_{\X^s} = {\sum}_{k = 1}^s ||D_t^{k} V(t)||_{H^{s-k}(\Omega)}
 + ||V(t)||_{H^{s-1}(\Omega)}.
\end{equation}

Our first result is then:
\begin{theorem}
  \label{vlwpthmintro}
 \!\!\!Let $r\! \geq\! 7$, $\ve\! >\! 0$ and suppose that
$(V_{\!0}^\ve\!\!, h_0^\ve)$ are compatible to order $r$. Let $\Cc_r$
 be as in Theorem \ref{intenergy} and set
 $\Cc'_r = \Cc_r E_0^{r}$.
Then there is a positive continuous function $T_\ve = T_\ve(E_0^{r+1})$ so that for any
$0 \!\leq\! T \!\leq \! T_\ve$,
the map $\Lambda$ has a unique fixed point in the space:
\begin{equation}
 \mathcal{C}^{r\!}(T) = \big\{ V \!\!: [0,T]\!\times \!\Omega \to \R^3 \big|\,
 V\! \textrm{ satisfies }
 \eqref{intadmissible} \textrm{ and }
 {\sup}_{\,0 \leq t \leq T\,} ||V(t)||^2_{\X^{r+1}} \leq \ve^{-2}
 \Cc_{r}' +\! 1 \big\}.
\end{equation}
\end{theorem}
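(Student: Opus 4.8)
The plan is to construct the fixed point of $\Lambda$ by a contraction mapping argument in a slightly weaker norm than the one defining $\mathcal{C}^r(T)$, exploiting that $\mathcal{C}^r(T)$ is a closed, bounded, convex set in $C([0,T]; \X^{r+1})$ on which the relevant estimates are available. First I would verify that $\Lambda$ maps $\mathcal{C}^r(T)$ into itself: given an admissible $V$ with $\sup_t \|V(t)\|^2_{\X^{r+1}} \le \ve^{-2}\Cc_r' + 1$, one forms $\xve[V]$ by \eqref{xvedef}, solves the wave equation \eqref{smwavedef}-\eqref{smwaveic} for $h[V]$ (which is well-posed by Theorem \ref{nllwpthm}, invoked at order $r+1$, since $V$ is admissible and $(V_0^\ve, h_0^\ve)$ are compatible to order $r$, hence to order $r+1$ after absorbing one more derivative via the $\ve$-weighted part of $E_0^{r+1}$), and forms $\phi[V]$ by \eqref{phidef}. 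The $\ve$-dependent elliptic and wave estimates give $\|h[V]\|$ and $\|\phi[V]\|$ bounds in terms of powers of $\ve^{-1}$ and $E_0^{r+1}$; integrating in time over $[0,T]$ picks up a factor $T$, and the admissibility condition \eqref{intadmissible} fixes the Taylor coefficients $D_t^k \Lambda[V]|_{t=0}$ to be exactly the prescribed $V_{k}^\ve$, so $\Lambda[V]$ is again admissible. Then for $T \le T_\ve(E_0^{r+1})$ with $T_\ve$ chosen small enough (depending on the constants $\Cc_r$, on $\ve$, and on the elliptic/wave constants), the time integration makes the $\X^{r+1}$-norm of $\Lambda[V] - V_0$ small enough that $\|\Lambda[V](t)\|^2_{\X^{r+1}} \le \ve^{-2}\Cc_r' + 1$ still holds, using $\|V_0\|^2_{\X^{r+1}} \lesssim \ve^{-2} E_0^{r+1} \le \ve^{-2}\Cc_r'$ for the leading part.

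Next I would prove a contraction estimate. Given $V_1, V_2 \in \mathcal{C}^r(T)$, set $W = V_1 - V_2$; then $\xve[V_1] - \xve[V_2] = \int_0^t \ssm W$, so $\|\xve[V_1] - \xve[V_2]\|$ is controlled by $T$ times a norm of $W$, and similarly $A[V_1] - A[V_2]$, $\pave[V_1] - \pave[V_2]$, $\kve[V_1] - \kve[V_2]$. The difference $h[V_1] - h[V_2]$ solves a linear wave equation of the form \eqref{smwavedef} with the same principal part (up to the coefficient differences just bounded) and a right-hand side that is linear in $W$, $\pave W$, $\nabla(h[V_1]-h[V_2])$ and the coefficient differences; energy estimates for this linear wave equation — the same ones used to prove Theorem \ref{nllwpthm} — give $\|h[V_1]-h[V_2]\|$ bounded by (powers of $\ve^{-1}$ times) $\sup_t \|W(t)\|$ in a norm one order lower than $\X^{r+1}$, because we have one spare derivative of room. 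The potential difference $\phi[V_1]-\phi[V_2]$ is handled by the integral-kernel representation \eqref{eq:potentialkernel}-\eqref{eq:Dtkpotentialkernel}: the kernel depends smoothly on $\xve$ and on the coefficient differences, and $\rho(h[V_1]) - \rho(h[V_2])$ is controlled by $h[V_1]-h[V_2]$, so $\phi[V_1]-\phi[V_2]$ inherits the same bound. Assembling these, $\|\Lambda[V_1] - \Lambda[V_2]\|_{\X^{r}(T)} \le C(\ve, E_0^{r+1})\, T\, \|V_1 - V_2\|_{\X^r(T)}$, which is a contraction in the $\X^r$-metric once $T$ is small enough.

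The remaining point is that the metric space $(\mathcal{C}^r(T), d)$ with $d(V_1,V_2) = \|V_1-V_2\|_{\X^r(T)}$ is complete: $\mathcal{C}^r(T)$ is a bounded subset of $C([0,T];\X^{r+1})$, so any $\X^r$-Cauchy sequence in it has a limit in $C([0,T];\X^r)$, and by weak-$*$ compactness / lower semicontinuity of the $\X^{r+1}$-norm the limit still satisfies the bound $\sup_t\|V(t)\|^2_{\X^{r+1}} \le \ve^{-2}\Cc_r'+1$; the admissibility constraint \eqref{intadmissible} is a finite set of pointwise conditions at $t=0$ on derivatives up to order $r$, which pass to the $\X^r$-limit. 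Hence the Banach fixed point theorem applies and yields the unique fixed point in $\mathcal{C}^r(T)$; uniqueness within $\mathcal{C}^r(T)$ is immediate from the contraction estimate. The main obstacle is bookkeeping the $\ve$-dependence: every elliptic and wave estimate invoked here loses powers of $\ve^{-1}$ (from the smoothing operators $S_\ve$ and from the loss in the elliptic estimates near the boundary), so one must check that the finitely many negative powers of $\ve$ that accumulate are all beaten by the single factor of $T$, i.e. that $T_\ve$ can be taken of the form $c\,\ve^{N}(1+E_0^{r+1})^{-M}$ for appropriate $N, M$; this forces the "time interval of size $\ve$" heuristic mentioned before the theorem and is the only genuinely delicate quantitative step, the rest being the standard Picard iteration scheme adapted to the constraint set.
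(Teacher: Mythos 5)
Your proposal follows essentially the same route as the paper: show that $\Lambda$ preserves the admissible ball $\C^{r+1}(T)$ (the paper's Proposition \ref{invariantprop} and Corollary \ref{invcor}), prove a Lipschitz bound of the form $\|\Lambda(\Vu)-\Lambda(\Vw)\|_{\X^r(T)}\le \ve^{-1}T\,\P_3\,\|\Vu-\Vw\|_{\X^r(T)}$ in the norm one derivative weaker (Proposition \ref{contcor}), and conclude by contraction with $T_\ve\sim\ve/\P_3$, recovering membership of the limit in $\C^{r+1}(T)$ from the uniform $\X^{r+1}$ bounds — the paper runs an explicit Picard iteration started from the Taylor polynomial $\sum V_k^\ve t^k/k!$ rather than quoting the Banach fixed point theorem, but this is the same argument. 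The only inaccuracy is your parenthetical claim that compatibility to order $r$ upgrades to order $r+1$ via the $\ve$-weighted part of $E_0^{r+1}$; the paper instead encodes the extra regularity needed for the top time derivative in the hypothesis $\W_r^0<\infty$, not in a higher-order compatibility condition.
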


The rest of the paper is devoted to proving Theorems \ref{intenergy} and \ref{vlwpthmintro}.
We will see in our construction that $T_\ve\! =\! O(\ve)$ and in particular
our proof of existence does not give a uniform time of existence
as $\ve\! \to\! 0$.

\subsection{Existence up to an $\varepsilon$ independent time}

Combining the a priori estimate from Theorem \ref{intenergy} and the
existence result Theorem \ref{vlwpthmintro}, we have:

\begin{proof}[Proof of Theorem \ref{mainthm}]
  Given initial data $(V_0, h_0)$, define $(V_0^\ve, h_0^\ve)$ as in
  Appendix \ref{compat}. For sufficiently small $\ve$,
  let $T_*$ denote the largest time so that the smoothed Euler equations
  \eqref{smpbmdef} have a unique solution $V_\ve(t) \!\in\! H^r(\Omega)$ with
  $V|_{t = 0} \!=\! V^\ve_0$. By Theorem \ref{vlwpthmintro}, $T_*\! \!> \!0$. We claim
  that in fact $T_* \!\geq \!\mathscr{T}_r$ where $\mathscr{T}_r$ is as in Theorem
  \ref{intenergy}. Assuming that this holds for the moment, we now have
  a vector field $V_\ve\! \in\! H^r(\Omega)$ satisfying \eqref{smpbmdef} on a time interval
  $[0,T_{\!E}]$ independent of $\ve$ and moreover by the energy estimate
  \eqref{intenergyest} we have that $||V_\ve||_{H^{r-1,1/2}(\Omega)}$ is uniformly bounded
  in $\ve$. By standard compactness theorems, it follows that there is a vector
  field $V \!\in\! H^{(r-1,1/2)}(\Omega)$ so that $V_\ve \!\to \!V$ strongly in
  $H^{r-1}(\Omega)$. Since $\xve = \ssm x \to x$ as
  $\ve \to 0$ and since $H^{r-1}(\Omega)$ is an algebra,
  it follows that $V$ satisfies Euler's equations
  \eqref{mom}-\eqref{mass}.
  To see that $T_* \!\geq\! \mathscr{T}_r$, we assume that $T_* \!< \! \mathscr{T}_r$.
   By the a priori
  estimate \eqref{intenergyest} and using that $D_t V_\ve = -\pave h - \pave \phi$
  and Theorem \ref{main theorem, ell est D_t phi} to control $\phi$, we have:
  \begin{equation}
    ||V_\ve(t)||_{\X^{r+1}}^2 \leq ||V_\ve(t)||_{H^r(\Omega)}^2 +
    ||\pave h(t)||_{r}^2 + ||\pave \phi(t)||_r^2
    \leq \ve^{-2} \Cc_r', \qquad 0 \leq t \leq T.
  \end{equation}
  Define $(V_{T_*}^\ve\!, h_{T_*}^\ve\!) \!=\! \lim_{t \nearrow T_*} (V(t), h(t))$.
  Since $h$ solves the wave equation \eqref{smwavedef} and $V_\ve$ solves the
  smoothed-out Euler equations \eqref{smpbmdef} it follows that
  the compatibility conditions \eqref{smcompatdef} are satisfied at $t = T_*$ as
  well, so repeating the proof of Theorem \ref{vlwpthmintro} with
  $t$ replaced by $t - T_*$ and $(V_0^\ve\!, h_0^\ve)$ replaced by
  $(V_{T_*}^\ve, h_{T_*}^\ve)$, we see that there is a
  $T_2 > T_{{}_{\!}*}$ so that $V\!\! \in\! \mathcal{C}^{r}(T_2)$ satisfies \eqref{smwavedef}, which
  contradicts the fact that $T_{{}_{\!}*}$ was maximal.
\end{proof}

\section{Elliptic estimates}
\label{ellsec}
In what follows we will need several elliptic estimates, which
are modifications of the estimates from \cite{Lindblad2016} and \cite{Nordgren2008}. We
summarize these here, and their proofs can be found in Appendix \ref{elliptic}.

Let $V\!: \![0,T]\! \times\! \Omega \!\to\!\R^3$ be a vector field on $\Omega$ and let
$\xve$ denote its smoothed flow as in \eqref{xvedef}, and let $A_{\m a}^i $ and $ A_{\m i}^{a}$
be as in \eqref{udef}.
We will assume that we have the following a priori bound:
\begin{equation}
 \tsum_{i,a}|A_{\m i}^a| + |A_{\m a}^i| + {\tsum}_{|I| \leq 3} |\pa_y^I \xve| \leq M_0,
 \label{uwassump}
\end{equation}
and in some of our estimates we will additionally assume the bound:
\begin{equation}
 \tsum_{i,a}|A_{\m i}^a| + |A_{\m a}^i| + {\tsum}_{k + |J| \leq 3}
 |\pa_y^J \xve|
 + |\pa_y^J D_t^k V| \leq M.
 \label{ubd2}
\end{equation}
We write $\pave$ for the
derivative with respect to $\xve$ (as in \eqref{pavedef}) and $\Dve$ for the
Laplacian with respect to $\xve$. For a one-form $\alpha = \alpha_i d\xve^i$
on $\tD_t$, we define $\div \alpha, \curl \alpha$ by \eqref{divdef}.
We will work with the following mixed norms:
\begin{equation}
 || f ||_{k,\ell} = {\sum}_{s \leq k}
 ||D_t^s f||_{H^\ell(\Omega)}, \qquad
 || f ||_r = {\sum}_{k + \ell \leq r} || f ||_{k,\ell}.
 \label{mixednormdef}
\end{equation}
In this section we let $C_0$, $C_s$ for $s\geq 1$ and $C_s'$  denote  continuous functions of arguments indicated below
\begin{equation}\label{eq:constantssection5}
 C_0=C_0(M_0),\qquad C_s=C_s(M_0, ||\xve||_{H^s(\Omega)}),\qquad C_s'=C_s'(M, ||\xve||_{s}), \quad\text{for}\quad s\geq 1.
\end{equation}

As in \cite{CL00} and \cite{Nordgren2008}, we will rely
on the following simple pointwise estimate:
\begin{lemma}
 With the norm of the tangential derivatives $\T\alpha$ as in \eqref{eq:simplifiedtangentialnotation},
 for every (0,1)-tensor $\alpha$ on $\Omega$
 \begin{equation}
  |\pave \wta | \leq C_0\big(|\div \alpha|
  + |\curl \alpha| + |\T \wta|\big).
  \label{ellpw}
 \end{equation}
\end{lemma}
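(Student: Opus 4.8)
The plan is to prove the pointwise bound \eqref{ellpw} by working in the $\xve$-coordinates, where $\alpha$ is a genuine $1$-form on the domain $\tD_t$, and decomposing the full gradient $\pave\wta$ into its symmetric and antisymmetric parts, $\pave_i\alpha_j = \tfrac12(\pave_i\alpha_j - \pave_j\alpha_i) + \tfrac12(\pave_i\alpha_j + \pave_j\alpha_i)$. The antisymmetric part is exactly $\tfrac12\curl\alpha$, so it is already accounted for. For the symmetric part $\defo\alpha_{ij} = \tfrac12(\pave_i\alpha_j+\pave_j\alpha_i)$ one uses the standard algebraic identity (valid for the flat metric $\delta_{ij}$ in the $\xve$-coordinates) $|\defo\alpha|^2 = \tfrac12|\curl\alpha|^2 + |\div\alpha|^2 + \text{(pure divergence / tangential terms)}$, but pointwise rather than integrated; more precisely one writes $\pa_i\alpha_j$ in an adapted orthonormal frame near a point and reconstructs all components from $\div\alpha$, $\curl\alpha$, and derivatives along directions tangent to $\pa\Omega$.

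Concretely, first I would fix a point $y\in\Omega$ and choose an orthonormal frame $\{e_1,e_2,e_3\}$ for the metric $\delta_{ij}$ (in $\xve$-variables) adapted so that $e_1,e_2$ are spanned by the pushforwards of tangential vector fields in $\T$ and $e_3$ is transverse; near $\pa\Omega$ this is possible because the $\Omega_{ab}$ span the tangent space of $\pa\Omega$, and in the interior $W_0$ the claim is trivial since there $\T$ contains all of $\pa_{y^a}$ so $|\T\wta|$ already controls $|\pave\wta|$ up to $C_0$. In the boundary charts, the components $e_\alpha^k\pave_k\alpha_j$ for $\alpha\in\{1,2\}$ are, up to the bounded change-of-frame coefficients (which are controlled by $M_0$ via \eqref{uwassump} since they involve only $A_{\m i}^a, A_{\m a}^i$ and one derivative of $\xve$), exactly tangential derivatives of $\alpha$, hence bounded by $C_0|\T\wta|$. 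That leaves only the single component $e_3^k\pave_k(e_3^j\alpha_j)$, i.e.\ the normal derivative of the normal component, which I recover from $\div\alpha = \sum_k e_k^j\pave_j(e_k^m\alpha_m)$ by solving for the $k=3$ term: it equals $\div\alpha$ minus the two tangential-type terms, each again bounded by $C_0(|\div\alpha| + |\T\wta|)$ plus frame-coefficient errors bounded by $C_0|\wta|$ — and $|\wta|$ itself is harmlessly absorbed since $|\wta|\le C_0|\T\wta|$ is false in general, so instead I include $|\wta|$ on the right or, cleaner, note that the frame coefficients differentiated produce terms $\lesssim C_0|\wta|\le C_0|\pave\wta|$ which can be absorbed on the left for the interior-normal direction only.

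The one genuine subtlety — and the step I expect to be the main obstacle — is bookkeeping the non-orthonormality of the coordinate frame: the metric $\gve^{ab}$ is not $\delta^{ab}$, so ``tangential'' vector fields $\eta\Omega_{ab}$ are tangent to $\pa\Omega$ in the $y$-picture but one must check they remain tangent to $\pa\tD_t = \xve(\pa\Omega)$, which is automatic since $\xve$ fixes $\pa\Omega$ setwise (indeed $\ssm$ preserves $\pa\Omega$), and then verify that expressing $\pave = A_{\m i}^a\pa_a$ in terms of these vector fields plus one transverse field costs only factors of $A$ bounded by $M_0$. Since only the first derivatives of $\xve$ (equivalently $A$) enter the frame and its reciprocal, and \eqref{uwassump} bounds all of $|A_{\m i}^a|, |A_{\m a}^i|, |\pa_y^I\xve|$ for $|I|\le 3$, all structure constants and connection coefficients of this frame are $\le C_0 = C_0(M_0)$, which is exactly the constant allowed in \eqref{ellpw}. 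I would then combine the estimates for the two-dimensional tangential block, the mixed tangential-normal entries, and the reconstructed normal-normal entry, summing over the finitely many charts $W_\mu$ with the partition of unity, to obtain $|\pave\wta|\le C_0(|\div\alpha| + |\curl\alpha| + |\T\wta|)$ pointwise.
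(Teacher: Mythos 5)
Your overall strategy is the same as the paper's (and as Lemma~5.5 of \cite{CL00}): control $\pave\wta$ near $\pa\Omega$ via an adapted frame, use curl to swap index slots and div to recover the one ``normal--normal'' entry, and note the interior case is trivial because $(1-\eta)\pa_{y^a}\in\T$ there. But as written your argument has a genuine gap, and then a wrong fix on top of it.

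The gap: you introduce frame-derivative error terms of size $C_0|\wta|$ that simply should not be there, and then try to absorb them with $|\wta|\le C_0|\pave\wta|$, which is \emph{false pointwise} (it is a Poincar\'e inequality, not an algebraic one). The spurious $|\wta|$ terms appear because at one point you compute $e_3^k\pave_k(e_3^j\alpha_j)$, i.e.\ you differentiate the contraction $e_3^j\alpha_j$, so a derivative lands on the frame vector $e_3^j$. If instead you compute the raw components $e_a^ie_b^j\,\pave_i\alpha_j$ directly, the frame coefficients are never differentiated: writing $\delta^{ik}=\sum_a e_a^ie_a^k$, $\delta^{j\ell}=\sum_b e_b^je_b^\ell$, one has $|\pave\wta|^2=\sum_{a,b}\big(e_a^ie_b^j\pave_i\alpha_j\big)^2$, and then
(i) if $a\le 2$, $e_a^ie_b^j\pave_i\alpha_j=e_b^j\cdot\big(e_a^i\pave_i\alpha_j\big)$, where $e_a^i\pave_i=e_a^iA^a_{\m i}\pa_a$ is a bounded combination (by \eqref{uwassump}) of $\T$-vector fields applied to the bare component $\alpha_j$, giving $\le C_0|\T\wta|$;
(ii) if $a=3$, $b\le 2$, swap indices using the curl, $e_3^ie_b^j\pave_i\alpha_j=e_3^i(e_b^j\pave_j\alpha_i)+e_3^ie_b^j(\curl\alpha)_{ji}$, and fall back to case (i) plus $C_0|\curl\alpha|$;
(iii) for $a=b=3$, $e_3^ie_3^j\pave_i\alpha_j=\div\alpha-\sum_{a\le 2}e_a^ie_a^j\pave_i\alpha_j$, controlled by $|\div\alpha|+C_0|\T\wta|$ via (i).
No $|\wta|$ term ever appears, so nothing needs to be absorbed.

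For comparison, the paper's proof in Appendix~\ref{elliptic} organizes the same idea more invariantly: it splits $\pave\alpha=\tfrac13 D\alpha+\tfrac12\curl\alpha+\tfrac12\widehat D\alpha$ with $\widehat D\alpha=\defo\alpha-\tfrac{2}{3}(\div\alpha)\delta$ the trace-free symmetric part, and uses the purely algebraic inequality $(N^iN^jS_{ij})^2=(\gamma^{ij}S_{ij})^2\le 2\gamma^{ij}\gamma^{k\ell}S_{ik}S_{j\ell}$ (valid for any trace-free symmetric $S$ and rank-2 projection $\gamma$) to bound $|S|$ by $\gamma^{ij}S_{ik}$, i.e.\ by components with at least one tangential slot; those are then read off as tangential derivatives, curl, and div, exactly as in (i)--(iii). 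That route avoids any explicit orthonormalization and makes it transparent that no zeroth-order term can arise. Either organization works; yours just needs the bookkeeping cleaned up so that the frame vectors are contracted algebraically, never differentiated.
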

See Lemma \ref{app:pwdiff} for the proof.
Then \eqref{ellpw} can be used to prove (see Proposition \ref{app:sobdiff}):
\begin{lemma}
  Let $s=k+\ell \!\geq\! 1$.
  If $\alpha$ is a (0,1)-tensor on $\Omega$ then, with notation as in
 \eqref{eq:simplifiedtangentialnotation}:
 \begin{equation}
  ||\wta||_{H^{s}(\Omega)}
  \leq C_s \bigtwo(||\div \alpha||_{H^{s-1}(\Omega)} +
  ||\curl \alpha||_{H^{s-1}(\Omega)}
  +  {\sum}_{j \leq s} ||\T^j \wta||_{L^2(\Omega)}\bigtwo),
  \label{sobtensor}
  \end{equation}
  \begin{equation}
   ||\wta||_{k,\ell} \leq C_s'\bigtwo(
   ||\div \alpha||_{k,\ell-1} + ||\curl \alpha||_{k,\ell-1}
   + {\sum}_{k_1 \leq k,\ell_1 \leq \ell}||\FD^{k_1,\ell_1} \wta||_{L^2(\Omega)}\bigtwo).
   \label{sobtensordt}
  \end{equation}
\end{lemma}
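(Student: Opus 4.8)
The plan is to deduce both estimates from the pointwise bound \eqref{ellpw} by induction on the total order $s$. First, a reduction: since $\pave_i = A^a_{\m i}\pa_a$ with $A^a_{\m i}, A^i_{\m a}$ bounded by $M_0$ via \eqref{uwassump}, the quantity $\sum_{j\le s}\|\pave^{j}\alpha\|_{L^2(\Omega)}$ is comparable to $\|\alpha\|_{H^s(\Omega)}$ modulo terms $C_s\|\alpha\|_{H^{s-1}(\Omega)}$ arising when derivatives fall on the matrices $A$ (and similarly for $\|\cdot\|_{k,\ell}$ with $C_s'$); these lower-order remainders will be absorbed by the induction. For the base case $s=1$, \eqref{ellpw} together with $|\pa_a\alpha|\le |A^i_{\m a}||\pave_i\alpha|\le C_0|\pave\alpha|$ gives \eqref{sobtensor} at once after adding $\|\alpha\|_{L^2(\Omega)}=\|\T^0\alpha\|_{L^2(\Omega)}$ to both sides.

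For the inductive step in \eqref{sobtensor}, assume the estimate holds at order $s-1$. The two structural facts that make the argument run are: (i) for any fixed index $k$ and any $T^I\in\T^{|I|}$, the quantities $(\pave_k\alpha)_i$ and $(T^I\alpha)_i$ are again $(0,1)$-tensors; (ii) the coordinate derivatives $\pave_i$ commute with one another, so $\div(\pave_k\alpha)=\pave_k\div\alpha$, $\curl(\pave_k\alpha)=\pave_k\curl\alpha$, while $\div(T^I\alpha)=T^I\div\alpha+[\div,T^I]\alpha$ with $[\div,T^I]$ of order $|I|$ and coefficients given by $T$-derivatives of $A$, hence bounded by $C_s$. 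Using these I would peel tangential derivatives off one at a time: writing, schematically, $\mathcal P_j=\|\pave^{j}\T^{s-j}\alpha\|_{L^2(\Omega)}$, an application of \eqref{ellpw} with $j-1$ copies of $\pave$, after commuting one tangential field inward and distributing the remaining $\pave$'s by a Moser inequality, yields
\[
\mathcal P_j\le C_s\big(\|\div\alpha\|_{H^{s-1}(\Omega)}+\|\curl\alpha\|_{H^{s-1}(\Omega)}+\mathcal P_{j-1}+\|\alpha\|_{H^{s-1}(\Omega)}\big).
\]
Iterating from $j=s$ down to $\mathcal P_0=\|\T^s\alpha\|_{L^2(\Omega)}$ bounds $\|\pave^s\alpha\|_{L^2(\Omega)}$ by the right side of \eqref{sobtensor} plus $C_s\|\alpha\|_{H^{s-1}(\Omega)}$, and the latter is absorbed by the inductive hypothesis; with the reduction above this proves \eqref{sobtensor}.

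For the mixed estimate \eqref{sobtensordt}, where $\FD=\T\cup D_t$, the new ingredient is that $D_t$ does not commute with $\pave$: from $[D_t,\pave_j]=-(\pave_j\tV^\ell)\pave_\ell$, $\tV=\ssm V$, iteration produces a formula for $[D_t^{k},\pave_i]$ whose terms are products of factors $\pave D_t^{(\cdot)}\tV$ times $\pave D_t^{(\cdot)}(\cdot)$ with at most $k-1$ of the $D_t$'s reaching the argument; since $D_t\xve=\tV$, these factors are controlled by \eqref{ubd2} and $\|\xve\|_{s}$, which is exactly the source of the $C_s'$ dependence in \eqref{eq:constantssection5}. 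I would then induct on the number $k$ of time derivatives: for each $k'\le k$, $D_t^{k'}\alpha(t,\cdot)$ is a $(0,1)$-tensor on $\Omega$, so \eqref{sobtensor} with $s=\ell$ gives
\[
\|D_t^{k'}\alpha\|_{H^{\ell}(\Omega)}\le C_\ell\big(\|\div D_t^{k'}\alpha\|_{H^{\ell-1}(\Omega)}+\|\curl D_t^{k'}\alpha\|_{H^{\ell-1}(\Omega)}+{\sum}_{j\le\ell}\|\T^{j}D_t^{k'}\alpha\|_{L^2(\Omega)}\big);
\]
the tangential terms are among the $\FD^{k_1,\ell_1}\alpha$ on the right of \eqref{sobtensordt}, while commuting $D_t^{k'}$ past $\div$ and $\curl$ produces the main terms $\|D_t^{k'}\div\alpha\|_{H^{\ell-1}(\Omega)}+\|D_t^{k'}\curl\alpha\|_{H^{\ell-1}(\Omega)}\le\|\div\alpha\|_{k,\ell-1}+\|\curl\alpha\|_{k,\ell-1}$ together with commutator terms that are lower order in $D_t$ and hence, after Moser estimates involving the $\tV$ factors, bounded by $C_s'\|\alpha\|_{k-1,\ell}$ and absorbed by the induction. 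Summing over $k'\le k$ and over the constituents of $\|\cdot\|_{k,\ell}$ gives \eqref{sobtensordt}.

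The step I expect to be the real obstacle is the bookkeeping in the peeling argument: one must arrange that after each use of \eqref{ellpw} the right-hand side contains only $H^{s-1}$-norms of $\div\alpha$ and $\curl\alpha$ (never $H^{s}$), only $L^2$-norms of the pure tangential strings $\T^{j}\alpha$ (never $H^{s-1}$ of $\T\alpha$, which would be circular), and otherwise only quantities of order $\le s-1$ in $\alpha$ — all while keeping the constants of the precise form \eqref{eq:constantssection5}, which forces a careful low-high/high-low split in every product and careful tracking of which derivatives land on the $A$'s versus on $\alpha$. Once this is set up, the $D_t$-version adds only commutator terms with $\tV$ that are lower order in $D_t$ and thus fit the induction. (An alternative would be to bypass \eqref{ellpw} and use the identity $\Dve\alpha_i=\pave_i\div\alpha+\delta^{jk}\pave_j\curl\alpha_{ki}$, valid because the $\pave_i$ commute, together with interior and boundary elliptic estimates for $\Dve$ controlling transversal derivatives by the Laplacian and the tangential ones; but the route through \eqref{ellpw} is more self-contained and matches \cite{CL00, Nordgren2008}.)
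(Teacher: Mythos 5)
Your argument is correct and is essentially the paper's proof: the paper (Lemma \ref{app:sobdiff}, proved in Appendix \ref{elliptic}) likewise proceeds by induction from the pointwise estimate \eqref{ellpw}, peeling off one non-tangential derivative at a time, commuting $\div$, $\curl$ and $\T$ through the remaining derivatives via the Moser/commutator bounds of Lemma \ref{vectcommlemma}, and handling $D_t$ through $[D_t,\pave_j]=-(\pave_j\ssm V^\ell)\pave_\ell$. The only difference is organizational — you run a decreasing induction on the number of full derivatives at fixed total order, while the paper inducts on the total order applied to $T\alpha$ — and the bookkeeping you flag as the main obstacle is exactly what the paper's commutator lemma supplies.
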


If $\wta_i = \pave_i f$
for a function $f$ which vanishes on $\pa \Omega$,
using an
integration-by-parts argument to control the last term on the
right-hand side of \eqref{sobtensor} (resp. \eqref{sobtensordt}) gives
(see Proposition \ref{app:sobfndiff}):
\begin{prop}
 If $f \!\!: \!\Omega \!\to \!\R$ is a function with
 $\!f\!\! =\! 0$ on $\pa \Omega$ then, with $\T \widetilde{\!x\,}\!\!$ defined in
\eqref{eq:simplifiedtangentialnotation},  for $k\!+\!\ell\!=s\! \geq \!1$:
 \begin{equation}
  ||\pave f||_{H^{s}(\Omega)}
  \leq C_s \big( ||\Dve f||_{H^{s-1}(\Omega)} +
  (||\T\xve||_{H^s(\Omega)} +
  ||\xve||_{H^s(\Omega)})||f||_{L^2(\Omega)}\big),\label{sobell}
\end{equation}
\begin{equation}
 ||\pave f||_{k,\ell} \leq C_s' \big(
 ||\Dve f||_{k,\ell-1} + (||D_t \xve||_s + ||\xve||_s)||D_t^k f||_{L^2(\Omega)}\big).
 \label{sobellmix}
\end{equation}
\end{prop}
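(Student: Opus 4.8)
The plan is to apply the lemma above to the particular $(0,1)$-tensor $\wta_i=\pave_i f$ and then to estimate the resulting tangential terms by an integration by parts exploiting the hypothesis $f|_{\pa\Omega}=0$. Since the $\pave_i=\pa/\pa\xve^i$ are coordinate derivatives on $\tD_t$ they commute, so $\wta=\pave f$ satisfies $\curl\wta=0$, while $\div\wta=\delta^{ij}\pave_i\pave_j f=\Dve f$ by \eqref{laplsm}. Hence \eqref{sobtensor} and \eqref{sobtensordt} give
\[
\|\pave f\|_{H^{s}(\Omega)}\le C_s\Big(\|\Dve f\|_{H^{s-1}(\Omega)}+{\sum}_{j\le s}\|\T^{j}\pave f\|_{L^2(\Omega)}\Big),
\]
\[
\|\pave f\|_{k,\ell}\le C_s'\Big(\|\Dve f\|_{k,\ell-1}+{\sum}_{k_1\le k,\,\ell_1\le\ell}\|\FD^{k_1,\ell_1}\pave f\|_{L^2(\Omega)}\Big),
\]
so it remains only to control the tangential sums.

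For this, fix $T^{I}\in\T^{j}$ with $1\le j\le s$. Because each $T\in\T$ is tangent to $\pa\Omega$ and $f$ vanishes there, $T^{I}f=0$ on $\pa\Omega$ as well. Commuting, $T^{I}\pave_i f=\pave_i(T^{I}f)+[T^{I},\pave_i]f$, where the commutator carries fewer than $j$ tangential derivatives of $\pave f$ multiplied by $y$-derivatives of the coefficients $A_{\m i}^a$ (equivalently of $\xve$). For the leading term we use the Dirichlet identity on $\Omega$ with metric $\gve$ and volume element $\kve\,dy$, valid for $g\in H^1_0(\Omega)$ with no boundary term since $g|_{\pa\Omega}=0$,
\[
\int_{\Omega}|\pave g|^2\,\kve\,dy=-\int_{\Omega}g\,(\Dve g)\,\kve\,dy,
\]
applied with $g=T^{I}f$; one then also integrates the tangential vector fields by parts (again with no boundary contribution, as the $T\in\T$ are tangent to $\pa\Omega$) to redistribute the $j$ derivatives on $g$ between the two factors, and inserts $\Dve T^{I}f=T^{I}\Dve f+[\Dve,T^{I}]f$, reducing everything to $\|\Dve f\|_{H^{s-1}(\Omega)}$, to lower-order tangential derivatives of $\pave f$, and to factors built from derivatives of $\xve$.

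Iterating this and tracking derivative counts, every term produced either has at most $s$ full derivatives of $f$ with strictly fewer tangential ones — in which case, after summing, it is absorbed into the left side of the first inequality above, legitimate once $C_s$ is enlarged and $\ve$ is held fixed — or it carries all of its derivatives on the metric coefficients and is bounded by $(\|\T\xve\|_{H^{s}(\Omega)}+\|\xve\|_{H^{s}(\Omega)})\|f\|_{L^2(\Omega)}$. The Poincar\'e inequality on $\tD_t$, with constant depending only on $M_0$ by \eqref{uwassump}, together with the Dirichlet identity above, handles the bottom level $\|\pave f\|_{L^2}\lesssim\|\Dve f\|_{L^2}$ and thereby removes any bare $\|f\|_{L^2}$. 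This proves \eqref{sobell}. For the mixed-norm estimate \eqref{sobellmix} one first commutes $D_t$ through $\pave$ and $\Dve$ — $D_t$ does not act on $\pa\Omega$, so $D_t^{k}f$ still vanishes there — treating $[D_t^{k},\pave]$ and $[D_t^{k},\Dve]$ as part of the coefficient structure, and then argues as before; the integration by parts is purely spatial, which is precisely why $\|D_t^{k}f\|_{L^2(\Omega)}$, rather than $\|D_t^{s}f\|_{L^2(\Omega)}$, occurs on the right.

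The main obstacle is the bookkeeping in the second step: one must verify that in the nested commutators $[T^{I},\pave]$, $[\Dve,T^{I}]$ (and, in the mixed case, $[D_t^{k},\pave]$, $[D_t^{k},\Dve]$) no term ever carries all $s$ derivatives on $f$ by itself, so that each contribution is genuinely controlled by $\|\Dve f\|_{H^{s-1}(\Omega)}$ (resp.\ $\|\Dve f\|_{k,\ell-1}$), is absorbable into the left side, or is bounded by $\|f\|_{L^2(\Omega)}$ (resp.\ $\|D_t^{k}f\|_{L^2(\Omega)}$) times the stated norm of $\xve$; this accounting is the content of Proposition~\ref{app:sobfndiff}.
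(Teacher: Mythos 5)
Your proposal is correct and follows essentially the same route as the paper: reduce to tangential derivatives via the div--curl estimate \eqref{sobtensor}--\eqref{sobtensordt} (using $\curl\pave f=0$, $\div\pave f=\Dve f$), then control $\T^{j}\pave f$ by integrating by parts against $T^{I}f\in H^1_0(\Omega)$ so no boundary terms appear, handling the commutators $[T^{I},\pave]$, $[\Dve,T^{I}]$ (and $[D_t^{k},\pave]$ in the mixed case) and absorbing the top-order terms. The paper carries out exactly this absorption in Lemma \ref{justtangential} via Cauchy's inequality with a small parameter $\delta$, which is the precise form of your "enlarge $C_s$" step.
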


There are two crucial points in the estimate
\eqref{sobell}. First, we are estimating
$\pave{}_{\!} f$ in $H^{{}_{\!}s{}_{\!}}(\Omega)$ instead of $\!f$ in $H^{s+{}_{\!}1{}_{\!}}(\Omega)$.
For the proof of this we only need to commute the divergence with $s\!-\!1$ instead of $s$ derivatives
with the Laplacian, which would have generate terms with too many derivatives
of $\xve$. Moreover, by first applying \eqref{sobtensor}, we
can replace full $y$-derivatives of $\pave{}_{\!} f$ with tangential derivatives
applied to $\pave{}_{\!} f$. This is why the right-hand side of \eqref{sobell}
involves $||\T \xve||_{H^s(\Omega)}$, which we can control more easily than $||\xve||_{H^{s+1}(\Omega)}$.

We also use \eqref{sobtensor} to
prove the following estimates.
They show that one can control
$\alpha$ in the interior by the divergence and curl of $\alpha$
and either the normal component of $\alpha$ on the boundary or the projection
of $\alpha$ to the tangent space at the boundary.
The first estimate will be used to control $||\sm x||_{H^r(\Omega)}$
in terms of the energies that we define in
Section \ref{energy} and the second will be used to control
$||V||_{H^r(\Omega)}$.
\begin{prop}
  \label{ftprop}
  Fix $r \!\geq\! 5$, $1 \!\leq \!s \!\leq\! r$. If $\alpha$ is a vector field, then
 with notation as in \eqref{eq:simplifiedtangentialnotation} and
  $H^s\! = H^s(\Omega)$:
\begin{equation}
||\wta||_{H^{s{}_{\!}}}^2
\!\leq \! C_{s\!}\Big(\!||\!\div \alpha||_{H^{s\!-\!1\!}}^2\! +\!
 ||\!\curl \alpha||_{H^{s\!-\!1\!}}^2 +\!||\wta||^2_{H^1}
 \!+\!{\sum}_{\mu = 1}^N  \!\int_{\pa \Omega} \!\!({}_{\!}\fdhm \T^{s-{}_{\!}1 \!}\wta^i)\!
 \cdot\!(\fdhm \T^{s-{}_{\!}1 \!}\wta^j)
 N_{{}_{\!}i} N_{\!j} dS
\Big),
\label{ellft1}
\end{equation}
\begin{equation}
||\wta||_{H^{s{}_{\!}}}^2
\leq C_{s\!}\Big( \!||\!\div \alpha||_{H^{s\!-\!1\!}}^2\! +\!
 ||\!\curl \alpha||_{H^{s\!-\!1\!}}^2\! +\!||\wta||^2_{H^1}\!
 + \!{\sum}_{\mu = 1}^N \!\int_{\pa \Omega} \!\!(\fdhm \T^{s -{}_{\!}1\!}\wta^i)\!\cdot\!(\fdhm \T^{s-{}_{\!}1\!} \wta^j)
 \gamma_{ij} dS\!\Big).
\label{ellft2}
\end{equation}
Here $\gamma$ denotes projection to the tangent space at the boundary,
for the definition of $\fdhm\!\!$, see Appendix \ref{tangapp}.
\end{prop}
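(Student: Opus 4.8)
The plan is to deduce both \eqref{ellft1} and \eqref{ellft2} from the div--curl estimate \eqref{sobtensor}, whose right-hand side already contains $\|\div\wta\|_{H^{s-1}(\Omega)}$, $\|\curl\wta\|_{H^{s-1}(\Omega)}$ and the tangential sum $\sum_{j\le s}\|\T^j\wta\|_{L^2(\Omega)}$. (The case $s=1$ is trivial, since $\|\wta\|^2_{H^1}$ is already allowed on the right.) The lower-order tangential terms are harmless: for $j\le s-1$ one has $\|\T^j\wta\|_{L^2(\Omega)}\lesssim \|\wta\|_{H^{s-1}(\Omega)}\le \epsilon\|\wta\|_{H^s(\Omega)}+C_\epsilon\|\wta\|_{H^1(\Omega)}$ by interpolation, the first term to be absorbed (after summing) on the left of \eqref{sobtensor} and the second being permitted in \eqref{ellft1}--\eqref{ellft2}. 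Since each $T\in\T$ is a combination of the $\pave_i$ with coefficients bounded by $M_0$, the pointwise bound $|\T^s\wta|\le C_0|\pave(\T^{s-1}\wta)|$ reduces everything to estimating $\int_\Omega|\pave\beta|^2$ for each $\beta = T^I\wta$ with $|I|=s-1$, in terms of $\|\div\beta\|_{L^2}$, $\|\curl\beta\|_{L^2}$ and a boundary term.

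For this I would use the classical div--curl integration-by-parts identity, carried out in the $\xve$-geometry (equivalently on $\tD_t=\xve(t,\Omega)$, where the ambient metric is flat):
\[
 \int|\pave\beta|^2
 = \int\big((\div\beta)^2 + \tfrac12|\curl\beta|^2\big)
 + \int_{\pa\tD_t}\big(\beta^j\pave_j\beta_i\,N^i - \beta_N\,\div\beta\big)\,dS,
\]
with $N$ the outward unit normal of $\pa\tD_t$, $\beta_N=\beta_iN^i$, and all integrals carrying the appropriate $\xve$ volume/area elements. For the interior terms I write $\div\beta = T^I\div\wta+[\div,T^I]\wta$ and likewise for $\curl$; each commutator carries at most $s-1$ derivatives of $\wta$, so it is $\lesssim\|\div\wta\|_{H^{s-1}}+\|\curl\wta\|_{H^{s-1}}+\|\wta\|_{H^{s-1}}$, which is acceptable after the interpolation above. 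Summing over $|I|=s-1$ and inserting into \eqref{sobtensor}, the estimate closes once the boundary integrals are bounded by $\epsilon\|\wta\|_{H^s}^2+C_\epsilon\|\wta\|_{H^1}^2$ plus, respectively, the $N_iN_j$- or $\gamma_{ij}$-boundary term of \eqref{ellft1}, \eqref{ellft2}.

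The core of the argument is the boundary integral $\int_{\pa\tD_t}(\beta^j\pave_j\beta_iN^i-\beta_N\div\beta)\,dS$. I would split $\beta$ along $\pa\tD_t$ into its normal part $\beta_N N$ and tangential part $\gamma\beta$. Expanding the two summands in this splitting, the pure normal--normal contributions $\beta_N\pave_N\beta_N$ cancel, leaving an integrand of the schematic form $(\gamma\beta)\!\cdot\!\nabla^{\!\pa}\beta_N-\beta_N\,\div^{\!\pa}(\gamma\beta)+(\text{second fundamental form})(\beta,\beta)$, where $\nabla^{\!\pa},\div^{\!\pa}$ and the second fundamental form of $\pa\tD_t$ have coefficients depending on at most two $y$-derivatives of $\xve$, hence bounded by $M_0$. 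Near the boundary the tangential first-order operators here are combinations of the fields in $\T$ pushed forward by $\xve$, so one integrates by parts tangentially and applies the duality estimate $|\int_{\pa\Omega}f\,Tg\,dS|\le C\|f\|_{H^{1/2}(\pa\Omega)}\|g\|_{H^{1/2}(\pa\Omega)}$ together with the commutator bound $\|\fdhm(fg)-f\fdhm g\|_{L^2(\pa\Omega)}\le C\|f\|_{H^2}\|g\|_{L^2}$ (to slide the smooth factors $N_i$, $\gamma_{ij}$ and the cutoffs past $\fdhm$ at the cost of genuinely lower-order terms). For \eqref{ellft1} this bounds the boundary integral by $C\|\beta_N\|_{H^{1/2}(\pa\Omega)}\|\gamma\beta\|_{H^{1/2}(\pa\Omega)}+C\|\beta\|_{L^2(\pa\Omega)}^2$; since $\|\gamma\beta\|_{H^{1/2}(\pa\Omega)}\lesssim\|\beta\|_{H^1(\Omega)}\lesssim\|\wta\|_{H^s(\Omega)}$ and $\|\beta\|_{L^2(\pa\Omega)}^2\lesssim\epsilon\|\beta\|_{H^1(\Omega)}^2+C_\epsilon\|\wta\|_{H^{s-1}(\Omega)}^2$, Young's inequality turns everything into an absorbable $\epsilon\|\wta\|_{H^s}^2$ plus $C_\epsilon\|\beta_N\|_{H^{1/2}(\pa\Omega)}^2$, and by the $\fdhm$-commutator estimate the latter is, up to lower-order terms, precisely the $N_iN_j$-boundary term of \eqref{ellft1} with $\beta=\T^{s-1}\wta$. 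For \eqref{ellft2} I would instead integrate by parts so that every tangential derivative falls on $\beta_N$, keeping $\|\gamma\beta\|_{H^{1/2}(\pa\Omega)}^2$ on the right and absorbing $\|\beta_N\|_{H^{1/2}(\pa\Omega)}^2\lesssim\|\beta\|_{H^1(\Omega)}^2$.

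I expect the main obstacle to be exactly this boundary analysis: performing the normal/tangential decomposition of $\beta^j\pave_j\beta_iN^i-\beta_N\div\beta$ cleanly in the $\xve$-geometry, verifying the cancellation of the $\pave_N\beta_N$ terms, and — most delicately — checking that every surviving cross-term is controlled by the \emph{half}-order boundary norms $\|\beta_N\|_{H^{1/2}(\pa\Omega)}$, $\|\gamma\beta\|_{H^{1/2}(\pa\Omega)}$ rather than by $\|\beta\|_{H^1(\pa\Omega)}$, since only then do Young's inequality and absorption into the left-hand side close the estimate. The remaining bookkeeping — confirming that each commutator ($[\div,T^I]$, $[\pave,\T^{s-1}]$, $[\fdhm,N_i]$), each curvature term and each trace term is of order $\le s-1$ in $\wta$, hence either absorbable into $\epsilon\|\wta\|_{H^s}^2$ or interpolable into $\|\wta\|_{H^1}^2$ — is routine but must be tracked carefully.
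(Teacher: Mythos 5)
Your proposal is correct and follows essentially the same route as the paper: the paper also reduces the $H^s$ bound to tangential derivatives $\T^{s-1}\wta$ via the div--curl estimate and commutators, then applies the Green's-formula identity whose boundary term (after the same normal/tangential splitting and cancellation of the $\pave_N$ contributions) is estimated by integrating half a tangential derivative by parts with \eqref{inthalf} and the fractional Leibniz rule, yielding exactly the Young-type bound $\tfrac{1}{\epsilon}\|(\fdh\wta)\cdot N\|_{L^2(\pa\Omega)}^2+\epsilon\|(\fdh\wta)\cdot\gamma\|_{L^2(\pa\Omega)}^2$ from which \eqref{ellft1} and \eqref{ellft2} follow by choosing $\epsilon$ small or large. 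The remaining bookkeeping you flag (commutators, curvature terms, trace interpolation for $\|\beta\|_{L^2(\pa\Omega)}$) is handled in the paper exactly as you describe.
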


\subsection{Estimates for differences of solutions}
In Section \ref{vlwpsec}, we will prove that the map $\Lambda$
defined in \eqref{lambdadef} satisfies a Lipschitz estimate. Given two
vector fields $\Vu{}_{\!}, \!\Vw\!:\![0,\!T] {}_{\!}\times{}_{\!} \Omega \to \R$,
define the corresponding smoothed flows $\xveu, \xvew$ as well as
the derivatives
$\paveu, \pavew$ and the Laplacians $\Dveu, \Dvew$. Assume that
$\xveu, \xvew$ both satisfy the estimate \eqref{uwassump} or \eqref{ubd2}
and now let the constants \eqref{eq:constantssection5} depend
on the corresponding norms of both $\xveu, \xvew$.
\begin{prop}
  \label{sobfndiff}
  Fix $r\! \geq\!6$. If $f, g\!:\! \Omega \!\to \!\R$ and $f\! \!=\! g\! =\! 0$ on $\pa \Omega$,
 then
 for $1 \!\leq\!\ell\! \leq \!r\!-\!1$, respectively $k\!+\!\ell\!=\!r$:
 \begin{equation}
  ||\paveu {}_{\!}f\! - \pavew g||_{H^{\ell_{\!}}(\Omega)\!}
  \leq C_r\big({}_{\!} ||\Dveu f\! - \Dvew g||_{H^{\ell\!-\!1\!}(\Omega)}\!
  + ||\xveu||_{H^{r\!}(\Omega)} ||f \!- g||_{L^2(\Omega)}\!
  + ||\xveu\! - \xvew ||_{H^{r\!}(\Omega)} ||\pavew g||_{H^{\ell\!}(\Omega)} {}_{\!}\big),
 \end{equation}
 \begin{equation}
  ||\paveu {}_{\!}f \!- \pavew g||_{H^{r_{\!}}(\Omega)\!}
  \leq C_{r{}_{\!}}\bigtwo({}_{\!} ||\Dveu {}_{\!}f \!- \Dvew g||_{H^{r-_{\!}1\!}(\Omega)\!}
  +{}_{\!} ||\T{}_{\!} \xveu{}_{\!} ||_{H^{r_{\!}}(\Omega)} ||f\! - g||_{L^{{}_{\!}2}(\Omega)\!}
  + ||\T{}_{\!} \xveu\!-\! \T{}_{\!} \xvew{}_{\!} ||_{H^{r_{\!}}(\Omega)}
  ||\pavew g||_{H^{r_{\!}}(\Omega)}{}_{\!}\bigtwo),
 \end{equation}
 \begin{equation}
  ||\paveu {}_{\!}f \!- \pavew g||_{k,\ell}
  \!\leq\! C_{{}_{\!}r}' \big({}_{\!} ||\Dveu{}_{\!}  f {}_{\!} - \Dvew g||_{k,\ell{}{\!}-\!1\!} + \big({}_{\!}||\xveu{}_{\!} ||_{r }+
  ||{}_{\!}D_t \xveu{}_{\!} ||_r{}_{\!}\big)||f\!-g||_{k,0}
  + \big({}_{\!}||\xveu{}_{{}_{\!}}  -\xvew\!||_r + ||{}_{\!}D_{{}_{\!}t}(\xveu {}_{\!}- \xvew{}_{\!})||_{r\!} \big)
  ||\pavew g||_{r}{}_{\!}\big).
 \end{equation}
\end{prop}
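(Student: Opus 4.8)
The plan is to reduce the difference estimate to the single-solution estimates \eqref{sobell}, \eqref{sobellmix} and the tensor estimates \eqref{sobtensor}, \eqref{sobtensordt} behind them, at the price of commutator terms measuring the discrepancy between the operators built from $\xveu$ and those built from $\xvew$. Write
\[
 \paveu f - \pavew g = \paveu w + \big(\paveu - \pavew\big) g , \qquad w = f - g ,
\]
noting that $w$ vanishes on $\pa\Omega$. Since for small time $\xveu$ and $\xvew$ are genuine changes of coordinates, iterated $\paveu$-derivatives commute, so $\curlu(\paveu\psi) = 0$ and $\divu(\paveu\psi) = \Dveu\psi$ for every $\psi$, and likewise for $\II$. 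By \eqref{udef}, \eqref{pavedef}, \eqref{divdef}, \eqref{laplsm} the first-order operators $\paveu - \pavew$, $\divu - \divw$, $\curlu - \curlw$ have coefficients that are smooth (indeed rational) functions of $\pa_y\xveu$ and $\pa_y\xvew$, vanish when $\xveu = \xvew$, and carry only one $y$-derivative of the coordinates; re-expressing plain $y$-derivatives of $g$ through $\pavew g$ by means of the matrices in \eqref{udef} and using Moser-type estimates for differences of compositions, one obtains, for $1 \le s \le r-1$,
\[
 \big\|(\paveu - \pavew)g\big\|_{H^s} \le C_r\,\|\xveu - \xvew\|_{H^{s+1}}\,\|\pavew g\|_{H^s},
 \quad
 \big\|(\divu - \divw)(\pavew g)\big\|_{H^{s-1}} + \big\|(\curlu - \curlw)(\pavew g)\big\|_{H^{s-1}} \le C_r\,\|\xveu - \xvew\|_{H^{s}}\,\|\pavew g\|_{H^s},
\]
with the corresponding mixed-norm bounds, in which $\|D_t(\xveu - \xvew)\|_r$ also enters through $D_t\xve = \ssm V$ and $[D_t, \pave_j] = -(\pave_j\ssm V^\ell)\pave_\ell$.

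For $1 \le \ell \le r-1$ the argument is then short. Apply \eqref{sobell} (resp.\ \eqref{sobellmix}) to the $\xveu$-gradient $\alpha = \paveu w$, which is legitimate because $w|_{\pa\Omega} = 0$, to get $\|\paveu w\|_{H^\ell} \le C_r\big(\|\Dveu w\|_{H^{\ell-1}} + (\|\T\xveu\|_{H^\ell} + \|\xveu\|_{H^\ell})\|w\|_{L^2}\big)$, and decompose
\[
 \Dveu w = \big(\Dveu f - \Dvew g\big) + \big(\Dvew - \Dveu\big)g , \qquad \big(\Dveu - \Dvew\big)g = \divu\big((\paveu - \pavew)g\big) + \big(\divu - \divw\big)(\pavew g) ,
\]
so that $\|\Dveu w\|_{H^{\ell-1}}$ is controlled by $\|\Dveu f - \Dvew g\|_{H^{\ell-1}}$ and the commutator bounds above. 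Combining, and using $\|\T\xveu\|_{H^\ell} \lesssim \|\xveu\|_{H^{\ell+1}} \le \|\xveu\|_{H^r}$ and $\|\xveu - \xvew\|_{H^{\ell+1}} \le \|\xveu - \xvew\|_{H^r}$ (valid since $\ell + 1 \le r$), gives the first displayed inequality of the Proposition; starting from \eqref{sobellmix} and keeping the $D_t$-bookkeeping gives the mixed-norm one.

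The case $\ell = r$ (resp.\ $k + \ell = r$) is the delicate one, where one can afford neither $r+1$ full $y$-derivatives of $\xveu$ nor of $\xveu - \xvew$, and one argues exactly as in the proof of \eqref{sobell}. Apply the tensor estimate \eqref{sobtensor} (resp.\ \eqref{sobtensordt}) to $\alpha = \paveu f - \pavew g$: it then suffices to control $\divu\alpha$ and $\curlu\alpha$ in $H^{r-1}$ and $\sum_{j \le r}\|\T^j\alpha\|_{L^2}$. Since $\divu(\paveu f) = \Dveu f$ and $\curlu(\paveu f) = 0$, one has $\divu\alpha = (\Dveu f - \Dvew g) - (\divu - \divw)(\pavew g)$ and $\curlu\alpha = -(\curlu - \curlw)(\pavew g)$, both handled by the commutator bounds — which is safe precisely because only one $y$-derivative of $\xveu - \xvew$ occurs. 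For the tangential terms, split $\T^j\alpha = \T^j\paveu w + \T^j(\paveu - \pavew)g$: the first is estimated by the same integration-by-parts argument used to prove \eqref{sobtensor}--\eqref{sobell} (using $w|_{\pa\Omega} = 0$), which is what brings in $\|\T\xveu\|_{H^r}$, while in the second one commutes the tangential fields through the product so that at top order only $\T^j$ applied to $\pa_y(\xveu - \xvew)$ appears, i.e.\ $\|\T\xveu - \T\xvew\|_{H^r}$, after trading a tangential derivative of a coordinate for a full derivative of $\T\xveu$ modulo lower-order terms (cf.\ Appendix \ref{tangapp}).

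I expect the main obstacle to be exactly this last bookkeeping: organizing the top-order terms so that the coordinates $\xveu$ and the difference $\xveu - \xvew$ are acted on only by tangential derivatives — so that the right-hand side carries $\|\T\xveu\|_{H^r}$ and $\|\T\xveu - \T\xvew\|_{H^r}$ rather than one additional full derivative — together with, in the mixed-norm statement, tracking the $D_t$-commutators $[D_t, \pave_j]$ so that they produce exactly $\|D_t\xveu\|_r$ and $\|D_t(\xveu - \xvew)\|_r$. Everything else reduces to the single-solution elliptic estimates already stated and to routine product and difference-of-compositions estimates.
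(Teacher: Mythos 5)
Your overall architecture is the same as the paper's: reduce to a div--curl--tangential decomposition (Lemma \ref{app:sobdiff}, resp.\ \eqref{sobtensor}--\eqref{sobtensordt}), and control the tangential piece by integrating by parts against the Dirichlet boundary condition so that at top order only tangential derivatives land on the coordinates. Your treatment of the range $1\le\ell\le r-1$ (splitting off $w=f-g$ and perturbing), and of $\divu\alpha$, $\curlu\alpha$ at top order — where you correctly keep $\Dveu f-\Dvew g$ intact and extract only the \emph{first-order} difference operators $(\divu-\divw)\pavew g$, $(\curlu-\curlw)\pavew g$ — is sound and parallels the paper's $L^2$ estimate \eqref{zeroth} and Lemma \ref{app:sobdiff}.

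The gap is exactly at the step you yourself flag as ``the main obstacle'' but do not carry out: the top-order tangential terms $\|\T^{j}(\paveu f-\pavew g)\|_{L^2}$, $j\le r$. Your plan is to split $\T^j\alpha=\T^j\paveu w+\T^j(\paveu-\pavew)g$ and estimate the first piece ``by the same integration-by-parts argument used to prove \eqref{sobtensor}--\eqref{sobell}.'' If that means invoking \eqref{sobell}, Proposition \ref{app:sobdirich}, or \eqref{wkellloc} as a black box for $w$, the argument fails: those estimates carry $\|\Dveu w\|_{H^{r-1}(\Omega)}$ with \emph{full} derivatives on the right-hand side, and $\Dveu w=(\Dveu f-\Dvew g)-(\Dveu-\Dvew)g$, where $(\Dveu-\Dvew)g$ is a second-order operator difference whose coefficients involve $\pa_y(\kveu\gveu^{ab}-\kvew\gvew^{ab})\sim\pa_y^2(\xveu-\xvew)$; its $H^{r-1}$ norm therefore costs $\|\xveu-\xvew\|_{H^{r+1}(\Omega)}$, one more derivative of the difference than is available. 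The paper's Lemma \ref{justtangential} avoids this by never forming $\Dveu(f-g)$: inside the integration by parts it regroups the divergence terms as $\int(Sf-Sg)\,S(\Dveu f-\Dvew g)+\int(Sf-Sg)\,S\big((\paveu-\pavew)\cdot\pavew g\big)$, so the only coordinate-difference object hit by the remaining $r-1$ tangential derivatives is the undifferentiated coefficient $A_{\I}-A_{\II}$ times $\pa_y\pavew g$, which is controlled by $\|\T(\xveu-\xvew)\|_{H^r(\Omega)}$ and $\|\pavew g\|_{H^r(\Omega)}$. Your route is repairable — the IBP in fact only produces \emph{tangential} derivatives of the source, and $\T^{r-1}\pa_y^2(\xveu-\xvew)$ is controlled by $\|\T(\xveu-\xvew)\|_{H^r(\Omega)}$ modulo commutators — but that verification is the actual content of the top-order case, and as written the step does not follow from the single-solution estimates you cite.
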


\section{Estimates for wave equations}
\label{waveests}
As in the previous section,
we fix a vector field $V = V(t,y)$ on $\Omega$
and let $\xve(t,y)$ denote the tangentially smoothed flow of $V$.
Define $A, \kve, \Dve$ as in \eqref{udef}-\eqref{laplsm}.
We will assume that the a priori assumptions \eqref{ubd2}
hold
Note that \eqref{ubd2} combined with the formula for the derivative
of the inverse \eqref{dinv} implies that $|\pa_y^\ell A_{\m i}^a| \leq C(M)$
for $\ell \leq 2$.
We consider the initial-boundary value problem:
\begin{align}
  \eprime D_t^2 \varphi - \Dve \varphi &= \F, \quad \textrm{ on } [0,\!T] \!\times\!
  \Omega,\quad\text{with}\quad
  \varphi = 0 ,\quad\textrm{ on } [0,\!T] \!\times \!\pa \Omega,\label{phiwave}\\
  \varphi(0,y) &= \varphi_0(y), \qquad D_t \varphi(0, y) = \varphi_1(y), \quad\textrm{ on } \Omega,
  \label{phiwaveic}
\end{align}
where $\F$ is a given function on $[0,T] \!\times \!\Omega$
and
$\eprime\! =\! \eprime(\varphi)$ is a given function satisfying
$0 \!<\! e_1\! \leq \!\eprime\! \leq\! e_2$ for some $e_1, e_2$.
We will suppress the dependence
on $e_1, e_2$ in the following.
In our applications we will have $\eprime = e'(\varphi)$ where $e(\varphi)$
is determined from the equation of state as in Section \ref{enthalpysec}.
We remark that for a linear equation of state $p(\rho) = \rho + c$, we have
$e(\varphi) = \varphi + c$ and so in this case \eqref{phiwave} is a linear
wave equation.

 For the applications we have in mind, we will need to
 allow $\F$ to depend on $\varphi$:
 \begin{equation}
  \F(t,y) =  \F_1(t,y) + \F_2[\varphi,D_t \varphi](t,y),
  \label{Fsplit}
 \end{equation}
 where we assume that $\F_2[\varphi,D_t \varphi](t,y)$ is a nonlocal functional of $(\varphi,D_t\varphi)$ that satisfies the following:
 \begin{equation}
  ||D_t^s\F_2[\varphi,D_t \varphi]||_{L^2(\Omega)}
  \leq P_1(||\varphi||_{s+1,0} + ||\varphi||_s),
  \qquad
  ||\F_2[\varphi,D_t \varphi]||_{s-1}
  \leq P_2 ||\varphi||_{s},
  \label{F2est}
 \end{equation}
 for some polynomials $P_1, P_2$
 depending on $M, L, ||\xve||_{H^s},
 ||V||_{\X^s}, ||\varphi||_{s,0}, ||\varphi||_{s-1}$.
 Recall that the mixed norms $||\cdot||_{k,\ell}$ and $||\cdot ||_s$
 are defined in \eqref{mixednormdef}.
 In Section \ref{enthsec}, we will take $F_2 = e''(\varphi) (D_t\varphi)^2
 + \rho[\varphi]$
 where $\rho$ is determined from $\varphi$ by the equation of state
 as in Section \ref{enthalpysec}, and we will see that this satisfies \eqref{F2est}.

The energy associated to the wave equation \eqref{phiwave} is:
\begin{equation}
 W_{\!s}(t)= \Big(\frac{1}{2}{\sum}_{k \leq s }\int_\Omega\big(
 \eprime(t) \,|D_t^{k+1} \varphi(t)|^2 + \delta^{ij}
 \big( D_t^k \pave_i  \varphi(t)\big)\big(D_t^k\pave_j \varphi(t)\big)
 \big)\,\kve dy\Big)^{1/2}.
\end{equation}
Fixing $T > 0$ and $ s \geq 0$, we will consider solutions $\varphi$ to \eqref{phiwave}-\eqref{phiwaveic}
in the following space:
\begin{equation}
\H_T^{s+1} = \{ \varphi: [0,T] \times \Omega\to \R |
 D_t^k \varphi \in L^\infty\big([0,T],H^{s+1-k}(\Omega)\big), k = 0,..., s+1, 0 \leq t \leq T\}.
 \label{phispace}
\end{equation}

In order for the wave equation \eqref{phiwave}-\eqref{phiwaveic} to have a solution
$\varphi \in \H_T^{s+1}$, the initial data \eqref{phiwaveic} needs to satisfy
compatibility conditions. These are the conditions that there is a formal
power series solution in $t$: $\widehat{\varphi} = \sum \varphi_k t^k$ to \eqref{phiwave}
at $t = 0$ so that $\varphi_k|_{\pa \Omega} = 0$ for $k = 0,..., s$. See \eqref{nlincc'}.

We assume that we have the following a priori estimate for $\varphi$:
\begin{equation}
 {\tsum}_{k + |J| \leq 3} | D_t^{k} \pa_y^J \pave \varphi (t)|
 + |D_t^k \varphi(t)|
 \leq L, \quad \text{ in } [0,T] \times \Omega.
 \label{Lassumpwave}
\end{equation}
This assumption is needed because in general \eqref{phiwave} is a nonlinear
equation. If the equation of state is such that $e'(\varphi)$ is constant,
all of our results hold without \eqref{Lassumpwave}. We also assume the bound for
$M\!=\!M[\widetilde{x}]$ in \eqref{ubd2}.

The first goal of this section is to prove the following theorem:
\begin{theorem}
  \label{mainwavethm}
Fix $s \geq 0$. There are continuous functions
$G_s$, with
\begin{equation}
  G_{\!s} \!=\! G_{\!s}\big(M, L, T, W_{\!s-1}(0),\,
  {\sup}_{\,0 \leq t \leq T\,} (||\xve(t)||_{H^s(\Omega)}
  + ||V(t)||_{\X^s} + ||F_1(t)||_{s-2}) \big),
\end{equation}
so that if $\varphi\in \H^{s+1}_T$ satisfies \eqref{phiwave}, \eqref{Lassumpwave} holds,
then for $0\leq t\leq T$:
 \begin{equation}
  ||\varphi(t)||_{s+1,0}  + ||\pave \varphi(t)||_{s,0}
  \leq G_s \Big( W_s(0) + \int_0^t ||F_1(\tau)||_{s,0}
  + ||F_1(\tau)||_{s-1} + ||V(\tau)||_{\X^{s+1}}\, d\tau
   \Big),
  \label{dtpsibd}
 \end{equation}
 \begin{equation}
  ||\pave \varphi(t)||_{s}  \leq G_s(||\T \xve||_s + 1)\Big(||F_1(t)||_{s-1} +
  W_s(0) + \int_0^t ||F_1(\tau)||_{s,0}
  + ||F_1(\tau)||_{s-1} + ||V(\tau)||_{\X^{s+1}}
  \, d\tau
  \Big).
  \label{allpsibd}
\end{equation}
\end{theorem}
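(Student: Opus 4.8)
The plan is a $D_t$-differentiated energy estimate for the wave equation \eqref{phiwave}, combined with the interior elliptic estimates of Section~\ref{ellsec} to recover full spatial regularity. Since the nonlocal source $F_2$ in \eqref{Fsplit} couples the two assertions, I will prove \eqref{dtpsibd} and \eqref{allpsibd} together by induction on $s$, the base case $s=0$ being the plain energy identity, which has no commutators.

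\textbf{The energy bound \eqref{dtpsibd}.} Fix $k\le s$ and apply $D_t^k$ to \eqref{phiwave}. Since $D_t$ commutes with $\pa_{y^a}$, writing $\Dve\varphi=\kve^{-1}\pa_a(\kve\gve^{ab}\pa_b\varphi)$ gives
\[
\eprime\kve\, D_t^{k+2}\varphi-\pa_a\big(\kve\gve^{ab}\pa_b D_t^k\varphi\big)=\kve\, D_t^k F+\Rr_k,
\]
where $\Rr_k$ collects $[D_t^k,\eprime]D_t^2\varphi$ together with the terms in which at least one $D_t$ falls on $\kve$ or $\gve^{ab}$; schematically $\Rr_k=\sum_{1\le j\le k}\pa_a\big(D_t^j(\kve\gve^{ab})\,\pa_b D_t^{k-j}\varphi\big)$ plus lower-order terms with no more than one derivative on $\varphi$. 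Multiplying by $D_t^{k+1}\varphi$, integrating over $\Omega$, and integrating by parts in $\pa_a$ — all boundary terms vanish because $\varphi$, hence every $D_t^m\varphi$, vanishes on the fixed boundary $\pa\Omega$ — and summing over $k\le s$, the principal terms assemble into $\tfrac{d}{dt}W_s(t)^2$ up to: (i) errors $\int D_t(\eprime\kve)|D_t^{k+1}\varphi|^2$ and $\int D_t(\kve\gve^{ab})\,\pa_a D_t^k\varphi\,\pa_b D_t^k\varphi$, bounded by $C(M,L)W_s^2$ via \eqref{ubd2}, \eqref{Lassumpwave}; (ii) the source $\int\kve\,(D_t^k F)\,D_t^{k+1}\varphi$; and (iii) the commutator term $\int\Rr_k\,D_t^{k+1}\varphi$. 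Item (iii) is the heart of the matter: after moving $\pa_a$ back onto $D_t^{k-j}\varphi$, every resulting term is bounded by $W_s$ times a polynomial in $M,L,||V||_{\X^{s+1}}$ and $\sup_t||\xve||_{H^s(\Omega)}$, \emph{except} the one carrying a second spatial derivative on $\varphi$, namely a term involving $\gve^{ab}\pa_a\pa_b D_t^m\varphi$ with $m\le s-1$. For this one I use the equation: $\gve^{ab}\pa_a\pa_b D_t^m\varphi=\kve\,\Dve D_t^m\varphi-\big(\pa_a(\kve\gve^{ab})\big)\pa_b D_t^m\varphi$, and $\Dve D_t^m\varphi=D_t^m\Dve\varphi+(\text{l.o.})=\eprime D_t^{m+2}\varphi-D_t^m F+(\text{l.o.})$, so the dangerous term is traded for $||D_t^{m+2}\varphi||_{L^2(\Omega)}$ (note $m+2\le s+1$, hence controlled by $W_s$) plus $||D_t^m F||_{L^2(\Omega)}$ plus genuinely lower-order pieces handled by the inductive hypothesis at level $s-1$. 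For the $F_2$ contributions to (ii) and to these trades I invoke \eqref{F2est} together with the Poincar\'e-type bound $||\varphi||_s\lesssim||\pave\varphi||_{s-1}+||\varphi||_{s,0}$ and the level-$(s-1)$ case of \eqref{allpsibd} to control $||\pave\varphi||_{s-1}$. Collecting, $\tfrac{d}{dt}W_s^2\le\mathcal{P}(t)\,W_s^2+C\,W_s\big(||F_1||_{s,0}+||F_1||_{s-1}+||V||_{\X^{s+1}}+(\text{lower-level terms})\big)$, with $\mathcal{P}$ controlled by $M,L$ and $\sup_t(||\xve||_{H^s(\Omega)}+||V||_{\X^s})$; dividing by $W_s$ and applying Gronwall's inequality gives \eqref{dtpsibd}, using $W_s(0)\gtrsim||\varphi(0)||_{s+1,0}+||\pave\varphi(0)||_{s,0}$ (constants depending on $M$) and that $W_{s-1}(0)$ enters through the lower-level terms.

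\textbf{The full-regularity bound \eqref{allpsibd}.} Rewrite \eqref{phiwave} as $\Dve\varphi=\eprime D_t^2\varphi-F$ and apply the elliptic estimates \eqref{sobell} and \eqref{sobellmix} for $\pave$ of a function vanishing on $\pa\Omega$. Inducting on the spatial order $\ell$ (base $\ell=0$ being \eqref{dtpsibd}), \eqref{sobellmix} controls $||\pave\varphi||_{k,\ell}$ for $k+\ell\le s$ by $||\eprime D_t^2\varphi-F||_{k,\ell-1}+(||D_t\xve||_s+||\xve||_s)||D_t^k\varphi||_{L^2(\Omega)}$; the $D_t^2\varphi$ term is of strictly lower spatial order, the $F_1$ term contributes $||F_1||_{s-1}$, and the $F_2$ term contributes $\lesssim||\varphi||_s\lesssim||\pave\varphi||_{s-1}+||\varphi||_{s,0}$, again closed by the level-$(s-1)$ hypothesis and by \eqref{dtpsibd}. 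Summing over $k+\ell\le s$ and using \eqref{sobell} at top order — which contributes the factor $||\T\xve||_{H^s(\Omega)}+||\xve||_{H^s(\Omega)}\le C(||\T\xve||_s+1)$ — together with the bound \eqref{dtpsibd} just obtained gives \eqref{allpsibd}.

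\textbf{Main obstacle.} The delicate point is the bookkeeping in item (iii): one must eliminate \emph{every} second spatial derivative of $\varphi$ in favour of time derivatives (via the equation) so that the estimate truly closes on $W_s$, while simultaneously checking that the coefficients $\xve,V$ are never differentiated more than $s+1$ times, so that only $||V||_{\X^{s+1}}$ and no higher norm enters the integral in \eqref{dtpsibd}. The coupling with \eqref{allpsibd} through the nonlocal term $F_2$ is a secondary issue, resolved by carrying the two estimates through the induction in tandem.
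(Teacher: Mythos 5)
Your proposal is correct and follows the same route as the paper: a $D_t$-differentiated energy estimate for $W_s$, Gronwall, then the elliptic estimate \eqref{sobell}/\eqref{sobellmix} to pass from \eqref{dtpsibd} to \eqref{allpsibd}, with the whole thing closed by induction on $s$ (the paper encapsulates the energy differential inequality in Lemma~\ref{dtW} and the elliptic step in \eqref{alldt1}). The only genuine difference is one of packaging: you write the wave operator in Lagrangian divergence form $\kve^{-1}\pa_a(\kve\gve^{ab}\pa_b\cdot)$, so that $D_t^k$ passes straight through $\pa_a$ and the error $\Rr_k$ arises from $D_t$ hitting the coefficients $\kve\gve^{ab}$; the paper instead keeps the Eulerian $\Dve$ and tracks the commutators $[D_t^k,\pave]$ and $[D_t^k,\pave]\pave$ explicitly via \eqref{vectcommdt}. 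Both routes reach the same dangerous term (a second spatial derivative of $D_t^m\varphi$, $m\le s-1$, weighted by a time-derivative of the metric), and both dispose of it by the same trade: you invoke the equation directly on the spot, while the paper packages the trade into the elliptic estimate \eqref{alldt1F} and then substitutes that into \eqref{showsthat}. One small caveat worth noting: your ``dangerous term'' actually appears with coefficient $D_t^j(\kve\gve^{ab})$, $j\ge1$, not $\gve^{ab}$, so the replacement $\gve^{ab}\pa_a\pa_b\to\Dve$ is not literal; for small $j$ the pointwise bound \eqref{ubd2} on $D_t^j(\kve\gve^{ab})$ reduces it to control of $\|\pave\varphi\|_{s-1,1}$, which is exactly where the level-$(s-1)$ case of \eqref{allpsibd} that you invoke comes in (in the paper, this is the same step, done via the appearance of $W_{s-1}(t)$ in the constant $G_s'$). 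Also, the energy you produce carries $\pave D_t^k\varphi$ rather than the paper's $D_t^k\pave\varphi$; these differ by $[D_t^k,\pave]\varphi$, which is lower-order, but that identification should be stated. Finally, in the Gronwall step you want the coercivity $W_s(t)\gtrsim\|\varphi(t)\|_{s+1,0}+\|\pave\varphi(t)\|_{s,0}$ at time $t$, not only at $t=0$.
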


\begin{proof}
  When $s = 0$, by Lemma \ref{dtW} there is a continuous function $G'_0 = G'_0(M)$
  and a polynomial $P_0$ so that:
  \begin{equation}
   \frac{d}{dt} W_0 \leq G'_0
   \big( ||F_1||_{L^2(\Omega)}
   +  P_0(L, ||\varphi||_{L^2(\Omega)} )W_0\big).
   \label{y0}
  \end{equation}
  By Poincar\'{e}'s inequality and H\"{o}lder's inequality,
  we have $||\varphi||_{L^2(\Omega)}
  \leq C(M) ||\pave \varphi||_{L^2(\Omega)} \leq C(M)L$.
  Multiplying both sides of \eqref{y0} by the integrating
  factor $e^{-(G_0' +  P_0(L, C(M)L) )t)}$ and integrating, we get:
  \begin{equation}
   W_0(t) \leq G_0\Big( W_0(0)
   + \int_0^t ||F_1(\tau)||_{L^2(\Omega)} \, d\tau\Big),
  \end{equation}
  for a continuous function $G_0 = G_0(M, L, T)$.
  We now assume that we have the result for $s = 0, ..., m-1$.
  Let $G'_m$ be as in Lemma \ref{dtW}. By the inductive
  assumption, \eqref{dtWest} gives:
  \begin{equation}
   \frac{d}{dt} W_m \leq G'_m
   \Big( \big(1 + P_m(L, W_{m-1}(0), {\sup}_{\,0 \leq t \leq T\,} ||F_1||_{m-2})\big) W_m
   + ||F_1||_{m,0} + ||F_1||_{m-1} + ||V||_{\X^m} \Big),
  \end{equation}
  for a polynomial $P_m$,
  and so multiplying by the integrating factor
  $e^{-G'_m(1 + P_m) t}$ and integrating gives the result
  for $s = m$ as well. The estimate \eqref{allpsibd} then follows from
  \eqref{dtpsibd} and \eqref{alldt1}.
\end{proof}
We have the following energy estimate proven in Section \ref{sec:F3}:
\begin{lemma}
  \label{dtW}
  For each $s \geq 0$,
  there is a continuous function $G'_s(t) \!=\! G'_s( M,
  ||\xve(t)||_s, ||V(t)||_{\X^{s}},W_{s-1}(t))$ and a polynomial $P$ so that
  if \eqref{phiwave}-\eqref{Lassumpwave} hold, then:
  \begin{equation}
   \frac{d}{dt} W_s\leq G'_s \Big(
   W_s + ||F_1||_{s,0} + ||F_1||_{s-1} + ||V||_{\X^{s+1}}  +
    P(L, W_{s-1}, ||F_1||_{s-2})W_{s}\Big).
   \label{dtWest}
  \end{equation}
\end{lemma}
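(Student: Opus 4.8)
The plan is to differentiate the energy $W_s$ directly and estimate each resulting term, proceeding by induction on $s$. First I would compute $\frac{d}{dt}\big(\tfrac12 W_s^2\big)$ by bringing $D_t$ inside the spatial integrals; this produces two kinds of terms. The principal contributions come from $\eprime(t)\,D_t^{k+1}\varphi\, D_t^{k+2}\varphi$ and $\delta^{ij}(D_t^k\pave_i\varphi)(D_t^{k+1}\pave_j\varphi)$ summed over $k\leq s$, plus lower-order pieces coming from $D_t$ hitting $\eprime$, $\kve$, and the coefficients $A^a_{\m i}$ inside $\pave$. To handle the principal part I would use the equation \eqref{phiwave} to rewrite $\eprime D_t^{k+2}\varphi = D_t^k(\Dve\varphi + \F) + [\,D_t^k,\eprime\,D_t^2\,]\varphi$ (up to lower order), and then integrate by parts in the $\Dve$ term, using $\varphi = 0$ on $\pa\Omega$ so that the boundary term vanishes and the integration-by-parts exactly cancels the $\delta^{ij}(D_t^k\pave_i\varphi)(D_t^{k+1}\pave_j\varphi)$ term modulo commutators $[D_t^k,\pave_j]$ and $[D_t,\pave_i]$. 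The commutators are controlled using \eqref{eq:highercommutatorssec2}--\eqref{eq:highercommutatorssec2exp} (in the smoothed form \eqref{dtpaSdefsmooth}--\eqref{sepsdef}), which express $[D_t^k,\pave]$ in terms of $\pave D_t^{\leq k-1}\tV$; these are bounded by $\|V\|_{\X^{s+1}}$ together with the pointwise bound $M$ from \eqref{ubd2} and its consequence $|\pa_y^\ell A^a_{\m i}|\leq C(M)$.

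Next I would organize the source and error terms. The contribution $\int \eprime D_t^{k+1}\varphi\, D_t^k\F_1\, \kve\,dy$ is bounded by $W_s\,\|F_1\|_{s,0}$. The piece involving $\F_2$ is handled via the assumption \eqref{F2est}: $\|D_t^k\F_2\|_{L^2(\Omega)}\leq P_1(\|\varphi\|_{s+1,0}+\|\varphi\|_s)$, and since $\|\varphi\|_{s+1,0}$ and $\|\pave\varphi\|_{s,0}$ are each comparable to $W_s$ up to Poincaré (using $\varphi = 0$ on $\pa\Omega$), this term contributes $\leq P(L,W_{s-1},\|F_1\|_{s-2})\,W_s$ — here the polynomial dependence on $W_{s-1}$ rather than $W_s$ is available because the lower-order part of $\|\varphi\|_s$ in \eqref{F2est}'s first estimate can be absorbed into $W_{s-1}$-type quantities. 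The terms where $D_t$ falls on $\eprime$ or on $\kve$ produce factors of $D_t\eprime = \eprime'(\varphi)D_t\varphi$ and $D_t\kve = \kve\div\tV$, both pointwise bounded by $C(M,L)$, giving a contribution $\lesssim C(M,L)W_s^2$. Finally, the commutator errors, after the integrations by parts, leave lower-order terms that I would estimate by interpolation (Gagliardo–Nirenberg) against $W_s$ and $W_{s-1}$, picking up the $\|V\|_{\X^{s+1}}$ factor from the top-order derivative of $\tV = \ssm V$ appearing in \eqref{sepsdef}; note $\|\ssm V\|_{H^\sigma}\lesssim\|V\|_{H^\sigma}$ by Lemma for smoothing, so no $\ve$-loss occurs. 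Collecting everything yields
\begin{equation}
 \frac{d}{dt}\big(\tfrac12 W_s^2\big) \leq G'_s\Big(W_s + \|F_1\|_{s,0} + \|F_1\|_{s-1} + \|V\|_{\X^{s+1}} + P(L,W_{s-1},\|F_1\|_{s-2})W_s\Big)W_s,
\end{equation}
and dividing by $W_s$ gives \eqref{dtWest}.

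I expect the main obstacle to be the bookkeeping of the top-order commutator terms: specifically, showing that after integrating by parts in the $\Dve$ term, every term that survives with $s+1$ total derivatives on $\varphi$ is matched and cancelled, so that only terms with at most $s$ derivatives on $\varphi$ (times at most one top-order derivative on $\xve$ or $V$, which supplies the $\|V\|_{\X^{s+1}}$ and $\|\xve\|_{H^s}$ factors) remain. The delicate point is that $[D_t^k,\pave_j]$ involves $\pave D_t^{k-1}\tV$, whose highest term has $k$ derivatives total; paired against $D_t^k\pave_i\varphi$ this is formally top order, and one must integrate by parts once more (moving a derivative off $\varphi$) or use that the structure $\delta^{ij}$ is symmetric so the genuinely top-order antisymmetric-looking piece drops. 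This is the same mechanism as in the standard energy estimate for quasilinear wave equations on a moving domain, adapted here to the Lagrangian/smoothed geometry, and the elliptic estimates of Section \ref{ellsec} are what let us close it without losing a derivative of $\xve$. A secondary technical point is verifying the precise $W_{s-1}$ (rather than $W_s$) dependence inside the polynomial $P$, which is needed so that the induction in Theorem \ref{mainwavethm} can treat $P$ as a known constant at stage $s$; this follows from the inductive control of $W_{s-1}$ and the structure of \eqref{F2est}.
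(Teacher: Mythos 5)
Your overall plan matches the paper's proof of this lemma (which appears in the appendix, Section \ref{sec:F3}): differentiate $W_s^2$, use the equation to replace $\eprime D_t^{k+2}\varphi$, integrate by parts against $D_t^{k+1}\varphi$, control the remaining commutators via \eqref{vectcommdt} and Lemma \ref{gestlem}, and treat $\F_2$ via \eqref{F2est}. The lower-order pieces from $D_t$ hitting $\eprime$ and $\kve$ are handled exactly as you say. So far so good.

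However, there is a genuine gap in how you propose to close the estimate, and your diagnosis of the ``delicate point'' misidentifies the mechanism. The troublesome commutator term is not the one where all time derivatives fall on $\tV$ (that one is easily handled by putting $\pave D_t^{s-1}\tV$ in $L^2$ and $\pave^2\varphi$ in $L^\infty$ using \eqref{Lassumpwave}, giving a factor $L\,\|V\|_{\X^{s+1}}$ --- no antisymmetry cancellation or extra integration by parts is needed or used). The delicate contribution comes from terms like $(\pave\ssm V)(D_t^{s-1}\pave^2\varphi)$ inside $[D_t^s,\pave_j]\pave_i\varphi$, which after Hölder leaves $\|\pave^2\varphi\|_{s-1,0}$. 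This quantity is \emph{not} controlled by $W_s$, which only records one spatial derivative of $\varphi$ at a time; nor is it reachable by Poincaré, contrary to your claim that $\|\varphi\|_s$-type quantities ``can be absorbed into $W_{s-1}$-type quantities.'' The paper resolves this by first proving an intermediate inequality in which $\|\pave\varphi\|_s$ and $P(L,\|\varphi\|_s)W_s$ appear explicitly on the right (see \eqref{showsthat}), and then applying the product estimate \eqref{product} and, crucially, the elliptic estimate \eqref{alldt1} to convert $\|\pave\varphi\|_s$ and $\|\varphi\|_s$ into $W_s$, $\|F\|_{s-1}$, and quantities controllable by $W_{s-1}$. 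You mention the elliptic estimates of Section \ref{ellsec} only in passing at the very end; they are not an auxiliary detail but the step that makes the estimate close. Without inserting \eqref{alldt1}, your claimed final inequality has an unaccounted-for $\|\pave\varphi\|_s$ on the right-hand side.

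A secondary point: your concern about matching and cancelling ``every term that survives with $s+1$ total derivatives on $\varphi$'' after integrating by parts is overstated. The commutator errors are not cancelled --- they are estimated. The integration by parts eliminates precisely the term $\int\delta^{ij}(D_t^k\pave_i\varphi)(\pave_j D_t^{k+1}\varphi)\,\kve\,dy$ against the corresponding $\Dve$-contribution; everything else survives and must be bounded, which is what the chain \eqref{vectcommdt} $\to$ \eqref{product} $\to$ \eqref{alldt1} accomplishes.
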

Theorem \ref{mainwavethm} relies on the  following consequence of the elliptic estimate \eqref{sobellmix} and is proven in Section \ref{sec:F3}:
\begin{lemma}
 There is a continuous function
  $G_s'' \!=\! G_s''(M,{}_{\!} ||\xve||_s)$ and $P_{\!s}$
  so that if
  \eqref{phiwave}-\eqref{Lassumpwave} hold, then:
\begin{equation}
 ||\pave \varphi||_s \leq G_s''(||\T \xve||_{H^s} + ||V||_{s})
  \big( ||\varphi||_{s+1,0} +
 ||\pave \varphi||_{s,0}
 + ||F||_{s-1}
 + P_s(L,||\varphi||_{s,0},
 ||\pave \varphi||_{s-1,0}, ||F_1||_{s-2})\big).
 \label{alldt1}
\end{equation}
\end{lemma}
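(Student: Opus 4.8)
I would prove this by induction on the spatial differentiation order, reducing the full mixed norm $||\pave \varphi||_{s}=\sum_{k+\ell\le s}||\pave\varphi||_{k,\ell}$ to the quantities on the right of \eqref{alldt1}. The $\ell=0$ part is harmless: $\sum_{k\le s}||\pave\varphi||_{k,0}\lesssim ||\pave\varphi||_{s,0}$, which already appears on the right-hand side. So it suffices to control $||\pave\varphi||_{k,\ell}$ for $1\le\ell$ and $k+\ell\le s$, and I would do this by induction on $\ell$, assuming at stage $\ell$ that every $||\pave\varphi||_{k',\ell'}$ with $\ell'<\ell$ and $k'+\ell'\le s$ is already dominated by the right-hand side of \eqref{alldt1}.

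For the inductive step, apply the elliptic estimate \eqref{sobellmix} with $f=\varphi$ — legitimate since $\varphi=0$ on $[0,T]\times\pa\Omega$ by \eqref{phiwave} — to get $||\pave\varphi||_{k,\ell}\le C_s'\big(||\Dve\varphi||_{k,\ell-1}+(||D_t\xve||_s+||\xve||_s)||D_t^k\varphi||_{L^2(\Omega)}\big)$, and then substitute the equation itself, $\Dve\varphi=\eprime D_t^2\varphi-F$, so that $||\Dve\varphi||_{k,\ell-1}\le ||\eprime D_t^2\varphi||_{k,\ell-1}+||F||_{k,\ell-1}$; since $k+\ell-1\le s-1$ the last term is $\le ||F||_{s-1}$, already present in \eqref{alldt1}. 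For the remaining group, $||D_t^k\varphi||_{L^2(\Omega)}\le ||\varphi||_{s+1,0}$; from \eqref{xvedef} one has $D_t^m\xve=\ssm D_t^{m-1}V$ for $m\ge 1$, and since the smoothing operators are bounded on every $H^\ell(\Omega)$ this gives $||D_t\xve||_s\lesssim ||V||_s$; finally, at top order $||\xve||_s$ is reduced to $C(M_0)+||\T\xve||_{H^s}$ using the tensor estimate \eqref{sobtensor} together with the facts that $\div\xve\equiv 3$ and $\curl\xve\equiv 0$ (at lower orders $||\xve||_s$ is simply absorbed into $G_s''=G_s''(M,||\xve||_s)$), while the constant $C(M_0)$ is absorbed since $||\T\xve||_{H^s}+||V||_s$ is bounded below by a fixed positive constant coming from $\T x_0$. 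This produces exactly the prefactor $G_s''(||\T\xve||_{H^s}+||V||_s)$ multiplying $||\varphi||_{s+1,0}$.

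The main point is the term $||\eprime D_t^2\varphi||_{k,\ell-1}$ with $\eprime=e'(\varphi)$. Expanding $D_t^m\big(e'(\varphi)D_t^2\varphi\big)$, $m\le k$, by the Leibniz and Fa\`a di Bruno formulas gives a finite sum of products $e^{(p)}(\varphi)(D_t^{q_1}\varphi)\cdots(D_t^{q_n}\varphi)$ in which the total number of $D_t$'s is at most $m+2\le k+2$. I would estimate each product in $H^{\ell-1}(\Omega)$ by a Moser/Gagliardo--Nirenberg inequality, placing all spatial derivatives on the factor carrying the most $D_t$'s; the other factors then carry at most three derivatives of $\varphi$ and are bounded in $L^\infty$ by a polynomial in $L$ by \eqref{Lassumpwave} — this is the only place that assumption, together with $0<e_1\le\eprime\le e_2$, is used, and correspondingly for a linear equation of state, where $e'$ is constant, these terms vanish and \eqref{Lassumpwave} is not needed. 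The distinguished factor is some $D_t^b\varphi$ with $b+(\ell-1)\le s+1$, and for $\ell\ge 2$ one has $||D_t^b\varphi||_{H^{\ell-1}(\Omega)}\lesssim ||D_t^b\varphi||_{L^2(\Omega)}+||\pave D_t^b\varphi||_{H^{\ell-2}(\Omega)}\le ||\varphi||_{s+1,0}+||\pave\varphi||_{b,\ell-2}$, whose last term is at spatial level $\ell-2<\ell$ and hence controlled by the inductive hypothesis, while for $\ell=1$ one simply uses $||D_t^b\varphi||_{L^2(\Omega)}\le ||\varphi||_{s+1,0}$. Everything not genuinely of top order is collected into $P_s$: the polynomial dependence on $L$, $||\varphi||_{s,0}$ and $||\pave\varphi||_{s-1,0}$ is what the Moser estimates on the low-order factors and the chain-rule coefficients produce, and the dependence on $||F_1||_{s-2}$ enters when a low-order norm of $\pave\varphi$ has to be re-expanded through the equation again and one bounds $||F||_{\le s-2}\le ||F_1||_{s-2}+||F_2||_{s-2}$ using \eqref{F2est}. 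Summing the finitely many $(k,\ell)$ and closing the induction on $\ell$ gives \eqref{alldt1}.

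The hard part will be purely the bookkeeping of derivative orders: one must verify that the only norms of $\varphi$ surviving at top order are $||\varphi||_{s+1,0}$, $||\pave\varphi||_{s,0}$ and $||F||_{s-1}$, that every other contribution is either at a strictly lower spatial level (handled inductively) or of total order $\le s$ (hence lands in $P_s$), and that no coefficient $e^{(p)}(\varphi)$ or product thereof ever costs more than a polynomial in $L$ and $||\varphi||_{s,0}$. The subtle point is keeping the top-order dependence on $\xve$ both linear and of the tangential form $||\T\xve||_{H^s}$ rather than $||\xve||_{H^{s+1}}$ — which is exactly why \eqref{sobellmix} (and at top order the tensor estimate \eqref{sobtensor} with $\div\xve$, $\curl\xve$ constant), rather than a crude elliptic estimate, must be used throughout.
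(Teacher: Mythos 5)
Your high-level strategy (apply the elliptic estimate, substitute the wave equation for $\Dve\varphi$, handle $\eprime D_t^2\varphi$ by a Leibniz/Fa\`a di Bruno expansion with Moser-type bounds, and close by a double induction that trades one space derivative on $\varphi$ for $\pave\varphi$ at one lower spatial level) is precisely what the paper does — the paper simply packages the nonlinear-coefficient step as \eqref{nlinest} (Lemma \ref{gestlem}) rather than spelling out the expansion, and uses $\pa_{y^a}\varphi = A^i_{\m a}\pave_i\varphi$ with \eqref{product} and Lemma \ref{inverselemma} for the exchange, but the content is the same.

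The one place you genuinely depart is the treatment of the prefactor $||\T\xve||_{H^s}$, and this is where your argument has a gap. You apply \eqref{sobellmix} uniformly in $(k,\ell)$, including the purely spatial case $k=0,\ \ell=s$, which produces the factor $||D_t\xve||_s + ||\xve||_s$; you then try to manufacture the $||\T\xve||_{H^s}$ dependence after the fact by applying \eqref{sobtensor} to $\alpha=\xve$ and using $\div\xve\equiv 3$, $\curl\xve\equiv 0$. Two problems. First, this step is circular as a ``reduction'': the constant $C_s$ in \eqref{sobtensor} itself depends on $||\xve||_{H^s(\Omega)}$ (see \eqref{eq:constantssection5}), and the term $\sum_{j\le s}||\T^j\xve||_{L^2}$ is a priori bounded only by $||\xve||_{H^s}$ or, with some commuting, by $||\T\xve||_{H^{s-1}}$ — you do not genuinely exchange $||\xve||_{H^s}$ for $||\T\xve||_{H^s}$, you exchange it for $C(M_0,||\xve||_{H^s})\cdot||\T\xve||_{H^{s-1}}$, which is something your final $G_s''$ already absorbs. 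Second, and more to the point, the detour is unnecessary and masks the real mechanism: the paper handles the $k=0$, $\ell'=s$ case using \eqref{sobell} (the purely spatial Dirichlet estimate, with $C_s=C_s(M_0,||\xve||_{H^s})$), and it is \eqref{sobell} itself, not any auxiliary argument, that produces the $(||\T\xve||_{H^s}+||\xve||_{H^s})\,||\varphi||_{L^2}$ structure; \eqref{sobellmix} is reserved for $k\ge 1$, where there is at least one time derivative to peel off. If you insist on using \eqref{sobellmix} everywhere you simply will not see $||\T\xve||_{H^s}$ arise — you will get a bound with prefactor $||V||_s$ plus constants, which is compatible with \eqref{alldt1} (since, as you note, $||\T\xve||_{H^s}+||V||_s$ is bounded below), but then the $\div\xve/\curl\xve$ excursion accomplishes nothing and should be deleted. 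Either commit to \eqref{sobell} at $k=0$ as the paper does, or state honestly that your bound has no $||\T\xve||_{H^s}$ factor and that this is nonetheless sufficient; the hybrid route you describe is not correct as written.

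One smaller point: for the inequality $||D_t^b\varphi||_{H^{\ell-1}}\lesssim||D_t^b\varphi||_{L^2}+||\pave D_t^b\varphi||_{H^{\ell-2}}$ you should make explicit that you also commute $\pave$ past $D_t^b$ using \eqref{dtpavecomm}/\eqref{vectcommdt}, which is what produces the lower-order $V$-dependent contributions that must land in $P_s$; the paper's route via $\pa_{y^a}\varphi=A^i_{\m a}\pave_i\varphi$ and Lemma \ref{inverselemma} makes that bookkeeping cleaner.
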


The following result is used in Section \ref{vlwpsec}
to replace the assumption \eqref{Lassumpwave} with an assumption at $t = 0$:
\begin{cor}
  \label{bootstrapped}
  Fix $r \geq 7$. Suppose that for some $T_1, K > 0$, we have the following
  estimate:
  \begin{equation}
   {\sup}_{\,0 \leq t \leq T_1}
   \big(||\xve(t)||_{r} +
   ||V(t)||_{\X^{r+1}} +
   ||F_1(t)||_{r,0} + ||F_1(t)||_{r-1} \big)\leq K,
   {}
  \end{equation}
  and that \eqref{phiwave}-\eqref{phispace} hold  on $[0,T_1]$,
 and
  \begin{equation}
   {\tsum}_{k + |J| \leq 3} |( D_t^k\pa_y^J \pave\varphi)(0)|
   + |D_t^k \varphi(0)| \leq L_0.
   {}
  \end{equation}
There are continuous functions $Q_r$
and $\underline{G}_r = \underline{G}_r(M, L_0, W_5(0),K)$ so that if $T$ satisfies:
\begin{equation}
 T Q_r(M, L_0, W_5(0), K, T_1) \leq 1, \quad \text{ and }
 T \leq T_1,
 \label{Tgr}
\end{equation}
then for $0 \leq t \leq T$ the estimate
\eqref{allpsibd} holds with $G_r$ replaced with $\underline{G}_r$.

\end{cor}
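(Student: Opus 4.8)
The plan is to use a continuity/bootstrap argument to convert the a priori hypothesis \eqref{Lassumpwave} into a consequence of the hypotheses at $t=0$. Concretely, define
\[
\Theta(t) = {\tsum}_{k + |J| \leq 3} | D_t^{k} \pa_y^J \pave \varphi (t)| + |D_t^k \varphi(t)|,
\]
so that \eqref{Lassumpwave} is the statement $\Theta(t) \leq L$ on $[0,T]$, and $\Theta(0) \leq L_0$ by hypothesis. Choose $L = 2 C_* L_0$ where $C_*$ is a Sobolev embedding constant (recall $r \geq 7$, so $H^{r-3}(\Omega) \hookrightarrow C^3(\Omega)$ controls all the derivatives appearing in $\Theta$), and let $T_* \leq T_1$ be the largest time so that $\Theta(t) \leq L$ on $[0,T_*]$. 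On this interval both \eqref{ubd2} (for $M = M[\widetilde{x}]$, controlled by $K$ and $T_1$ since $\widetilde x = x_0 + \int_0^t \ssm V$) and \eqref{Lassumpwave} hold, so Theorem \ref{mainwavethm} applies with its constant $G_r$ now expressible as a continuous function of $M, L, W_{r-1}(0)$ and $K$; feeding in the bound on $||F_1||$, $||V||_{\X^{r+1}}$ from $K$ we obtain
\[
||\varphi(t)||_{r+1,0} + ||\pave \varphi(t)||_{r,0} \leq \underline{G}_r \big( W_r(0) + t K \big), \qquad 0 \leq t \leq T_*.
\]

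The key step is then to bound $W_r(0)$ itself in terms of $W_5(0)$, $L_0$, $M$, and $K$. This uses the compatibility conditions and the equation \eqref{phiwave} at $t = 0$: the quantities $D_t^{k+1}\varphi(0)$ and $D_t^k \pave_i \varphi(0)$ for $k \leq r$ are determined (via the formal power series, as in the discussion following \eqref{phispace}) by $\varphi_0, \varphi_1$, by space derivatives of $\xve$ and $V$ at $t = 0$ (bounded by $K$), by the coefficient $\eprime$ and its derivatives, and by $F_1(0), \dots, D_t^{r-2} F_1(0)$ together with $F_2$ through the estimate \eqref{F2est}; the latter is controlled by $||\varphi(0)||_{r,0}$ and $||\varphi(0)||_{r-1}$, which by the same recursion reduce to $\varphi_0,\varphi_1$ and hence are bounded in terms of $L_0$ (or one absorbs this circularity by noting that the top-order data $D_t^r\varphi(0)$ enters $W_r(0)$ linearly and $F_2$ only contributes lower-order terms). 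The upshot is a bound $W_r(0) \leq \underline G_r(M, L_0, W_5(0), K)$, so the displayed estimate above becomes $||\varphi(t)||_{r+1,0} + ||\pave\varphi(t)||_{r,0} \leq \underline{G}_r(1 + tK)$ with $\underline G_r$ as in the statement.

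Finally, to close the bootstrap, use Sobolev embedding again: on $[0,T_*]$,
\[
\Theta(t) \leq C_* \big( ||\pave \varphi(t)||_{r,0} + ||\varphi(t)||_{r-3+1,0}\big) \leq C_* \underline{G}_r (1 + t K) \leq C_* \underline{G}_r (1 + T K) \leq 2 C_* \underline G_r,
\]
provided $T K \leq 1$. Here we have to be slightly careful: the right-hand side must be bounded strictly below $L = 2 C_* L_0$, which forces an inequality $\underline G_r < L_0$ that need not hold; the standard fix is to redefine $L$ not as a multiple of $L_0$ but to include the output of the first application of Theorem \ref{mainwavethm}, i.e. run the argument with $L$ chosen as $C_*(\underline{G}_r(M,L_0,W_5(0),K) \cdot 2 + 1)$ up front — a legitimate choice since $\underline G_r$ depends on $L_0$ but not on $L$, provided we have verified that $M$ in \eqref{ubd2} does not itself depend on $L$ (it does not, being determined by $\xve$ alone). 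With $L$ so chosen and $T Q_r \leq 1$ for a suitable continuous $Q_r$ absorbing all the constants (including the $t = 0$ bound $W_r(0)$ and the embedding constant), the bootstrap inequality $\Theta(t) < L$ propagates, so $T_* = T$, \eqref{Lassumpwave} holds on all of $[0,T]$ with this $L$, and \eqref{allpsibd} with $G_r$ replaced by $\underline G_r$ follows directly from \eqref{allpsibd} of Theorem \ref{mainwavethm} together with the bounds on $W_r(0)$ and on $||F_1||$ by $K$. The main obstacle is the bookkeeping in this last step — organizing the choice of $L$ and the functional dependencies so that the bootstrap genuinely closes without circular dependence of $L$ on itself — together with the reduction of $W_r(0)$ to $W_5(0)$ via the compatibility recursion, which is where the equation of state hypotheses and \eqref{F2est} are essential.
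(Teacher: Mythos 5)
Your overall strategy --- a continuity argument on $\Theta(t)=L(t)$ with the bootstrap set $\{t:\Theta(t)\le L\}$ --- is the same as the paper's, but the step that closes the bootstrap does not work, and you have in fact flagged the problem yourself. Your bound $\Theta(t)\le C_*\underline{G}_r(1+tK)$ is a \emph{static} estimate: it does not reduce to $\Theta(0)\le L_0$ as $t\to 0$, so no choice of small $T$ makes it beat the threshold $L$. Your proposed repair --- take $L=C_*(2\underline{G}_r+1)$ ``up front'' --- is circular: the constant $G_r$ of Theorem \ref{mainwavethm} depends on $L$ through the a priori assumption \eqref{Lassumpwave}, so $\underline{G}_r=G_r(M,L,T,\dots)$ cannot be evaluated before $L$ is fixed, and the self-consistency condition $C_*G_r(M,L,\dots)(1+TK)\le L$ need not admit a solution (e.g.\ if $G_r$ grows exponentially in $L$). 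The paper avoids this entirely by estimating the \emph{increment} rather than the value: since $\Theta(t)-\Theta(0)$ is bounded by a time integral whose integrand costs one more derivative, Sobolev embedding together with \eqref{dtpsibd}--\eqref{allpsibd} (applied at level $6$, valid on the bootstrap set with $L=2L_0$) gives $\Theta(t)\le L_0+T\,P_0(M,2L_0,W_5(0),K)$. Choosing $T$ so that $TP_0\le L_0/2$ yields $\Theta(t)\le\tfrac32 L_0<2L_0$, a strict improvement that closes the continuity argument with $L=2L_0$ fixed in advance. This time-integration step is the missing idea.

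Secondarily, your detour through bounding $W_r(0)$ by $W_5(0)$ via the compatibility recursion is not needed (and is itself admitted to be circular in your write-up): estimate \eqref{allpsibd} carries $W_s(0)$ explicitly on its right-hand side, so the corollary only requires replacing the constant $G_r$ by one evaluated at the bootstrap value $L=2L_0$; the paper simply sets $\underline{G}_r=G_r(M,2L_0,\dots)$ and $Q_r=\max(1,(2L_0)^{-1})P_0$.
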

\begin{proof}
  Let $L(t) = \sum_{|J| + k \leq 3} | D_t^k \pa_y^J \pave \varphi(t)|
  + |D_t^k \varphi(t)| $. By Sobolev embedding
  $L(t) \leq C (||\pave \varphi(t)||_{5} + ||\varphi(t)||_5)$. Using
  the product estimate \eqref{product} we have
  $||\varphi||_5 \leq C'(M, ||\xve||_{H^7(\Omega)}, ||V||_{\X^7})
  ( ||\varphi||_{5,0} + ||\pave \varphi||_{4})$. Integrating once in time
  and then using the estimates \eqref{allpsibd} and \eqref{dtpsibd},
  we have:
  \begin{equation}
   L(t) \leq L(0) + C' \int_0^t
    ||\varphi(\tau)||_{6,0} + ||\pave \varphi(\tau)||_6\, d\tau
    \leq L(0) + T P_0,
    \label{bootstrap}
  \end{equation}
  for a polynomial $P_0$ with $P_0 = P_0\big(M, L, W_5(0), \sup_{\,0 \leq t \leq T}
  \big(||\xve(t)||_{H^7(\Omega)} + ||V(t)||_{\X^7} + ||F_1(t)||_7\big)\big)$.

We take $T \leq T_* \equiv \min(T_0, T_1)$ with $T_0$ defined by:
 \begin{equation}
  T_0 P_0 (M, 2L_0, W_5(0), K_1) \leq  L_0/2.
  {}
 \end{equation}
 Let $S \!=\! \{ 0\! \leq \!t \!\leq \!T_*\! : L(t) \leq 2L_0\}$.
 Then $S$ is nonempty, connected and closed. If $t \in S$
 is an interior point then
  \eqref{bootstrap} and the fact that $t \leq T_0$
 shows that $t + \delta \in S$
  for sufficiently small $\delta$,
so
 $L \leq 2 L_0$  for $t \leq T_*$. The result now follows
 from Theorem \ref{mainwavethm} with $\underline{G}_r \!= G_r(M, 2L_0, T_r, W_{r-1}(0), K_1)$
 and $Q_r\! = \max(1,(2L_0)^{-1}) P_0$.
\end{proof}

\subsection{Estimates for differences of solutions}

We will also need to prove a Lipschitz
estimate for $\Lambda$. We fix two vector fields $V_{\!I},\! V_{{}_{\!}\II}$
and, defining $\xve_{{}_{\!}J\!}, \pave_{{}_{\!}J}, \Dve_{{}_{\!}J}$ with $J{}_{\!}\! =\! I{}_{\!}, \II$
as in \eqref{udef}-\eqref{laplsm}, we consider
solutions $\varphi_{{}_{\!}J}$ to
\begin{equation}
 \eprime_J D_t^2 \varphi_J -\Dve_J \varphi_J = \F_J,\quad \textrm{ in } [0,\!T] \!\times\! \Omega, \qquad\text{with}\quad
 \quad \varphi_J = 0, \text{ on } [0,\!T] \!\times\! \pa \Omega,\quad\text{for }
 J\!=\!I,\II,
 \label{phijwave}
\end{equation}
with the same initial data  \eqref{phiwaveic}.
Here $F_{\!J} \!=\! F_{\!J}^1 \!+ \!F_{\!J}^2[\varphi_J,D_t \varphi_J]$ as in \eqref{Fsplit},
where we assume that $F_{\!J}^2[\varphi_J]=F_{\!J}^2[\varphi_J,D_t \varphi_J]$ satisfies \eqref{F2est} and:
\begin{equation}
 ||D_t^{s \!}(F_I^2[\varphi_{{}_{\!}I{}_{\!}}] - F_{\!\II}^2[\varphi_{\II{}_{\!}}])||_{L^2}
 \!\leq \!\overline{P}_{1\!} ||\varphi_{{}_{\!}I{}_{\!}} - \varphi_{\II{}_{\!}}||_{s+\!1\!,0},
 \quad\,\,\,
 ||F_I^2[\varphi_{{}_{\!}I{}_{\!}}] - F_{\!\II}^2[\varphi_{\II{}_{\!}}]||_{s\!-\!1}
 \!\leq \! \overline{P}_{2} ( ||\varphi_{{}_{\!}I{}_{\!}} - \varphi_{\II{}_{\!}}||_{s, 0}
 + ||\paveu \varphi_{{}_{\!}I{}_{\!}} - \pavew \varphi_{\II{}_{\!}}||_{s\!-\!1}),
\end{equation}
where $L^2\!\!=\!L^2(\Omega)$, and $\overline{P}_{\!1}, \overline{P}_{\!2}$ depend on $M$ and $||\xve_J||_{H^{s}(\Omega)},
||V_J||_{\X^s}, ||\pave \varphi_J||_s$, $J = I, \II$, and $L$, where
\begin{equation}
 {\tsum}_{k + |M| \leq 3} |D_t^k \pa_y^{M}\pave \varphi_J|
 + |D_t^k \varphi_J|
 \leq L,
 \qquad \textrm{ in } [0,T] \times \Omega,\quad \text{for}\quad J = I,\II. \label{apriori varphi}
\end{equation}
 Writing $\psi = \varphi_{\I} - \varphi_{\II}$, we have that:
\begin{equation}
  \eprime_I D_t^2 \psi- \Dveu \psi = \F_{\I} - \F_{\II} +
 (\Dveu - \Dvew) \varphi_{\II} + (\eprime_{\I} - \eprime_{\II}) D_t^2 \varphi_{\II},
 \textrm{ in }
  [0,\!T] \!\times\!\Omega,\quad \text{with}\quad
 \psi = 0, \textrm{ on }  [0,\!T] \!\times\!\pa\Omega,
\end{equation}
and that $\psi|_{t = 0} = D_t \psi|_{t = 0} = 0$.
In Lemma \ref{dtphidiffnew} we prove estimates for $\psi$ that are similar
to the estimates in Theorem \ref{mainwavethm} for $\varphi_I, \varphi_{\II}$.
 However because of the terms $(\Dveu - \Dvew)\varphi_{\II}$ and
 $(\eprime_{\I} - \eprime_{\II}) D_t^2 \varphi_{\II}$ we will need to assume an estimate for one more
 derivative of $\varphi_{\II}$ than we get back for $\psi$.

With notation as in the beginning of this section,
we assume that both
$(u, \Vu)$ and $(w, \Vw)$ satisfy the a priori assumption \eqref{ubd2}, and we
also assume that $\varphi_I, \varphi_{II}$ satisfy
\eqref{apriori varphi}.
We define:
\begin{equation}
 ||\alpha||_{C^k_{x,t}(\Omega)} = {\tsum}_{k_1 + k_2 \leq k}
 ||\pa_y^{k_1} D_t^{k_2} \alpha||_{L^\infty(\Omega)}.
\end{equation}

We then have the following energy estimate for differences proven in section \ref{sec:F3}:
\begin{lemma}
 \label{dtphidiffnew}
 Suppose that $(A_I\!, \!\Vu{}_{\!}), (A_{\II}\!,\! \Vw{}_{\!})$  satisfy \eqref{ubd2}
 and that $\varphi_{I\!}, \varphi_{\II}$ satisfy the wave equation
 \eqref{phiwave}-\eqref{phiwaveic}
 with $\Dve$ replaced with $\Dveu, \Dvew$ and  $\F_2$ (defined
 by \eqref{Fsplit}) replaced
 with $\F_2[\varphi_I], \F_2[\varphi_{\II}]$, respectively.
 Define:
 \begin{equation}
  W_s^{I\!,\II}(t) = \Big( \frac{1}{2} {\sum}_{k \leq s}
  \int_\Omega e'(\varphi_I)
  |D_t^{k+1} (\varphi_I - \varphi_{II})|^2
  + |D_t^k \paveu(\varphi_I - \varphi_{II})|^2\, \kve dy\Big)^{1/2}.
 \end{equation}
 For each $s \geq 0$,
 there is a positive, continuous function
 $D_s$ depending on
 \begin{equation}
   M, L, T, W_{s-1}(0), \text{ and }\,\,\,
   {\sup}_{\,0 \leq t \leq T\,}\big(
   ||\xve_J(t)||_{H^{s+1}} +
   ||V_J(t)||_{\X^{s+2}}
   + ||F_J^1(t)||_{s+1}\big), \quad J = I, \II,
 \end{equation}
 so that:
 \begin{equation}
   W_s^{I\!,\II\!}(t) \leq D_{\!s}\!\!
  \int_0^t \!\!
  ||F_{{}_{\!}I}^1(\tau)_{\!} -_{\!} F_{{}_{\!}\II}^1(\tau)||_{s,0}
  + ||F_{{}_{\!}I}^1(\tau)_{\!} -_{\!} F_{{}_{\!}\II}^1(\tau)||_{s-\!1}
  + ||\Vu(\tau)_{\!} -_{\!} \Vw(\tau)||_{s+1} + ||\xveu(\tau)_{\!} -_{\!} \xvew(\tau)||_{C^3_{x,t}}\!
   d\tau,
  \label{energy esti y^12}
 \end{equation}
 \begin{equation}
  ||\paveu \varphi_I - \paveu\varphi_{II}||_{s}
  \leq D_s \big( (||\T(x_I - x_{II})||_{H^s(\Omega)} + ||\xveu - \xvew||_{C^3_{x,t}} +
  1)
  + ||F_I^1 - F_{II}^1||_{s-1} + W_s^{I,II}\big).
  \label{elliptic y^12}
 \end{equation}
\end{lemma}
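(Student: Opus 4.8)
The plan is to regard $\psi := \varphi_I - \varphi_{II}$ as a solution of the wave equation for the operator $(\eprime_I,\Dveu)$ with vanishing data, and then to rerun the argument that proves Theorem \ref{mainwavethm}, treating the mismatch of the two sets of coefficients as an inhomogeneity. Subtracting the two equations \eqref{phijwave} gives
\begin{equation}
\begin{aligned}
\eprime_I D_t^2\psi - \Dveu\psi &= \mathcal{R},\\
\mathcal{R} &:= (F_I - F_{II}) + (\Dveu - \Dvew)\varphi_{II} + (\eprime_I - \eprime_{II})D_t^2\varphi_{II},
\end{aligned}
\end{equation}
with $\psi = D_t\psi = 0$ at $t=0$. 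The vanishing of the data makes the compatibility conditions for $\psi$ trivial, so $\psi \in \H^{s+1}_T$; the a priori bound \eqref{Lassumpwave} for $\psi$ follows from \eqref{apriori varphi} for $\varphi_I,\varphi_{II}$, and $M$ is supplied by \eqref{ubd2}.

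First I would split $\mathcal{R} = \mathcal{R}_1 + \mathcal{R}_2[\psi]$ with $\mathcal{R}_1 := (F_I^1 - F_{II}^1) + (\Dveu - \Dvew)\varphi_{II}$, which depends only on $\Vu - \Vw$, $\xveu - \xvew$ and on the fixed function $\varphi_{II}$, and $\mathcal{R}_2[\psi] := (F_I^2[\varphi_I] - F_{II}^2[\varphi_{II}]) + (\eprime_I - \eprime_{II})D_t^2\varphi_{II}$, which is a nonlocal functional of $\psi$. Writing $\eprime_I - \eprime_{II} = \big(\int_0^1 e''(\varphi_{II} + \theta\psi)\,d\theta\big)\psi$ and combining the hypothesized difference bounds for $F_J^2$ with \eqref{F2est}, one checks that $\mathcal{R}_2[\psi]$ obeys estimates of the same shape as \eqref{F2est} with $\varphi$ replaced by $\psi$, the only extra pieces being $(\paveu - \pavew)\varphi_{II}$ terms, which are $O(\|\xveu - \xvew\|_{C^3_{x,t}})$ times norms of $\varphi_{II}$ absorbed into the constant. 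Thus $\mathcal{R}_2[\psi]$ plays exactly the role of $F_2$ in Lemma \ref{dtW} and only feeds the Gr\"onwall factor.

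The heart of the argument is the estimate for $(\Dveu - \Dvew)\varphi_{II}$. Using \eqref{laplsm} I would write it in divergence form, $\kve_I^{-1}\pa_a\big((\kve_I\gve_I^{ab} - \kve_{II}\gve_{II}^{ab})\pa_b\varphi_{II}\big) + (\kve_I^{-1} - \kve_{II}^{-1})\pa_a(\kve_{II}\gve_{II}^{ab}\pa_b\varphi_{II})$, where the coefficient differences are smooth functions of $(\pa\xveu,\pa\xvew)$ that vanish when $\xveu = \xvew$, hence are of the form $(\pa\xveu - \pa\xvew)\cdot(\textrm{smooth})$. Distributing derivatives via the product estimates, the top-order ($H^{s+1}$) derivatives land either on $\xveu - \xvew$ --- controlled by $\int_0^t \|\ssm(\Vu - \Vw)\|_{H^{s+1}}\,d\tau \leq C\int_0^t\|\Vu - \Vw\|_{s+1}\,d\tau$ using \eqref{xvedef} and the boundedness of $\ssm$ on $H^{s+1}$ --- or on $\varphi_{II}$, where a bound for $\|\varphi_{II}\|_{s+2}$ is needed; this is precisely what Theorem \ref{mainwavethm} (together with \eqref{sobellmix}) supplies for $\varphi_{II}$, and it is absorbed into $D_s$, which is the reason one derivative is lost on $\varphi_{II}$. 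The remaining terms carry at most $C^3_{x,t}$-many derivatives on $\xveu - \xvew$, so they are bounded by $\|\xveu - \xvew\|_{C^3_{x,t}}$ times quantities in $D_s$. This yields $\|\mathcal{R}_1\|_{s,0} + \|\mathcal{R}_1\|_{s-1} \leq D_s\big(\|F_I^1 - F_{II}^1\|_{s,0} + \|F_I^1 - F_{II}^1\|_{s-1} + \int_0^t\|\Vu - \Vw\|_{s+1} + \|\xveu - \xvew\|_{C^3_{x,t}}\big)$. Feeding $\mathcal{R} = \mathcal{R}_1 + \mathcal{R}_2[\psi]$ into Lemma \ref{dtW}, absorbing $\mathcal{R}_2[\psi]$ and the lower-order $\psi$-norms (which are $\lesssim W_s^{I,II}$ by \eqref{sobellmix} and Poincar\'e) into the Gr\"onwall term, and using $W_s^{I,II}(0) = 0$, integration yields \eqref{energy esti y^12}.

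Finally, for \eqref{elliptic y^12} I would apply the difference elliptic estimate, Proposition \ref{sobfndiff}, to $f = \varphi_I$, $g = \varphi_{II}$ (both vanishing on $\pa\Omega$). From the equations, $\Dveu\varphi_I - \Dvew\varphi_{II} = \eprime_I D_t^2\psi + (\eprime_I - \eprime_{II})D_t^2\varphi_{II} - (F_I - F_{II})$, whose $\|\cdot\|_{s-1}$-norm is bounded by $W_s^{I,II}$ together with the $\mathcal{R}_1,\mathcal{R}_2$ estimates above; the remaining two terms in Proposition \ref{sobfndiff} contribute $\|\T\xveu\|_{H^s}\|\varphi_I - \varphi_{II}\|_{L^2} \lesssim D_s W_s^{I,II}$ and $\|\T(x_I - x_{II})\|_{H^s}\|\pavew\varphi_{II}\|_{H^s}$, the latter producing the factor $\|\T(x_I - x_{II})\|_{H^s}$ in \eqref{elliptic y^12} once $\|\pavew\varphi_{II}\|_{H^s}$ is absorbed into $D_s$. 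The main obstacle is the estimate for $(\Dveu - \Dvew)\varphi_{II}$ in the previous step: one must arrange the product estimates so that only the cheap $C^3_{x,t}$-norm of $\xveu - \xvew$ and the integrated $\|\Vu - \Vw\|_{s+1}$ carry the actual smallness, while all higher Sobolev norms of $\varphi_{II}$ and of $\xveu,\xvew$ are routed through the already-established estimates and buried in the constant $D_s$.
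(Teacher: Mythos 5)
Your proposal is correct and follows essentially the same route as the paper: subtract the two equations to get a wave equation for $\psi=\varphi_I-\varphi_{\II}$ with vanishing data, treat $(\Dveu-\Dvew)\varphi_{\II}$ and $(\eprime_I-\eprime_{\II})D_t^2\varphi_{\II}$ as inhomogeneities (accepting the one-derivative loss on $\varphi_{\II}$, absorbed into $D_s$ via Theorem \ref{mainwavethm}), run the energy estimate of Lemma \ref{dtW}, and obtain \eqref{elliptic y^12} from the difference elliptic estimate of Proposition \ref{sobfndiff}. No gaps.
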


\section{Estimates for the gravitational potential}
\label{gravitysection}

The estimates for $\phi$ will require integration by parts on
$\tD_t \equiv \xve(t, \Omega)$ and this will yield a boundary term
which is difficult to deal with because there is no boundary
condition for $\phi$ on $\pa \tD_t$. Because of this, following
\cite{Lindblad2008}, our strategy is to extend the domain $\tD_t$ in the radial
direction to a set $\hD_t$ (see \eqref{hD_t def}), and then approximate
$\phi$ with a sequence of functions $\phi_m$ defined $\hD_t$
in such a way that $\T\Delta \phi_m$ and $D_t \Delta \phi_m$ vanish outside
of $\tD_t$. We will also see that $\Phi(x-z)\in L^2_z$ if $x\in \pa\hD_t$
and $z\in \hD_t$ and these facts will allow us to bound
$||\T^s \pa \phi_m||_{L^2(\hD_t)}$ for each $m$ after integrating by
parts.
We will also show that the sequence
$\{\T^s \pa\phi_m\}_{m=1}^\infty$ is a Cauchy sequence in $L^2(\hD_t)$,
which gives an estimate for
 $||\T^s \pa \phi||_{L^2(\tD_t)}$ by letting $m\rightarrow\infty$. Finally, we
  get an estimate for $||\phi||_{H^{s+1}(\tD_t)}$ using \eqref{sobtensor}.

  Similarly to Section \ref{ellsec}, we will let $C_s, C_s^\prime,
  C_s^{\prime\prime}, C_s^{\prime\prime\prime}$ denote
  continuous functions with:
  \begin{equation}
    C_{\!s} \!= \!C_{\!s}(_{\!}M_0,_{\!} ||\xve||_{H^s}\!),
   \quad
   C_{\!s\!}'\!\! =\! C_{\!s}'(_{\!}M_{\!},_{\!} ||\xve||_{s_{\!}}),
   \quad
   C_{\!s}^{\prime\prime} \!\!{}_{\!}=\! C_{s}^{\prime\prime}{}_{\!}(_{\!}M_0,_{\!} ||\xve||_{H^{(_{\!}s-_{\!}1_{\!},1_{\!}/2)}}\!),
   \quad
   C_s^{\prime\prime\prime}\!\!{}_{\!}=\!C_s^{\prime\prime\prime}{}_{\!}(_{\!}M_{\!},_{\!}
   ||\xve||_{H^{(_{\!}s-_{\!}1_{\!},1_{\!}/2)}}{}_{\!},_{\!}
   ||\xve||_{s_{\!}}),
   \label{cprimes}
  \end{equation}
  with $H^k = H^k(\Omega)$ and $H^{(k,1/2)} = H^{(k,1/2)}(\Omega)$
  defined by \eqref{sobspacedef}.
\subsection{Bounds for $\phi$ and the extended domain $\widehat{\Omega}$}
\label{extensionsec}
The following theorem is the main result of this section, and follows
from the elliptic estimate \eqref{sobtensor} and the upcoming
Theorem \ref{usual norms}.
\begin{theorem} \label{main theorem, ell est of phi}
  If $r\geq 5$, then with $\phi$ defined by \eqref{phidef}:
\begin{equation}
||\pave \phi||_{H^{r-1}(\tD_t)} \leq C_r\big(||\rho||_{H^{r-2}(\tD_t)}
+||\rho||_{H^{(r-2, 1/2)}(\tD_t)}\big),
\end{equation}
\begin{equation}
||\pave \phi||_{H^r(\tD_t)} \leq C_r (||\T \xve||_{H^{(r-1, 1/2)}(\Omega)} + 1 )\big(||\rho||_{H^{r-1}(\tD_t)}
+||\rho||_{H^{(r-1,1/2)}(\tD_t)}\big).
\end{equation}
\end{theorem}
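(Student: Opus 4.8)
The plan is to reduce both inequalities to estimates for the tangential derivatives of $\pave\phi$ by means of the tensorial elliptic estimate \eqref{sobtensor}, and then to establish those tangential estimates through the extended-domain approximation scheme described at the start of this section, i.e.\ through the upcoming Theorem \ref{usual norms}. First I would apply \eqref{sobtensor} to the one-form $\alpha = \pave\phi$, regarded via \eqref{pavedef} as a map on $\Omega$ whose $H^s(\Omega)$ norm is comparable, with a constant $C_s$ as in \eqref{cprimes}, to $\|\pave\phi\|_{H^s(\tD_t)}$. Since $\pave\phi$ is an exact differential in the $\xve$ coordinates, $\curl\pave\phi = 0$, and by \eqref{phidef}, $\div\pave\phi = \Dve\phi = -\rho(h)$. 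Taking $s = r-1$ (resp.\ $s = r$) and using that $\kve$ is bounded above and below in terms of $M_0$, so that the $L^2(\Omega)$ and $L^2(\tD_t)$ norms agree up to constants, \eqref{sobtensor} gives
\[
\|\pave\phi\|_{H^{r-1}(\tD_t)} \le C_r\Big(\|\rho\|_{H^{r-2}(\tD_t)} + {\sum}_{j \le r-1}\|\T^j\pave\phi\|_{L^2(\tD_t)}\Big),
\]
and the same with every $r-1$ replaced by $r$. So everything reduces to bounding $\sum_{j\le s}\|\T^j\pave\phi\|_{L^2(\tD_t)}$, and it is here — not in the application of \eqref{sobtensor} — that the half-tangential-derivative norms of $\rho$ and, at top order, the factor $\|\T\xve\|_{H^{(r-1,1/2)}(\Omega)}+1$ must appear.

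For the tangential estimate I would invoke Theorem \ref{usual norms}, which states precisely that
\[
{\sum}_{j\le r-1}\|\T^j\pave\phi\|_{L^2(\tD_t)} \le C_r\big(\|\rho\|_{H^{r-2}(\tD_t)} + \|\rho\|_{H^{(r-2,1/2)}(\tD_t)}\big),
\]
together with the version one order higher carrying the extra factor $\|\T\xve\|_{H^{(r-1,1/2)}(\Omega)}+1$; substituting these into the two displayed inequalities finishes the proof. The proof of Theorem \ref{usual norms} itself runs along the lines sketched above: one extends $\tD_t$ radially to $\hD_t$, constructs approximants $\phi_m$ on $\hD_t$ whose Laplacian agrees with $-\rho\chi_{\tD_t}$ to sufficiently high order so that $\T\Delta\phi_m$ and $D_t\Delta\phi_m$ vanish on $\hD_t\setminus\tD_t$, applies $\T^j$ to $\Delta\phi_m = -\rho\chi$, and integrates by parts over $\hD_t$. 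The resulting boundary term is now carried by $\pa\hD_t$, where $\Phi(x - \cdot)\in L^2(\hD_t)$, hence harmless; the terms where a tangential field lands on the Newtonian kernel are handled with the explicit structure \eqref{eq:potentialkernel}--\eqref{eq:Dtkpotentialkernel} together with the $H^{1/2}$ tangential-duality and $\fdhm$-commutator estimates, which is exactly where the half derivative of $\rho$ and the reduction from a full derivative down to the $H^{(r-1,1/2)}$ norm of $\T\xve$ originate. Finally one shows $\{\T^j\pa\phi_m\}_m$ is Cauchy in $L^2(\hD_t)$ and passes to the limit $m\to\infty$ to land on $\tD_t$.

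The main obstacle is the absence of any boundary condition for $\phi$ on $\pa\tD_t$: a direct integration by parts to bound $\|\T^s\pa\phi\|_{L^2(\tD_t)}$ leaves an uncontrolled boundary integral over $\pa\tD_t$, and the entire purpose of the extension $\hD_t$ and the approximants $\phi_m$ is to relocate that term to $\pa\hD_t$, where the kernel $\Phi(x-z)$ is square-integrable in $z$. A secondary technical burden is the commutator bookkeeping for $[\T^j,\Dve]$ and for the tangential fields hitting the kernel, keeping at most $r-1$ (resp.\ $r$, and then only with a half tangential derivative) derivatives on $\xve$ at any stage, which is what produces constants of the form \eqref{cprimes} and the sharp dependence on $\|\T\xve\|_{H^{(r-1,1/2)}(\Omega)}$ rather than on $\|\xve\|_{H^{r+1}(\Omega)}$.
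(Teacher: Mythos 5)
Your proposal is correct and follows the paper's argument exactly: the paper proves this theorem in one line by combining the elliptic estimate \eqref{sobtensor} applied to $\alpha=\pave\phi$ (using $\curl\pave\phi=0$ and $\div\pave\phi=\Dve\phi=-\rho$) with Theorem \ref{usual norms} to control the tangential terms, and your outline of why Theorem \ref{usual norms} itself works (extension to $\hD_t$, approximants $\phi_m$, Cauchy sequence) matches the strategy the paper lays out at the start of Section \ref{gravitysection}.
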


We now employ the strategy mentioned above.
Fix $d_0\!>\!0$ and define
 $\Omega^{d_0}\!=\{y_1\!+y_2\in \R^3\!: y_1\!\in \Omega, |y_2|\!<\!d_0\}$.
  Let $\chi_m$ be a smooth radial function whose support is contained in
  $\Omega^{d_0/2}$ with $\chi_m(y)\!=\!1$ whenever $y\!\in\! \Omega$,
   where $m$ is taken large enough that ${1}/{m}\!\leq \!d_0/2$.
  For fixed $r \geq 7$, let $E$ denote an extension operator which is
  bounded from $H^r(\Omega)$ to
  $H^r(\Omega^{d_0})$ (see Appendix \ref{extension section} for the detailed construction of the extension operator $E$). Define
  $\xht(t,y)=\chi_{d_0\!/2}(y)E(\xve(t,\cdot)-x_0(\cdot))(y)+x_0(y)$,
   and define the corresponding velocity by $\widehat{V}\!\!=\!D_t \xht$. With
   these definitions, we have arranged that $\xht(t,y)\!=\!x_0(y)$ for $y\!\in \! \pa\Omega^{d_0}\!\!$,
   and for some $0\!<c\!<\! C\!<\!\infty$:
\begin{equation}
c||\xve||_{H^s(\Omega)}\leq ||\xht||_{H^s(\Omega^{d_0})}\leq C||\xve||_{H^s(\Omega)},\qquad
 ||\T\xht||_{H^s(\Omega^{d_0})}\leq C||\T\xve||_{H^s(\Omega)},
\qquad 0 \leq s \leq r\label{equiv extended norm},
\end{equation}
by Theorem \ref{comparable thm} and similarly for $\widehat{V}$.  Abusing notation, we will also
  let $\chi_m$ denote the analogous function in the Eulerian frame
  and write $\chi_m(\xht)=\chi_m(\xht(t,y))$. We use $\paht$ to denote
   the derivative with respect to $\xht$  and  $D_t=\pa_t+\Vht^k \paht_k$ to denote the material derivative in $\hD_t$.

Assuming that \eqref{uwassump} holds, then taking $d_0$ smaller if necessary,
$\xht(t,\cdot)$ is a homeomorphism from $\Omega^{d_0}$ to $\hD_t$ and the normal
$N$ to $\p\tD_t$ can be extended continuously into the region
between $\pa \tD_t$ and $\p\hD_t$, where:
\begin{equation}
\hD_t=\xht(t, \Omega^{d_0}).
\label{hD_t def}
\end{equation}

 We want to establish an approximation scheme which allows us to control $\phi$.
 Let $\Phi$ be the fundamental solution of the Laplacian and let $\hrho(x) = \rho(x)$
 if $x\in \tD_t$, $\hrho(x) = \bar{\rho}$ if $x\notin \tD_t$. We define
\begin{equation}
\phi_m(t,x) = -\hrho\chi_{m}*\Phi(x),\qquad x\in\widehat{\D}_t.
\label{def wphi}
\end{equation}
 We will show that the sequence $\{\T^{j\!} \paht \phi_m\}_{m = 0}^\infty$ is Cauchy in $L^2(\hD_t)$, which
we will use to  control $||\T^j \pave \phi||_{L^2(\tD_t)}$. The fundamental result we need
is the following inequality, whose proof can be found in Section \ref{gravityproofs1}:
\begin{theorem} \label{ellphi'' 1}
  Fix $r \!\geq \!5$, suppose that \eqref{uwassump} holds and let $\Phi\!$ denote
  the fundamental solution of the Laplacian in $\R^3\!\!$.
  If $g$ is a smooth function supported  in $\xht(t, \Omega^{d_0\!/2})$
  such that $g(x)$ is radial when $x\!\in \!\hD_t\!\!\setminus\!\D_t$,
  then:
\begin{multline}
{\sum}_{j\leq r}||\T^j \paht (g * \Phi)||_{L^2(\hD_t)} \leq C_r^{\prime\prime} (||\T \xve||_{H^{(r-1,1/2)}(\Omega)} + 1)
\\\cdot\bigtwo(
{\sum}_{k\leq r-1}||\T^kg||_{L^2(\hD_t)}+||\T^{r-1} g||_{H^{(0,1/2)}(\hD_t)}+ {\sum}_{k\leq 2}||\T^kg||_{L^6(\hD_t)}
+||g||_{L^\infty(\hD_t)}
\bigtwo).
\label{boundphi T 1}
\end{multline}
\end{theorem}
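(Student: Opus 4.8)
The plan is to bound $\sum_{j \le r} \|\T^j \paht(g * \Phi)\|_{L^2(\hD_t)}$ by first reducing, via the tensor estimate \eqref{sobtensor} applied with $\alpha = \paht(g * \Phi)$, to controlling $\div$, $\curl$, and tangential derivatives of this one-form. Since $g * \Phi$ is a function, $\curl \paht(g*\Phi) = 0$ pointwise. The divergence is $\Dht(g * \Phi)$, which in the Eulerian frame equals $-g$ (because $\Phi$ is the fundamental solution and $\chi_m g$ is compactly supported away from $\pa \hD_t$), so $\div$ contributes exactly the $\sum_{k \le r-1} \|\T^k g\|_{L^2(\hD_t)}$ terms on the right, after commuting $\T^j$ past $\Dht$ and paying the price of the coefficients of $\Dht$ in terms of $\xht$, which \eqref{equiv extended norm} converts into the factor $C_r''(\|\T\xve\|_{H^{(r-1,1/2)}} + 1)$.

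The main work is therefore the tangential term $\sum_{j \le r}\|\T^j \paht(g*\Phi)\|_{L^2(\hD_t)}$ with $\T^j$ acting on the potential itself. First I would commute the tangential vector fields $\T$ through the convolution: since $\T$ is (a combination of) the rotation fields $\Omega_{ab}$ which are Killing for the flat metric, and the kernel $\Phi(\xht(y) - \xht(y'))$ transforms covariantly, each $\T$ hitting $g * \Phi$ can be traded for $\T$ hitting $g$ plus lower-order commutator terms involving derivatives of $\xht$; this is where $\T^{r-1} g$ (and its half-derivative refinement $\|\T^{r-1} g\|_{H^{(0,1/2)}(\hD_t)}$) enters. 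The half-derivative gain comes from an integration-by-parts / duality argument: we write $\|\T^j \paht(g*\Phi)\|_{L^2}^2$ by pairing against a test one-form, integrate by parts moving one $\T$ and the gradient onto the other factor, and estimate the resulting double integral $\iint K(x,z) \, (\text{data at } x)(\text{data at } z)\, dx\, dz$ using that the kernel $\Phi(x - z)$ and $|\paht_x \Phi(x-z)|$ are in $L^2_z$ uniformly in $x$ when $x \in \pa \hD_t$, $z \in \hD_t$ (the key geometric fact stated in Section \ref{extensionsec}), combined with the trace/fractional-Sobolev inequality $\big|\int_{\pa\Omega} f\, T g\big| \le C \|f\|_{H^{1/2}}\|g\|_{H^{1/2}}$ from the lemma on $\fdh$. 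The $L^6$ and $L^\infty$ norms of $g$ on the right-hand side absorb the low-order pieces, via Sobolev embedding $H^1 \hookrightarrow L^6$ on the three-dimensional domain and the explicit $L^\infty$ bound on the lowest-order boundary terms in the integration by parts.

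The hard part will be making the boundary integration by parts rigorous and getting the sharp half-derivative: one must carefully track where the rotation fields $\T$ are tangent vs.\ where the cutoff $\eta$ degenerates them into plain $\pa_y$, handle the commutators $[\T, \paht]$ (which produce extra $\pa_y \xht$ factors, hence the $\|\T\xve\|_{H^{(r-1,1/2)}}$ dependence), and verify that all boundary terms generated when moving derivatives from $x$ to $z$ in the double integral are either controlled by the stated kernel $L^2_z$-bound or vanish because $g$ is supported in $\xht(t,\Omega^{d_0/2})$ and radial outside $\D_t$ (the radiality is exactly what kills the otherwise-uncontrollable boundary contribution on $\pa \hD_t$). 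I expect the bookkeeping of which norm of $\xve$ appears — full $H^{r-1}$ versus the weaker $H^{(r-1,1/2)}$ tangential norm — to be the subtle point, and it hinges on using \eqref{sobtensor} to convert full derivatives of $\paht \phi_m$ into tangential ones before any derivative lands on $\xht$.
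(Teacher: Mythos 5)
Your proposal has the right general flavor --- integration by parts, the non-singularity of the kernel on $\pa \hD_t$ (Lemma \ref{bdy estimate for D_tphi_m}), the half-derivative duality \eqref{inthalf}, and Sobolev embedding for the $L^6$/$L^\infty$ terms all appear in the paper's argument --- but two of your central steps do not hold up as stated.

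First, the opening reduction via \eqref{sobtensor} is circular for the quantity you actually need to bound. The left-hand side of \eqref{boundphi T 1} is already $\sum_{j\le r}\|\T^j\paht(g*\Phi)\|_{L^2(\hD_t)}$, i.e.\ purely tangential derivatives of $\paht(g*\Phi)$; the div--curl decomposition (with $\curl\paht(g*\Phi)=0$ and $\Dht(g*\Phi)=g$) is what the paper uses to prove the companion Lemma \ref{ellphi} (which controls $\paht\T^j\paht f$, with a full derivative outside) and to upgrade to full Sobolev norms in Theorems \ref{usual norms} and \ref{main theorem, ell est of phi}. It does not reduce the tangential estimate to anything simpler, and after your first paragraph you are back exactly where you started.

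Second, the mechanism you propose for the main term --- commuting $\T$ through the convolution because the rotation fields are ``Killing for the flat metric'' and ``the kernel transforms covariantly'' --- is not justified. The tangential fields $\T$ live in the Lagrangian coordinates $y$, while the kernel is $\Phi(\xht(t,y)-\xht(t,y'))$; since $\xht$ does not intertwine rotations, $\T_y$ applied to the kernel is $(\T\xht(y))\cdot\nabla\Phi(\xht(y)-\xht(y'))$, which is \emph{more} singular than $\Phi$ and cannot be traded for $\T_{y'}$ acting on $g$ without an argument. (The symmetrized difference-quotient identity of the type \eqref{eq:Dtkpotentialkernel} is the correct version of this idea, but you would still need to control $r$ such factors, which is precisely the loss the theorem is designed to avoid.) The paper instead argues by induction on $j$: it writes $\|\T^m\paht f\|_{L^2(\hD_t)}^2$ as an integral over $\hD_t$, integrates by parts twice using Green's formula and $\Dht f=g$, so that the top-order interior terms become $\int(\T^m g)(\T^m f)$ and $\int(\T^m g)(\T^m\xht)(\paht f)$ plus commutators $[\T^m,\Dht]f$ involving $\pa\T^{\ell}\xht$; the boundary term is killed by the kernel bound of Lemma \ref{bdy estimate for D_tphi_m}, the top-order products are handled by integrating half a tangential derivative by parts via \eqref{inthalf} and \eqref{app:alg2} (which is where $\|\T^{r-1}g\|_{H^{(0,1/2)}}$ and $\|\T\xve\|_{H^{(r-1,1/2)}}$ enter), and the commutator terms are distributed by H\"older as $L^2\times L^3\times L^6$ using the $L^6$ estimates \eqref{L^6 phi est} and the $L^\infty$ estimate \eqref{Linfty f} from Lemma \ref{ellphi} --- that, and not a generic $H^1\hookrightarrow L^6$ embedding, is where the $\sum_{k\le 2}\|\T^k g\|_{L^6}$ and $\|g\|_{L^\infty}$ terms on the right come from. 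Your sketch correctly flags the bookkeeping as the hard part but leaves the essential derivative-counting entirely unexecuted, and the one concrete mechanism you do commit to would not deliver it.
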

Applying \eqref{boundphi T 1} to $g = \hrho (\chi_m -\chi_n)$ and using that
by construction $\T (\hrho(\chi_m\! - \chi_n)) \!=\! 0$, we have the following:
\begin{cor} \label{ellphi'''}
  With the same hypotheses and notation as in Theorem \ref{ellphi'' 1} and with
  $\phi_\ell$ defined by \eqref{def wphi}, set
    $\phi_{m,n} = \phi_m-\phi_n$ and $\chi_{m,n}=\chi_m-\chi_n$. Then
\begin{equation}
{\sum}_{j\leq r-1}||\T^j \paht \phi_{m, n}||_{L^2(\hD_t)} \leq
C_r^{\prime\prime}\big(||\hrho \chi_{m,n}||_{L^2(\hD_t)} + ||\hrho \chi_{m,n}||_{L^6(\hD_t)}
+  ||\hrho \chi_{m,n}||_{L^\infty(\hD_t)}\big),
\end{equation}
\begin{equation}
||\T^r\paht \phi_{m,n}||_{L^2(\hD_t)} \leq C_r^{\prime\prime}\big(||\T\xve||_{H^{(r-1, 1/2)}(\Omega)} + 1\big)
\big(||\hrho \chi_{m,n}||_{L^2(\hD_t)} + ||\hrho \chi_{m,n}||_{L^6(\hD_t)}
+  ||\hrho \chi_{m,n}||_{L^\infty(\hD_t)}\big).
\end{equation}
\end{cor}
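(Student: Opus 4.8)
The plan is to deduce the corollary from Theorem~\ref{ellphi'' 1} applied to $g = \hrho\,\chi_{m,n}$, with $\chi_{m,n} = \chi_m - \chi_n$, after verifying three points: that $g$ meets the hypotheses of that theorem, that its tangential derivatives of positive order all vanish, and that $g*\Phi = -\phi_{m,n}$. The last is immediate from \eqref{def wphi}: $\phi_{m,n} = \phi_m - \phi_n = -\hrho(\chi_m-\chi_n)*\Phi = -g*\Phi$, so $\T^j\paht\phi_{m,n} = -\T^j\paht(g*\Phi)$ and every left-hand side in the corollary is a norm of $\T^j\paht(g*\Phi)$. For the hypotheses, note that since $\chi_m \equiv \chi_n \equiv 1$ on $\Omega$, $\chi_{m,n}$ vanishes on $\tD_t = \xht(t,\Omega)$, so $g$ is supported in $\xht(t,\hO)$ and in fact in $\hD_t\setminus\tD_t$, where $\hrho \equiv \bar\rho$; hence $g = \bar\rho\,\chi_{m,n}$. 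In particular the jump of $\hrho$ across $\pa\tD_t$ is irrelevant, $g$ is as regular as the construction permits, $g*\Phi$ is well defined since $\Phi \in L^1_{\mathrm{loc}}$, and $g$ satisfies the support and radiality hypotheses of Theorem~\ref{ellphi'' 1}.

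The only substantive step is the identity $\T^j g = 0$ for all $j \geq 1$. Pulled back via $\xht$, the function $g$ equals $\bar\rho\,(\chi_m-\chi_n)$, a constant times a radial function of $y$, supported in $\hO\setminus\Omega \subset \{|y|\geq 1\}$. Every vector field in $\T$ is either an angular field $\eta\,\Omega_{ab}$ or a field $(1-\eta)\pa_{y^a}$: the angular fields annihilate radial functions, and $1-\eta$ vanishes on $\{|y|\geq 1/2\}$, hence on the support of $g$. Thus each generator of $\T$ kills $g$, and iterating yields $\T^j g = 0$ for every $j \geq 1$. This is the one place where the construction is essential — radiality of the cutoffs $\chi_m$, extension of $\rho$ by its boundary value $\bar\rho$ outside $\tD_t$, and the placement of $\eta$ in the definition of $\T$ — and I expect no real obstacle: given this observation the corollary is a routine consequence of Theorem~\ref{ellphi'' 1}, the only mild care being the constant bookkeeping for the first estimate.

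With $\T^j g = 0$ for $j \geq 1$ and $r \geq 5$, every term on the right-hand side of \eqref{boundphi T 1} that contains a positive number of tangential derivatives of $g$ drops out, so for each integer $q \geq 5$ the estimate \eqref{boundphi T 1}, with $r$ replaced by $q$, collapses to
\[
 {\sum}_{j\leq q}||\T^j\paht\phi_{m,n}||_{L^2(\hD_t)} \leq C_q''\,(||\T\xve||_{H^{(q-1,1/2)}(\Omega)}+1)(||\hrho\chi_{m,n}||_{L^2(\hD_t)} + ||\hrho\chi_{m,n}||_{L^6(\hD_t)} + ||\hrho\chi_{m,n}||_{L^\infty(\hD_t)}).
\]
Taking $q = r$ and keeping only the $j = r$ term on the left gives the second inequality of the corollary. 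For the first inequality I take $q = r-1$ (hence $r \geq 6$; $r$ is larger in the applications): this bounds ${\sum}_{j\leq r-1}||\T^j\paht\phi_{m,n}||_{L^2(\hD_t)}$ by $C_{r-1}''\,(||\T\xve||_{H^{(r-2,1/2)}(\Omega)}+1)$ times the same data, and since $||\T\xve||_{H^{(r-2,1/2)}(\Omega)} \lesssim ||\xve||_{H^{(r-1,1/2)}(\Omega)}$ while $C_{r-1}''$ depends only on $M_0$ and $||\xve||_{H^{(r-2,1/2)}(\Omega)} \leq ||\xve||_{H^{(r-1,1/2)}(\Omega)}$, the whole prefactor is absorbed into a constant of the form $C_r''$, giving the stated bound without the $||\T\xve||$ factor.
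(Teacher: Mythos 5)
Your proof is correct and follows the paper's intended route: the Corollary is presented in the paper as a direct consequence of applying Theorem~\ref{ellphi'' 1} to $g = \hrho\,\chi_{m,n}$ together with the observation $\T(\hrho\,\chi_{m,n}) = 0$, and you supply exactly the verification the paper compresses into the phrase ``by construction'' — namely that $\chi_{m,n}$ vanishes on $\overline{\Omega}$ so $\hrho\equiv\bar\rho$ on its support, that the pullback of $g$ under $\xht$ is therefore a constant multiple of the radial function $\chi_{m,n}(y)$, and that on that support the angular fields annihilate it while $1-\zeta$ vanishes. One small remark on the first inequality: applying the theorem with $q=r-1$ and absorbing $C_{r-1}''(\,\|\T\xve\|_{H^{(r-2,1/2)}}+1)$ into $C_r''$ is a clean way to drop the $\|\T\xve\|_{H^{(r-1,1/2)}}$ factor, but it formally needs $r-1\ge 5$ whereas the Corollary is stated for $r\ge 5$; the paper's implicit alternative is that the $(\|\T\xve\|_{H^{(r-1,1/2)}}+1)$ factor in Theorem~\ref{ellphi'' 1} only enters at the top order $j=r$ (visible already in the split between \eqref{ellphi 1} and \eqref{ellphi 2} in Lemma~\ref{ellphi}), so the induction in that proof already yields the $j\le r-1$ bound without it. Your route avoids re-examining that proof at the harmless price of the nominal restriction $r\ge 6$, which holds in every application.
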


Corollary \ref{ellphi'''} implies that the sequence  $\{\T^j \paht \phi_m\}_{m=1}^\infty$ is Cauchy in $L^2(\hD_t)$ so
$\T^j \paht \phi_m\rightarrow  \T^j \paht \phi$ in $L^2(\hD_t)$. This allows us to get a bound for $||\T^j \pave \phi||_{L^2(\tD_t)}$ from that of $||\T^j \paht \phi_m||_{L^2(\hD_t)}$.
Although $g\!=\!\hrho \chi_m$ is not smooth, by a regularization procedure in the radial and tangential directions
 \eqref{boundphi T 1} still holds.

 \begin{theorem} \label{usual norms}
 With the same hypotheses and notation as in Theorem \ref{ellphi'' 1},
 writing $\phi=-\rho\chi_{\D_t}*\Phi$, we have:
 \begin{equation}
  {\sum}_{ j\leq r-1}||\T^j \pave \phi||_{L^2(\tD_t)}\leq C_r^{\prime\prime}
  \big(||\rho||_{H^{r-2}(\tD_t)}+||\rho||_{H^{(r-2,1/2)}(\tD_t)}\big),
  \label{usual1}
 \end{equation}
\begin{equation}
  ||\T^r \pave \phi||_{L^2(\tD_t)}\leq C_r^{\prime\prime}\big( ||\T \xve||_{H^{(r-1, 1/2)}(\Omega)} + 1\big)
  \big(||\rho||_{H^{r-1}(\tD_t)}+||\rho||_{H^{(r-1,1/2)}(\tD_t)}\big).
  \label{usual2}
 \end{equation}
\end{theorem}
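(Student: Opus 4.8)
The plan is to pass to the limit in the approximation scheme $\phi_m = -\hrho\chi_m * \Phi$ on $\hD_t$ that has already been set up, using the elliptic bound of Theorem \ref{ellphi'' 1} applied to $g = \hrho\chi_m$. The only obstruction is that $\hrho$ has a jump in its normal derivative across $\pa\tD_t$, so $g$ is not smooth and Theorem \ref{ellphi'' 1} does not apply directly. The key observation is that the right-hand side of \eqref{boundphi T 1} involves only \emph{tangential} derivatives $\T^k g$ (of order $k \le r-1$), and that $\hrho\chi_m$ is perfectly regular in those directions: $\chi_m$ is radial, hence annihilated by every $T\in\T$, so $\T^k(\hrho\chi_m) = \chi_m\,\T^k\hrho$; and since $\rho \equiv \bar\rho$ on $\pa\tD_t$ while the fields of $\T$ are tangent to $\pa\tD_t$, the tangential derivatives $\T^k\rho$ vanish on $\pa\tD_t$ for $k \ge 1$, so $\T^k\hrho$ is continuous across $\pa\tD_t$ and vanishes outside $\tD_t$ for $k\ge 1$. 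I would therefore mollify $\hrho$ in the radial and tangential directions to obtain smooth $g^\delta = \hrho^\delta\chi_m$, still radial on $\hD_t\setminus\D_t$, apply \eqref{boundphi T 1} to $g^\delta$, and note that its right-hand side stays bounded uniformly in $\delta$ (the problematic normal derivative of $g^\delta$ never enters it), while $g^\delta * \Phi \to \phi_m$ with convergence of each $\T^j\paht$ in $L^2(\hD_t)$; this yields \eqref{boundphi T 1} with $g = \hrho\chi_m$.

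With $\T^k(\hrho\chi_m) = \chi_m\,\T^k\hrho$ and $\T^k\hrho = \chi_{\tD_t}\T^k\rho$ for $k\ge1$, the right-hand side of \eqref{boundphi T 1} applied at level $r-1$, respectively $r$, is bounded by $\|\rho\|_{H^{r-2}(\tD_t)} + \|\rho\|_{H^{(r-2,1/2)}(\tD_t)}$, respectively $\|\rho\|_{H^{r-1}(\tD_t)} + \|\rho\|_{H^{(r-1,1/2)}(\tD_t)}$ (the remaining $L^6$ and $L^\infty$ terms being lower order and absorbed). Since $g = \hrho\chi_m$ is radial outside $\D_t$ so that $\T^k g = 0$ there for $k \ge 1$, this gives, uniformly in $m$,
\begin{equation}
  {\sum}_{j\le r-1}\|\T^j\paht\phi_m\|_{L^2(\hD_t)} \le C_r^{\prime\prime}\big(\|\rho\|_{H^{r-2}(\tD_t)} + \|\rho\|_{H^{(r-2,1/2)}(\tD_t)}\big),
\end{equation}
where the factor $C_{r-1}^{\prime\prime}(\|\T\xve\|_{H^{(r-2,1/2)}(\Omega)}+1)$ coming from \eqref{boundphi T 1} at level $r-1$ has been absorbed into $C_r^{\prime\prime}$ since $\|\T\xve\|_{H^{(r-2,1/2)}(\Omega)}\lesssim\|\xve\|_{H^{(r-1,1/2)}(\Omega)}$, and, keeping the explicit factor at level $r$,
\begin{equation}
  \|\T^r\paht\phi_m\|_{L^2(\hD_t)} \le C_r^{\prime\prime}\big(\|\T\xve\|_{H^{(r-1,1/2)}(\Omega)}+1\big)\big(\|\rho\|_{H^{r-1}(\tD_t)} + \|\rho\|_{H^{(r-1,1/2)}(\tD_t)}\big).
\end{equation}

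Finally I would let $m\to\infty$. By Corollary \ref{ellphi'''} each sequence $\{\T^j\paht\phi_m\}_m$ is Cauchy in $L^2(\hD_t)$, hence convergent; since $\hrho\chi_m\to\rho\chi_{\tD_t}$ in $L^2$ we have $\phi_m\to\phi$, so the limit of $\T^j\paht\phi_m$ must be $\T^j\paht\phi$. Restricting to $\tD_t$, where $\xht = \xve$ and hence $\paht = \pave$, and using $\|\cdot\|_{L^2(\tD_t)}\le\|\cdot\|_{L^2(\hD_t)}$, the uniform bounds above pass to the limit and yield \eqref{usual1} and \eqref{usual2}. The step I expect to be the main obstacle is making the regularization precise: one must check that the tangential norms on the right of \eqref{boundphi T 1} do not degenerate under normal mollification and that the radial structure of $g$ outside $\D_t$ is preserved, and this is exactly where the boundary condition $\rho|_{\pa\tD_t}=\bar\rho$, together with the tangentiality of $\T$ at $\pa\tD_t$, is essential, since it forces $\T^k\rho|_{\pa\tD_t}=0$ for $k\ge1$ and so leaves nothing to lose when $\hrho$ is extended and mollified across the free boundary.
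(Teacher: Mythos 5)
Your proposal is correct and follows essentially the same route as the paper: substitute $g=\hrho\chi_m$ into \eqref{boundphi T 1} (justified by a radial/tangential regularization, which the paper only asserts and you usefully flesh out via $\rho|_{\pa\tD_t}=\bar\rho$ and the tangentiality of $\T$), bound the purely tangential right-hand side by the stated norms of $\rho$ via Sobolev embedding, and pass to the limit $m\to\infty$ using the Cauchy property from Corollary \ref{ellphi'''}. Your explicit application of Theorem \ref{ellphi'' 1} at level $r-1$ for \eqref{usual1}, with the factor $\|\T\xve\|_{H^{(r-2,1/2)}(\Omega)}+1$ absorbed into $C_r^{\prime\prime}$, is in fact slightly more careful than the paper's single level-$r$ substitution.
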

 \begin{proof}
Substituting $g=-\hrho \chi_m$ into \eqref{boundphi T 1}, we have:
\begin{multline}
{\sum}_{j\leq r}||\T^j \paht \phi_m||_{L^2(\hD_t)} \leq C_r^{\prime\prime} (||\T \xve||_{H^{(r-1,1/2)}(\Omega)} + 1)
\\\cdot\bigtwo(
{\sum}_{k\leq r-1}||\T^k (\hrho\chi_m)||_{L^2(\hD_t)}+||\T^{r-1} (\hrho\chi_m)||_{H^{(0,1/2)}(\hD_t)}+ {\sum}_{k\leq 2}||\T^k (\hrho\chi_m)||_{L^6(\hD_t)}
+||\hrho\chi_m||_{L^\infty(\hD_t)}
\bigtwo).
\end{multline}
Since the right hand side involves only tangential derivatives and because $\hrho\chi_m\rightarrow \rho\chi_{\tD_t}$ as $m\to\infty$, we have that $||\T^{r-1} (\hrho\chi_m)||_{H^{(0,0.5)}(\hD_t)}\to ||\T^{r-1} \rho||_{H^{(0,0.5)}(\tD_t)}$ and
$||\T^k (\hrho\chi_m)||_{L^2(\hD_t)}\to ||\T^k \rho||_{L^2(\tD_t)}$
for $k \leq r-1$, and these are both bounded by the right-hand side of \eqref{usual1}
(resp. \eqref{usual2}).
Similarly, for $k \leq r-1$ we have
 $ ||\T^k (\hrho\chi_m)||_{L^6(\hD_t)}\to  ||\T^k \rho||_{L^6(\tD_t)}$
 and by the Sobolev lemma, this last term is bounded by the right-hand side of
 \eqref{usual1} (resp. \eqref{usual2}). The term involving the $L^\infty$ norm
 can be bounded in the same way.\qedhere
 \end{proof}

\subsection{Bounds for $\phi$ with mixed space and time derivatives }
\label{section 7.2}
The purpose of this section is to estimate $||D_t^{k-1} \pave \phi||_{H^\ell(\hD_t)}$,
extending the result of Theorem \ref{main theorem, ell est of phi}. Recall
the notation from \eqref{cprimes}.

\begin{theorem} \label{main theorem, ell est D_t phi}
Fix $r\!\geq \!7$, $k\! \geq \!1$ and suppose that
\eqref{ubd2} holds. Then with $\phi$ defined by \eqref{phidef}
we have
\begin{align}
|| D_t^{k-1}\pave \phi||_{H^\ell(\tD_t)} \leq C_r^\prime{\sum}_{s\leq k-1}||D_t^s \rho||_{H^{r-s-1}(\tD_t)},\qquad\text{if}\quad k\! + \!\ell \!\leq\! r,
\label{pave D_t mixed1}
\end{align}
\begin{equation}
||D_t^{k-1} \pave \phi||_{H^\ell(\tD_t)} \leq C_r^\prime
(||\T\xve||_{H^{(r-1, 1/2)}(\Omega)} + 1){\sum}_{s\leq k-1}||D_t^s \rho||_{H^{r-s}(\tD_t)},\qquad\text{if}\quad k\! + \!\ell \!=\! r .
\label{pave D_t mixed }
\end{equation}
In addition, $C_r'$ in \eqref{pave D_t mixed1}-\eqref{pave D_t mixed } can be replaced by $C(||\xve||_{H^r(\Omega)}, {\sum}_{s\leq k-1}||D_t^s V||_{H^{r-s}(\Omega)})$.
\end{theorem}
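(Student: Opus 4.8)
The plan is to reduce the estimate, via the mixed elliptic estimate \eqref{sobtensordt}, to bounding mixed tangential--time $L^2$ norms of $\pave\phi$ on $\tD_t$, and then to rerun the approximation scheme of Section \ref{gravitysection} with material time derivatives included. First, apply \eqref{sobtensordt} to the one-form $\alpha=\pave\phi$ on $\Omega$. Since $\pave_i$ and $\pave_j$ commute we have $\curl\pave\phi=0$, and since $\Dve\phi=-\rho(h)$ by \eqref{phidef} we have $\div\pave\phi=-\rho$. Hence for $k\geq2$ (the case $k=1$ is Theorem \ref{main theorem, ell est of phi}, noting $H^{r-1}(\tD_t)\hookrightarrow H^{(r-2,1/2)}(\tD_t)$ and $H^r(\tD_t)\hookrightarrow H^{(r-1,1/2)}(\tD_t)$) and with $s=(k-1)+\ell\geq1$,
\[
  ||D_t^{k-1}\pave\phi||_{H^\ell(\tD_t)}\leq||\pave\phi||_{k-1,\ell}
  \leq C_r'\Big(||\rho||_{k-1,\ell-1}+\sum_{k_1\leq k-1,\,\ell_1\leq\ell}||\FD^{k_1,\ell_1}\pave\phi||_{L^2(\tD_t)}\Big).
\]
The first term equals $\sum_{s\leq k-1}||D_t^s\rho||_{H^{\ell-1}(\tD_t)}$, which is dominated by the right-hand sides of \eqref{pave D_t mixed1}--\eqref{pave D_t mixed } since $\ell-1\leq r-s-1$ for $s\leq k-1$. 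So it remains to bound each $||\FD^{k_1,\ell_1}\pave\phi||_{L^2(\tD_t)}$ with $k_1\leq k-1$, $\ell_1\leq\ell$, i.e.\ (up to lower-order commutators) $||D_t^{k_1}\T^{\ell_1}\pave\phi||_{L^2(\tD_t)}$.

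For this I would follow Section \ref{extensionsec}. Extend $\phi$ to $\hD_t$ and approximate it by $\phi_m=-\hrho\chi_m*\Phi$ as in \eqref{def wphi}. Writing $\phi_m(t,y)=\tfrac1{4\pi}\int\frac{(\hrho\chi_m)(t,y')\,dy'}{|\xht(t,y)-\xht(t,y')|}$ and applying $D_t^{k_1}$, the commutator expansion exactly as in \eqref{eq:potentialkernel}--\eqref{eq:Dtkpotentialkernel} writes $D_t^{k_1}\phi_m$ as a finite sum of potential-type operators whose kernels are products of difference quotients of $D_t^j\xht$, $j\leq k_1$, applied to $D_t^{k_1-\ell'}(\hrho\chi_m)$, $\ell'\leq k_1$. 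One then applies $\T^{\ell_1}$ and a generalization of Theorem \ref{ellphi'' 1} to each summand; the hypotheses hold because, thanks to the compatibility conditions, $D_t^j(\hrho\chi_m)$ is supported in $\overline{\tD_t}$ when $j\geq1$ and equals $\bar\rho\chi_m$, hence is radial on $\hD_t\setminus\tD_t$, when $j=0$. Running the same estimate on $\phi_{m,n}=\phi_m-\phi_n$ with $g=\hrho\chi_{m,n}$, where all tangential derivatives of $D_t^j(\hrho\chi_{m,n})$ vanish and $\hrho\chi_{m,n}\to0$ in $L^p$, shows $\{\T^{\ell_1}D_t^{k_1}\paht\phi_m\}_m$ is Cauchy in $L^2(\hD_t)$ and converges to $\T^{\ell_1}D_t^{k_1}\paht\phi$; restricting to $\tD_t$ and letting $\hrho\chi_m\to\rho\chi_{\tD_t}$ (after the radial and tangential regularization used in the proof of Theorem \ref{usual norms}) gives $||D_t^{k_1}\T^{\ell_1}\pave\phi||_{L^2(\tD_t)}$ bounded by $\sum_{s\leq k_1}||D_t^s\rho||_{H^{r-s-1}(\tD_t)}$ when $k_1+\ell_1\leq r-1$, and by $(||\T\xve||_{H^{(r-1,1/2)}(\Omega)}+1)\sum_{s\leq k-1}||D_t^s\rho||_{H^{r-s}(\tD_t)}$ in the top-order case $k_1=k-1$, $\ell_1=\ell$, $k+\ell=r$ --- exactly mirroring Theorem \ref{usual norms}. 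Since the kernel factors involve $D_t^j\xht$, and $D_t\xve=\ssm V$ with $\ssm$ bounded on Sobolev spaces, while the $\leq3$-derivative quantities in $M$ from \eqref{ubd2} are dominated by $||\xve||_{H^r(\Omega)}$ and $\sum_{s\leq k-1}||D_t^sV||_{H^{r-s}(\Omega)}$ via Sobolev embedding, this also yields the stated alternative form of the constant.

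The main obstacle is the generalization of Theorem \ref{ellphi'' 1} to potential operators whose kernels are the difference-quotient kernels of \eqref{eq:Dtkpotentialkernel} rather than the Newtonian kernel: one must track precisely which norms of $\xht$ enter when tangential derivatives are distributed over the kernel factors $\delta D_t^j\xht$ and over $g$, and, crucially, keep the boundary term on $\pa\hD_t$ produced by the integration by parts under control --- this is again where the facts that $\Phi(x-\cdot)\in L^2(\hD_t)$ for $x\in\pa\hD_t$ and that $D_t^j(\hrho\chi_m)$ vanishes (or is radial) outside $\tD_t$ are used. The bookkeeping for the commutator identity $D_t^{k_1}(g*\Phi)=\sum_\ell\Phi_\ell[D_t^{k_1-\ell}g]$ on the moving domain $\hD_t$, and for the interchange of $D_t^{k_1}$ with $\T^{\ell_1}$, is routine but must be carried out with care.
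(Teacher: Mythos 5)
Your scaffolding matches the paper's: the paper also inducts, handles $k\!=\!1$ via Theorem \ref{main theorem, ell est of phi}, reduces the $\ell\!\geq\!1$ case via a div--curl estimate (they invoke \eqref{sobellmix} applied to the one-form $D_t^{k-1}\pave\phi$, and control $\div D_t^{k-1}\pave\phi$, $\curl D_t^{k-1}\pave\phi$ via the commutator Lemma \ref{commutators in Eulerian coordinate} and induction), and handles the leftover mixed tangential--time $L^2$ norms by the extension scheme with $\hrho\chi_m*\Phi$ (Corollary \ref{usual norm D_t} resting on Theorem \ref{Dtellphi' 1}).

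Where you diverge, and where the gap lies, is the mechanism for absorbing the material derivatives inside the potential. You propose to commute $D_t^{k_1}$ directly through the convolution using the difference-quotient kernel expansion \eqref{eq:Dtkpotentialkernel}, which produces operators $\Phi_\ell[D_t^{k_1-\ell}g]$ with kernels built from products $\delta D_t^{\ell_1}\xht\cdot\delta D_t^{\ell_2}\xht\cdots$ over $|\xht(y)-\xht(y')|$, and then to prove an analogue of Theorem \ref{ellphi'' 1} for each such operator. You correctly identify that this analogue is the crux and that it is not established; indeed it would require redoing the entire boundary integration-by-parts and Cauchy-sequence analysis of Section \ref{gravityproofs1} for a family of non-convolution kernels, tracking how tangential derivatives distribute over the kernel factors. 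That work is nontrivial and not reducible to a routine modification.

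The paper sidesteps this entirely. It never commutes $D_t$ through the kernel. Instead, it commutes $D_t^j$ through the Poisson equation $\Dht\phi_m = g_m$, writing $\Dht D_t^j\phi_m = D_t^j g_m + [\Dht, D_t^j]\phi_m$. Because the extension $\xht$ is chosen so that $\xht(t,y)=x_0(y)$ (hence $\Vht\!=\!0$) outside $\Omega^{d_0/2}$, the commutator $[\Dht, D_t^j]\phi_m$ --- which is a sum of terms like $(\paht^2 D_t^{\ell_1}\Vht)(D_t^{\ell_2}\paht\phi_m)$ --- is compactly supported away from $\pa\hD_t$. This gives $D_t^j\phi_m = (D_t^j g_m)*\Phi + ([\Dht,D_t^j]\phi_m)*\Phi$ with the fixed Newton kernel in both terms (Lemma \ref{D_t^r f lem}); the first is handled exactly as before, and the second by Young's inequality and Sobolev, since $\Phi, \paht\Phi\in L^1(\hD_t)$. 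Consequently the tangential boundary estimate (Lemma \ref{bdy estimate for D_tphi_m'}) and the energy-identity integration by parts go through with the Newton kernel unchanged. You should therefore replace your Step 3 by this PDE-commutator argument; as written, your proposal leaves its central estimate unproven and pursues it by a strictly harder route than the paper's.

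One small correction: the observation that $D_t^j(\hrho\chi_m)$ for $j\geq 1$ is supported in $\overline{\tD_t}$ has nothing to do with the compatibility conditions --- it is just that $\hrho\equiv\bar\rho$ and $\chi_m$ is time-independent in Lagrangian variables outside $\tD_t$, so their material derivative vanishes there.
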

\begin{proof}
  We will just prove \eqref{pave D_t mixed },
  the proof of \eqref{pave D_t mixed1} being similar.
   We proceed by induction: when $k+\ell=1$, this follows from Theorem \ref{main theorem, ell est of phi}. Suppose that we know \eqref{pave D_t mixed }
    for $k+\ell=1,\cdots r$.
   The case $\ell = 0$ follows directly from Theorem \ref{usual norm D_t} so we
   assume that $\ell \geq 1$. By the elliptic estimate \eqref{sobellmix},
   we have:
\begin{equation*}
||D_t^{k-\!1} \pave \phi||_{H^\ell(\tD_t\!)} \leq C(||\xve||_{H^r(\Omega)})\big(||\div\! D_t^{k-\!1} \pave \phi||_{H^{\ell-\!1}\!(\tD_t\!)}
+||\curl D_t^{k-\!1} \pave \phi||_{H^{\ell-\!1}\!(\tD_t\!)}+\!{\sum}_{s\leq \ell}||\T^s D_t^{k-\!1} \pave \phi||_{L^2(\tD_t\!)}\big).
\label{pa D_t phi pre}
\end{equation*}
To control $||\div D_t^{k-1} \pave \phi||_{H^{\ell-1}(\tD_t\!)}$ and $||\curl D_t^{k-1} \pave \phi||_{H^{\ell-1}(\tD_t\!)}$, we use \eqref{ucommEu} and get:
\begin{equation}
||\div D_t^{k-1} \pave \phi||_{H^{\ell-1}(\tD_t)} \leq ||D_t^{k-1} \rho||_{H^{\ell-1}(\tD_t)}+ P({\tsum}_{s\leq k-2}||D_t^s \ssm V||_{H^{r-s}(\Omega)}){\sum}_{s\leq k-2}||D_t^s \pave\phi||_{H^{r-1-s}(\tD_t)},
\end{equation}
\begin{equation}
||\curl D_t^{k-1} \pave \phi||_{H^{\ell-1}(\tD_t)} \leq P({\tsum}_{s\leq k-2}||D_t^s \ssm V||_{H^{r-s}(\Omega)}){\sum}_{s\leq k-2}||D_t^s \pave\phi||_{H^{r-1-s}(\tD_t)}.
\end{equation}
By the inductive assumption, $||D_t^s \pave \phi||_{H^{r-s}(\tD_t)}$ is bounded
by the right-hand side of \eqref{pave D_t mixed1} (resp. \eqref{pave D_t mixed })
when $s \leq k-2$,
and by Theorem \ref{usual norm D_t}, we likewise control $||\T^s D_t^{k-1} \pave \phi||_{L^2(\tD_t)}$
for $s \leq \ell$.
\end{proof}

First, we need a result analogous to Theorem \ref{ellphi'' 1}.
 Let $\FD^r$ be the mixed tangential space and time derivative defined in Section \ref{def T and FD}.
The proof of the following theorem can be found in Appendix \ref{gravityproofs2}:
\begin{theorem} \label{Dtellphi' 1}
  Fix $r \geq 5$, suppose that \eqref{uwassump} holds and let $\Phi$ denote
  the fundamental solution of the Laplacian in $\R^3$. If $g$ is a smooth function whose support is contained in $\xht(t, \Omega^{d_0/2})$
  which additionally satisfies that $g(x)$ is radial whenever $x\!\in \!\hD_t\!\setminus\!\D_t$,
  then with $L^p=L^p(\hD_t)$:
\begin{equation}
{\sum}_{k\leq r}||\mathfrak{D}^k \paht (g * \Phi)||_{L^2} \leq C_r^{\prime\prime\prime}
(||\T\xve||_{H^{(r-1, 1/2)}(\Omega)} + 1)\bigtwo({\sum}_{k\leq r}||\mathfrak{D}^kg||_{L^2}+{\sum}_{k\leq 2}||\mathfrak{D}^kg||_{L^6}+||g||_{L^\infty}\bigtwo).
\end{equation}
\end{theorem}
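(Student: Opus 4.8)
The plan is to follow the proof of Theorem~\ref{ellphi'' 1}, the only genuinely new ingredient being that we must also commute the material derivative $D_t$ through the convolution. First pull the convolution back to the fixed domain $\Omega^{d_0}$ by the flow $\xht$: setting $f(t,y)=g(t,\xht(t,y))$ and $\widehat{\kappa}=\det(\pa\xht/\pa y)$, we have
\[
 \phi(t,y):=(g*\Phi)(\xht(t,y))
 =\int_{\Omega^{d_0}}\Phi\big(\xht(t,y)-\xht(t,y')\big)\,f(t,y')\,\widehat{\kappa}(t,y')\,dy',
\]
so that $\paht\phi$ is the integral operator with kernel $\Phi'(\xht(t,y)-\xht(t,y'))$ applied to $f\widehat{\kappa}$, where $\Phi'=\nab\Phi$ is homogeneous of degree $-2$ on $\R^3$. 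Since $g$ is supported in $\xht(t,\Omega^{d_0/2})$, the pullback $f$ vanishes near $\pa\Omega^{d_0}$.

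Because the fields in $\T$ are $y$-vector fields with $t$-independent coefficients, $[\T,D_t]=0$, so a word $\FD^k$ can be rewritten without commutator errors as $T^I D_t^b$ with $T^I\in\T^{k-b}$. Arguing as in the derivation of \eqref{eq:Dtkpotentialkernel} (now for the kernel $\Phi'$ rather than $\Phi$) we first commute $D_t^b$: a time derivative landing on the kernel gives, by the chain rule, $\Phi''(\xht(y)-\xht(y'))\cdot(\Vht(t,y)-\Vht(t,y'))$, i.e.\ it is automatically a difference quotient and does not worsen the singularity, so $D_t^b\paht\phi$ is a finite sum of integral operators with kernels $\Phi'(\xht(y)-\xht(y'))$ times products of the difference quotients $\delta(D_t^{i}\xht)$, applied to $D_t^{\le b}(f\widehat{\kappa})$. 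We then apply the remaining tangential derivatives $T^I$ exactly as in the proof of Theorem~\ref{ellphi'' 1}: whenever $T\in\T$ lands on a kernel it produces $\Phi''(\xht(y)-\xht(y'))\cdot T\xht(y)=\Phi''\cdot\big(T\xht(y)-T\xht(y')\big)+\Phi''\cdot T\xht(y')$, where the first term carries the compensating difference and, in the second, one writes $\Phi''(\xht(y)-\xht(y'))=-A^a_{\,i}(y')\pa_{y'^a}\big[\Phi'(\xht(y)-\xht(y'))\big]$ and integrates by parts in $y'$ (no boundary term, as $f$ is compactly supported in $\Omega^{d_0/2}$), transferring the derivative onto $f\widehat{\kappa}$. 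Iterating, $\FD^k\paht\phi$ becomes a finite sum of integral operators with kernels $d_\beta\,\Phi'(\xht(y)-\xht(y'))$ times products of difference quotients $\delta(\FD^{i}\xht)$ and factors of bounded-order $\FD\pa_y\xht$, applied to $\FD^{j}f$ with $j\le k$; the key point is that every singular differentiation of the kernel is paired with a difference quotient, so no resulting kernel is more singular than $|\xht(y)-\xht(y')|^{-2}$, which — since $\xht$ is bi-Lipschitz by \eqref{uwassump} — is weakly singular on $\R^3$ and hence defines an operator bounded $L^p\!\to\!L^q$ by Hardy--Littlewood--Sobolev.

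It remains to estimate these terms, sorting them exactly as in the proof of Theorem~\ref{ellphi'' 1}. In a term where no single copy of $\xht$ carries more than a bounded number of derivatives, the difference quotients and $A$-factors are bounded in $L^\infty$ in one of the two variables (using \eqref{ubd2} and the pointwise bounds on $A$ and its first two $y$-derivatives that follow from it), and the $L^p\!\to\!L^q$ bound for $|\cdot|^{-2}$ together with the change of variables $\xht$ — which contributes the constant $C_r^{\prime\prime\prime}$ via \eqref{equiv extended norm} — reproduces the terms ${\sum}_{k\le r}\|\FD^kg\|_{L^2}$, ${\sum}_{k\le 2}\|\FD^kg\|_{L^6}$ and $\|g\|_{L^\infty}$ on the right; the stronger $L^6$ and $L^\infty$ norms of $g$ appear precisely because $|\cdot|^{-2}$ only lies in weak $L^{3/2}$, and one also uses that $\Phi(x-\cdot)\in L^2(\hD_t)$ for $x\in\pa\hD_t$ as observed at the start of Section~\ref{gravitysection}. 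In the remaining, top-order, terms one copy of $\xht$ carries $\sim r$ of the derivatives from $\FD^k$: that factor is placed in $L^2$ by Cauchy--Schwarz, all others in $L^\infty$. If the block of derivatives on it contains a $D_t$, it is a derivative of $\Vht=D_t\xht$ of order $<r$, whose $L^2$ norm is bounded in terms of ${\sum}_{s\le r}\|D_t^s\xve\|_{H^{r-s}(\Omega)}$ and so is absorbed into $C_r^{\prime\prime\prime}$ (this is why $C_r^{\prime\prime\prime}$ is allowed to depend on $M$ and $\|\xve\|_s$, and not only on $\|\xve\|_{H^{(r-1,1/2)}}$); if it contains no $D_t$ it equals $\T^r\xht$, and, because the accompanying difference quotient makes the associated kernel act like an operator of order $\tfrac12$, one needs $\T^r\xht$ with half an extra tangential derivative, which is exactly controlled by $\|\T\xve\|_{H^{(r-1,1/2)}}$, giving the explicit factor $\|\T\xve\|_{H^{(r-1,1/2)}}+1$ after \eqref{equiv extended norm}, precisely as in Theorem~\ref{ellphi'' 1}.

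The main obstacle, just as in the pure tangential case, is controlling the Calder\'on--Zygmund-type singular integrals that would arise from a derivative hitting the kernel without a compensating difference quotient; this is resolved by the symmetrization-and-integration-by-parts device above, which is legitimate because $g$ has compact support in $\xht(t,\Omega^{d_0/2})$ and the fields in $\T$ are tangent to $\pa\Omega^{d_0}$, and because $D_t$ applied to the kernel is automatically a difference. The only point special to the mixed-derivative case is the bookkeeping in the preceding paragraph: one must verify that every $\xht$-factor carrying a top-order derivative together with at least one $D_t$ can be routed into $C_r^{\prime\prime\prime}$ rather than costing an extra half tangential derivative, so that only $\T^r\xht$ survives into the explicit factor $\|\T\xve\|_{H^{(r-1,1/2)}}+1$.
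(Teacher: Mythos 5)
Your route is genuinely different from the paper's. The paper never works with the differentiated kernel: it computes $\|\FD^{r-1}D_t\paht f\|_{L^2(\hD_t)}^2$ directly, commutes $\FD$ past $\paht$ (Corollary \ref{commutator FD pave lemma}), integrates by parts over $\hD_t$ so that the equation $\widehat{\Delta}f=g$ converts two derivatives of $f$ into $g$, controls the resulting boundary term on $\pa\hD_t$ by Lemma \ref{bdy estimate for D_tphi_m'} (this is where the support and radial hypotheses on $g$ enter), and extracts the factor $\|\T\xve\|_{H^{(r-1,1/2)}}+1$ from the top-order interior terms via the half-derivative duality \eqref{inthalf} and the fractional Leibniz rule \eqref{app:alg2}, together with Lemmas \ref{Dtellphi} and \ref{D_t^r f lem}. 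You instead stay entirely at the level of the pulled-back convolution kernel, never use the PDE, and never see $\pa\hD_t$.

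The gap is in your top-order term. When all $r$ derivatives land on a single copy of $\xht$ inside the kernel, the compensated piece you are left with is
\begin{equation}
\int \Phi''\big(\xht(y)-\xht(y')\big)\cdot\big(T^I\xht(y)-T^I\xht(y')\big)\,(f\widehat{\kappa})(y')\,dy',\qquad |I|=r,
\end{equation}
and $\Phi''$ is homogeneous of degree $-3$. The difference $T^I\xht(y)-T^I\xht(y')$ only gains a factor of $|y-y'|$ if $T^I\xht$ is Lipschitz, which \eqref{uwassump} gives for $|I|\le 2$ but certainly not for $|I|=r$, where $\T^r\xht$ is controlled only in $L^2$-based spaces of half a tangential derivative. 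Bounding the difference crudely by $|T^I\xht(y)|+|T^I\xht(y')|$ leaves the kernel $|y-y'|^{-3}$, which is not weakly singular, so Hardy--Littlewood--Sobolev and the $L^1$-kernel Young inequality you invoke do not apply; what you would actually need is a Calder\'on-commutator-type $L^2$ bound for a kernel of the form $(W(y)-W(y'))/|y-y'|^{3}$ with $W=\T^r\xht$ of only $H^{1/2}$ tangential regularity. Your sentence ``the accompanying difference quotient makes the associated kernel act like an operator of order $\tfrac12$, so one needs $\T^r\xht$ with half an extra tangential derivative'' is precisely the statement that has to be proved, and it is the crux of the theorem, since it is the only place the factor $\|\T\xve\|_{H^{(r-1,1/2)}}+1$ can come from. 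The paper sidesteps this entirely: after integrating the Laplacian by parts, the dangerous pairing appears as concrete products such as $\int(\T\pa\T^{r-1}\xht)(\paht^2 f)(\T^{r}\xht)(\paht f)$ and $\int(\T^r g)(\T^r\xht)(\paht f)$, where half a tangential derivative is moved across the integral by \eqref{inthalf} and distributed by \eqref{app:alg2} --- no singular-integral operator bound is ever needed. Unless you supply the commutator estimate (or reorganize your computation so that the equation $\widehat{\Delta}f=g$ absorbs the two kernel derivatives before the top-order factor of $\xht$ is produced), the argument does not close.
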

Similar to the case when $\mathfrak{D}^j = \T^j$, Theorem \ref{Dtellphi' 1} with
$g = \hrho (\chi_m - \chi_n)$ implies that  the sequence
 $(\mathfrak{D}^j \paht \hphi_m)_{m=1}^\infty$ is Cauchy in $L^2(\hD_t)$ for $j\leq r$
 and this gives the following bound:
\begin{cor} \label{usual norm D_t}
With the same hypotheses and notation as in Theorem \ref{Dtellphi' 1}, writing
$\phi = -\rho\chi_{\D_t} * \Phi$, we have:
\begin{equation}
{\sum}_{j\leq r-1}||\mathfrak{D}^j \paht \phi||_{L^2(\tD_t)} \leq C_r^{\prime\prime\prime}
\bigtwo({\sum}_{k\leq r-1}||\mathfrak{D}^k\rho||_{L^2(\tD_t)}+{\sum}_{k\leq 2}||D_t^k\rho||_{H^{3-k}(\tD_t)}\bigtwo),\qquad
\end{equation}
\begin{equation}
{\sum}_{j\leq r}||\mathfrak{D}^j \paht \phi||_{L^2(\tD_t)} \leq C_r^{\prime\prime\prime}
(||\T\xve||_{H^{(r-1, 1/2)}(\Omega)} + 1)\bigtwo({\sum}_{k\leq r}||\mathfrak{D}^k\rho||_{L^2(\tD_t)}+{\sum}_{k\leq 2}||D_t^k\rho||_{H^{3-k}(\tD_t)}\bigtwo).
\end{equation}
\end{cor}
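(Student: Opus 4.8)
The plan is to follow the scheme used for the purely tangential estimates in Theorem \ref{usual norms}: prove the inequalities for the regularized potentials $\phi_m = -\hrho\chi_m * \Phi$ of \eqref{def wphi} and let $m\to\infty$. I would first record the algebraic properties of the cutoff. Since $\chi_m$ is radial and independent of $t$ in the $y$-variables, $\mathfrak{D}\chi_m = 0$, and (as $\T$ and $D_t$ commute) Leibniz gives $\mathfrak{D}^k(\hrho\chi_m) = \chi_m\,\mathfrak{D}^k\hrho$. Next, $\hrho$ equals $\rho$ on $\tD_t$ and the constant $\overline{\rho}$ on $\hD_t\setminus\tD_t$; because $h\equiv 0$ on $[0,T]\times\pa\Omega$ we have $D_t^j h|_{\pa\Omega}=0$ and $\rho|_{\pa\Omega}=\overline{\rho}$, so the chain rule for $\rho=\rho(h)$ together with the tangency of $\T$ to $\pa\Omega$ yields $\mathfrak{D}^k\rho|_{\pa\Omega}=0$ for every $k\geq1$. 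Hence $\mathfrak{D}^k\hrho$ agrees with $\mathfrak{D}^k\rho$ on $\tD_t$, vanishes on $\hD_t\setminus\tD_t$, and carries no singular contribution along $\pa\tD_t$, so $||\mathfrak{D}^k(\hrho\chi_m)||_{L^2(\hD_t)}=||\mathfrak{D}^k\rho||_{L^2(\tD_t)}$ for $k\geq1$, uniformly in $m$ (and similarly for $L^6$ norms with $k\leq2$ and the $L^\infty$ norm for $k=0$).

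Then I would apply Theorem \ref{Dtellphi' 1} to $g=-\hrho\chi_m$, which is supported in $\xht(t,\Omega^{d_0/2})$ and satisfies $\mathfrak{D}g=0$ on $\hD_t\setminus\tD_t$; since $g$ is only Lipschitz one first regularizes it in the radial and tangential directions exactly as in the proof of Theorem \ref{usual norms}. By the previous paragraph the right-hand side of the resulting estimate is dominated, uniformly in $m$, by $\sum_{k}||\mathfrak{D}^k\rho||_{L^2(\tD_t)}+\sum_{k\leq2}||\mathfrak{D}^k\rho||_{L^6(\tD_t)}+||\rho||_{L^\infty(\tD_t)}$, and writing $\mathfrak{D}^k=\T^ID_t^j$ with $|I|+j=k\leq2$ and using $H^1(\tD_t)\hookrightarrow L^6(\tD_t)$, $H^2(\tD_t)\hookrightarrow L^\infty(\tD_t)$ together with $|I|+1\leq3-j$, the last two sums are absorbed into $\sum_{k\leq2}||D_t^k\rho||_{H^{3-k}(\tD_t)}$. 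Using Theorem \ref{Dtellphi' 1} with the index $r$ produces (after the limiting argument below) the second estimate, with constant $C_r'''(||\T\xve||_{H^{(r-1,1/2)}(\Omega)}+1)$; the first estimate follows by running the same argument with $r$ replaced by $r-1$, the factor $||\T\xve||_{H^{(r-2,1/2)}(\Omega)}+1$ and the constant $C_{r-1}'''$ then being absorbed into a single $C_r'''$ since they depend only on $M$, $||\xve||_{H^{(r-1,1/2)}(\Omega)}$ and $||\xve||_r$.

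It remains to pass to the limit. As indicated in the text preceding the statement, Theorem \ref{Dtellphi' 1} applied to $g=\hrho(\chi_m-\chi_n)$ — for which $\mathfrak{D}^kg\equiv0$ when $k\geq1$, since $\chi_m\equiv\chi_n\equiv1$ on $\Omega$ and $\hrho$ is constant on the support of $\chi_m-\chi_n$ — shows that $\{\mathfrak{D}^j\paht\phi_m\}_{m\geq1}$ is Cauchy in $L^2(\hD_t)$ for $j\leq r$; since $\hrho\chi_m\to\rho\chi_{\tD_t}$ in $L^p$ for $p<\infty$ we have $\phi_m\to\phi$ (with $\phi$ as in the statement), so the $L^2$-limits above are $\mathfrak{D}^j\paht\phi$, and because $\paht=\pave$ on $\tD_t$ this gives $\mathfrak{D}^j\pave\phi_m\to\mathfrak{D}^j\pave\phi$ in $L^2(\tD_t)$. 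Letting $m\to\infty$ in the bounds of the previous step then gives the two claimed inequalities, using for the $k=0$ terms that the annular region $\hD_t\setminus\tD_t$ on which $\chi_m$ is nonconstant has measure tending to $0$. Everything here is routine once Theorem \ref{Dtellphi' 1} is in hand; the only point requiring care is the Cauchy property in $L^2$, because the $||g||_{L^\infty(\hD_t)}$ term on the right of Theorem \ref{Dtellphi' 1} does not tend to zero for $g=\hrho(\chi_m-\chi_n)$, so one must instead use that $\paht(\Phi*g)$ is already controlled by $L^2\cap L^{6/5}$-type norms of $g$ — the norms that do vanish in the limit — exactly as in the corresponding step for Theorem \ref{usual norms}.
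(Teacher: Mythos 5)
Your argument is exactly the one the paper intends: apply Theorem \ref{Dtellphi' 1} to $g=-\hrho\chi_m$ (after regularization), obtain the Cauchy property from $g=\hrho(\chi_{m}-\chi_n)$ using $\mathfrak{D}(\hrho\chi_{m,n})=0$, pass to the limit, and convert the $L^6/L^\infty$ terms into $\sum_{k\le2}\|D_t^k\rho\|_{H^{3-k}(\tD_t)}$ by Sobolev embedding, mirroring the proof of Theorem \ref{usual norms}. Your closing observation — that the $\|g\|_{L^\infty(\hD_t)}$ term does not vanish for $g=\hrho\chi_{m,n}$ and must be replaced by norms controlled by the shrinking support — is a point the paper leaves implicit, and your fix is the right one.
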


\subsection{Fractional derivative bounds for $\phi$}\label{section 7.3}
We will need an estimate for $||\pave \phi||_{H^{(0,r-1/2)}(\D_t)}$ in Section \ref{uniform}.
The following theorem is an analogue of Theorem \ref{usual norms} and follows from an approximation
argument as above and the estimates in Appendix \ref{tangapp}. See Appendix \ref{gravityproofs}
for the proof.
\begin{theorem} \label{r-0.5 estimate for phi} Fix $r\geq 5$. With $\phi$ defined
  by \eqref{phidef} we have
\begin{equation}
||\T^{r-1}\pave \phi||_{H^{(0,1/2)}(\tD_t)} \leq C_r||\rho||_{H^{r-1}(\tD_t)}.
\end{equation}
\end{theorem}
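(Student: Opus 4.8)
The plan is to follow the proof of Theorem~\ref{usual norms} essentially line by line, using the same extension of $\tD_t$ to $\hD_t$, the same approximating potentials $\phi_m=-\hrho\chi_m*\Phi$ of \eqref{def wphi}, and the Cauchy argument of Corollary~\ref{ellphi'''}. The only genuinely new ingredient is a fractional analogue of Theorem~\ref{ellphi'' 1}: for $g$ satisfying the hypotheses there (smooth, supported in $\xht(t,\Omega^{d_0/2})$, radial on $\hD_t\setminus\D_t$),
\[
\|\fdh\T^{r-1}\paht(g*\Phi)\|_{L^2(\hD_t)}\ \le\ C_r\Big(\tsum_{k\le r-1}\|\T^k g\|_{L^2(\hD_t)}+\tsum_{k\le 2}\|\T^k g\|_{L^6(\hD_t)}+\|g\|_{L^\infty(\hD_t)}\Big).
\]
The two features that distinguish this from the $j=r$ case of \eqref{boundphi T 1} are essential: the right‑hand side contains only \emph{integer}-order tangential norms of $g$, and it carries \emph{no} factor $\|\T\xve\|_{H^{(r-1,1/2)}(\Omega)}$. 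Both are possible because $\fdh\T^{r-1}$ sits strictly below $\T^r$ in tangential order, and it is precisely the $r$-th \emph{full} tangential derivative that forces the fractional norm of $g$ and the top‑order norm of $\xve$ in \eqref{boundphi T 1}. To prove the displayed estimate I would revisit the argument behind Theorem~\ref{ellphi'' 1} in Appendix~\ref{gravityproofs} with $\T^r$ replaced by $\fdh\T^{r-1}$, commuting the tangential fields through $\paht$ and through the Laplacian so that only $\le r-1$ integer tangential derivatives of $g$ and $\le r$ derivatives of $\xht$ (controlled by $\|\xve\|_{H^r(\Omega)}$ via \eqref{equiv extended norm}) ever occur, and using the product and commutator estimates for $\fdh$ from Appendix~\ref{tangapp} to move the half‑derivative around; the $\pa\hD_t$ boundary contributions, for which $\phi$ has no boundary condition, are handled as in Theorem~\ref{ellphi'' 1}, via the $L^p_z(\hD_t)$‑integrability of $\Phi(x-\cdot)$ for $x\in\pa\hD_t$ and the radial structure of $g$ off $\D_t$.

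With the displayed estimate in hand, the rest mirrors the proof of Theorem~\ref{usual norms}. Applying it to $g=\hrho(\chi_m-\chi_n)$ and using $\T^k\big(\hrho(\chi_m-\chi_n)\big)=0$ for $k\ge 1$ (since $\chi_m,\chi_n$ both equal $1$ on $\D_t$ and $\hrho$ is constant off $\D_t$; the non‑smoothness of $g$ is absorbed by the same radial‑and‑tangential regularization as in Theorem~\ref{usual norms}) shows, exactly as in Corollary~\ref{ellphi'''}, that $\{\fdh\T^{r-1}\paht\phi_m\}_m$ is Cauchy in $L^2(\hD_t)$; since $\phi_m\to\phi$ as in the proof of Theorem~\ref{usual norms}, its limit is $\fdh\T^{r-1}\paht\phi$, and on $\tD_t$, where $\paht=\pave$, this gives $\fdh\T^{r-1}\pave\phi\in L^2(\tD_t)$. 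Feeding $g=\hrho\chi_m$ into the displayed estimate and letting $m\to\infty$ bounds $\|\fdh\T^{r-1}\pave\phi\|_{L^2(\tD_t)}$ by $\tsum_{k\le r-1}\|\T^k\rho\|_{L^2(\tD_t)}+\tsum_{k\le 2}\|\T^k\rho\|_{L^6(\tD_t)}+\|\rho\|_{L^\infty(\tD_t)}$; since $r\ge 5$, the first family is $\le\|\rho\|_{H^{r-1}(\tD_t)}$ trivially, and the $L^6$ and $L^\infty$ terms are $\le C\|\rho\|_{H^{r-1}(\tD_t)}$ by $H^1\hookrightarrow L^6$ and $H^2\hookrightarrow L^\infty$ in three dimensions. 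Unwinding the chart definition \eqref{sobspacedef} of the $H^{(0,1/2)}$‑norm and absorbing the resulting lower‑order tangential terms by Theorem~\ref{usual norms} gives $\|\T^{r-1}\pave\phi\|_{H^{(0,1/2)}(\tD_t)}\le C_r\|\rho\|_{H^{r-1}(\tD_t)}$.

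The main obstacle is the displayed extended‑domain estimate, and specifically the bookkeeping showing that the extra half tangential derivative is absorbed without any fractional norm of $g$ and without the factor $\|\T\xve\|_{H^{(r-1,1/2)}(\Omega)}$ — i.e. that only $r-1$ integer tangential derivatives of $g$ and only $r$ derivatives of $\xht$ are ever needed — combined with the treatment of the $\pa\hD_t$ boundary terms, which as in Theorem~\ref{ellphi'' 1} relies on the integrability of $\Phi(x-\cdot)$ for $x\in\pa\hD_t$ and the radiality of $g$ outside $\D_t$.
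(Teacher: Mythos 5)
Your proposal is correct and follows essentially the same route as the paper: the paper's Appendix~\ref{gravityproofs} proves exactly the fractional analogue you describe, namely $\|\fdhm\T^{j}\paht(g*\Phi)\|_{L^2(\hD_t)}\le P(\|\xve\|_{H^r(\Omega)})\big(\sum_{k\le r-1}\|\T^k g\|_{L^2}+\sum_{k\le 2}\|\T^k g\|_{L^6}+\|g\|_{L^\infty}\big)$ for $j\le r-1$, with only integer tangential norms of $g$ and no $\|\T\xve\|_{H^{(r-1,1/2)}}$ factor, by rerunning the integration-by-parts argument of Theorem~\ref{ellphi'' 1} with the half-derivative moved around via the fractional Leibniz rule \eqref{app:alg2}. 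The subsequent approximation with $g=\hrho\chi_m$ and the Cauchy argument for $g=\hrho(\chi_m-\chi_n)$ are exactly as in Theorem~\ref{usual norms} and Corollary~\ref{ellphi'''}, so your assembly of the final estimate matches the paper's.
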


\subsection{Estimates for differences of solutions}\label{section 7.4}
Let $\Vu,\Vw:\Omega\to\R^3$ be two vector fields and  $\xve_I$, $\xve_{\II}$
their corresponding smoothed flows. Let $\xhtu$, $\xhtw$ be
the corresponding flow maps in the extended domain $\Odo$ and $\Vhtu\!=\!D_t\xhtu$,
$\Vhtw\!=\!D_t\xhtw$ be the associated velocity fields.
For $J \!= \! I, \II$, we define $\tD_{\!Jt}\! = \xve_J(t, \Omega)$ and:
\begin{equation}
  \phi_J(t, x) = \int_{\tD_{J t}}\rho_J(y_J(t, z))\chi_{\tD_{J t}}(z)\Phi(x-
  z)\,dz,
 \label{phijdef}
\end{equation}
where  $y_{\!J}(t,\cdot) \!:\! \tD_{\!Jt} \!\to \!\Omega$ is the inverse of
$\xve_{\!J}(t,\cdot)$. Throughout this section let $D_r$ denote a continuous function
\begin{equation}\label{eq:Dr}
D_r\!=\!D_r(||\xveu||_{H^r(\Omega)},||\xvew||_{H^r(\Omega)}, ||\Vu||_{H^{r-1}(\Omega)}, ||\Vw||_{H^{r-1}(\Omega)}, ||D_t\Vu||_{r-1}, ||D_t\Vw||_{r-1}).
\end{equation}
To prove a Lipschitz estimate for the map $\Lambda$ in Section
\ref{vlwpsec} we have
\begin{theorem}
\label{diffellphi full thm}
For $r\!\geq\! 7$ there is a continuous $D_r$ as in \eqref{eq:Dr}
so that for $k\!+\!\ell\!=r\!$ with $\phi_J$ defined by \eqref{phijdef}:
\begin{equation}
||D_t^{k-1}\paveu \phi_I\! -D_t^{k-1}\pavew \phi_{\II}||_{H^{\ell}(\Omega)} \leq D_r \big(||\rho_I\!-\rho_{\II}||_{r-1}\\
+\big\{||\xveu\!-\xvew||_{H^r(\Omega)}+||\Vu\!-\Vw||_{\X^r}\big\}||\rho_{\II}||_{r-1}\big).
\label{diffellphi full}
\end{equation}
In addition, $D_r$ in \eqref{pave D_t mixed1}-\eqref{pave D_t mixed } can be replaced by $$D_{k,\ell}\big(||\xveu||_{H^r(\Omega)},||\xvew||_{H^r(\Omega)}, {\tsum}_{s\leq k-1}||D_t^s \Vu||_{H^{r-s}(\Omega)},  {\tsum}_{s\leq k-1}||D_t^s \Vw||_{H^{r-s}(\Omega)}\big).$$
\end{theorem}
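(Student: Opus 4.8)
The plan is to follow the proof of Theorem \ref{main theorem, ell est D_t phi}, carrying two flows at once and tracking differences, exactly as Lemma \ref{dtphidiffnew} is the difference version of Theorem \ref{mainwavethm}. We argue by induction on $k+\ell$. When $\ell = 0$ the estimate will follow directly from a difference version of Corollary \ref{usual norm D_t}, described below. For $\ell \ge 1$ we set $\alpha := \paveu\phi_I - \pavew\phi_{\II}$, view it as a one-form on $\Omega$, and apply the mixed elliptic estimate \eqref{sobtensordt} in the $\xveu$-coordinates with order indices $(k-1,\ell)$; since the constants in \eqref{eq:constantssection5} are now allowed to depend on both $\|\xveu\|$ and $\|\xvew\|$, this bounds $\|\alpha\|_{k-1,\ell}$, hence the left-hand side of \eqref{diffellphi full}, by a $D_r$ as in \eqref{eq:Dr} times $\|\divu\alpha\|_{k-1,\ell-1}$, $\|\curlu\alpha\|_{k-1,\ell-1}$ and $\sum_{k_1\le k-1,\,\ell_1\le\ell}\|\FD^{k_1,\ell_1}\alpha\|_{L^2(\Omega)}$.

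For the divergence, $\divu\paveu\phi_I = \Dveu\phi_I = -\rho_I$ by \eqref{phidef}, while $\divu\pavew\phi_{\II} = \Dvew\phi_{\II} + (\divu - \divw)\pavew\phi_{\II} = -\rho_{\II} + (\divu - \divw)\pavew\phi_{\II}$, so that $\divu\alpha = -(\rho_I - \rho_{\II}) - (\divu - \divw)\pavew\phi_{\II}$; the first piece contributes $\|\rho_I - \rho_{\II}\|_{r-1}$, and in the second $(\divu - \divw)$ is a first-order operator whose coefficients are differences of entries of $A_I$ and $A_{\II}$ and their $y$- and $D_t$-derivatives, controlled in the norms of \eqref{eq:Dr} by $\|\xveu - \xvew\|_{H^r(\Omega)} + \|\Vu - \Vw\|_{\X^r}$ via \eqref{udef} and $\xveu - \xvew = \int_0^t \ssm(V_I - V_{\II})\,ds$; Moser-type product estimates together with the single-flow bound of Theorem \ref{main theorem, ell est D_t phi} for $\|\pavew\phi_{\II}\|_r$ then give a term bounded by $D_r(\|\xveu - \xvew\|_{H^r} + \|\Vu - \Vw\|_{\X^r})\|\rho_{\II}\|_{r-1}$. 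For the curl, $\curlu\paveu\phi_I = 0$ and $\curlw\pavew\phi_{\II} = 0$ because each is the pullback of an exact one-form, so $\curlu\alpha = -(\curlu - \curlw)\pavew\phi_{\II}$ is again a coordinate-difference term of the same type. The lower-order commutator terms arising when $D_t$-derivatives are pushed through $\divu$, $\curlu$ are handled by \eqref{ucommEu} and absorbed by the inductive hypothesis.

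The heart of the matter is a difference analogue of Theorem \ref{Dtellphi' 1}, bounding $\sum_{j\le r}\|\FD^j(\pahtu\phi_I - \pahtw\phi_{\II})\|_{L^2(\hD_t)}$, from which the required difference version of Corollary \ref{usual norm D_t} follows just as in the single-flow case. We would prove it by running the extension-and-approximation scheme of Section \ref{gravitysection} for both flows simultaneously: pass to the extended domains $\hD_{It}$, $\hD_{\II t}$, put $\phi_m^J = -\hrho_J\chi_m * \Phi$, and use the kernel expansion \eqref{eq:Dtkpotentialkernel} to write $D_t^p\paht\phi_m^J$, pulled back, as a sum of nonlocal operators with kernels built from divided differences of $D_t^q\xht_J$ applied to $\hrho_J\chi_m$. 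Subtracting the $I$- and $\II$-versions, each kernel difference is a sum of terms in which exactly one divided difference is of $D_t^q(\xht_I - \xht_{\II})$, the others being of $\xht_I$ or $\xht_{\II}$, plus a difference of the denominators $|\xht_I(t,y) - \xht_I(t,y')|^{-1} - |\xht_{\II}(t,y) - \xht_{\II}(t,y')|^{-1}$; together with the density difference $\hrho_I\chi_m - \hrho_{\II}\chi_m$, the Cauchy--Schwarz arguments, the $L^2$-integrability of $\Phi(x-\cdot)$ on $\hD_t$, and the fractional estimates of Appendix \ref{tangapp} used in the single-flow proof yield the bound with the extra factor $\|\T(\xht_I - \xht_{\II})\|_{H^{(r-1,1/2)}} + \|\xveu - \xvew\|_{C^3_{x,t}} + 1$ and right-hand side $\|\rho_I - \rho_{\II}\|_{r-1}$ plus a coordinate difference times $\|\rho_{\II}\|_{r-1}$. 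Since $\T(\hrho_J(\chi_m - \chi_n)) = 0$, exactly as in Corollary \ref{ellphi'''}, the sequences $\{\FD^j(\pahtu\phi_m^I - \pahtw\phi_m^{\II})\}_m$ are Cauchy in $L^2(\hD_t)$ for $j\le r$, so letting $m\to\infty$ and restricting to $\tD_t$ controls the $\FD^{k_1,\ell_1}\alpha$ terms and closes the induction, giving \eqref{diffellphi full}. That $D_r$ may be replaced by $D_{k,\ell}$ follows exactly as in Theorem \ref{main theorem, ell est D_t phi}, since at every stage only $\sum_{s\le k-1}\|D_t^s\Vu\|_{H^{r-s}}$, $\sum_{s\le k-1}\|D_t^s\Vw\|_{H^{r-s}}$ and $\|\xveu\|_{H^r}$, $\|\xvew\|_{H^r}$ enter.

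The main obstacle is precisely this difference version of the gravitational estimates: because the extended domains $\hD_{It}$ and $\hD_{\II t}$ are genuinely different, $\phi_I - \phi_{\II}$ cannot be compared on a fixed domain, and the entire chain of kernel estimates from Section \ref{gravitysection} and Appendices \ref{gravityproofs2} and \ref{tangapp} — including the boundary $L^2$ bound for $\Phi$ and the fractional estimates — must be reproved while carefully following the dependence of each divided-difference kernel and each denominator on $\xht_I - \xht_{\II}$. This is the nonlocal counterpart of the difference wave estimate Lemma \ref{dtphidiffnew}, and it is where essentially all the new work lies.
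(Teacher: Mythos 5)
Your proposal is essentially correct and follows the same architecture as the paper's proof: reduce $\|D_t^{k-1}\paveu\phi_I - D_t^{k-1}\pavew\phi_{\II}\|_{H^\ell}$ to divergence, curl, and tangential terms via a div--curl elliptic estimate, identify the div term with $\rho_I - \rho_{\II}$ modulo a commutator, observe the curl vanishes up to coordinate differences, and rely on the difference version of the tangential/time gravitational estimates for the remaining $\FD^{k_1,\ell_1}$ terms. Two points of comparison are worth noting. First, you apply the single-flow estimate \eqref{sobtensordt} to $\alpha = \paveu\phi_I - \pavew\phi_{\II}$ and then peel off $(\divu-\divw)\pavew\phi_{\II}$ and $(\curlu-\curlw)\pavew\phi_{\II}$ by hand; the paper instead invokes the purpose-built difference elliptic estimate Lemma \ref{app:sobdiff} (estimate \eqref{mixedellipticest}), which gives a bound directly in terms of $\divu\alpha - \divw\beta$ and $\curlu\alpha - \curlw\beta$ plus an explicit $\|\xveu-\xvew\|_{H^r}\|\pavew\phi_{\II}\|$ term, so the coordinate-difference bookkeeping is already baked in. Both routes work; the paper's is a bit leaner. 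Second, the ``heart of the matter'' you identify --- the difference analogue of Theorem \ref{Dtellphi' 1} controlling $\sum_{j\le r}\|\FD^j(\pahtu\phi_I - \pahtw\phi_{\II})\|_{L^2}$ via the extended-domain kernel scheme --- does not need to be reproved from scratch: it is already stated and proved in the paper as Theorem \ref{diffellphithm 1}, with the $L^2(\Omega)$ consequence you need given by Theorem \ref{diffellphi tan thm} (and Lemma \ref{diffellphi} for the interior $L^6/L^\infty$ pieces). Your sketch of how those estimates would be proved --- isolating one divided difference of $D_t^q(\xht_I - \xht_{\II})$ per kernel term plus the denominator difference --- matches the argument in Appendix \ref{gravityproofs}, so the proof is complete once those theorems are cited.
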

\begin{proof}
First, if $k=1$, by Lemma \ref{app:sobdiff}, we have:
\begin{equation}
||\paveu \phi_I-\pavew \phi_{\II}||_{H^{r-1}(\Omega)}\leq D_r \big(||\rho_I-\rho_{\II}||_{H^{r-2}(\Omega)}+||\T^{r-1}(\paveu \phi_I-\pavew\phi_{\II})||_{L^2(\Omega)}+ ||\xveu-\xvew||_{H^r(\Omega)}||\pavew \phi_{\II}||_{H^{r-1}(\Omega)}\!\big),
\end{equation}
which is bounded by the right-hand side of \eqref{diffellphi full}.
Second, we consider the case when $k\!\geq \!2$. If $\ell\!=0\!$ then \eqref{diffellphi full} is
Theorem \ref{diffellphi tan thm}. When $\ell\!\geq\! 1$, we set $\alpha\!=\!\paveu \phi_I$ and $\beta\!=\!\pavew \phi_{\II}$
 in Lemma \ref{app:sobdiff} and get with $H^k\!=H^k(\Omega)$:
\begin{multline}
||{}_{\!}D_t^{k-\!1\!}\paveu \phi_{I\!} -{}_{\!}D_t^{k-\!1\!}\pavew \phi_{\II}\!||_{{}_{\!}H^{\ell}}\!\leq\! D_{\!r}\bigtwo(\!||\!\div_{{}_{\!}I} \!D_t^{k-\!1\!}\paveu \phi_{I\!} -\div_{{}_{\!}\II}\! D_t^{k-\!1\!}\pavew \phi_{\II\!}||_{{}_{\!}H^{\ell\!-\!1}}
{}_{\!}+{}_{\!}||\!\curl_{I\!} \!D_t^{k-\!1\!}\paveu \phi_{I\!} -\curl_{\II{}_{\!}} \!D_t^{k-\!1\!}\pavew \phi_{\II}\!||_{{}_{\!}H^{\ell\!-\!1}}\\
+||\T^\ell D_t^{k-1} (\paveu \phi_I\!-\pavew \phi_{\II}\!)||_{L^2}
+||\paveu \phi_I-\pavew\phi_{\II}||_{H^{r-1}}+||\xve_I-\xve_{\II}||_{H^r} ||D_t^{k-1}\pavew  \phi_{\II}||_{{}_{\!}H^{\ell}}\bigtwo),
\end{multline}
which is bounded by the right-hand side of
\eqref{diffellphi full} except for the first two terms. For the first term, we write:
 \begin{multline}
\div_I D_t^{k-1}  \paveu \phi_I-\div_{\II} D_t^{k-1} \pavew \phi_{\II}
= D_t^{k-1} (\rho_I-\rho_{\II})\\
+\sum \big((D_t^{k_1}\pa \xveu)\cdots(D_t^{k_s}\pa \xveu)(\paveu D_t^{\ell'} \paveu \phi_I)\big)-\big((D_t^{k_1}\pa \xvew)\cdots(D_t^{k_s}\pa \xvew)(\pavew D_t^{\ell'} \pavew \phi_{\II})\big),
\end{multline}
with the sum taken over all $k_1+\cdots+k_s+\ell'=k-1$ and $k_1\geq 1$.
The $H^{\ell-1}(\Omega)$ norm is controlled by
\begin{equation}
C_r\bigtwo(||\rho_I-\rho_{\II}||_{k-1,\ell-1}+{\sum}_{s\leq k-2}||D_t^{s}(\paveu \phi_I - \pavew\phi_{\II})||_{H^{r-2-s}}
+\big\{||\xveu-\xvew||_{H^r}+||\Vu-\Vw||_{\X^r}\big\}||D_t^s\pavew \phi_{\II}||_{H^{r-2-s}}\bigtwo),
\end{equation}
by adapting the argument used in the proof of Lemma \ref{diffellphi}. The curl term is controlled similarly.
\end{proof}
With $f_{\!J} \!=\! (g_J * \Phi)\circ \xht_J$, $J\!{}_{\!}=\!I{}_{\!},\II$,
the following theorem allows one to control $||\FD^{r-1\!} \pahtu {}_{\!}f_{\!I}-\FD^{r-1\!} \pahtw{}_{\!} f_{\II}||_{L^2(\Odo\!)}$.
This will be used to get an estimate for $||\FD^{r-1\!} \paveu \phi_I-\FD^{r-1\!} \pavew \phi_{\II}||_{L^2(\Omega)}$
and by Proposition \ref{app:sobdiff} this will allow us to control the full Sobolev norm
of the difference. The proof of this is in Appendix \ref{gravityproofs}.
 \begin{theorem}
 \label{diffellphithm 1}
 For $r\! \geq\! 7$ there is a continuous
 $D_r$ as in \eqref{eq:Dr} so that the following hold.
 For $J\!=\!I,\II$, if $g_J$ are smooth functions supported in $\Omega^{d_0\!/2}\!\!$,
  such that $\FD g_J\!=\!0$ in $ \Odo\!\!\setminus \! \Omega$,
  then $f_{\!J}\!=\!(g_J*\Phi)\!\circ \widehat{x}_J$ satisfy:
 \begin{multline}
 {\sum}_{k\leq r-1}||\FD^k \pahtu f_{I}-\FD^k\pahtw f_{\II}||_{L^2(\Odo)}\\
 \leq D_r\bigtwo({\sum}_{k\leq r-1}||\FD^k (g_I-g_{\II})||_{L^2(\Odo)}
 +{\sum}_{k\leq 2}||\FD^k (g_I-g_{\II})||_{L^6(\Odo\!)}
+||g_I-g_{\II}||_{L^\infty(\Odo\!)}\,\,\\
+\big\{||\xveu-\xvew||_{H^r(\Omega)}+||\Vu-\Vw||_{r}\big\}\big({\sum}_{k\leq r-1}||\mathfrak{D}^kg_{\II}||_{L^2(\Odo)}+{\sum}_{k\leq 2}||\mathfrak{D}^kg_{\II}||_{L^6(\Odo)}+||g_{\II}||_{L^\infty(\Odo)}\big)\bigtwo).
 \label{diff bound phi' 1}
 \end{multline}
 \end{theorem}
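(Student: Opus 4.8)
The plan is to follow the proof of Theorem \ref{Dtellphi' 1} essentially line by line, but with the single flow replaced throughout by the pair $\xhtu,\xhtw$ and every quantity replaced by a difference via a telescoping identity; this is the same passage from a ``tame'' estimate to a Lipschitz estimate that turns Theorem \ref{main theorem, ell est D_t phi} into Theorem \ref{diffellphi full thm}. First I would expand the left-hand side. Since each factor in $\FD = \T\cup D_t$ acts in the $y$-variables, applying $\FD^k$ to $\pahtu f_I$, which is the Eulerian gradient $(\nabla(g_I*\Phi))\circ\xhtu$, and using the chain rule together with commutator identities of the type \eqref{eq:highercommutatorssec2} (now with $\xht$, $\Vht=D_t\xht$ in place of $x$, $V$) and the kernel formulas \eqref{eq:Dtkpotentialkernel}, one writes
\[
 \FD^k\pahtu f_I = {\sum}_{\ell\le k}\Phi_{I,\ell}\big[\FD^{k-\ell}g_I\big],
\]
where each $\Phi_{I,\ell}$ is an integral operator whose kernel is a polynomial in the divided differences $\delta D_t^m\xhtu(t,y,y')=(D_t^m\xhtu(t,y)-D_t^m\xhtu(t,y'))/|\xhtu(t,y)-\xhtu(t,y')|$, $m\le\ell$, divided by $|\xhtu(t,y)-\xhtu(t,y')|$; similarly for the $II$ terms. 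Here, exactly as in Theorem \ref{Dtellphi' 1}, the hypothesis $\FD g_J=0$ on $\Odo\setminus\Omega$ is what makes the integrations by parts (used to redistribute tangential derivatives away from the kernels, and to handle the contribution on $\pa\hD_t$ through $\Phi(x-\cdot)\in L^2$) produce no bad terms, since the $\FD$-derivatives of $g_J$ are then supported in the interior $\Omega$.

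Next, for each $\ell$ I would telescope
\[
 \Phi_{I,\ell}[\FD^{k-\ell}g_I]-\Phi_{II,\ell}[\FD^{k-\ell}g_{II}]
 =\Phi_{I,\ell}\big[\FD^{k-\ell}(g_I-g_{II})\big]+\big(\Phi_{I,\ell}-\Phi_{II,\ell}\big)\big[\FD^{k-\ell}g_{II}\big].
\]
For the first summand one reuses verbatim the $L^2$-boundedness analysis of $\Phi_{I,\ell}$ from the proof of Theorem \ref{Dtellphi' 1}: the divided-difference factors are bounded using the a priori bounds \eqref{ubd2} (which also give a bi-Lipschitz bound for $\xhtu$), leaving a weakly singular kernel $\lesssim|\xhtu(t,y)-\xhtu(t,y')|^{-1}$, so Young's and the Hardy--Littlewood--Sobolev inequalities bound the contribution by $\sum_{k\le r-1}\|\FD^k(g_I-g_{II})\|_{L^2(\Odo)}$ plus the low-order terms $\sum_{k\le2}\|\FD^k(g_I-g_{II})\|_{L^6(\Odo)}$ and $\|g_I-g_{II}\|_{L^\infty(\Odo)}$ coming from those terms where nearly all derivatives land on the kernel. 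For the second summand I would estimate the kernel of $\Phi_{I,\ell}-\Phi_{II,\ell}$ by a further telescoping of the polynomial in the $\delta D_t^m\xht$, together with the elementary mean value bound
\[
 \big|\nabla^j\Phi(\xhtu(t,y)-\xhtu(t,y'))-\nabla^j\Phi(\xhtw(t,y)-\xhtw(t,y'))\big|
 \lesssim\frac{\big|(\xhtu-\xhtw)(t,y)-(\xhtu-\xhtw)(t,y')\big|}{|\xhtu(t,y)-\xhtu(t,y')|^{\,j+2}},
\]
where the extra power in the denominator is absorbed into one more divided-difference factor of $\xhtu-\xhtw$. The difference kernel is then again weakly singular times divided differences of $\xhtu-\xhtw$ and of its $D_t$-derivatives; since $\Vht_J=D_t\xht_J$, the latter are divided differences of $\Vhtu-\Vhtw$, and by \eqref{equiv extended norm} (which lets one pass from $\xht_J$ on $\Odo$ back to $\xve_J$ on $\Omega$, and similarly for the velocities) one bounds the resulting contribution by $\{\|\xveu-\xvew\|_{H^r(\Omega)}+\|\Vu-\Vw\|_r\}$ times the $L^2$, $L^6$, $L^\infty$ norms of $\FD^{\le k}g_{II}$ appearing on the right of \eqref{diff bound phi' 1}.

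The main obstacle is purely the bookkeeping in the kernel-difference term: one must track which of the up to $r-1$ derivatives fall on the singular factor $|\xhtu(t,y)-\xhtu(t,y')|^{-1}$, which on the divided-difference numerators, and which on $g_{II}$, and verify that no combination ever produces a singularity worse than $|\xhtu(t,y)-\xhtu(t,y')|^{-1}$ once the bounded divided differences are absorbed using \eqref{ubd2}. This is the same difficulty already present in Theorem \ref{Dtellphi' 1}, doubled by the telescoping, and requires no new idea beyond a careful case analysis. Summing over $\ell\le k$ and $k\le r-1$ then yields \eqref{diff bound phi' 1}, and the stated refinement of $D_r$ follows by keeping track only of the norms of $\xht_J$ and of $D_t^s\Vht_J$ that actually enter.
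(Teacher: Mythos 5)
The proposal claims to follow the proof of Theorem~\ref{Dtellphi' 1} ``line by line,'' but what you describe is not the argument of that theorem. The proof of Theorem~\ref{Dtellphi' 1} in the paper (and likewise of Theorem~\ref{diffellphithm}, the appendix version of the statement at hand) is an \emph{energy method}: one writes $\|\FD^{r-1}\pahtu f_I-\FD^{r-1}\pahtw f_{\II}\|_{L^2}^2$ as an integral, splits off a commutator, and then integrates by parts in $\pahtu$ to expose $\Dhtu(f_I-f_{\II})$, which is then replaced by $g_I-g_{\II}$ plus controllable errors via the PDE. The resulting boundary contribution is handled by Lemma~\ref{bdy estimate diff}, the commutators by Lemma~\ref{commutator FD pave lemma diff}, and the pure $D_t^{r-1}$ subcase separately by Lemma~\ref{D_t^r-1 F-G lem}. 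What you propose instead --- expanding $\FD^k\pahtu f_I$ directly as a sum of integral operators $\Phi_{I,\ell}[\FD^{k-\ell}g_I]$ with divided-difference kernels and bounding each piece by Young and Hardy--Littlewood--Sobolev --- is the method the paper deploys \emph{only} for the pure time-derivative case (Lemma~\ref{D_t^r f lem}, Lemma~\ref{D_t^r-1 F-G lem}), where it works because $[\widehat\Delta,D_t^{r-1}]$ is lower order and $[\widehat\Delta,D_t^{r-1}]f$ has compact support.

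The step you defer as ``bookkeeping'' is where this approach breaks. First, a small point: the kernel of $\FD^k\pahtu f$ carries the singularity of $\nabla\Phi$, i.e.\ $\lesssim |\xhtu(t,y)-\xhtu(t,y')|^{-2}$, not $|\cdot|^{-1}$ as you write (the $|\cdot|^{-1}$ formula \eqref{eq:Dtkpotentialkernel} is for $D_t^k\phi$, without the $\paht$). The more serious issue is the top-order term where all $r-1$ derivatives land on the kernel: after repeatedly inserting $\T_y+\T_{y'}$ and integrating $\T_{y'}$ by parts, the surviving kernel is, in the worst case, $\delta\FD^{r-1}\xhtu(y,y')/|\xhtu(y)-\xhtu(y')|^2$. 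For $\xveu\in H^r(\Omega)$ the factor $\delta\FD^{r-1}\xhtu$ is a genuinely two-variable object controlled only in $L^2$ along segments, not pointwise; the Schur test applied to this kernel produces $\int_0^1(1-t)^{-1}\,dt=\infty$ after the change of variables that reduces it to a weakly singular convolution, so Young/HLS alone do not close this case. The a~priori bound \eqref{ubd2} that you invoke controls $\delta\FD^m\xht$ only for $m\le 3$. The paper's integration by parts in $\pahtu$ (which you omit) is precisely what rebalances this: after transferring one derivative to the other copy of $f_I-f_{\II}$ and invoking $\Dhtu(f_I-f_{\II})=g_I-g_{\II}+\text{error}$, one never faces a bare $\delta\FD^{r-1}\xht$ kernel acting on $g$ in a Young estimate; instead one estimates the energy pairing directly as in the control of $II_1$ in Theorem~\ref{ellphi'' 1}, including the half-derivative integration by parts \eqref{inthalf} against $\pa\T^{m-1}\xht$. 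Without this structural idea your kernel estimate does not go through for the tangential-derivative-heavy multi-indices, and the proof cannot be completed as written.
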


Let $\phi_I^m$, $\phi_{\II}^m$ be the extended $\phi_I$ and $\phi_{\II}$, respectively, i.e.,  $\phi_I^m = \int_{\Odo}[\hrho_I\chi_m](\zhtu(t,y'))\Phi(\xhtu(t,y)-\zhtu(t,y'))\,dy'$ and $\phi_{\II}^m$ is defined in a analogous way.
  Then Theorem \ref{diffellphithm 1} with $F=\phi_I^m-\phi_I^n$ and $G=\phi_{\II}^m-\phi_{\II}^n$
  implies that the sequence $(\FD^k \pahtu \phi_I^m-\FD^k \pahtw \phi_I^m)_{m=1}^\infty$
  is Cauchy in $L^2(\Odo)$, and this allows one to get a bound for
  $||\FD^r \paveu \phi_I-\FD^r\pavew \phi_{\II}||_{L^2(\Omega)}$ from that of
  $||\FD^r \pahtu \phi_I^m-\FD^r\pahtw \phi_{\II}^m||_{L^2(\Odo)}$, which gives:
\begin{theorem}
\label{diffellphi tan thm}
If $r\geq 7$, there is a continuous
$D_r$ as in \eqref{eq:Dr}
so that with $\phi_I, \phi_{\II}$ defined by \eqref{phijdef}:
 \begin{multline}
 {\sum}_{k\leq r-1}||\FD^k \paveu \phi_I-\FD^k\pavew \phi_{\II}||_{L^2(\Omega)}
 \leq D_r\bigtwo({\sum}_{k\leq r-1}||\FD^k (\rho_I-\rho_{\II})||_{L^2(\Omega)}+{\sum}_{k\leq 2}||D_t^k(\rho_I-\rho_{\II})||_{H^{3-k}(\Omega)}\,\,\\
+\big\{||\xveu-\xvew||_{H^r(\Omega)}+|\Vu-\Vw||_{r}\big\}\big({\sum}_{k\leq r-1}||\mathfrak{D}^k\rho_{\II}||_{L^2(\Omega)}+{\sum}_{k\leq 2}||D_t^k\!\rho_{\II}||_{H^{3-k}(\Omega)}\big)\bigtwo).
 \end{multline}
\end{theorem}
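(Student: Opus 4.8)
The plan is to carry out, in the two-solution setting, the extension-and-approximation scheme of Section~\ref{gravitysection} that was used to prove Theorem~\ref{usual norms} and Corollary~\ref{usual norm D_t}, taking the difference estimate Theorem~\ref{diffellphithm 1} as the main input. Recall that the extended potentials on $\Odo$ are $\phi^m_J=(\hrho_J\chi_m* \Phi)\circ\xht_J$, $J=I,\II$, and that $g_J=\hrho_J\chi_m$ satisfies the hypotheses of Theorem~\ref{diffellphithm 1}: it is smooth, supported in the image of $\Omega^{d_0/2}$, and $\FD g_J$ vanishes off $\Omega$, since there $\hrho_J$ is the constant $\overline{\rho}$ and $\chi_m$ is radial and time-independent. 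I would apply Theorem~\ref{diffellphithm 1} twice.

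First, I would show that $(\FD^k\pahtu\phi^m_I-\FD^k\pahtw\phi^m_{\II})_{m}$ is Cauchy in $L^2(\Odo)$ for $k\le r-1$. Apply Theorem~\ref{diffellphithm 1} to $g_J=\hrho_J(\chi_m-\chi_n)$, so that $f_I=\phi^m_I-\phi^n_I$, $f_{\II}=\phi^m_{\II}-\phi^n_{\II}$. The key point is algebraic: $\chi_m-\chi_n$ is supported off $\Omega$, where $\hrho_I=\hrho_{\II}=\overline{\rho}$, so $g_I=g_{\II}=\overline{\rho}(\chi_m-\chi_n)$; hence every $g_I-g_{\II}$ term in \eqref{diff bound phi' 1} drops out, and every $\FD^k g_{\II}$ term with $k\ge1$ drops out by radiality and time-independence. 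What remains is $D_r\big(||\xveu-\xvew||_{H^r(\Omega)}+||\Vu-\Vw||_r\big)$ times the $L^2$, $L^6$ and $L^\infty$ norms of $\overline{\rho}(\chi_m-\chi_n)$, which tend to zero as $m,n\to\infty$ once the two derivatives gained from the convolution with $\Phi$ are taken into account; this gives the Cauchy property. Since $\xht_J=\xve_J$ on $\Omega$ and $\phi^m_J\to\phi_J$, the $L^2(\Odo)$-limit of $\FD^k\pahtu\phi^m_I-\FD^k\pahtw\phi^m_{\II}$ restricts on $\Omega$ to $\FD^k\paveu\phi_I-\FD^k\pavew\phi_{\II}$, so $\sum_{k\le r-1}||\FD^k\paveu\phi_I-\FD^k\pavew\phi_{\II}||_{L^2(\Omega)}$ is bounded by the $L^2(\Odo)$-limit of the corresponding extended quantities.

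Second, I would apply Theorem~\ref{diffellphithm 1} to $g_J=\hrho_J\chi_m$ itself and pass to the limit $m\to\infty$. For each $m$ this bounds $\sum_{k\le r-1}||\FD^k\pahtu\phi^m_I-\FD^k\pahtw\phi^m_{\II}||_{L^2(\Odo)}$ by $D_r$ times the $L^2$-norms ($k\le r-1$), $L^6$-norms ($k\le2$) and the $L^\infty$-norm of $\hrho_I\chi_m-\hrho_{\II}\chi_m$, plus $D_r\big(||\xveu-\xvew||_{H^r(\Omega)}+||\Vu-\Vw||_r\big)$ times the analogous norms of $\hrho_{\II}\chi_m$. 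On $\Omega$ one has $\chi_m\equiv1$ and $\hrho_J=\rho_J$, while $\hrho_I\chi_m-\hrho_{\II}\chi_m$ vanishes off $\Omega$; hence, as $m\to\infty$, $\FD^k(\hrho_I\chi_m-\hrho_{\II}\chi_m)\to\FD^k(\rho_I-\rho_{\II})$ on $\Omega$ and $\hrho_J\chi_m\to\rho_J\chi_{\tD_{Jt}}$. Folding the low-order $L^6$ and $L^\infty$ terms into $\sum_{k\le2}||D_t^k(\rho_I-\rho_{\II})||_{H^{3-k}(\Omega)}$ (respectively $\sum_{k\le2}||D_t^k\rho_{\II}||_{H^{3-k}(\Omega)}$) via the Sobolev embeddings $H^1\hookrightarrow L^6$ and $H^2\hookrightarrow L^\infty$, and replacing $\Odo$-norms of $\xhtu-\xhtw$ by $\Omega$-norms of $\xveu-\xvew$ using the difference form of the norm equivalence \eqref{equiv extended norm}, the limit together with the first step yields exactly the claimed estimate.

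The main obstacle is the limiting argument in the first step: the right-hand side of Theorem~\ref{diffellphithm 1} carries only the crude norm $||g_{\II}||_{L^\infty}$, whereas $\chi_m-\chi_n$ does not converge in $L^\infty$. One must therefore use that this $L^\infty$ norm enters the proof of Theorem~\ref{diffellphithm 1} only through quantities of the form $(\chi_m-\chi_n)*\nabla\Phi$, which Young's inequality controls by $L^p$-norms of $\chi_m-\chi_n$ with $p<\infty$ that do tend to zero---or, equivalently, rerun the approximation with $\chi_m$ mollified in the radial and tangential directions, as in the proof of Theorem~\ref{usual norms}. One must also check that the solution-difference factor $||\xveu-\xvew||_{H^r(\Omega)}+||\Vu-\Vw||_r$ survives the passage to the limit intact; the remaining matching of extended-domain norms of $\hrho_J\chi_m$ with the $\rho_J$-norms in the statement is routine.
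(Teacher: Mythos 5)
Your proposal is correct and follows essentially the same route as the paper, which derives this theorem exactly as you do: apply Theorem~\ref{diffellphithm 1} to the extended potentials $\phi_J^m-\phi_J^n$ to get the Cauchy property in $L^2(\Odo)$, then to $\hrho_J\chi_m$ itself, pass to the limit, restrict to $\Omega$, and absorb the low-order $L^6$/$L^\infty$ terms via Sobolev embedding. Your observation about the non-convergent $\|\chi_m-\chi_n\|_{L^\infty}$ term and its resolution is a point the paper glosses over (it is the same regularization issue already present in the proof of Theorem~\ref{usual norms}), so your treatment is, if anything, slightly more careful than the paper's sketch.
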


\section{Estimates for solutions of the enthalpy equation}
\label{enthsec}
With the same notation as in the previous sections,
we now return to the equation:
\begin{align}
\!\!\!\!\! e'(h)D_t^2h - \Dve h &= (\pave_i \ssm V^j)\pave_j V^i\!\! - e''(h) (D_th)^2\!-\rho(h),
 \quad \textrm{in } [0,\!T]\!
 \times\! \Omega,\quad\text{with}\quad
 h= 0,\quad \textrm{on } [0,\!T] \!\times\! \pa\Omega, \quad\label{enth1}\\
 h(0,y) &= h_0^\ve(y), \quad D_t h(0,y)
 = h_1^\ve(y),\qquad \textrm{ on } \Omega.\label{enth3}
\end{align}
We set:
\begin{equation}
 \W_s(t) =  \Big(\frac{1}{2}{\sum}_{k \leq s}
 \int_\Omega e'(h) |D_t^{k+1} h(t)|^2 + \delta^{ij}
 \big( D_t^k \pave_i h(t)\big) \big(
 D_t^k\pave_j  h(t)\big) \kve(t)\, dy\Big)^{1/2}.
 \label{energydef}
\end{equation}

By Lemma \ref{fbdslem},
writing $\F_1 = -(\pave_i \ssm V^j)(\pave_j V^i)$
and $\F_2 = -e'(h) (D_t h)^2-\rho(h)$, we have the estimates:
\begin{align}
 ||\F_1||_{s,0} &= {\sum}_{k \leq s}||D_t^k \F_1||_{L^2}
 \leq C(M)\big(||D_t^s V||_{H^1} + P(||V||_{\X^s})\big),\\
 ||\F_1||_{s-1} &=
 \!\!{\sum}_{k + \ell \leq s-1} ||D_t^k \F_1||_{H^{\ell}}
 \leq C(M)\big(||V||_{s} + ||\xve||_{H^{s}} + P(||V||_{s-1},
  ||\xve||_{H^{s-1}}) \big),
\end{align}
and assuming that $h$ satisfies the a priori assumption \eqref{Lassumpwave} we have
\begin{equation}
 ||\F_2[h]||_{s,0} \leq P_1(L, ||h||_{s,0}, ||h||_{s-1})
 ||h||_{s+1,0} , \qquad
 ||\F_2[h]||_{s-1} \leq P_2(L, ||h||_{s-1}) ||h||_s.
\end{equation}

Combining these estimates with Theorem \ref{mainwavethm}, we have:
\begin{prop}
 \label{enthwavepropen}
 Fix $r \geq 7$, $0 \leq s \leq r$, and $T >0$. Suppose that
  $V \in \X^{r+1}(T)$ and that
 \eqref{ubd2} holds. There is a continuous function $\oC_s$
 depending on
 $M, L, \W_{s-1}(0)$
 and $\sup_{\,0 \leq t \leq T} \big(||V(t)||_{\X^s} + ||\xve(t)||_{H^s}\big)$
 so that if $h$ satisfies the wave equation \eqref{enth1}-\eqref{enth3} and
 the a priori assumption \eqref{Lassumpwave}
 for $0 \leq t \leq T$, then for $s\leq r-1$:
 \begin{align}
  ||D_t h(t)||_{s,0}
  + ||\pave h(t)||_{s,0} &\leq  \oC_s \Big(\W_s(0) + \int_0^t ||V(\tau)||_{\X^{s+1}}
  \, d\tau\Big), \quad 0 \leq t \leq T,\\
  ||\pave h(t)||_{s} &\leq \oC_s \Big( ||V(t)||_{s} + ||\xve(t)||_{H^s}
  + \W_s(0) + \int_0^t ||V(\tau)||_{\X^{s+1}} \, d\tau\Big)
  ,\label{enthellbd}
  \\
  ||\pave h(t)||_{r} &
\leq \oC_r \Big( ||V(t)||_{r}
+ \ve^{-1}||\sm x(t)||_{H^r}
+ \W_r(0) + \int_0^t ||V(\tau)||_{\X^{r+1}} \Big).
 \end{align}

 Moreover, with $h^\ve_k$ defined as in Section \ref{wavecompatcondn}, suppose that:
 \begin{equation}
  {\tsum}_{k + |J| \leq 3} |\pa_y^J \pave h_k^\ve|
  + |h_k^\ve| \leq L_0.
 \end{equation}
 If $T$ satisfies \eqref{Tgr} then
 the
 constants $\C_s\!$ can be taken to depend on $L_0$ instead of $L\!$
 if $T \!\leq\! T_{1}$.
\end{prop}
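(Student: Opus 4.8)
The plan is to apply Theorem~\ref{mainwavethm} to the enthalpy equation \eqref{enth1}--\eqref{enth3} with $\varphi = h$, $\eprime = e'(h)$, with $F_1$ the quadratic velocity term $(\pave_i\ssm V^j)(\pave_j V^i)$ and $F_2 = -e''(h)(D_t h)^2 - \rho(h)$ as named just above, and then to invoke Corollary~\ref{bootstrapped} for the last assertion. First I would verify the hypotheses of Theorem~\ref{mainwavethm}: the two-sided bound $0<e_1\le\eprime\le e_2$ holds by \eqref{eq:closetoincompressible2}; the splitting \eqref{Fsplit} holds with these $F_1,F_2$; the bound \eqref{ubd2} supplies the constant $M$; and the estimate \eqref{F2est} on the nonlocal term $F_2$ is exactly the pair of inequalities for $\F_2[h]$ stated above (a consequence of Lemma~\ref{fbdslem}), which is where the a priori assumption \eqref{Lassumpwave} and the bound on $\rho(h)$ from \eqref{eq:closetoincompressible2} enter.

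Granting this, I would feed the two displayed bounds for $\F_1$ into \eqref{dtpsibd} and \eqref{allpsibd}. Since $||D_t^s V||_{H^1}\le||V||_{\X^{s+1}}$ and $||V||_s\le||V||_{\X^{s+1}}$ directly from the definition of the norm $\X^{s+1}$, one has $||\F_1(\tau)||_{s,0}+||\F_1(\tau)||_{s-1}\le C(M)\,||V(\tau)||_{\X^{s+1}}+C(M)\,||\xve(\tau)||_{H^s}+P$, with $P$ a polynomial in $||V(\tau)||_{\X^s}$ and $||\xve(\tau)||_{H^{s-1}}$; the time integral of the last two terms is bounded by a multiple of the quantities $\oC_s$ is permitted to depend on, so \eqref{dtpsibd} yields the first displayed estimate. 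For $s\le r-1$ the elliptic factor $||\T\xve||_s$ in \eqref{allpsibd} is (since $\xve$ solves $D_t\xve=\ssm V$, cf.\ \eqref{xvedef}, and one has a derivative to spare below the top order) controlled by the quantities on which $\oC_s$ may depend, hence absorbed, leaving only $||\F_1(t)||_{s-1}\le C(M)(||V(t)||_s+||\xve(t)||_{H^s}+P)$ on the right; this gives the second displayed estimate.

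The top-order case $s=r$ is the delicate one, and is where the loss of a factor $\ve^{-1}$ appears. Here the multiplier $||\T\xve||_r$ in \eqref{allpsibd} is no longer among the quantities on which $\oC_r$ may depend, so it must be extracted. Writing $\xve=x_0+\ssm(x-x_0)$ with $\ssm=\sm\sm$ and $x=x_0+\int_0^t V\,ds$, commuting the tangential fields $\T$ through $\ssm$ (the commutators being of lower order) and using the smoothing estimate $||\sm g||_{H^r}\lesssim\ve^{-1}||g||_{H^{r-1}}$ of Section~\ref{smoothingsec} to trade one derivative, one obtains $||\T\xve(t)||_r\lesssim\ve^{-1}||\sm x(t)||_{H^r}+(\text{lower-order norms of }\xve)$; the same mechanism bounds the $||\xve(t)||_{H^r}$ occurring inside $||\F_1(t)||_{r-1}$, while $||\F_1(t)||_{r-1}$ also contributes the $||V(t)||_r$ term. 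Combining with \eqref{dtpsibd} at level $r$ and absorbing lower-order contributions into $\oC_r$ then produces the third displayed estimate.

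Finally, for the ``moreover'' statement the goal is to discard the standing a priori hypothesis \eqref{Lassumpwave} in favour of a bound at $t=0$. I would apply Corollary~\ref{bootstrapped} with $\varphi=h$ and $F_1$ as above: the Taylor coefficients $D_t^k h|_{t=0}$ are precisely the $h_k^\ve$ constructed in Section~\ref{wavecompatcondn}, so the hypothesis $\sum_{k+|J|\le3}|\pa_y^J\pave h_k^\ve|+|h_k^\ve|\le L_0$ is the required $C^3$-bound at $t=0$, while the constant $K$ of that corollary is furnished by $V\in\X^{r+1}(T)$ together with \eqref{ubd2} and the $\F_1$ bounds above. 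Then, provided $T$ satisfies \eqref{Tgr} and $T\le T_1$, Corollary~\ref{bootstrapped} gives that the estimates above hold with $L$ replaced by $2L_0$, i.e.\ with $\oC_s$ depending on $L_0$ rather than on $L$. The main obstacle throughout is the top-order step: correctly tracking how $||\xve||_{H^r}$, which enters both $||\F_1||_{r-1}$ and the elliptic multiplier $||\T\xve||_r$, must be traded for $\ve^{-1}||\sm x||_{H^r}$ using the smoothing estimates and the precise structure of the mixed norms; the remaining steps are routine norm inclusions and Gr\"onwall-type absorption already encapsulated in Theorem~\ref{mainwavethm}.
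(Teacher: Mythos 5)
Your proposal is correct and follows essentially the same route as the paper, whose own proof here is a one-liner ("combining these estimates with Theorem \ref{mainwavethm}, we have"): take $\varphi=h$, $\eprime=e'(h)$, split $F=F_1+F_2$ with $F_1=(\pave_i\ssm V^j)(\pave_jV^i)$ and $F_2=-e''(h)(D_th)^2-\rho(h)$, feed the Lemma~\ref{fbdslem} bounds for $F_1,F_2$ into \eqref{dtpsibd} and \eqref{allpsibd}, trade $||\T\xve||_r$ for $\ve^{-1}||\sm x||_{H^r}$ via the smoothing estimate at top order, and invoke Corollary~\ref{bootstrapped} to replace $L$ by $L_0$. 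Your identification of where the $\ve^{-1}$ is generated (only at $s=r$, from the multiplicative elliptic factor in \eqref{allpsibd}) is exactly the paper's mechanism.

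One caveat worth tightening: for $s\le r-1$ you assert in passing that $||\T\xve||_s$ is "controlled by the quantities on which $\oC_s$ may depend" because "one has a derivative to spare." Literally, $||\T\xve||_s$ costs one more spatial derivative of $\xve$ than $||\xve||_{H^s}$, so the absorption is not a free norm inclusion; it relies on the structure $\xve=\ssm x$, $D_t\xve=\ssm V$, the commutator bounds of Lemma~\ref{smproperties}, and the fact that for $s\le r-1$ the extra derivative still lands within the regularity class $V\in\X^{r+1}(T)$. Since the paper gives no written proof here, the imprecision is shared, but a reader will want you to spell out this step rather than wave at it.
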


\subsection{Estimates for differences of solutions}
We now prove the estimates we will need in Corollary
\ref{contcor}. Recall the notation and definitions from
Section \ref{waveests}. Suppose that $h_J$, for $J=I,\II$,
satisfy:
\begin{align}
&e'(h_J)D_t^2h_J - \Dve_J h_J = (\pave_{Ji} \ssm V_J^j)(\pave_{Jj} V_J^i)
 - e''(h_J) (D_th_J)^2\!-\rho(h_J),
 \quad \textrm{in } [0,T]\!
 \times\! \Omega,\\
 &h_J= 0,\quad \textrm{on } [0,T] \!\times\! \pa\Omega,\qquad
  h_J(0,y) = h_0^\ve(y), \quad D_t h_J(0,y)
 = h_1^\ve(y),\quad \textrm{ on } \Omega.
\end{align}

 We  write $\W^I_s,\W^{\II}_s$
for the energy \eqref{energydef} evaluated at $h_I, h_{\II}$,
respectively, and $F_J^1 \!=\! -(\pave_{Ji} \ssm V_J^j)
(\pave_{Jj} V_{J}^i)$ for $J = I, \II$. By the estimate \eqref{fdiff}
we have with $\C_s = \C_s(M, ||\Vu||_s, ||\Vw||_s, ||\xveu||_{H^{\ell+1}},
||\xvew||_{H^{\ell+1}})$:
\begin{align}
 ||F_I^1 - F_{\II}^1||_{s,0} &\leq \C_s \big( ||D_t^s(\Vu - \Vw) ||_{H^{1}(\Omega)}
 + ||\Vu - \Vw||_{C^3_{x,t}}\big),\\
 ||F_I^1 - F^1_{\II}||_{s-1}
 &\leq \C_s \big( ||\Vu - \Vw||_{\X^{s}} + ||\xveu - \xvew||_{H^{s}}
 + ||\xveu - \xvew||_{C^4_{x,t}}\big).
\end{align}

Writing $\F_J^2 \!=\!-e''(h_J) (D_t h_J)^2 - \rho(h_J)$, by
the estimates \eqref{functionalest1}-\eqref{functionalest2}, we also have:
\begin{align}
 ||F_I^2 - F_{\II}^2||_{s,0} &\leq \C_s \big( ||h_I - h_{\II}||_{s+1,0}
 + ||\Vu - \Vw||_{C^3_{x,t}}\big),\\
 ||F_I^2 - F^2_{\II}||_{s-1}
 &\leq \C_s \big( ||\Vu - \Vw||_{\X^{s}} + ||\xveu - \xvew||_{H^{s}}
 + ||\xveu - \xvew||_{C^4_{x,t}}\big).
 \label{fdiffdb2}
\end{align}

Combining these estimates with Lemma \ref{dtphidiffnew}, we have:
\begin{cor}
  \label{mainwavecor2}
Define:
\begin{equation}
 \W_s^{I,II}(t)
 = \Big(\frac{1}{2} {\sum}_{k \leq s}\int_\Omega e'(h_I) |D_t^{k+1} (h_I - h_{\II})|^2
 + |D_t^{k} \paveu (h_I - h_{\II})|^2\, \kveu dy\Big)^{1/2}.
\end{equation}
Fix $r \geq 7$ and suppose that the hypotheses in Proposition \ref{bootstrapped}
hold. Take $T$ small enough that \eqref{Tgr} holds. For
each $s \leq r-1$ there is a positive continuous function $\D_s$ depending
on
   $M, L_0,
   \W_s(0), $ and $
   \sup_{0 \leq t \leq T} \big(||V_J(t)||_{\X^{r+1}}+
   ||\xve_J(t)||_{H^r}\big),$
 for $J = I, \II$, so that:
 \begin{equation}
 {\sup}_{\,0 \leq t \leq T}  W_s^{I\!,\II}(t) \leq
 \D_s \int_0^T\!\!\! ||\Vu(\tau)\! - \! \Vw(\tau)||_{\X^{r+1}}
 + ||\xveu(\tau) - \xvew(\tau)||_{H^{r}(\Omega)}\!
  + ||\xveu\!-\xvew||_{C^4_{x,t}(\Omega)}\, d\tau
 ,
  \label{enthwavediff}
  \end{equation}
  \begin{equation}
  ||\paveu  h_I - \pavew  h_{\II}||_{s}
  \leq \D_s \big(W_s^{I,\II} +
  ||\Vu - \Vw||_{s+1} + ||x_I - x_{\II}||_{H^{s}}\big).
  \label{enthdiff}
\end{equation}
\end{cor}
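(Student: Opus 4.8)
The analytic work is carried out in Lemma \ref{dtphidiffnew}; the corollary then follows by applying that lemma with $\varphi_J = h_J$ and feeding in the difference estimates for the forcing terms recorded just before the statement. First I would set $\psi = h_I - h_{\II}$, observe that $\psi|_{t=0} = D_t\psi|_{t=0} = 0$ since $h_I$ and $h_{\II}$ have the same initial data $h_0^\ve, h_1^\ve$, and note that $\psi$ solves exactly the difference wave equation treated in Lemma \ref{dtphidiffnew}, with $e'(h_J)$ playing the role of $\eprime_J$, with $F_J^1 = -(\pave_{Ji}\ssm V_J^j)(\pave_{Jj}V_J^i)$ and with the nonlocal functional $F_J^2[h_J] = -e''(h_J)(D_t h_J)^2 - \rho(h_J)$. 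To invoke the lemma I must check its hypotheses: the a priori bounds \eqref{ubd2} for $\xveu$ and $\xvew$ are assumed; the pointwise bound \eqref{apriori varphi} on $h_I, h_{\II}$ is obtained — exactly as in the last paragraph of Proposition \ref{enthwavepropen} — from the bootstrap of Corollary \ref{bootstrapped}, which, since $T$ is chosen so that \eqref{Tgr} holds, replaces \eqref{Lassumpwave} by a bound with $L = 2L_0$; and the structural estimate \eqref{F2est} together with the difference estimates required of $F_J^2$ are precisely the bounds for $\|F_I^2 - F_{\II}^2\|_{s,0}$ and $\|F_I^2 - F_{\II}^2\|_{s-1}$ displayed above, which involve $h_J$ and $D_t h_J$ only.

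Granting the hypotheses, the energy bound \eqref{energy esti y^12} gives
\[
W_s^{I,\II}(t) \le D_s \int_0^t \|F_I^1 - F_{\II}^1\|_{s,0} + \|F_I^1 - F_{\II}^1\|_{s-1} + \|\Vu - \Vw\|_{s+1} + \|\xveu - \xvew\|_{C^3_{x,t}}\, d\tau,
\]
and I would substitute $\|F_I^1 - F_{\II}^1\|_{s,0} \le \C_s(\|D_t^s(\Vu-\Vw)\|_{H^1} + \|\Vu - \Vw\|_{C^3_{x,t}})$ and $\|F_I^1 - F_{\II}^1\|_{s-1} \le \C_s(\|\Vu - \Vw\|_{\X^s} + \|\xveu - \xvew\|_{H^s} + \|\xveu - \xvew\|_{C^4_{x,t}})$. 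Since $s \le r-1$, the terms $\|D_t^s(\Vu-\Vw)\|_{H^1}$, $\|\Vu - \Vw\|_{\X^s}$ and (by Sobolev embedding, using $r \ge 7$) $\|\Vu - \Vw\|_{C^3_{x,t}}$, $\|\xveu - \xvew\|_{C^4_{x,t}}$ are all bounded by $\|\Vu - \Vw\|_{\X^{r+1}} + \|\xveu - \xvew\|_{H^r}$, and after this reduction every constant depends only on $M$, $L_0$, $W_s(0)$ and $\sup_t(\|V_J\|_{\X^{r+1}} + \|\xve_J\|_{H^r})$; absorbing them into a single continuous $\D_s$ yields \eqref{enthwavediff}.

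For \eqref{enthdiff} I would start from the elliptic difference estimate \eqref{elliptic y^12}, which controls $\|\paveu h_I - \paveu h_{\II}\|_s$ by $\D_s(\|\T(x_I - x_{\II})\|_{H^s} + \|\xveu - \xvew\|_{C^3_{x,t}} + 1)(\|F_I^1 - F_{\II}^1\|_{s-1} + W_s^{I,\II})$; the first factor is bounded a priori, $W_s^{I,\II}$ appears directly on the right of \eqref{enthdiff}, the $F^1$-term is handled as above, and $\|\xveu - \xvew\|_{C^k_{x,t}} \lesssim T\|\Vu - \Vw\|_{s+1}$ (using $\xveu - \xvew = \int_0^t\ssm(\Vu - \Vw)\,ds$, that $\ssm$ commutes with $D_t$ and is bounded on Sobolev spaces, and Sobolev embedding). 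Finally, to pass from $\paveu h_{\II}$ to $\pavew h_{\II}$ I write $(\paveu_i - \pavew_i)h_{\II} = (A_{I\,i}^a - A_{\II\,i}^a)\pa_a h_{\II}$ and estimate it, via the Moser/product inequalities of Section \ref{ellsec}, by $C_r\|\xveu - \xvew\|_{H^r}(1 + \|h_{\II}\|_{s+1})$, which is again controlled by the right side of \eqref{enthdiff}. The main obstacle is not any single estimate — Lemma \ref{dtphidiffnew} and the product estimates do the real work — but the accounting needed to ensure that all constants depend only on data at $t = 0$ and on the $\C^r(T)$-quantities $\|V_J\|_{\X^{r+1}}$, $\|\xve_J\|_{H^r}$; this is precisely where the one-derivative loss built into Lemma \ref{dtphidiffnew} and the smallness of $T$ from \eqref{Tgr} get used, and once that is arranged the rest is bookkeeping with $s \le r-1 < r+1$ and Sobolev embedding.
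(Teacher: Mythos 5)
Your proposal is correct and follows essentially the same route as the paper: the paper's proof of this corollary consists precisely of invoking Lemma \ref{dtphidiffnew} with $\varphi_J=h_J$ together with the difference bounds for $F_J^1$ and $F_J^2$ displayed just before the statement, and you supply exactly the bookkeeping (zero initial data for $h_I-h_{\II}$, the bootstrap from Corollary \ref{bootstrapped} to replace $L$ by $2L_0$, Sobolev embedding to absorb the $C^k_{x,t}$ and $\X^s$ norms into the stated quantities, and the conversion from $\paveu h_{\II}$ to $\pavew h_{\II}$) that the paper leaves implicit. No gaps.
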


\section{Existence for the smoothed problem up to a smoothing dependent time}
\label{vlwpsec}
Let $(V_0^\ve, h_0^\ve)$ satisfy the comptibility conditions of order
$r$ (see \eqref{smcompatdef}) for some $r \geq 7$,
and define $h_1^\ve$ by \eqref{h1epsdef}. In this section, we will prove
that there is a unique vector field $V$ solving the smoothed-out
Euler equations \eqref{smpbmdef} with $h$ given by
\eqref{smwavedef}-\eqref{smwaveic}.
We will work with the norms:
 \begin{align}
   ||V||_{\X^s(T)} = {\sup}_{\,0 \leq t \leq T\,} ||V(t)||_{\X^s},
   \quad \text{ where } \quad
  ||V(t)||_{\X^s} = {\sum}_{k = 0}^{s-1} ||D_t^{1+k} V(t)||_{H^{s- k-1}(\Omega)}
  + ||V(t)||_{H^{s-1}(\Omega)}.
  \label{xnorm2}
 \end{align}
 We let $\X^s(T)$ denote the closure of $C^\infty([0,T];
 C^\infty(\overline{\Omega}))$
 with respect to the norm $\X^s(T)$.

For a given vector field $V \in \X^{r+1}(T)$,
we define the tangentially smoothed
flow $\xve: [0, T] \times \Omega \to \R^3$
as in \eqref{xvedef},
$A$ as in \eqref{udef} and
the derivatives $\pave, \Dve$ by \eqref{pavedef} and \eqref{laplsm}.
By Theorem \ref{nllwpthm}, if $h_0^\ve, h_1^\ve$ are compatible to order $r$,
there is a function $h = h[V]$ which solves the
problem:
\begin{align}
 D_t^2 e(h) - \Dve h &= (\pave_i \ssm V^j)(\pave_j V^i)-\rho(h),\quad \textrm{ on }
 [0,T] \times \Omega,\quad\text{and}\quad
 h = 0 \textrm{ on } [0,T] \times \pa \Omega,\label{hdef1}\\
 h(0, y) &= h_0^\ve(y), \quad \,\, D_t h(0,y) = h_1^\ve(y) ,\qquad\textrm{ on } \Omega,
 \label{hdef3}
\end{align}
and the estimates in Proposition \ref{enthwavepropen} hold for $h$.
We also define $\rho = \rho[h] = \rho[V]$ as in Section
\ref{approxpbmsec} and then define $\phi = \phi[V]$ by \eqref{phidef}.
We then define a map $\Lambda$ by:
\begin{equation}
 \Lambda^i(V)(t,y) = V_0^i(y) - \int_0^t \delta^{ij}\pave_j h(s,y)\, ds
 -\int_0^t \delta^{ij}\pave_j \phi(s,y)\,ds.
 {}
\end{equation}
If $V\!$ is a regular fixed point of $\Lambda$ then it satisfies
\eqref{smpbmdef} and the corresponding $h$ satisfies \eqref{hdef1}-\eqref{hdef3}.
Set:
  \begin{align}
   E_0^s = ||V(t)||_{\X^s} \big|_{t = 0},
   \quad\text{and}\quad
   \W_0^s = \Big( \frac{1}{2}{\sum}_{k \leq s}
   \int_\Omega \big(|h_{k+1}^\ve(y)|^2 + |\pave h_k^\ve(y)|^2\big)
    \kve_0(y)\, dy\Big)^{1/2},
  \label{Esdef}
  \end{align}
  with the $h^\ve_s$ defined by \eqref{hkepsdef} and
  $\kve_0 = \det(\pa x_0/\pa y)$. In Lemma
  \ref{initlem}, we show that if $V$ satisfies \eqref{intadmissible}, then
  these quantities are well-defined and bounded by the initial data.
  We also write:
\begin{equation}
   e_0^s = e_1^s + e_2^s + e_2^s,\quad\text{where}\quad
   e_1^s = ||x_0||_{H^s(\Omega)},\qquad e_2^s = ||V^\ve_0||_{H^s(\Omega)},
   \quad \text{and}\quad e_3^s = ||h_0^\ve||_{H^s(\Omega)}.
 \end{equation}
   We remark that
  $x_0$ and $h_0$ are not  independent; we take
  $x_0$ so that $\det(\pa x_0/\pa y) \!=\! 1/\rho(h_0)$, and consequently
  $e_3^s \!\leq \!C(e_3^{s}) e_1^{s+1}\!$
  for a constant $C$ depending on the equation of state. However, it
  is more natural to state our estimates in terms of $x_0^\ve$ rather than
  $h_0^\ve$ in many cases so we will keep the notation separate.

  The main result of this section is then:
\begin{theorem}
  \label{vlwpthm}
 Let $r \geq 7$. If $E_0^{r+1} + \W_r^0 + e_0^r < \infty$
 and $V_0^\ve, h_0^\ve$ satisfy the compatibility conditions \eqref{smcompatdef}
 to order $r$, then for sufficiently small $\ve$, there is a
 positive and continous function $T_\ve =
 T_\ve(E_0^{r+1}, \W_r^0, e_0^r, \ve^{-1})$ so that for any $0 \leq T \leq T_\ve$,
 there is a unique $V \in \X^{r+1}(T)$ which satisfies the smoothed-out
 problem \eqref{smpbmdef}.
 Moreover,
 there is a positive continuous function
 $\F_r$ so that:
 \begin{align}
   {\sup}_{0 \leq t \leq T} \big(||V(t)||_{\X^{r+1}}
   + ||\pave h(t)||_{r} \big)
   &\leq \ve^{-1}\F_r(E_0^{r}, \W^0_{r-1}, e_0^{r-1})
   (E_0^{r+1} + \W^0_r + e_0^{r}) + 1,\\
   {\sup}_{0 \leq t \leq T}
   \W_r(t) &\leq \F_r(E_0^{r}, \W^0_{r-1} e_0^r)\W^0_r + 1.
  \label{normbd}
 \end{align}
\end{theorem}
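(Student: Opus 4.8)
The plan is to produce $V$ as the unique fixed point of the map $\Lambda$ in \eqref{lambdadef}, restricted to a suitable ball, and then to read off the regularity and the bounds \eqref{normbd}. Fix $r\geq7$ and let $K$ be a constant of the form $K=\ve^{-1}\F_r(E_0^r,\W^0_{r-1},e_0^{r-1})(E_0^{r+1}+\W^0_r+e_0^r)+1$, with $\F_r$ to be pinned down in the course of the argument, and set
\[
 \mathcal B=\big\{V\in\X^{r+1}(T):V\text{ admissible to order }r,\ {\sup}_{[0,T]}\|V(t)\|_{\X^{r+1}}\le K\big\}.
\]
Restricting to admissible $V$ is essential: by Theorem \ref{nllwpthm} it guarantees that the wave equation \eqref{hdef1}--\eqref{hdef3} has a unique solution $h=h[V]$ with enough regularity for $\phi=\phi[V]$, hence $\Lambda(V)$, to be defined; and since admissibility prescribes $D_t^kV|_{t=0}$ for $k\le r$ in terms of the data, writing $D_t^kV(t)=D_t^kV(0)+\int_0^tD_t^{k+1}V$ together with $\xve(t)=x_0+\int_0^t\ssm V$ shows that, after shrinking $T$, $\xve(t)$ is a diffeomorphism and the a priori assumptions \eqref{uwassump}--\eqref{ubd2} and \eqref{Lassumpwave} hold throughout $\mathcal B$ with constants depending only on the data and $\ve^{-1}$. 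First I would check $\mathcal B\neq\emptyset$: the admissible Taylor polynomial $V^{[r]}(t,y)=\sum_{k\le r}(D_t^kV|_{t=0})\,t^k/k!$, with coefficients given by \eqref{intadmissible}, lies in $\mathcal B$ once $K$ is large and $T$ is small.

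The self-mapping step is $\Lambda(\mathcal B)\subset\mathcal B$ for small $T$. That $\Lambda(V)$ is again admissible to order $r$ is automatic, because admissibility of $V$ forces the Taylor coefficients of $h[V]$ and $\phi[V]$ to be the $h^\ve_k,\phi^\ve_k$, hence those of $\Lambda(V)$ to be the $V^\ve_k$. For the size bound I would use $D_t^k\Lambda(V)=-D_t^{k-1}(\pave h+\pave\phi)$ for $k\ge1$ and $\|\Lambda(V)\|_{H^r}\le e_2^r+\int_0^t\|\pave h+\pave\phi\|_{H^r}$, then combine the enthalpy estimates of Proposition \ref{enthwavepropen} (notably the bound for $\|\pave h\|_r$), the gravitational estimate \eqref{pave D_t mixed} of Theorem \ref{main theorem, ell est D_t phi} (controlling $\|\pave\phi\|_r$), the energy estimate for $\W_r(t)$ coming from Theorem \ref{mainwavethm}, and the tangential smoothing bounds of the Lemma in Section \ref{smoothingsec} to absorb the $\ve$-losses (typically via $\|\ssm\,\pa^{r}(\cdot)\|_{H^1}\lesssim\ve^{-1}\|\cdot\|_{H^r}$). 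The one subtlety is that \eqref{enthellbd} and \eqref{pave D_t mixed} contain \emph{non-integrated} occurrences of $\|V(t)\|$ and $\|\xve(t)\|$, a priori only $\le CK$; here I would again invoke admissibility to rewrite each such top-order norm as $(\text{data})+\int_0^t D_t^{k+1}V$ resp. $(\text{data})+\int_0^t\ssm V$, so that it becomes $\le(\text{data})+TK$. The net outcome is
\[
 {\sup}_{[0,T]}\big(\|\Lambda(V)\|_{\X^{r+1}}+\|\pave h\|_r\big)\le C_1+\ve^{-1}C_2+T\ve^{-N}P(K),\qquad
 {\sup}_{[0,T]}\W_r\le\W^0_r+T\ve^{-N}P(K),
\]
with $C_1,C_2,P$ depending only on the data; choosing $\F_r$ (hence $K$) so that $C_1+\ve^{-1}C_2\le K/2$, and then $T=T_\ve$ so small that $TK\le1$ and $T\ve^{-N}P(K)\le K/2$, closes the self-map, and applied to the eventual fixed point this gives \eqref{normbd}.

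Existence and uniqueness then follow from the contraction mapping principle: a closed ball of $\X^{r+1}(T)$ is closed, hence complete, in the weaker norm $\X^r(T)$, so it suffices that $\Lambda$ be a contraction on $\mathcal B$ for the $\X^r(T)$-metric, which is the content of the difference estimates of Corollary \ref{mainwavecor2} and Theorems \ref{diffellphi full thm}--\ref{diffellphi tan thm} (assembled through Proposition \ref{sobfndiff}). The structural point, and the whole reason the smoothed problem was introduced, is that in these estimates every occurrence of $V_I-V_{II}$ is either integrated in time, or enters through $\xve_I-\xve_{II}=\int_0^t\ssm(V_I-V_{II})$, or carries a smoothing operator $\ssm$ that trades the apparent extra derivative for a harmless factor $\ve^{-1}$; since $\Lambda(V_I)-\Lambda(V_{II})$ is itself an integral in time and $D_t^k[\Lambda(V_I)-\Lambda(V_{II})]=-D_t^{k-1}(\paveu h_I-\pavew h_{II}+\paveu\phi_I-\pavew\phi_{II})$, its $\X^r(T)$-norm reduces to controlling differences of $\pave h$ and $\pave\phi$ one order lower, which the difference estimates do. Feeding this through yields $\|\Lambda(V_I)-\Lambda(V_{II})\|_{\X^r(T)}\le\ve^{-N}T\,D_r(K)\,\|V_I-V_{II}\|_{\X^r(T)}$, a contraction once $T=T_\ve$ is small (this is where $T_\ve$ acquires its dependence on $\ve^{-1}$). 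Banach's theorem gives a unique fixed point $V\in\mathcal B$, which lies in $\X^{r+1}(T)$ and solves \eqref{smpbmdef}; uniqueness within all of $\X^{r+1}(T)$ follows since any solution is a fixed point of $\Lambda$, hence obeys the a priori bounds of the self-mapping step and therefore lies in $\mathcal B$.

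The main obstacle I anticipate is precisely the bookkeeping that makes the two previous steps close: turning the non-integrated top-order norms of $V$ and $\xve$ into $(\text{data})+TK$ via the admissibility constraint, and checking that in every difference estimate the unavoidable loss of one derivative is accompanied either by a time integration or by a smoothing operator — so that it costs only a benign power of $\ve^{-1}$ — rather than by an uncontrolled term. The elliptic, wave-equation, enthalpy and gravitational estimates of Sections \ref{ellsec}--\ref{enthsec} and the tangential smoothing lemmas are taken as given; the work here is purely to assemble them into the fixed-point scheme with $\ve$-dependent but data-controlled constants.
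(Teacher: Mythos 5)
Your proposal is correct and follows essentially the same route as the paper: the paper likewise restricts to admissible vector fields, proves the self-mapping bound via Proposition \ref{invariantprop} and Corollary \ref{invcor} (using admissibility to convert non-integrated top-order norms of $V$ and $\xve$ into data plus $T\|V\|_{\X^{r+1}(T)}$), proves the Lipschitz estimate $\|\Lambda(\Vu)-\Lambda(\Vw)\|_{\X^r(T)}\le \ve^{-1}T\P_3\|\Vu-\Vw\|_{\X^r(T)}$ in Proposition \ref{contcor}, and then runs the Picard iteration starting from the admissible Taylor polynomial, concluding via the Cauchy property in $\X^r(T)$ together with the uniform $\X^{r+1}(T)$ bound. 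The only difference is cosmetic (explicit iteration versus an appeal to Banach's fixed point theorem on a ball complete in the weaker norm).
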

First, in Section \ref{initialsec}, we show that
$\Lambda(V)$ is admissible (recall the definition in \eqref{intadmissible})
 whenever $V$ is,
and that under the hypotheses of Theorem
\ref{vlwpthm}, the quantities $E_0^{r+1}, \W^0_r$ are bounded.
In Proposition \ref{invariantprop}, we use the estimates from Section
\ref{enthsec} to show that $||\Lambda(V)||_{\X^{r+1}}$
can be bounded in terms of the initial data, $\ve$, and $||V||_{\X^{r+1}}$.
This fact is then used in Corollary \ref{invcor} to show that $\Lambda$
maps a certain Banach space $\C^{r+1} \subset \X^{r+1}$ to
itself
(see \eqref{Crdef}). Finally, in Proposition \ref{contcor}, we prove
that if $\Vu, \Vw \in C^{r+1}$,
$||\Lambda(\Vu) - \Lambda(\Vw)||_{\X^r}$ can be bounded in terms
of $||\Vu||_{\X^{r+1}}, ||\Vw||_{\X^{r+1}}$ and $||\Vu - \Vw||_{\X^{r}}$.

\subsection{The initial data}
\label{initialsec}

Given $(V_0, h_0) \in H^r$ that satisfy the compatibility conditions
\eqref{maincomp0}
to order $r-1$, let $(V_0^\ve, h_0^\ve) \in H^r$ be the data constructed in
Appendix \ref{compat} that satisfy \eqref{smcompatdef}
 to order $r-1$.
Define $h_1^\ve$ by:
\begin{equation}
e'(h_0^\ve) h_1^\ve = -
 \div V_0^\ve,\quad \text{where} \quad\div V_0^\ve = \pave_i V_0^i.
\end{equation}
If $V_0^\ve\!,_{\!}...,\!
V^\ve_r{}_{\!}$ are as in \eqref{vkepsdef}, we will only consider
vector fields $V\!$ which
are admissible to order $r$, meaning:
\begin{equation}
 D_t^k V\big|_{t = 0} = V_k^\ve, \qquad k = 0,..., r.
 \label{admissible}
\end{equation}

 Taking $M_0, L_0$ so that:
 \begin{equation}
  |\pa x_0| + |\pa x^{-1}_0| + {\tsum}_{k + |J| \leq 3}
  |\pa^J V_k|\leq M_0/2,\qquad\text{and}\qquad
  {\tsum}_{ k + |J| \leq 3} |\pa^J \pave h_k|
  + |h_k|
 \leq L_0/2,
  \label{M0L0bd}
 \end{equation}
 we have the following bounds for sufficiently small $\ve$:
  \begin{equation}
   |A(0, y)| + |A^{-1}(0,y)| +
   {\tsum}_{k + |J| \leq 3} |\pa^J V_k^\ve|
  \leq M_0,\qquad\text{and}\qquad
  {\tsum}_{ k + |J| \leq 3} |\pa^J \pave h^\ve_k| + |h_k^\ve|
 \leq L_0.
 \label{M0L0bdreal}
  \end{equation}

  That there are data so that the compatibility conditions \eqref{smcompatdef}
hold follows from Theorem \ref{compatthm}.
We have:
\begin{lemma}
  \label{initlem} Suppose that $(V_0^\ve, h_0^\ve)$ satisfy the compatibility conditions for smoothed Euler \eqref{smcompatdef}
  to order $r-1$. Then if $V\!$ satisfies \eqref{admissible},  $\Lambda(V)$ also satisfies \eqref{admissible}. Moreover,
 $  E_0^{s+1} \!+ \W_0^s \!\leq
  C(M_0)\big(e_1^{r+1}\! \!+
  P(e_0^{r})\big)
  \label{e0bd1}$, $s\!\leq \!r$.
\end{lemma}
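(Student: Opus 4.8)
The proof splits into two independent parts: that the map $\Lambda$ preserves admissibility, and the a priori bound on $E_0^{s+1}+\W_0^s$. For the first part, write $H[V]=h[V]+\phi[V]$, so that $\Lambda^i(V)(0,y)=V_0^i(y)$ and $D_t^{k}\Lambda^i(V)=-D_t^{k-1}\big(\delta^{ij}\pave_j H[V]\big)$ for $k\ge 1$. The plan is to expand the right-hand side with the commutator identity \eqref{dtpaSdefsmooth}--\eqref{sepsdef}, evaluate at $t=0$, and identify the resulting sum with the recursion \eqref{vkepsdef}. Since $V$ is admissible, $D_t^{m}\ssm V|_{t=0}=\ssm V_m^\ve=\widetilde V_m^\ve$, so the $\widetilde S$-coefficients produced by the commutator formula are exactly the ones appearing in \eqref{vkepsdef}. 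The remaining factors are $\pave_j D_t^{\ell}H[V]|_{t=0}$, and here one argues that the Taylor coefficients of the coupled system at $t=0$ are uniquely determined and satisfy $D_t^{\ell}h[V]|_{t=0}=h_\ell^\ve$, $D_t^{\ell}\phi[V]|_{t=0}=\phi_\ell^\ve$: the function $h=h[V]$ solves the wave equation \eqref{smwavedef}--\eqref{smwaveic} with $D_t h(0)=h_1^\ve$ chosen by \eqref{h1epsdef}, and since \eqref{smwavedef} is precisely $D_t$ applied to the continuity equation \eqref{resultofproj}, the admissibility of $V$ together with \eqref{h1epsdef} forces the continuity equation to hold at $t=0$ to all orders; hence the Taylor coefficients of $h[V]$ obey the recursion \eqref{hkepsdef}, and then differentiating the integral formula \eqref{phidef} via \eqref{eq:Dtkpotentialkernel} gives the coefficients of $\phi[V]$ as in \eqref{phikepsdef}. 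Comparing with \eqref{vkepsdef} yields $D_t^{k}\Lambda^i(V)|_{t=0}=V_k^\ve$, i.e. $\Lambda(V)$ is admissible.

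For the bound, note that since $V$ is admissible, \eqref{Esdef} expresses $E_0^{s+1}$ and $\W_0^s$ as finite sums of fixed Sobolev norms of $V_k^\ve$, $h_k^\ve$ and $\pave h_k^\ve$ at $t=0$ (with $\kve_0=\det(\pa x_0/\pa y)$ bounded above and below in terms of $M_0$). The plan is to prove by induction on $k$ that $V_k^\ve$, $h_k^\ve$, $\phi_k^\ve$ are bounded in the appropriate Sobolev spaces (at the levels needed for the recursions to close, with $V_0^\ve$ and $h_0^\ve$ supplied at their given regularity by the construction in Appendix \ref{compat}) by $C(M_0)\big(e_1^{r+1}+P(e_0^r)\big)$. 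In the inductive step one plugs the lower-order bounds into \eqref{vkepsdef}, \eqref{hkepsdef}, \eqref{phikepsdef}: the coefficients $\widetilde S^{jk}_{i\ell}$ and $S^{jk}_{i\ell}$ are polynomials in the strictly lower-order quantities $\pave D_t^{\ell_i}\widetilde V_m^\ve$, hence lie in a Sobolev algebra and are handled by the product estimate; the leading term $\pave_j H_\ell^\ve$ is controlled via the first-order interior elliptic estimates of Section \ref{ellsec} and the inductive hypothesis; and the nonlocal term $\phi_k^\ve$ is controlled by the gravity estimates of Section \ref{gravitysection}, in particular Theorem \ref{main theorem, ell est of phi}, in terms of $D_t^j\rho(h)$ for $j\le k$, again available inductively. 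Each step costs exactly one extra $y$-derivative of $x_0$ (through the matrix $A$ and $\kve$), which is the origin of the factor $e_1^{r+1}=\|x_0\|_{H^{r+1}}$; every other contribution is polynomial in $e_0^r$, and all constants depend only on $M_0$ and, through $L_0$, on the fixed lower-order bounds on $h^\ve_k$. Since the right-hand side of the claimed inequality is independent of $s$ and $E_0^{s+1}\le E_0^{r+1}$, $\W_0^s\le\W_0^r$ for $s\le r$, it suffices to run the induction through the top order.

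The main obstacle is the derivative bookkeeping in this induction: one must check that the coupled recursions \eqref{vkepsdef}--\eqref{hkepsdef}, together with the second-order regularity gain built into the interior and gravitational elliptic estimates, close at the stated Sobolev levels, so that no term requires more than one $y$-derivative of $x_0$ or more than the prescribed regularity of $V_0^\ve$ and $h_0^\ve$, and that the resulting constants genuinely have the form $C(M_0)(e_1^{r+1}+P(e_0^r))$ with no hidden dependence on $\ve$. A smaller but essential technical point, used in the admissibility statement, is the verification that the joint Taylor expansion of $(h[V],\phi[V])$ at $t=0$ is well-defined and reproduces $(h^\ve_\ell,\phi^\ve_\ell)$; this is exactly where the choice \eqref{h1epsdef} of $h_1^\ve$ and the compatibility conditions \eqref{smcompatdef} are needed. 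Once the derivative count is arranged, the individual estimates are routine applications of the product rule and the elliptic and gravity bounds already proved.
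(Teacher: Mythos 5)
Your proposal is correct and takes essentially the same route as the paper: part (1) reduces to the identification $D_t^k h[V]|_{t=0}=h_k^\ve$, $D_t^k\phi[V]|_{t=0}=\phi_k^\ve$ (which the paper asserts ``by construction'' and you justify from the fact that the wave equation is $D_t$ of the continuity equation, together with the choice \eqref{h1epsdef}), after which the commutator identity \eqref{dtpaSdefsmooth} turns the definition of $\Lambda(V)$ into the recursion \eqref{vkepsdef}. For part (2) the paper runs a double induction — on the Sobolev level $m$, and within each $m$ on the Taylor index $k$, introducing the auxiliary quantity $e_0^{m^*}=\sum_{\ell\le m}\|h_\ell^\ve\|_{H^{m-\ell}}$ and bounding it via the continuity and wave equations — whereas you sketch a single induction on $k$; this is structurally equivalent provided the Sobolev indices are tracked as carefully as you indicate, and the elliptic/gravity estimates you cite are precisely the ones the paper uses.
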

\begin{proof}
To see that $\Lambda(_{\!}V)$ satisfies \eqref{admissible}, note that $\Lambda(_{\!}V)\big|_{t = 0}\!
= \!\!V_0^\ve$, and for $k\! \geq \!1$, by the definition of $V_k^\ve$,
\eqref{vkepsdef}:
\begin{equation}
  D_t^k V|_{t = 0} = V_k^\ve
  =
  {\sum}_{\ell \leq k} \widetilde{S}_{i\ell}^{jk}(\pave \tV_0^\ve,...,
  \pave \tV_{k-\ell-1}^\ve) \pave_j H_{\ell}^\ve
 =
 -D_t^{k-1} \pave h\big|_{t = 0}-D_t^{k-1} \pave \phi\big|_{t = 0},
\end{equation}
where the last equality follows from the identity
\eqref{dtpaSdefsmooth}, and the fact that by construction
$D_t^k h|_{t = 0} = h_k^\ve$, $D_t^k \phi|_{t = 0} = \phi_k^\ve$. The right-hand
side here is $D_t^k \Lambda(V)$ by definition and this proves the first point.

  To prove the second point, we start by showing that for $0 \leq s \leq r$,
  \begin{equation}
   E_0^{s+1} + W_0^{s} \leq C(M_0)( e_0^{s+1} + P(L_0, e_0^r)),
   \label{intermediate}
  \end{equation}
  where $L_0$ is as in \eqref{M0L0bd}.
 If $V$ satisfies \eqref{admissible}, then:
  \begin{equation}
   D_tV|_{t = 0} = V^\ve_1 = -\pa h_0^\ve-\pave \phi|_{t=0},
  \end{equation}
  and so:
  \begin{equation}
   E_0^1 = \big(||D_t V(t,\cdot)||_{L^2} + ||V(t,\cdot)||_{L^2}\big)\big|_{t = 0}
   \leq C(M_0)\big( ||h_0^\ve||_{H^1}+||\rho(h_0^\ve )||_{L^2} + ||V_0^\ve||_{L^2}\big),
  \end{equation}
  where we used Theorem \ref{main theorem, ell est of phi} to control $||\phi(0,_{{}_{\!}}\cdot,_{{}_{\!}})||_{\!H^1}\!$.\! That $W_0^0$ is bounded by the right-hand side of \eqref{intermediate}
  is immediate,
  so \eqref{intermediate} hold for $s \!= \!0$. Suppose now that it
  holds for $s \!\leq \! m\!-\!1$. We introduce the notation:
    \begin{equation}
     e^{m^*}_0 = {\sum}_{ k =0}^m ||h^\ve_k||_{H^{m-k}}.
    \end{equation}

  By definition we have $||V_0^\ve||_{H^m} \leq e_0^m$.
  Suppose we know that for some $k \geq 0$:
  \begin{equation}
   ||V_k^\ve||_{H^{m-k}} \leq C(M_0) \big(||H_{k-1}^\ve||_{H^{m-k+1}}
   + P(e^{m^*}_0, e_0^{m-1}, L_0)\big),
  \end{equation}
  where $H_{k-1}^\ve$ is defined in Section \ref{wavecompatcondn},
  then by definition for $F_k$,  and Theorem \ref{main theorem, ell est D_t phi}, we have:
  \begin{equation}
   ||V_{\!k+\!1}^\ve\!||_{H^{m\!-\!k\!-\!1}}\!
   \leq C({}_{\!}M_{0\!}) \big( || H_{k}^\ve||_{H^{m\!-\!k}\!}
   + ||(\pa V_{k}^\ve{}_{\!})\pa H_0^\ve||_{H^{m\!-\!k} \!}+
   ||F_{k+\!1\!}||_{H^{m\!-\!k\!-\!1}}\!\big)\!\leq
   C({}_{\!}M_{0\!}) \big(||h_{k}^\ve||_{H^{m\!-\!k}} \!+ P(e^{m^{\!*}}_0\!\!\!\!, e_0^{m\!-\!1}\!\!\!, L_0)\big).
  \end{equation}
  Therefore \eqref{intermediate} follows from bounding $e_0^{m^*}\!$ by $||V_0^\ve||_{H^m}$ and $||h_0^\ve||_{H^m}$, and hence $e_0^m$.
   To prove this, we use the continuity equation $h_1^\ve \!= \! e'(h_0^\ve)^{-1}\!\div V_0^\ve$ which yields the bound $||h_1^\ve||_{H^{m-1}}\!\leq P(L_0, e_0^m)$. In addition, suppose we know that for some $k\geq 3$ $${\sum}_{\ell\leq k-1}||h_\ell^\ve||_{H^{m-\ell}}\leq P(L_0, e_0^m).$$
We want to show that $||h_k||_{H^{m-k}}$ can be controlled by the same bound. This follows from the wave equation,
\begin{align}
h_{k}^\ve = e'(h_0^\ve)^{-1}\big(\Delta h_{k-2}^\ve +(\pa \ssm V_{k-2})(\pa V_0)+G_{k-2}   \big),
\end{align}
where $G_{k-2}$ is given in Section \ref{wavecompatcondn}. This implies:
\begin{align}
||h_k^\ve||_{H^{m-k}} \leq P(L_0, e_0^m),
\end{align}
where the bound of $||G_{k-2}||_{H^{m-k}}$ follows from Sobolev lemma.
\end{proof}

\subsection{Existence on a time interval of size $O(\ve)$}
\label{vshorttime}

We can now prove Theorem \ref{vlwpthm}.
We start with the following simple lemma, which will be used to
control some low norms of $\xve$ and $V$.
\begin{lemma}
  \label{lownorms}
  Fix $r \!\geq \!5$ and $T_1\! > \!0$ and suppose that $V \!$ satisfies
  \eqref{admissible} and that $||V||_{\X^r(T_{1\!})}\! \leq \!K$.
  If the initial data satisfies \eqref{M0L0bdreal}, then there is a positive,
  continuous
  function $D_0$ so that if $T$ satisfies:
  \begin{equation}
   T D_0(M_0, K, T_1) \leq 1, \text{ and } \quad T \leq T_1,
  \end{equation}
  then:
  \begin{equation}
    {\sup}_{\,0 \leq t \leq T\,}
    \big(|| \pa \xve(t, \cdot)/ \pa y||_{L^{\infty}}
    + || \pa y(t,\cdot)/\pa \xve||_{L^\infty}
    +
    {\tsum}_{ k + |J| \leq 3} ||D_t^k\pa_y^{J} V(t,\cdot)||_{L^\infty}\big)
   \leq 4M_0.
  \end{equation}
\end{lemma}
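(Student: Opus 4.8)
The plan is a standard short-time estimate. Write $\mathcal{N}(t)$ for the quantity on the left-hand side of the conclusion, and recall from \eqref{udef} that $A = \partial\xve/\partial y$ and $A^{-1} = \partial y/\partial\xve$. Since $\xve(0,\cdot) = x_0$ and $D_t^k V|_{t=0} = V_k^\ve$ by \eqref{admissible}, the data bound \eqref{M0L0bdreal} gives $\mathcal{N}(0) \leq M_0$ (pointwise in $y$, hence in $L^\infty$). I will show that on $[0,T_1]$ each of the three pieces of $\mathcal{N}$ grows by at most $t\,C(M_0,K)$, so that $\mathcal{N}(t) \leq 4M_0$ once $T$ is taken small.

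First, from \eqref{xvedef} one has $A(t) = A(0) + \int_0^t \partial_y\ssm V(s)\,ds$; by the tangential-smoothing bounds of Section \ref{smoothingsec} ($\ssm$ is bounded on $H^{r-1}(\Omega)$, uniformly in $\varepsilon$), the Sobolev embedding $H^{r-1}(\Omega)\hookrightarrow C^1(\overline\Omega)$ for $r\geq 5$, and $\|V(s)\|_{H^{r-1}}\leq \|V(s)\|_{\X^r}\leq K$, I get $\|\partial_y\ssm V(s)\|_{L^\infty}\leq C_1(K)$ and hence $\|A(t)\|_{L^\infty}\leq M_0 + tC_1$. For $A^{-1}$ I would write $A(t) = A(0)\bigl(I + A(0)^{-1}(A(t)-A(0))\bigr)$ with $\|A(0)^{-1}(A(t)-A(0))\|_{L^\infty}\leq M_0\,tC_1$, so once $tC_1M_0\leq 1/2$ a Neumann series gives $\|A^{-1}(t)\|_{L^\infty}\leq M_0/(1-tC_1M_0)\leq M_0 + 2M_0^2tC_1$. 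For the velocity terms, since $D_t$ and $\partial_y$ commute and $D_t^kV|_{t=0}=V_k^\ve$, I write $D_t^k\partial_y^J V(t) = \partial_y^J V_k^\ve + \int_0^t \partial_y^J D_t^{k+1}V(s)\,ds$; for $k+|J|\leq 3$, Sobolev embedding and $\|D_t^{k+1}V(s)\|_{H^{r-k-1}}\leq\|V(s)\|_{\X^r}\leq K$ (using $r\geq 5$, so $H^{r-k-1}\hookrightarrow C^{|J|}$) give $\|\partial_y^J D_t^{k+1}V(s)\|_{L^\infty}\leq C_2(K)$, whence $\sum_{k+|J|\leq 3}\|D_t^k\partial_y^J V(t)\|_{L^\infty}\leq M_0 + tC_2$ by \eqref{M0L0bdreal}.

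Adding the three bounds gives $\mathcal{N}(t)\leq 3M_0 + tC_3$ on $[0,T_1]$, with $C_3=C_3(M_0,K)$ absorbing $C_1$, $C_2$ and $2M_0^2C_1$; then taking $D_0(M_0,K,T_1) = C_3/M_0 + 2M_0C_1 + 1$ — so that $TD_0\leq 1$ forces both $TC_3\leq M_0$ and $TC_1M_0\leq 1/2$ — yields $\mathcal{N}(t)\leq 4M_0$ on $[0,T]$, which is the claim. No genuine bootstrap is needed: each of the three estimates uses only the standing hypotheses $\|V\|_{\X^r(T_1)}\leq K$ and the data bound \eqref{M0L0bdreal}, never a running bound on $\mathcal{N}$ itself. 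The only step requiring any care is the bookkeeping of Sobolev exponents — checking that $\|V\|_{\X^r}$ with $r\geq 5$ controls all the $C^3$-type quantities in $\mathcal{N}$ after one integration in $t$ — and even that is routine.
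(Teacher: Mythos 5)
Your proof is essentially correct and follows the paper's route: integrate the quantities in time, bound the integrands via Sobolev embedding using $\|V\|_{\X^r(T_1)}\leq K$ together with the boundedness of $\ssm$ on Sobolev spaces, and then take $T$ small. The only real difference is in the treatment of $\pa y/\pa\xve = A^{-1}$: you invert by a Neumann-series perturbation of $A(0)^{-1}$, while the paper differentiates $N(t)=\|A^{-1}(t)\|_{L^\infty}$ and uses \eqref{dinv} to derive the Riccati inequality $dN/dt\leq C_0N^2\|\pa_y\ssm V\|_{L^\infty}$. The two arguments are equivalent and both yield $N(t)\leq M_0(1-C tM_0)^{-1}\leq 2M_0$ once $CtM_0\leq\tfrac12$, which is exactly the smallness you build into $D_0$. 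One small bookkeeping caveat worth noting: for $k+|J|\leq 3$ the embedding $H^{r-k-1}(\Omega)\hookrightarrow C^{|J|}(\overline\Omega)$ in three dimensions requires $r>k+|J|+\tfrac52$, which in the worst case $k+|J|=3$ forces $r\geq 6$ rather than $r\geq 5$ as you assert; the paper's own proof (using $\|V\|_{\X^5}$) has the same off-by-one slack, and every application of the lemma in the paper has $r\geq 7$, so this is immaterial for the argument.
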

\begin{proof}
  We integrate in time and use Sobolev embedding to get:
  \begin{equation}
   ||\pa \xve(t,\cdot)/\pa y(t,\cdot)||_{L^\infty}
   + {\tsum}_{k + |J| \leq 3} ||\pa_y^J D_t V(t,\cdot)||_{L^\infty}
   \leq M_0 + \int_0^t ||V(\tau)||_{\X^5}\, d\tau.
  \end{equation}
  The right-hand side is bounded by $2M_0$ if $T \!\leq\! \min(T_{{}_{\!}1},  M_0/\!K)$.
To control $\pa y/_{{}_{\!}}\pa \xve$,
let $N_{{}_{\!}}(t) \!=\! ||\pa y(t,\cdot)/\pa \xve||_{L^\infty(\Omega)}$
and note that by \eqref{dinv} we have ${d} N/{dt} \!\leq C_0 N^2
 ||\pa \ssm V||_{L^\infty(\Omega)}$. Using $N_{{}_{\!}}(0)\! \leq M_0$ and Sobolev embedding,
 this implies that
 $N_{{}_{\!}}(t)\! \leq  M_0 (1\! -\! C_0 M_0 t ||\pa V(t)||_{L^\infty(\Omega)})^{-1}\!$ for a constant
 $C_0$ depending only on $\Omega$.
 Taking $D_0\!=  2 \min( K/\!M_0, 2C_0M_0^{2})$, this implies that
 $N_{{}_{\!}}(t)\! \leq\! 2M_0$ provided $T \leq T_1$ and $TD_0 \leq 1$.
\end{proof}

We can now begin the proof of local well-posedness. We will use the next
estimate to show that $\Lambda$ maps a certain Banach space to itself.
We set $\oE^r_0 = E_0^{r+1} + \W^0_r + e_0^r$.
\begin{prop}
  \label{invariantprop}
  Fix $r \geq 7$ and suppose that $V_0, h_0$ are such that
   $\overline{E}^r_0 < \infty$. There are continuous,
   positive functions $D_r$, $D_r'$  and
   polynomials $\P_1, \P_2$ so that the following statement
   holds: If $T, \ve$ satisfy:
   \begin{equation}
    T D_r(M_0, L_0, \oE_0^{r}) \leq 1,
    \qquad \ve D_r'(M_0, L_0, \oE_0^{r}) \leq 1,
    \label{epsstart}
   \end{equation}
   and $V \in \X^{r+1}(T)$ is any vector field
   satisfying the condition \eqref{admissible} with
   $||V||_{\X^{r+1}(T)} \leq \ve^{-2} E_0^{r+1}$,  then:
 \begin{equation}
  {\sup}_{\,0 \leq t \leq T\,} ||\Lambda(V)(t)||_{\X^{r+1}}
  \leq \ve^{-1} \P_1\big(E_0^{r+1}, e_0^r, \W^0_r
  \big) + \ve^{-1}T\,
  \P_2\big(E_0^{r+1}, e_0^r, \W^0_r,
  ||V||_{\X^{r+1}(T)}\big).
 \end{equation}
\end{prop}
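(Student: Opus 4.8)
The plan is to bound $\|\Lambda(V)(t)\|_{\X^{r+1}}$ by differentiating the defining formula $\Lambda^i(V) = V_0^i - \int_0^t \delta^{ij}\pave_j(h+\phi)$ and estimating the two terms $\pave h$ and $\pave\phi$ appearing in the integrand. First I would observe that applying $D_t^{k}$ to $\Lambda(V)$ for $1 \le k \le r+1$ gives $D_t^k \Lambda(V) = -D_t^{k-1}(\pave_j h + \pave_j \phi)$ plus (for $k \ge 1$) commutator terms $[D_t^{k-1}, \pave_j]$ acting on $h$ and $\phi$, which by the higher-order commutator identities from Section \ref{hocommutators} (see \eqref{dtpaSdefsmooth}-\eqref{sepsdef}) are polynomial expressions in $\pave D_t^{\ell}\ssm V$ times lower $D_t$-derivatives of $\pave h, \pave \phi$. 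The $k=0$ term is just $V_0$, bounded by $e_0^{r+1} \le \oE_0^r$, and lower-order spatial norms of $\Lambda(V)$ follow by integrating in time, using that $\|\pave h(s)\|_{r-1} + \|\pave\phi(s)\|_{r-1}$ is integrable. So the whole estimate reduces to controlling $\|\pave h\|_{r}$ and $\|\pave\phi\|_{r}$ pointwise in $t$, plus the mixed norms $\sum_{k+\ell \le r+1}\|D_t^k\pave h\|_{H^\ell} = \|\pave h\|_{r+1}$ — but here I must be careful: we only expect to control $\|\pave h\|_r$, not $\|\pave h\|_{r+1}$, so the top $D_t$-derivative $D_t^{r+1}\Lambda(V)$ must be handled separately, using that $D_t^{r}h$ is controlled in $L^2$ by the wave energy $\W_r$ and hence $D_t^{r+1}\Lambda(V) = -D_t^r \pave h - D_t^r \pave \phi$, where the $h$-term is estimated by $\pave$ applied to $D_t^r h \in H^1$... actually more carefully by the elliptic estimate \eqref{sobellmix} applied with $k = r$, $\ell = 0$.

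The key steps, in order, are: (1) Apply Proposition \ref{enthwavepropen} to control $\|\pave h(t)\|_r$: the relevant bound is $\|\pave h(t)\|_r \le \oC_r(\|V(t)\|_r + \ve^{-1}\|\sm x(t)\|_{H^r} + \W_r(0) + \int_0^t \|V(\tau)\|_{\X^{r+1}}\,d\tau)$, and here $\ve^{-1}\|\sm x(t)\|_{H^r} \le \ve^{-1}(\|x_0\|_{H^r} + \int_0^t \|\ssm V(\tau)\|_{H^r}\,d\tau) \le \ve^{-1}e_0^r + \ve^{-1}T\|V\|_{\X^{r+1}(T)}$, using the smoothing bound $\|\ssm f\|_{H^r} \lesssim \|f\|_{H^r}$ from Lemma near \eqref{smoothing}. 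The constant $\oC_r$ depends on $M, L_0, \W_{r-1}(0)$ and low norms of $V, \xve$; by Lemma \ref{lownorms} these low norms are bounded by $4M_0$ once $T D_0 \le 1$, and by Corollary \ref{bootstrapped}/Proposition \ref{enthwavepropen} the $L$-dependence can be replaced by $L_0$ once $T$ satisfies \eqref{Tgr}. (2) Apply Theorem \ref{main theorem, ell est D_t phi} to control $\|D_t^{k-1}\pave\phi\|_{H^\ell}$ for $k+\ell \le r$ and $k+\ell = r$ by $C_r'(\|\T\xve\|_{H^{(r-1,1/2)}} + 1)\sum_{s\le k-1}\|D_t^s\rho\|_{H^{r-s}}$, then bound $\|D_t^s \rho\|_{H^{r-s}}$ via $\rho = \rho(h)$ and the chain rule in terms of $\|\pave h\|_r$ and low norms (the equation of state being close to constant, \eqref{eq:closetoincompressible2}, keeps these controlled); similarly $\|\T\xve\|_{H^{(r-1,1/2)}}$ is bounded in terms of $\|\xve\|_{H^r} \le \|x_0\|_{H^r} + T\|V\|_{\X^{r+1}}$ and hence, after imposing $T D_r \le 1$ with $D_r$ absorbing $\oE_0^r$, by a constant depending on $\oE_0^r$ plus $\ve^{-1}$-free quantities. (3) Assemble: each $D_t^k\Lambda(V)$ term is a sum of $\pave h$, $\pave\phi$ (bounded above), commutator terms (products of $\pave D_t^\ell \ssm V$, controlled by $\|V\|_{\X^{r+1}}$ and hence by $\ve^{-2}E_0^{r+1}$, times lower $\pave h, \pave\phi$-norms), and the time integral of $\pave h + \pave\phi$. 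Collecting the non-time-integrated contributions into $\ve^{-1}\P_1(E_0^{r+1}, e_0^r, \W_r^0)$ — the $\ve^{-1}$ coming from the $\ve^{-1}\|\sm x\|_{H^r}$ term in the elliptic bound for $\pave h$ — and the time-integrated contributions (which carry the factor $T$ and can involve the larger quantity $\|V\|_{\X^{r+1}(T)}$) into $\ve^{-1}T\,\P_2$, gives the claimed inequality.

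The main obstacle I anticipate is the bookkeeping of $\ve$-powers and making sure the split into $\P_1$ (no $\|V\|_{\X^{r+1}(T)}$ dependence) and $\P_2$ is honest. The subtle point is that $\|\pave h\|_r$ contains the term $\ve^{-1}\|\sm x\|_{H^r}$, and $\sm x = x_0 + \int_0^t \ssm V$, so its "initial" part $\ve^{-1}\|x_0\|_{H^r} = \ve^{-1}e_0^r$ goes into $\ve^{-1}\P_1$ while its "dynamic" part $\ve^{-1}\int_0^t\|\ssm V\|_{H^r}\,d\tau \le \ve^{-1}T\|V\|_{\X^{r+1}(T)}$ goes into $\ve^{-1}T\P_2$ — this is exactly why the proposition is stated with both an $\ve^{-1}$ term and an $\ve^{-1}T$ term. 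A second delicate point is ensuring all the elliptic and wave constants ($\oC_r$, $C_r'$, the $D$'s) are ultimately controlled by $\oE_0^r = E_0^{r+1} + \W_r^0 + e_0^r$ and $M_0, L_0$ alone (not by $\|V\|_{\X^{r+1}(T)}$), which requires invoking Lemma \ref{lownorms} to replace the a priori bounds \eqref{ubd2}, \eqref{Lassumpwave} with bounds in terms of initial data once $T$ is small — and the requisite smallness of $T$ and $\ve$ is precisely what \eqref{epsstart} encodes. Everything else is a routine application of the product/commutator estimates and the already-proven elliptic and wave estimates.
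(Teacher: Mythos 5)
Your proposal is correct and follows essentially the same route as the paper: bound $\|\pave h\|_r$ via Proposition \ref{enthwavepropen} (with the $\ve^{-1}\|\sm x\|_{H^r}$ term split into the initial part $\ve^{-1}e_0^r$ and the dynamic part $\ve^{-1}T\|V\|_{\X^{r+1}(T)}$), bound $\|\pave\phi\|_r$ via Theorem \ref{main theorem, ell est D_t phi}, control the low norms and the constant $L$ through Lemma \ref{lownorms} and Corollary \ref{bootstrapped}, and read off $D_t^k\Lambda(V)=-D_t^{k-1}(\pave h+\pave\phi)$. The only cosmetic remark is that no commutators $[D_t^{k-1},\pave]$ are actually needed, since the mixed norms $\|\cdot\|_r$ from \eqref{mixednormdef} already control $D_t^{k-1}\pave h$ directly rather than $\pave D_t^{k-1}h$.
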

\begin{proof}
To get started, we fix $T_1 \leq 1$ and  $\ve$ small enough
that \eqref{M0L0bdreal} holds, and consider only $V$ so that
$\sup_{\,0 \leq t \leq T_1 } ||V(t)||_{\X^{r+1}}\! \leq \ve^{-2} E_0^{r+1}\!\!$. With
 $\F_1 \!= -(\pave_i \ssm V^j)(\pave_j V^i)$,
 we fix $K_0 \!= K_0(E_0^{r+1}\!\!,e_0^r, \ve^{-1})$ so that:
\begin{equation}
  {\sup}_{\,0 \leq t \leq T_1}\big( ||\xve(t)||_r + ||V(t)||_{\X^{r+1}}
  + ||\F_1(t)||_{r,0} + ||\F_1(t)||_{r-1}\big) \leq K_0,
\end{equation}
whenever $ ||V||_{\X^{r+1}(T_1)} \leq \ve^{-2} E_0^{r+1}$, where
we are bounding $||\xve(t)||_r \leq ||\xve(0)||_r + T_1 ||V||_{\X^{r+1}(T_1)}$.

Let $D_0\!$ be as in Lemma \ref{lownorms}, and take $T\!$ smaller
if needed so that $T D_0(M_0,  T_{\!1}) \!\leq\! 1$.
By Lemma \ref{lownorms}:
  \begin{align}
    {\sup}_{\,0 \leq t \leq T}\big(
    ||\pa \xve(t, \cdot)/\pa y||_{L^\infty}
    + || \pa y(t,\cdot)/\pa \xve||_{L^\infty}
    +
    {\tsum}_{ k + |J| \leq 3} ||D_t^k\pa_y^{J} V(t,\cdot)||_{L^\infty}\big)
    &\leq 4M_0,
   {}
  \end{align}
  for all $V$ satisfying \eqref{admissible} with
  $||V||_{\X^r(T)} \leq \ve^{-2} E_0^{r+1} $. In particular, the assumption
  \eqref{ubd2} holds with $M = 4M_0$.

With $Q_r$ as in Corollary \ref{bootstrapped} and
$G_r'$ as in Theorem \ref{nllwpthm}, we take $T$ smaller again
if necessary so that:
\begin{equation}
 T (Q_r(4M_0, L_0, \W^0_r, K_0, T_1)
 + G_r'(4M_0, L_0, \W^0_r, K_0, T_1)) \leq 1.
 \label{Trestrict}
\end{equation}
By Lemma \ref{initlem} and Theorem \ref{nllwpthm}
the wave equation \eqref{hdef1}-\eqref{hdef3} has
a unique solution $h\! =\! h[V]$ on $[0, T]\!\times\! \Omega $.
By the above calculations and
the first bound in \eqref{Trestrict},
applying Proposition \ref{enthwavepropen}, for $0 \leq t \leq T$ we have:
\begin{equation}
 ||h(t)||_{s+1,0}
 + ||\pave h(t)||_{H^s(\Omega)} \leq
  (1+\ve^{-1}) \D_1 ( ||\sm x(t)||_{H^r(\Omega)}
 + ||V(t)||_{H^r(\Omega)} )
 + T \D_2 ||V||_{\X^{r+1}(T)},
 \qquad 0 \leq s \leq r.
\end{equation}
where $\D_1, \D_2$ depend on $M_0, L_0, \W^0_{r}$ as well as $
\sup_{0 \leq t \leq T }
||\xve(t)||_{H^{r}(\Omega)} + ||V\!(t)||_{\X^{r}}\!$. We now bound
$||V\!(t)||_{H^{r_{\!}}(\Omega)}\! \leq E_0^{r+1} \!\!+ T||V||_{\X^{r+1}(T)}$
and  $||\xve(t)||_{H^{r_{\!}}(\Omega)} \!+ ||V(t)||_{\X^{r}}\!
\leq E_0^{r+1} + e_0^r\! \!+ T ||V||_{\X^{r+1}(T)}$. This gives:
\begin{equation}
 {\sum}_{\,s \leq r-1\,} ||h(t)||_{s+1,0} + ||\pave h(t)||_{H^s(\Omega)}
 \leq (1 + \ve^{-1}) \D'_1 + T \D'_2,
 \label{effectiveest2}
\end{equation}
with $\D_1' = \D_1'(M_0, L_0, E_0^{r+1}, e_0^r, \W^0_r, T ||V||_{\X^{r+1}(T)})$
and $\D_2' = \D_2'(M_0, L_0,
E_0^{r+1}, e_0^r, \W^0_r,  ||V||_{\X^{r+1}(T)})$.

By Theorem \ref{main theorem, ell est D_t phi}, we also have:
\begin{multline}
||\pave\phi||_{r} \leq (1+\ve^{-1})P\big(||\xve||_{H^r(\Omega)},  ||V||_{\X^{r}}, ||V||_{H^r(\Omega)}, ||h||_{r-1}\big)||J_\ve x||_{H^r(\Omega)} ||h||_{r}\\
\leq   (1\!+\!\ve^{-1}) \D_3'(E_0^r, e_0^r)\!
 + \! T \D_4'\big(E_0^r, e_0^r,  ||V||_{\X^{r+1}(T)}\!\big).
 \label{effectiveest3}
\end{multline}

Set $\P_1 = \D_1' + \D_3'$ and $\P_2 = \D_2' + \D_4'$.
Since, for $k \geq 1$
we have $D_t^{k} \Lambda(V)_i = -D_t^{k-1} \pave_i h
- D_t^{k-1} \pave_i \phi ,$
 the estimates for $||D_t^{k}\Lambda(V)||_{H^{\ell}(\Omega)}$ follow from
 \eqref{effectiveest2}-\eqref{effectiveest3}, and the estimate
 \begin{equation}
  ||\Lambda(V)(t)||_{H^r(\Omega)}
  \leq ||V_0||_{H^r(\Omega)} + T\, {\sup}_{\,0 \leq \tau \leq T\,}
  \big(||\pave h(\tau)||_{H^r(\Omega)} + ||\pave \phi(\tau)||_{H^r(\Omega)}\big).
  {}\tag*{\qedhere}
 \end{equation}
\end{proof}

\begin{cor}
  \label{invcor}
  If the hypotheses of the previous theorem hold, there are positive, continuous,
  functions $D_r'', \P_r = \P_r(M_0, \oE_0^{r},\ve)$ so that
  if $T$ satisfies:
  \begin{equation}
   T D_r''(M_0, \oE_0^{r}, \ve) \leq 1,
   \label{lwptassump}
  \end{equation}
  and if $V \in \X^{r+1}(T)$ satisfies \eqref{admissible} as well as the bound:
 \begin{equation}
  {\sup}_{\,0 \leq t \leq T\,} ||V(t)||_{\X^{r+1}} \leq \ve^{-1}\P_r
  + 1,
  \label{invbd}
 \end{equation}
 then $\Lambda(V)$ satisfies:
 \begin{equation}
  {\sup}_{\, 0\leq t \leq  T\,} ||\Lambda(V)||_{\X^{r+1}}
  \leq \ve^{-1} \P_r + 1.
 \end{equation}
\end{cor}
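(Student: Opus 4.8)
The plan is to use Proposition \ref{invariantprop} to show that $\Lambda$ carries the ball $\{V:\sup_{0\le t\le T}\|V(t)\|_{\X^{r+1}}\le \ve^{-1}\P_r+1\}$ into itself, once $\P_r$ is taken large enough to absorb the ``constant'' output $\ve^{-1}\P_1$ of that proposition and $D_r''$ is taken large enough to make the ``error'' output $\ve^{-1}T\P_2$ at most $1$. Concretely, I would set $\P_r:=\P_1(E_0^{r+1},e_0^r,\W^0_r)$, with $\P_1$ the polynomial produced by Proposition \ref{invariantprop}; its coefficients depend on $M_0$ and $L_0$, and since $r\ge 7$ the quantity $L_0$ of \eqref{M0L0bd}--\eqref{M0L0bdreal} is bounded by a continuous function of $M_0$ and $\oE_0^r$ via Sobolev embedding, while $E_0^{r+1},e_0^r,\W^0_r\le \oE_0^r$ and are finite by Lemma \ref{initlem}. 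Hence $\P_r$ may legitimately be regarded as a continuous function of $M_0,\oE_0^r$ (and, trivially, $\ve$), as required by the statement.

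The steps, in order, are: (i) observe that the hypotheses of Proposition \ref{invariantprop} already include $\ve D_r'(M_0,L_0,\oE_0^r)\le 1$ in \eqref{epsstart}, so after possibly enlarging $D_r'$ we may assume $\ve$ is small enough that $\ve\P_r+\ve^2\le E_0^{r+1}$, i.e.\ $\ve^{-1}\P_r+1\le \ve^{-2}E_0^{r+1}$; then any $V$ obeying \eqref{admissible} and \eqref{invbd} automatically satisfies the hypothesis $\|V\|_{\X^{r+1}(T)}\le \ve^{-2}E_0^{r+1}$ of that proposition. (ii) Apply Proposition \ref{invariantprop}: provided $TD_r(M_0,L_0,\oE_0^r)\le1$ it gives
\[
\sup_{0\le t\le T}\|\Lambda(V)(t)\|_{\X^{r+1}}\le \ve^{-1}\P_1(E_0^{r+1},e_0^r,\W^0_r)+\ve^{-1}T\,\P_2\big(E_0^{r+1},e_0^r,\W^0_r,\|V\|_{\X^{r+1}(T)}\big).
\]
(iii) Identify the first term as $\ve^{-1}\P_r$, and, using \eqref{invbd} and the monotonicity of $\P_2$, bound the second by $\ve^{-1}T\,Q$ with $Q:=\P_2(E_0^{r+1},e_0^r,\W^0_r,\ve^{-1}\P_r+1)$, again a continuous function of $M_0,\oE_0^r,\ve$. (iv) Finally define $D_r''(M_0,\oE_0^r,\ve)$ to be the maximum of $\ve^{-1}Q$, of $D_r(M_0,L_0,\oE_0^r)$, and of every other $T$-threshold invoked inside the proof of Proposition \ref{invariantprop} (the function $D_0$ of Lemma \ref{lownorms}, which guarantees \eqref{ubd2} with $M=4M_0$, and the restriction \eqref{Trestrict}); then $TD_r''\le1$ both makes Proposition \ref{invariantprop} applicable and forces $\ve^{-1}TQ\le1$, whence $\sup_{0\le t\le T}\|\Lambda(V)(t)\|_{\X^{r+1}}\le \ve^{-1}\P_r+1$, which is the assertion.

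There is no genuinely hard analytic step here; all the substance was already carried out in Proposition \ref{invariantprop}, and the content of this corollary is the bookkeeping of constants. The one point that needs care, and which I expect to be the only real obstacle, is the nesting of balls: the self-map ball of radius $\ve^{-1}\P_r+1$ must sit inside the larger ball of radius $\ve^{-2}E_0^{r+1}$ on which Proposition \ref{invariantprop} is valid, which is exactly what forces the extra smallness of $\ve$ (depending on $M_0$ and $\oE_0^r$) in step (i) and explains why $\P_r$ is allowed to depend on $\ve$. A secondary bookkeeping point is keeping the dependence on $L_0$ and on $\sup_{[0,T]}(\|\xve\|_r+\|V\|_{\X^r})$ expressed purely in terms of $M_0$ and $\oE_0^r$, which again follows from Sobolev embedding ($r\ge7$) together with Lemma \ref{initlem} and the time-integrated low-norm bounds used in the proof of Proposition \ref{invariantprop}.
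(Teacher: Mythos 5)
Your proposal is correct and follows essentially the same route as the paper's own proof: set $\P_r=\P_1$, absorb $L_0$ into a function of $M_0,\oE_0^r$ by Sobolev embedding, shrink $\ve$ so that the ball of radius $\ve^{-1}\P_r+1$ sits inside the ball of radius $\ve^{-2}E_0^{r+1}$ on which Proposition \ref{invariantprop} applies, and then choose $D_r''$ to make the $\ve^{-1}T\P_2$ term at most $1$. The ball-nesting point you flag as the only delicate step is exactly the step the paper handles by ``taking $\ve$ smaller if needed.''
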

\begin{proof}
  Let $D_r, D_r', \P_1, \P_2$ be as in Proposition \ref{invariantprop}.
  Take $\ve$, $T$ small enough that \eqref{epsstart} holds.
  Let $T^*\!\!, \P_1, \P_2$ be as in Proposition \ref{invariantprop}. By
  Sobolev embedding and the elliptic estimate \eqref{enthellbd}, we have
  that $L_0 \leq  C_0(M_0, \oE_0^r)$, and we take
  $D_r'' = D_r'( M_0, C_0, \oE).$ Now set $\P_r = \P_1$.
  Taking $\ve$ smaller if needed,
  the right-hand side of \eqref{invbd} is smaller than $\ve^{-2}E_0^{r+1}\!\!$,
  and so if $V\!$ satisfies \eqref{invbd} for $T\! \leq \!T^*\!\!$,
  then Proposition \ref{invariantprop} applies and so:
  \begin{equation}
   {\sup}_{\,0 \leq t \leq T\,}
   ||\Lambda(V) (t)||_{\X^{r+1}}
   \leq \ve^{-1} \P_1 + T \big( \ve^{-1}
   \P_2(E_0^{r+1}, e_0^r,\W_r^0,\ve^{-1}\P_1 + 1)\big).
  \end{equation}
  We now take $T$ small enough that this last factor is 1, which
  gives the result.\qedhere
\end{proof}

We now take $T$ small enough that \eqref{lwptassump} holds and define:
\begin{equation}
 \C^{r+1}(T) = \big\{ V\!: [0,T] \!\times \!\Omega \to \R^3 \big|
 V \textrm{ satisfies } \eqref{admissible} \textrm{ and }
 {\sup}_{\, 0\leq t \leq T\,} ||V(t)||_{\X^{r+1}} \!\leq \ve^{-1} \P_r(e_0^{r},
 E_0^{r+1})\! + \!1\big\}.
 \label{Crdef}
\end{equation}
Corollary
\ref{invcor} and Lemma \ref{initlem} imply that $\Lambda :\C^{r+1}(T) \to
\C^{r+1}(T)$.
We now want to show that $\Lambda$ has a fixed point
in $\C^{r+1}(T)$ for $T$ taken small enough.
We start with:
\begin{prop}
  \label{contcor}
  Fix $r \!\geq \! 7$. There
   is a polynomial $\P_3\! =\! \P_3(\oE^r_0, \ve^{-1})$ so that if $T$ satisfies
   \eqref{lwptassump}  then:
 \begin{equation}
  {\sup}_{\,0 \leq t \leq T\,} ||\Lambda(\Vu)(t) - \Lambda(\Vw) (t)||_{\X^r}
  \leq \ve^{-1} T \P_3
  ||\Vu - \Vw||_{\X^r},\quad\text{for}\quad
 \Vu, \Vw \!\in \C^{r+1}(T).
  \label{contbd}
 \end{equation}
\end{prop}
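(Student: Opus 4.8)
The plan is to estimate $\Lambda(\Vu) - \Lambda(\Vw)$ directly from its definition
\[
 \Lambda^i(\Vu)(t,y) - \Lambda^i(\Vw)(t,y) = -\int_0^t \delta^{ij}\big(\paveu_j h_I - \pavew_j h_{\II}\big)(s,y)\, ds
 -\int_0^t \delta^{ij}\big(\paveu_j \phi_I - \pavew_j \phi_{\II}\big)(s,y)\, ds,
\]
where $h_J = h[V_J]$, $\phi_J = \phi[V_J]$, and $\xve_J$, $\paveu$, $\pavew$ are the smoothed flows and derivatives associated to $\Vu, \Vw$. Since $\Lambda(\Vu) - \Lambda(\Vw)$ vanishes at $t = 0$, each $D_t^k$ with $k \geq 1$ of it equals $-D_t^{k-1}(\paveu h_I - \pavew h_{\II}) - D_t^{k-1}(\paveu \phi_I - \pavew \phi_{\II})$, and the $\X^r$ norm of the difference is controlled by $T$ times the supremum over $[0,T]$ of $\sum_{k+\ell \leq r}\big(||\paveu h_I - \pavew h_{\II}||_{k,\ell} + ||\paveu \phi_I - \pavew \phi_{\II}||_{k,\ell}\big)$ together with the trivial bound $||\Lambda(\Vu)(t) - \Lambda(\Vw)(t)||_{H^{r-1}(\Omega)} \leq T \sup_{[0,T]}\big(||\paveu h_I - \pavew h_{\II}||_{H^{r-1}} + ||\paveu \phi_I - \pavew \phi_{\II}||_{H^{r-1}}\big)$.

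The two differences are then handled by the difference estimates already established. For the enthalpy term, I would invoke Corollary \ref{mainwavecor2}: its hypotheses are met because $\Vu, \Vw \in \C^{r+1}(T)$ already guarantees (via Lemma \ref{lownorms} and the bound \eqref{invbd}) that the a priori assumptions \eqref{ubd2} and \eqref{Lassumpwave} hold with constants depending only on $M_0$, $L_0$, $\oE_0^r$ and $\ve^{-1}$, so $\eqref{Tgr}$ holds once $T D_r''(M_0,\oE_0^r,\ve) \leq 1$ as in \eqref{lwptassump}. Then \eqref{enthwavediff} and \eqref{enthdiff} give
\[
 \sup_{[0,T]}\Big( ||\paveu h_I - \pavew h_{\II}||_{r-1} + \sum_{k \leq r} ||D_t^k(h_I - h_{\II})||_{L^2}\Big)
 \leq \D_r\, T \sup_{[0,T]}\big( ||\Vu - \Vw||_{\X^{r+1}} + ||\xveu - \xvew||_{H^r} + ||\xveu - \xvew||_{C^4_{x,t}}\big).
\]
For the potential, Theorem \ref{diffellphi full thm} with $k + \ell = r$ together with Theorem \ref{diffellphi tan thm} controls $||D_t^{k-1}\paveu\phi_I - D_t^{k-1}\pavew\phi_{\II}||_{H^\ell(\Omega)}$ by $D_r\big(||\rho_I - \rho_{\II}||_{r-1} + (||\xveu - \xvew||_{H^r} + ||\Vu - \Vw||_{\X^r})||\rho_{\II}||_{r-1}\big)$; here $\rho_J = \rho(h_J)$, so $||\rho_I - \rho_{\II}||_{r-1}$ is bounded by a polynomial in $L_0$, $M$ and $\oE_0^r$ times $||h_I - h_{\II}||_{r-1}$ (using the mean value theorem and the product estimate), and $||\rho_{\II}||_{r-1}$ is likewise controlled by the a priori bounds, all independent of the difference. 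Finally, the smoothed-coordinate differences are estimated directly from \eqref{xvedef}: since $\xveu - \xvew = \int_0^t \ssm(\Vu - \Vw)\, ds$, boundedness of $\ssm$ on Sobolev spaces (Lemma after \eqref{smoothing}) and on $C^k$ gives $||\xveu(t) - \xvew(t)||_{H^r(\Omega)} + ||\xveu(t) - \xvew(t)||_{C^4_{x,t}(\Omega)} \leq C\,\ve^{-c}\, T \sup_{[0,T]} ||\Vu - \Vw||_{\X^r}$ for an appropriate power determined by the smoothing estimates (the extra $D_t$ derivatives needed for the $C^4_{x,t}$ norm are controlled using that $D_t$ commutes with the time integral, contributing $D_t(\Vu - \Vw)$, i.e. an $\X^r$ quantity).

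Collecting everything, all right-hand sides are of the form (constant depending on $M_0, L_0, \oE_0^r, \ve^{-1}$) $\times\, T \times ||\Vu - \Vw||_{\X^r}$, and the outer time integral in the definition of $\Lambda$ contributes one more factor of $T$; absorbing constants into a polynomial $\P_3 = \P_3(\oE_0^r, \ve^{-1})$ and noting that one of the two factors of $T$ is kept explicit while the other is bounded by $\eqref{lwptassump}$, one obtains $\sup_{[0,T]} ||\Lambda(\Vu)(t) - \Lambda(\Vw)(t)||_{\X^r} \leq \ve^{-1} T\, \P_3\, ||\Vu - \Vw||_{\X^r}$, which is \eqref{contbd}. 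The main obstacle I anticipate is bookkeeping: making sure that at the top level $k + \ell = r$ the difference estimates for $\phi$ and $h$ really only need $\Vu, \Vw$ in $\X^{r+1}$ (not $\X^{r+2}$) — this is precisely why the $\phi$-difference estimate \eqref{diffellphi full} is stated with the tangential norm $||\T\xve||$ on the loss term and why Lemma \ref{dtphidiffnew}/Corollary \ref{mainwavecor2} pay one extra derivative only on $\varphi_{\II}$ (here $h_{\II}$, $\phi_{\II}$), whose norms are already under control — and verifying that the $C^4_{x,t}$ and $C^3_{x,t}$ norms of $\xveu - \xvew$ appearing in those lemmas are genuinely dominated by $T$ times an $\X^r$ norm of $\Vu - \Vw$, not an $\X^{r+1}$ norm, so that no circularity is introduced.
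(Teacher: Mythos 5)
Your proposal is correct and follows essentially the same route as the paper: the same decomposition of $D_t^k(\Lambda(\Vu)-\Lambda(\Vw))$ into the enthalpy difference and the potential difference, the same key inputs (Corollary \ref{mainwavecor2} for $\paveu h_I-\pavew h_{\II}$, Theorem \ref{diffellphi full thm} for $\paveu\phi_I-\pavew\phi_{\II}$, the mean-value bound $|\rho_I-\rho_{\II}|\lesssim|h_I-h_{\II}|$, and the a priori bounds on $||h_J||_{r+1,0}+||\pave h_J||_r$ from Corollary \ref{invcor}), and the same mechanism for gaining the factor of $T$ from $\xveu-\xvew=\int_0^t\ssm(\Vu-\Vw)$ and the vanishing of the difference at $t=0$. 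Your closing remarks on where the $\ve^{-1}$ enters (via $||\ssm f||_{H^r}\lesssim\ve^{-1}||f||_{H^{r-1}}$) and on keeping the difference norm at the $\X^r$ level are exactly the bookkeeping points the paper's proof relies on.
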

\begin{proof}
 First, note that by Corollary \ref{mainwavecor2}
 and Corollary \ref{invcor}, under our hypotheses we have:
 \begin{equation}
  {\sup}_{\,0 \leq t \leq T}
  (||h_J(t)||_{r+1,0} +
  ||\pave h_J(t)||_{r})
  \leq  \C_1(M_0, \oE_0^{r}, \ve^{-1} ) + 1,
  \label{bounds}
 \end{equation}
 for $J = I, II$ and some positive continuous function $\C_1$.
 For $k \geq 1$, we have:
   \begin{equation}
   D_t^k (\Lambda(\Vu) - \Lambda(\Vw))
   = D_t^{k-1} (\paveu h_I - \pavew h_{II})+D_t^{k-1} (\paveu \phi_I - \pavew \phi_{II}).
 \end{equation}

 By \eqref{enthdiff} combined with \eqref{enthwavediff}, we have:
  \begin{equation}
   ||D_t^{k-1}{}_{\!} (\paveu h_I(t) - \pavew  h_{I\!I}(t))||_{H^{r{}_{\!}-{}_{\!}k}}\!
   \leq\! \C_r' \Big(||\Vu(t)-\Vw(t)||_{\X^{r-1}}+||\xve_I(t)-\xve_{{}_{\!}I\!I}(t)||_{H^{r{}_{\!}-{}_{\!}1}}
    + \!\int_0^t ||\Vu({}_{\!}\tau{}_{\!})-\Vw({}_{\!}\tau{}_{\!})||_{\X^{r}}\!\Big),
  \end{equation}
  and by \eqref{diffellphi full}:
  \begin{equation}
  || D_t^{k-\!1\!} (\paveu \phi_I\! - \pavew  \phi_{I\!I})||_{H^{r\!-\!k}}\leq
  \mathcal{C}_r'P(||h_I||_{r-1}, ||h_{\II}||_{r-1})(||\Vu-\Vw||_{\X^{r}}+||\xve_I-\xve_{{}_{\!}I\!I}||_{H^{r}})||h_I\!-h_{I\!I}||_{r-1}
 ,
  \end{equation}
  where $\C_r' = C_r'\big(M_0, \oE_0^r, \sup_{0 \leq t\leq T}(||\xve_I(t)||_{H^r(\Omega)}
  + ||\xve_{I\!I}(t)||_{H^r(\Omega)}),
  ||V_{\!I}||_{\X^{r+1}(T)},||V_{\!I\!I}||_{\X^{r+1}(T)}
  \big)$ and where we have used that $|\rho_I - \rho_{II}| =
  |\rho(h_I) - \rho(h_{II})| \leq C|\rho'| |h_I - h_{II}|$.
  Combining these with the simple estimate:
  \begin{align}
    {\sup}_{\,0 \leq t \leq T}
    ||\Vu(t) - \Vw(t)||_{\X^{r}}
  + ||\xve_I - \xve_{II}||_{H^{r-1}}
  &\leq 2T \,{\sup}_{\,0 \leq t \leq T}
  ||\Vu(t) - \Vw(t)||_{\X^{r+1}},
  \end{align}
  and using \eqref{bounds}, we have \eqref{contbd}.
\end{proof}
\begin{proof}[Proof of Theorem \ref{vlwpthm}]
 With notation as in Corollary \ref{invcor} and Proposition
  \ref{contcor},  take $\ve$ so
  $\ve/\P_3\!\leq\! 1/\!D_r''$ and set
  \begin{equation}
   T_\ve = 2^{-1}\ve/{\P_3} .
  \end{equation}
  If $T \leq T_\ve$, by Lemma \ref{initlem} and Corollary \ref{invcor},
  for any $V \in \C^{r+1}(T)$, we have that $\Lambda(V) \in \C^{r+1}(T)$.
  Moreover,
  by Proposition \ref{contcor}, for any $\Vu, \Vw \in\C^{r+1}(T)$
  we have that:
  \begin{equation}
    ||\Lambda(\Vu) - \Lambda(\Vw)||_{\X^r(T)}
    \leq  2^{-1}||\Vu - \Vw||_{\X^r(T)}.
   \label{contbd2}
  \end{equation}

With $V_k^\ve$ defined by \eqref{vkepsdef},
define the following sequence:
\begin{align}
 V^{(0)}(t,y) &= {\sum}_{k = 0}^r V_k^\ve(y){t^k}\!/{k!} ,&&
 V^{(N)}(t,y) = \Lambda(V^{(N-1)})(t,y), \quad N \geq 1.
 {}
\end{align}
Noting that $D_t^s V^{(0)}|_{t = 0} = V_k^\ve$,
by Corollary \ref{invcor} and Lemma \ref{initlem}, the
sequence $V^{(N)}$ is well defined and $V^{(N)} \in \C^{r+1}(T)$ for all $N$.
Let
$d_0 = \sup_{\,0 \leq t \leq T} ||\Lambda(V^{(0)})(t) - V^{(0)}(t)||_{\X^r}$.
The estimate \eqref{contbd2} implies:
\begin{equation}
 ||\Lambda(V^{(N)}) - \Lambda(V^{(M)})||_{\X^r(T)} \leq
  2^{1-\min{(M,N)}}d_0,
\end{equation}
for $0 \leq T \leq T_\ve$.
In particular the sequence $V^{(N)}$ is a Cauchy sequence in $\X^r(T)$.
Let $V \in \X^r$ denote the limit. Because the norms $||V^{(N)}||_{\X^{r+1}(T)}$
are uniformly bounded we conclude that $V \in \C^{r+1}(T)$ as well.

The estimate \eqref{normbd} now follows from the definition of
$\C^{r+1}(T)$ and the bounds in Corollary \ref{mainwavecor2}.
\end{proof}

\section{Energy estimates}
\label{energy}
In the previous section, we constructed a solution to the smoothed
problem on a time interval of size $O(\ve)$.  In this section,
we prove the basic energy estimates which control Sobolev norms of the velocity
uniformly in $\ve$. We will not
apply these energy estimates to the solutions $V$ constructed in the previous
section directly but instead to a sequence $V_N$ which converges to $V$ in
an appropriate norm, and so we write our energy estimate in terms of remainders
which we expect to converge to zero. In section \ref{higherordersec} we
prove estimates for the differentiated problem which will be used to show
that these remainders do converge to zero. Finally, in section \ref{uniform}
we implement this strategy and prove the desired estimates for $V, h$ satisfying
Euler's equations.

The below energy estimates are slightly cumbersome, because
we need to control a fractional number of derivatives of the
solution $V$, and since
$\Omega$ does not admit a global
coordinate system, we will need to apply fractional derivatives in each
coordinate patch separately. This unfortunately obscures the
idea behind the estimates so let us explain how the energy estimates work
in a simple case.
The gravitational potential will not enter into the energy estimates
to highest order, so we will ignore it for the moment. Let $T$ be a vector
field which is tangential at the boundary. Using the formula $[T,\pave_i] = -(\pave_i T \xve^k)\pave_k$,
and applying $T$ to the smoothed-out Euler's equations and the
continuity equation \eqref{resultofproj}, we have:
\begin{equation}
 D_t T V^i - \delta^{ij}\pave_j \big( (T \xve^k) \pave_k h - T h\big) =
 - T \xve^k (\delta^{ij}\pave_j\pave_k h),
 \qquad D_t T e(h)  + \div T V = -
 (\pave_i T\xve^\ell) \pave_\ell V^i
 \label{linearized}
\end{equation}
Multiplying the first equation by $T V^i \kve$ and integrating over
$\Omega$ gives:
\begin{equation}
 \frac{1}{2}\frac{d}{dt} ||T V(t)||_{L^2(\Omega)}^2
 - \int_{\Omega} \pave_i \big( (T \xve^k) \pave_k h - Th\big)
 T V^i\, \kve dy = \text{ lower-order terms}.
 \label{}
\end{equation}
Integrating by parts and using that $Th = 0$ on $\pa \Omega$,
we have:
\begin{equation}
 - \int_{\Omega} \pave_i \big( (T \xve^k) \pave_k h - Th\big)T V^i\, \kve dy
 = -\int_{\pa \Omega} (T \xve^k) (T V^i)N_i \pave_k h \kve dy
 +\int_{\Omega} ((T \xve^k) \pave_k h - T h)\pave_i T V^i\, \kve dy.
 \label{heuristicibp}
\end{equation}
We now manipulate the boundary term.
Recall that $\xve = \ssm x = \sm \sm x$ and that $\sm$ is
symmetric with respect to the surface measure $dS$.
Ignoring the commutator $[T, \sm]$
for the moment, the boundary term is:
\begin{multline}
- \int_{\pa \Omega} \sm (T \sm x^k) (T V^i) \pave_k h N_i\,\widetilde{\nu} dS(y)
= -\int_{\pa \Omega} (T\sm x^k)(T \sm V^i) \pave_k h N_i\, \widetilde{\nu} dS(y)\\
 -\int_{\pa \Omega} (T \sm x^k)\big[
\sm \big( T V^i \pave_k h N_i  \widetilde{\nu} \big) - (\sm T V^i) (\pave_k h N_i\widetilde{\nu})\big]
\,dS(y).
 \label{heuristicbdy}
\end{multline}
Because $h = 0$ and $N\cdot \pave h < 0$ on $\pa \Omega$, it follows that
$\pave_k h= -N_k |\pave h|^{-1}$, so the first term here is
the time derivative of a positive term to highest order:
\begin{equation}
- \int_{\pa \Omega}\!\!\!\! (T\!\sm x^k)(T\! \sm V^i) \pave_k h N_i \widetilde{\nu} dS
 = \frac{1}{2}\frac{d}{dt}\!
 \int_{\pa \Omega} \!\!\!(T\!\sm x^k)(T\! \sm x^i) N_k N_i |\pave h| \widetilde{\nu} dS
 + \int_{\pa \Omega}\!\!\!\! (T\!\sm x^k)(T\! \sm x^i)
 D_t (N_k N_i |\pave h|\widetilde{\nu}) dS.
\end{equation}
Since $\sm$ is a convolution with a function supported on a ball of size
$\sim \ve$, one should
expect that the second term in \eqref{heuristicbdy} is bounded
by $C\ve ||T \xve||_{L^2(\pa \Omega)} ||T V||_{L^2(\pa \Omega)}$, with the constant
depending on bounds for $\pave h$.

Using the second equation in \eqref{linearized}, the interior term is:
\begin{equation}
 \int_\Omega ((T \xve^k) \pave_k h - T h) \pave_i T V^i\, \kve dy
 = \int_\Omega ((T \xve^k) \pave_k h - Th) T D_t e(h)\, \kve dy+
 \int_\Omega ((T\xve^k) \pave_k h - Th) (\pave_i T \xve^\ell) (\pave_\ell V^i)\kve dy.
 \label{}
\end{equation}

This leads to an energy identity of the form:
\begin{multline}
 \frac{d}{dt} (||T V(t)||_{L^2(\Omega)}^2 + ||T \sm x(t)\cdot N||_{L^2(\pa \Omega)}^2
 + ||\sqrt{e'(h)}T h||_{L^2(\Omega)}^2)
 \\
 \lesssim
 \bigg|\int_\Omega ((T\xve^k) \pave_k h - Th) (\pave_i T \xve^\ell) \pave_\ell V^i\kve dy
 \bigg| + \ve ||T\xve(t)||_{L^2(\pa \Omega)}||TV(t)||_{L^2(\pa \Omega)} +
 \text{ lower order terms. }
 \label{}
\end{multline}
By the elliptic estimates \eqref{ellft1} the second term on the left controls
$||T\sm x||_{L^2(\pa \Omega)}$ provided we have estimates for
$\div T\sm x, \curl T\sm x$, and so one can think of this as controlling
$||\sm x||_{H^{3/2}(\Omega)}$ and thus $||\xve||_{H^{3/2}(\Omega)}$,
by the trace inequality. The term on the
right-hand side looks problematic because we do not control
$||\xve||_{H^2(\Omega)}$, however we will be able to ``integrate half a derivative
by parts'' in this term using \eqref{inthalf} to control it.

To control the term $\ve ||TV(t)||_{L^2(\pa \Omega)}$, it turns out that
it can be bounded by $||\gamma \cdot TV||_{L^2(\pa \Omega)}$
provided we control the divergence and curl appropriately in the
interior, where $\gamma$ is the projection to the tangent
space at the boundary. We will see that $\ve d/dt ||\gamma \cdot TV||_{L^2(\pa \Omega)}$
is lower-order, uniformly in $\ve$, because to highest order $D_t \gamma \cdot TV \sim
\gamma \cdot T \pave h \sim \gamma \cdot \pave T h + T\pa \xve \cdot \pa h$.
The first term is zero because $h = 0$ on $\pa \Omega$ and using the
smoothing property \eqref{smoothingproperty} the second term is $O(\ve^{-1}$).

\subsection{Higher order energy estimates}
Let $\alpha$ be a vector field on $\Omega$ and $q$ a function with
$q = 0$ on $\pa \Omega$. We suppose that the following hold:
\begin{alignat}{2}
  {|D_t \pave q|}/{|\pave q|} + |q| \leq K, && \quad \text{ on } \pa \Omega,
  \label{qchiassump1}\\
  |D_t^k\pa_y^I q| \leq K, && \quad
  k + |I| \leq 3, \quad \text{ in } \Omega.
 \label{qchiassump2}
\end{alignat}
As in earlier sections, we will also fix a strictly positive
function $\eprime$ so that $|\eprime'| \lesssim \eprime$. We also let $x \in C^1([0,T];
H^r(\Omega))$ for $r \geq 7$ be a given vector field and let $V = D_t x$.

With notation as in \eqref{eq:simplifiedtangentialnotation} and $\fdhm\!$
defined by \eqref{fdmudef},
for each $T^I\! \in\! \T^s$\!\!, we
define $E^I_{\mu} \!= E^I_{\mu,1}\! + E^I_{\mu,2}$, where:
\begin{align}
 E^{I}_{\mu,1}
 &= \frac{1}{2}\int_\Omega \delta^{ij} ( T^I \fdhm \alpha_i)
 ( T^I \fdhm \alpha_j) \kve dy
 + \frac{1}{2} \int_\Omega \eprime| T^I  \fdh_\mu q|^2 \kve dy, \label{e1def}\\
 E^I_{\mu,2} &=
\frac{1}{2} \int_{\pa \Omega} (T^I \fdhm \sm x^i)
(T^I \fdhm \sm x^j)
N_i N_j |\pave q|\, \widetilde{\nu} dS,
 \label{e2def}
\end{align}
as well as:
\begin{equation}
 E^I = {\sum}_{\mu = 0}^N E^I_{\mu,1} + E^I_{\mu,2}, \qquad
 E^s = {\sum}_{|I| \leq s} E^I.
 \label{Edefs}
\end{equation}
Here $\widetilde{\kappa} dy=d\widetilde{x}$ is the volume form on $\widetilde{\mathcal{D}}_t$ and $\widetilde{\nu}$ is such that $\widetilde{\nu}$ times the surface measure on $\pa \Omega$ is equal to the surface measure on $\pa\widetilde{\mathcal{D}}_t$
in the $\widetilde{x}$ coordinates.
We will also need to control the time derivative of
\begin{equation}
 E^I_{\mu, \ve} = \int_{\pa \Omega} \gamma^{ij}
 (\fdhm T^I \alpha_i)(\fdhm T^I \alpha_j) \,\widetilde{\nu}  dS,
 \qquad E^I_\ve = {\sum}_{\mu = K}^N E^I_{\mu, \ve},
 \qquad E^s_\ve = {\sum}_{|I| \leq s} E^I_\mu.
 \label{Edefs2}
\end{equation}
Here, $\gamma^{ij} = A^i_a A^j_b \gamma^{ab}$ where $\gamma^{ab}$
is the projection to $T(\pa \Omega)$.
The fact that the fractional derivative operator $\fdhm$ appears
on the ``outside'' in this definition and the ``inside''
in the definition of the $E^I_{\mu}$ is just to make the computation
simpler and has no special significance; note that the commutator
$[T^I, \fdhm]$ is an operator of lower order by Lemma \ref{fracalg}.
The quantity $E^s_\ve$ will appear in our calculation weighted with
a power of $\ve$ and will be needed in order to show that we have a solution
to the problem \eqref{smpbmdef} on a time interval independent of $\ve$.

We write $T^I = S T^J$ where $S \in \T$ and $|J| = s-1$ and define:
\begin{multline}
 R^I = {\sum}_{\mu = 1}^N
 ||D_t T^I \fdhm \alpha - \pave \big( (T^I \fdhm \xve^j) (\pave_j q)
 +  T^I\fdhm q\big)||_{L^2(\Omega)}\\
 + ||\fdhm\big( \eprime D_t T^J q - (\pave_i T^J \xve^j)\pave_j V^i
 + \div T^J \alpha\big)||_{L^2(\Omega)}
 + ||D_t T^I \fdhm x - T^I \fdhm \alpha||_{L^2(\pa \Omega)},
 \label{Rsdef}
\end{multline}
and $R^s = \sum_{|I| \leq s} R^I$. We will ultimately take
$\alpha = V$ and $q = h$, in which case using \eqref{linearized}, one
expects the first two terms here to be lower order. See Section \ref{higherordersec}.
We also define:
\begin{equation}
 R^I_\ve = {\sum}_{\mu = 1}^N ||\gamma\cdot\fdhm T^I D_t \alpha - (\fdhm T^I \gamma)\cdot
 \pave q||_{L^2(\pa \Omega)},
 \qquad R^s_\ve = {\sum}_{|I| \leq s} R^I_\ve.
 \label{Rsdef2}
\end{equation}

The main result of this section is:
\begin{theorem}
  Suppose that $\xve$ satisfies the assumption \eqref{ubd2},
  that $\alpha$ and $q$ are given as above and that the
  assumptions \eqref{qchiassump1}-\eqref{qchiassump2}
   hold. With $E^I, E^s, E^I_\ve, E^s_\ve$
  defined as in \eqref{Edefs}-\eqref{Edefs2}
   and $R^s, R^s_\ve$ defined by \eqref{Rsdef}-\eqref{Rsdef2},
   there is a continuous function $C = C(M, K)$ so that:
\begin{multline}
 \frac{d}{dt} E^I\!  \leq C
\sqrt{\!E^s} \Big(\!R^s +
||\T^{s-1}\! \alpha||_{H^{(1_{\!},1_{\!}/2)}(\Omega)}
 + ||\T^{s-1}\!\! \sm x||_{H^1(\Omega)} + ||\sm x||_{H^{s+1_{\!}/2}(\pa \Omega)}
 + ||\T^s\! q||_{H^1(\Omega)} + ||D_t \!\T^s\!q||_{L^2(\Omega)}
 \!\Big)\\
 + C E^s + C \ve ||\alpha||_{H^{s+1_{\!}/2}(\pa \Omega)}
 ||\sm x||_{H^{s+1_{\!}/2}(\pa \Omega)},
  \qquad |I| \leq s,
 \label{enident}
\end{multline}
\begin{equation}
 \frac{d}{dt} E^{I}_\ve \leq C\sqrt{E^s_\ve} \big( R_\ve^s + \ve^{-1}
  ||\sm x||_{H^{s+1_{\!}/2}(\pa \Omega)} \big).
 \label{enident2}
\end{equation}

In particular, writing $E = E^s + \ve^2 E^s_\ve$ and $R = R^s + \ve^2 R_\ve^s$,
 we have:
\begin{multline}
 \frac{d}{dt} E
 \leq C
 \sqrt{\!E}\Big(\!
 R + ||\T^{s-1} \alpha||_{H^{(1_{\!},1_{\!}/2)}(\Omega)}
  + ||\T^{s-1}\!\! \sm x||_{H^1(\Omega)} + ||\sm x||_{H^{s+1/2}(\pa \Omega)}
  + ||\T^s q||_{H^1(\Omega)} + ||D_t \T^sq||_{L^2(\Omega)}\!\Big)\\
  + C_s E + \ve ||\alpha||_{H^{s+1_{\!}/2}(\pa \Omega)} ||\sm x||_{H^{s+1_{\!}/2}(\pa \Omega)}.
\end{multline}
\end{theorem}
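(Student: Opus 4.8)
We prove the first identity \eqref{enident} for a fixed multi-index $I$ with $|I|=s$ and then sum over $|I|\le s$ as in \eqref{Edefs}; the bound \eqref{enident2} comes from a shorter variant of the same computation, and the combined estimate is obtained by adding $\ve^2$ times \eqref{enident2} to \eqref{enident} and using $\ve^2 E^s_\ve\le E$ together with $\ve\le 1$. Throughout, whenever $D_t$ falls on one of the geometric weights — $\kve$ (where $D_t\kve$ is lower order by \eqref{ubd2}), $\widetilde\nu$, the extended normal $N$, the density $|\pave q|$ on $\pa\Omega$ (controlled by \eqref{qchiassump1}), or $\eprime$ (using $|\eprime'|\lesssim\eprime$ and \eqref{qchiassump2}) — the resulting term is absorbed into $CE^s$, as is every commutator $[T^I,\fdhm]$, which is of genuinely lower order by Lemma \ref{fracalg}.

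Differentiating $E^I_{\mu,1}$ from \eqref{e1def}, the two leading contributions are $\int_\Omega\delta^{ij}(D_tT^I\fdhm\alpha_i)(T^I\fdhm\alpha_j)\,\kve\,dy$ and $\int_\Omega\eprime(D_tT^I\fdhm q)(T^I\fdhm q)\,\kve\,dy$. In the first I substitute $D_tT^I\fdhm\alpha_i=\pave_i\big((T^I\fdhm\xve^k)\pave_kq-T^I\fdhm q\big)+\mathrm{rem}$, the $L^2(\Omega)$-norm of the remainder being one of the terms of $R^I$ in \eqref{Rsdef} (cf. the first equation of \eqref{linearized}); by Cauchy--Schwarz and \eqref{e1def} its contribution is $\lesssim\sqrt{E^s}\,R^s$. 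I then integrate the $\pave$-term by parts using the identity $\pa_a(\kve A^a_i)=0$; because $T^I$ and $\fdhm$ are tangential and $q=0$ on $\pa\Omega$, so is $T^I\fdhm q$, and one is left with the interior term $-\int_\Omega\big((T^I\fdhm\xve^k)\pave_kq-T^I\fdhm q\big)\div(T^I\fdhm\alpha)\,\kve\,dy$ and the boundary term $\int_{\pa\Omega}(N\cdot\pave q)(T^I\fdhm\xve\cdot N)(T^I\fdhm\alpha\cdot N)\,\widetilde\nu\,dS$, exactly as in \eqref{heuristicibp}.

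For the boundary term I use $\xve=\sm\sm x$ and the self-adjointness of $\sm$ on $L^2(\pa\Omega)$ to move one factor of $\sm$ onto the other factor, as in \eqref{heuristicbdy}; the commutators $[\sm,T^I\fdhm]$, $[\sm,N_i]$ and $[\sm,|\pave q|\widetilde\nu]$ are $O(\ve)$ by the smoothing estimates of Section \ref{smoothingsec} and produce the term $\ve\|\alpha\|_{H^{s+1/2}(\pa\Omega)}\|\sm x\|_{H^{s+1/2}(\pa\Omega)}$, since $\fdhm T^I$ contributes $s+1/2$ tangential derivatives on $\pa\Omega$. What remains is $\int_{\pa\Omega}(N\cdot\pave q)(T^I\fdhm\sm x\cdot N)(T^I\fdhm\sm\alpha\cdot N)\,\widetilde\nu\,dS$; replacing $T^I\fdhm\sm\alpha\cdot N$ by $D_t(T^I\fdhm\sm x\cdot N)$ up to the remainder in the last term of $R^I$ (and up to $D_tN$, which is lower order), and using that $q=0$ on $\pa\Omega$ forces $\pave q=(N\cdot\pave q)N$ with $N\cdot\pave q<0$ by the sign condition — so that $E^I_{\mu,2}\ge 0$ — this boundary term equals $-\frac{d}{dt}E^I_{\mu,2}+CE^s+\mathrm{rem}$ and therefore cancels against $\frac{d}{dt}E^I_{\mu,2}$ when the full energy $E^I=\sum_\mu(E^I_{\mu,1}+E^I_{\mu,2})$ is differentiated; this is the purpose of the boundary energy \eqref{e2def}. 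The remaining interior term, together with the $q$-contribution above, is handled by writing $T^I=ST^J$ with $S\in\T$, $|J|=s-1$, and substituting the continuity-equation remainder $\div T^J\alpha=(\pave_iT^J\xve^j)\pave_jV^i-\eprime D_tT^Jq+\mathrm{rem}$ from the second line of \eqref{Rsdef} into $\div(T^I\fdhm\alpha)=S\fdhm\div T^J\alpha+\text{l.o.}$: the $\eprime D_tT^Jq$ piece cancels the $q$-contribution up to commutators, while the $(\pave_iT^J\xve^j)\pave_jV^i$ piece gives a term quadratic in top-order tangential derivatives of $\xve$ which is the only one needing more regularity of $\xve$ than the energy controls. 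Using the ``integrate half a tangential derivative by parts'' device \eqref{inthalf} to redistribute the fractional and tangential derivatives between the two factors, this term is dominated by $\sqrt{E^s}$ times $\|\T^{s-1}\alpha\|_{H^{(1,1/2)}(\Omega)}+\|\T^{s-1}\sm x\|_{H^1(\Omega)}+\|\sm x\|_{H^{s+1/2}(\pa\Omega)}+\|\T^sq\|_{H^1(\Omega)}+\|D_t\T^sq\|_{L^2(\Omega)}$, with everything not absorbed there going into $R^s$ or $CE^s$; summing over $|I|\le s$ gives \eqref{enident}.

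For \eqref{enident2} one differentiates $E^I_\ve$ from \eqref{Edefs2}, whose leading term is $2\int_{\pa\Omega}\gamma^{ij}(\fdhm T^ID_t\alpha_i)(\fdhm T^I\alpha_j)\,\widetilde\nu\,dS$; using $R^I_\ve$ from \eqref{Rsdef2} to replace $\gamma\cdot\fdhm T^ID_t\alpha$ by $(\fdhm T^I\gamma)\cdot\pave q$ and noting that $\gamma\cdot\pave q=0$ on $\pa\Omega$ (since $\pave q\parallel N$ there), every surviving term carries a derivative of $\gamma=A^i_aA^j_b\gamma^{ab}$, hence of $\xve=\sm\sm x$, and each such derivative of $\sm$ costs a factor $\ve^{-1}$ by \eqref{smoothingproperty}; this gives $\frac{d}{dt}E^I_\ve\le C\sqrt{E^s_\ve}\big(R^s_\ve+\ve^{-1}\|\sm x\|_{H^{s+1/2}(\pa\Omega)}\big)$ after Cauchy--Schwarz, and the combined estimate follows since $\ve^2\frac{d}{dt}E^s_\ve\le C\ve^2\sqrt{E^s_\ve}\,R^s_\ve+C\ve\sqrt{E^s_\ve}\|\sm x\|_{H^{s+1/2}(\pa\Omega)}\le C\sqrt{E}\,R+C\sqrt{E}\,\|\sm x\|_{H^{s+1/2}(\pa\Omega)}$. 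The hard part will be the quadratic top-order $\xve$ term in the interior: making the half-derivative integration by parts \eqref{inthalf} genuinely close with exactly the norms on the right of \eqref{enident}, and tracking the many commutators — in particular that the commutators of $\sm$ on $\pa\Omega$ are exactly $O(\ve)$ rather than worse, and that $[T^I,\fdhm]$, $[D_t,T^I]$ and $[\div,T^I]$ do not degrade the estimate — which requires working patch by patch with the coordinate definitions of $\sm$, $\fdhm$ and $\T$ from Section \ref{smoothingsec} and the differentiated equations of Section \ref{higherordersec}.
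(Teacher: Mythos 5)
Your proposal follows the paper's own proof essentially step for step: the same differentiation of $E^I_{\mu,1}$ with the first remainder of \eqref{Rsdef}, the same integration by parts producing \eqref{afteribp}, the same use of the symmetry of $\sm$ to convert the boundary term into $-\tfrac{d}{dt}E^I_{\mu,2}$ plus commutators, the same substitution of the continuity remainder into $\div T^I\fdhm\alpha$ followed by the half-derivative integration by parts \eqref{inthalf}, and the same treatment of $E^I_\ve$ via $\gamma^{ij}\pave_iq=0$ and the smoothing gain $\|S\sm f\|_{L^2}\lesssim\ve^{-1}\|f\|_{L^2}$. The only imprecision worth flagging is that you describe all the $\sm$-commutators on $\pa\Omega$ as $O(\ve)$, whereas only the multiplication commutator \eqref{jprodrule} carries the factor $\ve$ (and is the source of the $\ve\|\alpha\|_{H^{s+1/2}}\|\sm x\|_{H^{s+1/2}}$ term); the operator commutator $[T^I\fdhm,\sm]$ is instead handled by the derivative gain in \eqref{jdiffrule} together with an integration by parts, as you correctly anticipate in your closing caveat.
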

These estimates appear to lose half a derivative since
$E^s\!$ only controls $||\T^{s\!} q||_{H^{(0_{\!},1_{\!}/2)}(\Omega)}$ and not
$||\T^{s\!} q||_{H^{1\!}(\Omega)}$, but $q$ will satisfy a wave equation which gains enough
regularity to close these estimates, see Lemma \ref{coercivitylem1}.
\begin{proof}
  We will prove that ${d}(E^I_{\mu,1} + E^I_{\mu,2})/{dt} $ is bounded
  by the right-hand side of \eqref{enident}.
  We have:
 \begin{multline}
  \frac{d}{dt} E_{\mu,1}^I
  = \int_{\Omega} \delta^{ij} \Big(  T^I \fdhm D_t \alpha_i
  - \pave_i\big(  (T^I \fdhm  \xve^k)(\pave_k q)
  +  T^I \fdhm  q\big) \Big)
  ( T^I  \fdhm \alpha_j)\, \kve dy\\
  + \int_\Omega
  \eprime (D_t T^I \fdhm q)(T^I\fdhm q)\kve dy
  +
  \int_\Omega \delta^{ij} \pave_i \Big( ( T^I\fdhm  \xve^k)(\pave_k q)
  -T^I \fdhm q\Big) ( T^I\fdhm  \alpha_j)\,\kve dy\\
  +
  \int_\Omega \Big( \delta^{ij} (T^I \fdhm   \alpha_i)( T^I \fdhm \alpha_j)
  + \eprime | T^I\fdhm q|^2\Big)
  (D_t \kve)
  + (D_t \eprime) |T^I \fdhm q|^2 \, \kve dy.
  \label{dte}
 \end{multline}
 The first and the last terms and are bounded by \eqref{enident}.
 After integrating by parts, using Green's formula \eqref{Green's formula}, and
 the facts that
 $ T^I\fdhm q \!= \! 0$  and $\pave_k q\! =\! -N_k |\pave q|$ on $\pa \Omega$,
 the second term on the second line is:
 \begin{equation}
  -\!\int_{\pa \Omega} \!\!\!( T^I \!\fdhm \xve^k)( T^I \!\fdhm \alpha_j) N_{\!j} N_k |\pave q|\, \widetilde{\nu} dS
  - \!\int_\Omega \! ( T^I\!\fdhm  \xve^k(\pave_k q) -
  \fdhm T^I \!q)(\div T^I\!\fdhm  \alpha)
  \, \kve dy.
  \label{afteribp}
 \end{equation}
 In order to handle the interior term,
  we need to perform a few manipulations.
 We start by  writing $\div T^I \!\fdhm \!\alpha = \big(\eprime D_t T^I\!\fdhm \!q
  - (\pave_i T^I\! \fdhm \xve^j)\pave_j V^i + \div T^I\! \fdhm \!\alpha\big)
  - \eprime D_t T^I\! \fdhm \!q + (\pave_i T^I \!\fdhm \xve^j)\pave_j V^i\!\!.$
  Inserting this into \eqref{afteribp}, we get:
  \begin{multline}
    \int_\Omega \big(T^I \fdhm \xve^k (\pave_k q) +
    T^I\fdhm q\big) \big(\eprime  D_t T^I \fdhm q
    - (\pave_i T^I \fdhm \xve^j)\pave_j V^i + \div T^I \fdhm \alpha\big) \, \kve dy\\
    + \int_\Omega \big( T^I \fdhm \xve^k (\pave_k q)\big)(\eprime D_t T^I \fdhm q)
    \,\kve dy
    + \int_\Omega \big( T^I \fdhm q\big) (\pave_i T^I \fdhm \xve^k \pave_kV^i)
    \, \kve dy\\
    + \int_\Omega \big( T^I \fdhm \xve^k \big) \big(\pave_i T^I \fdhm \xve^j)
    \pave_jV^i\, \kve dy.
\end{multline}
We now want to integrate half a derivative by parts so
write $T^I \!= S T^J$, $|J| \!=\! s\!-\!1$. Since $\pave_i\! =\! A_{\m i}^a \pa_a$,
we have:
\begin{equation}
 ||[S, \div] T^J\! \fdhm \alpha||_{L^{\!2}(\Omega)}
 \!\leq C(M) ||\pa T^J\! \fdhm \alpha||_{L^{\!2}(\Omega)},
 \qquad\!\!
 ||[S, \pave] T^J\! \fdhm \xve||_{L^{\!2}(\Omega)}
 \!\leq C(M) ||\pa T^J\! \fdhm \xve||_{L^{\!2}(\Omega)}.
\end{equation}
These terms are bounded by  \eqref{enident}.
Writing $F_{\!1\!} \!= \!\eprime D_t T^J\! \fdhm \!q -(\pave_iT^J\!\fdhm\xve^j)\pave_{j\!} V^i\!
+ \div T^J\! \fdhm \!\alpha$,
 applying \eqref{inthalf}
and the  Leibniz rule \eqref{app:alg2}, we have:
\begin{multline}
 \Big| \int_\Omega (-T^I \fdhm \xve^k (\pave_k q) + \eprime T^I \fdhm q)
 S (F_1) \, \kve dy\Big|
 \\
 \leq CK (|| T^I \fdhm \xve ||_{1/2} +
 ||T^I \fdhm q||_{1/2})
 ||\sqrt{\eprime} D_t T^J \fdhm q-
 (\pave_i T^J \fdhm \xve^j)\pave_jV^i + \div T^J \fdhm \alpha ||_{1/2},
\end{multline}
where $||\cdot||_{1/2} = ||\cdot||_{H^{(0,1/2)}(\Omega)}$,
as well as:
\begin{multline}
 \Big| \int_\Omega (T^I \fdhm\xve^k) (\pave_k q)
 S (\eprime D_t T^J \fdhm q) \kve dy\Big|
 + \Big|\int_\Omega
 (T^I\fdhm q)S ( \pave_i T^J \fdhm \xve^j(\pave_jV^i)\, \kve dy
 \Big|\\
 \leq CK (||\eprime D_t T^J \fdhm q||_{1/2} + ||\eprime T^I \fdhm q||_{1/2})
 (||T^I \fdhm \xve||_{1/2} + ||\pave T^J \fdhm \xve||_{1/2}),
\end{multline}
and finally:
\begin{equation}
 \Big| \int_\Omega (T^I \fdhm\xve^k) (\pave_k q) S (\pave_i T^J \fdhm \xve^j(\pave_jV^i))
 \, \kve dy\Big|
 \leq C ||T^I \fdhm \xve||_{1/2}||\pave T^J \fdhm \xve||_{1/2}.
\end{equation}
Since $||\fdhm\! f||_{1/2} \!\lesssim\!||f||_{H^1(\Omega)}$
 using  Lemma
 \ref{fracalg}, the terms with $\xve$ are bounded by \eqref{enident}.

It remains to control the boundary term in \eqref{afteribp}. Recalling
the definition of $E^{I\!,\mu}_2$ from \eqref{Edefs2},
we have:
\begin{multline}
 \frac{d}{dt}E^{I}_{\mu,2} - \int_{\pa \Omega}
(T^I\fdhm \xve^k)(T\fdhm \alpha^j\big)
N_j N_k |\pave q|\,\widetilde{\nu} dS\\
= \int_{\pa \Omega}
(T^I\fdhm \sm x^k )
(T^I\fdhm \sm D_t x^j)
- (T^I\fdhm \xve^k)(T^I \fdhm \alpha^j) N_j N_k |\pave q|\, \widetilde{\nu} dS\\
+ \frac{1}{2}\int_{\pa \Omega} (T^I \fdhm \sm x^k)(T^I \fdhm \sm x^j)
D_t (N_j N_k |\pave q|\, \widetilde{\nu} dS).
 \label{otherterm}
\end{multline}
The last term is bounded by \eqref{enident}
by the assumption \eqref{qchiassump1}. To handle the second term, we
recall that $\xve = \ssm x$,  $\ssm =\sm^2$ and that $\sm$ is symmetric with respect to
the measure $dS$, so:
\begin{multline}
 \int_{\pa \Omega} (T^I\fdhm \xve^k)(T^I \fdhm \alpha^j) N_j N_k |\pave q|\, \widetilde{\nu}
 dS
 = \int_{\pa \Omega} (T^I \fdhm \sm x^k)(T^I \fdhm \sm \alpha^j)
 N_j N_k |\pave q|\, \widetilde{\nu} dS\\
 + \int_{\pa \Omega} \bigtwo( ([T^I\fdhm, \sm] \sm x^k)(T^I \fdhm \alpha^j) +
 (T^I \fdhm \sm x^k)([\sm, T^I\fdhm] \alpha^j)
 \bigtwo) N_j N_k |\pave q| \, dS\\
 +
 \int_{\pa \Omega} (T^I \fdhm \sm x^k)
 \bigtwo(
 \big(\sm (T^I\fdhm \alpha^j N_j N_k |\pave q|) - (\sm T^I \fdhm \alpha^j)N_j N_k
 |\pave q|\big)\bigtwo)\, dS
 \label{}
\end{multline}
The first term  cancels the first term from \eqref{otherterm}.
Integrating by parts in the first term on the second line and  using
\eqref{jdiffrule} and \eqref{jprodrule}, the terms on the second and third line are bounded
by the right side of \eqref{enident}.

Finally, we control the time derivative of $E^{I}_{\mu,\ve}$. We have:
\begin{equation}
 \frac{d}{dt} E^{I}_{\mu,\ve}
 = \int_{\pa \Omega} (\fdhm T^I \alpha_i)(\fdhm T^I  D_t \alpha_j) \gamma^{ij}\,\widetilde{\nu} dS
 + \frac{1}{2} \int_{\pa \Omega} (\fdhm T^I \alpha_i)(\fdhm T^I  \alpha_j)
 D_t (\gamma^{ij}\widetilde{\nu} )dS.
\end{equation}

The second term is bounded by  \eqref{enident2}.
The idea is that $\gamma^{ij} \fdhm T^I  \pave_i q $ is lower
order because $q \!=\! 0$ on $\pa \Omega$, the operators $T^I\!\!, \fdhm\!$
are tangential, and we are multiplying by the tangential projection
$\gamma$. We have:
\begin{equation}
 ||\fdhm \gamma^{ij}||_{H^k(\pa \Omega)}
 \leq C(M) ||\pa_y \xve||_{H^{k+1/2}(\pa \Omega)},
 \qquad ||\gamma^{ij}||_{H^k(\pa \Omega)} \leq
 C(M) ||\pa_y \xve||_{H^k(\pa \Omega)},
 \label{gammabds}
\end{equation}
which follows from  $\gamma^{ij}\! = \!\gamma^{ab}\! A^i_{\, a} A^j_{\, b}$,
the fractional product rule \eqref{app:alg2}, the formula \eqref{dinv},
and interpolation.

We write $\gamma^{ij\!} \fdhm T^I \!\pave_i q =
\fdhm \!\big( \gamma^{ij} T^I\!\pave_i q) + [\gamma^{ij}, \fdhm] T^I\! \pave_i q
$. The $L^2(\pa \Omega)$ norm  of the
second term here is bounded by $C(\!M{}_{\!}) ||\xve||_{H^{3}(\pa \Omega)}
 ||T^I\!\pave q||_{L^2(\pa \Omega)}$ by \eqref{gammabds} and the fractional
 product rule \eqref{app:alg2}. We then write
 $\gamma^{ij} T^I \pave_i q = T^I(\gamma^{ij}\pave_i q)
 - (T^I \gamma^{ij})\pave_i q + \sum_{J + K = I, |J|, |K| \leq |I|-1}
 (T^J\gamma^{ij})(T^K \pave_i q)$. Applying $\fdhm$ to these terms, we see
 that the first term is zero because $q = 0$ on $\pa \Omega$, while:
 \begin{equation}
  ||\fdhm \big( (T^I \gamma^{ij}) \pave_i q\big)||_{L^2(\pa \Omega)}
  \leq ||\fdhm T^I \gamma^{ij}||_{L^2(\pa \Omega)}
   ||\pave q||_{H^3(\pa \Omega)}
   \leq C(M) ||\fdhm T^I \pa_y \xve||_{L^2(\pa \Omega)}
   ||\pave q||_{H^3(\pa \Omega)}.
 \end{equation}
 Since $|I| \!=\! s$ this last term is higher-order than
 \eqref{enident2}, so we write $T^I\! =\! S T^J$,
 $S \!\in\! \T$ and use the smoothing property
 $||S \sm f||_{L^2(\pa \Omega)} \!\lesssim\! \ve^{-1} ||f||_{L^2(\pa \Omega)}$
 to bound it by $\ve^{-1} ||\fdhm T^J\pa_y \sm x||_{L^2(\pa \Omega)}
 \!\lesssim \!\ve^{-1} ||\pa_y \sm x||_{H^{s-1/2}(\pa \Omega)}$.

 It now remains to control
 $||\fdhm \big((T^J\gamma^{ij})(T^K \pave_i q)\big)||_{L^2(\pa \Omega)}$
 and for this
 we use the Leibniz rule \eqref{app:alg2} in a few different ways. First, when
 $s \leq 5$ we bound the result by $||T^J \gamma||_{H^2(\pa \Omega)}
 ||\fdhm T^K \pave q||_{L^2(\pa \Omega)}$ and this is bounded by
 the right-hand side of \eqref{enident2}.
 If $s \geq 6$ and  $|K| \leq s-3$ we bound this by $||\fdhm T^J \gamma^{ij}||_{L^2(\pa \Omega)}
 ||T^K \pave q||_{H^2(\pa \Omega)}$ and if $|K|\! \geq\! s \!-\!2$ then since
 $s\! \geq \!6$ and $|J|, |K|\!\leq\! s\!-\!1$, we have $|J| \!\leq\! s\!-\!3$ and so we bound
 it by $||T^J \gamma^{ij}||_{H^2(\pa \Omega)} ||\fdhm T^K \pave q||_{L^2(\pa \Omega)}$.
 In each of these cases, applying \eqref{gammabds} we wind up with terms which are
 bounded by the right-hand side of \eqref{enident2}. This completes the
 proof.
\end{proof}

\section{The higher-order equations}
\label{higherordersec}
Fix $r \geq 5$ and let $V \in \X^{r+1}(T)$ be the solution to the smoothed-out
Euler equations \eqref{smpbmdef} constructed in Section \ref{vlwpsec}.
Recall that if $h$ is the corresponding enthalpy, then we have:
\begin{align}
 V &\in L^\infty(0,T; H^r(\Omega)),&&
 D_t^{k} V \in L^\infty(0,T;  H^{r+1-k}(\Omega)), && k = 1,..., r+1,
 \label{regbds1} \\
 D_t^{r+1} h &\in L^\infty(0,T; L^2(\Omega)), &&
 D_t^k \pave h \in L^\infty(0, T; H^{r-k}(\Omega)),
 && k= 0,...,r.
 \label{regbds2}\noeqref{regbds2}
\end{align}

In this section it is convenient to assume that we have a bit more regularity
of $\xve$. We will assume that:
\begin{equation}
 |A^i_{\m a}| + |A^a_{\m i}| + ||\xve||_{H^6(\Omega)} \leq M',
 \quad \text{ on } \Omega.
 \label{strongM}
\end{equation}
The reason we want this assumption is that the fractional product
rule \eqref{app:alg2} involves Sobolev norms. We will use notation similar
but not identical
to that in Sections \ref{ellsec} and \ref{gravitysection} and let $C_0, C_s, s\geq1,
C_s'$ denote
a continuous function of the following arguments:
\begin{equation}
  C_0 = C_0(M'),\qquad
 C_s = C_s(M', ||\xve||_{H^{s}(\Omega)}),\qquad
 C_s' = C_s'(M', ||\xve||_{H^s(\pa \Omega)}).
 {}
\end{equation}

In order to prove that we have uniform energy estimates for $V$,
 we need to show
that we can control $R^s$ and $R^s_\ve$ in terms of the energy. The first
step is the following:
\begin{lemma}
  \label{lowerorder}
  Let $r \geq 5$ and let $V \in\X^{r+1}(T)$ be the solution to
  the smoothed-out Euler equations constructed in the previous section.
  Suppose that \eqref{strongM} holds.
  Fix $1 \leq s \leq r-1$,
  let $T^I \in \T^s$ and write $T^I = S T^J$ for $S \in \T, |J| = s-1$.
  For each
  $\mu,\nu = 0,..., N$, we have:
  \begin{multline}
   ||D_t T^I \fdhm V - \pave\big( (T^I \fdhm \xve^j)(\pave_j h)
   + T^I \fdhm h\big)||_{L^2(\Omega)} \\
   \leq C_s
    ||\xve||_{H^{s}(\Omega)}(||\pave h||_{H^s(\Omega)}
    + ||\pave h||_{H^2(\Omega)})
    + ||\fdhm T^I \pave \phi(h)||_{L^2(\Omega)}, \label{Vest}
 \end{multline}
   \begin{multline}
   ||\fdhn\big( e'(h) D_t T^J\fdhm h - \pave_i \big( (T^J\fdhm  \xve^j)(\pave_j V^i)
   + T^J \fdhm V^i\big)\big)||_{L^2(\Omega)}\\
   \leq C_s
   ||\xve||_{H^{s+1}(\Omega)}
   \big( ||V||_{H^{(s-1,1/2)}(\Omega)}
   + P(||h||_{s-1})||\pave h||_{H^s(\Omega)} \big),\label{divest}
  \end{multline}
   \begin{equation}
   ||\gamma_{ij} D_t \fdhm T^I V^j +
   (\fdhm  T^I \gamma_{ij})\delta^{jk}\pave_k h ||_{L^2(\pa \Omega)}
   \leq C_s' (||\T \xve||_{H^{s+1/2}(\pa\Omega)} +1 )
   ||\pave h||_{H^{s+1/2}(\pa \Omega)}.
   \label{bdyest}
  \end{equation}
\end{lemma}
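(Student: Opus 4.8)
The plan is to differentiate the smoothed Euler equation \eqref{smpbmdef}, $D_t V^i = -\delta^{ij}\pave_j h - \delta^{ij}\pave_j\phi$, and the continuity equation \eqref{resultofproj}, $e'(h)D_t h + \div V = 0$, by the tangential operators appearing in \eqref{Rsdef}--\eqref{Rsdef2}, commute the differential operators into the order dictated by the right-hand sides of \eqref{Vest}--\eqref{bdyest}, and check that the leading commutator terms reproduce exactly the explicit ``structural'' terms in those estimates, so that what is left over is genuinely of lower order. Two commutation facts drive this. First, every vector field in $\T$ has time-independent coefficients and $\fdhm$ is a fixed Fourier multiplier in each chart, so $D_t$ commutes with $\fdhm$, with each $T^I$, and hence with $T^I\fdhm$; in particular $D_t T^I\fdhm V = T^I\fdhm D_t V$ with no error. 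Second, for a spatial derivative one uses $[T,\pave_i] = -(\pave_i T\xve^k)\pave_k$ (cf. \eqref{linearized}) together with the fractional product rule \eqref{app:alg2} for passing $\fdhm$ across the coefficients $A^a_i$ (here the extra regularity \eqref{strongM} is what makes \eqref{app:alg2} applicable): commuting $T^I\fdhm$ past $\pave_i$ produces $\pave_i\big((T^I\fdhm\xve^k)\pave_k(\cdot) + T^I\fdhm(\cdot)\big)$, up to the interior term $(T^I\fdhm\xve^k)\pave_i\pave_k(\cdot)$ (as in \eqref{linearized}) and a remainder in which the $s$ tangential and the half tangential derivatives are distributed among several factors of $\xve$ and the argument.

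For \eqref{Vest} I would apply $T^I\fdhm$ to Euler's equation; the left side becomes $D_t T^I\fdhm V^i$, and on the right the $h$-term, after the commutation above, becomes the structural term $\delta^{ij}\pave_j\big((T^I\fdhm\xve^k)\pave_k h + T^I\fdhm h\big)$ — the top-order contributions $(\pave_i T^I\fdhm\xve^k)\pave_k h$ from the commutator cancelling against those produced by differentiating the structural term, as in \eqref{linearized} — plus a remainder, while the $\phi$-term, after reordering $T^I$ and $\fdhm$ (the reordering error being lower order by Lemma \ref{fracalg}), contributes $\fdhm T^I\pave_i\phi$, which is kept as is. Every summand of the remainder is a product of tangential derivatives of $\xve$, at least one factor carrying a derivative and of total tangential order $\le s$, against tangential derivatives of $\pave h$ of total order $\le s$; the Sobolev product estimates bound its $L^2(\Omega)$ norm by $C_s\|\xve\|_{H^s(\Omega)}\big(\|\pave h\|_{H^s(\Omega)} + \|\pave h\|_{H^2(\Omega)}\big)$, the $H^2$ norm entering as the low-regularity factor in the case where $\xve$ carries most of the derivatives. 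The estimate \eqref{divest} is obtained the same way from $e'(h)D_t h = -\pave_i V^i$ differentiated by $T^J\fdhm$ and then by $\fdhn$: moving $T^J\fdhm$ through $e'(h)D_t$ yields $e'(h)D_t T^J\fdhm h$ plus a commutator whose leading piece is $e''(h)(T^J\fdhm h)(D_t h)$, and moving it through $\pave_i$ on the right yields the structural term $\pave_i\big((T^J\fdhm\xve^j)\pave_j V^i + T^J\fdhm V^i\big)$ plus a remainder; collecting everything, the discrepancy is a sum of products of up to $s$ tangential derivatives of $\xve$ (the extra derivative relative to \eqref{Vest} coming from the divergence) against $\le s-1$ tangential derivatives of $V$, plus terms of the schematic form $P(\|h\|_{s-1})$ times tangential derivatives of $\pave h$; applying $\fdhn$ and \eqref{app:alg2} produces the bound $C_s\|\xve\|_{H^{s+1}(\Omega)}\big(\|V\|_{H^{(s-1,1/2)}(\Omega)} + P(\|h\|_{s-1})\|\pave h\|_{H^s(\Omega)}\big)$, the half-derivative being absorbed into the fractional norms.

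For \eqref{bdyest} the key point is that $h$ vanishes on $\pa\Omega$, so $\pave h$ is conormal there and its tangential projection vanishes: $\gamma^{ij}\pave_j h = 0$ on $\pa\Omega$, and since $T^I,\fdhm$ are tangential this persists after differentiation, $\fdhm T^I(\gamma_{ij}\delta^{jk}\pave_k h)\big|_{\pa\Omega} = 0$. Using Euler's equation, $\gamma_{ij}D_t\fdhm T^I V^j = \gamma_{ij}\fdhm T^I D_t V^j = -\gamma_{ij}\fdhm T^I(\delta^{jk}\pave_k h) - \gamma_{ij}\fdhm T^I(\delta^{jk}\pave_k\phi)$; replacing $\gamma_{ij}\fdhm T^I(\delta^{jk}\pave_k h)$ by $-[\fdhm T^I,\gamma_{ij}](\delta^{jk}\pave_k h)$ via the vanishing above, the leading commutator term is precisely the explicit $(\fdhm T^I\gamma_{ij})\delta^{jk}\pave_k h$ in \eqref{bdyest}. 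What is left is (i) lower-order products of tangential derivatives of $\gamma$ — hence, by \eqref{gammabds}, of $\pa_y\xve$ on $\pa\Omega$ — against tangential derivatives of $\pave h$, bounded by $C_s'\|\T\xve\|_{H^{s+1/2}(\pa\Omega)}\|\pave h\|_{H^{s+1/2}(\pa\Omega)}$ using \eqref{app:alg2}, and (ii) the gravitational term $\gamma_{ij}\fdhm T^I(\delta^{jk}\pave_k\phi)$, which involves $s+1/2$ tangential derivatives of $\pave\phi$ on $\pa\Omega$ and is controlled, via the trace inequality and the elliptic estimates of Section \ref{gravitysection} (Theorems \ref{main theorem, ell est of phi} and \ref{r-0.5 estimate for phi}), by low-order norms of $\rho = \rho(h)$, which for $s\le r-1$ lie strictly below top order and are absorbed into the factor $C_s'(\|\T\xve\|_{H^{s+1/2}(\pa\Omega)} + 1)$.

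The main obstacle is the bookkeeping: in each of the three computations one must isolate exactly those leading terms that reproduce verbatim the explicit terms in the statement (so that the energy remainders $R^s, R^s_\ve$ really are remainders), while carrying the noncommutative fractional tangential operators $\fdhm$ through chart-dependent coefficients and invoking \eqref{app:alg2} with the correct Sobolev exponents and chart labels $\mu,\nu$. The secondary difficulty is the conceptual one in \eqref{bdyest}, namely recognizing that the ostensibly top-order boundary contribution of $\phi$ is, by $h|_{\pa\Omega}=0$ and the elliptic regularity of $\phi$, in fact of lower order.
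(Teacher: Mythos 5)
Your proposal follows the same overall strategy as the paper's proof: apply the tangential and fractional operators to the smoothed Euler and continuity equations, commute through $\pave$ using $[T,\pave_i]=-(\pave_i T\xve^k)\pave_k$ and the fractional Leibniz rule \eqref{app:alg2}, identify the explicit ``structural'' terms that appear in the statement, and bound the remainders by Sobolev product estimates with the usual case analysis on how the derivatives are distributed. The organization is slightly different from the paper in two minor but real ways. For \eqref{Vest}, the paper first establishes the estimate \eqref{engoal1} with $\fdhm$ kept \emph{outside} the whole expression and then passes to \eqref{Vest} by a $[T^I,\fdhm]$ commutator estimate (Lemma \ref{fracalg}), whereas you apply $T^I\fdhm$ directly; both are fine. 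For \eqref{divest}, the paper instead proves a stronger estimate with $\pa_y T^J$ in place of $\fdhn T^J\fdhm$ and then invokes $\|\fdhm f\|_{L^2}\le\|f\|_{H^1}$, which avoids repeated applications of the fractional product rule; your direct route via \eqref{app:alg2} reaches the same place with a bit more bookkeeping.

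Your treatment of \eqref{bdyest} is actually more careful than the paper's written proof. The paper states ``$\gamma^{ij}D_t v_j|_{\pa\Omega}=\gamma^{ij}\pave_j h|_{\pa\Omega}=0$'', but since $D_t V^j=-\delta^{jk}(\pave_k h+\pave_k\phi)$, the tangential projection of $D_t V$ is $-\gamma^{jk}\pave_k\phi$ on $\pa\Omega$, \emph{not} zero; the paper's displayed calculation only handles the $h$ part and silently drops the gravitational contribution. You explicitly isolate the term $\gamma_{ij}\fdhm T^I\delta^{jk}\pave_k\phi$ and propose to control it by the trace inequality and the elliptic estimates for $\phi$. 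That is the right idea. Two small caveats: (i) the claim that these $\rho$-norms are ``absorbed into $C_s'$'' is not literal --- by \eqref{cprimes}, $C_s'$ depends only on $M'$ and $\|\xve\|_{H^s(\pa\Omega)}$, and the literal right-hand side of \eqref{bdyest} has only $\|\pave h\|_{H^{s+1/2}(\pa\Omega)}$; making this rigorous requires either recasting the $\phi$ contribution in terms of $\pave h$ norms (using $\rho=\rho(h)$ and the fact that only lower-order bulk norms of $h$ appear for $s\le r-1$) or slightly enlarging the stated right-hand side, which the surrounding energy arguments can in any case absorb; (ii) the sign in \eqref{bdyest} is $+(\fdhm T^I\gamma_{ij})\delta^{jk}\pave_k h$ while the remainder $R^I_\ve$ in \eqref{Rsdef2} has a minus; with the plus the leading commutator term doubles rather than cancels, so your ``precisely the explicit term'' phrasing reflects the cancellation that occurs for the sign in \eqref{Rsdef2}. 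Neither affects the validity of the argument.
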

We recall that the terms on the left-hand sides of \eqref{Vest}-\eqref{divest}
are needed to control $\E^s$. We will eventually show
that $\E^s$ controls $||V||_{H^{(s,1/2)}(\Omega)}, ||\pave h||_{H^s(\Omega)}$
and $||\xve||_{H^{s+1}(\Omega)}$. Similarly we will use the estimate
\eqref{bdyest} to control  $\E^s_\ve$ and we will eventually show
that this controls $\ve^{-1} ||V||_{H^{s+1}(\Omega)}$ and
$\ve^{-1} ||\pave h||_{H^{s+1}(\Omega)}$.

The terms on the right-hand side of \eqref{bdyest} are
higher-order and to deal with them we  need to use tangential smoothing,
which introduces a term behaving like $\ve^{-1}\!\!$. Since we will estimate
$\ve \sqrt{\E^s_\ve}$ this will not cause issues.

The reason we do not commute $\fdhm$ all the way through on the
left-hand side of \eqref{divest} is that it would generate an error term
involving $||V||_{H^{s+1}(\Omega)}$ which can only be controlled
in terms of $\ve^{-1} \sqrt{\E^s_\ve}$, which would not allow us to close
the energy estimates in the next sections on a time interval independent of
$\ve$.

\begin{proof}
  We start by noting that if $V, h$ satisfy \eqref{regbds1}-
  \eqref{regbds2}, then all of the quantities on the right-hand
  sides of \eqref{Vest}-\eqref{bdyest} are finite.
  Therefore, by an approximation argument it suffices to prove
  this result assuming that $V, h$ are smooth.
We first show that
\begin{equation}
  ||\fdhm (D_t T^I V - \pave( (T^I \xve^j)(\pave_j h) )
  + T^I h)||_{L^2}
  \leq C_s ||\xve||_{H^{s+1}}
  (||\pave h||_{H^s} + ||\pave h||_{H^2})
  + ||\fdhm T^I \pave \phi(h)||_{L^2},
 \label{engoal1}
\end{equation}
where $H^s=H^s(\Omega)$ and $L^2=L^2(\Omega)$.
Note that by Lemma \ref{fracalg} we have:
\begin{equation}
 ||[T^I, \fdhm] D_t V||_{L^2(\Omega)} \leq C ||D_t V||_{H^s(\Omega)}
 = C ||\pave h||_{H^s(\Omega)},
 \qquad
 || \pave [T^I, \fdhm] \xve||_{L^2(\Omega)}
 \leq C(M) ||\xve||_{H^{s+1}(\Omega)},
 {}
\end{equation}
for $|I| \leq s$,
and so combining this with \eqref{engoal1} gives \eqref{Vest}.
To prove \eqref{engoal1}, we start by computing $T^I (D_t V + \pave h)$. The vector
fields $T^I$ commute with $D_t$ and so we just need to compute
$T^I \pave h$.
Using \eqref{dinv}, we have:
\begin{equation}
 T^I \pave h -
 \pave \big( T^I h - T^I \xve^j (\pave_j h)\big)
 = (T^I \xve^j)\pave\pave_j h
 + \sum (\pave T^{I_1} \xve)\cdots
 (\pave T^{I_\ell} \xve )(T^{I_{\ell+1}}\pave   h),
 \label{TIexp}
\end{equation}
where the sum is taken over all indices with $|I_1| + ...
+ |I_{\ell+1}| \leq |I|$ so that
$|I_j| \leq s-1$ for  $j\! \leq\! \ell$ and $|I_{\ell+1}| \geq \!1$.

To control the first term on the right-hand side of \eqref{TIexp} we
apply the fractional product rule \eqref{app:alg2}:
\begin{equation}
 ||\fdhm (T^I {}_{\!}\xve \cdot \pave^2{}_{\!} h)||_{L^2(\Omega)}
 \leq C ||T^I{}_{\!} \xve||_{H^2(\Omega)} (||\pave^2{}_{\!} h||_{L^2(\Omega)}
 + ||\fdhm \pave^2 {}_{\!}h||_{L^2(\Omega)})
 \leq C(M) ||\xve||_{H^5(\Omega)} ||\pave h||_{H^{(1{}_{\!},{}_{\!}1/{}_{\!}2)}(\Omega)},
\end{equation}
for $|I| \!= \!s\! \leq \!3$
as required.
If instead $|I| = s \geq 4$,
we use the fractional product rule \eqref{app:alg2} to bound:
\begin{equation}
 ||\fdhm (T^I \xve \cdot \pave^2 h)||_{L^2(\Omega)}
 \leq C||\fdhm T^I \xve||_{L^2(\Omega)}
 ||\pave h||_{H^3(\Omega)},
 {}
\end{equation}
and this is also bounded by the right-hand side of \eqref{Vest}.
It just remains to bound the terms in the sum in \eqref{TIexp}.
Suppose for each $j \leq \ell$, $|I_j| \leq 2$. By the Leibniz
rule \eqref{app:alg2}, we have:
\begin{equation}
 ||\fdhm\big( (\pave T^{I_1}\xve) \cdots (\pave T^{I_{\ell}}\xve)
 (T^{I_{\ell+1}} \pave h)\big)||_{L^2(\Omega)}
 \leq C|| (\pave T^{I_1} \xve)\cdots (\pave T^{I_\ell}\xve)||_{H^2(\Omega)}
 ||\fdhm \pave h||_{L^2(\Omega)}.
 {}
\end{equation}
Since $H^2(\Omega)$ is an algebra, the first factor is bounded
by $C(M) ||\xve||_{H^{5}(\Omega)}^\ell$. This just leaves the case
that there is at least one $j \leq \ell$ with $|I_j| \geq 3$. However note
that in this case we must have that $s \geq 4$ and $|I_{j'}| \leq s-4$ for
$j' \leq \ell +1$, $j' \not= j$, and so using \eqref{app:alg2}
and the algebra property of $H^2(\Omega)$ we have:
\begin{equation}
  ||\fdhm\big( (\pave T^{I_1}\xve) \cdots (\pave T^{I_{\ell}}\xve)
  (T^{I_{\ell+1}} \pave h)\big)||_{L^2(\Omega)}
  \leq
  C(M)||\fdhm \pave T^{I_j}\xve||_{L^2(\Omega)}
  ||\xve||_{H^{s-1}(\Omega)}^{\ell-1}
 ||\pave h||_{H^{s-2}(\Omega)}.
 {}
\end{equation}
The first factor is bounded by $||\xve||_{H^{s+1}(\Omega)}$ since
$I_{j} \leq s-1$, and this completes the proof of \eqref{engoal1}.

The estimate \eqref{divest} is similar. We will actually prove the
slightly stronger estimate:
\begin{equation}
  || \pa_y \big(e'(h) D_t T^J \!h - (\pave_i T^J \xve^j)(\pave_j \ssm V^i)
  + \div T^J V\big)||_{L^2(\Omega)}
 \leq C_s
  \big(||\xve||_{H^s(\Omega)}\! + \!1) (||\pave h||_{H^s(\Omega)}\! +\!
  P(||\pave h||_{H^{s-1}(\Omega)})\big),
\end{equation}
which implies \eqref{divest} since $||\fdhm \!f||_{L^2(\Omega)}
\!\leq\! ||f||_{H^1(\Omega)}$.
We apply $\pa_y T^J\!$ to $e^{\prime\!}(h)D_t h \!+ \!\div\! V\!\! = \!0$.
We start with
\begin{equation}
 ||\pa_y T^J( e'(h) D_t h) - \pa_y( e'(h)D_t T^{J} h)||_{L^2(\Omega)}
 \leq C_s P(||\pave h||_{H^{s-2}(\Omega)})
 (||\pave h||_{H^{s-1}(\Omega)} + ||D_t h||_{H^{s-1}(\Omega)}),
 {}
\end{equation}
which follows from a straightforward modification of the proof
of Lemma \ref{gestlem}.

It just remains to control $[T^J, \div]V$.
We start by writing:
\begin{equation}
 T^J \pave_i V^i = \pave_i T^J V^i
 - (\pave_i T^J \xve^j)(\pave_j V^i)
 + \sum (\pave T^{J_1} \xve)\cdots (\pave T^{J_\ell} \xve)
 ( T^{J_{\ell+1}} \pave V),
\end{equation}
where the sum is over all $J_1 + \cdots J_{\ell+1} = J$
with $|J_{\ell+1}| \geq 1$ and $|J_j| \leq |J| - 1  =s-2$ for
each $j \leq \ell+1$.
Applying $\pa_y$,
it suffices to control the $L^2$ norms of:
\begin{equation}
  (\pave T^{J_1} \xve) (\pave T^{J_2} \xve) \cdots
  (\pa_y T^{J_{\ell+1}}\pave  V),
  \quad
  \text{ and }
  \quad
  (\pa_y \pave T^{J_1} \xve) (\pave T^{J_2} \xve) \cdots
  (T^{J_{\ell+1}} \pave V).
 \label{stuff}
\end{equation}
Estimates for these terms can be obtained in the same way as the
estimates we used above to control the sum in \eqref{TIexp}.
To control the first term in \eqref{stuff},
if $|I_j|  \leq 2$ for each $j \leq \ell$, then we use Sobolev embedding:
\begin{equation}
 ||(\pave T^{J_1} \xve)(\pave T^{J_2} \xve)\cdots
 (T^{J_{\ell+1}} V)||_{L^2(\Omega)}
 \leq C ||(\pave T^{J_1} \xve)(\pave T^{J_2} \xve)
 \cdots(\pave T^{J_\ell} \xve)||_{H^2(\Omega)}
 || \pa_y T^{J_{\ell+1}} \pave V||_{L^2(\Omega)},
 {}
\end{equation}
and the first factor here is bounded by $C(M')$ using the fact that
$H^2(\Omega)$ is an algebra. To control the second factor,
we write it as $ ||\pa_y T^{J_{\ell+1}} (u\cdot \pa_y V)||_{L^2(\Omega)}$
and then note that since $|J_{\ell+1}| \leq s-2$, we can bound
$||\pa_y T^{J_{\ell+1}} (u\cdot \pa_y V)||_{L^2(\Omega)}
\leq C(M) ||\xve||_{H^s(\Omega)} ||V||_{H^s(\Omega)}$ by using
similar arguments to the above. If there is a multi-index $I_j$ with
$|I_j| \geq 3$ then this forces $|I_{j'}| \leq s-4$ for each
$j' \not= j$ and so we put all the factors except $\pave T^{I_{j}} \xve$
into $L^\infty$, apply Sobolev embedding and argue as above.
To control the first type of term from \eqref{stuff} is similar, noting
that in this case there are no more than $2\! + \!(s\!-\!2)$ derivatives falling
on $\xve$ at any point.

We now prove \eqref{bdyest}. We apply $T^I$ to
$\gamma^{ij}D_t v_j|_{\pa \Omega} = \gamma^{ij}\pave_j h|_{\pa \Omega} = 0$ where
$v_i = \delta_{ij}V^j$, and we have:
\begin{equation}
 0 = (T^I \gamma^{ij})\pave_i h
 + \gamma^{ij} T^I \pave_i h + \sum
 (T^{I_1} \gamma^{ij})\cdots (T^{I_\ell} \gamma^{ij})
  (T^{I_{\ell+1}} \pave_i h),
\end{equation}
where $|I_1| + \cdots |I_{\ell+1}| \leq s, |I_{\ell + 1}| \geq 1,
|I_j| \leq s-1, j \leq \ell+1$.

We now recall that $\gamma^{ij} = \gamma^{ab} A^i_{\m a} A^j_{\m b}$ where
$\gamma^{ab} = \delta^{ab}  - N^aN^b$ and in particular $\gamma^{ab}$
is independent of $\xve$. Applying \eqref{dinv} repeatedly, it therefore
suffices to control the $L^2(\pa \Omega)$ norms of:
\begin{equation}
  \fdhm(\gamma^{ij} T^I \pave_i h), \qquad
 \fdhm \big((\pave T^I \xve) (\pave h)\big),
 \qquad
 \fdhm \big((\pave T^{I_1} \xve)\cdots (\pave T^{I_\ell} \xve)
 (T^{I_{\ell+1}} \pave h)
 \big),
 \label{gammaterms}
\end{equation}
with the same conditions on the multi-indices $I_1,..., I_{\ell+1}$
as above.

To deal with the first term in \eqref{gammaterms},
we just use the Leibniz rule \eqref{app:alg2} and control it by:
\begin{equation}
 ||\gamma^{ij}||_{H^2(\pa \Omega)} ||\fdhm T^I \pave h||_{L^2(\pa \Omega)}
 \leq C(M)||\xve||_{H^3(\pa \Omega)}||\fdhm T^I \pave h||_{L^2(\pa \Omega)},
 {}
\end{equation}
since $\gamma$ is quadratic in $A$, where we have use the fact that
$H^2(\pa \Omega)$ is an algebra. By the trace inequality this is
controlled by the right-hand side of \eqref{bdyest}.
To deal with the second term in \eqref{gammaterms}, when
$|I| = s \leq 2$ we use the Leibniz rule \eqref{app:alg2}
and control it by $||\pave T^I \xve||_{H^2(\pa \Omega)}
||\pave h||_{H^{1/2}(\pa \Omega)}$, and this first
factor is controlled by $C(M)||\xve||_{H^5(\pa\Omega)}
\leq C(M)||\xve||_{H^6(\Omega)}$ by the trace inequality.
If $|I| = s \geq 3$ we instead control it by
$||\fdhm \pave T^I \xve||_{L^2(\pa \Omega)} ||\pave h||_{H^2(\pa \Omega)}$,
which is bounded by $C(M)||\T \xve||_{H^{s+1/2}(\pa\Omega)}$
times $||\pave h||_{H^s(\pa \Omega)}$.

Estimates for the third term in \eqref{gammaterms} can be obtained in a
similar fashion. If $|I_j| \!\leq \!2$ for each $j \!\leq \!\ell$, then
\begin{equation}
 ||\fdhm \big( (\pave T^{I_1} \xve)\cdots
 (\pave T^{I_\ell} \xve) (T^{I_{\ell+1}} \pave h)||_{L^2(\pa \Omega)}
 \leq C(M) ||\xve||_{H^5(\pa \Omega)}^\ell ||\fdhm\pave h||_{H^{s-1}(\pa \Omega)},
 {}
\end{equation}
and by the trace inequality this is bounded by the right-hand side of
\eqref{bdyest}. If instead $|I_j| \geq 3$ for some $j \leq \ell$ then this
forces $s \geq 4$ and $|I_{j'}| \leq s-4$ for $j'\not = j$
and so the result is bounded by:
\begin{equation}
 ||\fdhm \pave T^{I_{j}} \xve||_{L^2(\pa\Omega)}
 ||\xve||_{H^{s}(\pa \Omega)}^{\ell-1}
 ||\pave h||_{H^{s-1}(\pa \Omega)}
 \leq C_s||\T \xve||_{H^{s-1/2}(\pa \Omega)} ||\pave h||_{H^{s-1}(\pa\Omega)}.
 \tag*{\qedhere}
\end{equation}
\end{proof}

\section{Uniform Energy estimates for the smoothed problem up to a fixed time}
\label{uniform}
We define:
\begin{equation}
\E^s = \K^s + {\sum}_{\mu = 0}^N {\sum}_{|I| \leq s} \E_\mu^{I}, \qquad
\E^s_\ve = \K_\ve^s + {\sum}_{|I| \leq s} \E^I_{\ve},
\end{equation}
where, with $\fdhm$ defined by \eqref{fdmudef}:
\begin{equation}
\K^s = {\sum}_{\mu = 0}^N ||\curl \fdhm V||_{H^{s-1}(\Omega)}^2,
\qquad
\K_\ve^s = || \curl \pa V||_{H^{s-1}(\Omega)}^2
+ ||\div \pa V||_{H^{s-1}(\Omega)}^2,
\end{equation}
and, with notation as in Sections \ref{smoothingsec} and \ref{tangsec}:
\begin{equation}
\E_{\!\mu}^{{}_{\!}I}\! =\!
\!\int_\Omega\! \delta_{ij} (T^I \!\fdhm V^i)(T^I\!\fdhm V^j)
{}_{\!}+{}_{\!} e^{\prime\!}(h) |T^I \fdhm h|^2\,\, \kve dy
+\! \int_{\pa \Omega}\!\!\!\! N_i N_{\!j} (T^I \! \fdhm \!\sm x^i)
(T^I \!\fdhm \!\sm x^j)\, |\pave h| \widetilde{\nu} dS,
\end{equation}
where $\widetilde{\nu} dS$ is the Eulerian surface measure, and
\begin{equation}
\E^I_{\ve} = \int_{\pa \Omega} \gamma_{ij}
(T^I V^i) (T^I V^j)\, \widetilde{\nu} dS, \qquad T^I \in \T^s.
\end{equation}

To control $h$, we will use:
\begin{equation}
\W^s = {\sum}_{k \leq s} \int_\Omega e'(h)  |D_t^{k+1} h|^2
+ |D_t^k \pave h|^2\, \kve dy,
\end{equation}
and to control $\xve$ we will use:
\begin{equation}
\A^s =
||\div \pa_y \sm  x ||_{H^{s-1}(\Omega)}^2
+ ||\curl\pa_y \sm x ||_{H^{s-1}(\Omega)}^2.
\label{Adef}
\end{equation}
The energy we consider is then:
\begin{equation}
\Ee^s = \A^s + \W^s + \E^s + \ve^2 \Eev^s.
\end{equation}
We will also write $\Ee^s_0$ for the quantity $\Ee^s$ with $V$ replaced
by $V_0^\ve$, $x$ replaced with $x_0$ and $D_t^{k+1} h$ replaced by $h_{k+1}^\ve$, with
$V_0^\ve, h_{k+1}^\ve$ defined by in Section \ref{wavecompatcondn}.
The goal of this section is to prove the following theorem:
\begin{theorem}
 \label{eneestthm}
 Suppose that the initial data
 $(V_0^\ve\!, h_0^\ve)$ are such that $\Ee^s_0 \!< \!\infty$ for some
 $s \!\geq\! 1$.
 There is a positive, continuous function $\mathcal{F}_{\!s}$
 so that the following holds:
 If $V \!\! \in \! \X^{s+1\!}(T)$ is a solution
 to the smoothed Euler's equations \eqref{smwavedef}-\eqref{smpbmdef}
 so that $(V\!{}_{\!}, {}_{\!}h)|_{t = 0}\! =\! (V_0^\ve\!, h_0^\ve)\!$ and
 the a priori assumptions \eqref{qchiassump1}-\eqref{qchiassump2} hold, then:
 \begin{equation}
  \Ee^s(t) \leq \mathcal{F}_s\left(M', L, \delta^{-1},\Ee^{s-1}_0
  \right)
   \Ee^s_0,
  \qquad 0 \leq t \leq T.
  \label{enest1}
 \end{equation}
\end{theorem}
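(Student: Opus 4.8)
The plan is to differentiate the full energy $\Ee^s = \A^s + \W^s + \E^s + \ve^2 \Eev^s$ in time, reduce the resulting right-hand side to $\Ee^s$ itself (plus data) by a coercivity argument, and then close by Gr\"onwall's inequality together with an induction on $s$. First I would assemble an evolution inequality for each summand. For $\E^s$ and $\ve^2 \Eev^s$ I apply the energy identity of Section~\ref{energy} (the estimates \eqref{enident}--\eqref{enident2}) with $\alpha = V$ and $q = h$; this is legitimate since $h|_{\pa\Omega} = 0$, and the Taylor sign condition \eqref{int:tsc} together with the a priori bounds \eqref{qchiassump1}--\eqref{qchiassump2} makes the boundary term \eqref{e2def} coercive because $|\pave h| \gtrsim \delta$ near $\pa\Omega$. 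Lemma~\ref{lowerorder} then bounds the remainders $R^s$ and $R^s_\ve$: \eqref{Vest} and \eqref{divest} use the smoothed momentum and continuity equations to show the relevant combinations are lower order up to a term $||\fdhm T^I \pave \phi||_{L^2}$, while \eqref{bdyest} controls the boundary remainder by $C(||\T\xve||_{H^{s+1/2}(\pa\Omega)}+1)||\pave h||_{H^{s+1/2}(\pa\Omega)}$, the $\ve^{-1}$ forced there by the tangential smoothing being absorbed by the $\ve$ weight carried by $\Eev^s$. For $\W^s$ I use Lemma~\ref{dtW} (equivalently Theorem~\ref{mainwavethm}). For $\K^s$ and $\K^s_\ve$ I note that, since $\pave$ is a genuine coordinate derivative, $[\pave_i,\pave_j]=0$, so $\curl \pave h = \curl \pave \phi = 0$ and $\curl V$ satisfies a transport equation $D_t \curl V = -(\pave\ssm V)\cdot\pave V + (\text{l.o.t.})$; commuting with $\fdhm$ gives the bound on $\K^s$, and $\K^s_\ve$ follows using in addition $\div V = -e'(h)D_t h$ from \eqref{resultofproj}. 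For $\A^s$ I differentiate $\sm x(t) = x_0 + \int_0^t \sm V$ and use $\sm D_t V = -\sm\pave(h+\phi)+(\text{l.o.t.})$.

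Next I would invoke the coercivity of the energy, Lemma~\ref{coercivitylem1}. The point is that $\E^s$ and $\A^s$, combined with the elliptic estimates \eqref{ellft1}--\eqref{ellft2} applied to $\alpha = V$ and $\alpha = \sm x$, the continuity relation $\div V = -e'(h)D_t h$, the curl control $\K^s$, the boundary term in $\E^I_\mu$, and the sign condition, recover $||V||_{H^{(s,1/2)}(\Omega)}$ and $||\sm x||_{H^{s+1/2}(\pa\Omega)}$ modulo lower-order terms; \eqref{sobell} and \eqref{sobellmix} then recover $||\xve||_{H^{s+1}(\Omega)}$ from $\A^s$ and $||\pave h||_{H^s(\Omega)}$ from $\W^s$, and the $\ve$-weighted piece $\ve^2\Eev^s$ similarly controls $\ve^2(||V||_{H^{s+1}(\Omega)}^2 + ||\pave h||_{H^{s+1}(\Omega)}^2)$. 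This is what lets every factor on the right of \eqref{enident} be re-expressed as a power of $\Ee^s$.

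The remaining issue is that the right-hand side of \eqref{enident} contains $||\T^s h||_{H^1(\Omega)}$ and $||D_t\T^s h||_{L^2(\Omega)}$, which are half a derivative stronger than anything $\E^s$ alone supplies. Here I would use that $h$ solves the enthalpy wave equation \eqref{enth1}: by Proposition~\ref{enthwavepropen} (which underlies Lemma~\ref{coercivitylem1}) the wave equation gains a derivative, so $||\pave h||_s$ and $||D_t^{s+1}h||_{L^2}$ are controlled by $\W^s(0)$, $\sup_{[0,T]}||V||_{\X^{s+1}}$ and a time integral, and no net loss occurs. The gravitational terms $||\fdhm T^I\pave\phi||_{L^2}$, $||\pave\phi||_{H^s}$ and the fractional norm of $\pave\phi$ are controlled by Sobolev norms of $\rho = \rho(h)$ via Theorems~\ref{main theorem, ell est of phi}, \ref{main theorem, ell est D_t phi} and \ref{r-0.5 estimate for phi}, and $\rho(h)$ is controlled by $\W^s$ and the data. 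Collecting these ingredients produces a differential inequality
\[
 \frac{d}{dt}\Ee^s \leq \mathcal{F}_s\!\left(M',L,\delta^{-1},\Ee^{s-1}\right)\!\left(\Ee^s + \Ee^s_0\right),
 \qquad 0 \leq t \leq T,
\]
where the dependence on $\Ee^{s-1}$ rather than $\Ee^s$ is guaranteed because every genuinely top-order appearance of the solution has been absorbed by coercivity; an induction on $s$ (with the low-order base case supplied directly by \eqref{qchiassump1}--\eqref{qchiassump2}, and $\Ee^{s-1}$ replaced by $\Ee^{s-1}_0$ using the inductive bound) together with Gr\"onwall's inequality gives \eqref{enest1}.

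The main obstacle is the combination of the coercivity step with the half-derivative bookkeeping: one must verify that the borderline terms $||\T^{s-1}V||_{H^{(1,1/2)}(\Omega)}$, $||\sm x||_{H^{s+1/2}(\pa\Omega)}$ and $||\T^s h||_{H^1(\Omega)}$ in \eqref{enident} are genuinely absorbable, which relies on the precise structure of the elliptic estimates \eqref{ellft1}--\eqref{sobell} (trading full $y$-derivatives of $\pave f$ for tangential derivatives plus only half a derivative on $\xve$) working in concert with the regularity gain from the enthalpy wave equation. A second subtlety, essential for the time $T$ to be independent of $\ve$, is that every power of $\ve^{-1}$ produced by the smoothing estimate $||S\sm f||\lesssim\ve^{-1}||f||$ must be matched against the $\ve$-weighted energy $\ve\sqrt{\Eev^s}$; this is precisely why $\Eev^s$ enters $\Ee^s$ with weight $\ve^2$, and the symmetry of $\sm$ with respect to the surface measure is what prevents the boundary terms in \eqref{enident} from losing a derivative.
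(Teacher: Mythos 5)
Your proposal follows essentially the same route as the paper: the energy identity \eqref{enident}--\eqref{enident2} with $\alpha=V$, $q=h$, the remainder bounds of Lemma~\ref{lowerorder}, the coercivity Lemmas~\ref{coercivitylem1}--\ref{coercivitylem3}, the regularity gain from the enthalpy wave equation to handle $\|\T^s h\|_{H^1}$ and $\|D_t\T^s h\|_{L^2}$, and Gr\"onwall plus induction on $s$. The only place your sketch is thinner than the paper is the curl part of $\A^s$, where invoking $\curl D_tV=0$ forces a second time integration with a correction two-form $R^I$ (Lemma~\ref{divcurlbounds}) to avoid an uncontrolled $\|V\|_{H^{s+1}}$ — which is implicit in your appeal to $\sm D_tV=-\sm\pave(h+\phi)$ — and the paper additionally runs the energy identity through a smooth approximation of $(V,h)$.
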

We now take $M_0', L_0, \delta_0 > 0$ so that:
\begin{equation}
|{\pa x_0}/_{\!}{\pa y}|{}_{\!} + {}_{\!}|\pa y_{{}_{\!}}/_{\!} \pa x_{0{}_{\!}}|{}_{\!} +{}_{\!} {\tsum}_{k + |_{\!}I_{{}_{\!}}| \leq 3} |\pa_y^{I} V_k^\ve{}_{\!}| {}_{\!}+{\!}
||\xve_0{}_{\!}||_{H^{6}(\Omega)}\!{}_{\!} \leq \! M_0',
\quad
{\tsum}_{k + |_{\!}I_{{}_{\!}}| \leq 3} |\pa_y^{I} \pave h_{{}_{\!}k}^\ve|{}_{\!}+\! |h_{{}_{\!}k}^\ve|
 \!\leq\! L_0,
 \quad \!-\pave_{N_0} h_0 |_{\pa \Omega}\! \geq\! \delta_0,
\label{aprioriM0L0}
\end{equation}
where we are writing $\pave_{N_0} = N_0^i \pave_i$ with $N_0^i$ the
unit normal to $\pa \Omega$ with respect to the metric $\gve$ at $t = 0$.

We will show that the above energy estimate implies:
\begin{cor}
 \label{useful}
 Let $r \geq 7$. There are continuous, strictly positive
functions $\mathscr{T}_r,\Cc_r,\Cc_r'$
with $\Cc_r, \Cc_r'$
depending on
$M_0', L_0, \delta_0^{-1}, \Ee^{r-1}_0$ so
that if $$T \leq \mathscr{T}_r(M_0', L_0, \Ee^{r-1}_0,\delta^{-1}_0),$$
 and $V \in \X^{r+1}(T)$ satisfies the smoothed-out Euler equations
\eqref{smpbmdef} with initial data satisfying
\eqref{aprioriM0L0}, then:
\begin{equation}
 \Ee^{r-1}(t) \leq \Cc_r \Ee_0^{r-1}, \qquad 0 \leq t \leq T,
 \label{useful1}
\end{equation}
and with $H^s = H^s(\Omega)$
\begin{equation}
 ||V(t)||_{H^{(r-1, 1/2)}}^2 +
 ||\sm x(t)||_{H^r}^2 + ||\pave h(t)||_{r-1}^2
  + \ve^2 (||V(t)||_{H^r}^2
  + ||\pave h(t)||_{H^{r}}^2)
 \leq \Cc_r'\Ee^{r-1}_0, \qquad 0 \leq t \leq T.
 \label{useful2}
\end{equation}
\end{cor}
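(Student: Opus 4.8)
## Proof proposal for Corollary \ref{useful}

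\textbf{The plan.} The statement has two parts: first, that the energy $\Ee^{r-1}$ remains comparable to its initial value on a time interval whose length depends only on the low-order data; and second, that $\Ee^{r-1}$ in fact controls the norms $||V||_{H^{(r-1,1/2)}}^2 + ||\sm x||_{H^r}^2 + ||\pave h||_{r-1}^2 + \ve^2(||V||_{H^r}^2 + ||\pave h||_{H^r}^2)$. The first part is a Gr\"onwall argument driven by Theorem \ref{eneestthm}; the second is a collection of elliptic and coercivity estimates applied to the individual pieces $\A^{r-1}, \W^{r-1}, \E^{r-1}, \ve^2\Eev^{r-1}$ of $\Ee^{r-1}$.

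\textbf{Step 1: From a priori bounds to a closed estimate.} I would first observe that Theorem \ref{eneestthm} gives the bound $\Ee^s(t) \leq \mathcal{F}_s(M', L, \delta^{-1}, \Ee_0^{s-1})\,\Ee_0^s$ provided the a priori assumptions \eqref{qchiassump1}-\eqref{qchiassump2} and the bound \eqref{ubd2} hold, with the constants $M', L, \delta^{-1}$ appearing there. To replace these running assumptions with assumptions only at $t=0$ (namely \eqref{aprioriM0L0}), I would run the standard continuity (bootstrap) argument: set $S = \{ 0 \leq t \leq T : \text{the a priori assumptions hold with } M' = 2M_0', L = 2L_0, \delta^{-1} = 2\delta_0^{-1}\}$, show $S$ is nonempty, closed, and open in $[0,T]$ for $T$ small, using that $V = D_t x$, that $D_t \xve = \ssm V$, that $||V(t)||_{\X^{r+1}}$ is already finite since $V \in \X^{r+1}(T)$, and integrating in time together with Sobolev embedding (as in Lemma \ref{lownorms} and Corollary \ref{bootstrapped}). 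The Taylor-sign lower bound $-\pave_N h \geq \delta_0/2$ propagates because $D_t(\pave_N h)$ is controlled by the energy; the pointwise bounds on $\xve, A, \pave h$ propagate because their time derivatives are controlled by $||V||_{\X^5} + ||\pave h||_5$, which in turn is bounded once we know \eqref{useful2} holds with the bootstrap constants. This forces $T \leq \mathscr{T}_r(M_0', L_0, \Ee_0^{r-1}, \delta_0^{-1})$ for an appropriate continuous $\mathscr{T}_r$, and on that interval \eqref{enest1} gives \eqref{useful1} with $\Cc_r = \mathcal{F}_{r-1}(2M_0', 2L_0, 2\delta_0^{-1}, \Ee_0^{r-1})$.

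\textbf{Step 2: The energy controls the Sobolev norms.} This is where the real work is. I would treat each term separately. For $||\pave h||_{r-1}^2$: the piece $\W^{r-1}$ controls $\sum_{k\leq r-1} ||D_t^k \pave h||_{L^2}^2$, and then the mixed elliptic estimate \eqref{sobellmix} applied to the wave equation \eqref{smwavedef} — more precisely the bound \eqref{enthellbd} of Proposition \ref{enthwavepropen} — upgrades this to the full mixed norm $||\pave h||_{r-1}$, at the cost of $C_s'$ factors depending on $||\xve||_{H^{r-1}}$ and the velocity, which we control by the bootstrap. For $||V||_{H^{(r-1,1/2)}}^2$: here I would use the elliptic estimate \eqref{ellft2} of Proposition \ref{ftprop} applied to $\alpha = V$; the divergence and curl of $\fdhm\T^{r-2} V$ are controlled by $\K^{r-1}$ (for curl) and by the continuity equation \eqref{resultofproj} which gives $\div V = -D_t e(h)$ (for div), whence by $\W^{r-1}$ and the $\pave h$ estimate; the boundary term $\int_{\pa\Omega}(\fdhm\T^{r-2}V^i)(\fdhm\T^{r-2}V^j)\gamma_{ij}\,dS$ is exactly $\sum_{|I|=r-2}\E^I_\ve$-type data, bounded by $\E^{r-1}$ itself (the interior half-derivative boundary term in $\E^I_\mu$ plus trace). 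Similarly $||\sm x||_{H^r}^2$ follows from \eqref{ellft1} applied to $\alpha = \sm x$: its divergence and curl are controlled by $\A^{r-1}$ (definition \eqref{Adef}), and the boundary term $\int_{\pa\Omega}(\fdhm\T^{r-1}\sm x^i)(\fdhm\T^{r-1}\sm x^j)N_i N_j\,dS$ is controlled by the boundary part $\E^I_{\mu,2}$ of the energy together with the Taylor sign lower bound $|\pave h| \geq \delta_0/2$ (divide by $|\pave h|$). Finally the $\ve^2$-weighted terms $\ve^2(||V||_{H^r}^2 + ||\pave h||_{H^r}^2)$ come from $\ve^2 \Eev^{r-1}$ and $\ve^2 \K_\ve^{r-1}$: the estimate \eqref{ellft2} again with $s=r$ bounds $\ve^2||V||_{H^r}^2$ by $\ve^2$ times (div, curl of $\pa V$, controlled by $\K_\ve^{r-1}$) plus $\ve^2$ times the boundary term $\E^{r-1}_\ve$, and then $\ve^2||\pave h||_{H^r}^2$ follows from the full elliptic estimate \eqref{sobell} for the wave equation with $s=r$, whose source terms involve $\ve^2||V||_{H^r}^2$, already handled.

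\textbf{The main obstacle.} The delicate point is the interplay between the half-derivative appearing in $\E^{r-1}$ (which only controls $||\T^{r-1} h||_{H^{(0,1/2)}}$, not $||\T^{r-1}h||_{H^1}$) and the full-derivative norms we want on the right of \eqref{useful2}. The resolution, as the paper signals after Theorem \ref{eneestthm} and in Lemma \ref{coercivitylem1}, is that $h$ satisfies the wave equation \eqref{smwavedef}, so elliptic regularity for $\Dve h$ gains back the missing half-derivative: one never estimates $||\T^{r-1}h||_{H^1}$ directly but instead estimates $||\pave h||_{r-1}$ via \eqref{sobellmix}, feeding in $||D_t^k\pave h||_{L^2}$ from $\W^{r-1}$ and the nonlinear source terms from Lemma \ref{fbdslem}. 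Getting the bookkeeping right so that at each stage the norms being controlled appear with the correct (non-circular) ordering — $\W$ first, then $\pave h$, then $V$ and $\sm x$, then the $\ve^2$ pieces — and checking that all the $C_s'$-type constants genuinely depend only on the bootstrapped low-order quantities $M_0', L_0, \Ee_0^{r-1}$ (not on $\Ee^{r-1}_0$ at top order), is the technically demanding part; it is, however, routine given the elliptic estimates of Section \ref{ellsec} and the wave estimates of Section \ref{waveests} already established.
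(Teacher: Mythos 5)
Your proposal is correct and follows essentially the same route as the paper: the paper's proof of Corollary \ref{useful} simply invokes the bootstrap Lemma \ref{bootlem1} (your Step 1) to propagate \eqref{aprioriM0L0} and then applies Theorem \ref{eneestthm} for \eqref{useful1}, and cites Corollary \ref{usefulcoercivity} — which packages precisely the coercivity chain you describe in Step 2 (Lemma \ref{coercivitylem1} together with Lemmas \ref{coercivitylem2}--\ref{coercivitylem3}) — for \eqref{useful2}. Your identification of the half-derivative gap and its resolution via elliptic regularity for the wave equation matches the paper's remark following Theorem \ref{eneestthm} and the proof of Lemma \ref{coercivitylem1}.
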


Before proving Theorem \ref{eneestthm}, we collect a few preliminary
results.
In Lemma \ref{coercivitylem1},
we show that we control $\xve$, $V$ and $h$
provided we control $\A^s$, $\W^s$ and the energies $\E^s$.
In Lemma \ref{coercivitylem2} and Corollary \ref{coercivitylem3},
we show that we control $\A^s$ and $\W^s$ provided that we control $\Ee^s$.

\begin{lemma}
 \label{coercivitylem1}
 Fix $r \!\geq \! 7\!$ and suppose that $V \!\!\in\!\X^{r+1}(T)$ satisfies
 the smoothed-out Euler equations \eqref{smpbmdef} and that
 the apriori assumptions
 \eqref{ubd2},\eqref{Lassumpwave} and the
 Taylor sign condition \eqref{int:tsc} hold.
 For each $0 \!\leq \! s \! \leq \! r\!-\!1$, there is a positive,
 continuous function
 $C_s \!= C_s(M'\!\!, L, \delta^{-1}\!\!, \A^{s-\!1}\!\!, \W^{s-\!1\!}\!, \E^{s-\!1})$
 so that the following estimates hold:
 \begin{equation}
  ||\xve||_{H^{s+1}(\Omega)}^2
  + ||V||_{H^{(s,1/2)}(\Omega)}^2 + ||V||_{\X^{s+1}}^2 +
  ||\pave h||_s^2 + ||D_t h||_s^2
   \leq C_s \big( \A^s + \W^s
  + (1 + \delta^{-1}) \E^s \big),
  \label{easycoer}
 \end{equation}
 and for $\mu = 0,..., N$, we have:
\begin{equation}
 ||\sm x||_{H^{s+1}(\Omega)}^2 +
 ||\fdhm V||_{H^s(\Omega)}^2 + ||\fdhm T^s \pave \phi(h)||_{L^2(\Omega)}
  \leq C_s\big(\A^s + \W^s +  (1 + \delta^{-1})\E^s\big).
  \label{hardcoer}
\end{equation}
Finally:
\begin{equation}
 ||V||_{H^{s+1}(\Omega)}^2 + ||\pave h||_{s+1}^2
 \leq  C_s (\A^s + \W^s + (1 + \delta^{-1})\E^s + \ve^{-2}\E^s_\ve ).
 \label{Vepscoercivity}
\end{equation}
\end{lemma}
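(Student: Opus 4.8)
The plan is to prove \eqref{easycoer}, \eqref{hardcoer} and \eqref{Vepscoercivity} together by induction on $s$, turning the quantities $\A^s$, $\W^s$, $\E^s$ and $\E^s_\ve$ into Sobolev control of $\xve$, $V$ and $h$ by feeding them into the elliptic estimates of Section~\ref{ellsec}, the bounds for $\phi$ of Section~\ref{gravitysection}, and the two evolution relations: the continuity equation $e'(h)D_t h+\div V=0$ and the enthalpy wave equation \eqref{smwavedef}. The base case ($s$ small) is immediate from the a priori assumptions \eqref{ubd2}, \eqref{strongM}, \eqref{Lassumpwave}. A device used repeatedly is the identity $D_t^{1+k}V=-D_t^k\pave h-D_t^k\pave\phi$, which trades time derivatives of $V$ for spatial derivatives of $h$ and $\phi$, so the estimates for $V$, $h$, $\phi$ form a closed system. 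The induction is what makes the constants work out: since $C_s$ in \eqref{easycoer}--\eqref{hardcoer} may depend on $\A^{s-1},\W^{s-1},\E^{s-1}$, the inductive hypothesis allows it to depend on $\|\xve\|_{H^s(\Omega)}$, $\|V\|_{\X^s}$ and $\|\pave h\|_{s-1}$, hence it qualifies as a generic constant of the type appearing in Section~\ref{ellsec} and in Theorems~\ref{main theorem, ell est of phi}--\ref{main theorem, ell est D_t phi}; all norms below the induction level, all pointwise norms, and all commutators of $\fdhm$ (or of the fields $T\in\T$) with $\pave,\div,\curl$ are absorbed into such constants by Lemma~\ref{fracalg} and the fractional Leibniz rule \eqref{app:alg2}.

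I would treat $\xve$ first, as its estimate does not involve $V$ or $h$ at the top level. Because $\|\sm f\|_{H^{s+1}(\Omega)}\lesssim\|f\|_{H^{s+1}(\Omega)}$ and $\xve=\ssm x=\sm(\sm x)$, it suffices to bound $\|\sm x\|_{H^{s+1}(\Omega)}$, and, the low-order norms being controlled, it is enough to bound $\sum_a\|\pa_a\sm x\|_{H^s(\Omega)}$. Applying \eqref{ellft1} to $\wta=\pa_a\sm x$, the divergence and curl terms are, up to lower-order commutators, exactly the quantities defining $\A^s$ in \eqref{Adef}, and in the boundary term one splits $\pa_a$ at $\pa\Omega$ into its tangential and normal parts: the tangential part contributes $\sum_{|I|\le s}\|\fdhm T^I\sm x\cdot N\|^2_{L^2(\pa\Omega)}$, which is bounded by $\delta^{-1}$ times the boundary term of $\E^s$ because $|\pave h|\ge\delta$ on $\pa\Omega$ by the Taylor sign condition \eqref{int:tsc}, while the normal part is rewritten through $N^iN^j\pave_i(\sm x_j)=\div\sm x-\gamma^{ij}\pave_i(\sm x_j)$, i.e.\ in terms of $\div\sm x$ (controlled by $\A^s$) and tangential derivatives of $\sm x$ (again by $\E^s$). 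This yields the bounds on $\|\xve\|_{H^{s+1}(\Omega)}$ and $\|\sm x\|_{H^{s+1}(\Omega)}$ in \eqref{easycoer} and \eqref{hardcoer}.

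I would then handle $V$ and $h$ simultaneously. For $h$, which vanishes on $\pa\Omega$, the Dirichlet elliptic estimates \eqref{sobell}--\eqref{sobellmix} bound $\|\pave h\|_{k,\ell}$ by $\|\Dve h\|_{k,\ell-1}$ plus $\|\xve\|$-type factors times $\|D_t^k h\|_{L^2(\Omega)}$; substituting $\Dve h=e'(h)D_t^2h+e''(h)(D_th)^2-(\pave_i\sm V^j)(\pave_jV^i)+\rho(h)$ from \eqref{smwavedef}, using that $\W^s$ supplies $\sum_{k\le s}\|D_t^{k+1}h\|_{L^2(\Omega)}+\|D_t^k\pave h\|_{L^2(\Omega)}$, replacing high time derivatives of $V$ in the source by spatial derivatives of $h$ and $\phi$ via the identity above, and running an induction that lowers the number of spatial derivatives, one obtains $\|\pave h\|_s^2+\|D_th\|_s^2$ (with $\|D_th\|_s$ also accessible through $e'(h)D_th=-\div V$). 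For $V$, the interior part of $\E^s$ gives $\sum_{|I|\le s}\|T^I\fdhm V\|^2_{L^2(\Omega)}$ and $\K^s$ gives $\|\curl\fdhm V\|^2_{H^{s-1}(\Omega)}$; since $\div\fdhm V=-\fdhm(e'(h)D_th)$ modulo a lower-order commutator, the tensor estimate \eqref{sobtensor} applied to $\alpha=\fdhm V$ bounds $\|\fdhm V\|_{H^s(\Omega)}$, hence $\|V\|_{H^{(s,1/2)}(\Omega)}=\sum_\mu\|\fdhm V\|_{H^s(\Omega)}$, and the norm $\|\fdhm T^s\pave\phi(h)\|_{L^2(\Omega)}$ of \eqref{hardcoer} follows from Theorem~\ref{r-0.5 estimate for phi} together with the $h$-control. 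Finally $\|V\|_{\X^{s+1}}$ is read off from $D_t^{1+k}V=-D_t^k\pave h-D_t^k\pave\phi$, with $D_t^k\pave\phi$ bounded by Theorems~\ref{main theorem, ell est of phi} and~\ref{main theorem, ell est D_t phi} (using $\rho=\rho(h)$, which is controlled one order below $h$ since $h$ vanishes on $\pa\Omega$), and the remaining low-time-order pieces of $\|V\|_{\X^{s+1}}$, in particular $\|V\|_{H^s(\Omega)}$, by the $H^{(s,1/2)}$-bound and \eqref{sobtensor}.

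For \eqref{Vepscoercivity}, $\E^s_\ve$ supplies the one extra full derivative: $\K^s_\ve$ gives $\|\div\pa V\|^2_{H^{s-1}(\Omega)}+\|\curl\pa V\|^2_{H^{s-1}(\Omega)}$ and the boundary part of $\E^s_\ve$ gives $\sum_{|I|\le s}\int_{\pa\Omega}\gamma_{ij}(T^IV^i)(T^IV^j)\,\widetilde{\nu}\,dS$; applying the second form \eqref{ellft2} (with the tangential boundary projection) to $\wta=\pa_aV$ and splitting $\pa_a$ at the boundary as before bounds $\|V\|^2_{H^{s+1}(\Omega)}$, and the wave equation together with \eqref{sobell} bounds $\|\pave h\|^2_{s+1}$. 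The weight $\ve^{-2}$ appears because raising the spatial order on the factor $\sm V$ occurring in $\div V$ (via $\div V=-e'(h)D_th$ and the wave equation) costs a power of $\ve^{-1}$ through $\|\sm f\|_{H^{k+1}(\Omega)}\lesssim\ve^{-1}\|f\|_{H^k(\Omega)}$. The main obstacle throughout is this top-order bookkeeping: one must verify that at each use of the elliptic estimates and of the two evolution relations the derivatives redistribute so that no term ever requires $V$ or $h$ at a level above the one being established, that the smoothing-induced $\ve^{-1}$ never contaminates \eqref{easycoer}--\eqref{hardcoer} but only \eqref{Vepscoercivity}, and that the boundary normal/tangential splittings only produce the normal component of $\sm x$ — absorbed into $\E^s$ via the Taylor sign condition — and tangential components, absorbed into $\E^s$ or $\E^s_\ve$.
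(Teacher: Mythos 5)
Your proposal is correct and follows essentially the same route as the paper: the div--curl--tangential elliptic estimates \eqref{ellft1}--\eqref{ellft2} and \eqref{sobtensor} with the boundary terms absorbed into $\E^s$ (via the Taylor sign condition) or $\E^s_\ve$, the continuity equation for $\div V$, the wave equation plus \eqref{sobell}--\eqref{sobellmix} for $\pave h$, the identity $D_t^{k+1}V=-D_t^k\pave h-D_t^k\pave\phi$ with the $\phi$-estimates, and an induction on $s$ so that lower-order quantities are absorbed into the constants. The only cosmetic difference is in the $\sm x$ step, where you split $\pa_a$ into normal and tangential parts directly at the boundary, while the paper first reduces $\|\pa_y\sm x\|_{H^{s-1/2}(\pa\Omega)}$ to $\|T\sm x\|_{H^s(\Omega)}$ by the trace inequality and then applies \eqref{ellft1} to $T\sm x$; the two reductions are equivalent.
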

\begin{proof}
  The first estimate in \eqref{easycoer} follows from the first
  estimate in \eqref{hardcoer}, since
  $\xve \!=\!\ssm x \!= \!\sm^2 x$ and $\sm$ is bounded
  on Sobolev spaces.
 The second estimate in \eqref{easycoer} follows after summing
 the second estimate in \eqref{hardcoer} over all $\mu\! = \!0,..., N$
 and using Lemma \ref{equivalentnorms}.
 To prove the third estimate
 we note that if $V\!$ solves the smoothed problem \eqref{smpbmdef}
 then $||V||_{\X^{s+1}}\!\! \leq\! ||V||_{H^{s}(\Omega)}\!
 + \!||\pave h||_{s}\! + \!||\pave \phi||_s$. Using Theorem
 \ref{main theorem, ell est of phi} to control $||\pave\phi||_s$,
 this estimate then follows from
 the estimate for $||\pave h||_s$. To prove the estimate
 for $||\pave h||_s$ and $||D_t h||_s$, we
 argue as in the proof of \eqref{alldt1} and suppose that
 \eqref{easycoer} holds
 for $s \!=\! 0,..., m\!-\!1$. By definition $||D_t^m \pave h||_{L^2(\Omega)}^2
 \!+ \!||D_t^{m+1}\! h||_{L^2(\Omega)}^2\!
 \!\leq\! C \W^s$   so we now suppose that
 $||D_t^{k} \pave h||_{H^{\ell}(\Omega)}^2\!
 +\! ||D_t^{k+1\!} h||_{H^{\ell}(\Omega)}^2$ is bounded by the right-hand
 side of \eqref{easycoer} for $k + \ell\! = \!s$ and some $\ell\! \geq\! 0$.
 By induction it suffices to prove that $||D_t^{k-1\!} \pave h||_{H^{\ell+1}(\Omega)}^2
 + ||D_t^{k-1}\! D_t h||_{H^{\ell+1}(\Omega)}^2$ is bounded by
 the right-hand side of \eqref{easycoer}.
 Writing $\pa_a D_t^{k-1}\! D_t h \!=\! A_{\m a}^i \pave_i D_t^{k-1}\! D_t h$
 and then $\pave D_t^{k-1}\! D_t h \!= \!D_t^k \pave h + [\pave, D_t^{k}] h$,
 using \eqref{vectcomm} to handle the commutator and \eqref{product}:
 \begin{equation}
  ||\pa_y D_t^{k-1} D_t h||_{H^{\ell}(\Omega)}
  \leq C(M', ||\xve||_{H^{\ell}(\Omega)}, ||V||_{\X^{m-1}})
   \big( ||D_t^{k} \pave h||_{H^{\ell}(\Omega)} +
  (||V||_{\X^{m}} + ||\xve||_{H^{\ell+1}(\Omega)} )||\pave h||_{m-1}\big).
 \end{equation}
 When $\ell = 0$ then by the inductive assumption and the definition of the energy
 $\W$, all of the terms on the right-hand side are bounded
 by the right-hand side of \eqref{easycoer}.
 The estimate \eqref{easycoer} for $s = m$ now follows from the inductive
 assumption and the following
 estimate, which we claim holds
 whenever $k + \ell =m, \ell \geq 1$:
 \begin{equation}
  ||D_t^k \pave h||_{H^{\ell}(\Omega)}^2
  \leq C
  \big( ||D_t^{k+2} h||_{H^{\ell-1}}
  + (||\xve||_{H^{m+1}} + ||V||_{H^{m}} + ||V||_{\X^{m}})
   (||\pave h||_{m-1} + ||D_t h||_{m-1})\big),
  {}
 \end{equation}
 where $C\! =\! C(M,\! ||\xve||_{H^{m}(\Omega)},\!
 ||V||_{\X^m}\!)$ and $H^s \!\!= \!H^s(\Omega)$. This estimate follows directly
 from the elliptic estimate \eqref{sobellmix}, the fact that
 $D_t^k \Dve h = D_t^{k}\big(e'(h) D_t^2 h - (\pave_i \ssm V^j)(\pave_j V^i)
 \big)$
 and Lemmas \ref{fbdslem} and \ref{gestlem} to control these.

 We now prove the first estimate in \eqref{hardcoer}.
When $s \leq 6$ there is nothing to prove
since $|| \sm x||_{H^6(\Omega)} \leq M'$, so we assume $s \geq 6$. In
fact the below argument works provided $s \geq 2$ and this assumption
is only needed to ensure that the trace map is continuous. The point
of the below manipulations is to replace the derivative $\pa_y$ with
tangential vector fields $T$. Using
\eqref{ftang1}, we have that:
\begin{equation}
||\pa_y \sm x||_{H^s(\Omega)}^2
\leq C_s \big( \A_s + ||\pa_y \sm x||_{H^{s-1/2}(\pa \Omega)}^2 +
||\sm x||_{H^1(\Omega)}^2\big),
\end{equation}
with $C_s = C_s(M', ||\xve||_{H^s(\Omega)})$. To control the boundary
term here it suffices to control
$
||T \pa_y \sm x||_{H^{s-3/2}(\pa \Omega)}$ for $s \geq 2$ and any
$T \in \T$, and by
the trace inequality \eqref{trace}, this is under control if we control
$||T \sm x||_{H^{s}(\Omega)}$. Finally, we note that because of the boundary
term in the energy, for each $T \in \T$ we have:
\begin{equation}
||T \sm x||_{H^s(\Omega)}^2
\leq C_s \big( ||\div T \sm x||_{H^{s-1}(\Omega)}^2
+ ||\curl T \sm x||_{H^{s-1}(\Omega)}^2 +
\delta^{-1} \E_s\big),
{}
\end{equation}
again with $C_s = C_s(M', ||\xve||_{H^{s}(\Omega)})$.
The first and second terms here are bounded by $\A^s$ and using
induction and the first estimate in \eqref{easycoer},
this implies the first estimate in \eqref{hardcoer}.

To prove the second estimate in \eqref{hardcoer},
we note that by the elliptic estimate \eqref{app:hotpw}, we have:
\begin{equation}
||\fdhm V||_{H^s(\Omega)}^2
\leq C(M', ||\xve||_{H^s(\Omega)} )\big(||\div \fdhm V||_{H^{s-1}(\Omega)}^2
+ ||\curl \fdhm V||_{H^{s-1}(\Omega)}^2 +
\E^s\big).
\end{equation}
The last two terms are controlled by the right-hand side of
\eqref{hardcoer}. For the first term, we use
Lemma \ref{fracalg}:
\begin{equation}
||[\div, \fdhm ]V||_{H^{s-1}(\Omega)}
\leq C(M', ||\xve||_{H^s(\Omega)})
\big(||\xve||_{H^{s+1}(\Omega)} + ||V||_{H^s(\Omega)}\big),
\end{equation}
and so using $\div V = -e'(h) D_t h$, it just remains
to bound $||\fdhm (e'(h) D_t h)||_{H^{s-1}(\Omega)}$. We first bound this
by $||e'(h) D_t h||_{H^s(\Omega)}$ and then using induction and Lemma
\ref{gestlem}, this is controlled by
$C(M'\!\!, L, \W^{s-1}) ||D_t h||_{H^s(\Omega)}$. We write
$\pa_y^I \!=\! \pa_y^J (u \cdot \pave )$, where $|I| \!=\!s, |J| \!=\! s\!-\!1$, then apply
\eqref{product}
and the commutator estimate \eqref{leibpave}
to control this by $C_s ||\pave h||_{s}$.
Since $\rho\!=\!\rho(h)$,  the third estimate in \eqref{hardcoer} is a consequence of Theorem \ref{r-0.5 estimate for phi}, \eqref{easycoer}.

The estimate \eqref{Vepscoercivity} follows from the definition of
$\E^s_\ve$, the elliptic estimate \eqref{ellft2} and \eqref{easycoer}.
\end{proof}

We now control the energy for the wave equation $\W^s$ in terms of
$\E^s, \A^s$:
\begin{lemma}
 \label{coercivitylem2}
 With the same hypotheses as Lemma \ref{coercivitylem1},
 there is a constant $C'_s$ depending on $M',$ $L,$ $\delta^{-1}$, $T$,
  $\sup_{\,0 \leq t \leq T}\A^{s-1}(t) +  \E^{s-1}(t)$ and
   $\W^{s-1}(0)$ so that:
 \begin{equation}
  \W^s(t) \leq C'_s \Big( \W^s(0) + \int_0^t
   \A^s(\tau) + \E^s(\tau) \, d\tau\Big), \quad 0 \leq t \leq T.
  \label{hcoercive}
 \end{equation}
\end{lemma}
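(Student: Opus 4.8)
The plan is to read this off the general wave–equation energy estimate, Lemma \ref{dtW}, applied with $\varphi = h$, $\eprime = e'(h)$, $\F_1 = -(\pave_i \ssm V^j)(\pave_j V^i)$ and $\F_2 = -e''(h)(D_t h)^2 - \rho(h)$, and then to convert every norm on the right-hand side into the energies $\A^s,\W^s,\E^s$ using the coercivity estimate \eqref{easycoer} from Lemma \ref{coercivitylem1}. The argument is an induction on $s$, with base case $s=0$.

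First I would note that $\W^s = 2\,\W_s^2$, where $\W_s$ is the square-root energy of Section \ref{enthsec}, so it suffices to bound $\W_s^2$. The estimates for $\F_1,\F_2$ recorded just before Proposition \ref{enthwavepropen} verify the hypotheses \eqref{F2est} (with $\varphi=h$), so Lemma \ref{dtW} gives
\[
 \frac{d}{dt}\W_s \le G_s'\Big(\W_s + \|\F_1\|_{s,0} + \|\F_1\|_{s-1} + \|V\|_{\X^{s+1}} + P(L,\W_{s-1},\|\F_1\|_{s-2})\W_s\Big),
\]
where $G_s'$ depends on $M'$, $\|\xve\|_{H^s(\Omega)}$, $\|V\|_{\X^s}$ and $\W_{s-1}$. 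By the inductive hypothesis (the present lemma at level $s-1$) together with $T$ fixed, $\W_{s-1}(t)$ is bounded by a constant of the allowed type; and by \eqref{easycoer} at level $s-1$, the quantities $\|\xve\|_{H^s(\Omega)}$, $\|V\|_{\X^s}$, and $\|\F_1\|_{s-2}$ are likewise bounded in terms of $\sup_{[0,T]}(\A^{s-1}+\E^{s-1})$ and $\W^{s-1}(0)$. Hence $G_s'$ and $P$ may be absorbed into a constant $C_s'$ of exactly the type allowed in the statement.

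Next I would estimate the inhomogeneous terms: using the bounds for $\|\F_1\|_{s,0}$ and $\|\F_1\|_{s-1}$ from before Proposition \ref{enthwavepropen} together with \eqref{easycoer}, one gets $\|\F_1\|_{s,0} + \|\F_1\|_{s-1} + \|V\|_{\X^{s+1}} \le C_s'\big(\A^s + \W^s + \E^s\big)^{1/2}$ (the $(1+\delta^{-1})$ factor in \eqref{easycoer} is harmless since $C_s'$ is allowed to depend on $\delta^{-1}$), and again $C_s'$ depends only on the lower-order data. Plugging this in, bounding the $\W_s$ term by $\W_s^2\le\tfrac12\W^s$, then multiplying by $4\W_s$ and applying $2ab\le a^2+b^2$,
\[
 \frac{d}{dt}\W^s = 4\,\W_s\,\frac{d}{dt}\W_s \le C_s'\big(\W^s + \A^s + \E^s\big).
\]
Gr\"onwall's inequality then yields $\W^s(t) \le e^{C_s' T}\big(\W^s(0) + C_s'\int_0^t \A^s(\tau)+\E^s(\tau)\,d\tau\big)$, which is \eqref{hcoercive} after renaming $C_s'$. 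The base case $s=0$ is the $s=0$ instance of the same computation, using the $s=0$ case of Lemma \ref{dtW} and Poincar\'e's inequality (via $h|_{\pa\Omega}=0$) to control $\|h\|_{L^2(\Omega)}$.

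The main obstacle is bookkeeping rather than analysis: one must check that every coefficient multiplying $\W^s$, $\A^s$ or $\E^s$ on the right depends only on $M',L,\delta^{-1},T$, $\sup_{[0,T]}(\A^{s-1}+\E^{s-1})$ and $\W^{s-1}(0)$, so that the Gr\"onwall step never feeds $\A^s$ or $\E^s$ back into the coefficient. The delicate point is the top-order term $\|V\|_{\X^{s+1}}$, which is not obviously lower order; this is exactly handled by \eqref{easycoer}, which bounds it by $\A^s + \W^s + \E^s$ with a constant of the correct (lower-order) type, after which it moves harmlessly under the time integral via the $2ab\le a^2+b^2$ step. The inductive use of the lemma itself (to bound $\W_{s-1}(t)$ in the constant $G_s'$) and of \eqref{easycoer} at level $s-1$ are what make the dependence structure close.
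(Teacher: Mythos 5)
Your proposal is correct and follows essentially the same route as the paper: apply Lemma \ref{dtW} to $h$, control $\F$, $V$, $\xve$ and $\pave h$ via Lemma \ref{fbdslem} and the coercivity estimates of Lemma \ref{coercivitylem1}, absorb the lower-order constants by induction on $s$ (which also handles the $\W^{s-1}(t)$-dependence of the coefficients), and close with an integrating factor/Gr\"onwall argument. The only cosmetic difference is that the paper keeps the differential inequality at the level of $\sqrt{\W^s}$ rather than squaring via $2ab\le a^2+b^2$.
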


\begin{proof}

By Lemma \ref{dtW}, writing $\F = -(\pave_i \ssm V^j)(\pave_i V^i)$,
we have:
\begin{equation}
\frac{d}{dt} \sqrt{\W^s} \leq C^1_s \big( ||\F||_{s,0} +
||\F||_{s-1} + ||V||_{\X^{s+1}} +
P(L, ||\pave h||_{s-1}, \sqrt{\W^{s-1}})\big),
{}
\end{equation}
with $C^1_s = C^1_s(M', L, T, ||\xve||_{H^s(\Omega)},
||V||_{\X^{s}})$.
Using Lemma \ref{fbdslem}
to control $\F$ and Lemma \ref{coercivitylem1} to control $V$, $\xve$
and $\pave h$ in terms of $\A, \W$ and $\E$, this implies:
\begin{equation}
\frac{d}{dt} \sqrt{\W^s}
\leq C_s^2\big(M', L, \delta^{-1}, T, \sqrt{\W^{s-1}}, \sqrt{\A^{s-1}},
\sqrt{\E^{s-1}}\big) \big( \sqrt{\W^s} + \sqrt{\A^s} +  \sqrt{\E^s} \big).
{}
\end{equation}
Multiplying by the integrating factor
$e^{-tC_s^2}$, integrating from 0 to $T$ and using induction
gives
\eqref{hcoercive}.
\end{proof}

We will need the following estimate to control $\xve$:
\begin{lemma}
 \label{divcurlbounds}
 For $s \geq 0$,
 there are constants $C'_{\!s\!}$ depending on  $M'\!\!,L,
 \delta^{-1}\!\!,T$ and  $\sup_{0 \leq t \leq T}\big(
  \A^{s-1}(t)+ \E^{s-1}(t)\big)$
 so that if \eqref{strongM} is satisfied, then for $0 \leq t \leq T$:
 \begin{equation}
||D_t \div  \sm \pa_y x||_{H^{s-1}(\Omega)}^2
\leq
   C_{\!s}'\big(
   ||\div V||_{H^{s}(\Omega)}^2\!
   + || \big(\sm \div \pa_y V\!\! - \div \sm \pa_y V\big)||_{H^{s-1_{\!}}(\Omega)}^2\!
   + \A^s\!{}_{\!}  + \E^s  \big).
  \label{arbitrarydiv}
\end{equation}
In addition, for any multi-index $I$ with $|I| = s-1$
and $\mu = 0,...., N$ there
is a two-form $R = R^{I}_{ij}$ with
$||R||_{L^2(\Omega)} \leq C_s'(\A^s + \E^s)$
so that for $0 \leq t \leq T$:
\begin{equation}
  ||D_t^2 \pa_y^I ( \curl \!\pa_y \sm x)
  - D_t R^{I\!}||_{L^2(\Omega)}^2 \! \leq
    C_{\!s\!}'\big( ||\! \curl {}_{\!}D_t V||_{H^{s}(\Omega)}^2\!
    + ||\sm{}_{\!} \curl \pa_y V\!\! - \curl \sm\pa_y V||_{H^{s\!-\!1_{\!}}(\Omega)}^2\!
    + \A^s\! + \E^s  \big).
   \label{arbitrarycurl}
\end{equation}
\end{lemma}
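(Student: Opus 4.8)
The plan is to reduce both inequalities to three structural facts: $\sm$ and the $y$-derivatives $\pa_y$ commute with $D_t$, the flow satisfies $D_t x = V$, and — crucially — the continuity equation \eqref{resultofproj} gives $\div V = -e'(h)D_t h$ while $\pave(h+\phi)$ is curl-free since the partials $\pave_i$ in the $\xve$-coordinates commute (see \eqref{divdef}). For \eqref{arbitrarydiv} I would write $D_t\div(\sm\pa_y x) = [D_t,\div](\sm\pa_y x) + \div(\sm\pa_y V)$, using $D_t\sm=\sm D_t$, $D_t\pa_y=\pa_y D_t$, $D_t x = V$. The commutator $[D_t,\div]$ is $-(\pave_i\ssm V^k)\pave_k\delta^{ij}(\cdot)_j$, i.e.\ a factor $\pave\ssm V$ times one $\pave$-derivative of $\sm\pa_y x$; its $H^{s-1}$ norm is estimated with the product rule \eqref{product} and the $H^2$-algebra property, the resulting low Sobolev norms of $\xve$ and $V$ being absorbed by Lemmas \ref{coercivitylem1} and \ref{coercivitylem2} into $\A^s,\W^s,\E^s$ and then (via \eqref{hcoercive}) into $\A^s+\E^s$ up to the initial energy; note the essential point that the $x$-factor always carries a $\sm$, so $\|\sm\pa_y x\|_{H^s}$ is controlled by \eqref{hardcoer}. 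For the main term I split $\div(\sm\pa_y V) = \sm\div(\pa_y V) + \big(\div\sm\pa_y V - \sm\div\pa_y V\big)$; the second summand is exactly the remainder kept on the right of \eqref{arbitrarydiv} (it is \emph{not} estimated by $O(\ve)$, because it must later be shown to vanish along the approximating sequence), and in the first I commute $\div$ with $\pa_y$ so that $\sm\div\pa_y V = \sm\pa_y\div V + \sm[\div,\pa_y]V = -\sm\pa_y\!\big(e'(h)D_t h\big) + \sm[\div,\pa_y]V$; the first term is bounded in $H^{s-1}$ by $C_s'\|\div V\|_{H^s}$ and the commutator $[\div,\pa_y]V\sim(\pa_y A)(\pa_y V)$ is handled as above.

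For \eqref{arbitrarycurl} the extra time derivative is forced on us: a single $D_t$ leaves $\curl(\pa_y\sm V)$, which the energy controls only with a loss of $\ve^{-1}$ (through $\K^s_\ve\subset\E^s_\ve$), whereas a second $D_t$ lets the Euler equation \eqref{smpbmdef} replace $D_t V$ by $-\pave h-\pave\phi$, a gradient, whose curl vanishes. Concretely, applying $D_t$ once to $\pa_y^I\curl(\pa_y\sm x)$ and commuting $D_t$ past $\curl$ (the only non-commuting operator, since $\pa_y$ and $\sm$ commute with $D_t$) yields $\pa_y^I\curl(\pa_y\sm V)$ plus a single commutator term of the schematic form $\pa_y^I\big[(\pave\ssm V)(\pave\sm\pa_y x)\big]$; I collect this into the two-form $R^I$ and verify $\|R^I\|_{L^2(\Omega)}\le C_s'(\A^s+\E^s)$ using \eqref{product}, the algebra property, and the coercivity lemmas (again the $\sm$ on the $x$-factor is what keeps this within reach of \eqref{hardcoer}). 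Applying $D_t$ once more, $D_t^2\pa_y^I\curl(\pa_y\sm x) - D_t R^I = \pa_y^I\curl(\pa_y\sm D_t V) + (\text{second-order commutators})$; substituting $D_t V = -\pave(h+\phi)$ and using $\curl\pave h=\curl\pave\phi=0$, the term $\pa_y^I\curl\big(\pa_y\sm\pave(h+\phi)\big)$ is a pure commutator of $\curl$ with $\pa_y^I\pa_y\sm$ acting on $\pave h,\pave\phi$, bounded in $L^2$ by the coercivity estimate \eqref{hardcoer} for $h$ and Theorems \ref{main theorem, ell est of phi}–\ref{main theorem, ell est D_t phi} for $\phi$; the second-order commutators, which have one $D_t$ landing on a $\pave\ssm V$ factor, are rewritten via $D_t(\pave\ssm V)=\pave\ssm D_t V + \dots = -\pave\ssm\pave(h+\phi)+\dots$ and are again lower order. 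The leftover $\|\curl D_t V\|_{H^s}$ on the right-hand side of \eqref{arbitrarycurl} is there because in the application of Section \ref{uniform} one uses a regularized sequence for which $V$ only approximately solves \eqref{smpbmdef}; for the exact solution $\curl D_t V = -\curl\pave(h+\phi) = 0$ and this term drops.

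The real work, and the main obstacle, is the bookkeeping in these commutator expansions: one must check that \emph{every} error term either is a constant times $\|\div V\|_{H^s}$ (resp.\ $\|\curl D_t V\|_{H^s}$) or the smoothing-commutator remainder, or else involves only Sobolev norms of $\xve,V,\pave h,\pave\phi$ of order $\le s$ — never $\|V\|_{H^{s+1}(\Omega)}$ or $\|\xve\|_{H^{s+2}(\Omega)}$, which are available only with a $\ve^{-1}$ loss — so that they are absorbed by $\A^s+\W^s+\E^s$ through Lemmas \ref{coercivitylem1}–\ref{coercivitylem2} and hence by the stated right-hand sides. The delicate recurring point is that $\div$ and $\curl$ are taken in the moving $\xve$-coordinates, so the ``$\pa_y$ versus $\pave$'' and ``$\sm$ versus $\pave$'' commutators proliferate, and that one must keep the $\sm$ attached to the $x$-factors in order to invoke \eqref{hardcoer} rather than an uncontrolled $\|x\|_{H^{s+1}}$; the identities $\div V = -e'(h)D_t h$ and $\curl\pave(h+\phi)=0$ are precisely what make the top-order contributions collapse.
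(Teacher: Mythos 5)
Your treatment of \eqref{arbitrarydiv} tracks the paper's proof: decompose $D_t\div\pa_y\sm x$ into the commutator $-(\pave_i\ssm V^j)\pave_j\pa_y\sm x^i$ plus $\div\pa_y\sm V$, estimate the commutator via the product/algebra estimates, absorb the low norms through Lemmas \ref{coercivitylem1}--\ref{coercivitylem2}, and peel off the $[\sm,\div]$ remainder (the mention of $\div V=-e'(h)D_th$ is unnecessary since $\|\div V\|_{H^s}$ is already on the right-hand side, but harmless).

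For \eqref{arbitrarycurl} there is a genuine gap, and it is exactly at the subtle point. You define $R^I$ to be the \emph{single} commutator $\pa_y^I[D_t,\curl](\pa_y\sm x)\sim\pa_y^I\big[(\pave\ssm V)(\pave\pa_y\sm x)\big]$, so that $D_t\pa_y^I\curl\pa_y\sm x = R^I + \pa_y^I\curl\pa_y\sm V$. Then $D_t^2\pa_y^I\curl\pa_y\sm x - D_tR^I = D_t(\pa_y^I\curl\pa_y\sm V) = \pa_y^I[D_t,\curl](\pa_y\sm V) + \pa_y^I\curl\pa_y\sm D_tV$. The second-order commutator that survives, $\pa_y^I[D_t,\curl](\pa_y\sm V)\sim\pa_y^I\big[(\pave\ssm V)(\pave\pa_y\sm V)\big]$, has \emph{no} $D_t$ hitting the $\pave\ssm V$ factor — contrary to your description of it — and after $|I|=s-1$ more $\pa_y$-derivatives it requires $\|V\|_{H^{s+1}(\Omega)}$, which by \eqref{Vepscoercivity} costs an $\ve^{-1}$ that this lemma cannot afford. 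Your rewriting $D_t(\pave\ssm V)=-\pave\ssm\pave(h+\phi)+\dots$ addresses a different (harmless) term and does not touch this one. The paper's proof resolves this by performing an extra manipulation — writing $(\pave\ssm V)\cdot\pave\pa_y\sm D_tx = D_t\big((\pave\ssm V)\cdot\pave\pa_y\sm x\big) - (\pave\ssm D_tV)\cdot\pave\pa_y\sm x + (\pave\ssm V)\cdot(\pave\ssm V)\cdot\pave\pa_y\sm x$ — and then taking $R_{ij}$ to be \emph{twice} the commutator, $R_{ij}=2\big(-(\pave_i\ssm V^\ell)\pave_\ell\pa_y\sm x_j+(\pave_j\ssm V^\ell)\pave_\ell\pa_y\sm x_i\big)$; that factor of $2$ is exactly what makes $D_tR$ cancel the dangerous $(\pave\ssm V)(\pave\pa_y\sm V)$ contribution, so that the surviving error only ever carries $\pave\pa_y\sm x$ (not $\pave\pa_y\sm V$). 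With your $R^I$ the bad term does not cancel, and the claimed bound $\le C_s'(\A^s+\E^s)$ fails.

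Two smaller points. First, once $R$ is corrected as above, one must also check $\|R\|_{L^2}\le C_s'(\A^s+\E^s)$, which your argument covers since $R$ still only involves $\pave\pa_y\sm x$. Second, your explanation of why $\|\curl D_tV\|_{H^s}$ appears on the right-hand side is off: the lemma is stated in that generality simply because it is proved without invoking the Euler equations, and the Euler equation $\curl D_tV=0$ is applied afterward (as in Corollary \ref{coercivitylem3}) — it is not about the approximating sequence in Section \ref{uniform} failing to solve \eqref{smpbmdef}.
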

\begin{proof}
 We start by writing
 $
 D_t \div \pa_y \sm x =
 -(\pave_i \ssm V^j )\pave_j \pa_y \sm x^i
 + \div \pa_y \sm V$. Applying
 $s-1$ derivatives to this expression, we first prove:
 \begin{equation}
  ||\pa_y^J (\pave \ssm V) \pa_y^K (\pave \pa_y \sm x)||_{L^2(\Omega)}
  \leq C_s'(\A^s + \E^s), \quad |J| + |K| = s-1.
  \label{prodest}
 \end{equation}
  When $|J| \leq 2$ we bound the first factor
 in $L^\infty$ by $M'$ and the second factor by
 $||\pave \pa_y \sm x||_{H^{s-1}(\Omega)} \leq C(M)||\sm x||_{H^{s+1}(\Omega)}$.
 If  $|J| \!\geq\! 3$ then $|K| \!\leq \!s\!-\!4$ and so we bound the
 first factor in $L^2(\Omega)$ by $||V||_{H^{s}(\Omega)}$ and the
 second factor by $||\pa_y^K \pave \pa_y \sm x||_{L^\infty(\Omega)}
 \leq C||\pave \pa_y \sm x||_{H^{s-2}(\Omega)} \leq C(M)||\xve||_{H^s(\Omega)}
 ||\sm x||_{H^{s}(\Omega)}$.
 By Lemma \ref{coercivitylem1}, we control all of these
 terms by the right-hand side of \eqref{arbitrarydiv}.

 We now control $||\div \pa_y \sm V||_{H^{s-1}(\Omega)}$.
 Noting that $||(\sm \div - \div \sm)V||_{H^{s-1}(\Omega)}$ appears on
 the right-hand side of \eqref{arbitrarydiv}, and that
 $[\pa_y, \!\sm]$  and  $\sm$ are bounded
 operators on $H^{s\!-\!1\!}(\Omega)$, it remains to control
 $||[\div\!, \pa_y] V||_{H^{s\!-\!1\!}(\Omega)}$. Writing
 $[\div\!, \pa_y]V\!\!=\! -(\pa_y A_{\m i}^a)\pa_a \!V^i\!$ and arguing as above,
 we have $||[\div \!, \pa_y] V\!||_{H^{s\!-\!1\!}(\Omega)}\!\!
 \leq\! C(\!M\!) ||\xve||_{\!H^{s\!+\!1\!}(\Omega)} ||V\!||_{\!H^s(\Omega)}$, and
 again using Lemma \ref{coercivitylem1} this is bounded by the
 right-hand side of \eqref{arbitrarydiv}.

 To prove \eqref{arbitrarycurl}, we start by writing:
 \begin{multline}
  D_t^2 (\curl \pa_y \sm x)_{ij} = D_t \big(
   \big(\!\!-(\pave_i \ssm{}_{\!} V^\ell) \pave_\ell \pa_y\sm x_j +
   (\pave_j \ssm{}_{\!} V^\ell)\pave_\ell
   \pa_y \sm x_i \big)
   + (\curl \pa_y\sm D_t x)_{ij}\big)\\
   \!=\!D_t
   \big(\!
   (\pave_j \ssm {}_{\!} V^\ell)\pave_\ell
   \pa_y \sm x_i-(\pave_i \ssm {}_{\!}V^\ell) \pave_\ell \pa_y \sm x_{{}_{\!}j{}_{\!}}  \big)
   - (\pave_i \ssm{}_{\!} V^\ell)\pave_\ell \pa_y \sm D_t x_{{}_{\!}j{}_{\!}}
   + (\pave_j \ssm {}_{\!} V^\ell)\pave_\ell
   \pa_y \sm D_t x_{{}_{\!}i{}_{\!}}
   + (\curl \pa_y \sm D_t^2 x)_{ij}.
 \end{multline}
 The last two terms will be too high-order after
 we apply $s-1$ derivatives since we do not want an estimate that
 involves
 $||V||_{H^{s+1}(\Omega)}$. To handle this, for each of
 these terms,
 we write
  $(\pave \ssm V)\cdot \pave \pa_y D_t \sm x
 = D_t (\pave \ssm V \cdot \pave \pa_y \sm x) - (\pave \ssm D_t V)\cdot \pave \pa_y \sm x
 + (\pave \ssm V)\cdot (\pave \ssm V)\cdot \pave \pa_y \sm x$. Writing:
 \begin{equation}
  R_{ij} = 2\big( -(\pave_i \ssm V^\ell) \pave_\ell \pa_y \sm x_j +
  (\pave_j \ssm V^\ell)\pave_\ell
  \pa_y\sm x_i\big),
 \end{equation}
 we have shown that for some constants $\alpha^{ijk\ell mn}, q^{ijk\ell}$:
 \begin{equation}
  D_t^{2\!} \curl \pa_y \sm x_{ij} \!-\! D_t R_{ij}\!
  =\!\curl \pa_y \sm D_t^2 x_{ij}
  + \!\!\sum\!\! \alpha_{j\ell}^{ik mn} (\pave_i\ssm V^j)(\pave_k \ssm V^\ell)
  \pave_m \pa_y \sm x_n
  \!+\! q_{jk}^{i \ell}(\pave_i \ssm D_t V^j)\pave_k \pa_y \sm x_\ell,
 \end{equation}
 For multi-index $I$ with $|I| \!=\! s-1$, we
 define $R^{I} \!\!= \! \pa_y^I R$. The estimate for $R^{I}$
 follows exactly as above estimates.

 Using \eqref{prodest}, we have that $R^{I}$
 satisfies the stated estimate so
 it just remains to control the terms in the sum after applying
 $\pa_y^I$. To control the second term in the sum, we note that we also have
 \eqref{prodest} with $V$ replaced by $D_tV = -\pave h - \pave \phi$,
 using the estimates in Lemma \ref{coercivitylem1} for $\pave h$.

To control the first term in the sum, we argue as in the proof of
\eqref{prodest}.
 If $|J| + |K| + |L| = s-1$ and
 either $|J|, |K| \leq 2$ then $||\pa_y^J \pave \ssm V||_{L^\infty(\Omega)}
 ||\pa_y^K \pave \ssm V||_{L^\infty(\Omega)} ||\pa_y^L \pave \pa_y \sm x||_{L^2(\Omega)}
 \leq C(M) ||\sm x||_{H^{s+1}(\Omega)}$ and if instead one of $|J|, |K| \geq
 3$ then without loss of generality it is $|J|$ and then $|K|, |L| \leq s-4$,
 so by Sobolev embedding, $||\pa_y^J \pave \ssm V||_{L^2(\Omega)}
 ||\pa_y^K \pave \ssm V||_{L^\infty(\Omega)}
 ||\pa_y^L \pave \pa_y \sm x||_{L^\infty(\Omega)}
 \leq C(M) ||V||_{H^s(\Omega)}^2 ||\sm x||_{H^{s}(\Omega)}$,
 as required. Finally, using the same arguments as above
 we can re-write $\pa_y^I \curl \pa_y \sm D_tV$ in terms of
 $\pa_y^I \curl \pa_y D_tV$ and terms with $L^2$ norms
 bounded by the right-hand side of \eqref{arbitrarycurl}. Finally, we
 note that:
 \begin{equation}
  ||\pa_y^I ([\curl, \pa_y] D_tv)||_{L^2(\Omega)}
  \leq C(M, ||\xve||_{H^s(\Omega)})
  ||\xve||_{H^{s+1}(\Omega)} ||D_tV||_{H^s(\Omega)},
 \end{equation}
 which follows from the fact that $([\curl,\pa_y]D_tV)_{ij} = (\pa_y A_{\m j}^a)\pa_a
 D_t v_i - (\pa_y A_{\m j}^a)\pa_a D_tv_j$ with $v_i = \delta_{ij} V^j$,
 and using the above arguments. Using the smoothed-out Euler's equations
 $D_t V \!= -\pave h - \pave \phi$ and Lemma \ref{coercivitylem1}, we have
 \eqref{arbitrarycurl}.
\end{proof}

For the next estimate, we write $\Eee^s\!
= \W^s\! + \E^s\! + \ve^2 \E^s_\ve$ for the part of $\Ee$ that does not
involve $\A$.
We  have:
\begin{cor}
 \label{coercivitylem3}
For each $s \geq 0$, there is a continuous function
$\C_s$ depending on $M'\!$, $L$, $\delta^{-1}\!$, $T$,
$\A^{s-1}(0)$, $\W^{s-1\!}(0)$, $\sup_{\,0 \leq t \leq T\,} \Eee^{s-1}(t)$
so that if $V\!\! \in\! \X^{s+1}(T)$ satisfies \eqref{smpbmdef}, then with $\A_s\!$
as in \eqref{Adef}:
\begin{equation}
 \A_s(t)
 \leq \C_s \big(\A_s(0) +  \int_0^t (1+\tau) \Eee^s(\tau)\,  d\tau\big).
 \label{Aestimate}
\end{equation}
\end{cor}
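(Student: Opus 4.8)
The plan is to estimate the time derivative of $\A^s$, which coincides with the quantity $\A_s$ in the statement (see \eqref{Adef}). Since $\Omega$ and the measure $dy$ used to define the norms on $\Omega$ do not depend on $t$, and $D_t$ commutes with the spatial derivatives $\pa_y$, for any function $f$ we have $\tfrac{d}{dt}\|f\|_{H^{s-1}(\Omega)}^2 = 2\langle f, D_t f\rangle_{H^{s-1}(\Omega)} \le 2\|f\|_{H^{s-1}(\Omega)}\|D_t f\|_{H^{s-1}(\Omega)}$, so it suffices to bound $\|D_t\div\pa_y\sm x\|_{H^{s-1}(\Omega)}$ and $\|D_t\curl\pa_y\sm x\|_{H^{s-1}(\Omega)}$, each weighted by $\sqrt{\A^s}$. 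For the divergence this comes directly from \eqref{arbitrarydiv}: the term $\|\div V\|_{H^s(\Omega)}$ is rewritten using the continuity equation \eqref{resultofproj}, $\div V = -e'(h)D_t h$, and bounded in terms of $\A^s+\W^s+\E^s$ by Lemma \ref{coercivitylem1}; the smoothing-commutator term $\|\sm\div\pa_y V-\div\sm\pa_y V\|_{H^{s-1}(\Omega)}$ is treated with the smoothing estimates, its $\ve$-gaining part squaring to $\lesssim\ve^2\|V\|_{H^{s+1}(\Omega)}^2 \lesssim \Eee^s$ by \eqref{Vepscoercivity} (absorbing $\ve^2\A^s\le\A^s$), while the residual bounded-operator part needs only $\|V\|_{H^{(s,1/2)}(\Omega)}^2$, controlled by \eqref{easycoer}. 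This gives $\tfrac{d}{dt}\|\div\pa_y\sm x\|_{H^{s-1}(\Omega)}^2 \le \C_s(\A^s+\Eee^s)$.

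The curl term is the crux. Here \eqref{arbitrarycurl} controls not $D_t\curl\pa_y\sm x$ but the second-order combination $D_t^2\pa_y^I\curl\pa_y\sm x - D_t R^I = D_t\Xi^I$, where $\Xi^I := D_t\pa_y^I\curl\pa_y\sm x - R^I$. On the right-hand side of \eqref{arbitrarycurl} the term $\curl D_t V$ vanishes, because by \eqref{smpbmdef} the one-form dual to $D_t V$ is $-\pave(h+\phi)$, a gradient in the $\xve$-coordinates; the smoothing commutator is handled as above; and $\A^s+\E^s\le\A^s+\Eee^s$. Thus $\|D_t\Xi^I\|_{L^2(\Omega)}^2 \le \C_s(\A^s+\Eee^s)$, and integrating once in time, $\|\Xi^I(t)\|_{L^2(\Omega)} \le \|\Xi^I(0)\|_{L^2(\Omega)} + \int_0^t\sqrt{\C_s(\A^s(\tau)+\Eee^s(\tau))}\,d\tau$, where $\|\Xi^I(0)\|_{L^2(\Omega)}$ is controlled by the initial data (via $D_t\sm x=\sm V$, the Euler equations and the smoothing estimates at $t=0$) in terms of $\A^s(0)$ and the lower-order initial energies already carried by $\C_s$. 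Writing $D_t\pa_y^I\curl\pa_y\sm x = R^I+\Xi^I$ and using $\|R^I\|_{L^2(\Omega)} \le \C_s(\sqrt{\A^s}+\sqrt{\Eee^s})$ — which holds since $R^I$ is a sum of products $(\pave\ssm V)(\pave\pa_y\sm x)$ whose top-order piece pairs $\pave\ssm V\in L^\infty$ with $\|\sm x\|_{H^{s+1}(\Omega)}$ (controlled elliptically by $\A^s$ and the boundary term in $\E^s$) and whose remaining pieces need only $\|V\|_{H^{(s,1/2)}(\Omega)}$, so that $\|V\|_{H^{s+1}(\Omega)}$ never occurs and no inverse power of $\ve$ is produced — we arrive at
\begin{equation*}
\frac{d}{dt}\|\curl\pa_y\sm x\|_{H^{s-1}(\Omega)}^2 \le \C_s\Big(\A^s(t)+\Eee^s(t)+\sqrt{\A^s(t)}\int_0^t\sqrt{\A^s(\tau)+\Eee^s(\tau)}\,d\tau\Big).
\end{equation*}

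Adding the two contributions and applying Young's inequality together with Cauchy--Schwarz, $\sqrt{\A^s(t)}\int_0^t\sqrt{\A^s+\Eee^s}\,d\tau \le \tfrac12\A^s(t)+\tfrac t2\int_0^t(\A^s+\Eee^s)\,d\tau$, produces the closed inequality $\tfrac{d}{dt}\A^s \le \C_s\big(\A^s(t)+\Eee^s(t)+t\int_0^t(\A^s+\Eee^s)\,d\tau\big)$; integrating in $t$, the nested time integral (with its extra factor of $t$) yields the weight $(1+\tau)$ in front of $\Eee^s$, and a Grönwall argument absorbs the $\A^s$ terms and gives \eqref{Aestimate}. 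The main obstacle is the curl estimate: the second-order estimate \eqref{arbitrarycurl} must be integrated once in time while every term is kept free of inverse powers of $\ve$ — which is exactly why $R^I$ is subtracted in \eqref{arbitrarycurl} and why one is careful never to commute past, or invoke, more regularity than the energy $\E^s$ itself provides — and then the resulting nested time integrals and initial-data terms must be bookkept so that the final estimate closes with $\C_s$ depending only on $M'$, $L$, $\delta^{-1}$, $T$, $\A^{s-1}(0)$, $\W^{s-1}(0)$ and $\sup_{[0,T]}\Eee^{s-1}$.
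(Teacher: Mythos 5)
Your proposal is correct and follows essentially the same route as the paper: both arguments reduce to Lemma \ref{divcurlbounds}, use $\div V=-e'(h)D_th$ and $\curl D_tV=0$ from \eqref{smpbmdef}, absorb the $\ve$-weighted smoothing commutators via \eqref{Vepscoercivity}, and exploit that \eqref{arbitrarycurl} only controls the curl at second order in time — your single integration of $\Xi^I=D_t\pa_y^I\curl\pa_y\sm x-R^I$ followed by integrating $\tfrac{d}{dt}\A^s$ is the same bookkeeping as the paper's direct double time-integration, and both produce the nested integral responsible for the $(1+\tau)$ weight before closing with Grönwall and induction on $s$.
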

\begin{proof}
 Using Lemmas \ref{smproperties} and \ref{coercivitylem1}, we have:
 \begin{equation}
  ||[\sm, \div] V||_{H^{s-1}}^2 +
  ||[\sm, \curl] V||_{H^{s-1}}^2
  \leq \ve^2 C' (||\xve||_{H^{s+1}}^2
  + ||V||_{H^{s+1}}^2)
  \leq C'
  (\A^s\! + (1+\delta^{-1}) \Eee^s),
  \label{weighted1}
\end{equation}
 with $C' =  C'(M', L, \delta^{-1}, \A^{s-1}, \Eee^{s-1})$, and with
 $H^{k} = H^k(\Omega)$,
 noting that the highest-order term in the second inequality is multiplied by
 $\ve^2$. Integrating \eqref{arbitrarydiv} once in time, we have:
 \begin{equation}
  ||\div \sm \pa_y x(t)||_{H^{s-1}(\Omega)}^2
  \leq  ||\div \sm\pa_y x(0)||_{H^{s-1}(\Omega)}^2+
  \int_0^t ||D_t \div \sm \pa_y x(\tau)||_{H^{s-1}(\Omega)}^2\, d\tau.
  {}
 \end{equation}
 If $|I| = s-1$ then with $R^{I}$ as defined in
 Lemma \ref{divcurlbounds}, then integrating \eqref{arbitrarycurl} twice in
 time, we also have:
 \begin{multline}
  ||\pa_y^I \curl \sm \pa_y x(t)||_{L^2}^2
   \leq ||\pa_y^I \curl \sm \pa_y x_0||_{L^2}^2
    + \int_0^t ||D_t \pa_y^I \curl \pa_y \sm x_0 - R^{I}_0||_{L^2}^2
   + ||R^{I}(\tau)||_{L^2}^2\, d\tau\\
   + \int_0^t \int_0^{\tau} ||D_t^2 \pa_y^I \curl \pa_y \sm x(\tau')
   - D_t R^{I}(\tau')||_{L^2}^2\, d\tau' d\tau,
 \end{multline}
 with $L^2 = L^2(\Omega)$ and $R^{I}_0 = R^{I}|_{t = 0}$.
We have $$|| D_t \pa_y^I
 \curl \pa_y \sm x(0) - R^{I}(0)||_{L^2(\Omega)} \leq
 \C_s''(M, L, \delta^{-1}\!\!, \A^{s-1}(0),  \Eee^{s-1}_0)(\A^{s}(0) \!+ \Eee^s_0).$$
 We now use the facts that $\div\! V \!\!= -e'(h) D_t h, \curl D_t V \!\!= 0$,
 the estimates \eqref{arbitrarydiv}-
 \eqref{arbitrarycurl}.
  Using \eqref{weighted1} and  Lemma \ref{divcurlbounds}
  for $R$, we get:
 \begin{align}
  \A^s(t) &\leq \A^s(0)
  + \C_s'\Big(\int_0^t \A^s(\tau) + \Eee^s(\tau)\, d\tau
  + \int_0^t \int_0^\tau \A^s(\tau') + \Eee^s(\tau')\, d\tau' d\tau\Big)\\
  &\leq
   \A^s(0) + \C_s' \Big(\int_0^t (1 + \tau) \A^s(\tau)\, d\tau
  + \int_0^t (1 + \tau) \Eee^s(\tau)\, d\tau\Big).
  \label{thisone}
 \end{align}
 with $\C_s' = \C_s'(M', L, \delta^{-1}, \A^{s-1}(t), \Eee^{s-1}(t), \A^{s-1}(0))$.
 We now assume that we have the estimate \eqref{Aestimate} for $s = 0,..., m-1$.
 By the inductive assumption, \eqref{thisone} holds with $s = m$
 and with $\C_m'$ replaced with
 $\C_m''$ depending on $M'\!\!, L, \delta^{-1}\!\!, T, \A^{m-1}(0), \W^{m-1}(0), \sup_{0 \leq t \leq T} \Eee^{m-1}(t)$.
 Making this substitution into \eqref{thisone} with $s = m$ and letting
 $H(t)$ denote the right-hand side, we have that
 $H'(t) \leq \C_m'' ( (1 + t) \Eee^m(t) + (1+t) H(t))$. Multiplying
 both sides by the integrating factor
 $e^{-(t + t^2\!/2)\C_m'' }$ and integrating gives the result.
\end{proof}

Combining Lemmas \ref{coercivitylem1}, \ref{coercivitylem2} and Corollary
\ref{coercivitylem3}, we have:
\begin{cor}
 \label{usefulcoercivity}
With the same hypotheses as Lemmas \ref{coercivitylem1}-\ref{coercivitylem2},
there are continuous functions $\C_s$ with
$\C_s = \C_s\big(M, L, \delta^{-1},T,  \A^{s-1}(0),\W^{s-1}(0),
\sup_{ \,0 \leq t \leq T\,} \Ee^{s-1}(t)\big)$ so that for $1 \leq s \leq r-1$:
\begin{equation}
 ||\xve(t)||_{H^{s+1}(\Omega)}^2 +
 ||\pave h(t)||_{s}^2
 + ||V(t)||_{H^{(s,1/2)}(\Omega)}^2
 + \ve^2 ||V(t)||_{H^{s+1}(\Omega)}^2
 \leq \C_s
{\, \sup}_{\,0 \leq t\leq T\,} \Ee^s(t), \qquad 0 \leq t \leq T.
 {}
\end{equation}
\end{cor}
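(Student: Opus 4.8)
The plan is to combine the three coercivity results that precede this corollary, namely Lemma \ref{coercivitylem1}, Lemma \ref{coercivitylem2}, and Corollary \ref{coercivitylem3}, with an induction on $s$. The point is that each of these results controls one piece of $\Ee^s = \A^s + \W^s + \E^s + \ve^2 \Eev^s$ in terms of $\Ee^s$ itself modulo a time integral and lower-order ($\Ee^{s-1}$) data, so the content of the corollary is really just the bookkeeping needed to close the estimate for the quantities $\|\xve\|_{H^{s+1}}$, $\|\pave h\|_s$, $\|V\|_{H^{(s,1/2)}}$ and $\ve^2 \|V\|_{H^{s+1}}^2$ in terms of $\sup_{[0,T]}\Ee^s$.

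First I would record that by Lemma \ref{coercivitylem1}, specifically the first estimate in \eqref{easycoer} together with \eqref{Vepscoercivity}, all four quantities on the left-hand side of the claimed inequality are bounded pointwise in $t$ by a constant $C_s$ (depending only on $M'$, $L$, $\delta^{-1}$, and the lower-order quantities $\A^{s-1}$, $\W^{s-1}$, $\E^{s-1}$) times $\A^s + \W^s + (1+\delta^{-1})\E^s + \ve^{-2}\E^s_\ve = \Ee^s$ up to the harmless $\ve^2/\ve^{-2}$ bookkeeping; more precisely, $\ve^2\|V\|_{H^{s+1}}^2 \le C_s(\A^s + \W^s + (1+\delta^{-1})\E^s + \E^s_\ve)$, and since $\E^s_\ve \le \ve^{-2}(\ve^2 \Eev^s)$ this is $\le C_s \ve^{-2}\Ee^s$, but in fact one only needs $\ve$ sufficiently small so that $\ve^2 \le 1$ and then $\E^s_\ve \le \Eev^s \le \ve^{-2}\Ee^s$; the clean statement is simply that the left side is $\le C_s \Ee^s(t) \le C_s \sup_{[0,T]}\Ee^s$. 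The one subtlety is that the constant $C_s$ in Lemma \ref{coercivitylem1} depends on $\A^{s-1}, \W^{s-1}, \E^{s-1}$ evaluated at time $t$, not just on the initial data, so one must first promote those lower-order quantities to be controlled by $\sup_{[0,T]}\Ee^{s-1}$ — but this is immediate from the definition $\Ee^{s-1} \ge \A^{s-1} + \W^{s-1} + \E^{s-1}$. Thus $C_s$ can be taken to depend on $M'$, $L$, $\delta^{-1}$, $T$, $\A^{s-1}(0)$, $\W^{s-1}(0)$ and $\sup_{[0,T]}\Ee^{s-1}(t)$ as stated, after possibly invoking Lemma \ref{coercivitylem2} and Corollary \ref{coercivitylem3} to re-express $\sup_{[0,T]}(\A^{s-1} + \W^{s-1})$ in terms of the initial data and $\sup_{[0,T]}\Ee^{s-1}$; that is the role of the induction.

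Concretely the induction runs as follows. For the base case one notes the low-regularity bounds are built into the a priori assumptions \eqref{ubd2}, \eqref{strongM}. Assuming the corollary holds for $s-1$, one has $\sup_{[0,T]}(\|\xve\|_{H^s}^2 + \|\pave h\|_{s-1}^2 + \ldots) \le \C_{s-1}\sup_{[0,T]}\Ee^{s-1}$; feeding this into Lemma \ref{coercivitylem2} gives $\sup_{[0,T]}\W^s \le C_s'(\W^s(0) + \int_0^T \A^s + \E^s)$ and into Corollary \ref{coercivitylem3} gives $\sup_{[0,T]}\A^s \le \C_s(\A^s(0) + \int_0^T(1+\tau)\Eee^s(\tau)d\tau)$, where $\Eee^s = \W^s + \E^s + \ve^2\Eev^s$. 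One does not actually need to bound $\sup_{[0,T]}\Ee^s$ here — rather, the hypothesis of the corollary is already that $V$ solves the smoothed Euler equations and (implicitly, via the surrounding theorems) that $\sup_{[0,T]}\Ee^s$ is the controlling quantity; so the cleanest route is to simply invoke Lemma \ref{coercivitylem1} to get the pointwise-in-$t$ bound $\text{LHS}(t) \le C_s\,\Ee^s(t) \le C_s\sup_{[0,T]}\Ee^s$, and then use the induction only to replace the $t$-dependent dependence of $C_s$ by the initial-data-plus-$\sup\Ee^{s-1}$ dependence. I expect the main obstacle — really the only place one must be careful — is tracking exactly which lower-order norms appear in the constants of Lemma \ref{coercivitylem1} and ensuring they are all dominated by $\sup_{[0,T]}\Ee^{s-1}$ via the inductive hypothesis together with Lemma \ref{coercivitylem2} and Corollary \ref{coercivitylem3}, so that the final $\C_s$ has precisely the claimed argument list $\big(M,L,\delta^{-1},T,\A^{s-1}(0),\W^{s-1}(0),\sup_{[0,T]}\Ee^{s-1}\big)$; no new analytic estimate is required.
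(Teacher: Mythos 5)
Your main line — apply Lemma \ref{coercivitylem1} pointwise in $t$, dominate the $t$-dependent arguments of the constant $C_s(M',L,\delta^{-1},\A^{s-1},\W^{s-1},\E^{s-1})$ by $\sup_{[0,T]}\Ee^{s-1}$ via the trivial inequality $\A^{s-1}(t)+\W^{s-1}(t)+\E^{s-1}(t)\leq \Ee^{s-1}(t)$, then bound the remaining right-hand sides by $\Ee^s(t)\leq \sup_{[0,T]}\Ee^s$ — is the intended proof and is correct. But two things muddy your write-up.

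First, the $\ve$-bookkeeping is not resolved. You write ``$\A^s+\W^s+(1+\delta^{-1})\E^s+\ve^{-2}\E^s_\ve=\Ee^s$'', but by definition $\Ee^s=\A^s+\W^s+\E^s+\ve^2\Eev^s$, with weight $\ve^2$, not $\ve^{-2}$. Your more careful chain gives $\ve^2\|V\|_{H^{s+1}}^2\leq C_s(\ve^2\A^s+\ve^2\W^s+\ve^2(1+\delta^{-1})\E^s+\E^s_\ve)$, and since only $\ve^2\E^s_\ve$ sits inside $\Ee^s$, the best you get from this is $\E^s_\ve\leq\ve^{-2}\Ee^s$, i.e.\ $\ve^2\|V\|_{H^{s+1}}^2\leq C_s\ve^{-2}\Ee^s$, which is \emph{not} the claimed $\ve$-uniform bound. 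You notice this and then simply assert ``the clean statement is simply that the left side is $\leq C_s\Ee^s(t)$'' without resolving the contradiction. To close this, one needs \eqref{Vepscoercivity} to read $\ldots+\E^s_\ve$ rather than $\ldots+\ve^{-2}\E^s_\ve$ (with the weaker reading, multiplying by $\ve^2$ gives $\ve^2\E^s_\ve\leq\Ee^s$ directly); you should either argue for this correction explicitly or note that as stated the bound loses a factor $\ve^{-2}$.

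Second, the induction and the invocations of Lemma \ref{coercivitylem2} and Corollary \ref{coercivitylem3} are unnecessary and somewhat confused. You correctly observe that the domination of $\A^{s-1}(t),\W^{s-1}(t),\E^{s-1}(t)$ by $\sup_{[0,T]}\Ee^{s-1}$ ``is immediate from the definition,'' but then still propose an induction that ``feeds'' the level-$(s-1)$ corollary into Lemma \ref{coercivitylem2} and Corollary \ref{coercivitylem3}. Those lemmas produce time-integral bounds for $\W^s$ and $\A^s$ and are needed elsewhere (to prove the Gr\"onwall inequality \eqref{energygoal}), but not here: for this corollary a pointwise application of Lemma \ref{coercivitylem1} together with the trivial domination by $\sup\Ee^{s-1}$ and $\sup\Ee^s$ suffices, and the extra machinery does not contribute.
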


\begin{proof}[Proof of Theorem \ref{eneestthm}]
We will prove that:
\begin{equation}
\Ee^s(t) \leq \F'_s\big(M', L, \delta^{-1}, T, {\sup}_{\,0 \leq t \leq T\,}
\Ee^{s-1}(t)\big)  \Big( \Ee^s_0 +
\int_0^t (1 + \tau)^2 \Ee^s(\tau)\, d\tau\Big), \qquad 0 \leq t \leq T,
\label{energygoal}
\end{equation}
for a continuous function $\F'_s$ .
If the estimate
\eqref{enest1} holds for $s = 0,..., m-1$ then \eqref{energygoal} implies:
\begin{equation}
\Ee^m(t) \leq \F''_m(M', L, \delta^{-1}, T, \Ee^{m-1}_0)
\Big( \Ee^m_0 + \int_0^t (1 + \tau)^2 \Ee^s(\tau)\, d\tau \Big),
\qquad 0 \leq t \leq T.
\end{equation}
 Letting $H_m(t)$ denote the right-hand side of
this expression then ${d} H_m/{dt} \leq (1 + t)^2 \F''_m H_m$ and so multiplying
by $e^{-((1 + t)^3\!/3 - 1)\F''_m }$ and integrating shows that \eqref{enest1}
holds for $s = m$ as well.

By Lemma \ref{coercivitylem2} and Corollary \ref{divcurlbounds}, we have shown
that $\A^s, \W^s$ are bounded by the right-hand side of \eqref{energygoal}
and so it just remains to prove that $\E^s + \ve^2 \E^s_\ve$ is bounded
by the right-hand side of \eqref{energygoal}.
 We will prove that, with $\C_s' = \C_s'\big(M', L, \delta^{-1}, T, \sup_{\,0 \leq t \leq T\,}
 \Ee^{s-1}(t)\big)$:
 \begin{equation}
  \frac{d}{dt} (\E^{s_{\!}}(t) + \ve^2 \E^{s\!}_\ve(t)) \leq \C_{\!s}' \Big( \! \E^{s\!}(t) +
  \ve^2 \E^{s\!}_\ve(t) + \A^{s\!}(t) + \W^{s\!}(t) +
  \!\int_0^t \!\!\!(1_{\!} +\tau)\big(\E^{s\!}(\tau_{{}_{\!}}) + \ve^2 \E^{s\!}_\ve(\tau_{{}_{\!}})
  + \A^{s\!}(\tau_{{}_{\!}}) + \W^{s\!}(\tau_{{}_{\!}})\big) d\tau\! \Big).
  \label{venergygoal}
 \end{equation}
Multiplying both sides of \eqref{venergygoal} by
the integrating factor $e^{-t \C_s'}$ gives:
 \begin{equation}
  \E^s_\ve(t) + \ve^2 \E^s_\ve(t) \leq \F_s'
  \Big(\E^s(0) + \ve^2 \E^s_\ve(0) + \int_0^t (1 + \tau)^2 \Ee^s(\tau)
  \, d\tau
  \Big),
 \end{equation}
 with $\F_s' \!\!=\! \F_s'\big(M'\!\!, L, \delta^{-1}\!\!, T\!, \sup_{\,0 \leq t \leq T} \Ee^{s-1}(t)\big)$.
  Together with the estimates for
  $\A, \W$, this proves \eqref{energygoal}.

  We start by controlling the time derivative of $\K^s$.
  By \eqref{smpbmdef}, $\curl D_t V\!\! = 0$ and so
  $D_t \curl V \!\!= -(\pave \ssm V)(\pave V)$ by \eqref{dinv}. Using also
 \eqref{leibpave} to control $[\fdhm, \curl] V$ and the product estimate
 \eqref{product}, it follows that:
 \begin{equation}
  ||D_t \curl \fdhm V||_{H^{s-1}(\Omega)}
  \leq C(M, ||\xve||_{H^s(\Omega)}, ||V||_{H^s(\Omega)})
  ||\fdhm V||_{H^s(\Omega)},
 \end{equation}
 and so using Corollary \ref{usefulcoercivity} and induction, this implies that
  ${d}\K^s /{dt}  \leq C(M) \Ee^s$. To control $\K_\ve^s$, the same argument
  allows us to control the curl term and to control the divergence term, we
  write:
  \begin{equation}
   D_t \div \pa V = -\pa (\Dve h + \Dve \phi) - [D_t, \div] \pa V
   + [\div, \pa] (\pave h + \pave \phi).
   {}
  \end{equation}
  Since $[D_t, \div] \pa V = -(\pave_i \ssm V^k)\pave_k \pa V^i$ and
  $[\div, \pa](\pave h + \pave \phi)
  = -(\pave_i \pa \xve^k)\pave_k (\pave_i h + \pave_i \phi)$, after using
  the product rule \eqref{product}, the wave equation \eqref{smwavedef}
  along with Lemmas \ref{fbdslem},\ref{gestlem},
  the definition $\Dve \phi = 4\pi \rho$, and Corollary \ref{usefulcoercivity},
  we can bound $\ve^2 {d}_{\,} ||\div \pa V||_{H^{s-1}(\Omega)}^2/{dt}$
  by the right-hand side of \eqref{venergygoal}.

  It remains to prove that for $\mu \!=\! 0,..., N$ and
  $|I| \!=\! s-\!1$, ${d}(\E^{I\!,\mu}\! + \ve^2 \E^I_\ve)/{dt}$ is
  bounded by the right-hand side of
  \eqref{venergygoal},
  and for this we use the energy identity \eqref{enident}
and an approximation argument. We could approximate $V, h, \xve$ by smooth
functions but since $\xve$ is smooth in tangential directions and we only apply
tangential derivatives, it will suffice to just approximate $V, h$.
We start by noting that under
our hypotheses, $V, D_t V, D_t h, \pave h \!\in\!
H^{r\!}(\Omega)$.
Indeed, $D_t V \!= -\pave h - \pave \phi$ and so by
Corollary \ref{usefulcoercivity} and
Theorem \ref{main theorem, ell est of phi}, we have $D_t V, \pave h \!\in\! H^r(\Omega)$.
To see that $D_t h \!\in\! H^r(\Omega)$, we write
$\pa_a D_t h = A_{\m a}^i \pave_i D_t h = A_{\m a}^i D_t \pave_i h
+ A_{\m a}^i \pave_i \ssm V^k \pave_k h$. The $H^{r-1}(\Omega)$ norm of the first term here
is bounded by $C(M')||\xve||_{H^{r+1}(\Omega)}||\pave h||_{r}$ using
\eqref{dinv} and the fact that $H^{r+1}(\Omega)$ is an algebra. The second
term here is bounded by $C(M) ||V||_{H^r(\Omega)} ||\pave h||_r$ for the
same reason. By \eqref{coercivitylem1} we control $||\pave h||_r$ and thus
$||D_t h||_{H^r(\Omega)}$.

Therefore, there is a sequence of smooth vector fields $V_{(n)}$
and a sequence of smooth functions $h_{(n)}$ with $h_{(n)}|_{\pa \Omega} = 0$
 so that $(V_{(n)}(t,\cdot), D_t V_{(n)}(t,\cdot), D_t h_{(n)}(t,\cdot),
 \pave h_{(n)}(t,\cdot))$
 $\to  (V(t,\cdot), D_t V(t,\cdot), D_t h(t,\cdot), \pave h(t,\cdot) )$ in $H^r(\Omega)$.
 We claim that for all $I, J$ with $|I| =s \leq r-1$, $|J| = s-1$
 and all $\mu = 0,..., N$, we have:
 \begin{equation}
  \pave \fdhm T^I h_{(n)} \to \pave \fdhm T^I h,
  \qquad \fdhm \div T^J \fdhm V_{(n)} \to \fdhm \div T^J \fdhm V,
  \qquad \text{ in } L^2(\Omega).
 \end{equation}
 These claims follow after writing $\fdhm \!\div T^J \fdhm V_{(n)} \!= \fdhm\fdhm T^J\! \div V_{(n)} + \fdhm [T^J\fdhm\!\!, \div] V_{(n)}$ and
 $\pave \fdhm T^I h_{(n)} = \fdhm T^I \pave h_{(n)} + [\fdhm T^i, \pave] h_{(n)}$.
 In each of these expressions, the first term converges in $L^2(\Omega)$.
 The commutator terms involve tangential derivatives of $\xve$ to highest order
 and lower-order norms of $V_{(n)}$ and so these converge as well.
 By the continuity of the trace map:
 \begin{equation}
  T^I \fdhm V_{(n)} \to T^I \fdhm V,
  \qquad
  D_t T^I \fdhm V_{(n)} \to D_t T^I \fdhm V,
  \qquad \text{ in } L^2(\pa \Omega).
 \end{equation}

 We now apply Proposition \ref{enident} with
 $
  \alpha\! =\!  V_{(n)}, q \! =\!  h_{(n)},  \chi\!  = \! \pave \ssm\!  V.
 $
 For sufficiently large $n$, the assumptions \eqref{qchiassump1}-\eqref{qchiassump2} hold with $K \! \! =\!  2\delta \! + \! 2M' \! \! + \! 2L$. With $\E^I_n,
 \E^I_{n, \ve}$
 defined by \eqref{e1def}-\eqref{e2def} with $\alpha \! =\!  V_{(n)}$, $q \! =\!  h_{(n)}$ let
 $\Rr_n \!+\! \ve \Rr_{n,\ve} = \!\sum_{\mu = 0}^N \sum_{|I| \leq s} \Rr^{I\!,\mu}_n\!
 + \ve \Rr^{I\!,\mu}_{n,\ve}$, where
 $\Rr^{I\!,\mu}_{n,\ve}\! =\! ||\gamma\!\cdot \!\fdhm D_tT^I V_{(n)} \!-\!
(\fdhm T^I \gamma)\!\cdot\! \pave h_{(n)}||_{L^2(\pa \Omega)}$ and
\begin{multline}
\Rr_n^{I,\mu} = ||D_t \fdhm T^I V_{(n)} - \pave \big( (T^I \fdhm \xve^j )
(\pave_j h_{(n)}) + T^I \fdhm h_{(n)}\big)||_{L^2(\Omega)}\\
+ ||\fdhm  \big(e'(h) D_t T^J h_{(n)} -
(\pave_i T^I\fdhm \xve^j)(\pave_j \ssm V^i) +
 \div T^J \fdhm V_{(n)}\big)||_{L^2(\Omega)}\\
+ ||D_t T^I \fdhm x - T^I \fdhm V_{(n)}||_{L^2(\pa \Omega)},
\end{multline}
 and $T^I\!\! = \!ST^J$ for $S \!\in \!\T\!, |J|\! =\! s\!-\!1$.
 The energy inequality \eqref{enident} then gives, with $C_0=C_0(M'\!\!, \delta^{-1})$,
 \begin{multline}
  \frac{d}{dt} \E^I_n
  \leq \sqrt{\E^s_n}
  C_0\Big( \Rr_n \!+ ||V_{(n)}||_{H^{(s,1_{\!}/2)}(\Omega)} +
  ||\sm x||_{H^{s}(\Omega)}
  + ||\sm x||_{H^{s+1_{\!}/2}(\pa \Omega)}
  + ||h_{(n)}||_{H^{s+1}(\Omega)}+
  || D_{{}_{\!}t} h_{(n)}||_{H^s(\Omega)}\Big)\\
  + C_s \E^s_n + C_s \ve||V_{(n)}||_{H^{s+1/2}(\pa \Omega)}
  ||\sm x||_{H^{s+1/2}(\pa \Omega)},
 \end{multline}
 \begin{equation}
  \frac{d}{dt} \E^I_{n,\ve}
  \leq \sqrt{\E^s_{n,\ve}} (\Rr_{n,\ve} +
  \ve^{-1} ||\sm x||_{H^{s+1/2}(\pa \Omega)}).
 \end{equation}
 By the above, using Lemma \ref{lowerorder}
 to control $\lim_{n \to \infty} \Rr_n$ and Corollary \ref{usefulcoercivity}
 again, this implies \eqref{venergygoal}.
\end{proof}

Before proving Corollary \ref{useful}, we prove the following
simple lemma:
\begin{lemma}
 \label{bootlem1}
 Fix $r \geq 6$, and write $V_k^\ve = D_t^k V|_{t = 0},
 h_k^\ve = D_t^k h|_{t = 0}$. Suppose that
 the bound \eqref{aprioriM0L0} holds for $x_0, V_k^\ve$ and $h_k^\ve$.
 There is a continuous function $\T_r = \T_r(M_0', L_0, \delta_0^{-1},
 \Ee^r_0)$ so that if $T \leq \T_r, $ and $V \in \X^{r+1}(T)$ satisfies
 the smoothed-out Euler equations \eqref{smpbmdef}, then
 for $ 0\leq t \leq T$:
 \begin{align}
   ||\pa \xve(t,\cdot)/\pa y||_{L^\infty} +
   ||\pa y(t,\cdot)/\pa \xve||_{L^\infty} +
  {\tsum}_{|I| + k \leq 3}||\pa_y^{I} D_t^k V(t,\cdot)||_{L^\infty} +
  ||\xve(t,\cdot)||_{H^6(\Omega)} &\leq 4M_0',
  \\
  {\tsum}_{|I| + k \leq 3} ||\pa_y^{I} \pave h(t,\cdot)||_{L^\infty} +
  ||D_t^k h(t,\cdot)||_{L^\infty} &\leq 2L_0,\\
  -\pave_N h(t,\cdot)|_{\pa \Omega} &\geq  \delta_0/2.
 \end{align}
\end{lemma}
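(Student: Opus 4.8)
The plan is a continuity (bootstrap) argument resting on the uniform energy estimate of Theorem~\ref{eneestthm}. Let $S$ be the set of $t\in[0,T]$ for which the three displayed estimates hold on $[0,t]$ with the stated constants $4M_0'$, $2L_0$, $\delta_0/2$. By \eqref{aprioriM0L0} these hold with strict inequality at $t=0$, so $0\in S$; and $S$ is closed because $V\in\X^{r+1}(T)$, $D_t^k h\in L^\infty(0,T;H^{r-k}(\Omega))$, and the relevant $L^\infty$- and $H^6$-norms, together with $\pave_N h|_{\pa\Omega}$, depend continuously on $t$ (using $H^2(\Omega)\hookrightarrow L^\infty(\Omega)$ and the trace inequality). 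It remains to prove $S$ is relatively open, which together with connectedness yields $S=[0,T]$ for every $T\le\T_r$.

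Fix $t_0\in S$. On $[0,t_0]$ the displayed bounds imply all the a priori hypotheses needed by the energy machinery: \eqref{ubd2} and \eqref{strongM} with $M'=CM_0'$ (using $H^6\hookrightarrow C^3$ to bound $\sum_{|J|\le3}|\pa_y^J\xve|$), \eqref{Lassumpwave} with $L=CL_0$, the Taylor sign condition \eqref{int:tsc} with $\delta=\delta_0/2$, and \eqref{qchiassump1}--\eqref{qchiassump2} with $K=C(M_0'+L_0+\delta_0^{-1})$ --- for \eqref{qchiassump1} one uses $|\pave h|\ge|\pave_N h|\ge\delta_0/2$ on $\pa\Omega$ and the pointwise bounds on $D_t\pave h$ and $\pa_y\ssm V$. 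Hence Theorem~\ref{eneestthm} applies at level $s=r-1\ge5$ and gives $\Ee^{r-1}(t)\le\mathcal B:=\mathcal F_{r-1}(CM_0',CL_0,2\delta_0^{-1},\Ee^{r-1}_0)\,\Ee^{r-1}_0$ on $[0,t_0]$. By Corollary~\ref{usefulcoercivity} together with \eqref{easycoer} and \eqref{hardcoer}, this controls, uniformly in $\ve$, the quantities $||\xve(t)||_{H^r(\Omega)}$ (hence $||\xve(t)||_{H^6}$, as $r\ge6$), $||V(t)||_{\X^6}$, $||\pave h(t)||_{r-1}$ and $||D_t h(t)||_{r-1}$, all by $C\sqrt{\mathcal B}$.

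For openness one shows that, after shrinking $t_0$, the displayed bounds hold with the strictly smaller constants $2M_0'$, $\tfrac32 L_0$, $\tfrac34\delta_0$. For the pointwise quantities one integrates the material derivative from $t=0$; since $D_t$ commutes with $\pa_y$,
\[
 ||\pa_y^I D_t^k V(t)||_{L^\infty}\le||\pa_y^I D_t^k V(0)||_{L^\infty}+\int_0^t||\pa_y^I D_t^{k+1}V(\tau)||_{L^\infty}\,d\tau ,
\]
and for $|I|+k\le3$ the integrand is bounded, via $H^2\hookrightarrow L^\infty$, by $C||D_t^{k+1}V(\tau)||_{H^{5-k}}\le C||V(\tau)||_{\X^6}\le C\sqrt{\mathcal B}$; likewise for $\pa_y^I D_t^k\pave h$ (modulo the commutator $[D_t,\pave]=-(\pave\ssm V)\pave$, which is lower order under the bootstrap) using $||\pave h||_{r-1}$, and for $D_t^k h$ after replacing each $D_t^2$ by $\Dve$ plus lower-order terms via the enthalpy equation \eqref{enth1}. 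For $||\pa y/\pa\xve||_{L^\infty}$ one uses instead the differential inequality coming from \eqref{dinv}, exactly as in the proof of Lemma~\ref{lownorms}, and for the Taylor sign condition one differentiates $\pave_N h$ along $D_t$ on $\pa\Omega$, bounding the result by $C\sqrt{\mathcal B}$. The $H^6$-bound on $\xve$ is the one quantity that cannot be improved by integration; there one observes that it persists through the div--curl coercivity \eqref{hardcoer}, which bounds $||\sm x(t)||_{H^r}$ --- hence $||\xve(t)||_{H^6}$ --- directly by $\A^{r-1}(t)+\W^{r-1}(t)+\E^{r-1}(t)\le C\mathcal B$ without any negative power of $\ve$. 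Choosing $\T_r=\T_r(M_0',L_0,\delta_0^{-1},\Ee^r_0)$ small enough that $\T_r\,C\sqrt{\mathcal B}$ lies below the available margins in all of these inequalities makes the improved bounds hold on $[0,t_0]$, so $t_0$ is interior to $S$ and the bootstrap closes.

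The main obstacle, and the reason the argument is organized this way, is keeping every estimate uniform in $\ve$: naively $D_t\xve=\ssm V$ involves $||V||_{H^6}$, which the energy controls only with a factor $\ve^{-1}$, and $D_t(\pa_y^I D_t^k\pave h)$ would likewise cost one extra derivative of $h$. This is circumvented by noting that every pointwise quantity in the statement involves at most three derivatives, so after a single $D_t$ it sits at the level of the $\X^6$- and $||\cdot||_{r-1}$-norms, which Lemma~\ref{coercivitylem1} and Corollary~\ref{usefulcoercivity} bound by the energy with no negative power of $\ve$, and by handling the top-order norm $||\xve||_{H^6}$ through the div--curl coercivity \eqref{hardcoer} rather than by time integration. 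The remaining point --- that the bootstrap bounds genuinely imply \eqref{ubd2}, \eqref{strongM} and \eqref{qchiassump1}--\eqref{qchiassump2} --- is routine given Sobolev embedding and the lower bound on $|\pave_N h|$.
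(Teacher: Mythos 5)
Your proposal is correct and follows essentially the same route as the paper: a continuity/bootstrap argument in which the pointwise quantities are controlled by integrating their material derivatives in time (with a quadratic differential inequality for $\pa y/\pa\xve$ and for the Taylor sign quantity), the integrands being bounded uniformly in $\ve$ by the energy via Theorem~\ref{eneestthm} and Corollary~\ref{usefulcoercivity}, and $\T_r$ chosen so the resulting increments stay below the available margins. The only cosmetic difference is that you make explicit the verification that the bootstrap bounds imply the a priori hypotheses of the energy machinery, and you route the $H^6$-bound on $\xve$ through the div--curl coercivity rather than time integration; both choices are consistent with the paper's argument.
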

\begin{proof}
Let $M_1(t)\! =\! ||\pa_y \xve(t)||_{L^\infty(\Omega)}\!+\!\sum_{|I| + k \leq 3}
||\pa_y^I D_t^k V(t)||_{L^\infty(\Omega)}$ and  $M_2(t)\! = \! ||\pa y(t)/\pa \xve||_{L^\infty(\Omega)}$. Further, let
$L(t)\! =\! \sum_{|I| + k \leq 3} ||\pa_y^I D_t^k \pave h(t)||_{L^\infty(\Omega)}
\!+\! ||D_t^k h(t)||_{L^{\!\infty}(\Omega)}$
and $\nu(t) \!= \! || (-\pave_N h(t))^{-_{\!}1}||_{L^{\!\infty}(\pa\Omega)}$.
Note that by the definition of $\xve$ in \eqref{xvedef} and the definition
of $V_k^\ve$,  we have $M_1(0) \!+\! M_2(0)\! \leq \! M_0'$
and $L(0)\! =\! L_0$ and $\nu(0)\! \leq\! \delta_0^{-1}\!\!$.
By Sobolev embedding, the fundamental theorem of calculus, and the fact that
the operator $\sm$ is bounded:
\begin{equation}
M_1(t)
\leq  M_0'
+ C_1 \Big(\int_0^t ||V(\tau)||_{6}
\, d\tau\Big),
\qquad
L(t) \leq  L_2 + C_2 \Big(\int_0^t || D_t h(\tau)||_{6} + ||\pave h(\tau)||_6
 \, d\tau\Big).
\label{MLbds}
\end{equation}
Using the trace inequality \eqref{trace}, we also have:
\begin{equation}
\nu(t) \leq \delta_0^{-1} + \int_0^t \nu(\tau)^2
||D_t \pave h(\tau,\cdot)||_{L^\infty(\pa\Omega)}\, d\tau
\leq \delta_0^{-1} + C_3 \int_0^t \nu(\tau)^2
||\pave h(\tau)||_{4}\, d\tau.
\end{equation}
Finally, integrating in time, using \eqref{dinv} and Sobolev embedding, we have:
\begin{equation}
 M_2(t) \leq M_0' + C_3 \int_0^t M_2(\tau)^2 ||V(\tau)||_4\, d\tau.
\end{equation}

If $V \in \X^{r+1}(T_1)$ solves the smoothed Euler equations
\eqref{smpbmdef} for some $T_1 > 0$ then
Corollary \ref{usefulcoercivity} combined with Theorem
\ref{eneestthm} gives a continuous function $\F_r'$  so that:
\begin{equation}
||V(\tau)||_{6}^2 + ||D_t h(\tau)||_6^2
+ ||\pave h(\tau)||_4^2 \leq \F_r'(M', L, \Ee^{r-1}_0) \Ee^r_0,
\qquad 0 \leq \tau \leq T_1.
\end{equation}
Here, the constants $C_1, C_2, C_3$ depend only on $\Omega$.
Set
$\F_r''\!\! =\! C_0\F_r'(4M_0', 2L_0, (2\delta_0)^{-1}\!\!,
\Ee^{r-1}_0)(\Ee^{r}_0 \!+ (2\delta_0)^{-2})$ with $C_0 \!=\! C_1 \!+ \!C_2 \!+\! C_3$, and define
$ \T_{\!\!\!r}\!= \!\min(M_0', 1/M_0', L_0, \delta_0) (16\F_r'')^{-1}\!\!$.
Take $T \!\leq\! \min(\T_{\!\!\!r},T_{\!1})$ and consider the set:
\begin{equation}
S = \{0 \leq t \leq T : M_1(t) + M_2(t) \leq 4M_0', L(t) \leq 2L_0,
\nu(t) \leq 2\delta_0^{-1}\}.
\end{equation}
Then $S$ is nonempty, since it contains $t\! = \!0$, and it is connected and closed by
continuity of the functions
$M_{{}_{\!}1\!}(_{{}_{\!}} t_{{}_{\!}} ), M_2(_{{}_{\!}}t_{{}_{\!}}), L(_{{}_{\!}}t_{{}_{\!}}), \nu(_{{}_{\!}}t_{{}_{\!}})$. If $t\!{}_{\!} \in \!S$ then
the assumption $T\!{}_{\!} \leq \!\T_{\!\!\!r}$ and \eqref{MLbds} imply:
\begin{equation}
M_1(t) \leq  M_0' + T \F_r''
\leq M_0' + M_0' (16\F_r'')^{-1} \F_r'',
\qquad
L(t) \leq L_0 + T \F_r''
\leq L_0 + L_0 (16 \F_r'')^{-1}\F_r'',
\end{equation}
and similarly
\begin{equation}
  M_2(t) \leq M_0' + (4M_0')^2 (16 M_0' \F_r'')^{-1} \F_r'',
\qquad
\nu(t) \leq  \delta_0^{-1}
+ 2\delta_0^{-2} \delta_0(16 \F_r'')^{-1} \F_r''.
{}
\end{equation}
In particular  $M_{{}_{\!}1}(t) \!+\! M_2(t)\! \leq\! 3 M_0',\, L(t) \!\leq\! 3L_0/2$
and $\nu(t)\! \leq\! 3\delta_0^{-1\!}\!/2$. Hence  $S$ is also open so $S \!=\! \{0 \!\leq\! t\! \leq \!T\}$.
\end{proof}

\begin{proof}[Proof of Corollary \ref{useful}]
 Let $\T_{\!\!r}$ be as in Lemma \ref{bootlem1} and with
 $\mathcal{F}_r$ as in Theorem \ref{eneestthm}, define:
 \begin{equation}
   \mathscr{T}_{\!r} = \T_{\!r},\qquad
  \Cc_r = \mathcal{F}_r(4M_0', 2L_0, (2\delta_0)^{-1}, \Ee_0^{r-1}).
  {}
 \end{equation}
 By \eqref{enest1} and  Lemma \ref{bootlem1}, this proves
 \eqref{useful1}. The estimate \eqref{useful2} follows from
 \eqref{useful1} and Corollary \ref{usefulcoercivity}.
\end{proof}

\appendix

\section{Fractional tangential derivatives and tangental smoothing}
\label{tangapp}

There is a family of open sets $U_\mu $, $\mu\!=\!1,\dots,N$ that cover $\pa \Omega$ and  onto diffeomorphisms
$\Phi_\mu\!:\! (-1,\!1)^2 \!\to \!U_\mu $. We fix a collection of cutoff functions $\chi_{\mu}\!{}_{\!}:\!\pa \Omega\!
\to \!\R$ so that $\chi_\mu^2$ form a partition of unity and $\operatorname{supp} \chi_\mu\!{}_{\!}\subset\! U_\mu$,
 as well as  two other families of
 cutoff functions such that $\tC_\mu\! \equiv\! 1$ on $\operatorname{supp} \chi_\mu$, $\overline{\chi}_\mu\!\equiv\! 1$ on
$\operatorname{supp} \tC_\mu$ and $\operatorname{supp}\overline{\chi}_\mu \!\!\subset \!U_\mu$. Recalling that
$\Omega$ is the unit ball, we set $W_{\!\mu} \!=\! \{r\omega, r \!\in\! (1/2, 1], \omega
\!\in\! U_{\!\mu}\}$
for $\mu \!= \!1,{}_{\!}...,{}_{\!} N$
and let $W_0$ be the ball of radius $3/{}_{\!}4$ so that
$\{W_{\!\mu}\}_{\mu = 0}^N$ covers $\Omega$.
Writing $\Psi_\mu(z,z_3) = z_3 \Phi_\mu(z)$,
$\Psi_\mu$ is a diffeomorphism from $(-1,{}_{\!}1)^2\! \times (1{}_{\!}/2,{}_{\!}1]$ to
$W_{\!\mu}$.
Let $\zeta\!:\![0,\!1] \to \R$ be
a bump function so that $\zeta(r) \!= \!1$ when $1/2\! \leq\! r\! \leq \!1$ and
$\zeta(r)\! = \!0$ when $r \!<\! 1/4$.
We extend the above cutoffs to $\Omega$ by setting $\chi_\mu(y) = \chi_\mu(y/|y|) \zeta(|y|)$  for $\mu = 1,..., N$ and $\chi_0
=\! 1\!-\!\zeta$, and we similarly extend $\tC_\mu$ and $\overline{\chi}_\mu$. We abuse notation by writing $\chi_\mu$ also for the function $\chi_\mu\circ \Psi_\mu$

\subsection{Fractional derivatives}
For a function $F: \R^2 \to \R$, we set:
\begin{equation}
 \fdh F(z) = \int_{\R^2} \langle \xi \rangle^{1/2} \hat{F}(\xi) e^{iz\cdot \xi}\,d\xi,
 \quad \text{where } \hat{F}(\xi) = \int_{\R^2} e^{-iz\cdot \xi} F(z)\, dz.
\end{equation}
Given a function $f: \Omega \to \R$,  we define $\fdhm f: \Omega \to \R$ for $\mu = 1,..., N$ by:
\begin{equation}\label{eq:localcoordinatefmu}
 \fdhm f = \widetilde{\chi}_\mu (\fdh f_\mu)\circ \Psi_\mu^{-1},\quad\text{where}\quad f_\mu = (\chi_\mu f)\circ \Psi_\mu:\R^2 \to \R.
\end{equation}

With the cutoff function $\zeta$ defined above, we let $\T$ denote
the following family of vector fields, which span the tangent space to the
boundary and in the interior span the full tangent space:
\begin{equation}
  \zeta(y) (y^a\pa_{y^b} - y^b \pa_{y^a}), \quad
  (1 - \zeta(y)) \pa_{y^a}, \quad a,b =1,2,3.
 \label{}
\end{equation}

We work in terms of the following Sobolev norms, for $s \in \R$:
\begin{equation}
 ||f||_{H^s(\pa \Omega)}^2
 = {\sum}_{\mu = 1}^N ||\fd^{s} f_\mu||_{L^2(\R^2)}^2
 = {\sum}_{\mu = 1}^N \int_{\R^2} |\langle \xi\rangle^{s} \hat{f}_\mu(\xi)|^2\, d\xi,
 \label{sobspacedef0}
\end{equation}
and if $s \in \R, k \in \N$ we set:
\begin{equation}
 ||f||_{H^{(k,s)}(\Omega)}^2
 = {\sum}_{|I| \leq k} \int_0^1 ||\pa_y^I (\zeta f)(r,\cdot)||_{H^s(\pa \Omega)}^2
 \, r^2 dr
 + ||(1-\zeta) f||_{H^{k+s}(\Omega)}^2,
\end{equation}
where for non-integer $s$, $H^{k+s}(\Omega)$ is defined in the usual
way by taking the
Fourier transform in all variables.
We collect here the basic properties of the operators
$\fdhm$ and the norms $H^s(\pa \Omega), H^{(k,s)}(\Omega)$:
\begin{lemma}
  \label{fracalg}
If $T \in \T$, then:
\begin{equation}
 \Big| \int_{\pa\Omega} f T g\, dS(y)\Big|
 \leq C||f||_{H^{1/2}(\pa \Omega)} ||g||_{H^{1/2}(\pa \Omega)},
 \qquad
 \Big| \int_{\Omega} f T g\, dy \Big|
 \leq C||f||_{H^{(0,1/2)}(\Omega)} ||g||_{H^{(0,1/2)}(\Omega)}.
 \label{inthalf}
\end{equation}
In addition, with $\Sigma = \pa \Omega$ or $\Omega$,
 \begin{equation}
  ||\fdhm (fg) - f \fdhm g||_{L^2(\Sigma)}
  \leq C ||f||_{H^2(\Sigma)} ||g||_{L^2(\Sigma)},
  \label{app:alg2}
 \end{equation}
 and, with notation as in \eqref{eq:simplifiedtangentialnotation} and $T^I \in
 \coord^k$ or $\T^k$:
 \begin{equation}
  ||\fdhm(T^I f) - T^I \fdhm f||_{L^2(\Sigma)}
  \leq C||f||_{H^{k}(\Sigma)}.
  \label{commutewithpa}
 \end{equation}
 In particular, if $||\xve||_{H^3(\Omega)} \leq M$ then:
 \begin{equation}
  ||\fdhm \pave f - \pave \fdhm f||_{L^2(\Sigma)}
  \leq C (M) ||f||_{H^1(\Sigma)}.
  \label{leibpave}
 \end{equation}
\end{lemma}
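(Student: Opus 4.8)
The statement to prove is Lemma \ref{fracalg}, which collects several properties of the fractional tangential derivative operators $\fdhm$ and the mixed Sobolev norms $H^s(\pa\Omega)$, $H^{(k,s)}(\Omega)$: a duality-type bound \eqref{inthalf} for integrating a tangential vector field by parts, a commutator (Leibniz-type) estimate \eqref{app:alg2}, a commutator estimate with tangential vector fields \eqref{commutewithpa}, and its specialization \eqref{leibpave} to the operator $\pave$.

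The plan is to reduce everything to flat statements on $\R^2$ (for the boundary norms) and on $\R^2\times(1/2,1]$ (for the interior norms) via the coordinate charts $\Psi_\mu$ and the cutoffs $\chi_\mu,\tC_\mu$, and then invoke standard properties of the classical fractional operators $\fd^s$. First I would prove \eqref{inthalf}: writing $T\in\T$, on the boundary $T$ is (up to the cutoff $\zeta$) a smooth combination of the rotation fields $\Omega_{ab}$, which are tangent to $\pa\Omega$, so in each chart $T$ pulls back to a smooth first-order differential operator with no $\pa_{z_3}$ component; then $\int_{\pa\Omega} fTg\,dS = \sum_\mu \int_{\R^2} f_\mu (T_\mu g_\mu)\,m_\mu^2\,dz$ up to lower-order commutator terms involving derivatives of the cutoffs. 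Integrating by parts once in $z$ moves half a derivative onto $f_\mu$: $|\int F (TG) \,dz| = |{-}\int (T^* F) G\,dz| \lesssim \||\xi|^{1/2}\hat F\|_{L^2}\||\xi|^{1/2}\hat G\|_{L^2} + (\text{l.o.t.})$ using $\langle\xi\rangle^{-1/2}T^*$ bounded on $L^2$ together with $\langle\xi\rangle^{1/2}$, and then summing over $\mu$ and comparing with the norm \eqref{sobspacedef0}. The interior inequality is the same argument carried out slice-by-slice in the radial variable, using the definition of $H^{(0,1/2)}(\Omega)$ as an $L^2_{r^2dr}$-integral of $H^{1/2}(\pa\Omega)$ slices for the part supported near the boundary and the ordinary $H^{1/2}$ duality for the interior piece where $(1-\zeta)$ localizes away from $\pa\Omega$.

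Next, \eqref{app:alg2} is a commutator estimate for $[\,\fdh,\, \text{multiplication by }f\,]$, which in flat coordinates reduces to the statement that $[\langle D\rangle^{1/2},f]$ maps $L^2(\R^2)\to L^2(\R^2)$ with norm $\lesssim\|f\|_{H^2}$ (or $\|f\|_{C^1}$ plus a bit, but $H^2\hookrightarrow C^0$ and controls one derivative in $L^\infty$-type spaces sufficiently); this is the classical Kato--Ponce / Coifman--Meyer commutator bound for an operator of order $1/2$, and I would either cite it or give the short Fourier-multiplier proof estimating $\widehat{[\langle D\rangle^{1/2},f]g}(\xi) = \int (\langle\xi\rangle^{1/2}-\langle\eta\rangle^{1/2})\hat f(\xi-\eta)\hat g(\eta)\,d\eta$ using $|\langle\xi\rangle^{1/2}-\langle\eta\rangle^{1/2}|\lesssim |\xi-\eta|^{1/2}$ together with $|\xi-\eta|^{1/2}\le\langle\xi-\eta\rangle^{1/2}$ and Schur's test / Young's inequality, with the two decaying powers absorbed into $\|f\|_{H^2}$ (two spatial derivatives of $f$ give $\langle\xi-\eta\rangle^{-2}$ decay, more than enough in two dimensions). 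The cutoff bookkeeping (the operator $\fdhm$ contains the outer cutoff $\tC_\mu$ and the inner cutoff $\chi_\mu$ hidden in $f_\mu=(\chi_\mu f)\circ\Psi_\mu$) produces only extra lower-order terms where derivatives land on cutoffs, all controlled by $\|f\|_{H^2}\|g\|_{L^2}$.

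Then \eqref{commutewithpa} follows by iterating: for a single $T\in\T$, in a chart $T$ becomes a first-order operator with smooth coefficients, so $[\fdh, T_\mu]$ is again a commutator of $\langle D\rangle^{1/2}$ with a smooth coefficient (order $1/2$) plus $\langle D\rangle^{1/2}$ composed with the constant-coefficient top-order part which commutes — so the commutator has order $1/2$ total and maps $H^k\to H^0$ losing no net derivatives beyond the $k$ we are allowed; one then induces on $|I|=k$, at each step peeling off one vector field and using \eqref{app:alg2} on the resulting lower-order pieces. Finally \eqref{leibpave} is the special case $T=\pave_i = A_{\m i}^a\pa_a$, which is not literally in $\T$ but is a smooth-coefficient first-order operator once $\|\xve\|_{H^3(\Omega)}\le M$ (so $A$ and one derivative of $A$ are bounded by Sobolev embedding, as noted after \eqref{ubd2}); writing $\fdhm\pave f - \pave\fdhm f = [\fdhm,A_{\m i}^a]\pa_a f + (\fdhm\pa_a - \pa_a\fdhm)(A_{\m i}^a f) - \dots$ and applying \eqref{app:alg2} with $A$ in place of the generic $f$ (using $\|A\|_{H^2(\Sigma)}\le C(M)$) plus \eqref{commutewithpa} for the constant-coefficient commutator, gives the bound $C(M)\|f\|_{H^1(\Sigma)}$. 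The main obstacle is purely organizational rather than deep: keeping the chart-by-chart cutoff error terms under control and making sure that in the boundary-to-interior passage the radial variable is handled correctly (the $\fdhm$ only differentiate in the two tangential directions, so all radial derivatives pass through freely, which is exactly why the estimates close with no loss). The analytic heart is the single classical commutator estimate for $\langle D\rangle^{1/2}$, which is standard.
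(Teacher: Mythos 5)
Your proposal is correct and follows essentially the same route as the paper: reduce to $\R^2$ via the charts $\Psi_\mu$ and cutoffs $\chi_\mu,\tC_\mu$, establish the half-derivative duality bound \eqref{inthalf} by Plancherel, prove the commutator estimate \eqref{app:alg2} via the symbol inequality $|\langle\xi\rangle^{1/2}-\langle\xi-\eta\rangle^{1/2}|\lesssim\langle\eta\rangle^{1/2}$ and a weighted Cauchy--Schwarz (the paper's ``Leibniz rule'' \eqref{leibbasic}), deduce \eqref{commutewithpa} by writing $T$ in coordinates and iterating, and get \eqref{leibpave} from \eqref{app:alg2} applied to $A_{\m i}^a$ plus \eqref{commutewithpa} for $\pa_a$. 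The only cosmetic difference is in \eqref{inthalf}: you integrate by parts first and then invoke boundedness of $\langle\xi\rangle^{-1/2}T^*$, whereas the paper applies Plancherel directly to $\int F\pa_\alpha G\,dz$ and splits $|\xi_\alpha|\le\langle\xi\rangle^{1/2}\langle\xi\rangle^{1/2}$; these are the same estimate packaged two ways (and your phrase ``integrating by parts moves half a derivative onto $f_\mu$'' is loose, since integration by parts moves a whole derivative---the halving happens only at the Fourier step), but the substance matches.
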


These estimates all rely on the following ``Leibniz
rule''. This lemma and its proof can be found in \cite{Nordgren2008}.
\begin{lemma}
  If $F, G: \R^2 \to \R$ have compact support, then:
  \begin{equation}
   ||\fdh(FG) - F \fdh G||_{L^2(\R^2)}
   \leq C ||F||_{H^2(\R^2)} ||G||_{L^2(\R^2)}.
   \label{leibbasic}
  \end{equation}
\end{lemma}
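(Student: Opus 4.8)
The plan is to prove this commutator estimate by passing to the Fourier side and reducing it to Young's convolution inequality, which is the standard route for Kato--Ponce type bounds. Set $[\fdh, F]G := \fdh(FG) - F\,\fdh G$. Writing $\widehat{F}(\xi) = \int_{\R^2} e^{-iz\cdot\xi}F(z)\,dz$, so that $\widehat{FG} = c\,\widehat{F}*\widehat{G}$ for a fixed constant, and using $\widehat{\fdh H}(\xi) = \langle\xi\rangle^{1/2}\widehat{H}(\xi)$, the commutator has Fourier transform
\begin{equation}
 \widehat{[\fdh, F]G}(\xi) = c\!\int_{\R^2}\!\big(\langle\xi\rangle^{1/2} - \langle\eta\rangle^{1/2}\big)\,\widehat{F}(\xi-\eta)\,\widehat{G}(\eta)\,d\eta .
\end{equation}
The whole point is that the symbol difference $\langle\xi\rangle^{1/2} - \langle\eta\rangle^{1/2}$ gains a factor of $|\xi-\eta|^{1/2}$, which is then absorbed by $\widehat{F}$ at the cost of half a derivative on $F$.

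The key elementary step, which I regard as the heart of the argument, is the pointwise inequality $|\langle\xi\rangle^{1/2} - \langle\eta\rangle^{1/2}| \le |\xi - \eta|^{1/2}$. This holds because $\zeta\mapsto\langle\zeta\rangle = (1+|\zeta|^2)^{1/2}$ is $1$-Lipschitz, giving $|\langle\xi\rangle - \langle\eta\rangle| \le |\xi-\eta|$, and $t\mapsto t^{1/2}$ is H\"older continuous of exponent $1/2$ on $[0,\infty)$. Inserting this into the display yields $|\widehat{[\fdh, F]G}(\xi)| \le c\,(k*g)(\xi)$, where $k(\zeta) = |\zeta|^{1/2}\,|\widehat{F}(\zeta)|$ and $g(\eta) = |\widehat{G}(\eta)|$. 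Then by Plancherel and Young's inequality $\|k*g\|_{L^2(\R^2)} \le \|k\|_{L^1(\R^2)}\|g\|_{L^2(\R^2)}$ I would conclude
\begin{equation}
 \|\,\fdh(FG) - F\,\fdh G\,\|_{L^2(\R^2)} \le c'\,\|k\|_{L^1(\R^2)}\,\|G\|_{L^2(\R^2)} .
\end{equation}

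It remains to check $\|k\|_{L^1(\R^2)} \le C\|F\|_{H^2(\R^2)}$, and this is where the order $H^2$ (and the dimension $2$) enters: by Cauchy--Schwarz,
\begin{equation}
 \int_{\R^2}|\zeta|^{1/2}|\widehat{F}(\zeta)|\,d\zeta \le \int_{\R^2}\langle\zeta\rangle^{-3/2}\big(\langle\zeta\rangle^{2}|\widehat{F}(\zeta)|\big)\,d\zeta \le \Big(\int_{\R^2}\langle\zeta\rangle^{-3}\,d\zeta\Big)^{1/2}\|F\|_{H^2(\R^2)},
\end{equation}
and $\int_{\R^2}\langle\zeta\rangle^{-3}\,d\zeta<\infty$ since $3>2$. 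Any $H^s$ with $s>3/2$ would work; $H^2$ is chosen for convenience, being an integer-order space matching the hypotheses of the product and commutator rules used downstream. Combining the last two displays gives \eqref{leibbasic}. I do not anticipate a genuine obstacle here: the compact support hypothesis is not even needed, since $F\in H^2(\R^2)\hookrightarrow L^\infty(\R^2)$ and $G\in L^2(\R^2)$ already ensure $FG\in L^2(\R^2)$ and the absolute convergence of every integral above; the only points requiring care are the justification of the symbol-difference inequality and the frequency-side bookkeeping just described.
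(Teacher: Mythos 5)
Your proof is correct and follows essentially the same route as the paper's: both pass to the Fourier side, exploit the symbol-difference bound $|\langle\xi\rangle^{1/2}-\langle\eta\rangle^{1/2}|\lesssim \langle\xi-\eta\rangle^{1/2}$ to shift half a derivative onto $F$, and close with a weighted Cauchy--Schwarz using the integrability of $\langle\zeta\rangle^{-3}$ on $\R^2$. Your packaging of the final step as Young's inequality $L^1 * L^2 \to L^2$ applied to $k(\zeta)=|\zeta|^{1/2}|\hat F(\zeta)|$ is just the paper's Cauchy--Schwarz-plus-Fubini computation in disguise, so there is no substantive difference.
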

\begin{proof}
 By the elementary estimate
 $|\langle\xi\rangle^{1/2} - \langle \xi-\eta\rangle^{1/2}| \leq C
 \langle \eta\rangle^{1/2}$, we have:
 \begin{equation}
  |\langle \xi\rangle^{1/2} \widehat{ FG} (\xi)
  - \widehat{ (F \fdh G)}(\xi)|^2
  \lesssim \Big(\int_{\R^2} \!\!\!\langle \eta\rangle^{1/2}
  |\hat{F}(\eta)|  |\hat{G}(\xi\!-\!\eta)|\, d\eta\Big)^2\!\!
  \lesssim  \int_{\R^2}  \!\!\!\langle \eta\rangle^{4}
  |\hat{F}(\eta)|^2\, d\eta
  \int_{\R^2} \!\!\! \langle\eta\rangle^{-3}  |\hat{G}(\xi\!-\!\eta)|^2\, d\eta.
  \label{identity}
 \end{equation}
 Integrating in $\xi$, changing variables, and using the fact that
 $\int_{\R^2} \langle\xi-\eta\rangle^{-3} \, d\xi\! \leq \!C$,
 we have:
 \begin{equation}
 ||\langle\xi\rangle \widehat{FG} - \widehat{(F \fdh G)}||_{L^2(\R^2)}^2 \leq
 C ||F||_{H^2(R)}^2
   \int_{\R^2}
  \int_{\R^2} \langle\xi-\eta\rangle^{-3}  |\hat{G}(\eta)|^2\, d\eta
  \, d\xi
  \leq C||F||_{H^2(R)} ||G||_{L^2(R)}.
  \end{equation}
  The result now follows from Plancherel's theorem.
\end{proof}

\begin{proof}[Proof of Lemma \ref{fracalg}]

Since $\sum \chi_\mu^2 = 1$, we have:
\begin{equation}
 \int_{\pa \Omega}
 fT g \, dS(y) =
 {\sum}_{\mu = 1}^N
 \int_{\pa \Omega} (\chi_\mu f) (\chi_\mu T g)\, dS(y)=
 {\sum}_{\mu = 1}^N
 \int_{\pa \Omega} \chi_\mu f T(\chi_\mu g) \, dS(y)
 - \int_{\pa \Omega} \chi_\mu f g T\chi_\mu\, dS(y).
\end{equation}
The second term is bounded by $C||f||_{L^2(\pa \Omega)} ||g||_{L^2(\pa \Omega)}$.
To deal with the first term, we use
\eqref{eq:localcoordinatefmu} and write:
\begin{equation}
 \int_{\pa \Omega} \chi_\mu f T(\chi_\mu g) \, dS(y)
 = \int_{\R^2} f_\mu T^\alpha \pa_{z^\alpha} g_\mu |\det{\Phi'_\mu}| \,dz,
 \quad\text{where}\quad
T= T^\alpha \pa_{z^\alpha}.
\end{equation}
 With $F^\alpha = f_\mu T^\alpha |\det{\Phi'_\mu}|$
and $G = g_\mu$, by Plancherel's theorem we have:
\begin{equation}
 \int_{\R^2} F^\alpha(z) \pa_{z^\alpha} G(z) dz
 = \int_{\R^2} \hat{F}^\alpha(\xi) i\xi_\alpha \hat{G}(\xi)\, d\xi
 \leq || \langle \xi \rangle^{1/2} \hat{F}||_{L^2(\R^2)}
 || \langle \xi \rangle^{1/2} \hat{G}||_{L^2(\R^2)}.
\end{equation}
By \eqref{leibbasic} and  \eqref{sobspacedef0}, this is bounded by
$(||\fdh \!f_{\mu\!}||_{L^2(R)}\! +\! ||f||_{L^2(\pa \Omega)})||g||_{{}_{\!}H^{1\!/2}(\pa \Omega)} $. The case $\Sigma \!= \! \Omega$
is similar.

We now prove \eqref{app:alg2}.
  Writing $\overline{f}_\mu =\overline{\chi}_\mu  f\circ\Psi_\mu$,
  where $\overline{\chi}_\mu \equiv 1$ in the support of $\tC_\mu$ in \eqref{eq:localcoordinatefmu},
  we have:
  \begin{equation}
   ||{}_{\!}\fdhm (f{}_{\!}g_{{}_{\!}})\! -\! f \fdhm {}_{\!}g||_{L^2{}_{\!}(\pa \Omega)}
  \! \lesssim\! ||{}_{\!}\fdh ({}_{\!}\overline{f}_{\!\!\mu} g_{\mu\!})\!
   - \!\overline{f}_{\!\!\mu\!} \fdh {}_{\!}g_{\mu\!}||_{L^2{}_{\!}(\R^{2\!})}
   \!\lesssim\! \|{}_{\!}\overline{f}_{\!\!\mu\!}\|_{H^2(\R^{2\!})} \|g_{\mu\!}\|_{L^2(\R^{2\!})}
   \!\lesssim\! ||f||_{H^2(\!\pa \Omega)}||g_{{}_{\!}}||_{L^2(\!\pa \Omega)},
  \end{equation}
 by  \eqref{leibbasic}, which gives
  \eqref{app:alg2} for $\Sigma \!= \!\pa \Omega$. The case $\Sigma\! =\!
  \Omega$ follows from the case $\Sigma  \!=\! \pa \Omega$ by the definition
   \eqref{sobspacedef0}.

We now prove \eqref{commutewithpa}. We first prove the case $k \!=\! 1$
with $T\in \T$ and $\Sigma = \pa \Omega$.
Since $\pa_{z^\alpha} \fdh = \fdh \pa_{z^\alpha}$:
  \begin{equation}\label{eq:commutatordifference}
    T^\alpha\pa_{z^\alpha} \fdh f_\mu - \fdh(T f)_\mu
    = T^\alpha \fdh(\pa_\alpha f_\mu)
    - \fdh(T^\alpha  \pa_\alpha f_\mu)+\fdh((T^\alpha  \pa_\alpha\chi_\mu)f),
  \end{equation}
  Applying \eqref{leibbasic}, the $L^2$ norm of the right-hand side is bounded
  by
  $C ||f||_{H^1(\pa \Omega)}$.
  The commutator of $T \fdhm\! f\! -\! \fdhm T\! f$ just contribute an additional term
  $(T\tC_\mu)\fdh \!f_\mu $ compared to \eqref{eq:commutatordifference} and \eqref{commutewithpa} follows.

  To prove \eqref{commutewithpa} when $T = \pa_{y^a}$ for some $a = 1,2,3$
  and $\Sigma = \pa \Omega$, close to the boundary we write $\pa_{y^a} =
   \sum_{T \in \T} c_a^T(y) T\!
  + \!c(y) \pa_r$ for some smooth functions $c_a^T$ and $c$. By what we have
  just proven and \eqref{app:alg2}, it is enough to prove  the estimate
  with $T$ replaced by $\pa_r$. This follows  from the definition after
  noting that close to the boundary, the cutoff functions $\tC_\mu,
  \chi_\mu$ are independent of $r$.
  The case $|I| \geq 2$ follows similarly.
\end{proof}

  The operators $\fdhm$ can be used to control
  fractional Sobolev norms:
  \begin{lemma} We have
  \begin{equation}
   ||(1-\tC_\mu) \fdh f_\mu||_{L^2(\R^2)}\lesssim ||f_\mu||_{L^2(\R^2)}.
  \end{equation}
   Moreover, there are constants $0<C_1< C_2<\infty$ so that:
   \begin{equation}
    C_1 \bigtwo({\sum}_{\mu = 1}^N ||\fdhm f||_{L^2(\pa \Omega)}+||f||_{L^2(\pa \Omega)}\bigtwo)
    \leq ||f||_{H^{1/2}(\pa \Omega)} \leq C_2
   \bigtwo( {\sum}_{\mu = 1}^N ||\fdhm f||_{L^2(\pa \Omega)}
   +||f||_{L^2(\pa \Omega)}\bigtwo).
   \label{equivalentnorms}
   \end{equation}
   The same estimate holds with $\pa \Omega$ replaced by
   $\Omega$ and $H^{1/2}(\pa \Omega)$ replaced with
   $H^{(0,1/2)}(\Omega)$.
  \end{lemma}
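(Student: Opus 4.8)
The plan is to isolate the one genuinely analytic ingredient — that $(1-\tC_\mu)\fdh$ is smoothing when applied to functions supported where $\tC_\mu\equiv 1$ — and then to derive the two norm equivalences by a partition of unity together with changes of variables. First I would prove the off-diagonal estimate. Since $f_\mu=(\chi_\mu f)\circ\Psi_\mu$ is supported in $\operatorname{supp}(\chi_\mu\circ\Psi_\mu)$, on which $\tC_\mu\circ\Psi_\mu\equiv 1$, there is $c_0>0$ with $\operatorname{dist}(\operatorname{supp}f_\mu,\operatorname{supp}(1-\tC_\mu))\ge c_0$. Writing $\fdh$ as convolution with the tempered distribution $K=\int_{\R^2}\langle\xi\rangle^{1/2}e^{iz\cdot\xi}\,d\xi$, and using that $\xi\mapsto\langle\xi\rangle^{1/2}$ is smooth, repeated integration by parts shows $K$ is a smooth, rapidly decaying function away from the origin, so $K\mathbf 1_{\{|z|\ge c_0\}}\in L^1(\R^2)$. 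Hence $(1-\tC_\mu(z))\fdh f_\mu(z)=(1-\tC_\mu(z))\int_{|z-w|\ge c_0}K(z-w)f_\mu(w)\,dw$ is an honest function, and Young's inequality gives $\|(1-\tC_\mu)\fdh f_\mu\|_{L^2(\R^2)}\lesssim\|f_\mu\|_{L^2(\R^2)}$ (the same reasoning, using more decay of $K$, shows $(1-\tC_\mu)\fdh$ maps $H^{-N}\to H^N$ for every $N$).

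For the equivalence on $\pa\Omega$ I would use $\|f\|_{H^{1/2}(\pa\Omega)}^2=\sum_\mu\|\fdh f_\mu\|_{L^2(\R^2)}^2$ with $f_\mu=(\chi_\mu f)\circ\Phi_\mu$ as in \eqref{eq:localcoordinatefmu}, together with the fact that $|\det\Phi_\mu'|$ is bounded above and below on the compact set $\Phi_\mu^{-1}(\operatorname{supp}\tC_\mu)$. Pulling back by $\Phi_\mu$ shows $\|\fdhm f\|_{L^2(\pa\Omega)}$ is comparable to $\|\tC_\mu\fdh f_\mu\|_{L^2(\R^2)}$ and $\|\chi_\mu f\|_{L^2(\pa\Omega)}$ to $\|f_\mu\|_{L^2(\R^2)}$. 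Since $\langle\xi\rangle^{1/2}\ge 1$ one has $\|f_\mu\|_{L^2}\le\|\fdh f_\mu\|_{L^2}$ and $\|\tC_\mu\fdh f_\mu\|_{L^2}\le\|\fdh f_\mu\|_{L^2}$; summing in $\mu$ and using $\sum_\mu\chi_\mu^2=1$ gives the lower bound $\sum_\mu\|\fdhm f\|_{L^2(\pa\Omega)}+\|f\|_{L^2(\pa\Omega)}\lesssim\|f\|_{H^{1/2}(\pa\Omega)}$. For the reverse inequality, split $\|\fdh f_\mu\|_{L^2}\le\|\tC_\mu\fdh f_\mu\|_{L^2}+\|(1-\tC_\mu)\fdh f_\mu\|_{L^2}$; the first term is $\lesssim\|\fdhm f\|_{L^2(\pa\Omega)}$ and the second is $\lesssim\|f_\mu\|_{L^2}\lesssim\|f\|_{L^2(\pa\Omega)}$ by the off-diagonal estimate, and squaring and summing yields $\|f\|_{H^{1/2}(\pa\Omega)}\lesssim\sum_\mu\|\fdhm f\|_{L^2(\pa\Omega)}+\|f\|_{L^2(\pa\Omega)}$.

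For $\Omega$ I would foliate by spheres. In the charts $W_\mu$ with $\mu\ge 1$ one has $\Psi_\mu(z,z_3)=z_3\Phi_\mu(z)$, so $|\det\Psi_\mu'|\sim z_3^2$, and for fixed $z_3=r\in[1/2,1]$ (where $\zeta\equiv 1$) the slice $f_\mu(\cdot,r)$ is exactly the $\mu$-th local representation of $f(r\,\cdot)|_{\pa\Omega}$; applying the $\pa\Omega$ estimate to each slice, with a constant uniform in $r$, and integrating against $r^2\,dr$ reproduces both inequalities for the spherical part of $\|\cdot\|_{H^{(0,1/2)}(\Omega)}$. The remaining region $\{|y|\le 1/2\}$, including the collar $\{1/4\le|y|\le 1/2\}$ where $\zeta$ is transitional, is covered by $W_0$, where $\fd_0^{1/2}=\langle\pa\rangle^{1/2}$ is the genuine fractional derivative and $\chi_0=1-\zeta$; there $\|\fd_0^{1/2}f\|_{L^2}$ is directly comparable to $\|(1-\zeta)f\|_{H^{1/2}(\Omega)}$ by the standard localization/extension description of fractional Sobolev norms, once more using the off-diagonal estimate to remove the outer cutoff $\chi_0$. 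Summing the contributions of $\{W_\mu\}_{\mu=0}^N$ as in the boundary case gives the claimed two-sided bound with $\Omega$ in place of $\pa\Omega$.

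I expect the main obstacle to be bookkeeping rather than analysis: one must check that the slicewise boundary estimate has a constant uniform in the radius $r$ — which holds because the charts, cutoffs and Jacobians depend smoothly on $r$ and everything is compactly supported — and that the spherical annulus $\{1/2\le|y|\le 1\}$, the transitional collar, and the core chart $W_0$ patch together correctly. The off-diagonal estimate of the first paragraph is precisely what makes this patching clean, since it lets one freely insert and remove the fattened cutoffs $\tC_\mu$ at the cost of an $L^2$ remainder.
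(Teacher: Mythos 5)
Your argument is correct, but for the key off-diagonal estimate you take a genuinely different and longer route than the paper. The paper exploits that $\tC_\mu f_\mu = f_\mu$ to write
$(1-\tC_\mu)\fdh f_\mu = \fdh(\tC_\mu f_\mu) - \tC_\mu\fdh f_\mu$
and then simply invokes the already-proved Leibniz estimate \eqref{leibbasic} with $F=\tC_\mu$, $G=f_\mu$, giving the bound $\lesssim\|\tC_\mu\|_{H^2}\|f_\mu\|_{L^2}$ in one line. You instead re-derive the pseudolocal property from scratch: integration by parts in $\int\langle\xi\rangle^{1/2}e^{iz\cdot\xi}\,d\xi$ to show the kernel decays rapidly off the diagonal, then Young's inequality on $|z|\ge c_0$. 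This is rigorous (since $1-\tC_\mu$ kills the part of the integral where the kernel formula is only distributional), and it buys you something the paper's one-liner does not — smoothing $H^{-N}\to H^N$ to arbitrary order away from the diagonal — but at the cost of reinventing a standard $\Psi$DO fact when the relevant commutator bound is already on the shelf. Your remaining two paragraphs fill in the norm equivalences on $\pa\Omega$ and $\Omega$, which the paper leaves implicit; the argument (partition of unity, two-sided Jacobian bounds, $\langle\xi\rangle^{1/2}\ge 1$ for the easy direction, and inserting/removing $\tC_\mu$ via the off-diagonal estimate for the hard direction, then slicing radially for $\Omega$) is the expected one and is sound, though the $r$-uniformity and the patching between $\{W_\mu\}_{\mu\ge 1}$, the transitional collar, and $W_0$ deserve the brief justification you give rather than being waved through.
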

  \begin{proof}
     Since $\tC_\mu =1$ on the support of $\chi_\mu$ and hence on the support of
    $f_\mu$ it follows from \eqref{leibbasic} that
    \begin{equation}
     ||(1-\tC_\mu) \fdh f_\mu||_{L^2(R)}
     =|| \fdh (\tC_\mu f_\mu)-\tC_\mu \fdh f_\mu||_{L^2(R)}
     \leq C ||f_\mu||_{L^2(R)}\leq C ||f||_{L^2(\pa \Omega)}.
      \tag*{\qedhere}
    \end{equation}
  \end{proof}

\subsection{Tangential smoothing}
\label{tangentialsmoothingappendix}

Let $\varphi \!:\! \R^2 \!\!\to\! \R$ be an even smooth
function, supported in $R = (-1,1)^2$,
with $\int_{\R^2}
 \!\varphi =\! 1$ and define the smoothing operator
\begin{equation}
 T_\ve f(z) = \int_{\R^2} \varphi_{\ve}(z-z') f(z') dz',\qquad\text{where}\qquad
 \varphi_{\ve}(z)\! =\! {\ve^{-2}} \varphi\big({z}\!/{\ve}\big).
\end{equation}
Because $\varphi$ is even, $T_\ve$ is symmetric; for any functions
$f, g: \R^2 \to \R$ we have:
\begin{equation}
 \int_{\R^2} T_\ve f(z) g(z) dz =
 \int_{\R^2} \int_{R} \varphi_\ve(z-z') f(z') g(z)
 dz'\, dz =
 \int_{\R^2} f(z) T_\ve g(z) dz.
\end{equation}
From the fact that $\|\pa^k \varphi_\ve\|_{L^1}\lesssim \ve^{-k}$ it follows that
for $k\geq m$
\begin{equation}
\| T_\ve f\|_{H^k}\lesssim \ve^{m-k} \|f\|_{H^m}.
\label{smoothingproperty}
\end{equation}
Furthermore, we have:
\begin{equation}
 |T_\ve(fg)(z) - fT_\ve(g)(z)|
 \leq C \ve ||f||_{C^1(R)} ||g||_{L^2(R)},
 \label{diffprop}
\end{equation}
which follows from the fact that $|z'| \leq \ve$ in the support of $\varphi_\ve$,
after writing:
\begin{equation}\label{eq:theaboveintegral}
 T_\ve(fg)(z) - f(z)T_\ve(g)(z)
 = \int_{\R^2} \varphi_\ve(z')g(z-z')\big( f(z-z') - f(z)\big)
 dz'.
\end{equation}
Moreover from using \eqref{eq:differencequotionet} and Minkowski's integral inequality in \eqref{eq:theaboveintegral} with $g\!=\!1$ it  follows that
\begin{equation}
\|T_\ve f-f\|_{H^{k}}\lesssim \ve \|f\|_{H^{k+1}}.
\label{nofreelunch}
\end{equation}

For a linear operator $T'$ defined in coordinate charts we define a global operator $T$ by
\begin{equation}\label{eq:localtoglobalappendixA}
T\!f\!=\!\sum T_{\!\mu} f,\quad\text{where}\quad
T_{\!\mu} f \!=
\chi_\mu\big(m_\mu^{-1}T'\big[m_\mu f_{\!\mu} \big]\big)\circ\Psi_{\!\mu}^{-1}\!\!,\qquad
f_{\!\mu} \!=\! (\chi_\mu f)\circ \Psi_{\!\mu},\quad
m_\mu \!=\!r|\!\det{\Phi_{\!\mu}^\prime}|^{1/2}\!\!.
\end{equation}
Then $T$ is symmetric with the measure $dy$ if $T'$ is with the measure $dz$ is since $dS(\omega)=m_\mu^2 dz$.
With notation as in \eqref{eq:localtoglobalappendixA}, the smoothing operators we consider on $\Omega$ or $\pa \Omega$ are then defined by:
\begin{align}
 \sm f = {\sum}_{\mu = 0}^N
  T_{\ve,\mu} f,
 \qquad
 \ssm f = \sm \sm f={\sum}_{\mu,\nu = 0}^N
 T_{\ve,\nu} T_{\ve,\mu} f.
 \label{smoothingappendixA}
\end{align}
Since $T_\ve $ is symmetric $J_\ve $ is as well, w.r.t. $dy$.

The smoothing operator has the following important properties:
\begin{lemma}
  \label{smproperties}
   If $f, g : \Omega \to \R$, then with $\Sigma = \pa \Omega$ or $\Omega$
   \begin{equation}
  ||J_\ve(fg) - f J_\ve(g)||_{L^2(\Sigma)}
  \leq C \ve ||f||_{C^1(\Sigma)} ||g||_{L^2(\Sigma)}.
  \label{jprodrule}
 \end{equation}
 \begin{equation}
  \Big| \int_{\pa\Omega}\big( f (\ssm g)
  - (\sm f)(\sm g) \big)\,\widetilde{\nu} dS(y)\Big|
  \leq C \ve||\widetilde{\nu}||_{C^1(\pa\Omega)} ||f||_{L^2(\Omega)} ||g||_{L^2(\pa\Omega)},
  \label{almostsaapp}
 \end{equation}
 \begin{equation}
  \Big| \int_{\Omega}\big( f (\ssm g)
  - (\sm f)(\sm g) \big)\,\kve dy\Big|
  \leq C \ve||\kve||_{C^1(\Omega)} ||f||_{L^2(\Omega)} ||g||_{L^2(\Omega)},
  \label{almostsaapp2}
 \end{equation}
 Further, if $T^I \in \T^k$ for $k \geq 0$:
   \begin{equation}
  ||T^I J_\ve f - J_\ve \big(T^I f\big)||_{L^2(\Sigma)}
  \leq C  ||f||_{H^{k-1}(\Sigma)},
  \label{jdiffrule}
 \end{equation}
  \begin{equation}
  ||\fd^{1/2}_\mu J_\ve f - J_\ve \big(\fd^{1/2}_\mu f\big)||_{L^2(\Sigma)}
  \leq C  ||f||_{L^2(\Sigma)}.
  \label{jdiffrulehalf}
 \end{equation}
\end{lemma}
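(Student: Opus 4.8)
The plan is to reduce each of \eqref{jprodrule}--\eqref{jdiffrulehalf}, via the partition of unity $\{\chi_\mu\}$ and the coordinate maps $\Psi_\mu$, to properties of the two model operators acting on functions on $\R^2$: the mollifier $T_\ve$ (convolution with $\varphi_\ve$) and the half-derivative $\fdh$ (the Fourier multiplier $\langle\xi\rangle^{1/2}$). The only error terms that will ever occur come from commuting $T_\ve$ or $\fdh$ past a smooth function --- a cutoff $\chi_\mu$ or $\tC_\mu$, or a geometric weight such as $m_\mu$, $\widetilde\nu$ or $\kve$ --- and these are always controlled on $L^2$: with a gain of $\ve$ by \eqref{diffprop} when the operator is $T_\ve$, and with a constant by \eqref{leibbasic}, \eqref{app:alg2} when the operator is $\fdh$. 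Since $\fdh$ and $T_\ve$ are both Fourier multipliers on $\R^2$ they commute exactly, and $\sm$ is bounded on $L^2$ uniformly in $\ve$ (because $\|\varphi_\ve\|_{L^1}=1$ and the construction \eqref{eq:localtoglobalappendixA} preserves this). Near the boundary $\sm$ and $\fdhm$ act within each sphere $r=\mathrm{const}$, so the statements for $\Sigma=\Omega$ follow from those for $\Sigma=\pa\Omega$ after integrating in the radial variable as in \eqref{sobspacedef0}, the interior patch $W_0$ being the straightforward $\R^3$ analogue.

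With this understood, \eqref{jprodrule} is just the global form of \eqref{diffprop}: in the chart $\mu$ the difference $\sm(fg)-f\sm(g)$ equals $\chi_\mu\, m_\mu^{-1}\big(T_\ve[(f\circ\Psi_\mu)H_\mu]-(f\circ\Psi_\mu)\,T_\ve[H_\mu]\big)\circ\Psi_\mu^{-1}$ with $H_\mu=m_\mu(\chi_\mu g)\circ\Psi_\mu$, and \eqref{diffprop} bounds it by $C\ve\|f\|_{C^1}\|g\|_{L^2}$ after summing over $\mu$. For \eqref{almostsaapp}--\eqref{almostsaapp2} I would use that $\sm$ is self-adjoint with respect to $dS$ on $\pa\Omega$ (and $dy$ on $\Omega$): since $\ssm=\sm^2$, moving one factor across gives
\[
\int_{\pa\Omega}\!\big(f\,\ssm g-(\sm f)(\sm g)\big)\widetilde\nu\,dS
=\int_{\pa\Omega}\! f\big(\widetilde\nu\,\sm(\sm g)-\sm(\widetilde\nu\,\sm g)\big)\,dS,
\]
and the parenthesis is the commutator of $\sm$ with multiplication by $\widetilde\nu$ applied to $\sm g$, hence of $L^2(\pa\Omega)$ norm $\le C\ve\|\widetilde\nu\|_{C^1}\|\sm g\|_{L^2}\le C\ve\|\widetilde\nu\|_{C^1}\|g\|_{L^2}$ by \eqref{jprodrule}; Cauchy--Schwarz finishes it, and \eqref{almostsaapp2} is identical with $\kve$ in place of $\widetilde\nu$.

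The substantive point is \eqref{jdiffrule}, a Friedrichs-type commutator estimate. For $|I|=1$, writing $T=T^\alpha\pa_\alpha$ in the chart and integrating by parts in the convolution,
\[
[T,T_\ve]g(z)=\int_{\R^2}\!(\pa_\alpha\varphi_\ve)(z-z')\big(T^\alpha(z)-T^\alpha(z')\big)g(z')\,dz'+\int_{\R^2}\!\varphi_\ve(z-z')\big(\pa_\alpha T^\alpha\big)(z')\,g(z')\,dz';
\]
the second integral is a mollification against a bounded function, and in the first the kernel is dominated by $C\ve^{-2}\psi\big((z-z')/\ve\big)$ with $\psi(w)=|w|\,|\pa_\alpha\varphi(w)|$ bounded and compactly supported (because $T^\alpha$ is Lipschitz and $|z-z'|\lesssim\ve$ on the support), so Schur's test gives an $L^2$ bound uniform in $\ve$. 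For $|I|=k$ I would expand the push-forward of $T^I$ in the chart as $\sum_{|\beta|\le k}a_\beta\pa^\beta$; the terms with $|\beta|<k$ are bounded by \eqref{diffprop} applied to $\pa^\beta f\in L^2$, costing $\le C\ve\|f\|_{H^{k-1}}$, while for $|\beta|=k$ I would split $\pa^\beta=\pa_\alpha\pa^{\beta'}$, put $g=\pa^{\beta'}f\in L^2$, and use
\[
[a_\beta,T_\ve]\pa_\alpha g=\pa_\alpha\big([a_\beta,T_\ve]g\big)-[\pa_\alpha a_\beta,T_\ve]g,
\]
the point being that differentiating the kernel $\varphi_\ve(z-z')\big(a_\beta(z)-a_\beta(z')\big)$ in $z$ again produces a difference-quotient kernel of the admissible type plus a bounded mollification, so $\pa_\alpha\big([a_\beta,T_\ve]g\big)$ is $O(1)$ on $L^2$ while $[\pa_\alpha a_\beta,T_\ve]g$ is $O(\ve)$; both cost only $\|f\|_{H^{k-1}}$. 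Collecting in addition all the commutators of $T^I$ with $\chi_\mu$, $\tC_\mu$ and $m_\mu^{\pm1}$ --- each of which spends at most $k-1$ derivatives on $f$ --- gives \eqref{jdiffrule}. Finally, \eqref{jdiffrulehalf} is proved the same way but is simpler: in a single chart $\fdh$ and $T_\ve$ commute exactly, so $\fdhm\sm f-\sm\fdhm f$ consists only of terms in which one of them has been commuted past a cutoff or weight, and each such term is bounded on $L^2$ by $C\|f\|_{L^2}$ via \eqref{diffprop} and \eqref{app:alg2} together with the nesting $\tC_\mu\equiv1$ on $\operatorname{supp}\chi_\mu$, $\overline{\chi}_\mu\equiv1$ on $\operatorname{supp}\tC_\mu$.

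I expect the main obstacle to be entirely bookkeeping: organizing the many error terms produced by the partition of unity and the geometric weights in \eqref{jdiffrule} for general $k$ and in \eqref{jdiffrulehalf}, and in particular checking carefully that the integration-by-parts identity above really makes the top-order term in \eqref{jdiffrule} lose only $k-1$ derivatives of $f$ rather than $k$. Once the two model facts --- the Schur bounds for difference-quotient kernels on $\R^2$, and the exact commutation of $\fdh$ with $T_\ve$ --- are in place, everything else is routine.
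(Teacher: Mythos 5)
Your proposal is correct and follows essentially the same route as the paper: \eqref{jprodrule} is the chart-by-chart form of \eqref{diffprop}; \eqref{almostsaapp}--\eqref{almostsaapp2} come from the symmetry of $\sm$ with respect to $dS$ (resp.\ $dy$) plus the commutator bound \eqref{jprodrule} applied to the weight $\widetilde\nu$ (resp.\ $\kve$); and \eqref{jdiffrulehalf} rests on the exact commutation $[\fdh,T_\ve]=0$ together with \eqref{app:alg2} and \eqref{jprodrule} for the cutoffs. The one place you go beyond the paper is \eqref{jdiffrule}: the paper disposes of it with ``a straightforward calculation as in the proof of Lemma \ref{fracalg},'' whereas you carry out the Friedrichs commutator argument explicitly --- integrating by parts in the convolution so the kernel becomes $(\pa_\alpha\varphi_\ve)(z-z')\big(T^\alpha(z)-T^\alpha(z')\big)$, which Schur's test bounds uniformly in $\ve$ --- and this is in fact the step that explains why only $k-1$ (rather than $k$) derivatives of $f$ are spent, a point the paper's sketch leaves implicit.
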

\begin{proof} The estimate \eqref{jprodrule} is a straightforward consequence of \eqref{diffprop}.

To prove \eqref{almostsaapp} and \eqref{almostsaapp2} note first that $J_\ve$ is symmetric w.r.t. to the measure $dS(y)$ since $m_\mu^2 dS(y)=dz$:
\begin{equation}
\int_{\pa\Omega} \!\!\!\!   f {}_{\!} J_{\ve} g\, dS
\!= \!\!{\sum}_{\mu\!}\!\int_{\pa\Omega}  \!\!\!\! f{}_{\!}\chi_{{}_{\!}\mu{}_{\!}}  \big( {}_{\!} m_\mu^{-\!1} T_{\!\ve} [m_\mu g_{\mu{}_{\!}}]\big){}_{\!}\circ{}_{\!} \Psi_\mu^{-\!1\!} dS
 \!=\!\! {\sum}_{\mu\!} \!\int_{\!R} \!\! m_\mu f_{\!\mu}   T_{\!\ve}[ m_\mu g_{\mu{}_{\!}}] dz
 \!=\!\!{\sum}_{\mu\!}\!\int_{\!R} \! T_{\!\ve}[ m_\mu f_{\!\mu{}_{\!}}]   m_\mu g_\mu dz
  \!=\!\!\int_{\pa\Omega}  \!\!\!{}_{\!}J_{\ve{}_{\!}}f\, g\, dS.
\end{equation}
\eqref{almostsaapp} follows from this applied to $\widetilde{\nu} f\! $ in place of $f\!$ and then \eqref{jprodrule} with $\widetilde{\nu}$ in place of $f\!$ and $f\!$ in place of $g$.

Changing coordinates, using
that $\pa_z (\varphi_\ve \!* \!F)\! =\! \varphi_\ve\! *\! (\pa_z F)$ for any function
$F\!\!:\! (-1,1)^2 \!\to \R$ and using \eqref{jprodrule}, a straightforward calculation as in the proof of Lemma \ref{fracalg} shows that
$||T^I \! J_\ve f - J_\ve T^I \!f||_{L^2(\Sigma)} \lesssim ||f||_{H^{k-1}(\Sigma)}$.

\eqref{jdiffrulehalf} follows from that
$[\fd^{{}_{\!}1_{\!}/2}\!\!, T_{\!\ve}]\!=\!0$ and $\sum_\nu \chi_\nu^2\!=\!1$, after repeatedly using \eqref{app:alg2} and
\eqref{jprodrule} in
\begin{equation}
\fd^{1/2}_\mu J_\ve f ={\sum}_\nu \tC_\mu\bigtwo(
\fd^{1/2}\bigtwo[ \chi_\mu  \chi_\nu m_\nu^{-1} T_{\!\ve}\big[m_\nu f_{\!\nu} \big]\bigtwo]\bigtwo)\circ\Psi_{\!\nu}^{-1},
\end{equation}
\begin{equation}
J_\ve \fd^{1/2}_\mu f ={\sum}_\nu \chi_\nu
\bigtwo(m_\nu^{-1}
 T_{\!\ve}\bigtwo[ \chi_\nu  m_\nu\tC_\mu\fd^{1/2}[ f_{\!\mu}] \big]\bigtwo]\bigtwo)\circ\Psi_{\!\nu}^{-1}.  \tag*{\qedhere}
\end{equation}
\end{proof}

  \subsection{Interpolation and Sobolev Inequalities}
Here we collect some standard inequalities we will use.

We will use the Sobolev inequalities on both $\Omega$ and
$\pa \Omega$. For any tensor field $\alpha$ on either $\Omega\cup\pa \Omega$
or $\pa \Omega$:
\begin{align}
  ||\alpha||_{L^{3p/(3-kp)}(\Omega)}
  &\leq C {\sum}_{|I| \leq k}||\pa_y^I \alpha||_{L^p(\Omega)},
  &&1 \leq p < {3}/{k},\label{sob2}\\
  ||\alpha||_{L^\infty(\Omega)}
  &\leq C {\sum}_{|I| \leq k} ||\pa_y^I \alpha||_{L^p(\Omega)},
  && k > {3}/{p},\\
  ||\alpha||_{L^{2p/(2-kp)}(\pa\Omega)}
  &\leq C {\sum}_{|I| \leq k} ||\pa_y^I \alpha||_{L^p(\pa \Omega)},
  && 1 \leq p < {2}/{k},\\
  ||\alpha||_{L^\infty(\pa \Omega)}
  &\leq {\sum}_{|I| \leq k} ||\pa_y^I \alpha||_{L^p(\pa \Omega)},
  && k> {2}/{p}.
 {}
\end{align}
By, e.g. the results in the appendix of \cite{CL00}, the constants above depend only
on the injectivity radius of $\Omega$.

We also have the following alternative characterization of the Sobolev spaces
\begin{equation}
\| D_c^h F\|_{L^2}\lesssim \|\pa_c F\|_{L^2}\lesssim {\sup}_h \| D_c^h F\|_{L^2}
, \quad \text{where}\quad
  D_c^h F(z) = \big(F(z + h e_c) - F(z)\big)/{h}, \label{eq:differencequotionet}
\end{equation}
denotes  the difference quotient in the direction of a unit
  vector $e_c$, see \cite{Evans2010}.

We will also need the trace inequality (see, e.g. \cite{Taylor2011}):
\begin{equation}
 ||f||_{H^{s-1/2}(\pa \Omega)} \leq C ||f||_{H^s(\Omega)},\qquad s>1/2.
 \label{trace}
\end{equation}
  We will only
apply this when $s$ is a positive integer and in that case the
right-hand side is defined in the usual way and the left-hand side is
defined by \eqref{sobspacedef}.
We will use the following Sobolev inequalities.
\begin{lemma}
 If $s \geq 2$, then:
 \begin{equation}
  ||f||_{L^\infty(\Omega)} \leq C ||f||_{H^s(\Omega)}.
  \label{regsob}
 \end{equation}
 Further, with notation as in Section \ref{def T and FD},
 if $s \geq 2$ then:
 \begin{equation}
  ||f||_{L^\infty(\Omega)}
  \leq C ||\T^{{}_{\,}s} f||_{H^1(\Omega)}.
  \label{vectsob}
 \end{equation}
 If $ k < {3}/{p}$ and ${1}/{q} = {1}/{p} - {k}/{3}$, then:
 \begin{equation}
  ||f||_{L^p(\Omega)} \leq C {\sum}_{|I| \leq k}
  || \pa_y^I f||_{L^q(\Omega)}.
  \label{sobstar}
 \end{equation}
\end{lemma}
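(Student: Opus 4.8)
The three estimates are all instances of Sobolev embedding in $\R^3$ adapted to the coordinate charts $\{W_\mu\}_{\mu=0}^N$ introduced in Section~\ref{def T and FD}. The bound \eqref{regsob} is immediate: since $\Omega\subset\R^3$ and $2>3/2$, the Sobolev inequality $\|\alpha\|_{L^\infty(\Omega)}\le C\sum_{|I|\le 2}\|\pa_y^I\alpha\|_{L^2(\Omega)}$ applied with $\alpha=f$ gives $\|f\|_{L^\infty(\Omega)}\le C\|f\|_{H^2(\Omega)}\le C\|f\|_{H^s(\Omega)}$ for $s\ge 2$. Likewise \eqref{sobstar} is the standard Gagliardo--Nirenberg--Sobolev inequality on the bounded domain $\Omega$, obtained by iterating the first-order inequality and matching exponents; it follows directly from \eqref{sob2}. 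So the only point requiring an argument is \eqref{vectsob}, in which $\|f\|_{L^\infty(\Omega)}$ is controlled using only $s$ \emph{tangential} derivatives together with one full ($H^1$) derivative; here $\|\T^s f\|_{H^1(\Omega)}$ is understood, as usual, to include the lower-order terms $\sum_{j\le s}\|\T^j f\|_{H^1(\Omega)}$.

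For \eqref{vectsob} the plan is to argue patch by patch using the covering $\{W_\mu\}_{\mu=0}^N$ and the associated cutoffs, so it suffices to bound $f$ in $L^\infty(W_\mu)$ for each $\mu$. On the interior ball $W_0$ the family $\T$ contains $(1-\eta)\pa_{y^a}$, which equals $\pa_{y^a}$ on $W_0$, so $\sum_{j\le s}\|\T^j f\|_{H^1(W_0)}$ controls $\|f\|_{H^{s+1}(W_0)}\ge\|f\|_{H^3(W_0)}$ and then \eqref{regsob} gives the $L^\infty$ bound. On a boundary patch $W_\mu$, $\mu\ge 1$, I would pass to the coordinates $(z',z_3)\in(-1,1)^2\times(1/2,1]$ given by $\Psi_\mu(z',z_3)=z_3\Phi_\mu(z')$; in these coordinates the vector fields $\eta\,\Omega_{ab}$ span the same directions as $\pa_{z^1},\pa_{z^2}$ up to smooth invertible coefficients (and $\eta\equiv 1$ on $W_\mu$), so control of $\sum_{j\le s}\|\T^j f\|_{H^1(W_\mu)}$ yields $\pa_{z'}^\beta f\in L^2$ for $|\beta|\le s$ and $\pa_{z_3}\pa_{z'}^\beta f\in L^2$ for $|\beta|\le s$. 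The estimate is then an anisotropic Sobolev argument: for a.e.\ $z'$, one-dimensional Sobolev embedding in $z_3$ on $(1/2,1]$ gives $\|g(z',\cdot)\|_{L^\infty_{z_3}}^2\le C\big(\|g(z',\cdot)\|_{L^2_{z_3}}^2+\|\pa_{z_3}g(z',\cdot)\|_{L^2_{z_3}}^2\big)$, applied with $g=\pa_{z'}^\beta f$ for $|\beta|\le 2$; squaring, summing over $|\beta|\le 2$, and using the two-dimensional embedding $H^2(\R^2_{z'})\hookrightarrow L^\infty_{z'}$ produces $\|f\|_{L^\infty(W_\mu)}^2\le C\sum_{|\beta|\le 2}\big(\|\pa_{z'}^\beta f\|_{L^2(W_\mu)}^2+\|\pa_{z_3}\pa_{z'}^\beta f\|_{L^2(W_\mu)}^2\big)$. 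Since $s\ge 2$, the right-hand side is bounded by $C\sum_{j\le s}\|\T^j f\|_{H^1(W_\mu)}^2$, and summing over $\mu$ completes the proof.

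The routine but slightly delicate steps are: (i) verifying that $\Psi_\mu$ and the cutoffs $\chi_\mu,\eta$ convert tangential derivatives on $\Omega$ into pure $z'$-derivatives plus harmless lower-order terms, so that the anisotropic estimate is genuinely fed by $\sum_{j\le s}\|\T^j f\|_{H^1}$; and (ii) checking that the single full derivative contained in the $H^1$ norm is exactly what supplies the one normal ($z_3$) derivative needed in the one-dimensional step, which is why $\T$ by itself---degenerate in the normal direction at $\pa\Omega$---does not suffice. I expect (ii), the anisotropic bookkeeping that the normal direction is handled by precisely one derivative while the two tangential directions carry the remaining $s\ge 2$ derivatives, to be the main conceptual point, although it is standard once set up.
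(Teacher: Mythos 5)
Your proof is correct and follows the same route as the paper, which disposes of \eqref{regsob} and \eqref{sobstar} as standard Sobolev inequalities and proves \eqref{vectsob} by exactly the anisotropic argument you give: one-dimensional Sobolev embedding in the radial (normal) direction combined with two-dimensional embedding in the tangential directions. The only imprecision is the claim that $(1-\eta)\pa_{y^a}$ equals $\pa_{y^a}$ on all of $W_0$ — it vanishes for $1/2 \leq r \leq 3/4$, where $\eta \equiv 1$ — but that annulus is contained in the boundary patches $W_\mu$, $\mu \geq 1$, so your boundary-patch argument already covers it.
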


\begin{proof}
 The estimates \eqref{regsob} and \eqref{sobstar} are the usual Sobolev
 inequalities.
 The estimate \eqref{vectsob} follows after applying
 the one-dimensional Sobolev inequality in the
radial direction and the two-dimensional Sobolev inequality in the
tangential directions.
\end{proof}

We also have the following product rule:
\begin{lemma}
  Suppose that $|\pa_y^I D_t^k f| \leq K$ in $\Omega$ for all
  $|I| + k \leq 3$. Then, if $k + \ell =s$, we have:
 \begin{equation}
  ||fg||_{k,\ell} \leq (||f||_{k,\ell} + K) (||g||_{k,\ell} +
  ||g||_{s-1}).
  \label{product}
 \end{equation}
\end{lemma}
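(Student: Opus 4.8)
The plan is a standard tame (Moser-type) product estimate adapted to the mixed space--time norms $||\cdot||_{k,\ell}$. First I would apply the Leibniz rule in both $D_t$ and $\pa_y$: for $j\le k$ and $|\alpha|\le\ell$,
\[
 \pa_y^\alpha D_t^j(fg)=\sum_{i\le j}\sum_{\beta\le\alpha} c_{ij\alpha\beta}\,(\pa_y^\beta D_t^i f)\,(\pa_y^{\alpha-\beta}D_t^{j-i}g),
\]
so that $||fg||_{k,\ell}$ is controlled by a finite sum of $L^2(\Omega)$ norms of such products. Note that for every term appearing we have $i\le j\le k$, $|\beta|\le|\alpha|\le\ell$, hence $||\pa_y^\beta D_t^i f||_{L^2(\Omega)}\le||f||_{k,\ell}$ and $||\pa_y^{\alpha-\beta}D_t^{j-i}g||_{L^2(\Omega)}\le||g||_{k,\ell}$, and moreover the total order satisfies $(i+|\beta|)+\big((j-i)+|\alpha-\beta|\big)=j+|\alpha|\le k+\ell=s$.

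The estimate of a single product term then splits into two cases according to how many derivatives land on $f$. If $i+|\beta|\le 3$, the pointwise hypothesis gives $||\pa_y^\beta D_t^i f||_{L^\infty(\Omega)}\le K$, so the term is bounded by $K\,||\pa_y^{\alpha-\beta}D_t^{j-i}g||_{L^2(\Omega)}\le K\,||g||_{k,\ell}$. If instead $i+|\beta|\ge 4$ (which can only happen when $s\ge 4$), then $(j-i)+|\alpha-\beta|\le s-(i+|\beta|)\le s-4$, so by the Sobolev embedding \eqref{regsob} in three dimensions,
\[
 ||\pa_y^{\alpha-\beta}D_t^{j-i}g||_{L^\infty(\Omega)}\le C\,||D_t^{j-i}g||_{H^{|\alpha-\beta|+2}(\Omega)}\le C\,||g||_{s-1},
\]
the last step being valid because $(j-i)+(|\alpha-\beta|+2)\le s-2\le s-1$; pairing this with $||\pa_y^\beta D_t^i f||_{L^2(\Omega)}\le||f||_{k,\ell}$ bounds the term by $C\,||f||_{k,\ell}\,||g||_{s-1}$. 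Summing the finitely many terms of both types and absorbing the Leibniz constants, one obtains $||fg||_{k,\ell}\lesssim K\,||g||_{k,\ell}+||f||_{k,\ell}\,||g||_{s-1}\le(||f||_{k,\ell}+K)(||g||_{k,\ell}+||g||_{s-1})$, which is \eqref{product}.

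Since this is a routine tame estimate, I do not anticipate a genuine obstacle; the one point that needs attention is the bookkeeping of the case split, in particular verifying in the second case that the surviving derivatives on the $g$-factor fit inside $||g||_{s-1}$ rather than only inside $||g||_{s}$. This works precisely because the pointwise control on $f$ covers $3$ derivatives while the embedding $H^2\hookrightarrow L^\infty$ costs $2$, so passing to the case $i+|\beta|\ge 4$ leaves exactly one derivative of slack --- which is the reason the right-hand side of \eqref{product} involves $||g||_{s-1}$.
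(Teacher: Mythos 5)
Your proof is correct and follows essentially the same route as the paper's: Leibniz expansion, then a case split on whether at most $3$ or at least $4$ derivatives fall on $f$, using the pointwise bound $K$ in the first case and the embedding $H^2(\Omega)\hookrightarrow L^\infty(\Omega)$ on the $g$-factor in the second. Your derivative count in the second case ($2+(j-i)+|\alpha-\beta|\le s-2\le s-1$) is exactly the paper's, stated a bit more carefully.
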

The right-hand side can also be bounded by $(||f||_s + L) ||g||_s$, but
for some our applications it is more useful to keep track of which types
of derivatives land on $f$.
\begin{proof}
 We need to bound $|| (D_t^{k_1\!} \pa_y^{J_1} \!f)
 ( D_t^{k_2} \pa_y^{J_2} g)||_{L^2(\Omega)}$ where
 $k_1\! + \!k_2 \!+\! |J_1| \!+\! |J_2| \!= \! s$. If $k_1\! +\! |J_1|
 \!\leq 3$, we bound this by
 $||D_t^{k_1\!} \pa_y^{J_1}\! f||_{L^\infty(\Omega)}
 ||D_t^{k_2} \pa_y^{J_2} g||_{L^2(\Omega)}$ which is bounded
 by the right-hand side of \eqref{product}. If instead
 $k_1\! + \! |J_1| \geq 4$, we bound it by
 $|| D_t^{k_1\!} \pa_y^{J_1}\! f||_{L^2(\Omega)}
 ||D_t^{k_2\!} \pa_y^{J_2}\! g||_{L^\infty(\Omega)}
 \leq ||f||_{k,\ell} ||g||_{2 + k_2 + |J_2|}$. Since
 $k_1 + |J_1| \geq 4$ and $k_1 + k_2 + |J_1| + |J_2| = s$,
 it follows that $2+ k_2 + |J_2| \leq s$, as required.
\end{proof}

\subsection{The extension operator}
\label{extension section}
Fix an integer $s \geq 0$. Let $\eta = \eta(r)$ be a smooth cutoff function which
is one when $r \leq 1 + 1/(4+4s) $ and zero when $ r \geq 1+ 1/(2+2s)$.
Let $\lambda_0,..., \lambda_s$ be the solution to the system
$\tsum_{j = 0}^s \lambda_j (-(j+1))^\ell = 1$ for $\ell = 0,..., s$.
If $f: \Omega \to \R$, we extend $f$ to a function $Ef = E_sf$ on
$\R^3$ by setting $Ef(y) = f(y)$ when
$|y| \leq 1$ and when $|y| \geq 1$, write $f(y) = f(r, \omega)$
where $r = |y|, \omega = y/|y|\in \mathbb{S}^2$ and define:
\begin{equation}
 Ef(r , \omega) =
 {\sum}_{j = 0}^s \lambda_j f(r - (j+1)(r-1), \omega) \eta(r), \quad r \geq 1.
 \label{extension operator}
\end{equation}

Let $\zeta = \zeta(r)$ be a smooth function with $\zeta(r) = 0, r \leq 1/4$
and $\zeta = 1$ for $r \geq 1/2$.
For $f:\R^3\to \R$, we define
$
 ||f||_{H^{(k,s)}(\R^3)}^2
 = {\sum}_{|I| \leq k} \int_0^\infty ||\pa_y^I (\zeta f)(r,\cdot)||_{H^s(\pa \Omega)}^2
 \, r^2 dr
 + ||(1-\zeta)f||_{H^{k+s}(\Omega)}^2,
 $
and we have:
\begin{theorem}
\label{comparable thm}
Fix $s \geq 2$ and define $E = E_s$ by \eqref{extension operator}.
Then $E$ is continuous as a map $H^s(\Omega) \to H^s(\R^3)$ and
$H^{(s,1/2)}(\Omega) \to H^{(s,1/2)}(\R^3)$
and there are constants $0 < C_1 < C_2 < \infty$ depending only on $s$ so that
\begin{equation}
 C_1 ||Ef||_{H^{(s,a)} (\R^3)} \leq  ||f||_{H^{(s,a)}(\Omega)}
 \leq C_2||Ef||_{H^{(s,a)}(\R^3)}
 \qquad
 \text{ where } a = 0,1/2,
 \label{extnorm}
\end{equation}
there is a constant $C$ depending only on $s$ so that if $T$ is any vector field on $\R^3$ with
$T|_{\Omega} \in \T$, then:
\begin{equation}
 ||T Ef||_{H^{(s, a)}(\R^3)} \leq C(|| ET f||_{H^{(s,a)}(\R^3)}+|| E f||_{H^{(s, a)}(\R^3)}),
 \quad \text{ where } a=0, 1/2.
 \label{comparable}
\end{equation}
 \end{theorem}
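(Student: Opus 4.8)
The operator $E$ is a radial Hestenes--Seeley reflection. Writing $y = r\omega$ with $r = |y|$, $\omega \in \mathbb{S}^2$, we have $Ef = f$ on $\{r \le 1\}$ and $Ef(r,\omega) = \eta(r)\sum_{j=0}^s \lambda_j\,(f\circ R_j)(r,\omega)$ on $\{r \ge 1\}$, where $R_j$ is the map $(r,\omega)\mapsto(\phi_j(r),\omega)$ with $\phi_j$ the affine function of $r$ in \eqref{extension operator}, so that $\phi_j(1) = 1$, $\phi_j'$ is a nonzero constant, and $\phi_j^{(k)}\equiv 0$ for $k\ge 2$. I would first record the elementary geometric facts I will use repeatedly: for a small $c_s>0$ fixed by $\operatorname{supp}\eta$, each $R_j$ maps the annulus $\{1\le r\le 1+c_s\}$ diffeomorphically onto a region contained in $\{1/2\le r\le 1\}\subset\overline\Omega$, with $R_j$ and all its $y$-derivatives bounded and Jacobian bounded above and below there; $R_j$ fixes $\omega$, hence commutes with every rotation field $\Omega_{ab}$ and with $\partial_r$; and the radial cutoff $\eta$ together with the fixed cutoffs $\zeta$ occurring in the definitions of $H^{(s,a)}$ are identically $1$ on $\operatorname{supp}\eta$ and on its $R_j$-images.

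\textbf{Step 1: continuity of $E$, i.e.\ the left inequality in \eqref{extnorm}.} I would work with $f\in C^\infty(\overline\Omega)$, dense in both $H^s(\Omega)$ and $H^{(s,1/2)}(\Omega)$. Since $\eta\equiv 1$ near $r=1$, $\phi_j(1)=1$, $\phi_j'=\mathrm{const}$ and $\phi_j''=0$, the chain rule gives $\partial_r^\ell(Ef)(1^+,\omega)=\big(\sum_j\lambda_j(\phi_j')^\ell\big)\partial_r^\ell f(1^-,\omega)=\partial_r^\ell f(1^-,\omega)$ for $\ell=0,\dots,s$ by the choice of the $\lambda_j$; angular derivatives commute with $R_j$, so all mixed derivatives up to order $s$ match across $\{r=1\}$, whence $Ef\in H^s_{\mathrm{loc}}(\R^3)$ and $Ef\in H^{(s,1/2)}_{\mathrm{loc}}(\R^3)$, with no distributional derivatives carried by the sphere. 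For the norm bounds I localize: on $\{r\le 1\}$, $Ef=f$; on the annulus, $Ef$ is a finite sum of $\eta\,(f\circ R_j)$. For the $H^s$ part one changes variables by $R_j$ and uses the chain rule, the bounds above, and boundedness of multiplication by the smooth compactly supported $\eta$. For the $H^{(s,1/2)}$ part one writes each $\partial_y^I$, $|I|\le s$, in polar form, notes that $\partial_r$ applied to $f\circ R_j$ produces only the constant factor $\phi_j'$ while angular derivatives pass through $R_j$, so that on the sphere $\{r\}$ one has $\|\partial_y^I(\eta\,(f\circ R_j))(r,\cdot)\|_{H^{1/2}(\partial\Omega)}\lesssim\sum_{|J|\le s}\|\partial_y^J f(\phi_j(r),\cdot)\|_{H^{1/2}(\partial\Omega)}$; changing variables $r\mapsto\phi_j(r)$ in the weighted radial integral $\int r^2\,dr$ (whose image lies in $[1/2,1]$, so all weights are comparable) bounds this by $C\|f\|_{H^{(s,1/2)}(\Omega)}$. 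By density this gives $\|Ef\|_{H^{(s,a)}(\R^3)}\le C\|f\|_{H^{(s,a)}(\Omega)}$ for $a=0,\tfrac12$. The right inequality in \eqref{extnorm} is immediate, since $Ef=f$ on $\Omega$, $1-\zeta$ is supported in $\{r<1/2\}\subset\Omega$, and the radial integral defining $H^{(s,a)}(\R^3)$ runs over $[0,\infty)\supseteq[0,1]$; hence $\|f\|_{H^{(s,a)}(\Omega)}\le\|Ef\|_{H^{(s,a)}(\R^3)}$.

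\textbf{Step 2: the commutator bound \eqref{comparable}.} It suffices to treat the two model vector fields in $\T$. The field $(1-\zeta)\partial_{y^a}$ is supported in $\{r\le 1/2\}$, where $E$ is the identity, so there $TEf=ETf$ and nothing is to be done. For $T=\zeta(y)\Omega_{ab}$: on $\{r\le 1\}$, $Ef=f$ gives $TEf=Tf=(ETf)|_{\{r\le 1\}}$; on $\{r\ge 1\}$, $\Omega_{ab}$ annihilates the radial function $\eta$ and commutes with each $R_j$, so
\[
 T(Ef) = \zeta\,\eta\,{\sum}_j\lambda_j\,(\Omega_{ab}f)\circ R_j,
 \qquad
 E(Tf) = \eta\,{\sum}_j\lambda_j\,(\zeta\circ\phi_j)\,(\Omega_{ab}f)\circ R_j,
\]
hence $T(Ef)-E(Tf)=\eta\sum_j\lambda_j(\zeta-\zeta\circ\phi_j)\,(\Omega_{ab}f)\circ R_j$. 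On $\operatorname{supp}\eta$ one has $r\ge 1/2$ and $\phi_j(r)\ge 1/2$ (by the choice of $c_s$ relative to $s$), so $\zeta(r)=\zeta(\phi_j(r))=1$ and the difference vanishes identically; thus $TEf=ETf$ and \eqref{comparable} holds, the term $\|Ef\|_{H^{(s,a)}(\R^3)}$ on the right being slack. For a vector field on $\R^3$ agreeing with an element of $\T$ only on $\overline\Omega$, one writes it as such a model field plus a smooth field supported in $\{|y|>1\}$, and absorbs the extra contribution — supported where $Ef$ is smooth and compactly supported — into the right-hand side.

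I expect \textbf{Step 1} to be the only substantial part: the bookkeeping needed to push the mixed norm $H^{(s,1/2)}$ through the reflections $R_j$ — expressing up to $s$ Cartesian derivatives in polar form and checking that tangential $H^{1/2}$-regularity on concentric spheres transforms correctly under the affine radial reparametrization — together with verifying that matching of all derivative traces up to order $s$ across $\{r=1\}$ genuinely places $Ef$ in $H^{(s,1/2)}(\R^3)$. The right inequality of \eqref{extnorm} and the commutator bound \eqref{comparable} are essentially formal once the structure of $E$ is unwound.
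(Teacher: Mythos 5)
Your argument is correct and is essentially the paper's own proof, just carried out in more detail: the paper likewise verifies that the choice of the $\lambda_j$ makes all radial derivative traces up to order $s$ match across $\{r=1\}$ (which gives \eqref{extnorm} after the affine radial change of variables), and obtains \eqref{comparable} from the fact that the rotation fields $\Omega_{ab}$ annihilate $|y|$ and hence commute exactly with the radial reflection, while the interior fields live where $E$ is the identity. No further comment is needed.
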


\begin{proof}
  We have:
  \begin{equation}
   \pa_r^\ell (Ef)(r,\omega) = {\sum}_{j = 0}^s \lambda_j \pa_r^\ell f(r - (j+1)(r-1),\omega)
   \big(-(j+1)\eta(r)\big)^\ell +
   g_\ell(r,\omega),\quad r \geq 1,
   \label{}
  \end{equation}
  where $g_\ell(1,\omega) \!= \!0$, so by the definition of the
  $\lambda_j$ and the fact that $\eta(1)\! =\! 1$, it follows that
  $\pa_r^k(Ef)(1,\omega) \!= \! \pa_r^k f(1,\omega)$ for $1\! \leq \!k \!\leq \!s$ and
  $\omega\! \in\! \mathbb{S}^2$. This implies the estimate \eqref{extnorm}.
  The estimate \eqref{comparable} follows from the fact that near the boundary,
  $T \in \T$ commutes with $E$ since $(y^a\pa_b - \pa_b y^a) |y|^2  = 0$.
\end{proof}

\subsection{The Green's formula}
We conclude this section by recording the following Green's formula which will be frequently used throughout this manuscript. Let $f, g: \D\rightarrow \R$ be $C^1$ functions, then:
\begin{multline}
\int_{\Omega} \pave_i f(\xve(t,y)) g(\xve(t, y)) \kve dy  = \int_{\tD_t} \pave_i f(\xve)g(\xve)d\xve = -\int_{\tD_t} f(\xve)\pave_i g(\xve)d\xve +\int_{\pa\tD_t} N_i f(\xve)g(\xve)dS(\xve)\\
=-\int_{\Omega} f(y)\pave_i g(y)\kve dy+\int_{\pa\Omega} N_i f(y)g(y)\widetilde{\nu} dS(y).
\label{Green's formula}
\end{multline}

\section{Proofs of Elliptic estimates for the Dirichlet Problem}
\label{elliptic}
Here we prove the elliptic estimates we need. We will
use these to prove that $\Lambda$ is a continuous map on a certain
Banach space and to prove that
 $\Lambda$ is a contraction, in Section \ref{vlwpsec}.
The basic estimates we need for the contraction estimates  imply
the estimates for the operator norm  so we start with the contraction estimates.

Let
$\Vu{}_{\!},{\!} \Vw{\!}:{}_{\!} [0,T] \!\times \!\Omega \!\to \R^3$ be two vector fields on
$\Omega$ and let
$\xveu, \xvew$ denote their smoothed flows \eqref{xvedef}. Set
\begin{equation}
 A_{{}_{\!}I\, a}^{\,\,i} = \frac{\pa \xveu^i}{\pa y^a},\qquad
  A_{{}_{\!}I\, i}^{\,\,a}= \frac{\pa y^a}{\pa \xveu^i}\qquad\text{and}\qquad
 A_{\II\, a}^{\,\,\,i}= \frac{\pa \xvew^i}{\pa y^a},\qquad
  A_{\II\, i}^{\,\,\,a}= \frac{\pa y^a}{\pa \xvew^i}.
\end{equation}
We will assume that:
\begin{equation}\label{uwbd}
 {\tsum}_{ k + |J| \leq 3} |\pa^J_y \xve_I| + |\pa^J_y \xve_{II}| \leq M_0.
\end{equation}
By the formula for the derivative of the inverse \eqref{dinv}
this implies that $|A^{\m\, a}_{\I\, i}| + |A_{\II\, i}^{\,\,\,a}| \leq C(M_0)$.
We define
\begin{equation}
  \wpa_{I i}= A_{{}_{\!}I\, i}^{\,\,a} \frac{\pa}{\pa y^a},\qquad\text{and}\qquad \wpa_{\II i}=A_{\II\, i}^{\,\,a}\frac{\pa}{\pa y^a},
\end{equation}
\begin{equation}
 \gveu^{ab} = \delta^{ij}A_{{}_{\!}I\, i}^{\,\,a}A_{{}_{\!}I\, j}^{\,\,b},\qquad\text{and}\qquad\gvew^{ab} = \delta^{ij}
 A_{\II\, i}^{\,\,\,a} A_{\II\, j}^{\,\,\,b},
\end{equation}
as well as:
\begin{equation}
 \Dveu f = \delta^{ij}\wpa_{I i}\wpa_{I j} f = \pa_a \big(\gve_I^{ab}\pa_b f\big),\qquad
 \Dvew f = \delta^{ij}\wpa_{I i}\wpa_{\II j} f = \pa_a \big(\gve_{\II}^{ab}\pa_b f\big).
\end{equation}

We define:
\begin{align}
 \divu \alpha &= \delta^{ij}\wpa_{I i} \alpha_j,
 &&
 \divw \alpha = \delta^{ij} \wpa_{\II i}  \alpha_j, \label{div12}\\
 (\curlu \alpha)_{ij} &= \wpa_{I i}  \alpha_j - \wpa_{I j}  \alpha_i,
 &&
 (\curlw \alpha)_{ij} = \wpa_{\II i} \alpha_j - \wpa_{\II j} \alpha_i,
 \end{align}
 and
 \begin{equation}
 \gamma_I^{ij} = A_{{}_{\!}I\, a}^{\,\,i} A_{{}_{\!}I\, b}^{\,\,j} \gamma^{ab}, \qquad\text{and}\qquad \gamma_{\II}^{ij} = A_{\II\, a}^{\,\,\,i} A_{\II\, b}^{\,\,\,j}
 \gamma^{ab}.
\end{equation}
Here, we are writing $\gamma^{ab}$ for the cometric on $\pa \Omega$ extended
to the interior of $\Omega$. Fixing a smooth radial function $\chi$ with
$\chi(r) = 0$ for $r \leq \frac{1}{2}$ and $\chi(r) = 1$ for $r \geq \frac{3}{4}$,
then:
\begin{equation}
 \gamma^{ab} = \delta^{ab} - \chi(r) N^a N^a,
 \label{}
\end{equation}
with $N$ the unit normal to $\pa \Omega$.

Recalling the notation $\T^r$ from Section \ref{def T and FD}, we will use the following norms:
\begin{equation}
 ||\alpha||_{H^k(\Omega)}^2
 =\! {\sum}_{|J| \leq k} \int_\Omega\!
 \delta^{ij}\pa_y^J \alpha_i
 \pa_y^J
 \alpha_j\, dy,\qquad
 ||\alpha||_{C^k(\Omega)} = \!{\sum}_{\ell = 0}^k ||\pa_y^\ell \alpha||_{L^\infty(\Omega)},\qquad
  ||\T^r \!\alpha||_{L^2(\Omega)}^2
 = \!\int_{\Omega} \!|\T^r\! \alpha|^2 dy.
\end{equation}

In what follows we will use the convention that the components of $\alpha$
will be expressed in terms of the $\xveu$ frame and $\beta$ will be expressed
in terms of the $\xvew$ frame and we will just write $\alpha, \beta$
instead of $\alpha_I, \beta_{II}$.
We now list the elliptic estimates we use. Proofs can be found in the following
sections.
\begin{lemma}
  \label{app:pwdiff}
  With the above definitions, if $\alpha, \beta$ are (0,1)-tensors
  on $\Omega$ then on $[0 ,T] \times \Omega$:
  \begin{equation}
   |\paveu \alpha - \pavew \beta|
   \leq C(M')\big(
   |\!\divu \alpha - \divw \beta|+
   |\!\curlu \alpha - \curlw \beta|
   +  |\T \!\alpha - \T \beta|
   + ||\xveu - \xvew||_{C^{1}(\Omega)} |\pavew \beta|\big).
   \label{pwdiffbd}
  \end{equation}
\end{lemma}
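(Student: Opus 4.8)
The plan is to reduce the difference estimate \eqref{pwdiffbd} to the corresponding pointwise estimate \eqref{ellpw} for a single flow by a careful bookkeeping of error terms. First I would recall the proof of \eqref{ellpw}: at a fixed point $y_0$, one chooses an orthonormal (with respect to $\gve$) frame and uses the algebraic identity expressing $\pave_i \alpha_j$ in terms of the symmetric part $\tfrac12(\pave_i\alpha_j + \pave_j\alpha_i)$ and the antisymmetric part $\tfrac12\curl\alpha_{ij}$; the symmetric part is in turn controlled by $\div\alpha$ (the trace) together with the off-diagonal entries, which for a unit ball are controlled by tangential derivatives $\T\alpha$ after accounting for the second fundamental form of the level sets of $|y|$. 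All the coefficients appearing in this reduction are smooth functions of the metric $\gve^{ab}$ and hence, via \eqref{dinv}, of $A_{\m i}^a$ and $A_{\m a}^i$, so they are bounded by $C(M')$ under \eqref{uwbd}.

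The key step is then to run this same computation for $\paveu\alpha$ and $\pavew\beta$ simultaneously and subtract. Writing $\paveu\alpha_j - \pavew\beta_j = (A_{I\,i}^a - A_{II\,i}^a)\pa_a\alpha_j + A_{II\,i}^a\pa_a(\alpha_j - \beta_j)$ at the level of raw derivatives shows the basic mechanism: differences of the coefficients $A_{I} - A_{II}$ are multiplied by (bounded) first derivatives of $\alpha$, producing the term $\|\xveu - \xvew\|_{C^1}|\pavew\beta|$, while the coefficients $A_{II}$ themselves are bounded and multiply the genuine difference $\alpha - \beta$. Carrying this through the orthonormal-frame decomposition described above, every structural coefficient either (i) gets hit by the difference, contributing $C(M')\|\xveu - \xvew\|_{C^1}$ times a bounded first-order quantity in $\beta$ — which I would bound by $|\pavew\beta|$ using \eqref{ellpw} applied to the single flow $\xvew$ — or (ii) stays as a bounded multiplier of $\divu\alpha - \divw\beta$, $\curlu\alpha - \curlw\beta$, or $\T\alpha - \T\beta$. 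One subtlety: the frame used to diagonalize $\gveu$ differs from the one diagonalizing $\gvew$; I would handle this by fixing a single reference frame (e.g. the Euclidean one) and expressing both decompositions in it, so that the frame change itself only contributes terms of type (i).

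The main obstacle — really the only place requiring care — is making sure the $C^1$ norm of $\xveu - \xvew$ (and not, say, a $C^2$ norm) suffices. This works because the elliptic identity is algebraic in $\pave\alpha$, $\div\alpha$, $\curl\alpha$ and involves the metric coefficients $\gve^{ab}$ but not their derivatives: no derivative of $A$ appears, only $A$ itself, and $A = \pa\xve/\pa y$ already costs one derivative of $\xve$. Hence differences of the coefficients cost $\|\pa(\xveu - \xvew)\|_{L^\infty} \le \|\xveu - \xvew\|_{C^1(\Omega)}$, as claimed. I would also note that since the statement is a pointwise bound on $\Omega$, no integration by parts is needed and the whole argument is local; the higher-norm versions in Proposition \ref{sobfndiff} are then obtained from this by the usual differentiation and product-rule arguments, exactly parallel to how \eqref{sobtensor} and \eqref{sobell} follow from \eqref{ellpw}.
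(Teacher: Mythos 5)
Your outline matches the paper's proof in its essentials: decompose $\paveu\alpha - \pavew\beta$ into trace (divergence), antisymmetric (curl), and remaining symmetric parts; run the single-flow pointwise argument with the $\II$-quantities rewritten in the $I$-frame so that coefficient differences produce exactly the $\|\xveu-\xvew\|_{C^1(\Omega)}|\pavew\beta|$ error; and observe — correctly, and this is the key point — that the identity is purely algebraic in $A_{\m i}^a$ and its inverse with no $\pa A$ appearing, so $C^1$ regularity of $\xveu-\xvew$ suffices.

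The one place where your description of the underlying single-flow mechanism (the one behind \eqref{ellpw}) doesn't quite hold up is the claim that the off-diagonal entries of the symmetric part are ``controlled by tangential derivatives $\T\alpha$ after accounting for the second fundamental form of the level sets of $|y|$.'' The second fundamental form does not enter this purely algebraic identity, and the off-diagonal entries with a normal index, e.g.\ $\tfrac12(\pave_1\alpha_3+\pave_3\alpha_1)$, are not controlled by tangential derivatives alone — you must trade the normal derivative $\pave_3\alpha_1$ for a tangential one using the curl. The paper avoids this case-by-case bookkeeping with a cleaner trick: after writing $\delta^{ij}=\gamma_I^{ij}+N_I^iN_I^j$, the trace-free property $\delta^{ij}S_{ij}=0$ gives $N_I^iN_I^jS_{ij}=-\gamma_I^{ij}S_{ij}$, whence the rank inequality $(\tr T)^2\leq \operatorname{rank}(T)\tr(T^2)$ yields $|S|^2\leq 2\gamma_I^{ij}\delta^{k\ell}S_{ik}S_{j\ell}$. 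That one line collapses the estimate onto a tangentially projected quantity, which is then expanded into $\T$-derivatives plus $\|\xveu-\xvew\|_{C^1}$-errors. Your direct component-by-component argument would also close if you invoked curl for the mixed normal-tangential entries instead of the second fundamental form, so the gap is in the justification rather than the result, but as written that step is not correct.
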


There is a higher-order version of Lemma \ref{app:pwdiff} in Sobolev spaces
and with mixed space and time derivatives:
\begin{lemma}
  \label{app:sobdiff}
  Fix $r \geq 7$ and let $1 \leq \ell \leq r$.
  Suppose $\xveu, \xvew \in H^r(\Omega)$ satisfy
  \eqref{uwbd}. If $\alpha - \beta \in H^\ell_{loc}(\Omega)$
  and:
  \begin{align}
  \divu \alpha - \divw \beta,\,\, \curlu \alpha - \curlw \beta\,\,
  \in H^{\ell-1}(\Omega),
  &&
  T (\alpha \!-\! \beta) \in H^{\ell-1}(\Omega), \textrm{ for all }
  T \!\in \T,
  && \pavew \beta \in H^{\ell-1}(\Omega),
  \label{hyp}
 \end{align}
  then $\alpha - \beta \in H^{\ell}(\Omega)$ and there is a constant
  $C_r = C_r(M_0, ||\xveu||_{H^{r}(\Omega)}, ||\xvew||_{H^{r}(\Omega)})$,
  so that
 \begin{multline}
  ||\alpha - \beta||_{H^{\ell}(\Omega)} \leq C_r
  \big( ||\divu \alpha - \divw \beta||_{H^{\ell-1}(\Omega)}
  + ||\curlu \alpha - \curlw \beta||_{H^{\ell-1}(\Omega)}\\
  + ||\T^{\ell-1} (\paveu \alpha - \pavew\beta)||_{L^2(\Omega)}
  + (||\xveu - \xvew||_{C^2(\Omega)} + ||\xveu - \xvew||_{H^{\ell}(\Omega)})
  ||\pavew \beta||_{H^{\ell}(\Omega)}\big).
  \label{app:hotpw}
 \end{multline}

 Similarly, if $k +\ell = s \leq r$, $D_t^{k'}\pave \beta \in H^{\ell'}(\Omega)$ for any $k'+\ell'\!\leq \! s$ and:
 \begin{equation*}
 D_t^k (\divu \!\alpha - \divw \beta)\!\in\! H^{\ell\!-\!1\!}(\Omega),\qquad D_t^k(\curlu\! \alpha - \curlw \beta)
\! \in\! H^{\ell\!-\!1\!}(\Omega),\qquad
 D_t^k T (\alpha - \beta) \!\in\! H^{\ell\!-\!1\!}(\Omega), \!\!\!\!\quad\text{for all }
 T \!\in\! \T\!\!,
 \label{hyp2}
\end{equation*}
then $D_t^k(\alpha - \beta) \in H^\ell(\Omega)$ and there is a constant
$C_r' = C_r'(M_0, ||\xveu||_r, ||\xvew||_r)$,
 so that:
 \begin{multline}
  ||\alpha - \beta||_{k,\ell} \leq C_s'
  \big( || (\divu \alpha - \divw \beta)||_{k,\ell-1} +
  ||\curlu \alpha - \curlw \beta||_{k,\ell-1}\\
  + ||\FD^{k,\ell-1} (\paveu \alpha - \pavew \beta)||_{L^2}+
  ||\alpha - \beta||_{s,0}
  +(||\xveu - \xvew||_{C^2(\Omega)} + ||\xveu - \xvew||_{s})
  (||\beta||_{s,0} + ||\pave \beta||_{s-1}) \big).
  \label{mixedellipticest}
 \end{multline}
\end{lemma}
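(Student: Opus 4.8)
The plan is to deduce both \eqref{app:hotpw} and \eqref{mixedellipticest} from the pointwise difference bound \eqref{pwdiffbd} of Lemma~\ref{app:pwdiff}, by induction on the order of differentiation, carrying the frame‑difference structure through the same scheme that leads from \eqref{ellpw} to \eqref{sobtensor}--\eqref{sobtensordt}. Since \eqref{mixedellipticest} is obtained from \eqref{app:hotpw} with extra bookkeeping of material derivatives, I would do \eqref{app:hotpw} first. The base case $\ell=1$ is essentially \eqref{pwdiffbd}: writing $\pa_a = A_{I\,a}^i\paveu_i = A_{\II\,a}^i\pavew_i$ and subtracting,
\[
 \pa_a(\alpha_i-\beta_i) = A_{I\,a}^j\,(\paveu_j\alpha_i-\pavew_j\beta_i) + \big(\pa_a(\xveu^j-\xvew^j)\big)\pavew_j\beta_i ,
\]
so that taking $L^2(\Omega)$ norms, bounding the first term with \eqref{pwdiffbd} and \eqref{uwbd} and the second by $\|\xveu-\xvew\|_{C^1(\Omega)}\|\pavew\beta\|_{L^2(\Omega)}$, yields \eqref{app:hotpw} for $\ell=1$ (the zeroth‑order piece $\|\alpha-\beta\|_{L^2(\Omega)}$ being absorbed along the way using Poincar\'e's inequality on $\Omega$ together with the same identity).

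For the inductive step I would argue in two stages. First, an analogue of \eqref{sobtensor} for differences: applying \eqref{pwdiffbd} to $\pa_y^{\ell-1}(\alpha-\beta)$, after converting each $\pa_y$ to $\paveu$ or $\pavew$ and commuting, one shows
\[
 \|\alpha-\beta\|_{H^\ell(\Omega)} \leq C_r\bigtwo(\|\divu\alpha-\divw\beta\|_{H^{\ell-1}(\Omega)} + \|\curlu\alpha-\curlw\beta\|_{H^{\ell-1}(\Omega)} + {\tsum}_{j\leq\ell}\|\T^j(\alpha-\beta)\|_{L^2(\Omega)} + \mathrm{Err}\bigtwo),
\]
where $\mathrm{Err}$ gathers the Jacobian‑difference contributions, each a product of a Sobolev norm of $\xveu-\xvew$ with a Sobolev norm of $\pavew\beta$, handled by the product rule \eqref{product} and bounded by $(\|\xveu-\xvew\|_{C^2(\Omega)}+\|\xveu-\xvew\|_{H^\ell(\Omega)})\|\pavew\beta\|_{H^\ell(\Omega)}$. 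Second, since the fields in $\T$ are tangential near $\pa\Omega$ and span the full tangent space in the interior, each $\T^j(\alpha-\beta)$ with $j\leq\ell-1$ is of order $\leq\ell-1$, hence controlled by $\|\alpha-\beta\|_{H^{\ell-1}(\Omega)}$ and thus by the inductive hypothesis, while $\|\T^\ell(\alpha-\beta)\|_{L^2(\Omega)}$ is bounded by $\|\T^{\ell-1}\paveu(\alpha-\beta)\|_{L^2(\Omega)}$ up to a commutator of lower order; writing $\paveu(\alpha-\beta)=(\paveu\alpha-\pavew\beta)-(\paveu-\pavew)\beta$ and noting $(\paveu-\pavew)\beta=\pa_y(\xveu-\xvew)\cdot\pavew\beta$ is another $\mathrm{Err}$ term, the tangential sum is replaced by $\|\T^{\ell-1}(\paveu\alpha-\pavew\beta)\|_{L^2(\Omega)}$ at the cost of already‑controlled quantities. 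Substituting back and iterating the induction closes \eqref{app:hotpw}.

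The mixed estimate \eqref{mixedellipticest} follows the same two‑stage argument while tracking the material derivatives $D_t^k$: one uses $[D_t,\pa_a]=0$ to commute $D_t$ through the $y$‑derivatives freely, and $[D_t,\paveu_i]=-(\paveu_i D_t\xveu^k)\paveu_k$ (together with its higher iterates, which have the structure of \eqref{dtpaSdefsmooth} with coefficients built from $\ssm\Vu$) whenever $\paveu$ is introduced; the explicit $\|\alpha-\beta\|_{s,0}$ term on the right of \eqref{mixedellipticest}, absent from \eqref{app:hotpw}, is exactly what is needed to absorb the lowest‑order pieces that arise because no Poincar\'e‑type reduction is available in the presence of $D_t$ derivatives.

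I expect the main obstacle to be organizational rather than conceptual: sorting the error terms produced by the repeated frame conversions and commutators into the three admissible shapes --- recursively controlled differences of the form $\|\cdot_{\I}-\cdot_{\II}\|$, Jacobian‑difference terms of the form $\{$a norm of $\xveu-\xvew\}\times\{$a norm of $\pavew\beta\}$, and strictly lower‑order terms --- and deciding in each Sobolev product which factor goes into $L^\infty$ (forcing the $\|\xveu-\xvew\|_{C^2(\Omega)}$ norm) and which into $L^2$ (allowing the $\|\xveu-\xvew\|_{H^\ell(\Omega)}$ norm), all while respecting the interior versus boundary cutoff structure of the fields in $\T$ and making sure that no term in the estimate ever requires an extra derivative of $\xveu$ or of $\pavew\beta$ than is permitted by the stated constants.
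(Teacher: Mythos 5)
Your proposal is correct and follows essentially the same route as the paper: an induction on the order of differentiation driven by the pointwise difference estimate \eqref{pwdiffbd}, with the base case given by the identity $\pa_a(\alpha_i-\beta_i)=A_{{}_{\!}I\, a}^{\,\,j}(\paveu_{j}\alpha_i-\pavew_{j}\beta_i)+(A_{{}_{\!}I\, a}^{\,\,j}-A_{\II\, a}^{\,\,\,j})\pavew_{j}\beta_i$ and the commutator/frame-difference errors sorted exactly as you describe (the paper does this via Lemmas \ref{vectcommlemma} and \ref{inversedifflemma}). The only organizational difference is that the paper handles the top tangential term by re-applying the inductive hypothesis to $T\alpha, T\beta$ rather than by first proving a full difference analogue of \eqref{sobtensor}; this is immaterial.
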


In the special case that $\alpha = \pa f, \beta = \pa g$ for functions
$f, g \in H^1_0(\Omega)$, $\paveu\! f - \pavew g \in H^\ell_{loc}(\Omega)$,
 we have:
\begin{prop}
  \label{app:sobfndiff}
 Suppose $\xve_I, \xve_{\II}\! \in\! H^s(\Omega)$, $s\! \geq\! 1$, satisfy \eqref{uwbd},
 $f\!-_{\!}g \!\in \!H^1_0(\Omega)$,
 $\paveu\! f \!-_{\!} \pavew g \!\in\! H^s_{loc}(\Omega)$ and that:
 \begin{align}
  \Dveu f - \Dvew g \in H^{s-1}(\Omega),&&
  \pavew g \in H^{s}(\Omega),&&
  T^J (\paveu  f - \pavew g) \in L^2(\Omega), \textrm{ for all } |J| \leq  s.
 \end{align}
 Then $\paveu f\!  - \pavew g \!\in \! H^{s}(\Omega)$
 and there is a constant $C_s\! = \!C_s(M_0, ||\xveu||_{H^s(\Omega)}, ||\xvew||_s)$
 so that
\begin{multline}
 ||\paveu f - \pavew g||_{H^{s}(\Omega)}
 \leq C_s ||\Dveu f - \Dvew g||_{H^{s-1}(\Omega)}
 +C_s ||\T(\xveu - \xvew)||_{H^s(\Omega)}
 ||\pavew g||_{H^s(\Omega)}\\
 + C_s ||\T \xveu||_{H^s(\Omega)}
 \big(||\xveu - \xvew||_{H^s(\Omega)}
  ||\pavew g||_{H^{s-1}(\Omega)}
  + ||f - g||_{L^2}\big).
\end{multline}

Similarly, if $k + \ell = s$, the assumption
\eqref{Massumpapp} holds,
$D_t^k(\paveu f - \pavew g) \in H^{\ell}_{loc}(\Omega)$ and:
\begin{equation}
  D_t^k(\Dveu f- \Dvew g) \in H^{\ell-1}(\Omega),
  \qquad
  D_t^k \pavew g \in H^{\ell}(\Omega),
  \qquad
  T^J(\paveu f - \pavew g) \in L^2(\Omega),
  \text{ for all } T^J \in \FD^s,
\end{equation}
then $D_t^k(\paveu f - \pavew g ) \in H^\ell(\Omega)$ and
there are constants $C_s' =
C_s'(M, ||\xveu||_{s},
||\xvew||_{s})$ so that if
$k + \ell = s$:
\begin{multline}
 ||\paveu f - \pavew g||_{k,\ell}
 \leq C_s' \big( ||\Dveu f - \Dvew g||_{k-1, \ell}
 + ||f - g||_{s+1,0} + ||\paveu f - \pavew g||_{s-1,1}
 + ||\T \xvew||_s ||f - g||_{s}\big)\\
 + C_r'||\T(\xveu - \xvew)||_{s}\big( ||\pavew g||_{s} +
 ||g||_{s+1,0}).
 \label{ibpmixedelliptic}
\end{multline}
\end{prop}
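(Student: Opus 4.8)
The plan is to parallel the proof of the single‑flow elliptic estimates \eqref{sobell}--\eqref{sobellmix}, carefully tracking where the flow difference $\xveu-\xvew$ enters, and to substitute for the tensorial div--curl--tangential inequality \eqref{sobtensor}--\eqref{sobtensordt} its difference form, Lemma~\ref{app:sobdiff} (resp.\ the pointwise Lemma~\ref{app:pwdiff}). As in Proposition~\ref{app:sobfndiff}, a density argument reduces us to smooth $f,g$, the hypotheses guaranteeing that the right‑hand side stays finite. First I would set $\alpha=\paveu f$, $\beta=\pavew g$ and observe that, since partial derivatives in each fixed coordinate frame commute, $\curlu\alpha=\curlw\beta=0$ while $\divu\alpha-\divw\beta=\Dveu f-\Dvew g$; hence the div--curl data of the difference $\paveu f-\pavew g$ is precisely the given element $\Dveu f-\Dvew g\in H^{s-1}(\Omega)$. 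Running the bootstrap behind Lemma~\ref{app:sobdiff} then reduces the $H^s(\Omega)$ estimate to three points: controlling the tangential Hessian term $||\T^{s-1}(\paveu\paveu f-\pavew\pavew g)||_{L^2(\Omega)}$; bounding the coefficient term $||\pavew(\pavew g)||_{H^{s-1}(\Omega)}$ (the Hessian of $g$, one order below the $H^s$ appearing in the statement of \eqref{app:hotpw}, which is what the bootstrap of \eqref{pwdiffbd} actually produces) by $||\pavew g||_{H^s(\Omega)}$ plus lower‑order terms, via \eqref{sobtensor} for the single flow $\xvew$ with $\curlw\pavew g=0$; and trading the full $y$‑derivatives of $\xveu-\xvew$ that appear for tangential ones, which is the source of the $||\T(\xveu-\xvew)||_{H^s(\Omega)}$ factor.

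The crux is the tangential Hessian term. I would fix $T^I\in\T^{s-1}$, set $\psi=f-g\in H^1_0(\Omega)$ — so $T^I\psi=0$ on $\pa\Omega$ — and split $\paveu\paveu f-\pavew\pavew g=\paveu\paveu\psi+(\paveu\paveu-\pavew\pavew)g$. In the second summand every term carries a factor $\paveu-\pavew$ or $\pa_y(\xveu-\xvew)$: by the formula for the inverse Jacobian \eqref{dinv}, $A_I^{-1}-A_{II}^{-1}$ is a smooth function of the Jacobians times $\pa_y(\xveu-\xvew)$, so after applying $T^I$ and the product estimate \eqref{product} together with the fractional Leibniz rule \eqref{app:alg2} this summand is bounded by $C_s||\xveu-\xvew||_{H^s(\Omega)}||\pavew g||_{H^s(\Omega)}$, which is on the stated right‑hand side. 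For the first summand I would commute $T^I$ past the two $\paveu$'s, integrate the outer one by parts via Green's formula \eqref{Green's formula} (the boundary term dropping since $T^I\psi=0$ on $\pa\Omega$), and convert $\sum_{i,j}||\paveu_i\paveu_j T^I\psi||_{L^2(\Omega)}^2$ into $||\Dveu T^I\psi||_{L^2(\Omega)}^2$ up to a boundary‑curvature term absorbed by interpolation and the commutator $[\Dveu,T^I]\psi$ absorbed by the inductive hypothesis on $s$. The remaining $||\Dveu T^I\psi||_{L^2(\Omega)}$ I would handle by writing $\Dveu\psi=(\Dveu f-\Dvew g)+(\Dvew-\Dveu)g$, putting the mismatch $(\Dvew-\Dveu)g$ in divergence form $\pa_a\big((\gvew^{ab}-\gveu^{ab})\pa_b g\big)$ with the bracket in $H^{s-1}(\Omega)$ of size $\lesssim C_s||\xveu-\xvew||_{H^s(\Omega)}||\pavew g||_{H^s(\Omega)}$, and then integrating that $\pa_a$ by parts once against $T^I\psi$ (legitimate since $T^I\psi=0$ on $\pa\Omega$), so it too lands on the right‑hand side; the genuine forcing term contributes $||\Dveu f-\Dvew g||_{H^{s-1}(\Omega)}$, and every commutator is either lower order and absorbed inductively, or carries $||\T\xveu||_{H^s(\Omega)}$ together with $||f-g||_{L^2(\Omega)}$ or a $||\xveu-\xvew||$‑norm.

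The mixed space--time estimate \eqref{ibpmixedelliptic} follows by the same scheme, now started from \eqref{mixedellipticest} and \eqref{sobellmix} and commuting also powers of $D_t$ through $\paveu,\pavew,\Dveu,\Dvew$; the identity $[D_t,\pave]=-(\pave\ssm V)\pave$ generates the extra contributions involving $||\Vu-\Vw||$ and the all‑time‑derivative norm $||g||_{s+1,0}$, again via \eqref{product}. I expect the main difficulty to be exactly this bookkeeping: one must route each error term either into a lower‑order difference norm absorbed by induction on $s$ (resp.\ on $k+\ell$), or into precisely one of the three terms on the right, with the correct norm of $\xveu-\xvew$ ($C^2$, full $H^s$, or tangential $\T H^s$), and throughout keep the divergence commuted past the Laplacian with only $s-1$ derivatives, so that neither $||\xveu||_{H^{s+1}(\Omega)}$ nor $||\pavew g||_{H^{s+1}(\Omega)}$ — quantities not controlled by the hypotheses — ever appears.
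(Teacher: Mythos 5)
The paper's proof of Proposition~\ref{app:sobfndiff} goes through Proposition~\ref{mixedestimates} and in particular through Lemma~\ref{justtangential}, which bounds $\|\FD^{k,\ell}(\paveu f - \pavew g)\|_{L^2(\Omega)}$ by a \emph{direct} integration-by-parts argument: one writes $T^I = ST^J$, starts from the bilinear form $\int_\Omega T^I(\paveu f - \pavew g)\cdot T^I(\paveu f - \pavew g)\,dy$, moves one factor of $\pave$ across by Green's formula \eqref{Green's formula} using $T^J f = T^J g = 0$ on $\pa\Omega$, identifies $\Dveu f - \Dvew g$, and finally integrates the \emph{outer tangential} derivative $S$ by parts, absorbing $S^2(f-g)$ into the left-hand side with a small parameter $\delta$. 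Your proposal, by contrast, routes through the full Hessian: you first say the div--curl reduction leaves you with $\|\T^{s-1}(\paveu\paveu f - \pavew\pavew g)\|_{L^2}$, and then want to convert $\sum_{i,j}\|\paveu_i\paveu_j T^I\psi\|_{L^2}^2$ into $\|\Dveu T^I\psi\|_{L^2}^2$ before estimating the latter. That is a genuinely different decomposition, and it has real gaps.

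Three concrete problems. First, the Hessian-to-Laplacian step you describe is \emph{not} a small perturbation that interpolation absorbs. With Dirichlet data the identity $\int_\Omega|\nabla^2 u|^2 = \int_\Omega(\Delta u)^2 + (\text{boundary term})$ leaves a term involving the second fundamental form paired with $|\partial_n T^I\psi|^2$ on $\pa\Omega$, which by trace costs $\|T^I\psi\|_{H^{3/2}(\Omega)}$ — a borderline, not lower-order, contribution; and the computation has to be done with the rough, time-dependent coefficients $\gveu^{ab}$, so the ``boundary-curvature term'' also carries extra derivatives of $\xveu$ that your hypotheses do not control. The paper avoids this entirely: only one $\pave$ is ever moved by Green's formula, never two, so no Hessian-versus-Laplacian comparison and no boundary-curvature term arises. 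Second, once you have committed to the quantity $\|\Dveu T^I\psi\|_{L^2(\Omega)}$ as a norm, the later sentence ``integrating that $\pa_a$ by parts once against $T^I\psi$'' has nothing to act on — the factor $T^I\psi$ was consumed when you squared. The paper keeps the bilinear pairing alive until the very end precisely so that this integration by parts (and the subsequent one) can be performed. Third, and most importantly, your sketch omits the step that makes the whole scheme close: after Green's formula the paper is left with $\int(Sf-Sg)\,S(\Dveu f - \Dvew g)\,\kve\,dy$, and the $S$ sitting on $\Dveu f - \Dvew g$ must be moved off by integrating $S$ by parts using the duality pairing \eqref{inthalf}. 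Without that move one would need $\|\Dveu f - \Dvew g\|$ with an extra tangential derivative, which is exactly what the statement of the Proposition does not allow. Your description never mentions this move and the structure you set up (having passed to a pure $L^2$ norm) does not support it.

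The part of your outline that matches the paper is the zeroth-order $L^2$ estimate for $\paveu f - \pavew g$, the use of the pointwise Lemma~\ref{app:pwdiff}/Lemma~\ref{app:sobdiff} to reduce to tangential quantities, and the general bookkeeping of which term carries $\|\xveu-\xvew\|_{C^2}$, $\|\xveu-\xvew\|_{H^s}$, or $\|\T(\xveu-\xvew)\|_{H^s}$ — that instinct is right and those ingredients are indeed the ones the paper uses. But the crux — what replaces the argument that produces Proposition~\ref{app:sobfndiff} out of those ingredients — is Lemma~\ref{justtangential}'s two-stage integration by parts (one $\pave$ via Green, one tangential $S$ via duality, both inside the bilinear form), and that is the argument you need to reconstruct rather than a Hessian-Laplacian comparison.
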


We also need a result to build regularity for a function $f$ with
$\Dve f \in H^{\ell-1}(\Omega)$ but with a priori only $f \in H^1_0(\Omega)$. Note that we are \emph{not} assuming that
$f \in H^\ell_{loc}(\Omega)$. This result is needed to prove a local-wellposedness
result
for the wave equation \eqref{smwavedef}-\eqref{smwaveic} (see Appendix \ref{linwaveexist}).
Writing $\xve = \xve_I$, we have:

\begin{prop}
  \label{app:sobdirich}
  Suppose  $\xve \in H^r(\Omega)$, $r\! \geq \! 5$, satisfies \eqref{uwbd}.
  If $f \in H^1_0(\Omega)$ and $\Dve f \in H^{\ell-1}(\Omega)$ for some $0 \leq \ell \leq r$,
  then $\pave f \in H^\ell(\Omega)$ and
  \begin{equation}
   ||\pave f||_{H^\ell(\Omega)} \leq C(M_0, ||\xve||_{H^r(\Omega)})
   \big( ||\Dve f||_{H^{\ell-1}(\Omega)} +
   ||\T \xve||_{H^r(\Omega)}
   ||f||_{L^2(\Omega)}\big).
   {}
  \end{equation}

  Similarly, if $f \in H^1_0(\Omega)$,
  $D_t^k f \in L^2(\Omega)$ and
$D_t^k \Dve f \in H^{\ell-1}(\Omega)$, then $D_t^k \pave f \in H^\ell(\Omega)$
and
\begin{equation}
 ||D_t^k \pave f||_{H^\ell(\Omega)} \leq C(M_0, ||\xve||_r)
 \big( ||D_t^k \Dve f||_{H^{\ell-1}(\Omega)}
 + ||\T \xve||_{r} ||D_t^k f||_{L^2(\Omega)}\big).
 {}
\end{equation}
\end{prop}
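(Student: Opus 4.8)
The point of this statement, as opposed to \eqref{sobell} or Proposition~\ref{app:sobfndiff}, is that we do not assume a priori that $\pave f\in H^\ell_{\mathrm{loc}}(\Omega)$; only $f\in H^1_0(\Omega)$ is given. The plan is to first produce this regularity, together with the tangential bounds $\T^j\pave f\in L^2(\Omega)$ for the relevant range of $j$, by a bootstrap, and then to obtain the quantitative estimate from \eqref{sobell}, i.e.\ from Proposition~\ref{app:sobfndiff} applied with $\xveu=\xvew=\xve$ and $g=0$. For the interior regularity I would note that by \eqref{uwbd}, the formula \eqref{dinv} for the derivative of the inverse, and Sobolev embedding (using $r\ge5$), the coefficients $\kve\gve^{ab}$ of the uniformly elliptic operator $\Dve f=\kve^{-1}\pa_a(\kve\gve^{ab}\pa_b f)$ lie in $H^{r-1}(\Omega)\hookrightarrow C^2(\Omega)$; since $\Dve f\in H^{\ell-1}(\Omega)$, standard interior elliptic regularity gives $f\in H^{\ell+1}_{\mathrm{loc}}(\Omega)$, hence $\pave f\in H^\ell_{\mathrm{loc}}(\Omega)$.

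Next I would build tangential regularity up to the boundary. Working in a boundary chart $\Psi_\mu$ as in Appendix~\ref{tangapp}, $\pa\Omega$ is flat and the vector fields $T\in\T$ are tangent to it, so, since $f=0$ on $\pa\Omega$, a tangentially translated $f$ still satisfies a homogeneous Dirichlet condition on the flat piece of boundary. Writing $\Dve$ in divergence form and testing the equation satisfied by a tangential difference quotient $D_c^h f$ (see \eqref{eq:differencequotionet}) against $D_c^{-h}(D_c^h f)$, then integrating by parts and absorbing, yields a bound for $\|D_c^h\pa_y f\|_{L^2}$ uniform in $h$; iterating this, with the commutators $[T,\Dve]$ being of lower order and having coefficients controlled by $\|\xve\|_{H^r(\Omega)}$, produces $\T^j\pave f\in L^2(\Omega)$ for the range of $j$ required. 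With $\pave f\in H^\ell_{\mathrm{loc}}(\Omega)$ and these tangential bounds in hand, the hypotheses of Proposition~\ref{app:sobfndiff} are met, and applying it with $\xveu=\xvew=\xve$ and $g=0$ gives $\pave f\in H^\ell(\Omega)$ together with the stated inequality, after bounding $\|\T\xve\|_{H^\ell(\Omega)}\le\|\T\xve\|_{H^r(\Omega)}$. Alternatively, once the tangential bounds are available one can conclude directly from the pointwise estimate \eqref{ellpw} applied to $\alpha=\pave f$, for which $\div\alpha=\Dve f$ and $\curl\alpha=0$, together with \eqref{sobtensor}.

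The mixed space--time statement follows by induction on $k$. Since the boundary $\pa\Omega$ is fixed, $f=0$ on $\pa\Omega$ forces $D_t^kf=0$ on $\pa\Omega$, so $D_t^kf\in H^1_0(\Omega)$, and $D_t^kf\in L^2(\Omega)$ by hypothesis. Commuting $D_t^k$ past $\Dve$ and using \eqref{dinv} to expand $[D_t,\pave_i]$, one finds $\Dve(D_t^kf)=D_t^k\Dve f+[D_t^k,\Dve]f$, where the commutator is a sum of products of space and time derivatives of $\xve$ with $D_t^j\pave f$ for $j<k$; by the inductive hypothesis this lies in $H^{\ell-1}(\Omega)$, so the already-proven spatial case applies to $D_t^kf$ and gives $\pave D_t^kf\in H^\ell(\Omega)$, whence $D_t^k\pave f\in H^\ell(\Omega)$ after commuting back, with the quantitative bound coming from the mixed case of Proposition~\ref{app:sobfndiff} (equivalently \eqref{sobellmix}).

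I expect the boundary bootstrap to be the main obstacle: because only $f\in H^1_0(\Omega)$ is known a priori, one cannot differentiate the equation freely, so the tangential difference-quotient argument must be run carefully in the curved charts, verifying that the Dirichlet condition survives tangential translation and that each iteration costs only derivatives of $\xve$ that are bounded by $\|\xve\|_{H^r(\Omega)}$ with $r\ge5$. Once $\pave f\in H^\ell(\Omega)$ has been established the remaining estimates are immediate from \eqref{sobell}--\eqref{sobellmix}.
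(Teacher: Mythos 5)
Your proposal is essentially the same as the paper's argument: establish $\pave f \in H^\ell_{loc}(\Omega)$ and $\T^j\pave f\in L^2(\Omega)$ by a preliminary lemma using tangential difference quotients in boundary charts (following the scheme in Evans), then invoke Proposition~\ref{app:sobfndiff} with $g=0$ to obtain the quantitative bound, and handle the mixed space–time case by commuting $D_t^k$ through $\Dve$ and reducing to the spatial case. One small point worth tightening: when you test against $D_c^{-h}(D_c^h f)$ you need to insert a cutoff $\rho$ compactly supported in the chart, i.e.\ test against $-D_c^{-h}(\rho^2 D_c^h f)$, so that the boundary term on the lateral edges of the chart vanishes; this is what the paper does and is needed to make the absorption argument close.
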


We also need estimates which involve fractional derivatives on $\pa \Omega$.
\begin{prop} \label{div-curl}
 Let $\alpha$ be a vector field on $\Omega$. Fix $r \geq 5$.
 Then, for $1 \leq \ell \leq r$, there are continuous functions $C_\ell = C_\ell\big(M_0,
 ||\xve||_{H^r(\Omega)}\big)$ so that
 \begin{multline}
   ||\wta||_{H^{\ell{}_{\!}}}^2
   \!\leq \! C_{\ell}\Big(||\!\div \alpha||_{H^{\ell\!-\!1\!}}^2\! +
    ||\!\curl \alpha||_{H^{\ell\!-\!1\!}}^2 +\!||\wta||^2_{H^1}
    +\!{\sum}_{\mu = 1}^N  \!\int_{\pa \Omega} \!\!({}_{\!}\fdhm \T^{\ell -\!1 \!}\wta^i)\!
    \cdot\!(\fdhm \T^{\ell-\!1 \!}\wta^j)
    N_{{}_{\!}i} N_{\!j} dS
   \Big),
 \label{fractang1}
\end{multline}
 \begin{multline}
    ||\wta||_{H^{\ell{}_{\!}}}^2
    \leq C_{\ell}\Big( ||\!\div \alpha||_{H^{\ell\!-\!1\!}}^2\! +
     ||\!\curl \alpha||_{H^{\ell\!-\!1\!}}^2\! +\!||\wta||^2_{H^1}
     + {\sum}_{\mu = 1}^N \!\int_{\pa \Omega} \!\!(\fdhm \T^{\ell -1 \!}\wta^i)\!\cdot\!(\fdhm \T^{\ell-1\!} \wta^j)
     \gamma_{ij} dS\Big).
 \label{fractang2}
\end{multline}
\end{prop}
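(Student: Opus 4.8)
\textbf{Proof proposal for Proposition \ref{div-curl}.}
The two estimates \eqref{fractang1}, \eqref{fractang2} are the estimates \eqref{ellft1}, \eqref{ellft2} from Proposition \ref{ftprop}, restated; the plan is to prove them by induction on $\ell$. The case $\ell=1$ is trivial, since then the term $\|\wta\|_{H^1(\Omega)}^2$ on the right-hand side already equals the left-hand side. Assuming the estimates at level $\ell-1$, I would first invoke the already-established Sobolev div--curl estimate \eqref{sobtensor} to reduce matters to bounding $\sum_{j\le\ell}\|\T^j\wta\|_{L^2(\Omega)}$ by the right-hand sides of \eqref{fractang1}, \eqref{fractang2}. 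For $j\le\ell-1$ this is lower order: $\|\T^j\wta\|_{L^2(\Omega)}\lesssim\|\wta\|_{H^{\ell-1}(\Omega)}$, and applying the inductive hypothesis leaves a boundary term $\int_{\pa\Omega}|\fdhm\T^{\ell-2}\wta\cdot N|^2\,dS$, which by interpolation on $\pa\Omega$ between $\|\wta\|_{L^2(\pa\Omega)}^2$ (controlled by $\|\wta\|_{H^1(\Omega)}^2$ via the trace inequality \eqref{trace}) and $\int_{\pa\Omega}|\fdhm\T^{\ell-1}\wta\cdot N|^2\,dS$ is absorbed, for $\ell\ge 2$, into $\epsilon$ times the top boundary term plus $C\|\wta\|_{H^1}^2$; the same argument applies to \eqref{fractang2} with $\gamma_{ij}$ in place of $N_iN_j$.

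The substantive term is $\|\T^\ell\wta\|_{L^2(\Omega)}$. Here I would write $T^I=S\,T^J$ with $S\in\T$, $|J|=\ell-1$, set $\beta=T^J\wta$, and use the pointwise bound $|\T\beta|\le C(M_0)|\pave\beta|$ (tangential fields are combinations of $\pa_y=A\,\pave$, with $|A|\le M_0$ by \eqref{uwbd}) to reduce to estimating $\|\pave\beta\|_{L^2(\Omega)}$. For this I would use a Rellich/Pohozaev-type energy identity obtained by integrating $\delta^{ij}(\pave_k\beta_i)(\pave_l\beta_j)\gve^{kl}$ by parts via Green's formula \eqref{Green's formula}; this controls $\|\pave\beta\|_{L^2(\Omega)}^2$ by $\|\div\beta\|_{L^2(\Omega)}^2+\|\curl\beta\|_{L^2(\Omega)}^2$, plus lower-order interior terms, plus a boundary integral over $\pa\Omega$ quadratic in $\beta|_{\pa\Omega}$ and its tangential derivatives, weighted either by $N_iN_j$ or by $\gamma_{ij}$ according to how one arranges the contraction --- this choice is exactly what distinguishes \eqref{fractang1} from \eqref{fractang2}. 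The boundary integral has the schematic form $\int_{\pa\Omega}(\text{wt})_{ij}\,\beta^i\,(T\beta^j)\,\widetilde\nu\,dS$; bounding it with the half-derivative pairing \eqref{inthalf} gives a bound by $\|\fdhm\beta\|_{L^2(\pa\Omega)}^2+\|\beta\|_{L^2(\pa\Omega)}^2$ with the weight preserved, and since $\fdhm\beta=\fdhm T^J\wta=\fdhm\T^{\ell-1}\wta$ in the notation of \eqref{eq:simplifiedtangentialnotation} this is precisely the boundary term on the right of \eqref{fractang1}/\eqref{fractang2}, up to the lower-order $\|\beta\|_{L^2(\pa\Omega)}^2\lesssim\|\wta\|_{H^{\ell-1}(\Omega)}^2$ handled as above.

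It then remains to control the interior errors $\|\div\beta\|_{L^2(\Omega)}$, $\|\curl\beta\|_{L^2(\Omega)}$ and the lower-order pieces of the energy identity. Commuting, $\div T^J\wta=T^J\div\wta+[T^J,\div]\wta$ and similarly for $\curl$; expanding the commutator via $[T,\pave_i]=-(\pave_i T\xve^k)\pave_k$ and the derivative-of-inverse formula \eqref{dinv} produces terms $(\pave T^{J_1}\xve)\cdots(\pave T^{J_m}\xve)(\pave T^{J_{m+1}}\wta)$ with $\sum|J_i|=\ell-1$, estimated in $L^2(\Omega)$ by putting the lowest-order factors in $L^\infty$ via Sobolev embedding, using that $H^2(\Omega)$ is an algebra and that $C_\ell$ may depend on $\|\xve\|_{H^r(\Omega)}$ with $r\ge\ell$; the resulting $\|\wta\|_{H^{\ell-1}(\Omega)}$-type factors are absorbed by the inductive hypothesis and interpolation exactly as before. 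I expect the main obstacle to be the second step --- producing the energy identity so that \emph{every} top-order boundary contribution is genuinely of the form $\wta_N\times(\text{tangential derivatives})$ in the first case and $\gamma\cdot\wta\times(\text{tangential derivatives})$ in the second, since a naive integration by parts also generates a boundary term of size $\|\T^{\ell-1}\wta_{\mathrm{tan}}\|_{H^{1/2}(\pa\Omega)}^2$ (respectively $\|\T^{\ell-1}\wta_N\|_{H^{1/2}(\pa\Omega)}^2$) which, being of order $\|\wta\|_{H^\ell(\Omega)}^2$ by the trace inequality, cannot be absorbed; avoiding this requires choosing the multiplier adapted to the level-set structure of $\pa\Omega$ and exploiting the antisymmetry of $\curl$.
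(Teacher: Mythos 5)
Your strategy coincides with the paper's: induction on $\ell$, reduction via the div--curl estimate \eqref{sobtensor} to controlling $\|\T^{\ell}\wta\|_{L^2(\Omega)}$, a Green's-formula identity for $\|\pave\,\T^{\ell-1}\wta\|_{L^2}^2$ in terms of divergence, curl and a boundary integral, the half-derivative pairing \eqref{inthalf} for that boundary integral, and commutator/Sobolev-algebra arguments for the interior errors. All of that is sound.

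The step you flag as the main obstacle is indeed the crux, and your account of how it is resolved is not quite right: the Green's identity does \emph{not} produce a boundary term "weighted by $N_iN_j$ or by $\gamma_{ij}$ according to how one arranges the contraction". It produces a single, fixed boundary term. Writing $\delta^{k\ell}=\gamma^{k\ell}+N^kN^\ell$, the pure normal--normal contributions of $N^k\alpha^j\pave_j\alpha_k$ and $N^i\alpha_i\div\alpha$ cancel (after the curl term has been used to symmetrize $\pave_k\alpha_j$ into $\pave_j\alpha_k$), leaving exactly the cross term
\begin{equation}
\int_{\pa\Omega}\alpha^j\gamma_j^k\pave_k(\alpha_iN^i)-(\alpha_iN^i)\,\gamma_j^k\pave_k(\gamma^j_\ell\alpha^\ell)\,dS
\end{equation}
plus zeroth-order terms. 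The half-derivative pairing then bounds this by
$\epsilon^{-1}\sum_\mu\|(\fdhm\alpha)\cdot N\|_{L^2(\pa\Omega)}^2+\epsilon\sum_\mu\|(\fdhm\alpha)\cdot\gamma\|_{L^2(\pa\Omega)}^2+C\|\alpha\|_{L^2(\Omega)}^2$ for every $\epsilon>0$, and the two estimates \eqref{fractang1} and \eqref{fractang2} come from the two choices of $\epsilon$: taking $\epsilon$ small and absorbing the tangential factor into $\|\wta\|_{H^1(\Omega)}^2$ via the trace inequality \eqref{trace} gives the $N_iN_j$ version, while taking $\epsilon$ large and absorbing the normal factor gives the $\gamma_{ij}$ version. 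So the "unabsorbable square of the wrong component" you worry about never occurs: whichever component you do not keep appears with a free small constant from Cauchy--Schwarz on the cross term, not as a genuine top-order square. With this supplied, your argument closes.
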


We will need the following lemma to exchange normal and tangential components
of vector fields on $\pa \Omega$. This estimate appears
in Lemma 5.6 of \cite{CL00}.
\begin{lemma}
  If $\alpha$ is a (0,1)-tensor
  on $\Omega$ and $\gamma$ denotes the metric on $\pa \Omega_t$, then:
 \begin{equation}
  \Big|\int_{\pa \Omega} \big(\gamma^{ij} - N^iN^j\big)\alpha_i
  \alpha_j d\mu_\gamma\Big|
  \leq
  \big( ||\div \alpha||_{L^2(\Omega)} + ||\curl \alpha||_{L^2(\Omega)}
  + K ||\alpha||_{L^2(\Omega)}\big) ||\alpha||_{L^2(\Omega)}.
 \end{equation}
\end{lemma}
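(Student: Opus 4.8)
The plan is to realize the boundary integrand as the normal flux of a suitable vector field on $\Omega$ and then integrate by parts, the point being that the top-order term in the resulting interior integral is a total derivative of the normal component of $\alpha$ which cancels against another term. By a routine density argument it suffices to treat $\alpha$ smooth. Since $\Omega$ is the unit ball, take $\nu^a = y^a$ as a smooth extension of the outward unit normal $N$ to all of $\Omega$; then $|\nu| \leq 1$ and $|\pa_y\nu| \leq K$ for an absolute constant $K$. (For a general domain one instead fixes any smooth extension of $N$ and lets $K$ bound its first derivatives.)

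First I would introduce the vector field
\begin{equation}
 X^k = \nu^k\,\delta^{ij}\alpha_i\alpha_j - 2(\nu^i\alpha_i)\,\delta^{kj}\alpha_j ,
 \qquad \alpha^k := \delta^{kj}\alpha_j,\quad |\alpha|^2 := \delta^{ij}\alpha_i\alpha_j .
\end{equation}
On $\pa\Omega$ we have $\nu = N$, $\delta^{ij} = \gamma^{ij} + N^iN^j$, hence
$X^kN_k = |\alpha|^2 - 2(N^i\alpha_i)^2 = (\gamma^{ij} - N^iN^j)\alpha_i\alpha_j$, so by the divergence theorem
$\int_{\pa\Omega}(\gamma^{ij}-N^iN^j)\alpha_i\alpha_j\, d\mu_\gamma = \int_\Omega \pa_k X^k\, dy$.
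Next I would expand, writing $\nu\cdot\alpha = \nu^i\alpha_i$:
\begin{equation}
 \pa_k X^k = (\pa_k\nu^k)|\alpha|^2 + \nu^k\pa_k|\alpha|^2 - 2\alpha^k\pa_k(\nu\cdot\alpha) - 2(\nu\cdot\alpha)\,\div\alpha .
\end{equation}
In the second term use $\pa_k\alpha_j = \pa_j\alpha_k + (\curl\alpha)_{kj}$ to get
$\nu^k\pa_k|\alpha|^2 = 2\alpha^j\nu^k\pa_j\alpha_k + 2\alpha^j\nu^k(\curl\alpha)_{kj}$, and then
$2\alpha^j\nu^k\pa_j\alpha_k = 2\alpha^k\pa_k(\nu\cdot\alpha) - 2\alpha^j(\pa_j\nu^k)\alpha_k$.
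The term $2\alpha^k\pa_k(\nu\cdot\alpha)$ cancels exactly against $-2\alpha^k\pa_k(\nu\cdot\alpha)$ above, leaving
\begin{equation}
 \pa_k X^k = (\pa_k\nu^k)|\alpha|^2 - 2\alpha^j(\pa_j\nu^k)\alpha_k + 2\alpha^j\nu^k(\curl\alpha)_{kj} - 2(\nu\cdot\alpha)\div\alpha .
\end{equation}
Integrating over $\Omega$, bounding the first two terms by $CK\|\alpha\|_{L^2(\Omega)}^2$, the third by $2\|\curl\alpha\|_{L^2(\Omega)}\|\alpha\|_{L^2(\Omega)}$ (using $|\nu|\le 1$) and the fourth by $2\|\div\alpha\|_{L^2(\Omega)}\|\alpha\|_{L^2(\Omega)}$, and absorbing the numerical constants into $K$ gives the claim.

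The only genuinely delicate point is the algebraic bookkeeping in the expansion of $\pa_k X^k$ and spotting the cancellation of the $\alpha^k\pa_k(\nu\cdot\alpha)$ terms; without it one would be left with a term requiring control of $\pa(N\cdot\alpha)$, which is not available. The density reduction (approximating $\alpha$ in the relevant norm so that the divergence theorem and the Cauchy–Schwarz bounds pass to the limit) is standard and I would only remark on it.
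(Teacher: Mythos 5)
The paper does not prove this lemma itself but cites Lemma~5.6 of \cite{CL00}; your argument is correct and is precisely the divergence-theorem proof given there, the key point being the cancellation of the $\alpha^k\pa_k(\nu\cdot\alpha)$ terms in $\pa_kX^k$, without which one would be stuck estimating a normal derivative of $N\cdot\alpha$ that is not controlled. Two small remarks: your computation actually produces factors of $2$ on the $\div$- and $\curl$-terms (the stated inequality absorbs these into its implicit constants), and in the paper's actual setting $\div$, $\curl$, $\gamma^{ij}$, and $d\mu_\gamma$ are the smoothed-Lagrangian objects, so the same computation should be run with $\pave$ in place of $\pa_y$ and Green's formula~\eqref{Green's formula} in place of the flat divergence theorem, the resulting metric-derivative commutator terms being absorbed into $K$.
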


Finally, in Section \ref{linwaveexist}, we will need the following elliptic estimate
in $H^2(\Omega)$:
\begin{lemma}
  \label{crosstermlem}
 Let $\Delta_y\! = \pa_{{}_{\!}y^1}^2\! + \pa_{{}_{\!}y^2}^2\! + \pa_{{}_{\!}y^3}^2$ be
 the flat Laplacian in the $y$ coordinates. If $f \!\in\! H^{{}_{\!}1}_0(\Omega) \!\cap\!
 H^2(\Omega)$, then:
 \begin{equation}
  ||\pave f||_{H^1(\Omega)} \leq C(M)
  \big( (\Delta_y f, \Dve f)_{L^2(\Omega)} + ||\pave f||_{L^2(\Omega)}
  + ||f||_{L^2(\Omega)}\big).
  \label{crossterm}
 \end{equation}
\end{lemma}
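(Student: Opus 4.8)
The plan is to relate the Euclidean $H^1$ norm of $\pave f$ to the pairing $(\Delta_y f, \Dve f)_{L^2(\Omega)}$ by integrating by parts twice, once with respect to $\Delta_y$ and once with respect to $\Dve$, and comparing the resulting quadratic forms. Write $\Dve f = \kve^{-1}\pa_a(\kve\gve^{ab}\pa_b f)$ and $\Delta_y f = \delta^{ab}\pa_a\pa_b f$. Since $f\in H^1_0(\Omega)\cap H^2(\Omega)$, the boundary terms in $\int_\Omega (\Delta_y f)(\Dve f)\,\kve\,dy$ vanish only after care: I would first reduce to $\int_\Omega (\Delta_y f)(\Dve f)\,dy$ by absorbing $\kve$ and its derivatives (which are controlled by $M$, using \eqref{ubd2} and \eqref{dinv}) into error terms of the form $\|\pave f\|_{L^2(\Omega)}\|f\|_{H^2}$, which will ultimately be handled by interpolation and absorption. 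Then integrate by parts in $\int_\Omega (\delta^{ab}\pa_a\pa_b f)(\gve^{cd}\pa_c\pa_d f)\,dy$ to move one derivative off each factor; the principal term is $\int_\Omega \gve^{cd}\pa_a\pa_c f\,\pa_a\pa_d f\,dy$ up to commutators, and since $\gve^{cd}$ is positive-definite with constants depending on $M$, this controls $\sum_{a,c}\|\pa_a\pa_c f\|_{L^2(\Omega)}^2$, hence $\|f\|_{H^2(\Omega)}^2$ up to lower-order terms.

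In more detail, the key steps in order are: (i) expand $(\Delta_y f,\Dve f)_{L^2}$, discarding $\kve$-weight via Cauchy--Schwarz into an $\epsilon\|f\|_{H^2}^2 + C_\epsilon\|\pave f\|_{L^2}^2$ remainder; (ii) integrate by parts in $y$, using $f=0$ on $\pa\Omega$ to kill the boundary contribution from the outer derivative and noting that $\pa_\tau f = 0$ on $\pa\Omega$ for tangential $\tau$, so the only surviving boundary term involves $\pa_N f$ paired against itself with a sign that can be discarded or estimated by the trace inequality \eqref{trace} and absorbed; (iii) identify the coercive bulk term $\int_\Omega \gve^{cd}(\pa_a\pa_c f)(\pa_a\pa_d f)\,dy \geq c(M)\sum_{a,c}\|\pa_a\pa_c f\|_{L^2(\Omega)}^2$; (iv) control all commutator terms — which contain at most one second derivative of $f$ times first derivatives of $\gve$ (hence of $\xve$, bounded by $M$) — by $C(M)\|f\|_{H^2(\Omega)}\|\pave f\|_{H^1(\Omega)}$, then apply Young's inequality to absorb the top-order piece; (v) combine to get $\|f\|_{H^2(\Omega)}^2 \leq C(M)\big((\Delta_y f,\Dve f)_{L^2} + \|\pave f\|_{L^2(\Omega)}^2 + \|f\|_{L^2(\Omega)}^2\big)$, and finally observe $\|\pave f\|_{H^1(\Omega)}\leq C(M)\|f\|_{H^2(\Omega)} + C(M)\|f\|_{H^1(\Omega)}$ since $\pave_i = A_{\m i}^a\pa_a$ with $|\pa_y A|\leq C(M)$, which yields \eqref{crossterm}.

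The main obstacle I anticipate is the boundary term arising in step (ii): after integrating by parts in $\int_\Omega(\Delta_y f)(\Dve f)\,dy$ the natural boundary integral is $\int_{\pa\Omega}(\pa_N f)(\gve^{ab}N_a\pa_b f)\,dS$ minus a term $\int_{\pa\Omega} f\,N^a\pa_a(\dots)\,dS$; the latter vanishes since $f|_{\pa\Omega}=0$, but the former does \emph{not} vanish and is genuinely of the size $\|\pa_N f\|_{L^2(\pa\Omega)}^2$, which a priori costs $\|f\|_{H^{3/2}(\pa\Omega)}^2\sim\|f\|_{H^2(\Omega)}^2$ and hence cannot simply be absorbed. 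The resolution is that this boundary term has a definite sign: writing the two Laplacians in divergence form and pairing carefully, the cross boundary term equals $\int_{\pa\Omega}\gve^{ab}N_a N_b |\pa_N f|^2\,dS$ up to tangential pieces that vanish because $\pa_\tau f=0$ on $\pa\Omega$, and $\gve^{ab}N_aN_b > 0$, so it appears with the \emph{favorable} sign when moved to the coercive side (or, if it appears with the wrong sign, it is dominated by the interior coercive term via a boundary-to-interior trace estimate controlled by $M$). Making this sign bookkeeping precise — i.e. organizing the double integration by parts so that all uncontrollable boundary contributions either cancel or carry the right sign — is the crux of the argument; everything else is routine product and commutator estimates of the type already used throughout Sections \ref{ellsec}--\ref{waveests}.
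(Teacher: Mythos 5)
Your route is genuinely different from the paper's, and the difference is precisely at the point you flag as the crux. The paper's proof is much shorter because it sidesteps the boundary issue entirely: it claims it suffices to prove the estimate for $f\in C_c^\infty(\Omega)$ by approximation, integrates by parts twice with \emph{no} boundary terms, and obtains the coercive bulk term $\int_\Omega\delta^{ij}\delta^{ab}(\pa_a\pave_j f)(\pa_b\pave_i f)\,dy=\|\pa_y\pave f\|_{L^2(\Omega)}^2$ plus two commutator terms of the schematic form $(\pa_y A)(\pa_y^2 f)(\pa_y f)$, which are bounded by $C(M)\|\pave f\|_{H^1(\Omega)}\|f\|_{H^1(\Omega)}$ and absorbed. (Note also that the pairing $(\Delta_y f,\Dve f)_{L^2(\Omega)}$ in the statement carries no $\kve$ weight, so your step (i) is not needed.) Your approach — working directly with $f\in H^1_0\cap H^2$ and confronting the boundary terms — is in principle the more robust one, since $C_c^\infty(\Omega)$ is not dense in $H^1_0(\Omega)\cap H^2(\Omega)$ for the $H^2$ topology and the paper's density reduction is itself somewhat glib; but it is also substantially harder, and as written you have not closed it.

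The gap is in your step (ii). After the double integration by parts the surviving boundary integrand contains genuine \emph{second} derivatives of $f$ (terms like $\int_{\pa\Omega}(\pa_N f)\,\gve^{ab}\pa_a\pa_b f\,dS$ and $\int_{\pa\Omega}N_a\gve^{ab}(\pa_c f)(\pa_b\pa_c f)\,dS$); it does not reduce to $\int_{\pa\Omega}\gve^{ab}N_aN_b|\pa_N f|^2\,dS$ merely because tangential derivatives of $f$ vanish. To eliminate the second derivatives one needs the Rellich-type identity that differentiates the relation $f|_{\pa\Omega}=0$ tangentially twice, trading tangential second derivatives for the second fundamental form of $\pa\Omega$ contracted against $\gve$, times $|\pa_N f|^2$; the resulting sign then rests on the convexity of the unit ball together with the uniform positivity of $\gve$, not on $\gve^{ab}N_aN_b>0$ alone. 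Your fallback — that a wrong-sign boundary term could be ``dominated by the interior coercive term via a boundary-to-interior trace estimate'' — does not work: that term is of size $\|\pa_N f\|_{L^2(\pa\Omega)}^2\sim\|f\|_{H^2(\Omega)}^2$ with an $O(1)$ constant, and a trace inequality supplies no smallness with which to absorb it into the interior term. So either carry out the Rellich computation honestly (which is where all the work lies), or adopt the paper's reduction to compactly supported $f$ and dispense with boundary terms altogether.
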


\subsection*{Proof of Lemma \ref{app:pwdiff} }
  The case with $\beta = 0$ is Lemma 5.5 in \cite{CL00},
  and this version is Lemma B.4.1
  of \cite{Nordgren2008}. For the reader's convenience, we include the proof here.
We start by setting:
   \begin{equation}
    \big(\defo_I \alpha\big)_{ij} = \wpa_{I{}_{\!}i} \alpha_{I{}_{\!}j}
    +
     \wpa_{I{}_{\!}j} \alpha_{I{}_{\!}i},\qquad
    (D_I \alpha)_{ij} = \div_I \alpha \delta_{ij},\qquad
    (\widehat{D_I} \alpha)_{ij} = \big(\defo_I \alpha - \frac{2}{3}D_I\alpha
    \big)_{ij},
   \end{equation}
   with a similar definition for $\defo_{\II}, D_{\II},$ and $\widehat{D_{\II}}$.
   We write:
   \begin{align}
    \paveu \alpha - \pavew \beta
    &=
     \frac{1}{3}\big(D_I \alpha - D_{\II} \beta\big)
    + \frac{1}{2}\big(\curl_I \alpha - \curl_{\II} \beta\big)
    + \frac{1}{2}\big(\widehat{D_{I}} \alpha - \widehat{D_{\II}} \beta\big).
   \end{align}
   The first and second terms are bounded by the right-hand side of
   \eqref{pwdiffbd}, and we now show how to control the last term.
   Let $S_{ij} = (\widehat{D_I}\alpha - \widehat{D_{II}} \beta)_{ij}$. Writing
   $\delta^{ij} = \gamma_I^{ij} - N^i_I N^j_I$ and using that
   $S$ is symmetric, we have:
   \begin{equation}
    \delta^{ij}\delta^{k\ell} S_{ik}S_{j\ell} =
    \big(\gamma_I^{ij}\gamma_{I}^{k\ell} + 2 \gamma_I^{ij} N_I^{k\ell}
    + N_I^iN_I^jN_I^kN^\ell_I\big) S_{ik} S_{j\ell}.
    \label{mainexpdiff}
   \end{equation}
   Now, because $\delta^{ij} S_{ij} = 0$, the last term is:
   \begin{equation}
    \big(N_I^iN_I^j S_{ij}\big)^2 = \big(\delta^{ij}S_{ij} - \gamma_I^{ij}S_{ij}
    \big)^2
    = \big(\gamma_I^{ij} S_{ij}\big)^2 \leq 2 \gamma_I^{ij}
    \gamma_I^{k\ell} S_{ik}S_{j\ell},
   \end{equation}
   where we have used that if $T\!$ is a symmetric
   matrix then
   $
    (\tr T)^2\! {}_{\!}\leq\! \textrm{rank} T {}_{\!}\tr (T^2)
    $.
   Returning to \eqref{mainexpdiff}, we have:
   \begin{equation}
    |S|^2 \leq 2 \gamma_I^{ij}\big(\gamma_I^{k\ell} + N_I^k N_I^\ell\big)
    S_{ik}S_{j\ell}
    = 2\gamma_I^{ij} \delta^{k\ell}S_{ik}S_{j\ell}.
   \end{equation}
   We now write:
   \begin{equation}
    S_{ij} = \big( \defo_I \alpha - \defo_{II}\beta\big)_{ij}
    - \frac{2}{3} \big( D_I\alpha - D_{II}\beta\big)_{ij}
    \equiv S^1_{ij} + S^2_{ij}.
   \end{equation}
   Since $|S^2| \leq C |\div_I \alpha - \div_{II}\beta|$, it suffices
   to control $S^1$. We have:
   \begin{equation}
    \gamma_I^{ij}\delta^{k\ell}S^1_{ik}S^1_{j\ell}   = \gamma_I^{ij}\delta^{k\ell}
    \big( \wpa_{I i} \alpha_{I k} - \wpa_{\II i} \beta_{I\!I k }
    + \wpa_{I k} \alpha_{I i} - \wpa_{\II k} \beta_{I\!I  i}
    \big)
    \big( \wpa_{I j}\alpha_{I\ell} - \wpa_{\II j}\beta_{II\ell}
    +  \wpa_{I \ell}\alpha_{Ij} -  \wpa_{\II \ell}\beta_{IIj}\big)
    \label{diffbig}.
    \end{equation}
    To bound the product of the first term in the first
    factor with the first term in the second factor, we
    replace $\pave_{II}\beta_{II}$ with $\pave_{I} \beta_{II}$, which
    generates terms that are bounded by the last term on the right-hand side of
    \eqref{pwdiffbd}. The resulting term only involves tangential derivatives
    of $\alpha, \beta$ but these are with respect to
    $\xveu$. However we can replace these with tangential
    derivatives with respect to $y$ up to terms that are bounded by the
    last term on the right-hand side of
    \eqref{pwdiffbd}.
    For the product of the second term in the first factor and the
    second term in the second factor we instead note that it can be
    controlled in terms of $|\!\curl_I\!\alpha \!- \curl_{II}\! \beta|^2$
    along with
    the third and fourth terms on the right-hand side of \eqref{pwdiffbd}
    The other terms in \eqref{diffbig} can be handled similarly.

\subsection*{Proof of Lemma \ref{app:sobdiff}}
\label{ftangpf}
Both estimates have essentially the same proof, so we will
just prove the second. The first one follows from the same argument,
but one uses the commutator estimate \ref{vectcommlemma} with
$\U \!=\! \{\pa_{y^1\!}, \pa_{y^2\!}, \pa_{y^3}\!\}$ instead of $\U \!=\! \coord$. The only
difference is that
in the proof of \eqref{app:hotpw} no time derivatives enter.

  We argue by induction. When $s = 1$, the result follows
  from the pointwise estimate after writing:
  \begin{equation}
   \pa_{a} (\alpha - \beta)=
   A_{{}_{\!}I\, a}^{\,\,i} (  \wpa_{I{}_{\!}i}\alpha - \wpa_{\II{}_{\!}i}\beta)
   + (A_{{}_{\!}I\, a}^{\,\,i} - A_{\II\, a}^{\,\,\,i})\wpa_{I{}_{\!}i}\beta.
  \end{equation}
  We now assume that we have the
  result for $s \!\leq\! m\!-\!1$.
  We write $T^I\!\! = D_t^k \pa_y^J \!\!\in \D^{k,\ell}$ where
  $k \!+ \! |J|\! = m$. If $|J|\!=\! 0$ there is nothing to prove, so we consider
  $|J| \!\geq\! 1$. We then write $D_t^k \pa_y^J \!\!= \pa_{a} D_t^k \pa_y^{J'}\!$ where
  $J \!= (a, J')$ and $\pa_{a} \!= A_{{}_{\!}I\, a}^{\,\,i} \pave_{i}$. Applying
  the pointwise estimate \eqref{pwdiffbd} and integrating over an arbitrary
  $U \!\subset\subset \Omega$, we have:
  \begin{multline}
   ||T^I (\alpha - \beta)||_{L^2(U)}
   \leq C(M_0) \big( ||\divu D_t^k \pa_y^{J'} \alpha -
   \divw D_t^k \pa_y^{J'} \beta||_{L^2(\Omega)}
  + ||\curlu D_t^k \pa_y^{J'} \alpha -
  \curlw D_t^k \pa_y^{J'} \beta||_{L^2(\Omega)} \\
  +
  ||\T D_t^k \pa_y^{J'} (\alpha - \beta)||_{L^2(\Omega)}
  + ||\xveu - \xvew||_{C^2(\Omega)} ||\pavew D_t^k \pa_y^{J'}\beta||_{L^2(\Omega)}\big).
   \label{afterpw}
  \end{multline}
  Using the commutator estimate from Lemma \ref{vectcommlemma} with
  $\U = \coord$, the last term is bounded by the right-hand side of
  \eqref{mixedellipticest}.
  To deal with the first two terms, we apply the commutator estimate
  \eqref{vectcomm}
  with $\U = \coord$:
  \begin{equation}
   ||\divu\! D_t^k \pa_y^{J'}\!\alpha -
   \divw\! D_t^k\pa_y^{J'} \!\beta||_{L^2}
   \leq ||D_t^k \pa_y^{J'}\! (\divu\! \alpha - \divw \beta)||_{L^2}
   +  C_s \big( ||\paveu \alpha - \pavew \beta||_{m-2}
   + ||\xveu \!- \xvew||_{s} ||\pavew \beta||_{m-2}\big),
  \end{equation}
  where $L^2=L^2(\Omega)$ and
   $C_s = C_s(M, ||\xveu||_s, ||\xvew||_s)$,
  along with a similar estimate for the curl. All of these terms
  are bounded by the right-hand side of \eqref{mixedellipticest}.
  To deal with the last term on the right-hand side of \eqref{afterpw},
  we commute the tangential derivative with $D_t^k\pa_y^{J'}$:
  \begin{equation}
   |\T D_t^k \pa_y^{J'} (\alpha - \beta)|
   \leq {\sum}_{T \in \T} |D_t^k \pa_y^{J'} T(\alpha - \beta)|
   + C |D_t^k\pa_y^{J'} (\alpha - \beta)|.
  \end{equation}
  The second term here is bounded by the right-hand side of \eqref{mixedellipticest}
  by the
  inductive assumption. To control the first term in $L^2$,
  we apply the inductive assumption with $\alpha, \beta$ replaced by
  $T \alpha, T\beta$, and this gives:
  \begin{multline}
   ||D_t^k\pa_y^{J'} T(\alpha - \beta)||_{L^2(\Omega)}
   \leq C_s ||\divu T\alpha - \divw T \beta||_{k,\ell-2}
   + ||\curlu T\alpha - \curlw T \beta||_{k,\ell-2}
   + ||\FD^{k,\ell}(\alpha - \beta)||_{L^2(\Omega)}\\
   + C_s(||\xveu - \xvew||_{C^2(\Omega)} +
   ||\xveu -\xvew||_{r} )||\pavew T\beta||_{m-1}.
   \label{last}
  \end{multline}
  We now write $\div T (\alpha \!-\! \beta) = T \div (\alpha\! -\! \beta)
  - TA_{{}_{\!}I\, i}^{\,\,a} \pa_a (\alpha^i\! -\! \beta^i)$, and use the product rule
  \eqref{product} and \eqref{dinv}:
  \begin{equation}
   ||(TA_{{}_{\!}I\, i}^{\,\,a})\pa_a(\alpha^i - \beta^i)||_{k,\ell-2}
   \leq C(M, ||\xveu||_s)||\alpha - \beta||_{m-1}.
  \end{equation}
  Arguing as with the other terms in \eqref{last},
  recalling that we are integrating over any $U \!\subset\subset
  \!\Omega$ gives the result.

\subsection*{Proof of Proposition \ref{app:sobfndiff}}
\label{sobdiffpf}

To motivate the proof, first consider the case that $\xve_{\II} \!= \!\xve_{\I}$
and $g \!=\! 0$. If $\xve_{\I}$ was smooth, one could get a version of this
estimate without tangential derivatives by straightening the boundary
and using a standard integration by parts argument.
Because the coordinate $\xve_{\I}$ is only smooth in tangential directions, the
idea is instead to first use the estimate \eqref{app:hotpw} to
replace the derivatives of $\pave f$ with derivatives of $\Delta f$ and tangential
derivatives of $\pave f$, and then apply the integration by parts argument
to this. One then has to deal with commutators $[\T^r\!\!, \pave ]f$.
To highest order, this behaves like $(\T^r \pa_y \xve_{\I})\pa_y f$, and because
the derivatives $\T$ are tangential this term can be handled. Also
note that since $\T^r\! f\!=\! 0$ on $\pa \Omega$, the boundary terms that arise
when integrating by parts vanish so we avoid the need to straighten the
boundary.

We start with the following estimate:
\begin{lemma}
 Under the hypotheses of Proposition \ref{app:sobfndiff}, we have:
 \begin{equation}
  ||\paveu f - \pavew g||_{L^2}^2
  \leq C(M_0)\big( ||\Dveu f - \Dvew g||_{L^2}^2
  + ||\xveu - \xvew||_{C^2(\Omega)}^2 ||\pavew g||_{L^2}^2\big).
  \label{zeroth}
 \end{equation}
\end{lemma}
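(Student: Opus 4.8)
The plan is to test the identity $F:=\Dveu f-\Dvew g$ against $w:=f-g\in H^1_0(\Omega)$ in the weighted pairing $\int_\Omega(\,\cdot\,)(\,\cdot\,)\,\kveu\,dy$ and to recover $\int_\Omega|\paveu f-\pavew g|^2\,\kveu\,dy$ from the resulting identity. It suffices to prove the bound for smooth $f,g$ and pass to the limit, the hypotheses of Proposition~\ref{app:sobfndiff} guaranteeing all quantities below lie in the relevant spaces; $C(M_0)$ denotes a constant depending only on $M_0$, changing from line to line. First I would record: with $r_i:=(\paveu_i-\pavew_i)g=(A_{I\,i}^a-A_{II\,i}^a)\pa_a g$, the bounds \eqref{dinv}, \eqref{uwbd} and the identity $\pa_a g=(\pa\xvew^j/\pa y^a)\pavew_j g$ give the pointwise estimate $|r|\le C(M_0)\|\xveu-\xvew\|_{C^1(\Omega)}|\pavew g|$; and since $\paveu w=(\paveu f-\pavew g)-r$, Poincar\'e's inequality on $H^1_0(\Omega)$ together with $|\pa w|\le C(M_0)|\paveu w|$ yields
\begin{equation*}
\|w\|_{L^2}+\|\pavew w\|_{L^2}\le C(M_0)\bigl(\|\paveu f-\pavew g\|_{L^2}+\|\xveu-\xvew\|_{C^1(\Omega)}\|\pavew g\|_{L^2}\bigr).
\end{equation*}

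Next I would integrate by parts. Applying Green's formula \eqref{Green's formula} (the boundary term vanishing since $w|_{\pa\Omega}=0$) gives $\int_\Omega(\Dveu f)w\,\kveu\,dy=-\int_\Omega\delta^{ij}(\paveu_i f)(\paveu_j w)\,\kveu\,dy$, and writing $\Dvew g=\delta^{ij}A_{II\,i}^a\pa_a(\pavew_j g)$ and integrating by parts in the flat derivative $\pa_a$ gives
\begin{equation*}
\int_\Omega(\Dvew g)w\,\kveu\,dy=-\int_\Omega\delta^{ij}(\pavew_i w)(\pavew_j g)\,\kveu\,dy-\int_\Omega w\,\delta^{ij}\bigl[\pa_a(A_{II\,i}^a\kveu)\bigr](\pavew_j g)\,dy.
\end{equation*}
Subtracting these, then substituting $\paveu_j w=(\paveu_j f-\pavew_j g)-r_j$ and $\pavew_i w=(\paveu_i f-\pavew_i g)-r_{f,i}$ with $r_{f,i}=(\paveu_i-\pavew_i)f$, and using that the cross terms $\delta^{ij}(\paveu_i f-\pavew_i g)(\pavew_j g)$ cancel, one obtains an identity of the form
\begin{multline*}
\int_\Omega\delta^{ij}(\paveu_i f-\pavew_i g)(\paveu_j f-\pavew_j g)\,\kveu\,dy=-\int_\Omega Fw\,\kveu\,dy-\int_\Omega\delta^{ij}(\paveu_i f-\pavew_i g)r_j\,\kveu\,dy\\
-\int_\Omega\delta^{ij}(\pavew_i g)r_j\,\kveu\,dy+\int_\Omega\delta^{ij}r_{f,i}(\pavew_j g)\,\kveu\,dy+\int_\Omega w\,\delta^{ij}\bigl[\pa_a(A_{II\,i}^a\kveu)\bigr](\pavew_j g)\,dy,
\end{multline*}
where I am using $\|r\|_{L^2}\le C(M_0)\|\xveu-\xvew\|_{C^1(\Omega)}\|\pavew g\|_{L^2}$ and, similarly, $\|r_f\|_{L^2}\le C(M_0)\|\xveu-\xvew\|_{C^1(\Omega)}(\|\paveu f-\pavew g\|_{L^2}+\|\pavew g\|_{L^2})$, together with $|\pa f|\le C(M_0)|\paveu f|$.

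The only genuinely non-routine ingredient — and what I expect to be the main obstacle — is the last integral: a priori $\pa_a(A_{II\,i}^a\kveu)$ is only $O(M_0)$, so this term alone contributes $C(M_0)\|w\|_{L^2}\|\pavew g\|_{L^2}$ with no gain of $\|\xveu-\xvew\|$, which is too weak. It is saved by the Piola identity $\pa_a(\kvew A_{II\,i}^a)=0$ — valid for the honest volume element and inverse Jacobian of the single map $\xvew$ — which forces $\pa_a(A_{II\,i}^a\kveu)=\pa_a\bigl((\kveu-\kvew)A_{II\,i}^a\bigr)$; being a polynomial expression in $(\pa\xveu,\pa\xvew,\pa^2\xveu,\pa^2\xvew)$ that vanishes when $\xveu=\xvew$, it is bounded by $C(M_0)\|\xveu-\xvew\|_{C^2(\Omega)}$ under \eqref{uwbd}. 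With this, each term on the right of the displayed identity is bounded by $C(M_0)$ times a product of factors drawn from $\{\,\|\paveu f-\pavew g\|_{L^2},\ \|\Dveu f-\Dvew g\|_{L^2},\ \|\xveu-\xvew\|_{C^2(\Omega)}\|\pavew g\|_{L^2}\,\}$, using the control of $\|w\|_{L^2}+\|\pavew w\|_{L^2}$ from the first paragraph and $\|\xveu-\xvew\|_{C^1}\le\|\xveu-\xvew\|_{C^2}$. Since the left-hand side is $\ge c(M_0)\|\paveu f-\pavew g\|_{L^2}^2$, Young's inequality — absorbing the factors of $\|\paveu f-\pavew g\|_{L^2}$ into the left-hand side and invoking \eqref{uwbd} to bound the remaining $C^2$-norm of the difference where it occurs linearly — yields \eqref{zeroth}.
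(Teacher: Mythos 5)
Your proof is correct and is in essence the paper's argument: both reduce to an integration by parts against $f-g\in H^1_0(\Omega)$, Poincar\'e's inequality, and the $C^1$/$C^2$-smallness of the differences of the coefficient matrices and volume elements. The only organizational difference is that you pair $\Dveu f-\Dvew g$ directly with $(f-g)\kveu$ and absorb the mismatched volume element via the Piola identity $\pa_a(\kvew A_{\II\, i}^{\,\,\,a})=0$, whereas the paper expands $|\paveu(f-g)+(\paveu-\pavew)g|^2$ and treats $(\Dveu-\Dvew)g$ by a second integration by parts; the Piola identity you invoke is precisely the divergence form \eqref{laplsm} that makes the paper's second integration by parts close, so the key structural input (and the reason no second derivative of $g$ is lost) is the same in both versions.
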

\begin{proof}

We write $\pavew g = \paveu g + (A_{\II}-A_I)\cdot \pa_y g$ and since $||\alpha||_{L^2(\Omega)}^2$ is comparable to $\int_\Omega |\alpha|^2 \kve\, dy$,  so:
\begin{multline}
 ||\paveu f - \pavew g||_{L^2(\Omega)}^2
 \lesssim  \int_{\Omega} \delta^{ij}
 \big(\wpa_{I i} f - \wpa_{\II i} g\big)
 \big( \wpa_{I j} f - \wpa_{\II j}g\big)\kveu dy = \int_{\Omega} \delta^{ij}
 \big( \wpa_{I i}  f - \wpa_{\II i} g\big)
 \big( \wpa_{I j}  f - \wpa_{\II j}g\big)\kveu dy\\
 + 2\int_{\Omega} \delta^{ij} (A_{{}_{\!}I\, i}^{\,\,a} - A_{\II\, i}^{\,\,\,a})
 A_{{}_{\!}I\, j}^{\,\,b} (\pa_a g)\pa_b (f-g)\kveu\, dy
 + \int_{\Omega} \delta^{ij} (A_{{}_{\!}I\, i}^{\,\,a} - A_{\II\, i}^{\,\,\,a})
 (A_{{}_{\!}I\, j}^{\,\,b} - A_{\II\, j}^{\,\,\,b}) (\pa_a g)(\pa_b g) \kveu dy.
\end{multline}

The terms on the last line are bounded by
the second term on the right-hand side of \eqref{zeroth}, using Lemma
\ref{inversedifflemma} and Sobolev embedding.
To control the terms on the first line, we integrate by parts:
\begin{align}
 \int_\Omega \delta^{ij} A_{{}_{\!}I\, i}^{\,\,a} A_{\m i}^a \pa_a (f-g) A_{{}_{\!}I\, j}^{\,\,b}\pa_b(f-g) \kveu dy
 = -\int_\Omega (f-g) \frac{1}{\kveu} \pa_a \big( \kveu \delta^{ij} A_{{}_{\!}I\, i}^{\,\,a}A_{{}_{\!}I\, j}^{\,\,b} \pa_b (f-g)\big)
 \kveu dy.
\end{align}
The second factor here is
$\Dveu(f-g) = \big(\Dveu f - \Dvew g \big)+ (\Dveu - \Dvew)g$.  Since
we
want a bound that only involves one derivative of $g$, we further write:
\begin{equation}
 (\Dveu - \Dvew )g =
 \frac{1}{\kveu} \pa_a \Big(  \kveu (\gveu^{ab} \pa_b g)
 - \kvew (\gvew^{ab}\pa_b g) \Big)
 + \Big(\frac{1}{\kvew} - \frac{1}{\kveu}\Big)
 \pa_a \big( \kvew \gvew^{ab}\pa_b g\big),
\end{equation}
and then integrate by parts and use Poincar\`{e}'s inequality again, which shows that:
\begin{equation}
 \Big| \int_\Omega (f-g) (\Dveu  - \Dvew) g\kve dy\Big|
\leq C(M_0)||\xveu -\xvew||_{C^2(\Omega)} ||\paveu f - \pavew g||_{L^2}
 ||\pavew g||_{L^2}.\tag*{\qedhere}
\end{equation}
\end{proof}

We now consider the case $\alpha = \paveu f, \beta = \pavew g$ for
functions $f,g \in H^1_0(\Omega)$. We then have:
\begin{prop}
  \label{mixedestimates}
 With the hypotheses of Proposition \ref{app:sobfndiff}, for each $s$
 there are constants\linebreak $C_s =
 C_s(M, ||\xveu||_{H^s(\Omega)}, ||D_t \xveu||_{s},
 ||\xvew||_{H^s(\Omega)}, ||D_t \xvew||_s)$ so that if
 $k + \ell = s$:
 \begin{multline}
  ||\paveu f - \pavew g||_{k,\ell}
  \leq C_s\big( ||\Dveu f - \Dvew g||_{k-1, \ell}
  + ||f - g||_{s,0} + ||\paveu f - \pavew g||_{s-1,1}
  + ||\T \xvew||_s ||f - g||_{s}\\
  + ||\T(\xveu - \xvew)||_{s}\big( ||\pavew g||_{s} +
  ||g||_{s+1,0}\big)\big).
 \end{multline}
\end{prop}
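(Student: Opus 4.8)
The plan is to prove Proposition \ref{mixedestimates} by induction on $s = k+\ell$, mirroring the structure of the non-difference estimate \eqref{ibpmixedelliptic} but carrying along the extra ``difference of coordinates'' terms. The base case $s=0$ is exactly the estimate \eqref{zeroth} just proved. For the inductive step, I would first apply the tangential-division estimate \eqref{mixedellipticest} from Lemma \ref{app:sobdiff} with $\alpha = \paveu f$, $\beta = \pavew g$, which reduces matters to controlling (i) $D_t^k(\divu \paveu f - \divw \pavew g) = D_t^k(\Dveu f - \Dvew g)$ in $\|\cdot\|_{k,\ell-1}$, which is a given; (ii) $D_t^k(\curlu \paveu f - \curlw \pavew g)$ in $\|\cdot\|_{k,\ell-1}$ — but $\curl$ of a gradient is a commutator term, $(\curlu \paveu f)_{ij} = \paveu_i \paveu_j f - \paveu_j \paveu_i f$, which by \eqref{dinv} is a sum of $(\paveu \paveu \xveu)\cdot(\paveu f)$-type terms, so the difference is bounded by $C_s(\|\xveu\|_s\|\paveu f - \pavew g\|_{s-1,0} + \|\xveu - \xvew\|_s\|\pavew g\|_{s-1,0})$, which is lower order; (iii) the pure tangential term $\|\FD^{k,\ell-1}(\paveu \paveu f - \pavew \pavew g)\|_{L^2}$; and (iv) the already-controlled terms $\|\paveu f - \pavew g\|_{s,0}$ and $\|\xveu - \xvew\|_s(\|\pavew g\|_{s,0} + \|g\|_{s+1,0})$, the first of which comes from the inductive hypothesis in lower $\ell$ (or from \eqref{zeroth} applied to differentiated quantities).

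The heart of the argument is controlling term (iii), and this is where the ``integrate half a derivative by parts'' idea — really a full integration by parts here since we are in $L^2$ rather than $H^{1/2}$ — is used, exactly as in the proof of Proposition \ref{app:sobfndiff}. Writing $\FD^{k,\ell-1} = T^J$ with $|J| \le s-1$ of the tangential-or-$D_t$ type, I would estimate $\|T^J(\paveu^2 f - \pavew^2 g)\|_{L^2}^2$ by pairing it against itself and integrating by parts; because $f, g \in H^1_0(\Omega)$ and the operators in $\FD$ are tangential (or $D_t$, which commutes with the boundary condition since the boundary is fixed), $T^J(f-g)$ still vanishes on $\pa\Omega$, so no boundary terms appear. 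The interior term then pairs $T^J(f-g)$ against $T^J$ applied to $\Dveu f - \Dvew g$ plus the commutators $[T^J, \Dveu]f$ and $(\Dveu - \Dvew)(T^J g)$-type expressions. To highest order the commutator $[T^J, \paveu]$ behaves like $(\FD^J \pa_y \xveu)\cdot \pa_y$, and since $\FD$ is tangential this is $\|\T\xveu\|_s \cdot (\text{lower-order norm of } f)$ — controllable — while the coordinate-difference commutators produce $\|\T(\xveu - \xvew)\|_s(\|\pavew g\|_s + \|g\|_{s+1,0})$, precisely the terms appearing on the right-hand side. The $(\Dveu - \Dvew)g$ piece is handled by the same trick used in the proof of \eqref{zeroth}: write $(\Dveu - \Dvew)g = \kveu^{-1}\pa_a(\kveu\gveu^{ab}\pa_b g - \kvew\gvew^{ab}\pa_b g) + (\kvew^{-1} - \kveu^{-1})\pa_a(\kvew\gvew^{ab}\pa_b g)$, integrate by parts once more to move a derivative off $g$, and use Poincaré's inequality and Lemma \ref{inversedifflemma} to extract a factor $\|\xveu - \xvew\|_s$ multiplying only one derivative of $g$.

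Once term (iii) is bounded, one reassembles the pieces: \eqref{mixedellipticest} gives $\|\paveu f - \pavew g\|_{k,\ell}$ in terms of (i)–(iv), and feeding back the inductive hypothesis for the lower-order norms $\|\paveu f - \pavew g\|_{s-1,1}$, $\|\paveu f - \pavew g\|_{m-2}$ etc. closes the induction. Finally, Proposition \ref{app:sobfndiff} itself follows from Proposition \ref{mixedestimates} together with the elliptic estimate \eqref{ibpmixedelliptic} for the individual solution $\pavew g$ (to convert $\|g\|_{s+1,0}$ and $\|\pavew g\|_s$ into the divergence $\|\Dvew g\|$ and the stated data norms), and from the product rule \eqref{product} and the difference-quotient characterization \eqref{eq:differencequotionet} to pass between the $C^k_{x,t}$ and Sobolev formulations; the first two displayed estimates of Proposition \ref{app:sobfndiff} are the $k=0$ specializations combined with \eqref{fractang1}–\eqref{fractang2} where fractional boundary derivatives enter. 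I expect the main obstacle to be bookkeeping in term (iii): making sure that whenever a high number of derivatives lands on $\xveu$ or $\xvew$ it is a \emph{tangential} derivative (so that only $\|\T\xveu\|_s$, not $\|\xveu\|_{s+1}$, appears) and that the remaining factors have low enough order to be placed in $L^\infty$ via Sobolev embedding — the same ``$|I_j|\ge 3 \Rightarrow |I_{j'}| \le s-4$'' combinatorics used repeatedly in Section \ref{higherordersec} and in the proof of Lemma \ref{app:sobdiff}.
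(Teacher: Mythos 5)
Your proposal matches the paper's proof: Proposition \ref{mixedestimates} is obtained there precisely by combining the div--curl--tangential decomposition \eqref{mixedellipticest} (applied to $\alpha = \paveu f$, $\beta = \pavew g$) with Lemma \ref{justtangential}, whose proof is exactly your integration-by-parts argument on the pure tangential term — no boundary terms since $T^J f = T^J g = 0$ on $\pa\Omega$, pairing against $\Dveu f - \Dvew g$ and the $(\Dveu - \Dvew)g$ piece, commutators handled by \eqref{vectcomm} so that only $\|\T\xveu\|_s$ and $\|\T(\xveu-\xvew)\|_s$ appear, and a $\delta$-Cauchy absorption to close the induction. The one cosmetic correction is that $\curlu\paveu f$ and $\curlw\pavew g$ vanish identically (coordinate derivatives commute), so the curl term requires no estimate at all rather than a lower-order commutator bound.
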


This proposition follows from \eqref{mixedellipticest} and the
following lemma:
\begin{lemma}\label{justtangential}
 With the hypotheses as above,
  there is a constant $C_{\!s}(M,_{\!}
 ||\xveu||_{s},_{\!} ||\xvew||_{s})$ so that for any $\delta\! > \!0$:
 \begin{multline}
  ||\FD^{k,\ell} (\paveu f - \pavew g)||_{L^2(\Omega)}
  \leq C_s \big(||\Dveu f- \Dvew g||_{k-1,\ell} + \delta
  ||\paveu f - \pavew g||_{k,\ell}+ \delta^{-1}||\T(\xveu -\xvew)||_s ||\pavew g||_{s}\\
  +  \delta^{-1}||\T \xveu||_s (||\paveu f - \pavew g||_{s,0}
  + ||f -  g||_{s,0})
  \big),\quad s\!=\!k\!+\!\ell.
  \label{justtangentialest}
 \end{multline}
\end{lemma}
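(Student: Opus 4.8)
\textbf{Proof plan for Lemma \ref{justtangential}.}
The idea is a classical integration-by-parts argument for the Dirichlet problem, adapted to the fact that the coordinates $\xveu, \xvew$ are only smooth in tangential directions, so that only tangential vector fields $\FD^{k,\ell} = \T^\ell D_t^k$ may be applied freely. Write $h = f - g$ and set $w = \paveu f - \pavew g$, whose components we think of as a $(0,1)$-tensor with respect to the $\xveu$-frame; note $h \in H^1_0(\Omega)$ so $\FD^{k,\ell} h$ vanishes on $\pa\Omega$ for each such operator (since the $T \in \T$ and $D_t$ are tangential). First I would expand, using $\pavew g = \paveu g + (A_{\II} - A_{\I})\cdot\pa_y g$,
\begin{equation}
 \int_\Omega |\FD^{k,\ell} w|^2 \kveu\, dy
 = \int_\Omega \delta^{ij}\big(\FD^{k,\ell}\paveu_i h\big)\big(\FD^{k,\ell}\paveu_j h\big)\kveu\, dy + (\text{error terms involving } A_{\I} - A_{\II}),
\end{equation}
where the error terms are controlled by $\delta^{-1}\|\T(\xveu - \xvew)\|_s\|\pavew g\|_s$ together with a small multiple of $\|w\|_{k,\ell}$, using Lemma \ref{inversedifflemma}, Sobolev embedding, and a weighted Cauchy–Schwarz (Young's inequality with parameter $\delta$). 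This is where the $\delta$-dependence on the right-hand side of \eqref{justtangentialest} enters.

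For the main term, I would commute $\FD^{k,\ell}$ past $\paveu_i$. Since $\FD^{k,\ell} = T^I$ with $T^I \in \FD^s$, write $\FD^{k,\ell}\paveu_i h = \paveu_i \FD^{k,\ell} h + [\FD^{k,\ell}, \paveu_i]h$; the commutator, by \eqref{dinv} and the Leibniz rule, is a sum of terms of the form $(\FD^{I_1}\pa_y\xveu)\cdots(\FD^{I_m}\pa_y\xveu)(\FD^{I_{m+1}}\pa_y h)$ with $|I_{m+1}|\le s-1$ and all other indices small, so its $L^2$ norm is bounded by $C_s\|\T\xveu\|_s\|h\|_{s,0}$ plus $C_s\|\T\xveu\|_s\|w\|_{s,0}$ (after trading $\pa_y h$ for $\paveu h = w + \pavew g$ and absorbing the $\pavew g$ piece). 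Then integrate by parts in the leading piece:
\begin{equation}
 \int_\Omega \delta^{ij}\big(\paveu_i \FD^{k,\ell} h\big)\big(\paveu_j \FD^{k,\ell} h\big)\kveu\, dy
 = -\int_\Omega \big(\FD^{k,\ell} h\big)\,\Dveu\big(\FD^{k,\ell} h\big)\,\kveu\, dy,
\end{equation}
with \emph{no} boundary contribution because $\FD^{k,\ell} h = 0$ on $\pa\Omega$ (this is exactly why we only use tangential operators). Now commute $\FD^{k,\ell}$ out of $\Dveu$: write $\Dveu \FD^{k,\ell} h = \FD^{k,\ell}\Dveu h + [\Dveu,\FD^{k,\ell}]h$, and split $\FD^{k,\ell}\Dveu h = \FD^{k,\ell}(\Dveu f - \Dvew g) + \FD^{k,\ell}(\Dvew - \Dveu)g$. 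The first is bounded by $\|\Dveu f - \Dvew g\|_{k-1,\ell}$ (one derivative is absorbed into the $\FD^{k,\ell} h$ factor via Poincaré/Cauchy–Schwarz, after noting $\FD^{k,\ell} = T\FD^{k,\ell-1}$ when $\ell\ge1$, or $\FD^{k,\ell}=D_t\FD^{k-1,\ell}$ when $\ell=0$, so one tangential or time derivative can be integrated by parts back onto $\Dveu f - \Dvew g$); the $(\Dvew - \Dveu)g$ term and the commutator $[\Dveu,\FD^{k,\ell}]h$ are handled as in the proof of Lemma \ref{app:sobfndiff} / the zeroth-order estimate \eqref{zeroth}, producing $\|\T(\xveu - \xvew)\|_s\|\pavew g\|_s$ and $\|\T\xveu\|_s(\|w\|_{s,0} + \|h\|_{s,0})$ contributions, with a further $\delta\|w\|_{k,\ell}$ absorbed.

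The main obstacle is the careful bookkeeping of the commutator terms $[\Dveu,\FD^{k,\ell}]h$ and $(\Dveu - \Dvew)g$: one must verify that in every term the number of derivatives landing on $\xveu$ (resp. on $\xveu - \xvew$) is at most $s-1$ tangential/time derivatives of $\pa_y\xveu$, so that it is bounded by $\|\T\xveu\|_s$ (resp. $\|\T(\xveu - \xvew)\|_s$) rather than $\|\xveu\|_{s+1}$, while the remaining factors involving $h$ or $g$ stay below top order and can be controlled by $\|w\|_{s,0} + \|h\|_{s,0}$ or by a small multiple of $\|w\|_{k,\ell}$. This is the same mechanism as in Lemma \ref{app:sobdiff} and Proposition \ref{app:sobfndiff}, using the commutator estimate from Lemma \ref{vectcommlemma} with $\U = \coord$, the product rule \eqref{product}, and \eqref{dinv}; I would organize it so that every top-order occurrence of $\xveu$ is tangential and every occurrence of $w$ at order $s$ appears with the tunable coefficient $\delta$. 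Summing over $T^I \in \FD^{k,\ell}$ and over $k+\ell = s$ then yields \eqref{justtangentialest}, and combining with \eqref{mixedellipticest} gives Proposition \ref{app:sobfndiff}.
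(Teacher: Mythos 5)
Your plan is essentially sound and lands on the same three mechanisms the paper uses: tangential/time operators kill all boundary terms since $\FD^{k,\ell}(f-g)\in H^1_0$, one extra tangential integration by parts reduces the order on $\Dveu f-\Dvew g$ from $s$ to $s-1$, and the resulting top-order occurrence of $\paveu f-\pavew g$ is absorbed with the tunable weight $\delta$. But your organization differs from the paper's in one substantive way: you reduce to a single frame at the outset, writing $w=\paveu h+(A_I-A_{\II})\pa_y g$ with $h=f-g$, so that after integration by parts you must form $\Dveu h=(\Dveu f-\Dvew g)+(\Dvew-\Dveu)g$ and then beat the second-order operator $(\Dvew-\Dveu)g$ back down by a further integration by parts. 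The paper never applies $\Dveu$ to $g$: it keeps both frames and integrates by parts symmetrically, $\int(\paveu T^If-\pavew T^Ig)\cdot T^Iw\to\int T^If\,\pa_a(A_I\,T^Iw)-\int T^Ig\,\pa_a(A_{\II}\,T^Iw)$, which produces $\Dveu f-\Dvew g$ directly together with the first-order cross term $(\paveu-\pavew)\cdot\pavew g$; the latter needs only $s-1$ derivatives after the final $S$-integration by parts, so it is bounded by $\|\T(\xveu-\xvew)\|_s\|\pavew g\|_s$ with no second-order-in-$g$ intermediary. Your route works but carries somewhat heavier commutator bookkeeping ($[\Dveu,\FD^{k,\ell}]h$ and $\FD^{k,\ell}$ of a second-order coefficient difference) than the paper's.

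One concrete slip: for the case $\ell=0$, i.e.\ $\FD^{k,\ell}=D_t^k$, you propose to ``integrate one time derivative by parts'' onto the other factor. That is not available here -- the estimate is at a fixed time, and $\int_\Omega u\,D_tv\,dy$ produces a total time derivative $\tfrac{d}{dt}\int uv$ rather than $-\int(D_tu)v$, which would change the character of the lemma from an elliptic estimate to an energy identity. The correct observation (and the one the paper makes) is that the pure time-derivative case is trivial: $\|D_t^k(\paveu f-\pavew g)\|_{L^2}\le\|\paveu f-\pavew g\|_{s,0}$, which already sits on the right-hand side of \eqref{justtangentialest}. With that case disposed of, every nontrivial $\FD^{k,\ell}$ factors as $S\,\FD^{k,\ell-1}$ with $S\in\T$, and your spatial integration by parts goes through.
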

\begin{proof}[Proof of Lemma \ref{justtangential}]
  For the purposes of the below proof, the commutator $[T, \pa_a]$
  for $T \in \T$ will be ignored for notational convenience.
  We argue by induction.
When $s = 1$, we fix a multi-index $I$ with
$|I| = 1$. If $T^I = D_t$ there is nothing to prove so we assume
that $T^I = S \in \T$. We start by writing:
\begin{equation*}
  ||S \paveu f\! - S \pavew g||_{L^2(\Omega)}^2
 = \int_\Omega \!\!(\paveu Sf\! - \pavew Sg)\cdot
 S( \paveu f \!- \pavew g)\,dy
 + \!\int_\Omega \!\! \big([\paveu, S] f \!- [\pavew, S] g\big)
 \cdot S (\paveu f \!- S \pavew g)\,
 dy.
\end{equation*}
To deal with the first term, we integrate by parts and use
that $Sf \!=\! Sg\!=\! 0$ on $\pa \Omega$, which gives:
\begin{equation}
 \int_\Omega \!\!(\paveu S f - \pavew Sg)\!\cdot\! S
 (\paveu{}_{\!} f \!- \pavew g)  dy\\
 = \!\int_\Omega\!\! Sf  \pa_a\big(
 \delta^{ij}\! A_{{}_{\!}I\, i}^{\,\,a} \{ S \paveu{}_{\!j} f \!- S\pavew{}_{\!j} g\}\big) dy
 - \!\int_\Omega \!\! Sg\,
 \pa_a\big(
 \delta^{ij}\! A_{\II\, i}^{\,\,\,a} \{S \paveu{}_{\!j} f \!- S\pavew{}_{\!j} g\}\big) dy.
\end{equation}
We write the first term on the right-hand side as:
\begin{equation}
  \int_\Omega
 \delta^{ij} \big( (Sf)A_{{}_{\!}I\, i}^{\,\,a} - (Sg) A_{\II\, i}^{\,\,\,a}) \pa_a \big( S \paveu{}_{\!j}f
 - S\pavew{}_{\!j} g\big)\,  dy
 +\int_\Omega \delta^{ij} \big(Sf \pa_a( A_{{}_{\!}I\, i}^{\,\,a}) - S g\pa_a( A_{\II\, i}^{\,\,\,a})\big)
 \big(S\paveu f - S\pavew g\big)\, dy.
\end{equation}
The second term here is bounded by the right-hand side of
\eqref{justtangentialest}. We now re-write the first term as:
\begin{multline}
 \int_\Omega\!\! (Sf) \paveu {}_{\!}\cdot{}_{\!} (S \paveu f\! - S\pavew g)
 - (Sg) \pavew{}_{\!} \cdot{}_{\!} (S \paveu f\! - S \pavew g)
=\!\! \int_\Omega  \!(Sf) S (\Dveu f\! - \paveu\!\cdot{}_{\!} \pavew g)
- (Sg) S(\pavew\!\cdot \paveu {}_{\!}f \!- \Dvew g)\\
+\!\! \int_\Omega  \!(S{}_{\!}f) [\paveu, S] {}_{\!}\cdot{}_{\!} ( \paveu {}_{\!}f \!- \pavew g)
-(Sg) [\pavew, S] {}_{\!}\cdot{}_{\!} (\paveu {}_{\!}f \! - \pavew g).
\end{multline}
Finally, we re-write the first
term on the right-hand side as:
\begin{equation}
 \int_\Omega (Sf - Sg) S (\Dveu f - \Dvew g)
 + \int_\Omega (Sf - Sg) S (\paveu - \pavew)\cdot \pavew g,
\end{equation}
and integrate $S$ by parts in each of these terms. Applying Cauchy's
inequality, the result of the above is:
\begin{multline}
 ||S(\paveu f - \pavew g)||_{L^2(\Omega)}^2
 \leq C_1 \big(||\Dveu f - \Dvew g||_{L^2(\Omega)}^2
 + \delta^{-1}||\paveu f - \pavew g||_{L^2(\Omega)}^2\big)+ C_1
 \delta^{-1}||\xveu - \xvew||_{C^2(\Omega)}
 ||\pavew g||_{H^1(\Omega)}\\
 + C_1\delta \big(||\paveu f - \pavew g||_{H^1(\Omega)}^2 +
 ||[\paveu, S] f - [\pavew, S] g||_{L^2(\Omega)}^2 +
 ||[\paveu, S]\cdot \paveu f - [\pavew, S]\cdot \pavew g||_{L^2(\Omega)}^2\big).
\end{multline}
Here, and in what follows, we will use $C_k$ to denote a constant
which depends on $M, ||\xveu||_k, ||\xvew||_k$.
Applying the commutator estimate \eqref{vectcomm}, every term here is bounded by the
right-hand side of \eqref{justtangentialest}.

We now suppose we have the result for $s = 1,..., m-1$, and fix
$T^I = D_t^k T^J$ where $T^K \in \T^\ell$ with $k + \ell = m$.
If $|K| = 0$ there is nothing to prove
so we assume that $T^I = ST^J$ for some $S \in \T$ and $T^J \in \FD^{k,\ell-1}$.
The proof now follows in nearly the same way as above, so we just indicate the
main points. First, we write:
\begin{equation}
 \int_\Omega \!\! T^I \!(\paveu {}_{\!}f \!- \pavew g)
T^I\! (\paveu  {}_{\!} f\! - \pavew g)\, dy
= \int_\Omega \!\! (\paveu T^I\! {}_{\!}f\! - \pavew T^I \! g)
T^I\!(\paveu {}_{\!}f\! - \pavew g) dy +\!
\int_\Omega\!\!  ([\paveu,\! T^I]f\! - [\pavew,\! T^I] g)
T^I (\paveu {}_{\!}f\!- \pavew g)  dy.
\end{equation}
Integrating by parts in the first term yields, in addition to lower-order terms:
\begin{equation}
 \int_\Omega (T^I f)\paveu \cdot (T^I \paveu f - T^I \pavew g)
 - (T^I g) \pavew \cdot (T^I \paveu f - T^I \pavew g)\, dy.
\end{equation}
We now write $T^I = S T^J$ in the second factor in each
term and then commute $S$ with $\paveu, \pavew$,
and obtain:
\begin{equation}
 \int_\Omega (T^J f - T^J g) S(T^J\Dveu f - T^J \Dvew g)
 + \int_\Omega (T^J f - T^J g) S T^J\big( (\paveu - \pavew) \cdot \pavew g\big).
\end{equation}
Integrating $S$ by parts and bounding:
\begin{equation}
 ||T^J\big((\paveu - \pavew) \cdot \pavew g\big)||_{L^2(\Omega)}
 \leq ||\xveu - \xvew||_{r} ||\pavew g||_{m},
\end{equation}
shows that $||T^I (\paveu f - \pavew g)||_{L^2(\Omega)}$ is bounded by:
\begin{multline}
 C_m \big(
 ||T^J (\Dveu f - \Dvew g)||_{L^2(\Omega)} +
 (1 + \delta^{-1})||T^J(\paveu f - \pavew g)||_{L^2(\Omega)}^2+
 (1 + \delta^{-1}) ||\T(\xveu - \xvew)||_{m}^2 ||\pave g||_{k,\ell}^2\\
 + C_m \delta \big( ||\paveu f - \pavew g||_{k,\ell}^2 +
 || [\paveu, S] T^J f - [\pavew, S] T^J g||_{L^2(\Omega)}
 + ||[\paveu, S] T^J \paveu f -
 [\paveu, S] T^J \pavew g||_{L^2(\Omega)} \big).
\end{multline}
The result now follows after using the commutator estimate \eqref{vectcomm}
and induction.
\end{proof}

\subsection*{Proof of Proposition \ref{app:sobdirich}}
We just prove the $k = 0$ case, as the $k \geq 1$ case follows
using similar arguments.
This would be a consequence of the Proposition \ref{app:sobfndiff}
with $g = 0$ if we knew that
$\pave f \in H^m_{loc}(\Omega)$ and $\T^I \pave f\in L^2(\Omega)$
for all $|I| \leq m$. In the following lemma we prove that this is
the case. See Section \ref{tangapp} for the definitions of the sets
 $U_\alpha$ and the vector fields $T \in \T$.

\begin{lemma}
Fix $s \!\geq\! 0$ and suppose that $\xve \!\in \! H^s(\Omega),
T \!\xve\! \in\! H^s(\Omega)$
for all $T \!\in \!\T\!$ and that \eqref{uwbd} holds.
Suppose also that $f\! {}_{\!}\in \! H^{s{}_{\!}}(\Omega)$,
$\Dve f\!{}_{\!} \in\! H^{s-\!1\!}(\Omega)$.
Then $\pave {}_{\!}f\! {}_{\!}\in \! H^s_{loc}(\Omega)$
and $T^{I{}_{\!}} \pave {}_{\!}f {}_{\!}\!\in \! L^2(\Omega)$ for all $|I|\! \leq \!s$
and there is a constant
$C_s \!=\! C_s(M_0, ||\xve||_{H^s(\Omega)})$ so that
with notation as in \eqref{eq:simplifiedtangentialnotation},
the following inequalities hold for any $V\! \subset\subset \!\Omega$:
\begin{align}
 ||\pave f||_{H^s(V)} + ||\T^s\pave f||_{L^2(\Omega)} &\leq
 C_s\big( ||\Dve f||_{H^{s-1}(\Omega)} +
 (||\T \xve||_{H^s(\Omega)} + ||\xve||_{H^s(\Omega)})
 ||f||_{H^s(\Omega)}\big).\label{wkellloc}
\end{align}
  \end{lemma}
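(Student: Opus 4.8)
The plan is to prove \eqref{wkellloc} by induction on $s$, using a difference-quotient argument to build interior regularity while simultaneously tracking the tangential derivatives up to the boundary. For the base case $s=0$ the estimate for $\|\pave f\|_{L^2(\Omega)}$ follows from the zeroth-order estimate \eqref{zeroth} (with $g=0$), after writing $\int_\Omega \delta^{ij}\wpa_i f\,\wpa_j f\,\kve\,dy = -\int_\Omega f\,\Dve f\,\kve\,dy$ and applying Cauchy--Schwarz together with Poincar\'e's inequality to absorb $\|f\|_{L^2(\Omega)}$; here only the bound \eqref{uwassump} is used.

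For the inductive step, assume \eqref{wkellloc} holds for $0,\dots,s-1$. Fix $V\subset\subset\Omega$. To control $\|\pave f\|_{H^s(V)}$ I would cover $V$ by finitely many coordinate balls; in the interior one can use genuine difference quotients $D_c^h$ in all coordinate directions and the characterization \eqref{eq:differencequotionet}, applying the $s=1$ estimate to $D_c^h f$ on slightly larger sets and using that $\Dve$ commutes with $D_c^h$ up to a commutator whose $H^{s-1}$-norm is controlled by $C_s(\|\xve\|_{H^s}+\|\T\xve\|_{H^s})\|f\|_{H^s}$ via \eqref{product} and \eqref{dinv}. Near the boundary one replaces the normal difference quotient by the tangential vector fields $T\in\T$: since $T\xve\in H^s(\Omega)$ by hypothesis, applying $T$ to the equation $\Dve f\in H^{s-1}$ gives $\Dve(Tf)\in H^{s-2}$ plus a commutator $[\,\Dve,T\,]f$ that, by a computation using \eqref{dinv}, is a sum of terms $(\,\T\xve\ \text{or lower})\cdot(\pave^2 f)$, whose $H^{s-2}$-norm is bounded by $C_s(\|\T\xve\|_{H^s}+\|\xve\|_{H^s})\|\pave f\|_{H^{s-1}(\Omega)}$; then the inductive hypothesis closes the estimate on $\|T f\|_{H^{s-1}}$, hence on $\|\pave f\|_{H^s}$ in the boundary charts, since the missing pure-normal derivative is recovered from the equation $\Dve f = \kve^{-1}\pa_a(\kve\gve^{ab}\pa_b f)$, solving for $\gve^{33}\pa_3^2 f$ and using ellipticity of $\gve$ (this is where \eqref{uwassump} and positivity of $\gve$ enter). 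The crucial point that no more than $s$ derivatives ever land on $\xve$, and that the ones landing are tangential, is what keeps the right-hand side at $\|\T\xve\|_{H^s}$ rather than $\|\xve\|_{H^{s+1}}$.

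The second quantity $\|\T^s\pave f\|_{L^2(\Omega)}$ is handled by exactly the argument in the proof of Lemma \ref{justtangential} (with $g=0$, $\xveu=\xvew$): for a multi-index $I$ with $|I|=s$ write $T^I=ST^J$, integrate $T^I\pave f\cdot T^I\pave f$ by parts moving one $T$ onto the other factor, use $T^If=0$ on $\pa\Omega$ so the boundary terms vanish, rewrite the resulting interior term using $\pave\cdot(T^I\pave f) = T^I\Dve f + [\,\pave,T^I\,]f$, and absorb the $\pave$-term back into $\delta\|\T^s\pave f\|_{L^2}^2$ for small $\delta$; the commutator $[\,\pave,T^I\,]f$ is controlled, via \eqref{vectcomm} and \eqref{product}, by $C_s(\|\T\xve\|_{H^s}+\|\xve\|_{H^s})\|f\|_{H^s}$ plus lower-order terms handled by induction. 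Combining the interior regularity estimate and the tangential estimate, then invoking Proposition \ref{app:sobfndiff} with $g=0$, upgrades these to the full estimate in Proposition \ref{app:sobdirich}.

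\textbf{Main obstacle.} The delicate point is ensuring that the a priori-unknown regularity $\pave f\in H^s_{loc}$ and $\T^I\pave f\in L^2$ is actually \emph{produced} rather than assumed: one cannot simply quote Proposition \ref{app:sobfndiff}, since its hypotheses include precisely this regularity. The difference-quotient scheme must therefore be set up so that each step only uses bounds already established at the previous level of induction, and so that the commutators generated when differentiating the equation never require a derivative of $f$ of order higher than what is being estimated, nor more than $s$ (tangential) derivatives of $\xve$. Getting the bookkeeping of these commutator terms right --- in particular separating the highest-order term $(\T^s\pa_y\xve)(\pa_y f)$, which is admissible because $T\xve\in H^s$, from genuinely dangerous terms, which turn out not to occur --- is the heart of the argument.
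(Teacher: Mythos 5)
Your proposal is correct and follows essentially the same route as the paper: an Evans-style difference-quotient argument (run only in the tangential chart directions near the boundary, in all directions in the interior) to \emph{produce} the regularity, combined with an induction in which $T^I f$ is shown to be an $H^1_0$ weak solution of the commuted equation $\Dve(T^I f)=T^I\Dve f+[\Dve,T^I]f$ so that the $s=1$ case can be reapplied; you also correctly identify the circularity with Proposition \ref{app:sobfndiff} as the whole point of the lemma. The only cosmetic difference is that the paper extracts the bound on $\|\T^s\pave f\|_{L^2(\Omega)}$ directly from this induction (via $T\pave_iF'=TT^I\pave_iF+(TT^IA_{\m i}^a)\pa_aF+R$) rather than rerunning the integration-by-parts of Lemma \ref{justtangential}, which is legitimate in your scheme only after the level-$s$ regularity has been secured by the difference quotients.
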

\begin{proof}
  We will follow the proof in \cite{Evans2010}. Both of the above statements have
  essentially the same proof and so we will just prove the second one.
  For the case $s = 1$, we want to show:
  \begin{equation}
   {\sum}_{T \in \T} ||T \pave f||_{L^2(\Omega)}
   \leq C(M')\big( ||\Dve f||_{L^2(\Omega)} + ||\pave f||_{L^2(\Omega)}\big).
   \label{goal}
  \end{equation}

  We fix one of the open sets $U = U_\mu$ with $\mu \geq 1$ and write
  $F = f\circ \psi_\mu$. Then, arguing
  as in \cite{Evans2010}, to  prove \eqref{goal} it suffices to prove
  that for every $V \subset U$,  with
  a constant independent of $h$,
  \begin{equation}
   ||D_c^h \pave F||_{L^2(V)} \leq C
   \big( ||\Dve f||_{L^2(\Omega)} + ||\pave f||_{L^2(\Omega)}\big),
   \quad\text{for}\quad c = 1,2,
   \label{diffquot}
  \end{equation}
 for $\!D_{\!c}^{h}\!$ denoting the difference quotient in the direction of a unit
  vector $e_c$
  \begin{equation}
   D_c^h F(z) = \big(F(z + h e_c) - F(z)\big)/{h}.
  \end{equation}

  Let $\rho$ denote
  a cutoff function which is 1 on $V$ and zero outside of $U$, and
  set $v = -D_c^{-h}(\rho^2 D_c^h F)$. Note that $v \in H^1_0(U)$.
  Now we have:
\begin{equation}
 \int_U \!\!\Dve F v = -\!\int_U\!\!\delta^{ij} (\pave_i F)
 \pave_j \big\{ D_c^{-h} (\rho^2 D_c^h F)\big\}
 =\! \int_U\!\!  \delta^{ij}\!\underbrace{ (\pave_i F) D_c^{-h} \pave_j\{\rho^2 D_c^h F\}}_{I}-
 \int_U \!\!\delta^{ij}\! \underbrace{(\pave_i F) (D_c^{-h}\! A_{\m j}^a) \pa_a \{\rho^2 D_c^h F\}}_{II}.
\end{equation}
Now,
\begin{multline}
 I = \int_U (D_c^h \pave_iF) \pave_j (\rho^2 D_c^h F)
 = \int_U \rho^2 \delta^{ij} (D_c^h \pave_i F)(\pave_j D_c^h F)
 + \int_U 2 \delta^{ij} (D_c^h \pave_i f)
 (D_c^h f)(\rho \pave_j \rho)\\
 = \int_U \rho^2 \delta^{ij} (D_c^h \pave_i F)(D_c^h\pave_j F)
 + \int_U  \delta^{ij} (D_c^h \pave_i F) \big\{ 2\rho(\pave_j \rho) (D_c^h F)
 - \rho^2 (D_c^h A_{\m j}^a)\pa_a F)\big\}.
\end{multline}
The first term is:
\begin{equation}
 \int_U \rho^2 |D_c^h \pave F|^2.
\end{equation}
The second term is bounded by:
\begin{equation}
  C(_{\!}M_{0\!})||\rho D_{\!c}^h {}_{\!}\pave F||_{L^{\!2}(U{}_{\!})}
  \big(||D_{\!c}^h\! F||_{L^{\!2}(U{}_{\!})} {}_{\!}+{}_{\!} ||D_{\!c}^h \! A||_{L^{\!\infty\!}(U{}_{\!})}
  ||f||_{H^{1\!}}{}_{\!}\big)\!
  \leq
\!  C(_{\!}M_{0\!}) ||\rho D_{\!c}^h {}_{\!}\pave F||_{L^{\!2}(U{}_{\!})}||f||_{H^1\!(\Omega)}(1 {}_{\!}+{}_{\!}
   ||\pa^2 \xve||_{L^{\!\infty\!}}).
\end{equation}
Next, writing $\pa_a D_c^h F = D_c^h \pa_a F = D_c^h(A_{\m a}^\ell \pave_\ell F)$:
\begin{multline}
  II = \int_U \delta^{ij}(\pave_j F)(D_c^{-h} \! A_{\m j}^a)\big\{
  2\rho \pa_a \rho D_c^h F
   + \rho^2 \pa_a D_c^h F\big\}\\
   =
   \int_U \delta^{ij} (\pave_j F)(D_c^{-h}\! A_{\m j}^a)
   \big\{ 2\rho \pa_a \rho D_c^h F +
   \rho^2 A_{\m a}^\ell D_c^h \pave_\ell F -
    \rho^2 (D_c^h A_{\m a}^\ell)\pave_\ell F\big\}
\end{multline}
so we have:
\begin{equation}
 |II| \leq C(M_0)||\pa^2 \xve||_{L^\infty(U)}||\pave F||_{L^2(U)}
 \big(||F||_{H^1(U)}
 +  ||\rho D_c^h \pave F||_{L^2(U)} + ||\pa^2 x||_{L^\infty(U)}
 ||\pave F||_{L^2(U)}\big).
\end{equation}

Finally, we have:
\begin{equation}
  \int_U |\Dve F| |v|\, dy
 \leq ||\Dve F||_{L^2(U)} ||D_c^{-h} (\rho^2 D_c^h F)||_{L^2(U)}.
\end{equation}
Using similar arguments to the above, we can show:
\begin{equation}
 ||D_c^{-h} (\rho^2 D_c^h F)||_{L^2(U)}
 \leq C(M')\big( ||\rho D_c^h \pave F||_{L^2(U)} +
 ||\pa^2 \xve||_{L^\infty(U)}||F||_{H^1}\big),
\end{equation}
so that:
\begin{equation}
 \int_U \rho^2 |D_c^h \pave F|^2
 \leq C(M_0) \big( ||\rho D_c^h \pave F||_{L^2(U)}
 \big\{(1 + ||\pa^2 x||_{L^\infty(U)})||F||_{H^1(U)} +
 ||g||_{L^2(U)}\big\}
 + ||f||_{H^1(U)}^2\big).
\end{equation}
Absorbing this first factor into the left-hand side we have, for any $h$
small enough:
\begin{equation}
 \int_V |D_c^h \pave F|^2 \leq
 \int_U \rho^2 |D_c^h \pave F|^2
 \leq C(M_0)\big( (1 + ||\pa^2 \xve||_{L^\infty(U)})^2 ||F||_{H^1(U)}^2
 + ||g||_{L^2(U)}^2\big),
\end{equation}
which implies the $s = 1$ case of the theorem.

Now suppose that $T^J \pave F \in L^2(\Omega)$ for all $|J| \leq s-1$.
Fix a multi-index $I$ with $|I| = s-1$ and write $F' = T^I F$.
Note that $F' = 0 $ on $\pa \Omega$ in the trace sense and also that:
\begin{equation}
 ||\pa_y F'||_{L^2(\Omega)}
 \leq C(M_0)\pave F'||_{L^2(\Omega)}
 \leq C(M_0)\big( ||T^I\pave F||_{L^2(\Omega)} + ||[\pave, \T^I]F||_{L^2(\Omega)}
 \big).
 {}
\end{equation}
The commutator can be bounded using Lemma \ref{vectcommlemma}:
\begin{equation}
 ||[\pave, T^I] F||_{L^2(\Omega)}
 \leq C(M_0, ||\xve||_{H^s(\Omega)}) \big(  ||\T \xve||_{H^s(\Omega)}  +
 ||\xve||_{H^s(\Omega)} \big)||F||_{H^{s-1}(\Omega)}.
 {}
\end{equation}
In particular this implies that $F' \!\!\in\! H^1_0(\Omega)$. We also have:
\begin{equation}
 \Dve F' = T^I \Dve F + [T^I,\Dve] F,
 \label{fprime}
\end{equation}
and
\begin{equation}
 ||[T^I, \Dve] F||_{L^2(\Omega)}
 \leq C(M_0, ||\xve||_{H^s(\Omega)}) ( ||\T \xve||_{H^s(\Omega)} + ||\xve||_{H^s(\Omega)}) ||F||_{H^s(\Omega)},
 {}
\end{equation}
Therefore we have that $F'\! \in\! H^1_0$ is the weak solution
to the problem \eqref{fprime} and $\Dve F'\! \in\! L^2(\Omega)$, so by the
$|I| \!=\! 1$ case we have $\T\! \pave F' \!\in\! L^2(\Omega)$ and:
\begin{equation}\label{eq:theaboveinequality}
 ||T \pave F'||_{L^2(\Omega)}
 \leq C(M_0)\big( ||\Dve F'||_{L^2(\Omega)} + ||\pave F'||_{L^2(\Omega)}\big).
 {}
\end{equation}
We write:
\begin{equation}
 T \pave_i F' = T (\pave_i \T^I F)
 = T T^I \pave_i F + (T T^I A_{\m i}^a)\pa_a F + R,
 {}
\end{equation}
where the $L^2$ norm of $R$ is bounded by the right side of \eqref{wkellloc}.
Combining this with \eqref{eq:theaboveinequality} gives \eqref{wkellloc}.
To prove the first estimate in \eqref{wkellloc} we argue in the same way,
but we also prove \eqref{diffquot} also for $c \!=\! 3$.
\end{proof}

\subsection*{Proof of Proposition \ref{div-curl}}

We will need a few preliminary results. First, we fix a function
$d$ with $d = 0$ on $\pa \Omega, d < 0 $ in $\Omega$ and
$|\nabla d| > 0$ everywhere, so that the normal can be written as:
\begin{equation}
 N_i = {\pave_i d}\, /|{\pave d}|
= {A_{\m i}^a \pa_ad}\,/|\wpa  d|,\qquad \text{where}\quad
|\wpa d|^2=\delta^{ij}\widetilde{\pa}_i d\, \widetilde{\pa}_j d\, =\gve^{ab} \pa_a d\,  \pa_b d.
 {}
\end{equation}
By \eqref{dinv}
and Lemma \ref{inverselemma}, this implies the estimates:
\begin{equation}
 ||N||_{C^\ell(\pa\Omega)} \leq C(M') ||\xve||_{C^{\ell+1}(\pa \Omega)}
 \leq C(M')||\xve||_{H^{\ell+4}(\Omega)},
 \quad \textrm{ and } \quad ||N||_{H^{\ell}(\pa \Omega)}
 \leq C(M') ||\xve||_{H^{\ell+1}(\pa \Omega)}.
 \label{Nbds}
\end{equation}
where in the first inequality we used Sobolev embedding on
$\pa \Omega$ and the
trace inequality \eqref{trace}.
Recalling the definition $\gamma_{ij} = \delta_{ij} - N_i N_j$, there
are similar estimates for derivatives of $\gamma$.

The basic result we need is the following consequence of Green's formula:
\begin{lemma}
 If $\alpha$ is a vector field then:
 \begin{equation}
   ||\pave \alpha||_{L^2(\tD_t)}^2   =
   ||\div \alpha||_{L^2(\tD_t)}^2 + \frac{1}{2} ||\curl \alpha||_{L^2(\tD_t)}^2
  +\int_{\pa \tD_t} \Big(\alpha^j (\gamma_j^k \pave_k \alpha_i) N^i
  - \alpha_i (\gamma_j^k \pave_k \alpha^j )N^i\Big).
 \end{equation}
\end{lemma}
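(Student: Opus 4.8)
The plan is to apply Green's formula \eqref{Green's formula} to the product $\pave_i\alpha^j$ paired against $\pave_j\alpha_i$ over $\tD_t$, which is the standard divergence--curl identity localized by an integration by parts that produces a boundary term. First I would write $\delta^{ik}\delta^{j\ell}(\pave_i\alpha_j)(\pave_k\alpha_\ell)$ and use the algebraic identity
\begin{equation}
(\pave_i\alpha_j)(\pave_k\alpha_\ell)\delta^{ik}\delta^{j\ell}
= (\pave_i\alpha^i)(\pave_j\alpha^j) + \tfrac12(\curl\alpha)_{ij}(\curl\alpha)^{ij}
+ \pave_j\big(\alpha^i\pave_i\alpha^j - \alpha^j\pave_i\alpha^i\big)
- \alpha^i\,\pave_j\pave_i\alpha^j + \alpha^j\,\pave_i\pave_j\alpha^i,
\end{equation}
which after using that the flat connection is torsion-free (so $\pave_i\pave_j=\pave_j\pave_i$ up to the curvature of the $\xve$-coordinates, but since these are genuine coordinates on $\tD_t$ the mixed partials commute) leaves only the perfect-divergence term as a correction. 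Integrating the divergence term over $\tD_t$ via \eqref{Green's formula} gives a boundary integral over $\pa\tD_t$ of $N_j\big(\alpha^i\pave_i\alpha^j - \alpha^j\pave_i\alpha^i\big)$.

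Next I would rewrite the boundary integrand to isolate the tangential part of the derivative hitting $\alpha$. On $\pa\tD_t$ decompose $\pave_i = \gamma_i^{\,k}\pave_k + N_iN^k\pave_k$, so that $\alpha^i\pave_i\alpha^j N_j = \alpha^i(\gamma_i^{\,k}\pave_k\alpha^j)N_j + (\alpha^iN_i)(N^k\pave_k\alpha^j)N_j$ and similarly for $\alpha^j\pave_i\alpha^i N_j$; the purely normal pieces $\alpha^iN_i$ times $N^k\pave_k\alpha^jN_j$ appear in both terms with opposite signs once one also expands the second term, and these cancel. What remains is precisely
\begin{equation}
\int_{\pa\tD_t}\Big(\alpha^j(\gamma_j^{\,k}\pave_k\alpha_i)N^i - \alpha_i(\gamma_j^{\,k}\pave_k\alpha^j)N^i\Big),
\end{equation}
which is the stated boundary term. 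I would be careful here that the cancellation of the normal--normal contributions is exact and uses only $\delta_{ij}=\gamma_{ij}+N_iN_j$ together with $N^iN_i=1$, not any regularity of $\alpha$ or $\xve$ beyond $C^1$.

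The main technical point — and the only place one needs to be careful — is the integration by parts in the interior: moving the $\pave_j$ off the divergence term onto the rest of the expression requires $\alpha\in H^1$ and the coefficients $A^a_{\ m\,i},\kve$ to be $C^1$, which holds under \eqref{uwbd}; the commuted second-derivative terms $-\alpha^i\pave_j\pave_i\alpha^j + \alpha^j\pave_i\pave_j\alpha^i$ must be shown to cancel, and this is immediate since in the $\xve$-coordinates $\pave_i=\partial/\partial\xve^i$ are commuting coordinate vector fields acting on the scalar components $\alpha^j$. So there is no genuine obstacle; the identity is an exact computation and the rest is bookkeeping with \eqref{Green's formula} and the decomposition $\delta=\gamma+N\otimes N$. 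I would present it as: (i) state the pointwise algebraic identity; (ii) integrate over $\tD_t$ and apply \eqref{Green's formula} once; (iii) split $\delta_{ij}=\gamma_{ij}+N_iN_j$ on $\pa\tD_t$ and cancel the normal terms to reach the displayed formula.
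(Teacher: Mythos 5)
Your proof is correct and follows essentially the same route as the paper's: the interior identity $|\pave\alpha|^2=(\div\alpha)^2+\tfrac12|\curl\alpha|^2+\pave_j(\alpha^i\pave_i\alpha^j-\alpha^j\div\alpha)$ combined with Green's formula, and then the decomposition $\delta^{ij}=\gamma^{ij}+N^iN^j$ on $\pa\tD_t$ with exact cancellation of the normal--normal contributions. The only difference is organizational — you integrate a pointwise divergence identity once, while the paper integrates by parts twice through $\Dve\alpha_j=\pave_j\div\alpha+\pave^\ell\curl\alpha_{\ell j}$ — but the computations coincide term by term.
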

\begin{proof}
  Integrating by parts:
  \begin{equation}
   ||\pave \alpha||_{L^2(\tD_t)}^2
   = -\int_{\tD_t} \delta^{ij}
   \alpha_i \Dve \alpha_j + \int_{\pa \tD_t}
   \delta^{ij} \alpha_i N^k \pave_k \alpha_j.
   \label{ibpapp1}
  \end{equation}
  We insert the identity:
  \begin{equation}
   \Delta \alpha_j = \delta^{k\ell}\pave_k(\pave_\ell \alpha_j)
   = \delta^{k\ell} \pave_k \big( \pave_j \alpha_\ell + \curl \alpha_{\ell j}\big)
   = \pave_j \div \alpha + \delta^{k\ell}\pave_k \curl \alpha_{\ell j},
  \end{equation}
into the first term in \eqref{ibpapp1}
  and integrate by parts again:
  \begin{equation}
   \int_{\tD_t}\delta^{ij} \alpha_i \Dve \alpha_j
   = \int_{\pa \tD_t} N^i \alpha_i \div \alpha
   + \delta^{ij} N^\ell \alpha_i \curl \alpha_{\ell j} dS
   - \int_{\tD_t} (\div \alpha)^2
   + \delta^{k\ell}\delta^{ij} \pave_k \alpha_i
   \curl \alpha_{\ell j}.
   \label{green1}
  \end{equation}
  Note that by the antisymmetry of curl:
  \begin{equation}
   \delta^{k\ell}\delta^{ij} \pave_k \alpha_i
   \curl \alpha_{\ell j}\!=\frac{1}{2} \delta^{k\ell}\delta^{ij} (\pave_k \alpha_i+\pave_i \alpha_k)
   \curl \alpha_{\ell j}+\frac{1}{2} \delta^{k\ell}\delta^{ij} (\pave_k \alpha_i-\pave_i \alpha_k)
   \curl \alpha_{\ell j}\!=\frac{1}{2} \delta^{k\ell}\delta^{ij}  \curl \alpha_{k i}
   \curl \alpha_{\ell j},
  \end{equation}
  so \eqref{ibpapp1} becomes:
  \begin{equation}
   ||\pave \alpha||_{L^2(\tD_t)}^2
   = ||\div \alpha||_{L^2(\tD_t)}^2 + \frac{1}{2} ||\curl \alpha||_{L^2(\tD_t)}^2
   +\int_{\pa \tD_t}
   N^k \alpha^j \pave_k \alpha_j -
   N^i \alpha_i \div \alpha
    - N^\ell \alpha^j
   \curl \alpha_{\ell j} .
   \end{equation}
   Here:
   \begin{multline}
    N^k \alpha^j \pave_k \alpha_j -
   N^i \alpha_i \div \alpha
    - N^\ell \alpha^j
   \curl \alpha_{\ell j}= N^k \alpha^j \pave_j \alpha_k -
   N^i \alpha_i \div \alpha\\
   =N^k \alpha_\ell N^\ell  N^j\pave_j \alpha_k +N^k \alpha_\ell \gamma^{\ell j}\pave_j \alpha_k -
   N^i \alpha_i( N^k N^\ell + \gamma^{\ell k})\pave_k \alpha_\ell
   =N^k \alpha_\ell \gamma^{\ell j}\pave_j \alpha_k -
   N^i \alpha_i \gamma^{\ell k}\pave_k \alpha_\ell.\tag*{\qedhere}
   \end{multline}
\end{proof}

\begin{lemma} There is a constant
  $C_{\!1}$ depending on $M'\!\!$ and $||\xve||_{H^5(\Omega)}$ so that
  if $\alpha$ is a vector field on $\Omega$ then
\begin{equation}
 \! ||\alpha||_{H^{1{}_{\!}}(\Omega)}^2
 {}_{\!}\leq\! C_{{}_{\!}1\!}\Big(\! ||\!\div \alpha||_{L^2(\Omega)}^2\!
 +{}_{\!} ||\!\curl \alpha||_{L^2(\Omega)}^2\!
 + {\sum}_{\mu=1}^N\!\int_{\pa \Omega}\!\!\! \big(\fdhm \alpha^i\big)
 \big(\fdhm \alpha^j\big) N_i N_j dS
 + ||\alpha||_{L^2(\pa \Omega)}^2\!
 + ||\alpha||_{L^2(\Omega)}^2\!\Big)
 \label{ftang1},
\end{equation}
\begin{equation}
 ||\alpha||_{H^{1{}_{\!}}(\Omega)}^2
 \!\leq\! C_1 \Big( ||\!\div \alpha||_{L^2(\Omega)}^2\!
 + ||\!\curl \alpha||_{L^2(\Omega)}^2
 +{\sum}_{\mu=1}^N\!\int_{\pa \Omega} \!\!\!(\fdhm \alpha^i)(\fdhm \alpha^j) \gamma_{ij} dS
 + ||\alpha||_{L^2(\pa \Omega)}^2\!
 + ||\alpha||_{L^2(\Omega)}^2\!\Big).
 \label{ftang2}
\end{equation}
\end{lemma}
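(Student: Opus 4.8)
\textbf{Proof strategy for \eqref{ftang1}--\eqref{ftang2}.}

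The plan is to reduce the bound for $\|\alpha\|_{H^1(\Omega)}$ to the Green's formula identity just established, and then estimate the boundary term using the fractional derivative machinery from Appendix \ref{tangapp}. First I would apply the preceding lemma to get
\[
\|\pave \alpha\|_{L^2(\tD_t)}^2 = \|\div\alpha\|_{L^2(\tD_t)}^2 + \tfrac12\|\curl\alpha\|_{L^2(\tD_t)}^2 + \int_{\pa\tD_t}\Big(\alpha^j(\gamma_j^k\pave_k\alpha_i)N^i - \alpha_i(\gamma_j^k\pave_k\alpha^j)N^i\Big)\,dS,
\]
and note that, since the metric $\gve$ and the Jacobian $\kve$ are bounded above and below in terms of $M'$, the norms $\|\pave\alpha\|_{L^2(\tD_t)}$ and $\|\pa_y\alpha\|_{L^2(\Omega)}$ are comparable up to constants $C(M')$; combined with $\|\alpha\|_{L^2(\Omega)}^2$ on the right this controls $\|\alpha\|_{H^1(\Omega)}^2$. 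The divergence and curl boundary integrals transfer to $\Omega$ with the factor $\widetilde\nu$, which is bounded in terms of $M'$. So the only real work is the boundary term $\int_{\pa\Omega}(\text{tangential derivative of }\alpha)\cdot\alpha\cdot N\,\widetilde\nu\,dS$.

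The key step is to observe that $\gamma_j^k\pave_k$ is a \emph{tangential} first-order operator on $\pa\Omega$: since $\gamma$ projects onto $T(\pa\Omega)$, we may write $\gamma_j^k\pave_k\alpha_i = \sum_{T\in\T} c_{jT}(y)\, T\alpha_i$ for smooth coefficients $c_{jT}$ bounded, together with their derivatives, by $C(M')\|\xve\|_{H^s(\Omega)}$ via \eqref{dinv}, the trace inequality \eqref{trace}, and Sobolev embedding on $\pa\Omega$ (these are the estimates \eqref{Nbds} for $N$ and $\gamma$). Then the boundary integral is a sum of terms of the form $\int_{\pa\Omega} (T\alpha^i)\,\alpha^j\, b_{ij}(y)\,dS$ with $b_{ij}$ involving $N$, $\gamma$, $\widetilde\nu$ and the coefficients $c$. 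To each such term I would apply the first estimate in \eqref{inthalf} of Lemma \ref{fracalg} after absorbing $b_{ij}$ and one factor of $\alpha$ into the arguments: this gives a bound by $C\|\fdhm(b_{ij}\alpha^i)\|_{L^2(\pa\Omega)}\|\alpha^j\|_{H^{1/2}(\pa\Omega)}$, hence by $C(M',\|\xve\|_{H^5(\Omega)})\big(\sum_\mu\|\fdhm\alpha\|_{L^2(\pa\Omega)} + \|\alpha\|_{L^2(\pa\Omega)}\big)\|\alpha\|_{H^{1/2}(\pa\Omega)}$ using the fractional product rule \eqref{app:alg2} and the equivalence \eqref{equivalentnorms}. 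To land on the stated right-hand sides, I would split $\delta_{ij} = N_iN_j + \gamma_{ij}$ in the term $\|\fdhm\alpha\|_{L^2(\pa\Omega)}^2 = \int_{\pa\Omega}\delta^{ij}(\fdhm\alpha_i)(\fdhm\alpha_j)\,dS$; this produces exactly the normal boundary term of \eqref{ftang1} (resp. the tangential one of \eqref{ftang2}), plus a remainder with the opposite projection that is lower order and can be absorbed, using the exchange lemma (Lemma 5.6 of \cite{CL00}) which bounds $\big|\int_{\pa\Omega}(\gamma^{ij}-N^iN^j)\alpha_i\alpha_j\,d\mu_\gamma\big|$ by $\big(\|\div\alpha\|_{L^2} + \|\curl\alpha\|_{L^2} + K\|\alpha\|_{L^2}\big)\|\alpha\|_{L^2}$ over $\Omega$, applied here to $\fdhm\alpha$ (for which $\div$ and $\curl$ gain only the controlled commutator $[\fdhm,\pave]$ by \eqref{leibpave}).

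Finally, I would close the estimate with a standard absorption argument: the factor $\|\alpha\|_{H^{1/2}(\pa\Omega)}$ is bounded by $\epsilon\|\alpha\|_{H^1(\Omega)} + C_\epsilon\|\alpha\|_{L^2(\Omega)}$ via the trace inequality and interpolation, so choosing $\epsilon$ small absorbs the $\|\alpha\|_{H^1(\Omega)}$ contribution into the left-hand side, leaving the remaining lower-order $L^2$ terms which are already present in \eqref{ftang1}--\eqref{ftang2}. The higher-order estimates \eqref{fractang1}--\eqref{fractang2} of Proposition \ref{div-curl} then follow by applying $\eqref{ftang1}$ to $\alpha$ replaced by $\T^{\ell-1}\alpha$, commuting tangential derivatives through $\div$, $\curl$, $\pave$ via \eqref{commutewithpa} and the commutator estimates, and inducting on $\ell$ as in the proof of Lemma \ref{app:sobdiff}. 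I expect the main obstacle to be bookkeeping the commutators $[\fdhm, \gamma^{ij}]$, $[\fdhm,\pave]$ and $[\T^k,\pave]$ so that every term that is not of the exact form appearing on the right-hand side is genuinely lower order in the differentiability of $\alpha$ (hence absorbable), while keeping the dependence of constants on $\|\xve\|_{H^s}$ (rather than $\|\xve\|_{H^{s+1}}$) — this is where the tangential structure of $\gamma_j^k\pave_k$ is essential, since it is what lets us use only $\|\T\xve\|$-type norms on the boundary.
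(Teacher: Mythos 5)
Your reduction to the boundary term via the Green's-formula identity is the same starting point as the paper, but the way you recover the normal (resp.\ tangential) projection in the boundary term has a genuine gap. You first apply \eqref{inthalf} to $\int_{\pa\Omega}(T\alpha^i)\,\alpha^j b_{ij}\,dS$ and obtain a bound of the form $\|\fdhm\alpha\|_{L^2(\pa\Omega)}\,\|\alpha\|_{H^{1/2}(\pa\Omega)}$, i.e.\ the \emph{full, unprojected} half-derivative of $\alpha$ on the boundary, and only afterwards split $\delta_{ij}=N_iN_j+\gamma_{ij}$ inside $\|\fdhm\alpha\|_{L^2(\pa\Omega)}^2$. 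The discarded projection is not lower order: $\int_{\pa\Omega}\gamma^{ij}(\fdhm\alpha_i)(\fdhm\alpha_j)\,dS$ is comparable to $\|\alpha\|_{H^1(\Omega)}^2$ in general, so it can only be absorbed into the left-hand side if it appears with a \emph{small} constant. In your scheme it appears with the same constant as the kept projection (and after Young's inequality, with a large constant $1/\epsilon$, since the small weight $\epsilon$ necessarily goes to the other factor $\|\alpha\|_{H^{1/2}(\pa\Omega)}$). Your fallback — applying the exchange lemma to $\beta=\fdhm\alpha$ — also fails: its error term contains $\|\div\fdhm\alpha\|_{L^2(\Omega)}\,\|\fdhm\alpha\|_{L^2(\Omega)}$, and $\div\fdhm\alpha=\fdhm\div\alpha+[\div,\fdhm]\alpha$ requires $\div\alpha\in H^{(0,1/2)}(\Omega)$, whereas the right-hand sides of \eqref{ftang1}--\eqref{ftang2} only contain $\|\div\alpha\|_{L^2(\Omega)}$.

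The paper avoids this by performing the projection \emph{inside the integrand before} the half-derivative Cauchy--Schwarz. Writing $\gamma_j^k\pave_k\alpha^j=\gamma_j^k\pave_k(\gamma^j_\ell\alpha^\ell)-\gamma^k_j(\pave_k\gamma^j_\ell)\alpha^\ell-\gamma^k_j(\pave_kN^j)N_\ell\alpha^\ell$, the boundary term becomes, up to lower-order pieces controlled by \eqref{Nbds},
\[
\int_{\pa\Omega}(\gamma\alpha)^k\pave_k(\alpha_iN^i)\,dS-\int_{\pa\Omega}\gamma_j^k\pave_k(\gamma^j_\ell\alpha^\ell)\,(\alpha_iN^i)\,dS,
\]
so every top-order product pairs the tangential component $\gamma\cdot\alpha$ against (a tangential derivative of) the normal component $\alpha\cdot N$. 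Applying \eqref{inthalf} and \eqref{app:alg2}--\eqref{leibpave} then yields $\tfrac{1}{\epsilon}\sum_\mu\|(\fdhm\alpha)\cdot N\|_{L^2(\pa\Omega)}^2+\epsilon\sum_\mu\|(\fdhm\alpha)\cdot\gamma\|_{L^2(\pa\Omega)}^2+C\|\alpha\|_{L^2(\Omega)}^2$ with \emph{independent} weights on the two projections; \eqref{ftang1} follows by taking $\epsilon$ small and absorbing $\epsilon\|(\fdhm\alpha)\cdot\gamma\|^2\le\epsilon C\|\alpha\|_{H^1(\Omega)}^2$ via the trace inequality, and \eqref{ftang2} by taking $\epsilon$ large. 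This asymmetric weighting is exactly what your post-hoc splitting cannot produce, and it is the essential point of the lemma.
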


\begin{proof}
  These estimates follow from the fact that for any $\epsilon > 0$:
 \begin{multline}
   \big|\int_{\pa \Omega}\!\!\!\!\! \alpha^j (\gamma_j^k \pave_k \alpha_i)
   N^i \!\!- \alpha_i(\gamma_j^k \pave_k \alpha^j)N^i\, dS \big|
   \\
   \leq C(M', ||\xve||_{H^5(\Omega)})\Big(
  \frac{1}{\epsilon} {\sum}_{\mu=1}^N||(\fd^{1/2}\alpha) \cdot N||_{L^2(\pa\Omega)}^2
  +  \epsilon {\sum}_{\mu=1}^N||(\fd^{1/2}\alpha)\cdot \gamma||_{L^2(\pa\Omega)}^2
  + ||\alpha||_{L^2(\Omega)}^2\Big).
  \label{goalft}
\end{multline}
To see that this estimate implies
\eqref{ftang1}, we use \eqref{Nbds} and the trace inequality \eqref{trace}
to control the second term
by $C(M') ||\alpha||_{H^{1/2}(\pa \Omega)} \leq C(M')||\alpha||_{H^1(\Omega)}$,
 and then take $\epsilon$ sufficiently small.
 The estimate \eqref{ftang2} follows
 by instead using the trace estimate on the first term
and taking $\epsilon$ sufficiently large.

  To prove \eqref{goalft}, we write $\gamma_j^k \pave_k \alpha^j\!\! =
  \gamma_j^k \pave_k(\gamma^j_\ell \alpha^\ell) -
  \gamma^k_j (\pave_k\gamma^j_\ell)\alpha^\ell
  -\gamma_j^k (\pave_k N^j) N_{\!\ell}\alpha^\ell$
  and  the left-hand side as:
  \begin{align}
   \int_{\pa \Omega}
   \alpha^j \gamma_j^k\pave_k(\alpha_i N^i)
   -
   \gamma_j^k (\pave_k(\gamma^j_\ell \alpha^\ell) ) \alpha_iN^i
   + \int_{\pa \Omega}
   \gamma^k_j (\pave_k\gamma^j_\ell)\alpha^\ell \alpha_i N^i
   +\gamma_j^k (\pave_k N^j) N_\ell\alpha^\ell \alpha_i N^i
   -\alpha^j \alpha_i \gamma_j^k \pave_k N^i.
   {}
  \end{align}
The  second integral is bounded by the right-hand
side of \eqref{goalft}, by \eqref{Nbds}. The first integral
is bounded by the right-hand side of \eqref{goalft} using the fractional
product rules \eqref{inthalf} and \eqref{app:alg2} - \eqref{leibpave}.
\end{proof}
\begin{proof}[Proof of Proposition \ref{div-curl}]
  By the previous lemma we have the result for $\ell = 1$. Assume that we
  have the result for $\ell = 1, ..., m-1$. To prove it for $\ell = m$,
  we write $\pa_y^m \alpha = \pa_y^{m-1} \pave \alpha + [\pa^{m-1},\pave]\alpha$.
  This second term can be bounded by the third term on the right-hand side of
  \eqref{fractang1} (resp. \eqref{fractang2}) by using Lemma \ref{inverselemma} and arguing
  as in the proof of Proposition \ref{app:sobdiff}.
  To control the first term, we apply \eqref{app:hotpw} and we need to control
  $||\pave \div \alpha||_{H^{m-2}(\Omega)},||\pave \curl \alpha||_{H^{m-2}(\Omega)}$
  and $||\T^J \pave \alpha||_{L^2(\Omega)}$ for all multi-indices with
  $|J| = m-1$. Writing $\pave = A\cdot \pa_y$ and arguing as above, the
  first two terms are bounded by the right-hand side of \eqref{fractang1}
  (resp. \eqref{fractang2}). It therefore just remains to control
  the third term. We commute $T^J$ with $\pave$, apply \eqref{app:dxtu}
  and again argue as in the proof of Proposition \ref{app:sobdiff}. Applying
  \eqref{ftang1} (resp. \eqref{ftang2}) and repeating the same
  argument as above completes the proof of Proposition
  \ref{div-curl}.
\end{proof}

\begin{proof}[Proof of Lemma \ref{crosstermlem}]
  It suffices to prove the claim for $f \in C_c^\infty(\Omega)$ by
  an approximation argument. Integrating by parts twice and
  using that $\pa_a \pave_i = \pave_i \pa_a -(\pa_a A_{\m i}^c)\pa_c$,
  we have:
  \begin{multline}
   (\Dve f, \Delta f)_{L^2(\Omega)}
   = \int_\Omega \delta^{ij} \delta^{ab}
   (\pave_i \pave_j f)(\pa_a \pa_b f)\\
   =
   \int_\Omega \delta^{ij} \delta^{ab}
 (\pa_a \pave_j f)(\pa_b  \pave_i f)
 + \int_\Omega \delta^{ij} \delta^{ab}
 (\pave_j f)(\pa_a A_{\m j}^c)(\pa_c \pa_b f)
 -\int_\Omega \delta^{ij} \delta^{ab}
 (\pa_a \pave_j f)(\pa_b A_{\m i}^d)(\pa_d f).
   {}
  \end{multline}
  This implies that:
  \begin{equation}
   (\Dve f, \Delta f)_{L^2(\Omega)}
   \geq C(M) \big( ||\pave f||_{H^1(\Omega)}^2 -
   ||\pave f||_{H^1(\Omega)} ||f||_{H^1(\Omega)}\big),
   {}
  \end{equation}
  and the result follows.
\end{proof}

\section{Proofs of Elliptic estimates for the Newton potential} \label{gravityproofs}
In this section we record the elliptic estimates that are needed to control $\phi$ in Section \ref{gravitysection}.
We will use the convention in \eqref{cprimes} for functions $C_s, C_s^\prime,
C_s^{\prime \prime}, C_s^{\prime\prime\prime}$ throughout this section.

\subsection{Estimates for Section \ref{extensionsec}}
\label{gravityproofs1}
Let $\hD_t$ be the extended fluid domain (see Section \ref{gravitysection}) and $\paht$ be the associated spatial derivative.
\begin{lemma} \label{ellphi} Suppose $r\geq 5$. If $\Dht f=g$ in $\hD_t$, then for $j\leq r-1$:
\begin{equation}
 ||\paht \T^j \paht f||_{L^2(\hD_t)}\leq C_r \bigtwo({\sum}_{k\leq j+1}||\T^{k}\paht f||_{L^2(\hD_t)}\!+{\sum}_{k\leq 2}||\T^k g||_{L^6(\hD_t)}
 +||\T^j\! g||_{L^2(\hD_t)}+||g||_{L^\infty(\hD_t)}\bigtwo).
  \label{ellphi 1}
 \end{equation}
 If $j\! \leq\! r\!-\!2$ then \eqref{ellphi 1} holds without the $L^6\!$ norms,
 and if $j \! \leq r\!-1$ it holds without the $L^\infty$ norm. In addition,
\begin{multline}
 \!\!\! ||\paht \T^r \paht f||_{L^2(\hD_t)}\leq C_r
 \bigtwo({\sum}_{k\leq r+1}||\T^{k}\paht f||_{L^2(\hD_t)}\!+||\T^r\! g||_{L^2(\hD_t)}\\
 +||\T\xve||_{H^r(\Omega)}\big[{\sum}_{k\leq 4}||\T^k \paht f||_{L^2(\hD_t)}+{\sum}_{k\leq 2}||\T^k g||_{L^6(\hD_t)}+||g||_{L^\infty(\hD_t)}\big]\bigtwo).
  \label{ellphi 2}
 \end{multline}
  Moreover, for $0\leq \ell \leq 2$,
 \begin{equation}
||\paht \T^{\ell} \paht f||_{L^6(\hD_t)} \leq C_r\bigtwo({\sum}_{k\leq \ell}||\T^k g||_{L^6(\hD_t)}+{\sum}_{k\leq \ell+2}||\T^k \paht f||_{L^2(\hD_t)}\bigtwo),
\label{L^6 phi est}
\end{equation}
as well as
\begin{equation}
||\paht^2 f||_{L^\infty(\hD_t)}\leq C_0\big(||g||_{L^\infty(\hD_t)}+{\sum}_{|J|\leq 1}||\paht^J\T \paht f||_{L^6(\hD_t)}\big).
\label{Linfty f}
\end{equation}
 The above estimates also hold in the domain $\tD_t$ with $\pave$ instead of $\paht$.
 \end{lemma}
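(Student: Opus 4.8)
\textbf{Proof plan for Lemma \ref{ellphi}.}
The plan is to reduce everything to the elliptic estimate \eqref{sobtensor} applied to the one-form $\alpha = \paht \T^j \paht f$ (or $\paht \T^\ell \paht f$ in the $L^6$ and $L^\infty$ variants), together with the equation $\Dht f = g$. First I would compute $\div \alpha$ and $\curl \alpha$. Since $\alpha_i = \paht_i \big(\T^j \paht_k f\big)$ for fixed $k$, we have $\curl \alpha = 0$ up to commutator terms $[\paht_i, \paht_\ell]$ which vanish (flat connection), so only the commutators $[\T^j, \paht]$ contribute, and these are controlled by lower-order tangential derivatives of $\paht f$ times derivatives of $\xve$. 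For $\div \alpha$ one writes $\div \alpha = \Dht(\T^j \paht_k f) = \T^j \paht_k \Dht f + [\Dht, \T^j\paht_k] f = \T^j \paht_k g + (\text{commutator})$; the key observation is that $\T^j \paht_k g$ can be integrated by parts once — since only one genuine spatial derivative $\paht_k$ sits outside — to trade it for $\paht_k g$-type terms, but more simply one bounds $\|\div \alpha\|_{H^{-1}}$-style: actually the cleanest route is to note $\Dht f = g$ means $\paht_k f$ solves $\Dht(\paht_k f) = \paht_k g + (\text{lower order})$, and then $\T^j$ of this is handled by the interior elliptic estimate \eqref{sobtensor} with $s = 1$, which only costs $\|\div\|_{L^2} + \|\curl\|_{L^2} + \sum_{i\le 1}\|\T^i \alpha\|_{L^2}$, yielding the $\T^{j+1}\paht f$ and $\T^j g$ terms on the right. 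The $L^6$ and $L^\infty$ terms of $g$ enter only through the commutator $[\Dht,\T^j\paht_k]f$ and the lower-order interior terms: these commutators produce products of the form $(\paht^a \T^b \xve)(\paht^c \T^d \paht f)$, and when $b$ or $d$ is small one puts the $\xve$-factor in $L^\infty$ (bounded by $M_0$ via \eqref{uwassump}) and the other in $L^2$, while when $j$ is at the top, one must estimate $\|\T\xve\|_{H^r(\Omega)}$ times low-order norms of $\paht f$ and $g$ — which is exactly the shape of \eqref{ellphi 2}. The refinements (dropping the $L^6$ norm when $j\le r-2$, dropping $L^\infty$ when $j\le r-1$) follow because the borderline Sobolev embeddings $H^1\hookrightarrow L^6$ and $H^2 \hookrightarrow L^\infty$ are only needed when the commutator forces us into those endpoints; with more spare derivatives one stays in $L^2$-based spaces throughout.

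For \eqref{L^6 phi est} and \eqref{Linfty f} the approach is the same but one applies \eqref{sobtensor} after composing with the Sobolev embeddings: \eqref{L^6 phi est} follows from \eqref{ellphi 1} with $j = \ell+1$ (or $\ell+2$) together with $H^1(\hD_t)\hookrightarrow L^6(\hD_t)$, estimating $\|\paht\T^\ell\paht f\|_{L^6}\lesssim \|\paht \T^\ell \paht f\|_{H^1}$ and then controlling the $H^1$ norm of this two-tensor by \eqref{sobtensor}. For \eqref{Linfty f} one uses $\|\paht^2 f\|_{L^\infty}\lesssim \sum_{|J|\le 1}\|\paht^J \paht^2 f\|_{L^6}$ by Morrey/Sobolev on the bounded domain $\hD_t$, writes $\paht^J\paht^2 f$ in terms of $\paht^J \T\paht f$ plus $\paht^J(\text{radial derivative of }\paht f)$, and for the radial piece uses the equation: the radial-radial second derivative of $f$ is $\Dht f$ minus tangential second derivatives, i.e. $\partial_r^2 f = g - (\text{tangential } \paht^2 f)$, so it is controlled by $\|g\|_{L^\infty}$ and $\|\paht^J\T\paht f\|_{L^6}$ after accounting for the metric coefficients (bounded in $C^1$ by $M_0$ via \eqref{uwassump} and \eqref{ubd2}).

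The main obstacle I anticipate is the bookkeeping of the commutators $[\Dht, \T^j \paht]f$ and $[\T^j, \paht]$ at top order: one must verify carefully that every term either has at most $j+1$ tangential derivatives on $\paht f$ (absorbable on the left or into the $\T^{k}\paht f$ sum on the right), or else carries a factor of $\T\xve$ differentiated up to order $r$ paired with a low-order factor — and crucially that no term puts more than $r$ derivatives on $\xve$ while simultaneously demanding high regularity of $\paht f$, which is why the estimate is stated with $\|\T\xve\|_{H^r(\Omega)}$ on the outside rather than $\|\xve\|_{H^{r+1}}$. This is the same mechanism as in the proof of \eqref{sobell} (replacing full $y$-derivatives by tangential ones to avoid $\|\xve\|_{H^{s+1}}$), so I would follow that template closely. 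A secondary technical point is that $\hD_t$ has a boundary but we have no boundary condition for $f = \phi$ there; however, since all the derivatives appearing are tangential (the $\T$ are tangent to $\partial\hD_t$ by construction) and the only genuine normal derivatives come from $\paht$, the interior estimate \eqref{sobtensor} — which has no boundary term beyond the $\T^j\alpha$ sum it already accounts for — applies without needing boundary data, and the final sentence (the estimates also hold on $\tD_t$ with $\pave$) follows by the same argument run on $\tD_t$, or alternatively by the Cauchy-sequence/limiting argument already set up in Section \ref{gravitysection}.
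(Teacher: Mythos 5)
Your overall strategy — apply the div--curl elliptic estimates to the family of one-forms $\T^j\paht f$, use $\Dht f = g$ to compute the divergence, and run an induction handling the commutators $[\T^j,\paht]$ by case analysis on how many derivatives fall on $\xve$ — is the right one and is essentially what the paper does. However, there are two places where your plan, as written, produces terms involving $\paht g$ that are not controlled by the right-hand sides of the estimates in the lemma, and these need to be fixed.

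\textbf{Choice of $\alpha$.} You set $\alpha_i = \paht_i(\T^j\paht_k f)$ for fixed $k$, i.e.\ $\alpha$ is the gradient of the scalar $\T^j\paht_k f$. With this choice $\div\alpha = \Dht(\T^j\paht_k f) = \T^j\paht_k g + (\text{commutators})$, and $\|\T^j\paht_k g\|_{L^2}$ requires a spatial derivative of $g$, which is not on the right-hand side of \eqref{ellphi 1}. The correct choice is $\alpha_k = \T^j\paht_k f$, i.e.\ the one-form indexed by the inner $\paht$. Then $\div\alpha = \paht^k\T^j\paht_k f = \T^j\Dht f + [\div,\T^j]\paht f = \T^j g + (\text{commutators})$, which only requires tangential derivatives of $g$. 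With this $\alpha$ the pointwise estimate \eqref{ellpw} gives exactly what you want: $\|\paht\T^j\paht f\|_{L^2} \lesssim \|\div\T^j\paht f\|_{L^2} + \|\curl\T^j\paht f\|_{L^2} + \|\T^{j+1}\paht f\|_{L^2}$, with the divergence producing $\T^j g$. (Invoking \eqref{sobtensor} with $s = 1$ applied to this $\alpha$ is equivalent; invoking it with your $\alpha$ is not.)

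\textbf{Order of Sobolev embedding and pointwise estimate.} For \eqref{L^6 phi est} and \eqref{Linfty f} you propose to apply Sobolev embedding first and then the elliptic estimate; this costs an extra $\paht$ on $g$. For instance, $\|\paht\T^\ell\paht f\|_{L^6} \lesssim \|\paht\T^\ell\paht f\|_{H^1}$ requires $\|\paht^2\T^\ell\paht f\|_{L^2}$, whose div--curl control brings in $\|\T^\ell\paht g\|_{L^2}$; similarly $\|\paht^2 f\|_{L^\infty}\lesssim \sum_{|J|\le 1}\|\paht^J\paht^2 f\|_{L^6}$ produces $\|\paht g\|_{L^6}$ from the radial piece when $|J|=1$. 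Neither is controlled by $\|\T^k g\|_{L^6}$ and $\|g\|_{L^\infty}$. The fix is to apply \eqref{ellpw} pointwise \emph{before} embedding, so that $g$ enters the pointwise bound undifferentiated: $|\paht\T^\ell\paht f|\lesssim |\div\T^\ell\paht f| + |\curl\T^\ell\paht f| + |\T^{\ell+1}\paht f|$, take $L^6$ norms, and apply $H^1\hookrightarrow L^6$ only to the tangential term $\T^{\ell+1}\paht f$. Likewise $|\paht^2 f|\lesssim |g| + |\T\paht f|$ pointwise, then $\|\T\paht f\|_{L^\infty}\lesssim\sum_{|J|\leq 1}\|\paht^J\T\paht f\|_{L^6}$. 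This reproduces the exact shape of \eqref{L^6 phi est} and \eqref{Linfty f}.

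\textbf{Induction.} You treat the commutator bookkeeping as a secondary concern, but the inductive structure is essential: after applying the div--curl estimate at level $m$, the commutator term $\div\T^m\paht f - \T^m g$ consists of products $(\T^{k_1}\pa\xht)\cdots(\T^{k_s}\pa\xht)(\paht\T^\ell\paht f)$ with $\ell \le m-1$, and to bound $\|\paht\T^\ell\paht f\|_{L^2}$ one needs precisely the statement for $j = \ell < m$. When some $k_j\geq r-2$ one is forced into $\ell\le 2$ and uses H\"older $L^3\cdot L^6$ (giving the $\|\T\xve\|_{H^{(r-1,1/2)}}$ and $L^6$ terms), and when $k_1 = r$ one bounds $\T^r\pa\xht$ in $L^2$ and $\paht^2 f$ in $L^\infty$ via \eqref{Linfty f}; this is exactly where the $\|\T\xve\|_{H^r}$ factor and the $L^6$, $L^\infty$ norms of $g$ enter \eqref{ellphi 1}--\eqref{ellphi 2}. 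Without the induction the argument does not close.
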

 \begin{proof}
   The estimate \eqref{Linfty f} follows from
   the pointwise estimate \eqref{ellpw} and Sobolev embedding:
  \begin{align}
  ||\paht^2 f||_{L^\infty(\hD_t)} \leq C_0\big( ||g||_{L^\infty(\hD_t)}+||\T \paht f||_{L^\infty(\hD_t)}
  \big) \leq
  C_0\big( ||g||_{L^\infty(\hD_t)}+{\sum}_{|J|\leq 1}||\paht^J \T \paht f||_{L^6(\hD_t)}\big).
  \end{align}
By \eqref{ellpw} we also have:
\begin{multline}
||\paht \T^{\ell} \paht f||_{L^6(\hD_t)} \leq
C_0\big(||\div \T^{\ell} \paht f||_{L^6(\hD_t)}+||\curl \T^{\ell} \paht f||_{L^6(\hD_t)}+||\T^{1+\ell}\paht f||_{L^6(\hD_t)}
\big)\\
\leq C_0\big(||\T^{\ell} g||_{L^6(\hD_t)}
+{\sum}_{k_1+k_2=\ell-1}||(\T^{1+k_1} \uht)(\paht \T^{k_2}\paht f)||_{L^6(\hD_t)}+|| \T^{\ell+1}\paht f||_{H^1(\hD_t)}
\big),
\end{multline}
where the sum is not there if $\ell\! =\! 0$. Putting $\T^{1+k_1}\!\! \uht$
into ${L}^{\!\infty}\!$ and using induction, this implies that for $\ell \!\leq\! 2$:
\begin{equation}
  ||\paht \T^{\ell} \paht f||_{L^6(\hD_t)} \leq
  C_0\big(||\T^{\ell} g||_{L^6(\hD_t)}+{\sum}_{\ell' \leq \ell} ||\T^{\ell'+1}\paht f||_{H^1(\hD_t)}
  \big).
 \label{L^6 step 1}
\end{equation}

We now prove \eqref{ellphi 1}, which, combined with \eqref{L^6 step 1} will
also prove \eqref{L^6 phi est}.
 We proceed by induction: for $j\!=\!0$, \eqref{ellphi 2}
 without the $L^6$ and $L^\infty$ norms is a direct consequence of \eqref{ellpw}.
  Now suppose that \eqref{ellphi 2} is known for $j=0,1,\cdots, m-1 \leq r-1$.
  Using the pointwise estimate \eqref{ellpw} we have:
 \begin{align}
 ||\paht \T^m \paht f||_{L^2(\hD_t)}\leq
 C_0\big(||\div \T^m \paht f||_{L^2(\hD_t)}+||\curl \T^m \paht f||_{L^2(\hD_t)}
 +||\T^{m+1}\paht f||_{L^2(\hD_t)}\big).
\end{align}
Here $\div$ and $\curl$ stand for the divergence and curl with respect to $\paht$.
Since $\div \T^m \paht_{\!} f\!=\!\T^m \!g+\sum (\T^k \!\!\uht)\paht \T^\ell \paht_{\!} f$,
where $\uht=(\uht_{\m i}^a)$ and the sum is over $k+\ell=m$ with $k\geq 1$, we have
\begin{equation}
\div \T^m \paht f = \T^m g+\sum (\T^{k_1}\pa \xht)\cdots(\T^{k_s}\pa \xht)(\paht \T^\ell \paht f).
\label{expanddiv}
\end{equation}
The above sum is over $k_1+\cdots+k_s+\ell=k+\ell=m, k \geq 1$ , and $\pa$ denotes the
Lagrangian spatial derivative $\pa_y$. This is because $\T^k\uht$ is a sum of
terms of the form $(\T^{k_1}\pa \xht)\cdots(\T^{k_s}\pa \xht)$. Now, we need to
control  $\sum (\T^{k_1}\pa \xht)\cdots(\T^{k_s}\pa \xht)(\paht \T^\ell \paht f)$
in $L^{\!2}(\hD_{t\!})$. When $\ell\!\geq\! 3$, then $k_1,\cdots\!, k_s\!\leq\! r\!-\!3$, so all
terms involving $\xht$ can be controlled in
$L^{\!\infty}$ by $||\xht||_{H^r(\Odo)}$ and we control
$||\paht \T^\ell \paht \!f||_{L^{\!2}(\hD_{\!t\!})}\!$ by the inductive assumption
since $\ell \!\leq \!m\!-\!1$.

We now consider the case that at least one of
$k_1,\cdots, k_s\geq  r-2$ so that $\ell \leq 2$.
Since $r\geq 5$, at most one of the $k_j$, say
$k_1$, can be greater than or equal to $r-2$. If $k_1=r-2$ or $k_1=r-1$,
then by Sobolev embedding we
control $||\T^{k_1}\pa \xht||_{L^{3}(\Omega^{d_0})} \leq C
 ||\T\xht||_{H^{(r-1, 1/2)}(\Omega^{d_0})}$, and the other terms involving
 $\xht$ can be controlled in $L^{\infty}$ and hence by $||\xht||_{H^{r-1}
 (\Odo)}$. Using the estimate \eqref{L^6 step 1}, the inductive assumption
 and H\"{o}lder's inequality $||f_1f_2||_{L^2}\! \leq\! ||f_1||_{L^6}||f_2||_{L^3}$,
 we control the $L^2$ norm of right-hand side of \eqref{expanddiv}
 by the right-hand side of \eqref{ellphi 1}.

The only remaining case is when $k_1=r$, and to deal with this
we bound $\T^{r}\pa \xht$ in $L^2$ and use \eqref{Linfty f} to bound
the $L^\infty$ norm of the term involving $f$, which gives:
 \begin{multline}
 ||\!\div \!\T^{m\!} \paht f||_{L^2(\hD_t\!)}\\
\!\! \leq\! C_r\bigtwo(\! ||\T^{m\!} g||_{L^2(\hD_t\!)}\!+\!\!{\sum}_{\ell=0,1,2}||\T^\ell \! g||_{L^6(\hD_t\!)}\!+\!||g||_{{L}^{\!\infty}(\hD_t\!)}
 \!+\!(||\T\xht||_{H^{r\!}(\Omega^{d_0}\!)} \!+\! 1){\sum}_{k\leq m-\!1}||\paht \T^k \paht f||_{L^2(\hD_t\!)}\!\bigtwo).
 \end{multline}
 By the inductive assumption,
   $||\!\div \!\T^m \paht f||_{L^2(\hD_t)}$ is  controlled by the right-hand
  side of \eqref{ellphi 2}. A similar argument shows that
  $||\!\curl\! \T^m \paht f||_{L^2(\hD_t)}$ is bounded
  by the right-hand side of \eqref{ellphi 1} (resp. \eqref{ellphi 2})
  with $s\! =\! m$ but with $||\xve||_{H^m(\Omega)},
  ||\T\xve||_{H^m(\Omega)}$ replaced by $||\xht||_{H^m(\Odo)},
  ||\T\xht||_{H^m(\Odo)}$. Using \eqref{equiv extended norm} completes the proof.
 \end{proof}
 We also need the following estimate for the Newton potential:
  \begin{lemma}
 \label{bdy estimate for D_tphi_m} If $g$ is a smooth function supported  in $\xht(t, \Omega^{d_0\!/2})$, then there is a constant $C$ with:
 \begin{equation}
 |\paht^s (g * \Phi)(x)| \leq  C||g||_{L^2(\hD_t)},\qquad x\in \pa\hD_t,\qquad  s\geq 0.
 \end{equation}
 \end{lemma}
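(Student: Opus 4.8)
The claim is that for a smooth $g$ supported in $\xht(t,\Omega^{d_0/2})$ and for $x\in\pa\hD_t$, the function $u=g*\Phi$ satisfies $|\paht^s u(x)|\le C\|g\|_{L^2(\hD_t)}$ for every $s\ge 0$, where $\Phi$ is the Newtonian fundamental solution in $\R^3$. The plan is to observe that the singularity of $\Phi$ never reaches the boundary: because $\operatorname{supp} g$ is contained in a set that stays a fixed distance $\sim d_0$ away from $\pa\hD_t$ (this is where the a priori bound \eqref{uwassump} enters — it guarantees $\xht(t,\cdot)$ is a bi-Lipschitz homeomorphism $\Omega^{d_0}\to\hD_t$ with uniform constants, so $\operatorname{dist}(\pa\hD_t,\xht(t,\Omega^{d_0/2}))\ge c\,d_0>0$), the kernel $z\mapsto \pa_x^s\Phi(x-z)$ is smooth on $\operatorname{supp} g$ for $x\in\pa\hD_t$.

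Concretely, first I would fix $x\in\pa\hD_t$ and write $\paht^s u(x)=\int_{\hD_t}(\pa_x^{s}\Phi)(x-z)\,g(z)\,dz$, differentiating under the integral sign (justified since the integrand and all its $x$-derivatives are continuous in $(x,z)$ on the relevant compact set away from the diagonal). Here I am using that $\paht$ is the Eulerian derivative $\pa/\pa\xht$, so $\paht^s u(x)$ is a genuine derivative of $u$ with respect to the spatial variable $x$ in $\R^3$ and the chain rule relating it to $\pa_y$ does not even need to be invoked. Next, since $|z-x|\ge c\,d_0$ on the support of $g$, the elementary bound $|\pa^s\Phi(w)|\le C_s|w|^{-1-s}$ gives $|(\pa_x^s\Phi)(x-z)|\le C_s (c\,d_0)^{-1-s}$ for all $z\in\operatorname{supp} g$. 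Then Cauchy--Schwarz yields
\begin{equation}
|\paht^s u(x)|\le C_s (c\,d_0)^{-1-s}\int_{\operatorname{supp} g}|g(z)|\,dz
\le C_s (c\,d_0)^{-1-s}\,|\hD_t|^{1/2}\,\|g\|_{L^2(\hD_t)},
\end{equation}
and since $|\hD_t|$ is bounded in terms of $M_0$ (again by \eqref{uwassump}), the constant $C$ depends only on $s$, $d_0$, and $M_0$, as desired.

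The only genuine issue — and it is minor — is making precise the claim that $\operatorname{supp} g$ stays uniformly away from $\pa\hD_t$. This follows because $g$ is supported in $\xht(t,\Omega^{d_0/2})$ while $\pa\hD_t=\xht(t,\pa\Omega^{d_0})$, and $\xht(t,\cdot)$ is, under \eqref{uwassump} (shrinking $d_0$ if necessary, exactly as in the paragraph following \eqref{equiv extended norm}), a homeomorphism with $\xht(t,y)=x_0(y)$ for $y\in\pa\Omega^{d_0}$ and with uniformly bounded Jacobian and inverse Jacobian; hence $\operatorname{dist}(\Omega^{d_0/2},\pa\Omega^{d_0})\ge d_0/2$ transfers, up to the Lipschitz constant $M_0$, to a positive lower bound $c(M_0)d_0$ on $\operatorname{dist}(\operatorname{supp} g,\pa\hD_t)$. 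With this in hand the estimate above goes through verbatim for every $s\ge 0$, completing the proof. (Note that no cancellation, integration by parts, or elliptic regularity is needed here — this lemma is precisely the ``easy'' far-field bound that makes the rest of Section~\ref{gravitysection} work, in contrast to the interior estimates where the diagonal singularity must be handled carefully.)
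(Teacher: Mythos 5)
Your proof is correct and follows essentially the same route as the paper: both arguments exploit that $\operatorname{supp} g\subset\xht(t,\Omega^{d_0/2})$ stays a fixed distance from $\pa\hD_t$, so the differentiated kernel is uniformly bounded (equivalently, in $L^2_z$) over the support of $g$, and Cauchy--Schwarz finishes. The extra justification you give for the distance lower bound via \eqref{uwassump} is a reasonable elaboration of what the paper simply asserts.
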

 \begin{proof}
 Since there exists $c_0>0$ such that $d(\xht(t, \Omega^{d_0/2}), \pa \hD_t)\geq c_0$, we have that $d(x, z)\geq c_0$ for each $z\in \text{supp}(g)\subset\xht(t, \Omega^{d_0/2}) $,  and so $\paht^s \Phi(x-\cdot)\in L^2(\xht(t, \Omega^{d_0/2}))$. Therefore,
 \begin{equation}
 |\paht^s (g*\Phi)| \leq ||g||_{L^2(\hD_t)}||\paht^s\Phi (x-z)||_{L^2_z(\xht(t, \Omega^{d_0/2})} \leq C||g||_{L^2(\hD_t)}.\tag*{\qedhere}
 \end{equation}
 \end{proof}

\begin{proof}[Proof of Theorem \ref{ellphi'' 1}]
We proceed by induction. Write $f = g * \Phi$.
 When $j=0$ we have
 \begin{equation}
 ||\paht f||_{L^2(\hD_t)}^2=\int_{\hD_t}\delta^{ij}(\paht_i f)\cdot(\paht_j f)\,dx
 =\int_{\pa \hD_t} N^i (\paht_i f)f\,dS(x)-\int_{\hD_t}gf\,dx.
 \label{j=0}
 \end{equation}
 By Lemma \ref{bdy estimate for D_tphi_m},
the boundary integral in \eqref{j=0} is bounded by $C||g||_{L^2(\hD_t)}^2$.
The second term in \eqref{j=0} is bounded by $||g||_{L^2(\hD_t)}||f||_{L^2(\hD_t)}$,
and by Young's inequality:
 \begin{align*}
 ||f||_{L^2(\hD_t)}=||g*\Phi||_{L^2(\hD_t)}\leq C||g||_{L^{2}(\hD_t)}
 ||\Phi||_{L^{1}(\hD_t)}\leq C||g||_{L^2(\hD_t)},
 \end{align*}
 By \eqref{j=0}, this implies:
 \begin{align}
 ||\paht f||_{L^2(\hD_t)} \leq C||g||_{L^2(\hD_t)}.
 \end{align}

Suppose that we now know that
$||\T^j \paht g||_{L^2(\tD_t)}$ is bounded
by the right-hand side of \eqref{ellphi'' 1} for $j= 0,\cdots, m-1 \leq r-1$.
To prove that it holds for $j = m$ as well, we integrate by parts:
\begin{multline}
||\T^m\paht f||_{L^2(\hD_t)}^2 = \int_{\hD_t}(\T^m\paht_i f)(\T^m\paht^i f)\,dx\\
=\!\int_{\hD_t}\underbrace{\!\!\!\delta^{ij} (\T^m\paht_i f)\paht_j\T^m \!f dx}_{I}
-\!\int_{\hD_t}\underbrace{ \!\!\!\delta^{ij}(\T^m\paht_i f)
(\paht_j\T^m \xht)\paht f dx}_{II}+\!\sum\! \int_{\hD_t}\underbrace{\!\!\!(\T^m \paht f)(\pa \T^{\ell_1}\w x)\cdots(\pa \T^{\ell_{s-1}}\w x)
\T^{\ell_s}\paht f dx}_{III},
\label{errorphi 1}
\end{multline}
where the sum is over
$\ell_1+\cdots+\ell_s=m$ and $\ell_1,\cdots, \ell_s\leq m-1$, $\ell_1\geq 1$.
To control $III$, we note that if $\ell_1,\cdots, \ell_{s-1}\leq r-3$, then
 $III\leq C(||\xve||_{H^{r-1}(\Omega)})||\T^m \paht f||_{L^2(\hD_t)}
 ||\T^{\ell_s}\paht f||_{L^2(\hD_t)}$, and we control $||\T^{\ell_s}\paht f||_{L^2(\D_t)}$
  by the inductive assumption. On the other hand, since $r \geq 5$, there can be at most
  one $j$ with $\ell_j \geq r-2$ and without loss of generality it is
  $\ell_1$ in which case $\ell_j\leq 2$
  for $j=2,3,\cdots, s$. We then bound $III\leq C(||\xve||_{H^{r-1}(\Omega)})
  ||\T^m \paht f||_{L^2(\hD_t)}||\pa \T^{\ell_1}\xht||_{L^3(\hD_t)}
  ||\T^{\ell_s}\paht f||_{L^6(\hD_t)}$. By Sobolev embedding,
   $||\pa \T^{\ell_1}\xht||_{L^3(\hD_t)}
  \leq C||\T \xve||_{H^{(r-1, 1/2)}(\Omega)} $, and $||\T^{\ell_s}\paht f
  ||_{L^6(\hD_t)}$ can be controlled using Lemma \ref{ellphi}.

To control $I+II$, we integrate by parts and get
\begin{equation}
I+II\!=-\!\int_{\hD_t}\!\!\!{}_{\!}\delta^{ij\!}\!\underbrace{(\paht_i \T^{m\!}\paht_j f)\T^m\! f dx}_{I_1}
+\!\int_{\hD_t}\!\!\!{}_{\!}\delta^{ij\!}\!\underbrace{(\paht_i\T^{m\!}\paht_j f)(\T^m \xht^k)\paht_k f dx}_{II_1}
+\!\int_{\hD_t}\!\!\!{}_{\!}\delta^{ij\!}\!\underbrace{(\T^m\paht_i f)(\T^m \xht^k)\paht_j\paht_k f dx}_{II_2}
+\mathcal{B},\label{errorphi 3}
\end{equation}
where
\begin{equation}
\mathcal{B}=\int_{\pa\hD_t}(N^i \T^m\paht_if)\big(\T^m f-(\T^m\xht^k)(\paht_k f)\big).
\label{errorphibound}
\end{equation}
To control $II_1$, we have:
\begin{equation}
\delta^{ij}\paht_i\T^m\paht_j f=
\T^m \Delta f + (\pa \T^m \xht)(\paht^2 f)+\sum (\pa \T^{\ell_1} \xht)\cdots(\pa
 \T^{\ell_{s-1}} \xht)(\paht \T^{\ell_s}\paht f), \label{pa T^r pa f}
\end{equation}
where the sum is over $\ell_1+\cdots+\ell_s=m$ and $\ell_1,\cdots, \ell_s\leq m-1$.
The terms in the sum can be controlled similarly to how we controlled the sum
 in \eqref{expanddiv}. The two main terms that are left in $II_1$ are
\begin{equation}
\int_{\hD_t}(\T^m g)(\T^{m} \xht)(\paht f)\,dx+\int_{\hD_t}(\pa \T^m \xht)
(\paht^2f)(\T^m \xht)(\paht f)\,dx. \label{THE ONLY}
\end{equation}
To control the second term in \eqref{THE ONLY}, we commute one $\T$ to the outside
which gives:
\begin{equation}
\!\int_{\hD_t}\underbrace{ (\T\pa \T^{m-1} \xht)(\paht^2 f)(\T^{m} \xht)(\paht f)\,dx}_{II_{11}}
+\!\int_{\hD_t}\underbrace{ (\pa^2 \xht)(\pa \T^{m-1} \xht)(\paht^2f)(\T^{m} \xht)
(\paht f)\,dx}_{II_{12}}.
\label{II11 II12}
\end{equation}
To control $II_{11}$, we integrate half a tangential derivative by parts using
\eqref{inthalf} and get:
\begin{align}
II_{11} \leq C||\pa \T^{m-1} \xht||_{H^{(0,1/2)}(\Omega)}
||(\paht^2 f)(\T^m \xht)(\paht f)||_{H^{(0,1/2)}(\hD_t)}.
\end{align}
Using the fractional product rule \eqref{app:alg2}, for each $\mu$ we have
with $L^2=L^2(\hD_t)$
\begin{equation}
||\fdhm \big((\paht^2\! f)(\T^m \xht)\paht  f\big)||_{L^2} \!\leq C
||(\paht^2 \! f)(\fdhm\T^m \xht)\paht f||_{L^2}\!
+||\T^m \xht||_{L^2(\Omega)} {\sum}_{\ell\leq 2}||\T^{\ell\!}\big((\paht^2\! f)
\paht f\big)||_{L^2}.
\label{j1=r}
\end{equation}
The first term on the right hand side can be controlled by
 $||\T \xve||_{H^{(m-1, 1/2)}(\Omega)}||\paht^2 f||_{L^\infty(\hD_t)}
 ||\paht f||_{L^\infty(\hD_t)}$. Using  \eqref{Linfty f},
 the Sobolev inequality $||\paht f||_{L^\infty(\hD_t)}\leq
  C{\sum}_{|I|\leq 1}||\paht^{I+1} f||_{L^6(\hD_t)}$ and
  \eqref{L^6 phi est}, we control this term.
  To control the second term in \eqref{j1=r}, we just show how to control
   $||(\T^\ell \paht^2 f)(\paht f)||_{L^2(\hD_t)}$ for $\ell \leq 2$ since
   the remaining terms are similar. For $\ell \leq 2$ we have:
\begin{equation}
||\T^\ell \paht^2 f||_{L^6(\hD_t)} \leq
||\paht \T^\ell \paht f||_{L^6(\hD_t)}+{\sum}_{\ell\leq 2}
{\sum}_{j_1+j_2=\ell, j_1\geq 1}||(\T^{j_1} \uht)(\paht\T^{j_2}\paht f)||_{L^6(\hD_t)}.
\end{equation}
By \eqref{L^6 phi est} we control the first term here, and after bounding the
term involving $\uht$ in $L^\infty$ and using \eqref{L^6 phi est} again we also control
the second term.
To control the term $II_{12}$ from \eqref{II11 II12}, we have:
\begin{equation}
II_{12}\leq P(||\T\xve||_{H^{r-1}(\Omega)})||\paht^2f||_{L^\infty(\hD_t)}||\paht f||_{L^\infty(\hD_t)},
\end{equation}
 and then use \eqref{Linfty f}.
 To control the first term in \eqref{THE ONLY} we use \eqref{inthalf}
 and then bound:
\begin{equation}
||\fdhm \big((\T^m\xht)(\paht f)\big)||_{L^2}
\leq C
\big(||(\fdhm\T^m\xht)(\paht f)||_{L^2}\!+\!
||\fdhm\big((\T^m\xht)(\paht f)\big)\!-\!(\fdhm\T^m\xht)(\paht f)||_{L^2}\big),
\end{equation}
where $L^2=L^2(\hD_t)$,
and then:
\begin{equation}
||(\fdhm \T^m\xht)(\paht f)||_{L^2(\hD_t)}\leq ||\T \xve||_{H^{(r-1, 1/2)}(\Omega)}
||\paht f||_{L^\infty(\hD_t)}\leq ||\T \xve||_{H^{(r-1, 1/2)}(\Omega)}
{\sum}_{\ell\leq 1}||\paht^{\ell+1}f||_{L^6(\hD_t)},
\end{equation}
and by \eqref{app:alg2},
\begin{equation}
||\fdhm \big((\T^m\xht)(\paht f)\big)-(\fdhm \T^mx)(\paht f)||_{L^2(\hD_t)} \leq
C||\T^m \xve||_{L^2(\Omega)}{\sum}_{\ell\leq 2}||\T^{\ell}\paht f||_{L^2(\hD_t)}.
\end{equation}

To control $II_2$ in \eqref{errorphi 3}, we have
\begin{equation}
II_2 \leq ||\T^m \paht f||_{L^2(\hD_t)}||\T^m \xht||_{L^3(\D_t)}||\paht^2 f||_{L^6(\D_t)},
\end{equation}
and $||\paht^2 f||_{L^6(\hD_t)}$ under control, using \eqref{L^6 phi est}.

To control $I_1$, we substitute \eqref{pa T^r pa f} into $I_1$ and get,
to highest order:
\begin{equation}
\int_{\hD_t} (\T^{m} g)(\T^{m} f)+\int_{\hD_t}(\T\pa \T^{m-1} \xht)(\paht^2 f)(\T^{m} f).
\label{I_1}
\end{equation}
We write $\T = \T^a \pa_{y^a}=\T^a \uht_a^i \paht_i$, so that:
\begin{equation}
\T^m \!f\!= (\T^a \uht_a^i \paht_i)\T^{m\!-\!1}\!f\!=\T^a \! \uht_a^i\Big(\T^{m\!-\!1}\paht_i f+
(\paht_i\T^{m\!-\!1}\xht)(\paht f)+{\sum}_{\ell_1\!+\cdots+\ell_s\leq m\!-\!2}
(\pa \T^{\ell_1}\xht)\cdots(\pa\T^{\ell_{s-1}}\xht)\T^{\ell_s}\paht f\Big).
\end{equation}
Substituting this into \eqref{I_1}, to highest order the result is:
\begin{multline}
\int_{\hD_t}(\T^m g)(\T^a\uht_a^i)(\T^{m-1}\paht_i f)+
\int_{\hD_t}(\T^m g)(\T^a\uht_a^i)(\paht_i \T^{m-1} \xht)(\paht f)\\
+\int_{\hD_t}(\T \pa \T^{m-1} \xht)(\paht^2 f)(\T^a \uht^i_a)(\T^{m-1}\paht_i f)
+\int_{\hD_t}(\T \pa \T^{m-1} \xht)(\paht^2 f)(\T^a \uht^i_a)(\paht_i \T^{m-1}
\xht)(\paht f).
\end{multline}
 The first and third terms can be controlled after integrating $\T$ by parts
 and using H\"{o}lder's inequality. The other terms can be controlled after
 integrating half a tangential derivative by parts using \eqref{inthalf} and
  \eqref{app:alg2}.

Finally, to control the boundary term $\mathcal{B}$ in \eqref{errorphibound},
we use Lemma \ref{bdy estimate for D_tphi_m}:
\begin{equation}
\mathcal{B}  = \int_{\pa\hD_t}\!\!\!(N^i \T^r\paht_if)\Big(\T^r f-(\T^r\xht)(\paht f)\Big)
\leq C\Big(||g||_{L^2(\hD_t)}^2+ ||g||_{L^2(\hD_t)}^2\int_{\pa\hD_t}\!\!\!|\T^{r}\xht|\Big),
\end{equation}
which is controlled by
$
C(||\T\xve||_{H^{(r-1, 0.5)}(\Omega)} + 1)||g||_{L^2(\hD_t)}
$
by the trace lemma \eqref{trace} and Theorem \ref{comparable thm}.
\end{proof}

\subsection{Estimates for Section \ref{section 7.2}}
\label{gravityproofs2}
 Let $\FD^r$ be the mixed tangential space and time derivative defined in Section \ref{def T and FD}. We have:
 \begin{lemma} \label{Dtellphi} Suppose that $r\geq 5$. If $\Dht f=g$ in $\hD_t$, then for $j\leq r-1$:
\begin{equation}
||\paht \mathfrak{D}^j \paht f||_{L^2(\hD_t)}\leq C^\prime_r
 \bigtwo({\sum}_{k=0}^{j+1}||\mathfrak{D}^k\paht f||_{L^2(\hD_t)}\!+{\sum}_{k\leq 2}||\mathfrak{D}^k g||_{L^6(\hD_t)}+||\mathfrak{D}^j\! g||_{L^2(\hD_t)}+||g||_{L^\infty(\hD_t)}\bigtwo).
 \end{equation}
  In addition, we have:
\begin{multline}
 ||\paht \mathfrak{D}^r \paht f||_{L^2(\hD_t)}\leq P\big(||\xve||_{H^{r}(\Omega)}, {\tsum}_{k\leq r-1}||D_t^k\ssm V||_{H^{r-k}(\Omega)}\big)\\
 \cdot\bigtwo({\sum}_{k=0}^{r+1}||\mathfrak{D}^k\paht f||_{L^2(\hD_t\!)}\!+||\mathfrak{D}^r\! g||_{L^2(\hD_t\!)}+||\T \xve||_{H^{r\!}(\Omega)}\big[{\sum}_{k\leq 4}||\FD^k \paht f||_{L^2(\hD_t\!)}+\!{\sum}_{k\leq 2}||\mathfrak{D}^k \! g||_{L^6(\hD_t\!)}+||g||_{L^\infty(\hD_t\!)}\big]\bigtwo).
 \end{multline}
 Moreover, for $0\leq \ell \leq 2$,
 \begin{equation}
||\paht \mathfrak{D}^\ell \paht f||_{L^6(\hD_t)} \leq C_r^\prime
\bigtwo({\sum}_{k\leq \ell}||\mathfrak{D}^k g||_{L^6(\hD_t)}+{\sum}_{k\leq \ell+2}||\mathfrak{D}^k \paht f||_{L^2(\hD_t)}\bigtwo).
 \label{Dt ellphi 2}
\end{equation}
\end{lemma}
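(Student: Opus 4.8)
The plan is to follow the proof of Lemma \ref{ellphi} almost verbatim, replacing the purely tangential derivatives $\T^j$ by the mixed space--time derivatives $\mathfrak{D}^j = \T^j \cdot D_t^k$, and to absorb the new error terms that arise when a time derivative falls on the coordinate $\xht$. The one extra structural fact needed is the commutator identity $D_t^k \paht_i = \widehat{S}^{jk}_{i\ell}\paht_j D_t^\ell$, the exact analogue of \eqref{dtpaSdefsmooth}--\eqref{sepsdef} with $\xve$ replaced by $\xht$ and $\ssm V$ replaced by $\Vht = D_t \xht$; together with $\Vht = D_t\xht$ and the norm equivalence \eqref{equiv extended norm} applied to $\Vht$, this lets one rewrite any $\mathfrak{D}^k$ hitting $\paht$, or any $\mathfrak{D}^k$ hitting $\uht$, as a sum of products of factors $\mathfrak{D}^{k_1}\pa_y \xht,\dots$ times derivatives $D_t^{\ell}\paht^{\ell'} f$, with the total number of $\pa_y$'s plus $D_t$'s preserved. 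The norms $\sum_{k\le r-1}\|D_t^k\ssm V\|_{H^{r-k}(\Omega)}$ in the statement are precisely what control $\sum_{k}\|\mathfrak{D}^k \pa_y \xht\|$ in the relevant spaces, via $D_t^a\pa_y \xht = D_t^{a-1}\pa_y\Vht$ and \eqref{equiv extended norm}.

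First I would establish the analogues of \eqref{Linfty f} and \eqref{L^6 phi est} for $\paht\mathfrak{D}^\ell\paht f$ with $\ell\le 2$, by the same iteration used in Lemma \ref{ellphi}: apply the pointwise elliptic inequality \eqref{ellpw} to $\mathfrak{D}^\ell\paht f$, bound $\div\mathfrak{D}^\ell\paht f$ and $\curl\mathfrak{D}^\ell\paht f$ by $\mathfrak{D}^\ell g$ plus commutator terms in which every $\xht$-factor carries at most $\ell-1\le 1$ derivative and so goes into $L^\infty$, and induct in $\ell$. This yields \eqref{Dt ellphi 2} and the pointwise bound $\|\paht^2 f\|_{L^\infty(\hD_t)}\le C_0\bigl(\|g\|_{L^\infty(\hD_t)}+\sum_{|J|\le 1}\|\paht^J\mathfrak{D}\paht f\|_{L^6(\hD_t)}\bigr)$.

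Next I would run the induction on $j$. Assuming the result for $0,\dots,m-1$ with $m\le r-1$, apply \eqref{ellpw} to $\mathfrak{D}^m\paht f$, reducing matters to $\div\mathfrak{D}^m\paht f$, $\curl\mathfrak{D}^m\paht f$, and $\mathfrak{D}^{m+1}\paht f$ (the latter already on the right-hand side). Expand $\div\mathfrak{D}^m\paht f = \mathfrak{D}^m g + \sum (\mathfrak{D}^{k_1}\pa_y\xht)\cdots(\mathfrak{D}^{k_s}\pa_y\xht)(\paht\mathfrak{D}^\ell\paht f)$ with $k_1+\cdots+k_s+\ell=m$, $k_1\ge 1$, exactly as in \eqref{expanddiv}. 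When $\ell\ge 3$ all $\xht$-factors have order $\le m-3\le r-3$, so they go into $L^\infty$ (bounded by $\|\xht\|_{H^r(\Odo)}$ together with the velocity norms), and $\|\paht\mathfrak{D}^\ell\paht f\|_{L^2(\hD_t)}$ is given by the inductive hypothesis. When $\ell\le 2$, at most one factor, say $\mathfrak{D}^{k_1}\pa_y\xht$, has order $\ge r-2$; for $k_1=r-2$ or $r-1$ use Sobolev embedding $\|\mathfrak{D}^{k_1}\pa_y\xht\|_{L^3(\Odo)}\lesssim \|\T\xve\|_{H^{(r-1,1/2)}(\Omega)}+1$ together with $\|\paht\mathfrak{D}^\ell\paht f\|_{L^6(\hD_t)}$ from the first step and H\"older's inequality $\|f_1f_2\|_{L^2}\le\|f_1\|_{L^3}\|f_2\|_{L^6}$; and for $k_1=r$, which occurs only when $m=r$, put $\mathfrak{D}^r\pa_y\xht$ in $L^2$ and $\paht^2 f$ in $L^\infty$ using the first step. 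The $\curl$ term is treated identically. Translating from $\Odo$ back to $\Omega$ via \eqref{equiv extended norm} (and $\Vht=D_t\xht$) converts the $\xht$-norms into $\|\xve\|_{H^r(\Omega)}$ and $\sum_{k\le r-1}\|D_t^k\ssm V\|_{H^{r-k}(\Omega)}$, producing the stated constants $C_r'$ and the polynomial $P$.

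The main obstacle is purely combinatorial: in the sum defining $\div\mathfrak{D}^m\paht f$ (and inside the commutator expansions of $D_t^k\paht$ and $D_t^k\uht$) one must carefully track how the $m$ derivatives split between the coordinate factors and the factor of $f$, so that in every case either all $\xht$-factors are low order, hence $L^\infty$-controlled, or exactly one is high order and is paired, via H\"older, with an $L^6$ or $L^\infty$ bound on the corresponding derivative of $f$ from the first step. This is the same bookkeeping as in Lemma \ref{ellphi} and Theorem \ref{ellphi'' 1}, only with the additional $D_t$ direction; no genuinely new analytic input beyond the commutator identity for $D_t^k\paht$ and the norm equivalence \eqref{equiv extended norm} is required.
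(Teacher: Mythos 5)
Your proposal is correct and follows essentially the same route as the paper's own proof: in both cases one applies the pointwise div–curl estimate \eqref{ellpw} to $\mathfrak{D}^m\paht f$, expands $\div$ and $\curl$ via the commutator identity for $D_t^k\paht$ to obtain products $(\mathfrak{D}^{k_1}\pa\xht)\cdots(\mathfrak{D}^{k_s}\pa\xht)(\paht\mathfrak{D}^\ell\paht f)$, establishes the $L^6$ bound \eqref{Dt ellphi 2} and a pointwise $L^\infty$ bound on $\paht^2 f$ inside the induction, and closes by the same $L^\infty/L^3/L^6/L^2$ H\"older bookkeeping depending on whether the high-order factor carries a $D_t$ (so is controlled by $\sum_k\|D_t^k\Vht\|_{H^{r-k}}$) or is purely tangential (so needs the $\|\T\xve\|_{H^r}$ prefactor in the top-order inequality). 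The only presentational difference is that the paper reduces at the outset to the case where $\mathfrak{D}^j$ contains at least one $D_t$ and cites Lemma \ref{ellphi} for the pure-tangential case, whereas you run the argument uniformly; this does not change the substance.
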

\begin{proof}
 It suffices to prove
 \begin{multline}
||\paht \mathfrak{D}^{r-1}D_t \paht f||_{L^2(\hD_t)}\leq C\big(||\xht||_{H^{r}(\Omega^{d_0})}, {\tsum}_{k\leq r-1}||D_t^k\widehat{V}||_{H^{r-k}(\Omega^{d_0})}\big)
 \bigtwo({\sum}_{k=0}^{r+1}||\mathfrak{D}^k\paht f||_{L^2(\hD_t)}\!+||\mathfrak{D}^r\! g||_{L^2(\hD_t)}\\
 +||\T \xve||_{H^{(r-1, 0.5)}(\Odo)}\big[{\sum}_{k\leq 4}||\FD^k \paht f||_{L^2(\hD_t)}+{\sum}_{k\leq 2}||\mathfrak{D}^k g||_{L^6(\hD_t)}+||g||_{L^\infty(\hD_t)}\big]\bigtwo).
 \label{Dt ellphi core}
\end{multline}
because \eqref{Dt ellphi 2} will then follow from this estimate and Lemma \ref{ellphi}.
Suppose that \eqref{Dt ellphi core} is known for $||\paht \mathfrak{D}^{r-1}D_t \paht f||_{L^2(\hD_t)}$ with $j=1,\cdots, r-2$, then for $j=r-1$, we have
 \begin{equation}
 ||\paht \FD^{r-1}D_t \paht f||_{L^2(\hD_t)}\leq  ||\div \FD^{r-1} D_t \paht f||_{L^2(\hD_t)}\\
 + ||\curl \FD^{r-1}D_t \paht f||_{L^2(\hD_t)}+ ||\T \FD^{r-1}D_t\paht f||_{L^2(\hD_t)}.
\end{equation}
Here $\div$ and $\curl$ stand for the divergence and curl with respect to $\paht$. We only need to control the div term, because the curl term can be treated similarly. Since $\div \FD^{r-1} D_t \paht_{\!} f\!=\!\FD^{r-1}D_t g+\sum (\FD^k \uht)(\paht \FD^\ell \paht_{\!} f)$, where $\uht=(\uht_{\m i}^a)$ and the sum is over $k+\ell=r$ such that $k\geq 1$, we have
\begin{equation}
\div \FD^{r-1} D_t \paht f = \FD^{r-1} D_t g+\sum (\FD^{k_1}\pa \xht)\cdots(\FD^{k_s}\pa \xht)(\paht \FD^\ell \paht f).
\label{controllable error}
\end{equation}
The above sum is over $k_1+\cdots+k_s+\ell=k+\ell=r$, which needs to be controlled
in $L^2(\hD_t)$.   If $\ell\geq 3$, then $k_1,\cdots, k_s\leq r-3$, and so all
terms involving $\xht$ can then be controlled in $L^\infty$ by either
$||\xht||_{H^r(\Omega^{d_0})}$ or ${\sum}_{k\leq r-3}||D_t^k \Vht||_{H^{r-k}(\Omega^{d_0})}$.
Furthermore, when at least one of  $k_1,\cdots, k_s\geq  r-2$, since $r\geq 5$, there is at most
one term, say $k_1$, can be greater than or equal to $r-2$. If $k_1=r-2$ or
$k_1=r-1$,  we control $||\FD^{k_1}\pa \xht||_{L^{3}(\Omega^{d_0})}$ by either
$||\T\xht||_{H^{(r-1, 0.5)}(\Odo)}$ or ${\sum}_{k\leq r-2}||D_t^k \Vht||_{H^{r-k}(\Odo)}$,
and other terms involving $\xht$ are of lower order. In addition to this, we control
$\paht \FD^\ell \paht f$ for $\ell\leq 2$ in $L^6$ because by the pointwise inequality
\eqref{ellpw} we have:
\begin{multline}
||\paht \FD^\ell \paht f||_{L^6(\hD_t)} \leq C(M)\big(
||\div \FD^\ell \paht f||_{L^6(\hD_t)}+
||\curl \FD^\ell \paht f||_{L^6(\hD_t)}+|
|\T\FD^\ell\paht f||_{L^6(\hD_t)}\big)\\
\leq C(M)\bigtwo(||\FD^\ell g||_{L^6(\hD_t)}
+{\sum}_{\ell_1+\ell_2=\ell, \ell_1\geq 1}||(\FD^{\ell_1} \uht)(\paht
\FD^{\ell_2}\paht f)||_{L^6(\hD_t)}+||\T\FD^\ell\paht f||_{H^1(\hD_t)}\bigtwo),
\end{multline}
where the second term is not present if $\ell = 0$. The second and third terms
can be bounded by the right-hand side of \eqref{Dt ellphi 2} by the
inductive assumption.
On the other hand, when $k_1\!=\!r$, $\FD^{k_1}\!$ involves at least one $D_t$, and so
we control $\FD^{k_1}\!\pa \xht$ in $L^2$ by ${\sum}_{k\leq r-\!1}
||D_t^k \Vht||_{H^{r-k}(\Odo)}$. We also control $\paht^2 \!f$ in
$L^{\!\infty}\!\!$, as in Lemma \ref{ellphi}.
\end{proof}

\begin{lemma} \label{D_t^r f lem}
Fix $r \geq 7$. If $g$ is a smooth function such that $\text{supp}(g)\subset \xht(t, \Omega^{d_0/2})$, then:
\begin{equation}
||D_{\!t}^r \!(g * \Phi)||_{L^{\!2}(\hD_t)} \leq C_r' \bigtwo({\sum}_{k\leq r\!-1}||\FD^{k} \paht f||_{L^{\!2}(\hD_t)}+{\sum}_{k\leq r}||\FD^k \! g||_{L^{\!2}(\hD_t)}+{\sum}_{k\leq 2}||\FD^k \! g||_{L^6(\hD_t)}+||g||_{L^\infty(\hD_t)}\!\bigtwo).
\end{equation}
\end{lemma}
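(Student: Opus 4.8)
The statement to be proved is Lemma~\ref{D_t^r f lem}, a bound for $\|D_t^r(g*\Phi)\|_{L^2(\hD_t)}$ where $f=g*\Phi$ is the Newtonian potential of $g$. The key observation is that $f$ solves $\Dht f=g$ in $\hD_t$, so that time derivatives of $f$ are controlled by a combination of an elliptic operator applied to $f$ and commutators arising from the fact that $\Dht$ depends on $\xht$ through the metric $\gve$. Concretely, I would start from the identity $\Dht f = g$ and apply the commutation formula for $D_t$ with $\pave$ (the analogue of \eqref{eq:highercommutatorssec2}, in the extended domain): writing $D_t^r$ through the Laplacian produces $\Dht(D_t^r f) = D_t^r g + [\,D_t^r,\Dht\,]f$, where the commutator is a sum of terms of the form $(\FD^{k_1}\pa\xht)\cdots(\FD^{k_s}\pa\xht)(\paht \FD^\ell\paht f)$ and $(\FD^{k_1}\pa\xht)\cdots(\FD^{k_s}\pa\xht)(\FD^\ell g)$ with $k_1+\cdots+k_s+\ell\le r$ and $\ell\le r-1$ (no term carries the full $r$ time derivatives onto $f$ as a \emph{highest-order} factor). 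This is exactly the kind of commutator expansion already carried out in the proof of Lemma~\ref{Dtellphi}.

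\textbf{Key steps.} First, I would use the fact that $D_t^r f\in H^1_0(\hD_t)$ to write, via integration by parts and Poincar\'e's inequality on $\hD_t$,
\[
\|D_t^r f\|_{L^2(\hD_t)} \le C\,\|\paht D_t^r f\|_{L^2(\hD_t)} \le C\,\|\FD^{r-1}\paht D_t f\|_{L^2(\hD_t)} + (\text{lower-order commutator terms}),
\]
so it suffices to bound $\|\FD^{r-1}\paht D_t\paht f\|_{L^2(\hD_t)}$; but that is precisely what Lemma~\ref{Dtellphi} provides (its second displayed estimate, for $j=r$, controls $\|\paht\FD^r\paht f\|_{L^2(\hD_t)}$). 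The second step is to convert the output of Lemma~\ref{Dtellphi}, which is phrased in terms of $\FD^k g$ and $\FD^k\paht f$ on the right-hand side together with $\|\T\xve\|$ and $\sum\|D_t^k\ssm V\|$ norms, into the clean form stated in Lemma~\ref{D_t^r f lem}; for this I use that $g$ is smooth and supported strictly inside $\xht(t,\Omega^{d_0/2})$, so the terms carrying many derivatives of $\xht$ (through $\paht$, $\gve$, $\kve$) can be absorbed into the constant $C_r'$ as in \eqref{cprimes}, since $D_r'$ and $C_r'$ are continuous functions of $M$ and the relevant Sobolev norms of $\xve$. Third, the $L^6$ and $L^\infty$ terms appear because, in bounding the commutator products where $\ell\le 2$, one puts the low-order $\paht\FD^\ell\paht f$ factor in $L^6$ (via \eqref{Dt ellphi 2}) paired with a high-order $\FD^{k_1}\pa\xht$ factor in $L^3$, and the very lowest-order piece of $f$ in $L^\infty$ via the pointwise bound \eqref{ellpw}; these are handled exactly as in the proof of Lemma~\ref{Dtellphi} and reduce, after using \eqref{Dt ellphi 2} and Sobolev embedding, to the $\|\FD^k g\|_{L^6}$ and $\|g\|_{L^\infty}$ terms on the right. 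Finally, collecting all contributions gives the claimed estimate.

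\textbf{Main obstacle.} The genuinely delicate point is bookkeeping in the commutator $[D_t^r,\Dht]f$: one must verify that no term forces an estimate of $f$ with $r+1$ derivatives (or of $\xht$ with too many derivatives in $L^2$), which would break the induction. The mechanism that saves this — already used repeatedly in the excerpt — is that whenever a factor $\FD^{k_1}\pa\xht$ carries close to the maximal number of derivatives, the complementary factor $\paht\FD^\ell\paht f$ has $\ell\le 2$ and so can be estimated in $L^6$ (hence by lower Sobolev norms via \eqref{Dt ellphi 2}), while the single factor carrying $r$ time derivatives of $\xht$ is estimated in $L^2$ by $\sum_{k\le r-1}\|D_t^k\widehat V\|_{H^{r-k}(\Odo)}$, which is finite and absorbed into $C_r'$. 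I would therefore organize the proof as a short induction on the number of $D_t$'s applied, mirroring the structure of Lemma~\ref{Dtellphi}, invoking Lemma~\ref{Dtellphi} itself for the top-order elliptic gain and \eqref{ellpw}, \eqref{Linfty f}, \eqref{Dt ellphi 2}, and the trace inequality \eqref{trace} together with Theorem~\ref{comparable thm} for the lower-order and boundary contributions.
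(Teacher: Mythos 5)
Your starting identity $\Dht(D_t^r f)=D_t^r g+[\Dht,D_t^r]f$ is the same as the paper's, but the way you propose to exploit it has two genuine gaps.

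First, the claim that $D_t^r f\in H^1_0(\hD_t)$ is false: $f=g*\Phi$ is the Newtonian potential of $g$ and does not vanish on $\pa\hD_t$ (indeed Lemma \ref{bdy estimate for D_tphi_m'} is devoted to showing that $\paht^sD_t^k f$ on $\pa\hD_t$ is merely \emph{bounded} by $C\|D_t^kg\|_{L^2}$, not zero). So the Poincar\'e step $\|D_t^rf\|_{L^2}\le C\|\paht D_t^rf\|_{L^2}$ is not available as stated; at best you would need a Poincar\'e inequality with a boundary term and then invoke Lemma \ref{bdy estimate for D_tphi_m'}.

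Second, and more seriously, the derivative count in your reduction does not close. Commuting $\paht$ through $D_t^r$ turns $\|\paht D_t^r f\|_{L^2}$ into $\|D_t^r\paht f\|_{L^2}=\|\FD^r\paht f\|_{L^2}$ plus lower-order terms, i.e.\ $r$ derivatives applied to $\paht f$, whereas the conclusion of the lemma only allows $\sum_{k\le r-1}\|\FD^k\paht f\|_{L^2}$ on the right. Lemma \ref{Dtellphi} cannot bridge this: its estimate for $\|\paht\FD^j\paht f\|_{L^2}$ has $\sum_{k\le j+1}\|\FD^k\paht f\|_{L^2}$ on its right-hand side, so invoking it at top order only pushes the problem up, never down. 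The mechanism the paper uses to \emph{gain} a derivative is entirely different: since $\Vht=0$ outside $\xht(t,\Omega^{d_0/2})$, the commutator $[\Dht,D_t^r]f$ is compactly supported there, and one writes the explicit representation $D_t^rf=(D_t^rg)*\Phi+([\Dht,D_t^r]f)*\Phi$. Because $\Phi$ and $\paht\Phi$ are in $L^1$ of a bounded domain, Young's inequality bounds these convolutions in $L^2$ by the $L^2$ (or $L^6$, $L^\infty$) norms of their arguments, and the commutator carries at most $r-1$ time derivatives on $\paht f$ (its terms are $(\paht^2D_t^{\ell_1}\Vht)\cdots D_t^{\ell_n}\paht f$ and $(\paht D_t^{\ell_1}\Vht)\cdots\paht D_t^{\ell_n}\paht f$ with $\ell_1+\cdots+\ell_n=r+1-n$, $n\ge 2$), so everything closes with only $\sum_{k\le r-1}\|\FD^k\paht f\|_{L^2}$. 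Without this inversion-by-convolution step (and the move of shifting one $\paht$ from the $\Vht$ factor onto $\Phi$ when a term like $\paht^2D_t^{r-1}\Vht$ appears), your argument cannot produce the stated estimate.
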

\begin{proof}
 Since $\widehat{\Delta} f = g$ in $\hD_t$, commuting $D_t^r$ through this and get
\begin{equation}
 \widehat{\Delta}D_t^r f = (D_t^rg)+ [\widehat{\Delta}, D_t^r] f.
 \label{lap D_t^s f}
\end{equation}
In addition, since $D_t=\pa_t+\Vht^k\paht_k$ in $\hD_t$, we have $[\paht, D_t]=\paht \Vht\cdot \paht$, which can then be used to compute
\begin{multline}
[\Dht, D_t^r] = {\sum}_{\ell_1+\ell_2=r-1}c_{\ell_1,\ell_2} (\Dht D_t^{\ell_1} \Vht)\cdot \paht D_t^{\ell_2}+ {\sum}_{\ell_1+\ell_2=r-1}c_{\ell_1,\ell_2} (\paht D_t^{\ell_1} \Vht)\cdot \paht D_t^{\ell_2}\paht\\
+ {\sum}_{\ell_1+\cdots+\ell_n= r-n+1, \, n\geq 3} d_{\ell_1,\cdots,\ell_n}(\paht D_t^{\ell_3} \Vht)\cdots (\paht D_t^{\ell_{n}} \Vht)\cdot (\paht^2 D_t^{\ell_1} \Vht)\cdot D_t^{\ell_2}\paht\\
+{\sum}_{\ell_1+\cdots+ \ell_n= r-n+1, \, n\geq 3} e_{\ell_1,\cdots,\ell_n}(\paht D_t^{\ell_3} \Vht)\cdots (\paht D_t^{\ell_n} \Vht)\cdot (\paht D_t^{\ell_1} \Vht)\cdot \paht D_t^{\ell_2}\paht .
\label{com Dht D_t^k}
\end{multline}
Since $\xht(t,y)\!=\!x_0(y)$ in $\Odo\!\setminus\!\Omega^{d_0/2}$,
$[\Dht, D_t^r]f$ is compactly supported in $\xht(t,\Omega^{d_0/2})$. Therefore, \eqref{lap D_t^s f} yields:
\begin{equation}
D_t^r f = (D_t^rg)*\Phi+ ([\widehat{\Delta}, D_t^r] f)*\Phi.
\label{D_t^s f}
\end{equation}
The first term on the right can be controlled by $ C(\text{Vol}(\hD_t))||D_t^r g||_{L^2(\hD_t)}$ using Young's inequality. In addition,
by \eqref{com Dht D_t^k}, to control the $L^2(\hD_t)$ norm of the second term it suffices to consider
\begin{equation}
||[(\paht^2 \!D_t^{\ell_{\!1\!}} \Vht)_{\!}\cdots_{\!} (\paht D_t^{\ell_{\!n\!-\!1\!}} \Vht)\!\cdot\!D_t^{\ell_{\!n\!}}\paht f]\!*\!\Phi||_{L^{\!2}(\hD_t\!)}\qquad \text{and}\qquad ||[(\paht D_t^{\ell_{\!1\!}} \Vht)_{\!}\cdots_{\!} (\paht D_t^{\ell_{\!n\!-\!1\!}} \Vht)\!\cdot\!\paht D_t^{\ell_{\!n\!}}\paht f]\!*\!\Phi||_{L^{\!2}(\hD_t\!)},
\label{products}
\end{equation}
where $\ell_1+\cdots+\ell_n=r+1-n$ and $n\geq 2$. For the first term in
 \eqref{products}, when $\ell_n\geq 3$, we must have $\ell_j\leq r-4$ for $j\leq n-1$.
  In this case, we bound the $\Vht$ terms in $L^\infty(\hD_t)$  and then use the
   Sobolev lemma:
\begin{equation}
||[(\paht^2 \!D_t^{\ell_{\!1\!}} \Vht)_{\!}\cdots_{\!} (\paht D_t^{\ell_{\!n\!-\!1\!}} \Vht)\!\cdot\!D_t^{\ell_{\!n\!}}\paht f]\!*\!\Phi||_{L^{\!2}(\hD_t\!)}
\leq C ||(\paht^2 \!D_t^{\ell_{\!1\!}} \Vht)_{\!}\cdots_{\!} (\paht D_t^{\ell_{\!n\!-\!1\!}} \Vht)\!\cdot\!D_t^{\ell_{\!n\!}}\paht f||_{L^{\!2}(\hD_t\!)} \leq C'_r|| D_t^{\ell_{\!n\!}}\paht f||_{L^{\!2}(\hD_t\!)}.
\end{equation}
When $\ell_n=1,2$, the worst case scenario is when $n=2$ and $D_t^{r-1-\ell_n}$
falls on $\paht^2 \Vht$. In other words, we only need to control
$||[(\paht^2 D_t^{r-1-\ell_n} \Vht)(D_t^{\ell_n} \paht f)]*\Phi||_{L^2(\hD_t)}$.
Writing
\begin{multline}
[(\paht^2 D_t^{r-1-\ell_n} \Vht)(D_t^{\ell_n} \paht f)]*\Phi = \paht[(\paht D_t^{r-1-\ell_n} \Vht)(D_t^{\ell_n} \paht f)]*\Phi-(\paht D_t^{r-1-\ell_n} \Vht)(\paht D_t^{\ell_n} \paht f)*\Phi\\
=[(\paht D_t^{r-1-\ell_n} \Vht)(D_t^{\ell_n} \paht f)]*(\paht \Phi)-(\paht D_t^{r-1-\ell_n} \Vht)(\paht D_t^{\ell_n} \paht f)*\Phi,
\end{multline}
and using that $\paht \Phi$ and $\Phi$ belong to $L^1(\hD_t)$, Young's inequality implies that
\begin{equation}
||[(\paht D_t^{r\!-\!1\!-\!\ell_{\!n\!}} \Vht)D_t^{\ell_{\!n\!}} \paht{}_{\!} f]*\paht \Phi||_{L^{\!2}(\hD_t\!)}\!+||[(\paht D_t^{r\!-\!1\!-\!\ell_{\!n\!}} \Vht)\paht D_t^{\ell_{\!n\!}} \paht {}_{\!}f]*\Phi||_{L^{\!2}(\hD_t\!)}
\!\lesssim\! {\sum}_{k\leq 1}||(\paht D_t^{r\!-\!1\!-\!\ell_{\!n\!}} \Vht)\paht^k \! D_t^{\ell_{\!n\!}}\paht{}_{\!} f||_{L^{\!2}(\hD_t\!)}.
\end{equation}
Next, to control the term on the right hand side, we have:
\begin{equation}
{\sum}_{k\leq 1}\!||(\paht D_t^{r\!-\!1\!-\!\ell_{\!n\!}} \Vht)\paht^k \! D_t^{\ell_{\!n\!}}\paht{}_{\!} f||_{L^{\!2}(_{\!}\hD_t\!)}\!
\leq \!
C_r'{\sum}_{k\leq 1}\!||\paht D_t^{\ell_{\!n\!}}\paht{}_{\!} f||_{L^{\!6}(_{\!}\hD_t\!)}\!
\leq \! C_{\!r\!}'\big(||\paht D_t^{\ell_{\!n\!}}\paht{}_{\!} f||_{L^{\!6}(_{\!}\hD_t\!)}
 \!+\!{\sum}_{k\leq 1}\!||\paht^k \! D_t^{\ell_{\!n\!}}\paht{}_{\!} f||_{L^{\!6}(_{\!}\hD_t\!)}\big),
\end{equation}
which can be controlled using Lemma \ref{Dtellphi}.
When $\ell_n=0$, the worst-case scenario is when $n=2$ and $D_t^{r-1}$ falls on $\paht^2 \Vht$. In other words, we only need to control $||(\paht^2 D_t^{r-1} \Vht)(\paht f)||_{L^2(\hD_t)}$. By a similar argument as above, we need to control ${\sum}_{k=1,2}||(\paht D_t^{r-1} \Vht)(\paht^k f)||_{L^2(\hD_t)}$, and this requires the control of $||\paht^k f||_{L^\infty(\hD_t)}$ for $k=1,2$. The case when $k=2$ is treated in Lemma \ref{ellphi}, and when $k=1$, we have by Young's inequality:
\begin{equation}
||\paht f||_{L^\infty(\hD_t)} \leq ||g*(\paht \Phi)||_{L^\infty(\hD_t)} \leq C||g||_{L^\infty(\hD_t)}.
\end{equation}
To control the $L^2(\hD_t)$ norm for the second product in \eqref{products}, when $\ell_n=r-1$ and $n=2$, we write:
\begin{equation}
[(\paht \Vht)(\paht D_t^{r-1}\paht f)]*\Phi=[(\paht \Vht)(D_t^{r-1}\paht f)]*(\paht \Phi)-[(\paht^2 \Vht)(D_t^{r-1}\paht f)]*\Phi,
\end{equation}
whose $L^2(\hD_t)$ norm can then be controlled by $C_r'{\sum}_{k\leq r-1}||D_t^k \paht f||_{L^2(\hD_t)}$ using Young's inequality and Sobolev's lemma.
When $r-2\geq \ell_n\geq 2$ (and so $\ell_j\leq r-3$ for $j=1,\cdots, n-1)$,  we have:
\begin{equation}
||[(\paht D_t^{\ell_1} \Vht)\cdots (\paht D_t^{\ell_{n-1}} \Vht)\cdot(\paht D_t^{\ell_n} \paht f)]*\Phi||_{L^2(\hD_t)} \leq  C'_r||\paht D_t^{\ell_n} \paht f||_{L^2(\hD_t)},
\end{equation}
using Young's inequality and Sobolev's lemma. The right hand side is controlled by Lemma \ref{Dtellphi}. If $\ell_n\!=\!1$, it suffices to consider $||[(\paht D_t^{r-2} \Vht)\paht D_t \paht f]\!*\!\Phi||_{L^2(\hD_{t\!})}$, which is bounded by $C'_r||\paht D_t\paht f||_{L^6(\hD_{t\!})}$.  When $\ell_n\!=\!0$, we need to control $||(\paht D_t^{r-1}\Vht)\paht^2 \!f]\!*\!\Phi||_{L^{\!2}(_{\!}\hD_{t\!})}$,
which requires control of $||\paht^2\! f||_{L^{\!\infty}(\hD_{t\!})}$ as in in Lemma \ref{ellphi}.
\end{proof}

\begin{lemma}
 \label{bdy estimate for D_tphi_m'} There is a constant $C\!$ so that if $g$ is smooth and supported  in $\xht({}_{\!}t,{}_{\!} \Omega^{d_0\!/2})$ and
 $f\!{}_{\!}=\!g{}_{\!}*{}_{\!}\Phi$ then
 \begin{equation}
 |\paht^s D_t^k (g*\Phi)(x)| \leq C||D_t^k g||_{L^2(\hD_t)},\qquad x\in \pa\hD_t\qquad  k, s\geq 0.
 \label{bdy est'}
 \end{equation}
 \end{lemma}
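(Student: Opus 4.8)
The plan is to mimic the proof of Lemma \ref{bdy estimate for D_tphi_m}, replacing $g$ by the source term of the Poisson equation satisfied by $D_t^k(g*\Phi)$. First I would record two elementary geometric facts. Since $\xht(t,y)=x_0(y)$ for $y\in\Odo\setminus\Omega^{d_0/2}$, the boundary $\pa\hD_t=\xht(t,\pa\Odo)=x_0(\pa\Odo)$ is independent of $t$, and --- shrinking $d_0$ if necessary and using \eqref{uwassump} --- there is a fixed $c_0>0$, the same one appearing in the proof of Lemma \ref{bdy estimate for D_tphi_m}, with $d\big(\xht(t,\Omega^{d_0/2}),\pa\hD_t\big)\geq c_0$ for all $t$. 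Consequently, for $x\in\pa\hD_t$ and $z\in\xht(t,\Omega^{d_0/2})$ one has $|x-z|\geq c_0$, so $|\paht^s\Phi(x-z)|\leq C_s c_0^{-1-s}$, and since $\hD_t$ has uniformly bounded volume, $\|\paht^s\Phi(x-\cdot)\|_{L^2(\xht(t,\Omega^{d_0/2}))}\leq C$ uniformly in $x$ and $t$.

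Next I would reduce to an interior $L^2$ estimate. Exactly as in the proof of Lemma \ref{D_t^r f lem}, commuting $D_t^k$ through $\Dht f=g$ and using that $\Vht=D_t\xht=0$ off $\Omega^{d_0/2}$, one obtains $D_t^k f=H_k*\Phi$, where $H_k=D_t^k g+[\Dht,D_t^k]f$ is supported in $\xht(t,\Omega^{d_0/2})$ and $[\Dht,D_t^k]$ has the explicit form \eqref{com Dht D_t^k}. Differentiating under the integral sign, for $x\in\pa\hD_t$,
$$\paht^s D_t^k f(x)=\int_{\xht(t,\Omega^{d_0/2})}H_k(z)\,(\paht^s\Phi)(x-z)\,dz,$$
so by the Cauchy--Schwarz inequality and the first step, $|\paht^s D_t^k f(x)|\leq C\,\|H_k\|_{L^2(\hD_t)}$.

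It then remains to bound $\|H_k\|_{L^2(\hD_t)}$. The term $\|D_t^k g\|_{L^2(\hD_t)}$ is the right-hand side of \eqref{bdy est'}. For $\|[\Dht,D_t^k]f\|_{L^2(\hD_t)}$, each summand of \eqref{com Dht D_t^k} is a product of uniformly bounded derivatives of $\Vht$ with $D_t^j\paht^2 f$ and $D_t^j\paht\paht f$, $j\leq k-1$, and these are controlled by Lemma \ref{Dtellphi} together with the Newtonian-potential estimates of this subsection --- precisely the bounds already used in the proof of Lemma \ref{D_t^r f lem} --- and absorbed into the constant. The hard part will be exactly this last bookkeeping: verifying that after convolution with $\Phi$ and restriction to the fixed, well-separated boundary $\pa\hD_t$, every term produced by \eqref{com Dht D_t^k} is dominated by $\|D_t^k g\|_{L^2(\hD_t)}$ together with quantities already under control, which one carries out by induction on $k$ with Lemma \ref{bdy estimate for D_tphi_m} (the commutator-free case $k=0$) as the base case.
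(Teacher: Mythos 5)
Your geometric setup is right: the boundary $\pa\hD_t = x_0(\pa\Odo)$ is fixed, the separation $d(\xht(t,\Omega^{d_0/2}),\pa\hD_t)\geq c_0$ is uniform, and $\|\paht^s\Phi(x-\cdot)\|_{L^2(\xht(t,\Omega^{d_0/2}))}\leq C$ for $x\in\pa\hD_t$. The representation $D_t^k f = H_k*\Phi$ with $H_k = D_t^kg + [\Dht,D_t^k]f$ and the Cauchy--Schwarz reduction to $\|H_k\|_{L^2}$ are also correct.

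The gap is in your last paragraph, and it is not merely bookkeeping. Your route requires bounding $\|[\Dht,D_t^k]f\|_{L^2(\hD_t)}$ and declaring it "absorbed into the constant." But by \eqref{com Dht D_t^k} this commutator is a sum of terms like $(\paht^2 D_t^{\ell_1}\Vht)\cdots(D_t^{\ell_n}\paht f)$, so estimating it requires the full interior Newton-potential machinery (Lemma \ref{Dtellphi}, Lemma \ref{D_t^r f lem}), which in turn brings in $\|\xve\|_{H^r}$, $\sum_j\|D_t^j V\|_{H^{r-j}}$, and Sobolev norms of $g$ well beyond $\|D_t^kg\|_{L^2}$. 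The resulting "constant" is therefore not the one in the statement: the lemma, like Lemma \ref{bdy estimate for D_tphi_m}, asserts a constant depending only on the geometry ($d_0$, $c_0$, $\mathrm{vol}(\hD_t)$), and your argument cannot deliver that.

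What you are missing is the paper's key observation, which removes the commutator term entirely rather than estimating it: since $\Vht = D_t\xht$ vanishes identically on a full neighborhood of $\pa\hD_t$ (not merely pointwise on the boundary), the commutator $[\Dht,D_t^k]f$ vanishes identically near $\pa\hD_t$, and moreover there $D_t=\pa_t$ and $\paht$ reduce to honest, $t$-independent Eulerian partial derivatives. The paper's proof exploits this to pass from the identity \eqref{D_t^s f} directly to $\paht^s D_t^k f(x) = (D_t^kg)*(\paht^s\Phi)(x)$ at $x\in\pa\hD_t$, after which the estimate is exactly Lemma \ref{bdy estimate for D_tphi_m}, word for word, with $g$ replaced by $D_t^kg$. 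Your proposal never invokes the vanishing of the commutator near the boundary, which is the entire content of this lemma over and above Lemma \ref{bdy estimate for D_tphi_m}, and without it the stated clean bound is out of reach.
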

 \begin{proof}  We have $[\Dht, D_t^k] f(x)\!=\!0$ when $x\!\in\! \pa\hD_t$
 since $\Vht\!\!=\!0$ near $\pa \hD_t$,.  Therefore, \eqref{D_t^s f} yields $\paht^s \! D_t^k\! f(x)\!=\!(D_t^k g)\!*\!(\paht^s\Phi)(x)$ and so \eqref{bdy est'} follows from a similar argument as in the proof of Lemma \ref{bdy estimate for D_tphi_m}.\!\!\!
 \end{proof}

\begin{proof}[Proof of Theorem \ref{Dtellphi' 1}]
It suffices to prove that for $j \leq r-1$:
\begin{multline}
||\mathfrak{D}^jD_t \paht f||_{L^2(\hD_t)} \leq C\big(
||\xve||_{H^{r}(\Omega)},  {\tsum}_{k\leq r-1}
||D_t^k V||_{H^{r-k}(\Omega)}\big)\\\cdot||\T\xve||_{H^{(r-1, 1/2)}(\Omega)}
\bigtwo({\sum}_{k\leq r}||\mathfrak{D}^kg||_{L^2(\hD_t)}+{\sum}_{k\leq 2}
||\mathfrak{D}^kg||_{L^6(\hD_t)}+||g||_{L^\infty(\hD_t)}\bigtwo).
\label{boundphi T D_t}
\end{multline}
When $j=r-1$, we have:
\begin{equation}
||\FD^{r-1}D_t\paht f||_{L^2(\hD_t)}^2
=\int_{\hD_t}\underbrace{\delta^{ij}(\FD^{r-1} D_t\paht_i f)(\paht_j\FD^{r-1} D_tf)\,dx}_{I}
+\int_{\hD_t}\underbrace{\delta^{ij}(\FD^{r-1}D_t\paht_i f)([\FD^{r-1}D_t, \paht_j]f)}_{II}.
\end{equation}
We then control $II$ by applying Corollary \ref{commutator FD pave lemma}.
To control $I$, we integrate by parts and get:
\begin{align}
-\int_{\hD_t}\underbrace{\delta^{ij}(\paht_i\FD^{r-1} D_t\paht_j f)(\FD^{r-1} D_tf)
\,dx}_{I_1}+\int_{\pa\hD_t}\underbrace{(N^i \mathfrak{D}^{r-1}D_t\paht_if)
(\mathfrak{D}^{r-1}D_t f)}_{\mathcal{B}}.
\end{align}
The interior term $I_{\!1}$ is equal to
$
\int_{\hD_t}(\mathfrak{D}^{r\!-\!1}\!D_t g )(\mathfrak{D}^{r\!-\!1}\!D_t f)
$
to highest order. The error terms here are as in
\eqref{controllable error}, and the $L^2$ norm of these terms contribute
$||\T\xve||_{H^{(r-1, 1/2)}(\Omega)}$ in \eqref{boundphi T D_t} using \eqref{inthalf}.  When $\FD^{r-1}=D_t^r$, this term can be controlled by $||D_t^r g||_{L^2(\hD_t)}||D_t^r f||_{L^2(\hD_t)}$, and then we may bound $||D_t^r f||_{L^2(\hD_t)}$ using Lemma \ref{D_t^r f lem}. In addition, when $\FD^{r-1}=\T\FD^{r-2}$, we control $I_1$ by integrating $\T$ by parts, similar to the control of \eqref{I_1} in the proof of Theorem \ref{ellphi'' 1}.
 Finally, we use Lemma \ref{bdy estimate for D_tphi_m'} to control $\mathcal{B}$.\!\!\!\!
\end{proof}

\subsection{Estimates for Section \ref{section 7.3}}
\begin{theorem}  If $r\geq 5$, then for each $\mu = 0,..., N$ and $j\leq r-1$:
\begin{equation}
||\fdhm\T^{j\!} \paht (g*\Phi)||_{L^2(\hD_t)} \!\leq \! P(||\xve||_{H^{r_{\!}}(\Omega)})
\bigtwo(\!||g||_{L^\infty(\hD_t)}\!+\!\!{\sum}_{k\leq 2}||\T^k \!g||_{L^6(\hD_t)}\!+\!\!{\sum}_{k\leq r_{\!}-_{\!}1}||\T^k \! g||_{L^2(\hD_t)}\!\bigtwo).
\label{boundphi 1/2}
\end{equation}
\end{theorem}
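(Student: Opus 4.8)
Write $f = g * \Phi$, so that $\Dht f = g$ on $\hD_t$ (the sign plays no role). The plan is to reprove the estimate of Theorem \ref{ellphi'' 1} with one extra half-derivative $\fdhm$ carried through the whole argument, proceeding by induction on $j$. The essential structural point is that the estimate \eqref{boundphi 1/2} only ever applies $r-1$ full tangential vector fields together with a single fractional operator $\fdhm$, so — in contrast to \eqref{ellphi 2} and \eqref{usual2} — the entire computation closes with constants depending only on $M_0$ and $\|\xht\|_{H^r(\Odo)}$, the latter being comparable to $\|\xve\|_{H^r(\Omega)}$ by Theorem \ref{comparable thm} and \eqref{equiv extended norm}. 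Throughout one uses the obvious analogues on $\Odo$/$\hD_t$ of the fractional tangential calculus of Appendix \ref{tangapp} (especially the Leibniz/commutator bounds \eqref{app:alg2}, \eqref{commutewithpa}, \eqref{leibpave} and the bilinear ``integrate half a derivative by parts'' bound \eqref{inthalf}), and one first reduces to smooth $g$ by the radial-and-tangential regularization already used for Theorems \ref{usual norms} and \ref{r-0.5 estimate for phi}.

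For the base case $j = 0$ it suffices to note $\|\fdhm \paht f\|_{L^2(\hD_t)} \le C\|\paht f\|_{H^{(0,1/2)}(\hD_t)}$ and, by interpolation between the tangential $L^2$ and $H^{(0,1)}$ norms followed by Young's inequality, $\|\paht f\|_{H^{(0,1/2)}(\hD_t)} \le C\big(\|\paht f\|_{L^2(\hD_t)} + \sum_{T\in\T}\|T\paht f\|_{L^2(\hD_t)}\big)$; by Theorem \ref{ellphi'' 1} (in the $j\le 1$ case, whose constant involves only $M_0$ and $\|\xve\|_{H^r(\Omega)}$) the right-hand side is bounded by the right-hand side of \eqref{boundphi 1/2}. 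For the inductive step, assuming the bound for $\fdhm\T^{j'}\paht f$ with $j'<j$, expand $\|\fdhm\T^j\paht f\|_{L^2(\hD_t)}^2 = \int_{\hD_t}\delta^{ik}(\fdhm\T^j\paht_i f)(\fdhm\T^j\paht_k f)\,dx$. In one factor commute $\fdhm$ and the $\T$'s past $\paht_i$ using \eqref{commutewithpa}, \eqref{leibpave} and the formula $\paht_i = \uht_{\,i}^a\pa_a$: the commutators produce error terms of the form $(\T^{k_1}\pa\xht)\cdots(\T^{k_s}\pa\xht)(\fdhm^{0\text{ or }1}\paht\T^\ell\paht f)$, of total tangential order $\le j$ with $\ell\le j-1$, which are absorbed exactly as in the proof of Theorem \ref{ellphi'' 1} — splitting according to whether some $k_i$ is large (using \eqref{uwassump}, Sobolev embedding and Theorem \ref{comparable thm}), using the inductive hypothesis, and invoking Lemma \ref{ellphi} (the $L^6$ bound \eqref{L^6 phi est} and the $L^\infty$ bound \eqref{Linfty f}) for the factors involving $f$. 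After these commutations one integrates a $\paht$ by parts; since $\delta^{ik}\paht_i\paht_k f = g$, the interior term reduces to top order to $\int_{\hD_t}(\fdhm\T^j g)(\fdhm\T^j f)\,dx$ plus terms in which either two tangential derivatives or one $\fdhm$ and one $\T$ land on $\xht$. These are handled by integrating half a tangential derivative by parts via \eqref{inthalf} followed by the fractional product rule \eqref{app:alg2}; this is the step that produces the $\sum_{k\le 2}\|\T^k g\|_{L^6(\hD_t)}$ and $\|g\|_{L^\infty(\hD_t)}$ contributions (through \eqref{L^6 phi est}, \eqref{Linfty f}), while the leading term $\int(\fdhm\T^j g)(\fdhm\T^j f)$ is bounded after re-expanding $\T^j f$ in terms of $\T^{j-1}\paht f$ and using the inductive hypothesis, Young's inequality and \eqref{inthalf}, giving the $\sum_{k\le r-1}\|\T^k g\|_{L^2(\hD_t)}$ term. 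The boundary term from the integration by parts is an integral over $\pa\hD_t$ of $(\fdhm\T^j\paht f)$ against $\fdhm\T^j f$ and lower-order pieces; since $\Phi(x-\cdot)$ and all its derivatives lie in $L^2$ over $\operatorname{supp}g$ uniformly for $x\in\pa\hD_t$, Lemma \ref{bdy estimate for D_tphi_m} (whose proof equally covers fractional boundary derivatives) bounds every such factor by $C\|g\|_{L^2(\hD_t)}$, so this term is harmless.

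The main obstacle, as already in the proof of Theorem \ref{ellphi'' 1}, is the bookkeeping: at each integration-by-parts of half a tangential derivative one must verify that both resulting factors are controlled in $H^{(0,1/2)}$ by quantities involving at most $r-1$ full tangential derivatives of $f$ (plus the low-order $L^6$/$L^\infty$ norms of $g$) and no Sobolev norm of $\xve$ beyond $\|\xve\|_{H^r(\Omega)}$ — i.e. that one never spills over into $\T^r f$ or into $\|\T\xve\|_{H^{(r-1,1/2)}}$. Because $\fdhm$ counts only as ``half'' a derivative and $j\le r-1$, the worst configurations put at most $r-1$ full plus one half derivative on $f$ and reach $r$-th order on $\xht$ only when the remaining factor of $f$ sits at very low order (so that Lemma \ref{ellphi} supplies the needed $L^6$/$L^\infty$ control), which is exactly what keeps the constant at $P(\|\xve\|_{H^r(\Omega)})$. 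A secondary, routine point is setting up the fractional tangential operators on the extended domain $\Odo$/$\hD_t$ consistently with the cutoffs and charts, which is immediate given Appendix \ref{tangapp}. Once \eqref{boundphi 1/2} is in hand, Theorem \ref{r-0.5 estimate for phi} follows by applying it to $g=\hrho\chi_m$ (using $\T(\hrho(\chi_m-\chi_n))=0$ to make $\fdhm\T^{r-1}\paht\phi_m$ Cauchy in $L^2(\hD_t)$), letting $m\to\infty$, combining with \eqref{usual1}, and passing to the norm $\|\cdot\|_{H^{(0,1/2)}(\tD_t)}$ via the equivalence in Lemma \ref{equivalentnorms}.
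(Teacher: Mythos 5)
Your proposal is correct and follows essentially the same route as the paper's proof: induction on $j$, expansion of $\|\fdhm\T^{j}\paht f\|_{L^2}^2$ as an inner product, commutation of the tangential and fractional operators past $\paht$ in one factor (producing the terms the paper labels $I$, $II$, $III$), repeated use of the fractional Leibniz rule \eqref{app:alg2} together with \eqref{inthalf} and integration by parts via the equation $\Dht f=g$, with the low-order factors absorbed by Lemma \ref{ellphi} and the boundary contribution by Lemma \ref{bdy estimate for D_tphi_m}. Your explicit base case and the remark on why the constant stays at $P(\|\xve\|_{H^r(\Omega)})$ are consistent with (and slightly more detailed than) what the paper writes.
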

\begin{proof}
Suppose that we know \eqref{boundphi 1/2} holds for $0\leq j\leq r-2$, when $j=r-1$, we have
\begin{multline}
||\fdhm \T^{r-1}\paht f||_{L^2(\hD_t)}^2 = \int_{\hD_t}(\fdhm \T^{r-1}\paht_i f)(\fdhm \T^{r-1}\paht^i f)\,dx\\
=\int_{\hD_t}\underbrace{(\fdhm \T^{r-1}\paht_i f)\fdhm (\paht^i\T^{r-1}f)\,dx}_{I}
-\int_{\hD_t}\underbrace{(\fdhm \T^{r-1}\paht_i f)
\fdhm \big((\paht^i\T^{r-1} \xht)(\paht f)\big)\,dx}_{II}\\
+\sum \int_{\hD_t}\underbrace{(\fdhm \T^{r-1} \paht f)\fdhm \big((\pa \T^{\ell_1}\w x)\cdots(\pa \T^{\ell_{s-1}}\w x)
(\T^{\ell_s}\paht f)\big)\,dx}_{III},
\end{multline}
where the sum of over $\ell_1+\cdots+\ell_s =r-1$, $\ell_1,\cdots,\ell_s\leq r-2$.
Invoking \eqref{app:alg2}, one has with $L^2=L^2(\hD_t)$:
\begin{equation}
II\!\leq\!\! \int_{\hD_t}\!\!\!(\fdhm \T^{r_{\!}-_{\!}1}\paht f)(\fdhm \paht \T^{r_{\!}-_{\!}1}\! \xht)(\paht_{\!} f)dx
+C||\fdhm \T^{r_{\!}-_{\!}1}\paht_{\!} f||_{L^2}||\paht \T^{r_{\!}-_{\!}1}\! f||_{L^2}{\sum}_{k\leq 2}||\T^k \paht_{\!} f||_{L^2}.
\label{II0.5}
\end{equation}
The last term on the right hand side is of the correct form that we control,
while the main term is controlled as the corresponding term (i.e., $II$) in the
 proof of Theorem \ref{ellphi'' 1} and a repeated use of \eqref{app:alg2}. In addition,
\begin{equation}
I \!\leq \!\!\int_{\hD_t}\!\!\!(\fdhm \T^{r_{\!}-_{\!}1}\paht_i f)(\paht^i\fdhm \T^{r-1}\!f)dx
+C||\fdhm \T^{r_{\!}-_{\!}1}\paht_{\!}  f||_{L^2(\hD_t)}||\paht \T^{r_{\!}-_{\!}1}\! f||_{L^2(\hD_t)}{\sum}_{k\leq 2}||\T^k \uht||_{L^2(\hD_t)}.
\label{I0.5}
\end{equation}
The last term on the right hand side is of the form that we control,
while the main term can be controlled similarly to how we controlled the corresponding term (i.e., $I$) in
the proof of Theorem \ref{ellphi'' 1} after a repeated use of \eqref{app:alg2}.
Finally, we need to control the $L^2$ norm of
$\sum \fdhm ((\pa \T^{\ell_1}\w x)\cdots(\pa \T^{\ell_{s-1}}\w x)
(\T^{\ell_s}\paht f))$ in $III$. When $\ell_s\geq 3$, then $\ell_1,\cdots,
\ell_{s-1}\leq r-4$, and so we let $\fdhm$ fall on $\p \T^{\ell_1} \xht$ by
applying \eqref{app:alg2} and then control the terms involving $\xht$ in $L^{\infty}$.
 Moreover, if at least one of $\ell_1,\cdots, \ell_{s-1}$, say $\ell_1$, is
 greater than or equal to $r-3$, we let $\fdhm$ falls on $\p \T^{\ell_1} \xht$
 by applying \eqref{app:alg2} and control this term in $L^3$, and so $\T^s \paht
 f$ is controlled in $L^6$. But this can then be treated using Sobolev embedding
 and then Lemma \ref{ellphi}.
\end{proof}
\subsection{Estimates for Section \ref{section 7.4}}
\begin{lemma} \label{diffellphi} Suppose that $r\!\geq\! 7$ and  $f_J$ satisfy $\Dht_J f_J\!=\!g_J$ for $J=I,\II$. Then for $j\!\leq \! r-1$, we have:
\begin{multline}
||\pahtu \mathfrak{D}^j \pahtu f_I-\pahtw \mathfrak{D}^j \pahtw f_{\II}||_{L^2(\Odo)}
\leq D_r\bigtwo({\sum}_{k\leq r}||\FD^k \pahtu f_I-\FD^k\pahtw f_{\II}||_{L^2(\Odo)}+||\FD^{r-1} (g_I-g_{\II})||_{L^2(\Odo)}\\
+{\sum}_{k\leq 2}||\FD^k (g_I-g_{\II})||_{L^6(\Odo)}
+||g_I-g_{\II}||_{L^\infty(\Odo)}
+\bigtwo\{|| \xveu-\xvew||_{H^r(\Omega)}+{\sum}_{k\leq r-2}||D_t^k (\Vu-\Vw)||_{H^{r-k}(\Omega)}\bigtwo\}\\
\cdot \big({\sum}_{k\leq r}||\FD^k \pahtw f_{\II}||_{L^2(\Odo)}+||\FD^{r-1}g_{\II}||_{L^2(\Odo)}+
{\sum}_{k\leq 2}||\FD^k g_{\II}||_{L^6(\Odo\!)}+||g_{\II}||_{L^\infty(\Odo\!)}\big)\bigtwo).
\label{diffbound phi}
 \end{multline}
 where $D_r\!=\!D_r\big(||\xveu\!||_{H^r(\Omega)}, \!||\xvew\!||_{H^r(\Omega)},\! {\sum}_{k\leq r-2}||D_t^k\Vu\!||_{H^{r\!-\!k}(\Omega)},\! {\sum}_{k\leq r-2}||D_t^k\Vw\!||_{H^{r\!-\!k}(\Omega)}\big)$. For  $0\!\leq\!\ell\!\leq \!2$, we have:
 \begin{multline}
 ||\pahtu \mathfrak{D}^\ell \pahtu f_I-\pahtw \mathfrak{D}^\ell \pahtw f_{\II}||_{L^6(\Odo)}\leq D_r\bigtwo({\sum}_{k\leq \ell}||\FD^k (g_I-g_{\II})||_{L^6(\Odo)}+{\sum}_{k\leq \ell+2}||\FD^k \pahtu f_I-\FD^k \pahtw f_{\II}||_{L^2(\Odo)}\\
 +||\xveu-\xvew||_{H^r(\Omega)}{\sum}_{k\leq \ell}||\pahtw \FD^k \pahtw f_{\II}||_{L^6(\Odo)}\bigtwo),
 \end{multline}
 as well as
 \begin{multline}
 ||\pahtu^2 f_I-\pahtw^2 f_{\II}||_{L^\infty(\Odo)}\lesssim ||g_I-g_{\II}||_{L^\infty(\Odo)}\\
 +{\sum}_{\ell\leq 1}||\pahtu^\ell \T\pahtu f_I-\pahtw^\ell\T\pahtw f_{\II}||_{L^6(\Odo)}
 +||\xveu-\xvew||_{H^r(\Omega)}{\sum}_{\ell\leq 1}||\pahtw \FD^\ell \pahtw f_{\II}||_{L^6(\Odo)}.
 \end{multline}
 \end{lemma}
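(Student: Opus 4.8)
\textbf{Proof proposal for Lemma \ref{diffellphi}.}

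The plan is to run an induction on $j$ for the first estimate, closely following the structure of the proofs of Theorem \ref{ellphi'' 1} and Lemma \ref{Dtellphi}, but now comparing the two solutions $f_I, f_{\II}$ at every step. The base case $j=0$ follows from a difference version of \eqref{j=0}: writing $\psi = \pahtu f_I - \pahtw f_{\II}$, expanding $\|\psi\|_{L^2(\Odo)}^2 = \int_{\Odo} \delta^{ij}(\pahtu_i f_I - \pahtw_i f_{\II})(\pahtu_j f_I - \pahtw_j f_{\II})$, and integrating by parts as in Lemma \ref{ellphi}. The new feature is that one must rewrite $\pahtw f_{\II} = \pahtu f_{\II} + (A_{\II} - A_I)\cdot \pa_y f_{\II}$ so that all derivatives can be taken with respect to a single frame; this produces the difference terms $\|\xveu - \xvew\|_{H^r(\Omega)}$ (plus the corresponding velocity differences) multiplying the ``$\II$'' quantities on the right-hand side, exactly as in the statement. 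The boundary term is handled by a difference version of Lemma \ref{bdy estimate for D_tphi_m'}: since $\pahtu^s D_t^k f_I(x) = (D_t^k g_I)*(\pahtu^s \Phi)(x)$ on $\pa\Odo$, and similarly for $f_{\II}$, one subtracts and uses that the two extended flow maps agree with $x_0$ near $\pa\Odo$, so the kernel difference is controlled by $\|\xhtu - \xhtw\|$ times $L^2$ norms of $g_I, g_{\II}$.

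For the inductive step, assume \eqref{diffbound phi} for $j = 0,\dots, m-1$ and prove it for $j = m \le r-1$. As in the proof of Theorem \ref{ellphi'' 1}, I would expand $\|\mathfrak{D}^m(\pahtu f_I - \pahtw f_{\II})\|_{L^2(\Odo)}^2$ by integrating by parts twice, generating the analogues of the terms $I, I_1, II, II_1, II_2, III$ and the boundary term $\mathcal B$ there. Each of those terms is now a difference of a ``$\I$'' quantity and a ``$\II$'' quantity; the key algebraic manipulation is the telescoping identity
\[
 a_1\cdots a_n - b_1\cdots b_n = {\sum}_{k=1}^{n} b_1\cdots b_{k-1}(a_k - b_k)a_{k+1}\cdots a_n,
\]
applied to each product of derivatives of $\xhtu$ (resp.\ $\xhtw$) and of $f_I$ (resp.\ $f_{\II}$). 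For the factors where the difference falls on $\xht$, one obtains $\|\xhtu - \xhtw\|_{H^r(\Omega)}$ (or $\sum_{k\le r-2}\|D_t^k(\Vu - \Vw)\|_{H^{r-k}(\Omega)}$ in the mixed-derivative case) times the remaining factors, estimated exactly as in Theorem \ref{ellphi'' 1} but now hitting the ``$\II$'' data; for the factors where the difference falls on $f$, one gets $\mathfrak{D}^k(\pahtu f_I - \pahtw f_{\II})$-type quantities controlled by the inductive hypothesis or appearing on the right-hand side of \eqref{diffbound phi}. The high-order terms $II_{11}$, the first term of \eqref{THE ONLY}, and the first term of \eqref{I_1} are again handled by integrating half a tangential derivative by parts via \eqref{inthalf} and then using the fractional Leibniz rule \eqref{app:alg2}; the commutator $[\Dht_J, D_t^k]$ in the mixed case is expanded as in \eqref{com Dht D_t^k} and compared termwise. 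The $L^6$ and $L^\infty$ bounds at the end are then proved exactly as \eqref{L^6 phi est} and \eqref{Linfty f}: apply the difference version of the pointwise estimate \eqref{pwdiffbd} (Lemma \ref{app:pwdiff}) and Sobolev embedding, bootstrapping off the $L^2$ estimate just obtained.

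The main obstacle I anticipate is purely bookkeeping: there is a proliferation of terms once every product of $O(r)$ factors is telescoped and every frame $A_I$ vs.\ $A_{\II}$ must be tracked, and one has to verify that in each term the number of derivatives falling on the high-order factor never exceeds $r$ after one converts $\pa_y$ into $\pahtu$ and commutes tangential fields — the same counting that makes Theorem \ref{ellphi'' 1} work. No genuinely new analytic input is needed beyond \eqref{inthalf}, \eqref{app:alg2}, the Sobolev/trace inequalities of Appendix \ref{tangapp}, and the difference kernel bound; the only subtlety is that, because $(\Dht_I - \Dht_{\II})f_{\II}$ involves two derivatives of $f_{\II}$, one must integrate by parts once (as in the proof of the $s=0$ estimate in Proposition \ref{app:sobfndiff}) to keep only one derivative of the ``$\II$'' solution, which is why the right-hand side of \eqref{diffbound phi} only asks for $\mathfrak D^k \pahtw f_{\II}$ with $k \le r$ and not one order higher.
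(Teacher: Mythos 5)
Your proposal is built on the wrong skeleton, and this is not merely a stylistic choice: you have conflated Lemma~\ref{diffellphi} with Theorem~\ref{diffellphithm 1}, which are two genuinely different statements serving different purposes in the paper's chain of reasoning. Lemma~\ref{diffellphi} is the difference analogue of Lemma~\ref{Dtellphi} (a div--curl estimate: it trades one extra $\pahtu$ on the left for tangential derivatives $\FD^k \pahtu f_I-\FD^k\pahtw f_{\II}$ on the right). It is proved in the paper by applying the pointwise div--curl inequality \eqref{pwdiffbd} from Lemma~\ref{app:pwdiff} and then expanding $\div_I\FD^{r-1}\pahtu f_I-\div_{\II}\FD^{r-1}\pahtw f_{\II}$, with no integration by parts and hence no boundary terms. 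The telltale sign is that $\sum_{k\leq r}\|\FD^k \pahtu f_I-\FD^k\pahtw f_{\II}\|_{L^2(\Odo)}$ appears \emph{on the right-hand side} of \eqref{diffbound phi}; an IBP argument of the kind you describe (modeled on Theorem~\ref{ellphi'' 1}) is designed precisely to eliminate those terms, and is used in the paper to prove Theorem~\ref{diffellphithm 1}, where they do not appear on the right.

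This misidentification makes your proposed estimates control the wrong quantities. Your base case computes $\|\pahtu f_I-\pahtw f_{\II}\|_{L^2(\Odo)}^2$ (one $\pahtu$ on $f$), but the $j=0$ case of \eqref{diffbound phi} requires bounding $\|\pahtu^2 f_I-\pahtw^2 f_{\II}\|_{L^2(\Odo)}$ (two derivatives). Your inductive step proposes to expand $\|\mathfrak{D}^m(\pahtu f_I-\pahtw f_{\II})\|_{L^2(\Odo)}^2$, which is essentially the left-hand side of Theorem~\ref{diffellphithm 1}, not the left-hand side $\|\pahtu\mathfrak{D}^m\pahtu f_I-\pahtw\mathfrak{D}^m\pahtw f_{\II}\|_{L^2(\Odo)}$ of \eqref{diffbound phi}, which has an additional outer gradient. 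Worse, the IBP machinery you invoke — boundary control via a difference version of Lemma~\ref{bdy estimate for D_tphi_m'}, integrating half a tangential derivative by parts using \eqref{inthalf}, the terms $I, I_1, II, III, \mathcal B$ — is precisely how Theorem~\ref{diffellphithm 1} is proved (as Theorem~\ref{diffellphithm}), and that proof in turn cites Lemma~\ref{diffellphi} to control $\|\pahtu f_I-\pahtw f_{\II}\|_{L^\infty(\Odo)}$ and the $L^6$ norms, so proving Lemma~\ref{diffellphi} by that route is at best redundant and at worst circular. The telescoping identity you point out is indeed used, but inside the div--curl expansion (the $\mathcal A$, $\mathcal B$ decomposition after expanding $\div_I-\div_{\II}$), not inside an integration by parts. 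What you should do instead: for $j=0$ apply \eqref{pwdiffbd} directly; for the inductive step apply \eqref{pwdiffbd} to $\FD^{r-1}\pahtu f_I$ vs.\ $\FD^{r-1}\pahtw f_{\II}$, then expand the div difference as a sum over $(\FD^{k_1}\pa\xhtu)\cdots\pahtu\FD^\ell\pahtu f_I$ minus the $\II$ counterpart, telescope, and do the usual $L^2/L^3/L^6/L^\infty$ case analysis on which factor is highest order; the auxiliary $L^6$ and $L^\infty$ bounds follow as in Lemma~\ref{ellphi} from the same pointwise inequality plus Sobolev embedding.
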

 \begin{proof}
 For $j\!=\!0$ \eqref{diffbound phi} follows from \eqref{pwdiffbd}.
 Suppose that \eqref{diffbound phi} hold for $j\!\leq \!r\!-\!2$. When $j\!=\!r\!-\!1$ we have:
 \begin{multline}
 ||\pahtu \mathfrak{D}^{r-1\!} \pahtu f_I-\pahtw \mathfrak{D}^{r-1\!} \pahtw f_{\II}||_{L^2(\Odo)}\\
 \lesssim ||\div_I \!\FD^{r-1\!} \pahtu f_I-\div_{\II}\!\FD^{r-1\!}\pahtw f_{\II}||_{L^2(\Odo)}
 +||\curl_I\! \FD^{r-1\!} \pahtu f_I-\curl_{\II}\!\FD^{r-1\!}\pahtw f_{\II}||_{L^2(\Odo)}\,\,\,\,\,\\
 +||\T\FD^{r-1\!} \pahtu f_I-\T\FD^{r-1\!} \pahtw f_{\II}||_{L^2(\Odo)}+||\xhtu-\xhtw||_{H^r(\Odo)}||\pahtw \FD^{r-1} \pahtw f_{\II}||_{L^2(\Odo)}.
 \end{multline}
 It suffices to control the $\div$ term, since the $\curl$ term can be controlled similarly. We have:
 \begin{multline}
\div_{\!I} \!\FD^{r-1\!}  \pahtu{}_{\!} f_I\!-\div_{\II}\! \FD^{r-1\!} \pahtw f_{\II}\\
= \FD^{r-1}\! (g_I\!-g_{\II})
+\!\sum \Big((\FD^{k_1\!}\pa \xhtu)\cdots(\FD^{k_s\!}\pa \xhtu{}_{\!})\pahtu \FD^\ell \pahtu{}_{\!} f_I\!-(\FD^{k_1\!}\pa \xhtw{}_{\!})\cdots(\FD^{k_s\!}\pa \xhtw)\pahtw \FD^\ell \pahtw f_{\II}\Big).
\end{multline}
where the sum is over $k_1\!+\!\cdots\!+\!k_s\!+\!\ell\!=\!r\!-\!1$, $k_1\!\geq \! 1$. To control the sum in $L^2(\Odo\!)$ we only need to consider
\begin{align}
\mathcal{A}=(\FD^{k_1}\pa \xhtu)\cdots(\FD^{k_s}\pa \xhtu)(\pahtu \FD^\ell \pahtu f_I-\pahtw \FD^\ell \pahtw f_{\II}),\\
\mathcal{B}=(\FD^{k_1}\pa \xhtu-\FD^{k_1}\pa \xhtw)(\FD^{k_2}\pa \xhtw)\cdots(\FD^{k_s}\pa \xhtw)(\pahtw \FD^\ell \pahtw f_{\II}).
\end{align}
Now, if $\ell\geq 2$, then $k_1,\cdots, k_s\leq r-3$, and so all terms involving $\xht$ can then be controlled in $L^\infty$, i.e.,
\begin{align}
||\mathcal{A}||_{L^2(\Odo)} \leq D_r||\pahtu \FD^{\ell} \pahtu f_I-\pahtw \FD^\ell \pahtw f_{\II}||_{L^2(\Odo)},\\
||\mathcal{B}||_{L^2(\Odo)} \leq D_r||\FD^{k_1}\pa \xhtu-\FD^{k_1}\pa \xhtw||_{L^\infty(\Odo)}||\pahtw\FD^\ell\pahtw f_{\II}||_{L^2(\Odo)}.
\end{align}
 Moreover, since $r\geq 7$, there is at most one of $k_1,\cdots, k_s$, say $k_1$, that can be $\geq r-2$. If $k_1=r-2$, then
\begin{align}
||\mathcal{A}||_{L^2(\Odo)}\leq D_r ||\FD^{k_1}\pa \xhtu||_{L^3(\Odo)}||\pahtu \FD^{\ell} \pahtu f_I-\pahtw \FD^\ell \pahtw f_{\II}||_{L^6(\Odo)},\\
||\mathcal{B}||_{L^2(\Odo)} \leq D_r ||\FD^{k_1}\pa \xhtu-\FD^{k_1}\pa \xhtw||_{L^3(\Odo)}||\pahtw\FD^\ell\pahtw f_{\II}||_{L^6(\Odo)},
\end{align}
and since $\ell\leq 2$, we have:
\begin{multline}
||\pahtu \FD^{\ell} \pahtu \! f_I-\pahtw \FD^\ell \pahtw f_{\II}{}_{\!}||_{L^6(\Odo\!)}\!\lesssim\!
 ||\div_{\!I}\! \FD^\ell \pahtu\! f_{\!I}-\div_{\II}\! \FD^{\ell} \pahtw{}_{\!} f_{\II}{}_{\!}||_{L^6(\Odo\!)}+||\curl_I \!\FD^\ell \pahtu \!f_{\!I}-\curl_{\II}\! \FD^{\ell} \pahtw {}_{\!} f_{\II}{}_{\!}||_{L^6(\Odo\!)}\,\,\,\\
+||\T\FD^\ell \pahtu f_I-\T\FD^\ell \pahtw f_{\II}||_{L^6(\Odo)}+||\xhtu-\xhtw||_{H^r(\Odo)}||\pahtw \FD^\ell \pahtw f_{\II}||_{L^6(\Odo)}
\lesssim ||\FD^\ell (g_I-g_{\II})||_{L^6(\Odo)}\\
+{\sum}_{\ell_1+\ell_2=\ell, \ell_1\geq 1}\Big(||(\FD^{\ell_1}\uht_I)(\pahtu\FD^{\ell_2}\pahtu f_I-\pahtw\FD^{\ell_2}\pahtw f_{\II})||_{L^6(\Odo)}+||(\FD^{\ell_1}[\widehat{A}_{\I}-\widehat{A}_{\II}])(\pahtw\FD^{\ell_2}\pahtw f_{\II})||_{L^6(\Odo)}\Big)\\
+||\T\FD^\ell \pahtu f_I-\T\FD^\ell \pahtw f_{\II}||_{H^1(\Odo)}+||\xhtu-\xhtw||_{H^r(\Odo)}||\pahtw \FD^\ell \pahtw f_{\II}||_{L^6(\Odo)},
\end{multline}
where the sum is of lower order and
\begin{equation}
||\pa_y(\T\FD^\ell \pahtu f_I\!-\T\FD^\ell \pahtw f_{\II})||_{L^2(\Odo)}\lesssim ||\pahtu\T\FD^\ell \pahtu {}_{\!}f_I\!-\pahtw\T\FD^\ell \pahtw f_{\II}||_{L^2(\Odo)}+||(\pahtw \!-\!\pahtu)\T\FD^\ell\pahtw f_{\II}||_{L^2(\Odo)},
\label{new H1}
\end{equation}
which is of the form we control.
Finally, if $k_1=r-1$, we need to control $\pahtu^2 F-\pahtw^2 G$ in $L^\infty$, i.e.,
\begin{equation}
||\pahtu^2 f_I-\pahtw^2 f_{\II}||_{L^\infty(\Odo)}\lesssim ||g_I-g_{\II}||_{L^\infty(\Odo)}+||\T(\pahtu f_I-\pahtw f_{\II})||_{L^\infty(\Odo)}+||\xhtu-\xhtw||_{H^r(\Odo)}||\pahtw^2 f_{\II}||_{L^\infty(\Odo)},
\end{equation}
where $||\T(\pahtu{}_{\!} f_I\!\!-\!\pahtw f_{\II})||_{L^{{}_{\!}\infty}(\Odo\!)}\lesssim {\sum}_{\ell\leq 1}||\pa_y^
\ell\T(\pahtu {}_{\!}f_I\!\!-\!\pahtw f_{\II})||_{L^{6_{\!}}(\Odo\!)}$, and this can be controlled as above.
 \end{proof}
 \begin{lemma}
 \label{D_t^r-1 F-G lem}
 Let $f_J=(g_J*\Phi)\circ \xht_J$ for $J=I,\II$, where $g_J$ are smooth functions supported in $\Omega^{d_0\!/2}$ satisfying $\FD g_J=0$ in $ \Odo\!\!\setminus \! \Omega$. Then:
 \begin{equation}
 ||f_I-f_{\II}||_{L^2(\Odo)}\leq D_r(||g_I-g_{\II}||_{L^2(\Odo)}+||\xveu-\xvew||_{H^r(\Omega)}||g_{\II}||_{L^2(\Odo)}),
 \label{F-G}
 \end{equation}
 and for $r\geq 7$, we have:
 \begin{multline}
 ||D_t^{r-1} f_I-D_t^{r-1} f_{\II}||_{L^2(\Odo)}\leq  D_r\bigtwo({\sum}_{k\leq r-1}||\FD^k \pahtu f_I-\FD^k\pahtw f_{\II}||_{L^2(\Odo)}+{\sum}_{k\leq r-1}||\FD^k (g_I-g_{\II})||_{L^2(\Odo)}\\
+{\sum}_{k\leq 2}||\FD^k (g_I-g_{\II})||_{L^6(\Odo)}
+||g_I-g_{\II}||_{L^\infty(\Odo)}
+\big\{|| \xveu-\xvew||_{H^r(\Omega)}+{\sum}_{k\leq r-2}||D_t^k (\Vu-\Vw)||_{H^{r-k}(\Omega)}\big\}\\
\!\cdot \!\big({\sum}_{k\leq r\!-\!1}||\FD^k\pahtw \! f_{\II\!}||_{L^2(\Odo\!)}\!+\!\!{\sum}_{k\leq r\!-\!1}||\FD^k \! g_{\II\!}||_{L^2(\Odo\!)}\!
+\!\!{\sum}_{k\leq 2}||\FD^k\! g_{\II\!}||_{L^6(\Odo\!)}\!
+||g_{\II\!}||_{L^\infty(\Odo\!)}\big)\!\bigtwo).
 \label{D_t r-1 F-G}
 \end{multline}
\end{lemma}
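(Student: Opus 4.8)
The proof follows the template of Lemma \ref{D_t^r f lem}, now keeping track of differences and paying the customary price of one extra derivative on the $\II$-quantities, exactly as in Lemma \ref{dtphidiffnew}. Throughout I would use the interior difference estimates of Lemma \ref{diffellphi} — which control $\pahtu\FD^j\pahtu f_I - \pahtw\FD^j\pahtw f_{\II}$ in $L^2$ and $L^6$ and the second-order difference $\pahtu^2 f_I - \pahtw^2 f_{\II}$ in $L^\infty$ — together with \eqref{F-G} for the lowest-order terms. To prove \eqref{F-G} itself, recall that $f_J=(g_J*\Phi)\circ\xht_J$ and decompose
$$f_I-f_{\II} = \big((g_I-g_{\II})*\Phi\big)\circ\xhtu + \big((g_{\II}*\Phi)\circ\xhtu - (g_{\II}*\Phi)\circ\xhtw\big).$$
The first term is bounded by Young's inequality, since $\Phi\in L^1$ on bounded sets, composition with $\xhtu$ being a bi-Lipschitz change of variables with Jacobian controlled by \eqref{uwbd}. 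The second term equals $\int_0^1\big(\paht(g_{\II}*\Phi)\big)\big(\xhtw+s(\xhtu-\xhtw)\big)\cdot(\xhtu-\xhtw)\,ds$; since $\nabla\Phi\sim|z|^{-2}$ is also in $L^1$ on bounded sets of $\R^3$, one has $||\nabla(g_{\II}*\Phi)||_{L^2}\lesssim||g_{\II}||_{L^2}$, while $||\xhtu-\xhtw||_{L^\infty(\Odo)}\lesssim||\xhtu-\xhtw||_{H^r(\Odo)}\lesssim||\xveu-\xvew||_{H^r(\Omega)}$ by Sobolev embedding, linearity of the extension operator (Theorem \ref{comparable thm}), and \eqref{equiv extended norm}. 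This gives \eqref{F-G}.

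For \eqref{D_t r-1 F-G} I would repeat the argument of Lemma \ref{D_t^r f lem} with $D_t^r$ replaced by $D_t^{r-1}$. Starting from $\Dht_J f_J=g_J$, commuting $D_t^{r-1}$ gives $\Dht_J D_t^{r-1} f_J = D_t^{r-1}g_J + [\Dht_J,D_t^{r-1}]f_J$; by \eqref{com Dht D_t^k} (with $r-1$ in place of $r$) the commutator is a sum of terms of the shape $(\paht_J^a D_t^{b_1}\Vht_J)\cdots(\paht_J D_t^{b_{n-1}}\Vht_J)\cdot D_t^{b_n}\paht_J^c f_J$, and since $\xht_J(t,\cdot)=x_0$ outside $\Omega^{d_0/2}$ it is supported in $\xht_J(t,\Omega^{d_0/2})$. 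Hence $D_t^{r-1}f_J = \big((D_t^{r-1}g_J)*\Phi + ([\Dht_J,D_t^{r-1}]f_J)*\Phi\big)$ with the potential and flow maps composed as in Lemma \ref{D_t^r f lem}. Subtracting the $I$ and $\II$ identities, the terms produced by replacing $\xhtu$ with $\xhtw$ in the outer composition are handled exactly as in \eqref{F-G} — mean value theorem together with $\nabla\Phi\in L^1_{loc}$, using Lemma \ref{diffellphi} and \eqref{F-G} for the lower-order pieces — and the term $\big(D_t^{r-1}(g_I-g_{\II})\big)*\Phi$ is bounded by $C||D_t^{r-1}(g_I-g_{\II})||_{L^2(\Odo)}$ by Young's inequality. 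It remains to bound the difference of the commutator convolutions.

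This last step is the main obstacle. For each term in \eqref{com Dht D_t^k} I would telescope the difference into a sum of pieces in which exactly one factor is differenced: (difference of one $\Vht$-factor)$\cdot$(remaining factors in the $I$-frame), plus ($\Vht$-factors in the $\II$-frame)$\cdot$(difference of the potential factor). Each piece is then estimated as in Lemma \ref{D_t^r f lem}: when the top-order count falls on a $\Vht$-factor, keep that factor in $L^2$ and the remaining lower-order $\Vht$-factors in $L^\infty$ by Sobolev embedding; integrate the innermost $\paht$ against $\Phi$ so as to work only with $\Phi,\nabla\Phi\in L^1_{loc}$ and Young's inequality; and control the low-order potential factors in $L^6$, $L^\infty$ or $L^2$ by Lemmas \ref{ellphi} and \ref{Dtellphi}. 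For the differenced factors, the $\Vht$-differences are transferred from $\paht_J$- to $\pa_y$-derivatives via \eqref{dinv} and Lemma \ref{inversedifflemma} and bounded by $\sum_{k\leq r-2}||D_t^k(\Vu-\Vw)||_{H^{r-k}(\Omega)}$, while the potential differences $D_t^{b}\paht^c f_I - D_t^{b}\paht^c f_{\II}$ are bounded in $L^2$ and $L^6$ by Lemma \ref{diffellphi}, the second-order difference in $L^\infty$ by Lemma \ref{diffellphi}, and the zeroth-order difference by \eqref{F-G}. Summing over all terms of \eqref{com Dht D_t^k} and collecting the factors that carry $||\xveu-\xvew||_{H^r(\Omega)}+\sum_{k\leq r-2}||D_t^k(\Vu-\Vw)||_{H^{r-k}(\Omega)}$ produces \eqref{D_t r-1 F-G}. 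The delicate point, and the one I expect to consume most of the work, is the bookkeeping: one must verify that no summand ever demands $r$ material derivatives of $\Vht_J$ or a potential difference of higher order than Lemma \ref{diffellphi} supplies — which is precisely why the estimate is stated with $D_t^{r-1}$ and carries the extra-derivative $\II$-factor on its right-hand side.
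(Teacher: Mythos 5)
Your treatment of the main estimate \eqref{D_t r-1 F-G} follows the paper's route essentially verbatim: commute $D_t^{r-1}$ through $\Dht_J$ via \eqref{com Dht D_t^k}, convolve the commutator with $\Phi$, telescope the $I$--$\II$ difference one factor at a time, and run the same $L^2$/$L^3$/$L^6$/$L^\infty$ case analysis on the top index (here the paper reduces at once to $n=2$), quoting Lemma \ref{diffellphi} for the differenced potential factors and \eqref{dinv}, Lemma \ref{inversedifflemma} for the differenced velocity factors. That part is sound.

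The gap is in your proof of \eqref{F-G}. The convolution $g_J*\Phi$ is not a fixed linear operator applied to $g_J$: written out, $f_J(y)=\int_{\Odo}g_J(y')\,\Phi(\xht_J(y)-\xht_J(y'))\,\widehat{\kappa}_J(y')\,dy'$, so the kernel's \emph{inner} argument $\xht_J(y')$ and the Jacobian $\widehat{\kappa}_J$ both depend on $J$. Consequently your two terms do not sum to $f_I-f_{\II}$: if ``$(g_{\II}*\Phi)$'' denotes the $\II$-frame convolution (as in $f_{\II}$), then your first term is a difference of convolutions taken over two different push-forwards and is not controlled by $\|g_I-g_{\II}\|_{L^2(\Odo)}$ via Young alone; if it denotes the $I$-frame convolution, then your second term is not merely a change of the outer evaluation point, and the mean-value formula you write down misses the contributions from differencing $\xht_J(y')$ inside the kernel and from $\khtu-\khtw$. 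The paper avoids this by a three-way Lagrangian splitting: $I_2$ carries $g_I-g_{\II}$, $I_3$ carries the Jacobian difference, and $I_1$ carries the kernel difference, which is estimated pointwise by $\big(|\xhtu(y)-\xhtw(y)|+|\xhtu(y')-\xhtw(y')|\big)\big(|\xhtu(y)-\xhtu(y')|\,|\xhtw(y)-\xhtw(y')|\big)^{-1}$, an $L^1$ kernel. The missing pieces are of exactly the type you already invoke ($\nabla\Phi\in L^1_{loc}$ plus Young) and land in the $\|\xveu-\xvew\|_{H^r(\Omega)}\|g_{\II}\|_{L^2(\Odo)}$ term, so the fix is routine --- but since \eqref{F-G} is reused inside the proof of \eqref{D_t r-1 F-G} for the zeroth-order potential difference and for every ``replace the $I$-convolution by the $\II$-convolution'' step, the decomposition needs to be stated correctly.
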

\begin{proof}
We prove \eqref{F-G} first. Writing $f_J=\int_{\Odo}g_J(t,y')\Phi(\xht_J(t,y)-\xht_J(t,y'))\widehat{\kappa}_J\,dy$, we have
\begin{multline}
f_I-f_{\II}=\int_{\Odo}\underbrace{g_{\II}(t,y')\big(\Phi(\xhtu(t,y)-\xhtu(t,y'))
-\Phi\big(\xhtw(t,y)-\xhtw(t,y')\big)\big)\khtu\,dy'}_{I_1}\\
+\!\!\int_{\Odo}\underbrace{\!\!\!\!\!\!(g_I(t,y')\!-g_{\II}(t,y'))
\Phi\big(\xhtu(t,y)\!-\xhtu(t,y')\big)\khtu dy'\!\!}_{I_2}\,
+\!\!\int_{\Odo}
\underbrace{\!\!\!\!\!g_{\II}(t,y')\Phi\big(\xhtw(t,y)\!-\xhtw(t,y')\big)
(\khtu\!-\khtw) dy'\!\!}_{I_3}.
\label{F-G expression}
\end{multline}
By Young's inequality, we have:
\begin{align}
||I_2||_{L^2(\Odo)}\leq D_r||g_I-g_{\II}||_{L^2(\Odo)},\qquad  ||I_3||_{L^2(\Odo)}\leq D_r ||\xveu-\xvew||_{H^r(\Omega)}||g_{\II}||_{L^2(\Odo)}.
\end{align}
 To control $I_1$, we write
\begin{multline}
\big|\Phi(\xhtu(t,y)-\xhtu(t,y'))-\Phi(\xhtw(t,y)-\xhtw(t,y'))\big|\\
=\frac{1}{4\pi}
\Big|\frac{1}{|\xht_I(t,y)-\xht_I(t,y')|}-\frac{1}{|\xht_{\II}(t,y)-\xht_{\II}(t,y')|}
\Big|
\leq \frac{1}{4\pi}\frac{|\xhtw(t,y)-\xhtu(t,y)|+|\xhtw(t,y')-\xhtu(t,y')|}
{|\xht_I(t,y)-\xht_I(t,y')||\xht_{\II}(t,y)-\xht_{\II}(t,y')|}.
\end{multline}
Since this is in $L^{\!1\!}(\Odo\!)$, we have $||I_{\!1\!}||_{L^2(\Odo\!)}\!\leq\! D_r(||\xveu\!||_{H^r(\Omega)})||\xveu-\xvew||_{H^r(\Omega)}||g_{\II}||_{L^2(\Odo\!)}$ using Young's inequality. Now, for \eqref{D_t r-1 F-G}, we write:
\begin{align}
D_t^{r-1}f_I= (D_t^{r-1}g_I)*\Phi\circ \xhtu +([\widehat{\Delta}_I, D_t^{r-1}]f_I)*\Phi\circ \xhtu,\\
D_t^{r-1}f_{\II}= (D_t^{r-1}g_{\II})*\Phi\circ \xhtw +([\widehat{\Delta}_{\II}, D_t^{r-1}]f_{\II})*\Phi\circ \xhtw.
\end{align}
To control $||D_t^{r-1} f_I\!-D_t^{r-1} f_{\II}||_{L^2(\Odo)}$, we need the bounds for $||(D_t^{r-1}\!g_I)*\Phi\circ \xhtu-(D_t^{r-1}\!g_{\II})*\Phi\circ \xhtw||_{L^2(\Odo)}$ and $||([\widehat{\Delta}_I, D_t^{r-1}]f_I)*\Phi\circ \xhtu-([\widehat{\Delta}_{\II}, D_t^{r-1}]f_{\II})*\Phi\circ \xhtw||_{L^2(\Odo\!)}$. As above, they are controlled by
\begin{align}
D_r \big\{||D_t^{r-1}g_I-D_t^{r-1}g_{\II}||_{L^2(\Odo)}
+||\xveu-\xvew||_{H^r(\Omega)}||D_t^{r-1} g_{\II}||_{L^2(\Odo)}\big\},\\
D_r \big\{||[\widehat{\Delta}_I, D_t^{r-1}]f_I-[\widehat{\Delta}_{\II}, D_t^{r-1}]f_{\II}||_{L^2(\Odo)}+||\xveu-\xvew||_{H^r(\Omega)}||[\widehat{\Delta}_{\II}, D_t^{r-1}]f_{\II}||_{L^2(\Odo)}\big\},
\end{align}
respectively, where $[\widehat{\Delta}_{\II}, D_t^{r-1}]f_{\II}$ can be treated by adapting the proof for Lemma \ref{D_t^r f lem}.
Moreover, since
for each $J=I, II$, $[\Dht_J, D_t^{r-1}]$ consists
\begin{equation}
(\paht_J^2 D_t^{\ell_1} \Vht_J)\cdots (\paht_J D_t^{\ell_{n-1}} \Vht_J)\cdot( D_t^{\ell_n} \paht_J )\q \text{and}\q (\paht_J D_t^{\ell_1} \Vht_J)\cdots (\paht_J D_t^{\ell_{n-1}} \Vht_J)\cdot(\paht_J D_t^{\ell_n} \paht_J),
\end{equation}
where $\ell_1+\cdots+\ell_n=r-n$, the control of $||[\widehat{\Delta}_I, D_t^{r-1}]f_I-[\widehat{\Delta}_{\II}, D_t^{r-1}]f_{\II}||_{L^2(\Odo)}$ requires that of
\begin{align}
K_1=||(\pahtu^2 D_t^{\ell_1} \Vhtu)\cdots (\pahtu D_t^{\ell_{n-1}} \Vhtu)\cdot( D_t^{\ell_n} \pahtu f_I )-(\pahtw^2 D_t^{\ell_1} \Vhtw)\cdots (\pahtw D_t^{\ell_{n-1}} \Vhtw)\cdot( D_t^{\ell_n} \pahtw f_{\II} )||_{L^2(\Odo)},\\
K_2=||(\pahtu D_t^{\ell_1} \Vhtu)\cdots (\pahtu D_t^{\ell_{n-1}} \Vhtu)\cdot( \pahtu D_t^{\ell_n} \pahtu f_I )-(\pahtw D_t^{\ell_1} \Vhtw)\cdots (\pahtw D_t^{\ell_{n-1}} \Vhtw)\cdot( \pahtw D_t^{\ell_n} \pahtw f_{\II})||_{L^2(\Odo)}.
\end{align}
It suffices to consider the case when $n=2$ only. To control $K_1$, we have
with $L^2=L^2(\Odo)$
\begin{multline}
||(\pahtu^2 D_t^{\ell_1} \!\Vhtu)D_t^{\ell_2} \pahtu{}_{\!}
{}_{\!} f_I\! -(\pahtw^2 D_t^{\ell_1}\! \Vhtw) D_t^{\ell_2} \pahtw {}_{\!} f_{\II}||_{L^2}\\
\leq ||(\pahtu^2 D_t^{\ell_1}\! \Vhtu\!-\pahtw^2 D_t^{\ell_1}\! \Vhtw) D_t^{\ell_2} \pahtw{}_{\!} f_{\II} ||_{L^2}+||(\pahtu^2 \! D_t^{\ell_1} \! \Vhtu)(D_t^{\ell_2\!} \pahtu {}_{\!}f_{\!I} \!-D_t^{\ell_2\!} \pahtw {}_{\!} f_{\II})||_{L^2}\\
\leq \underbrace{||\big((\pahtu^2\!-\pahtw^2) D_t^{\ell_1} \!\Vhtu \!\big)D_t^{\ell_2} \pahtw {}_{\!}f_{\II}||_{L^2}}_{K_{12}}+\underbrace{||\big(\pahtw^2 D_t^{\ell_1} \! (\Vhtu\!-\!\Vhtw)\big) D_t^{\ell_2} \pahtw{}_{\!} f_{\II} ||_{L^2}}_{K_{13}}+
\underbrace{||(\pahtu^2\! D_t^{\ell_1} \! \Vhtu)(D_t^{\ell_2} \pahtu {}_{\!}f_{\!I} \!-\!D_t^{\ell_2\!} \pahtw {}_{\!} f_{\II})||_{L^2}}_{K_{11}}\!.
\end{multline}
When $\ell_1\!\leq \!r\!-\!4$, we bound $\Vht\!$ factors in $L^{\!\infty\!}$ and use Sobolev's lemma. Then
$K_{11}\!\leq \! D_r||D_t^{\ell_2} (\pahtu {}_{\!}f_{\!I} \!- \pahtw {}_{\!} f_{\II\!})||_{L^2}$, and $K_{12}\!\leq \! D_r||\xveu\!-\!\xvew||_{H^r(\Omega)}|| D_t^{\ell_2} \pahtw{}_{\!} f_{\II}||_{L^2}$,
 and $K_{13}\!\leq\! D_r ||D_t^{\ell_1}\!(V_{\I}\!-\!V_{\II})||_{H^{r\!-\!\ell_1}} || D_t^{\ell_2} {}_{\!} \pahtw f_{\II}||_{L^{\!2}}$.
 When $\ell_1\!=\!r\!-\!3$ (and $\ell_2\!=\!1$), we bound $\Vht$ terms in $L^3(\Odo\!)$ and use Sobolev's lemma.  In this case,
$K_{11}\!\leq\!
D_r||D_t (\pahtu{}_{\!} f_{\!I}\! - \pahtw{}_{\!} f_{\II\!})||_{L^6(\Odo\!)}$,
and  $K_{12}\!\leq\! D_r||\xveu\!-\!\xvew||_{H^r(\Omega)}|| D_t\pahtw{}_{\!}  f_{\II\!}||_{L^6(\Odo\!)}$ and $K_{13}\leq D_r ||D_t^{\ell_1}\!(V_{\I}\!-\!V_{\II})||_{H^{r-3}(\Omega)} || D_t\pahtw f_{\II}||_{L^6(\Odo\!)}$.
 By Sobolev's lemma
$
||D_t (\pahtu\! f_{\!I} -{}_{\!}\pahtw{}_{\!} f_{\II})||_{L^6(\Odo\!)}\!\lesssim\! ||D_t(\pahtu f_{\!I \!}-\pahtw f_{\II})||_{H^{1\!}(\Odo\!)},
$
 where we have
$
||\pa_y D_t (\pahtu f_{\!I} - \pahtw f_{\II})||_{L^2(\Odo)}\leq  D_r\big(||\pahtu D_t \pahtu {}_{\!}f_I\!-\pahtw D_t \pahtw f_{\II}||_{L^2(\Odo)}+||(\pahtw -\pahtu)D_t\pahtw f_{\II}||_{L^2(\Odo)}\big),
\label{}
$
which can be controlled by the right hand side of \eqref{D_t r-1 F-G} using Lemma \ref{Dtellphi}, and $||D_t\pahtw f_{\II}||_{L^6(\Odo)}$ can be treated in a similar way. When $\ell_1\!=\!r\!-\!2$ (and $\ell_2\!=\!0$), we bound $\Vht\!$ terms in $L^2(\Odo)$, so we need to control $||\pahtu \! f_I\!-\pahtw f_{\II}||_{L^\infty(\Odo)}$ and $||\pahtw f_{\II}||_{L^\infty(\Odo)}$.  By Sobolev's lemma,
$
||\pahtu f_I \!\!-\pahtw f_{\II}||_{L^\infty(\Odo)} \lesssim {\sum}_{\ell\leq 1}||\pa_y^\ell(\pahtu f_I\! -\pahtw f_{\II})||_{L^6(\Odo)},
$
where
$
||\pa_y(\pahtu f_I\! -\pahtw f_{\II})||_{L^6(\Odo)}\leq ||\pahtu^2 f_I\!-\pahtw^2 f_{\II}||_{L^6(\Odo)}+||(\widehat{A}_{\II}\!-\widehat{A}_{I})\pa_y \pahtw f_{\II}||_{L^6(\Odo)},
$
which is of the form that we control thanks to Lemma \ref{Dtellphi}, and $||\pahtw f_{\II}||_{L^\infty(\Odo)}$ can be treated in a similar fashion.  Finally, we control $K_2$ by adapting a similar argument as above.
\end{proof}
\begin{lemma}
\label{bdy estimate diff} Let $g_J$, $J=I,\II$ be smooth functions supported in $\Omega^{d_0\!/2}$ and $y\in \pa\Odo$. Then:
\begin{equation}
|\pa_y \FD^k(f_I(t,y)-f_{\II}(t,y)) | \lesssim ||\FD^k(g_I-g_{\II})||_{L^2(\Odo)}+||\xveu-\xvew||_{H^r(\Omega)}||\FD^k g_{\II}||_{L^2(\Odo)} ,\q k\geq 0.
\end{equation}
\end{lemma}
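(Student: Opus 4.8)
The plan is to follow the same pattern already used to prove Lemma \ref{bdy estimate for D_tphi_m'} and Lemma \ref{bdy estimate for D_tphi_m}, but now for differences. Since $g_J$ is supported in $\Omega^{d_0/2}$ and $\FD g_J = 0$ in $\Odo\setminus\Omega$, one has $\FD^k g_J$ supported in a compact subset of $\xht_J(t,\Omega^{d_0/2})$ whose distance from $\pa\hD_{Jt}$ is bounded below by some $c_0 > 0$. The key observation, already used repeatedly above, is that $[\Dht_J, D_t^{k_0}] f_J(x) = 0$ for $x \in \pa\hD_{Jt}$ because $\Vht_J$ vanishes near the boundary; hence when $\FD^k = \T^{k_1} D_t^{k_0}$ we may commute the $D_t$'s through the fundamental solution and write, near the boundary, $\paht_J^{s} \T^{k_1} D_t^{k_0} f_J(x) = \big( \T^{k_1} D_t^{k_0} g_J \big) * (\paht_J^{s} \Phi)(x)$, up to tangential-derivative-of-$\xht_J$ error terms that are themselves of the same structure. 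Since $\paht_J^s \Phi(x - \cdot)$ lies in $L^2$ over the support region (being smooth and bounded there), this gives pointwise control of $\pa_y \FD^k f_J(x)$ by $\|\FD^k g_J\|_{L^2(\hD_{Jt})}$ for $x\in\pa\hD_{Jt}$, hence by $\|\FD^k g_J\|_{L^2(\Odo)}$ after changing variables.

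For the difference, I would split in the usual three-term fashion, exactly mimicking \eqref{F-G expression}: writing $f_J = \int_{\Odo} g_J(t,y')\,\Phi(\xht_J(t,y) - \xht_J(t,y'))\,\khtu_J\,dy'$ and differentiating, the difference $\pa_y\FD^k(f_I - f_{\II})$ at a boundary point decomposes into (i) a term where the difference falls on $g_I - g_{\II}$, with the $\xhtu$-kernel, controlled by $\|\FD^k(g_I - g_{\II})\|_{L^2(\Odo)}$ via the argument of the previous paragraph; (ii) a term where the difference falls on the kernel $\Phi(\xhtu(\cdot) - \xhtu(\cdot)) - \Phi(\xhtw(\cdot) - \xhtw(\cdot))$, estimated as in Lemma \ref{D_t^r-1 F-G lem} by $\frac{|\xhtw - \xhtu| + |\xhtw' - \xhtu'|}{|\xhtu - \xhtu'|\,|\xhtw - \xhtw'|}$, which is smooth and $L^2$ in $y'$ over the compact support region and so contributes $\|\xveu - \xvew\|_{H^r(\Omega)}\|\FD^k g_{\II}\|_{L^2(\Odo)}$; and (iii) a term where the difference falls on $\khtu_I - \khtw$, likewise bounded by $\|\xveu - \xvew\|_{H^r(\Omega)}\|\FD^k g_{\II}\|_{L^2(\Odo)}$ using \eqref{dinv} and that $\Phi(\xhtw - \xhtw')$ is $L^2$ in $y'$ over the support region. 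Throughout I would use the boundedness of $\paht_J^s\Phi(x - \cdot)$ in $L^2$ away from $x$, together with H\"older/Young-type bounds exactly as in the proofs of Lemmas \ref{bdy estimate for D_tphi_m'}, \ref{D_t^r f lem} and \ref{D_t^r-1 F-G lem}. The time-derivative commutators $[\Dht_J, D_t^{k_0}]$, which vanish on the boundary, are handled precisely as in \eqref{com Dht D_t^k}--\eqref{D_t^s f} and their difference versions in Lemma \ref{D_t^r-1 F-G lem}; these contribute only lower-order interior norms of $f_J$ and $g_J$, which are in turn subsumed (for the boundary estimate we only claim control by $\|\FD^k g_J\|_{L^2}$-type quantities, which is a weaker assertion than the interior lemmas so the bookkeeping is easier here).

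The one technical point to watch is that when $\FD^k$ contains tangential vector fields $\T$ as well as time derivatives, the $\T$'s do \emph{not} commute cleanly through $\Phi$; applying $\T^{k_1}$ to $\Phi(\xht_J(t,y) - \xht_J(t,y'))$ produces chains $(\T^{\ell_1}\pa\xht_J)\cdots(\T^{\ell_m}\pa\xht_J)$ times kernels of the form \eqref{eq:Dtkpotentialkernel}, exactly as exploited in the interior estimates. On the boundary this is actually simpler than in the interior: each such kernel is still smooth and $L^2$ in $y'$ over the compact support of $g_J$, since the singularity at $y = y'$ is never reached, and the $\T^{\ell_j}\pa\xht_J$ factors are evaluated at the fixed boundary point $y$ and hence bounded in terms of $\|\xve_J\|_{H^r(\Omega)}$ (through the constant $D_r$) by Sobolev embedding and the trace inequality \eqref{trace}. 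So the main obstacle is purely organizational: carefully enumerating which derivatives land on $g_J$ versus the kernel versus $\khtu_J$, and for the difference, carefully distributing the difference operator $\delta W$ across these factors in the manner of Lemma \ref{D_t^r-1 F-G lem} so that every term is either a difference of $g$'s or carries a factor of $\xveu - \xvew$ (or its time derivatives). No genuinely new estimate is needed beyond those already established.
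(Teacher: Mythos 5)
Your proposal is correct and follows essentially the same route as the paper: both rest on the observation that $\Vht_J$ vanishes and $\xht_J=x_0$ near $\pa\Odo$, so the commutators with $\FD^k$ vanish there and $\pa_y\FD^k f_J=(\FD^k g_J)*(\pa_y\Phi)$ at boundary points, after which the difference is handled by the three-term splitting of \eqref{F-G expression} using that the kernel is away from its singularity. The paper's proof is just a terser version of this; your extra care with the tangential commutators and the Jacobian factors fills in details the paper leaves implicit.
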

\begin{proof}
Since $\xht(t,y)=x_0(y)$ and $\Vht(t,y)=0$ when  $y\in \pa\Odo$, we have that $[\Dveu, \FD^k]f_I(y)=[\Dvew, \FD^k]f_{\II}(y)=0$. Therefore, $\pa_y\FD^k f_I(t,y)=(\FD^k g_I)*(\pa_y \Phi)(t,y)$ and $\pa_y\FD^k f_{\II}(t,y)=(\FD^k g_{\II})*(\pa_y \Phi)(t,y)$, and the control of $|\pa_y \FD^k(f_I(t,y)-f_{\II}(t,y))|$ follows from a similar argument that is used to control \eqref{F-G expression} since $\pa_y\Phi(\xht(t,y)-\xht(t,y'))$ is away from its singularity when $y'\in \Omega^{d_0/2}$ and $y\in \pa\Odo$.
\end{proof}
 \begin{theorem}
 \label{diffellphithm}
  With the same assumptions as in Lemma \ref{D_t^r-1 F-G lem},  if $r\geq 7$, we have with $L^p=L^p(\Odo)$:
 \begin{multline}
 {\sum}_{k\leq r-1}||\FD^k \pahtu f_I-\FD^k\pahtw f_{\II}||_{L^2}
 \leq D_r\bigtwo({\sum}_{k\leq r-1}||\FD^k (g_I-g_{\II})||_{L^2}
 +{\sum}_{k\leq 2}||\FD^k (g_I-g_{\II})||_{L^6}
+||g_I-g_{\II}||_{L^\infty}\,\,\\
+\!\big(||\xveu\!{}_{\!}-\!\xvew\!||_{H^{r\!}(\Omega)}\!+\!\!{\sum}_{k\leq r-2}||D_t^k (\Vu\!-\!\Vw\!)||_{H^{r\!-\!k}(\Omega)}\big)
\big({\sum}_{k\leq r\!-\!1}||\mathfrak{D}^k\! g_{\II}\!||_{L^2}\!+\!\!
{\sum}_{k\leq 2}||\mathfrak{D}^k\! g_{\II}\!||_{L^6}\!+\!||g_{\II}\!||_{L^\infty}\!\big)\!\bigtwo).
 \label{diff bound phi'}
 \end{multline}
 \end{theorem}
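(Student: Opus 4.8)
The plan is to prove Theorem~\ref{diffellphithm} by induction on $k$, following the same strategy that was used to prove the single-solution estimate in Theorem~\ref{Dtellphi' 1} but carefully tracking all the differences. I would reduce the claim to the energy estimate
\begin{equation*}
||\FD^{r-1} \pahtu f_I - \FD^{r-1} \pahtw f_{\II}||_{L^2(\Odo)}^2 \lesssim \text{(RHS of \eqref{diff bound phi'})}^2,
\end{equation*}
the lower-order terms $k\leq r-2$ being handled by Lemma~\ref{diffellphi}. Writing $\psi = \FD^{r-1} D_t(\pahtu f_I - \pahtw f_{\II})$ (the worst case; the case where $\FD^{r-1}$ contains only spatial tangential derivatives is handled as in Theorem~\ref{ellphi'' 1}, and the pure-time case $\FD^{r-1}=D_t^r$ is covered by Lemma~\ref{D_t^r-1 F-G lem}), I would integrate $||\psi||_{L^2(\Odo)}^2$ by parts. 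This produces an interior term and a boundary term over $\pa\Odo$.

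The key steps, in order, are: (1) expand $\delta^{ij}(\FD^{r-1}D_t\pahtu_i f_I - \FD^{r-1}D_t\pahtw_i f_{\II})(\FD^{r-1}D_t\pahtu_j f_I - \FD^{r-1}D_t\pahtw_j f_{\II})$ and integrate one copy of $\FD^{r-1}D_t\pahtu$ (resp. $\pahtw$) by parts, commuting $\FD^{r-1}D_t$ through $\pahtu$ resp. $\pahtw$; the commutators are handled by Corollary~\ref{commutator FD pave lemma} applied to each of $f_I, f_{\II}$ and the difference, which generates terms involving $||\xhtu-\xhtw||_{H^r}$ and $\sum_{k\leq r-2}||D_t^k(\Vu-\Vw)||_{H^{r-k}}$ times lower-order norms of $f_{\II}$, exactly the structure on the RHS of \eqref{diff bound phi'}; (2) in the resulting interior term, replace $\Dht_I f_I$ and $\Dht_{\II} f_{\II}$ by $g_I$ and $g_{\II}$ respectively, using $\Dht_J f_J = g_J$, so the main interior contribution is $\int_{\Odo} (\FD^{r-1}D_t(g_I-g_{\II}))(\FD^{r-1}D_t(f_I-f_{\II}))$, controlled by $||\FD^{r-1}(g_I-g_{\II})||_{L^2}$ against a lower-order factor (which we bound using Lemma~\ref{D_t^r-1 F-G lem} and the inductive hypothesis), while the error terms from this replacement have the form $(\FD^{k_1}\pa\xhtu)\cdots(\FD^{k_s}\pa\xhtu)(\pahtu\FD^\ell\pahtu f_I) - (\FD^{k_1}\pa\xhtw)\cdots(\pahtw\FD^\ell\pahtw f_{\II})$ with $k_1\geq 1$, to be split as in Lemma~\ref{diffellphi} into a ``difference of $f$'' piece and a ``difference of $\xht$'' piece and estimated using H\"older, the $L^6$ bounds from Lemma~\ref{diffellphi}, and when necessary integrating half a tangential derivative by parts via \eqref{inthalf} together with the fractional Leibniz rule \eqref{app:alg2}; (3) bound the boundary term over $\pa\Odo$ using Lemma~\ref{bdy estimate diff}, since on $\pa\Odo$ we have $\xht_J(t,y)=x_0(y)$ and $\Vht_J(t,y)=0$, so all commutators with $D_t$ vanish there and $\pa_y\FD^k(f_I-f_{\II})$ is a convolution of $\FD^k(g_I-g_{\II})$ with $\pa_y\Phi$ evaluated away from the singularity; combined with the trace inequality and Theorem~\ref{comparable thm}, the boundary term is absorbed. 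Finally, summing over $\mu$ and over all $\FD^{r-1}\in\coord^{r-1}$ and using Corollary~\ref{usual norm D_t}-type estimates for $f_{\II}$ to convert $\sum_{k\leq r-1}||\FD^k\pahtw f_{\II}||$ into $\sum_{k\leq r-1}||\FD^k g_{\II}||_{L^2} + \sum_{k\leq 2}||\FD^k g_{\II}||_{L^6} + ||g_{\II}||_{L^\infty}$ closes the induction.

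The main obstacle I expect is bookkeeping in step~(2): controlling the error terms $(\FD^{k_1}\pa\xhtu)\cdots(\pahtu\FD^\ell\pahtu f_I) - (\FD^{k_1}\pa\xhtw)\cdots(\pahtw\FD^\ell\pahtw f_{\II})$ in $L^2(\Odo)$ requires a careful case analysis on the distribution of derivatives (mirroring the $\ell\geq 3$ vs. $\ell\leq 2$ vs. $k_1\in\{r-2,r-1,r\}$ split in Lemma~\ref{Dtellphi} and Lemma~\ref{diffellphi}), and in the borderline cases one must either put a factor into $L^3$ and the rest into $L^6$ (invoking the $L^6$ difference estimates of Lemma~\ref{diffellphi}) or push a factor into $L^\infty$ via the pointwise estimate \eqref{ellpw} and Sobolev embedding, all while maintaining the precise structure of the right-hand side of \eqref{diff bound phi'}. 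The half-derivative integration by parts in the terms where $\FD^{r}$ derivatives land on $\xht$ — needed because we do not control $||\xht||_{H^{r+1}}$ — is the delicate point, just as in the proofs of Theorems~\ref{ellphi'' 1} and \ref{Dtellphi' 1}, and it is here that the factor $||\T\xve||_{H^{(r-1,1/2)}(\Omega)}$ (hidden inside $D_r$ via \eqref{equiv extended norm}) enters.
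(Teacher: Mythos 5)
Your overall strategy — induction on $k$, split the top--order term into an interior integration by parts plus a commutator term, replace $\Dht_J f_J$ by $g_J$, and bound the boundary term via Lemma~\ref{bdy estimate diff} — matches the paper's proof. However, there are two places where the proposal as written would not go through.

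First, the commutator term. You propose to handle $[\FD^{r-1},\pahtu]f_I - [\FD^{r-1},\pahtw]f_{\II}$ by applying Corollary~\ref{commutator FD pave lemma} (the single--solution version) to $f_I$, to $f_{\II}$, and "to the difference," claiming this produces factors $||\xhtu - \xhtw||_{H^r}$ and $\sum_k ||D_t^k(\Vu-\Vw)||_{H^{r-k}}$ multiplying norms of $f_{\II}$. It does not: applying the single--solution estimate separately gives bounds of the form $C\,||\T\xve_J||\cdot||\paht f_J||$ for $J = I,\II$, and subtracting these does not yield the product of a difference norm in $\xhtu, \xhtw$ with a norm of $g_{\II}$. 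Obtaining that structure requires Lemma~\ref{commutator FD pave lemma diff}, which carefully splits each summand $(\FD^{k_1}\pa\xhtu)\cdots(\pahtu\FD^\ell\pahtu f_I) - (\FD^{k_1}\pa\xhtw)\cdots(\pahtw\FD^\ell\pahtw f_{\II})$ into one piece that pairs $\xhtu-\xhtw$ (or $\Vhtu-\Vhtw$) with norms of $f_{\II}$ and a second piece that keeps difference norms of $f$. The paper uses exactly this lemma; you have identified the wrong lemma.

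Second, the integration by parts. You write "integrate one copy of $\FD^{r-1}D_t\pahtu$ (resp.\ $\pahtw$) by parts" — that is, with $\pahtu$ on the $I$--factor and $\pahtw$ on the $\II$--factor. Since $\pahtu$ and $\pahtw$ are different operators (they involve $\hat A_I$ and $\hat A_{\II}$), these two integrations by parts do not combine to produce a single interior divergence plus a single boundary term, so you do not get a quantity that factors through the difference $f_I - f_{\II}$. The paper avoids this by first substituting $\pahtw_{\!i}\FD^{r-1}f_{\II} = \pahtu_{\!i}\FD^{r-1}f_{\II} + (\hat A_{\II\,i}^{\,\,\,a}-\hat A_{I\,i}^{\,\,a})\pa_a\FD^{r-1}f_{\II}$ and similarly for $\FD^{r-1}\pahtw f_{\II}$, so that after expanding, the main term is $\int\delta^{ij}(\pahtu_i\FD^{r-1}(f_I-f_{\II}))(\FD^{r-1}\pahtu_j(f_I-f_{\II}))$ — entirely in terms of the single operator $\pahtu$ acting on the difference — and three remainder terms carrying $\hat A_{\II}-\hat A_I$. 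Only after this rewriting does the integration by parts close, giving the interior term $II$ and the boundary term $\mathcal B$. Your proposal skips this step and would not recover the needed cancellation.

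Finally, a small but real indexing slip: the top--order case for this theorem is $\FD^{r-1}$, not $\FD^{r-1}D_t$ (that extra $D_t$ belongs to the proof of Theorem~\ref{Dtellphi' 1}, where the sum runs to $k\leq r$); correspondingly the pure--time case is $D_t^{r-1}$, not $D_t^r$, and it is in that case — after the integration by parts, not as a separate branch before it — that Lemma~\ref{D_t^r-1 F-G lem} is invoked.
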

 \begin{proof}
 When $k=0$, this is done as in the proof of Lemma \ref{zeroth}. However, one needs to estimate $||f_I-f_{\II}||_{L^2}$ directly without using Poincar\'e's inequality, which has been done in Lemma \ref{D_t^r-1 F-G lem}. Next, suppose that \eqref{diff bound phi'} is known for $k=0,\cdots,r-2$. When $k=r-1$, we have:
\begin{multline}
||\FD^{r-1}\pahtu f_I-\FD^{r-1}\pahtw f_{\II}||_{L^2(\Odo)}^2 =\int_{\Odo} \!\! \delta^{ij}\!\underbrace{\big(\FD^{r-1} \pahtu{}_{\!i} f_I-\FD^{r-1}\pahtw{}_{\!i} f_{\II}\big)\big(\pahtu{}_{\!j} \FD^{r-1} f_I-\pahtw{}_{\!j} \FD^{r-1} f_{\II}\big)\,dy}_{I}\\
+\int_{\Odo} \!\!\delta^{ij}\big(\FD^{r-1} \pahtu{}_{\!i} f_I-\FD^{r-1}\pahtw{}_{\!i} f_{\II}\big)\big([\FD^{r-1}, \pahtu{}_{\!j}] f_I-[\FD^{r-1},\pahtw{}_{\!j}] f_{\II}]\big)\,dy.
\label{diffI}
\end{multline}
The second term can be bounded using Lemma \ref{commutator FD pave lemma diff} together with the bounds for $||\pahtu f_{\!I}\! -\!\pahtw f_{\II}||_{L^\infty(\Odo)}$ and ${\sum}_{\ell\leq 2}||\FD^\ell \pahtu {}_{\!} f_{\!I}\!-\!\FD^\ell {}_{\!}\pahtw {}_{\!}f_{\II}||_{L^6(\Odo)}$.
Here,
$
||\pahtu {}_{\!}f_I \!-\pahtw {}_{\!}f_{\II}||_{L^\infty(\Odo)} \lesssim {\sum}_{\ell\leq 1}||\pa_y^\ell(\pahtu {}_{\!}f_{\!I}\! -{}_{\!}\pahtw {}_{\!} f_{\II})||_{L^6(\Odo)},
$
where
\begin{equation}
||\pa_y(\pahtu f_I -\pahtw f_{\II})||_{L^6(\Odo)}\leq ||\pahtu^2 f_I-\pahtw^2 f_{\II}||_{L^6(\Odo)}+||(\widehat{A}_{\II\, i}^{\,\,\,a}-\widehat{A}_{{}_{\!}I\, i}^{\,\,a})\pa_{y^a} \pahtw f_{\II}||_{L^6(\Odo)},
\end{equation}
which is of the form that we control by Lemma \ref{diffellphi}.
In addition, for each $\ell\leq 2$, we have with $L^p=L^p(\Odo)$:
\begin{equation}
||\FD^\ell \pahtu{}_{\!} f_{\!I\!}-\FD^\ell{}_{\!} \pahtw {}_{\!} f_{\II}\!||_{L^6}\lesssim ||\pa_y(\FD^\ell \pahtu \! f_{\!I}\!-\FD^\ell \pahtw {}_{\!} f_{\II})||_{L^2}
\leq ||\pahtu \FD^\ell \pahtu f_I\!-\pahtw \FD^\ell \pahtw f_{\II}\!||_{L^2}+||(\widehat{A}_{\II\, i}^{\,\,\,a}\!-\widehat{A}_{{}_{\!}I\, i}^{\,\,a})\pa_{a} \FD^\ell \pahtw f_{\II}||_{L^2},
\end{equation}
which is again of the form that we control by Lemma \ref{diffellphi}.
To deal with the first term in \eqref{diffI}, one writes $\FD^{r-1}\pahtw{}_{\!i} f_{\II}\!=\!\FD^{r-1}\pahtu{}_{\!i} f_{\II}+\FD^{r-1}[(\widehat{A}_{\II\, i}^{\,\,\,a}\!-\widehat{A}_{{}_{\!}I\, i}^{\,\,a})\pa_{a}f_{\II}]$ and $\pahtw{}_{\!i}\FD^{r-1}  f_{\II} \!=\!\pahtu{}_{\!i}\FD^{r-1} f_{\II}+(\widehat{A}_{\II\, i}^{\,\,\,a}\!-\widehat{A}_{{}_{\!}I\, i}^{\,\,a})\pa_{a}\FD^{r-1} f_{\II}$, and
\begin{multline}
I\!=\!\int_{\Odo}\!\!\!\! \delta^{ij}\big(\pahtu{}_{\!i} \FD^{r-1} (f_I\!-f_{\II})\big)\big(\FD^{r-1} \pahtu{}_{\!j} (f_I\!- f_{\II})\big)dy
+ \int_{\Odo}\!\!\!\! \delta^{ij}\big(\pahtu{}_{\!i} \FD^{r-1} (f_I\!-f_{\II})\big)\big(\FD^{r-1} \big[(\widehat{A}_{\II\, j}^{\,\,\,a}-\widehat{A}_{{}_{\!}I\, j}^{\,\,a})\pa_{a} f_{\II}\big]\big)dy\\
+\!\int_{\Odo}\!\!\!\!\!\!\delta^{ij}\big((\widehat{A}_{\II\, i}^{\,\,\,a}\!-\widehat{A}_{{}_{\!}I\, i}^{\,\,a})\pa_{a} \FD^{r-1\!} f_{\II}\!\big)\big(\FD^{r\!-\!1\!} \pahtu{}_{\!j} (f_{\!I\!}- f_{\II}\!)\big)dy
+\!\int_{\Odo}\!\!\!\!\!\!\delta^{ij}\big((\widehat{A}_{\II\, i}^{\,\,\,a}\!-\widehat{A}_{{}_{\!}I\, i}^{\,\,a})\pa_{a} \FD^{r\!-\!1\!} f_{\II}\!\big)\big(\FD^{r\!-\!1\!} \big[(\widehat{A}_{\II\, j}^{\,\,\,a}-\widehat{A}_{{}_{\!}I\, j}^{\,\,a})\pa_{a} f_{\II}\!\big]\big)dy.
\end{multline}
It is straightforward to control the last three terms, and the first term is equal to:
\begin{equation}
\int_{\Odo}\!\!\!\! \delta^{ij}\pa_{a}\big((\widehat{A}_{{}_{\!}I\, i}^{\,\,a} \FD^{r\!-\!1} \! (f_{\!I}\!- \! f_{\II})\big)\FD^{r\!-\!1} \pahtu{}_{\!j} (f_{\!I}\!-\!f_{\II})\big) dy-\!\int_{\Odo}\!\!\!\!\delta^{ij} (\pa_{a}\widehat{A}_{{}_{\!}I\, i}^{\,\,a})\big( \FD^{r\!-\!1 \!} (f_{\!I}\!- \! f_{\II})\big)\big(\FD^{r-1\!} \pahtu{}_{\!j} (f_{\!I}\!-\! f_{\II})\big)\,dy.\,\,\,
\label{diffII}
\end{equation}
Integrating the first term by parts gives:
\begin{equation*}
\int_{\Odo} \delta^{ij}\underbrace{\big( \FD^{r-1} \!( f_I\!- f_{\II})\big)\pahtu{}_{\!i}\big(\FD^{r-1\!} \pahtu{}_{\!j} (f_I\!- f_{\II})\big)\, dy}_{II}+\!\!\int_{\p\Odo}\delta^{ij}\underbrace{N_a \widehat{A}_{{}_{\!}I{}_{\,} i}^{\,\,a} \big( \FD^{r-1\!}  (f_I\!-  f_{\II})\big)\big(\FD^{r-1} \pahtu{}_{\!j} (f_I\!- f_{\II})\big)\, dy}_{\mathcal{B}}.
\end{equation*}
 Here,  modulo controllable error terms, $II$ is equal to
$
\int_{\Odo} (\FD^r f_I-\FD^r f_{\II})(\FD^r g_I-\FD^r g_{\II})\,dy.
$
When $\FD^{r-1}$ contains at least one $\T$, one can integrate this $\T$ by parts and control the resulting integral as what is done to the control of \eqref{I_1} in the proof of Theorem \ref{ellphi'' 1}.
When $\FD^{r-1}=D_t^{r-1}$, this is bounded by $||D_t^{r-1} f_I-D_t^{r-1} f_{\II}||_{L^2(\Odo)}||D_t^{r-1} g_I-D_t^{r-1} g_{\II}||_{L^2(\Odo)}$, where $||D_t^{r-1} f_I-D_t^{r-1} f_{\II}||_{L^2(\Odo)}$ can be controlled by Lemma \ref{D_t^r-1 F-G lem}. The second term in \eqref{diffII} can be controlled in a similar way. On the other hand, since $\widehat{A}_{{}_{\!}I{}_{\,} i}^{\,\,a}=\delta^{a}_i$ on $\pa\Odo$, $\mathcal{B}$ can be controlled appropriately using the Lemma \ref{bdy estimate diff}.
 \end{proof}

\section{Estimates for commutators and $\F$} \label{commutatorsec}

In this section, we fix a vector field $V = V(t,y)$ on $\Omega$. We let
$x(t,y)$ denote the flow of $V(t,y)$, i.e. $D_t x=V$, $x|_{t=0}=x_0$,
and let $\xve(t,y)$ denote the tangentially smoothed flow, as in
\eqref{xvedef}. We suppose that
the mapping $y \mapsto \xve(t,y)$ is invertible for each $t$,
and we let $ A^i_{\m a}$ and $ A^a_{\m i}$
be the Jacobian matrix of $\xve$ and its inverse, respectively, see \eqref{udef}.
We will assume that $\xve$ and $V$ satisfy the bounds \eqref{uwassump}.

If $M^i_{\m a}$ is an invertible matrix with inverse $N_{\m i}^a$,
we recall the formula for the derivatives of $N_{\m i}^a$:
\begin{equation}
  D N_{\m i}^a = - N_{\m i}^b N_{\m j}^a \big(D M^j_{\m b}\big),
\end{equation}
where here $D = D_t$ or $D = \pa_c$. When $M^i_{\m a} = A^i_{\m a}$,
then this gives:
\begin{align}
  D_t A^a_{\m i} = - A_{\m j}^a \big(\pave_i \ssm V^j\big),
  &&
  \pa_c A^a_{\m i} = -A_{\m j}^a \big(\pave_i u^j_c\big).
  \label{dinv}
\end{align}

Using these formulas it is straightforward to calculate the following
commutators:
\begin{align}
  [D_t, \pave_i] & = -A_{\m i}^b A_{\m j}^a \big(\pa_b \ssm V^j\big) \pa_a
  = -\big(\pave_i \ssm V^j\big) \pave_j, \label{dtpavecomm}\\
  [\pa_c, \pave_i] &= -A_{\m i}^b A_{\m j}^a \big(\pa_c A^j_{\m b}\big) \pa_a
  =- \big(\pave_i A_{\m c}^j\big) \pave_j.
  \label{papavecomm}
\end{align}

We will need estimates for higher order derivatives of $A_{\m i}^a$.
As in section \ref{def T and FD},
given a set $\U = \{T_1,..., T_N\}$ of vector fields,
we write $\U^r = \U \times \cdots \times \U$ ($r$ times)
as well as
$\U^r V:\Omega \to \R^{3N+3}$.
The families of vector fields we will consider are $\U = \T$ (tangential
derivatives, $\U = \FD$ (mixed tangential and time derivatives),
$\U = \coord$ (mixed full space and time derivatives), and $\U = \{\pa_y\}$.
The point of the below estimate is just that derivatives of $A$ behave like
derivatives of $\pa_y \xve$. This lemma is in fact essentially the same
as Lemma \ref{vectcommlemma} but it is convenient to note this
estimate separately.
\begin{lemma}
  \label{inverselemma}
  With notation as in Section \ref{def T and FD},
  if $T^I \in \U^s$ where $\U = \T$,$\FD$,$\coord$,
  or $\{\pa_y\}$, then:
  \begin{equation}
   ||T^I\! A_{\m i}^a||_{L^2}
   + ||T^I\!\! g^{ab}||_{L^2}
   \leq C(M)\big(||T^I \xve||_{H^{1}} + P(||\U^{s-2} \xve||_{H^2})\big)
   \label{app:dxtu}
  \end{equation}
\end{lemma}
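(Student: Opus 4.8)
The plan is to reduce everything to differentiating the inverse Jacobian via the identity \eqref{dinv} and then to a Leibniz/Faà di Bruno bookkeeping in the entries $A^i_{\m a} = \pa_a\xve^i$; the bound for $g^{ab} = \delta^{ij}A_{\m i}^a A_{\m j}^b$ then follows from the bound for $A_{\m i}^a$ by one application of the Leibniz rule, since $g^{ab}$ is quadratic in the $A_{\m i}^a$'s. First I would record the analogue of \eqref{dinv} for an arbitrary $T \in \U$: each such vector field is a first-order operator with uniformly bounded smooth coefficients (for $T = D_t$ note $[D_t,\pa_{y^a}] = 0$, and for $T = \eta\,\Omega_{ab}$ the commutators with $\pa_y$ have constant or bounded coefficients), so differentiating $A_{\m i}^a A^i_{\m b} = \delta^a_b$ gives $T A_{\m i}^a = -A_{\m i}^b A_{\m j}^a\,(T A^j_{\m b})$, where $TA^j_{\m b}$ is $T$ applied to $\pa_y\xve$.

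Iterating this and applying the Leibniz rule, I would show that $T^I A_{\m i}^a$ with $|I| = s$ is a finite sum of terms of the schematic form $(A\cdots A)\,(T^{I_1}A^{\cdot}_{\cdot})\cdots(T^{I_n}A^{\cdot}_{\cdot})$ with $|I_1|+\cdots+|I_n| = s$ and each $|I_j|\ge 1$, where the leading factor is a product of $O(s)$ undifferentiated entries, each bounded pointwise by $M$ thanks to the a priori assumption \eqref{uwassump} (resp.\ \eqref{ubd2} when $\U$ contains $D_t$). Crucially, exactly one term in this sum has $n = 1$, namely $(A\cdots A)\,T^I A^{\cdot}_{\cdot}$, and every other term has $n\ge 2$ and hence every $|I_j|\le s-1$. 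For the $n=1$ term I would write $T^I\pa_y\xve = \pa_y(T^I\xve) + [T^I,\pa_y]\xve$; the first piece contributes $\|T^I\xve\|_{H^1}$, while the commutator is a differential operator of order $\le s$ that does not contain the monomial $\pa_y T^I$, so its $L^2$ action on $\xve$ is of strictly lower order and is absorbed into $P(\|\U^{s-2}\xve\|_{H^2})$ after again invoking \eqref{dinv} and the pointwise bounds. For a term with $n\ge 2$ I would place the factor $T^{I_j}A^{\cdot}_{\cdot}$ with the largest $|I_j|$ in $L^2$ and the remaining factors in $L^\infty$: those carrying at most three derivatives of $\xve$ are bounded by $C(M)$ by the a priori assumption, and any carrying more are estimated in $L^\infty$ by Sobolev embedding in terms of strictly lower-order Sobolev norms of $\xve$; the $L^2$ factor is controlled by the inductive hypothesis, and one checks that all resulting lower-order terms fit inside $P(\|\U^{s-2}\xve\|_{H^2})$. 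An induction on $s$ (with the base cases $s\le 1$ being the pointwise bounds from \eqref{uwassump}/\eqref{ubd2}) then closes the argument, and inserting this bound into the Leibniz expansion of $T^I g^{ab}$ gives the stated estimate for $g^{ab}$ as well.

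The main obstacle is the derivative bookkeeping: verifying that in every term of the Leibniz expansion at most one factor carries the full $s$ derivatives, so that a single copy of $\|T^I\xve\|_{H^1}$ is produced and everything else is genuinely of lower order and can be swept into $P(\|\U^{s-2}\xve\|_{H^2})$ using \eqref{dinv} together with the pointwise a priori bounds \eqref{uwassump}--\eqref{ubd2}. This is precisely the bookkeeping behind the commutator estimate \eqref{vectcomm} of Lemma \ref{vectcommlemma}, so no new idea is required beyond carefully tracking how many derivatives land on each factor and which factors can be placed in $L^\infty$.
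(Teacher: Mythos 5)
Your proposal is correct and follows essentially the same route as the paper: reduce $g^{ab}$ to $A_{\m i}^a$ by the Leibniz rule, iterate the inverse-derivative identity \eqref{dinv} to write $T^I A_{\m i}^a$ as a principal term $-A_{\m i}^b A_{\m j}^a T^I\pa_b\xve^j$ (controlled by $\|T^I\xve\|_{H^1}$) plus products $(\pave T^{I_1}\xve)\cdots(\pave T^{I_k}\xve)$ with all $|I_j|\le s-1$, and then estimate each product by placing the one possibly high-order factor in $L^2$ and the rest in $L^\infty$ via the a priori bounds and Sobolev embedding. The only detail you defer — that at most one factor in each product can carry close to $s$ derivatives, so everything else lands in $P(\|\U^{s-2}\xve\|_{H^2})$ — is exactly the short combinatorial check the paper carries out, so no new idea is missing.
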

We note that taking $\U = \{\pa_y\}$ and summing over all $T^I \in \U^s$
gives:
 \begin{equation}
  ||A_{\m i}^a||_{H^s} + ||g^{ab}||_{H^s(\Omega)}
  \leq C(M)\big( ||\xve||_{H^{s+1}}
  + P( ||\xve||_{H^{s}})\big).
  \label{app:dxu}
 \end{equation}

\begin{proof}
The estimates for $g$ follow
from the estimates for $A$ and the definition $g^{ab} = \delta^{ij}
A_{\m i}^a A_{\m j}^b$ so we just prove the estimates for $A$.
For the sake of simplicity we will assume that all $T \in \U$ commutes
with $\pa_y$; this is only not the case if $\U = \T$ and in that
case the commutator is lower order and can be handled using similar arguments
to the below.
For $T^I \in \U^s$, repeatedly applying \eqref{dinv}, we have:
\begin{equation}
 T^{I} A_{\m i}^a =
 -A_{\m i}^b A_{\m j}^a T^I \pa_b \xve^j -
 {\sum}
 \big(\pave T^{I_1} \xve \big) \cdots \big(\pave T^{I_k} \xve\big)
 \label{sums}
\end{equation}
where the sum is taken over
a collection of multi-indices $I_1,..., I_k$
with $|I_1| + \cdots +  |I_k| = s$
with $|I_j| \leq s-1$ for $j = 1,..., k$.
The first term is bounded by the first term on the
right-hand side of \eqref{app:dxu}.
When $s \leq 3$, we bound the first $k-1$ factors in each summand in $L^\infty$
by $C(M)$ and the remaining factor in $L^2$ by $||\U^{s-1} \xve||_{H^1}$
and this is bounded by the right-hand side of \eqref{app:dxu} for all the
values of $\U$ we are considering.
We now assume that $s \geq 4$.
If any index $|I_j| \leq \min(3, s-3)$, we use the Sobolev estimate \eqref{vectsob}
to bound
$||\pave T^{I_{\!j\!}} x||_{L^{\!\infty}}\! \leq C(_{{}_{\!}}M_{{}_{\!}}) ||\pa_y T^{I_{\!j\!}} x||_{L^{\!\infty}}\!
\leq C(_{{}_{\!}}M_{{}_{\!}}) ||\U^{s-\!2} x||_{H^2}$. Therefore it suffices to deal with the
case when at least one index $ |I_j| \geq \max(4, s-4)$.
There can be at most one such index because if there are $\ell \geq 2$
such terms then  $4\ell \leq s $ so that $s \geq 8$
and that $\ell(s-4) \leq s $ so that $s \leq 4$. Since there is one such
index and $|I_j| \leq s-1$ we bound the corresponding term in $L^2$ by
$||\U^{s-1} \xve||_{H^1}$ which completes the proof.
 \end{proof}

Similarly, we have:
\begin{lemma}
  \label{inversedifflemma}
 Define $\xveu, \xvew, A_I, A_{\II}, \gveu, \gvew$ as in Appendix \ref{elliptic}.
 With notation as in Lemma \ref{inverselemma}, if $T^I \in \V^s$:
 \begin{equation}
   ||T^I\!(_{\!}A_{{}_{\!}I\, i}^{\,\,a}\! -\! A_{\II\, i}^{\,\,\,a})||_{L^2}\! + \! ||T^I\!(\gveu^{ab}\!\!- \!\gvew^{ab})||_{L^2}
 \! \leq \D_{\!s}
  ||\xveu\! - \xvew\!||_{H^{\ell+1}},\qquad
  \D_{\!s} \!=\! \D_{\!s}(M{}_{\!},_{\!} ||\U^{s-\!1\!}\xveu||_{H^{2}}, ||\U^{s-\!1\!}\xvew||_{H^{2}}).
  \label{dinvdiff}
 \end{equation}
\end{lemma}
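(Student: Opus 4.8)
The plan is to mirror the proof of Lemma \ref{inverselemma}, now keeping track of the (leading-order, linear) dependence of each term on the difference $\xveu-\xvew$. The point of departure is the elementary identity for differences of inverse matrices: since $A_{{}_{\!}I\, i}^{\,\,a}$ and $A_{\II\, i}^{\,\,\,a}$ are the inverses of $A_{{}_{\!}I\, a}^{\,\,i}=\pa_a\xveu^i$ and $A_{\II\, a}^{\,\,\,i}=\pa_a\xvew^i$,
\begin{equation}
 A_{{}_{\!}I\, i}^{\,\,a}-A_{\II\, i}^{\,\,\,a}
 =-A_{{}_{\!}I\, i}^{\,\,b}\,\big(\pa_b(\xveu^j-\xvew^j)\big)\,A_{\II\, j}^{\,\,\,a}.
\end{equation}
Applying $T^I$ with $|I|=s$ (the parameter $\ell$ in the statement being $\ell=s$) and expanding by the Leibniz rule, then re-expressing every derivative that falls on an $A$-factor through the formulas \eqref{dinv} exactly as in \eqref{sums} in the proof of Lemma \ref{inverselemma}, one writes $T^I(A_{{}_{\!}I\, i}^{\,\,a}-A_{\II\, i}^{\,\,\,a})$ as a finite sum of products of factors with the following structure: in each summand there is \emph{precisely one} factor involving the difference, of the form $T^K\pa(\xveu-\xvew)$ or $\paveu T^K\pa(\xveu-\xvew)$ for some multi-index $K$ with $|K|\le s$, while all the remaining factors are $\paveu$- or $\pa_y$-derivatives of $\xveu$ alone, or $\pavew$- or $\pa_y$-derivatives of $\xvew$ alone, with at most $s$ derivatives distributed among them. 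The structural fact that makes this work — that differentiating $A_I$ (resp. $A_{\II}$) produces only quantities built from $\xveu$ (resp. $\xvew$), never differences — is immediate from \eqref{dinv}.

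Given this structure, the estimate is obtained exactly as in Lemma \ref{inverselemma}: among the factors of a fixed summand the derivative count of \eqref{sums} shows that at most one can carry $\ge\max(4,s-4)$ derivatives; we estimate that one in $L^2(\Omega)$ and all the others in $L^\infty(\Omega)$, using the Sobolev embeddings \eqref{regsob} and \eqref{vectsob}, the algebra property of $H^2(\Omega)$, and the a priori bound \eqref{uwassump} to control the low-order norms by $M$. If the distinguished $L^2$-factor is the difference factor, we bound it by $\|T^K\pa(\xveu-\xvew)\|_{L^2}\le\|\xveu-\xvew\|_{H^{s+1}}$ and the product of the remaining, low-order, factors by $C(M)$; if instead the distinguished $L^2$-factor is one of the $\xveu$- or $\xvew$-factors, we bound it by $\|\U^{s-1}\xveu\|_{H^2}$ or $\|\U^{s-1}\xvew\|_{H^2}$, the difference factor is then of order $\le s-3$ and is placed in $L^\infty$ and bounded via \eqref{vectsob} by $C(M)\|\xveu-\xvew\|_{H^{s-1}}$, while the remaining factors are again $C(M)$. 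Summing over summands gives
\begin{equation}
 \|T^I(A_{{}_{\!}I\, i}^{\,\,a}-A_{\II\, i}^{\,\,\,a})\|_{L^2(\Omega)}
 \le\D_s\,\|\xveu-\xvew\|_{H^{s+1}(\Omega)},
 \qquad
 \D_s=\D_s\big(M,\|\U^{s-1}\xveu\|_{H^2},\|\U^{s-1}\xvew\|_{H^2}\big),
\end{equation}
which is the asserted bound. The estimate for $\gveu^{ab}-\gvew^{ab}$ follows from $\gveu^{ab}-\gvew^{ab}=\delta^{ij}(A_{{}_{\!}I\, i}^{\,\,a}-A_{\II\, i}^{\,\,\,a})A_{{}_{\!}I\, j}^{\,\,b}+\delta^{ij}A_{\II\, i}^{\,\,\,a}(A_{{}_{\!}I\, j}^{\,\,b}-A_{\II\, j}^{\,\,\,b})$, the Leibniz rule, the same $L^2$–$L^\infty$ splitting, the bound just proved for the $A$-differences, and \eqref{app:dxtu} applied to $T^K A_I$ and $T^K A_{\II}$.

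The only point requiring care — and the main obstacle — is the bookkeeping underlying the two claims above: that the Leibniz/\eqref{dinv} expansion is exactly linear in $\xveu-\xvew$ to leading order, so that each summand carries exactly one difference factor, and that the derivative count in \eqref{sums} forces at most one high-order factor per summand. Both are checked in precisely the same manner as in the proof of Lemma \ref{inverselemma}; the single extra difference factor does not affect the counting, since it can always be absorbed among the $L^\infty$ factors unless it is itself the unique high-order factor, in which case it is the one estimated in $L^2$ and contributes the norm $\|\xveu-\xvew\|_{H^{s+1}}$.
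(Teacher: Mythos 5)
Your proof is correct and takes essentially the same approach as the paper: the paper's (very terse) argument applies \eqref{dinv}/\eqref{sums} to each of $A_I$, $A_{\II}$ and subtracts the two resulting sums, while you begin from the algebraically cleaner identity $A_I^{-1}-A_{\II}^{-1}=-A_I^{-1}(A_I-A_{\II})A_{\II}^{-1}$ and then apply the Leibniz rule and \eqref{sums}; both routes produce the same structure (each summand carries exactly one difference factor), and the $L^2$--$L^\infty$ splitting and derivative-counting argument are then identical to the proof of Lemma \ref{inverselemma}. The only very minor imprecision is in the arithmetic for the $L^\infty$ bound on the low-order difference factor (it comes out to $\|\xveu-\xvew\|_{H^{s}}$ rather than $H^{s-1}$), but this is harmless since either is dominated by $\|\xveu-\xvew\|_{H^{s+1}}$.
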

\begin{proof}
  Applying \eqref{dinv} to $A_{{}_{\!}I\, a}^{\,\,i}$ and $A_{\II\, a}^{\,\,\,i}$ generates two sums of the
  form \eqref{sums}. Subtracting these two sums and arguing as in
  the proof of the previous lemma gives \eqref{dinvdiff}.
\end{proof}

The next lemma will be used at several places. Recall the definitions of
 $\Odo$, $\pahtu, \pahtw$ from Section \ref{extensionsec}.
 \begin{lemma}
   \label{commutator FD pave lemma diff}
Let with $r\geq 5$.
Then there is a continuous function
$$C_r = C_r\big(M', ||\xveu||_{H^r(\Omega)}, ||\xvew||_{H^r(\Omega)}, {\tsum}_{\ell\leq r-1}||D_t^\ell\Vu||_{H^{r-\ell}(\Omega)}, {\tsum}_{\ell\leq r-1}||D_t^\ell\Vw||_{H^{r-\ell}(\Omega)}\big).
$$
 such that with $\FD^r$ the mixed space-time tangential derivatives defined in Section \ref{def T and FD}:
 \begin{multline}
   ||[\FD^r,\pahtu] f - [\FD^r, \pahtw] g||_{L^2(\Odo\!)}\\
   \leq C_r \bigtwo(  {\sum}_{\ell \leq r-1} ||\FD^\ell \pahtu f - \FD^\ell \pahtw g||_{L^2(\Odo\!)}
   + ||\T \xveu||_{H^r(\Omega)}\big\{||\pahtu f - \pahtw g||_{L^\infty(\Odo\!)}
   +
   {\sum}_{\ell \leq 2} ||\FD^\ell \pahtu f - \FD^\ell \pahtw g||_{L^6(\Omega^{d_0}\!)}\big\}\\
   + \big\{||\T \xveu - \T \xvew||_{H^r(\Omega)}
   +{\sum}_{\ell\leq r-1}||D_t^{\ell}\big((\Vve_{\I}-\Vve_{\II}\big)||_{H^{r-\ell}(\Omega)}\big\}
   (||\pahtw g||_{L^\infty(\Odo\!)}+{\sum}_{\ell\leq 2}||\FD^{\ell}\pahtw g||_{L^6(\Odo\!)})\\
   +\big\{||\xveu -\xvew||_{H^r(\Omega)}+{\sum}_{\ell\leq r-1}||D_t^{\ell}\big(\Vve_{\I}-  \Vve_{\II}\big)||_{H^{r-\ell}(\Omega)}\big\}{\sum}_{\ell\leq r-1}||\FD^\ell \pahtw g||_{L^2(\Odo\!)}\bigtwo).
  {}
 \end{multline}
\end{lemma}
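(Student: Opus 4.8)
\emph{Proof proposal.} The plan is to expand both commutators explicitly, using the commutator identities \eqref{dtpavecomm}--\eqref{papavecomm} adapted to $\pahtu,\pahtw$ on the extended domain $\Odo$, reduce the difference of the two expansions to a telescoping sum of products, and then distribute $L^p$-norms over those products exactly as in the proofs of Lemma \ref{diffellphi} and Lemma \ref{D_t^r-1 F-G lem}. Iterating the identity $[\FD,\pahtu_i]=-(\pahtu_i\widehat{W}^j)\,\pahtu_j$, where $\FD\in\{T,D_t\}$ and $\widehat{W}$ equals a component of $\xhtu$ when $\FD$ is a tangential field $T\in\T$ and equals $\Vveu=D_t\xhtu$ when $\FD=D_t$, and using \eqref{dinv} together with Lemma \ref{inverselemma} to rewrite $\FD$-derivatives of the inverse matrix $\widehat{A}_{\I}$ in terms of $\FD$-derivatives of $\pa_y\xhtu$ and of the $D_t^{\ell}\Vveu$, one gets that $[\FD^r,\pahtu]f$ is a finite sum of terms of the form $(\FD^{k_1}\pa_y\xhtu)\cdots(\FD^{k_s}\pa_y\xhtu)(\FD^{\ell}\pahtu f)$ with $k_1+\cdots+k_s+\ell=r$, $k_1\ge1$, $\ell\le r-1$, and similarly for $\pahtw$ and $g$.

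Subtracting the two expansions term by term, I would write each difference as $(\FD^{k_1}\pa_y\xhtu)\cdots(\FD^{k_s}\pa_y\xhtu)(\FD^{\ell}\pahtu f-\FD^{\ell}\pahtw g)$ plus a term in which the $f$--$g$ factor is replaced by $\FD^{\ell}\pahtw g$ and exactly one $\xht$-factor is replaced by a difference $\FD^{k_a}\pa_y(\xhtu-\xhtw)$, the other $\xht$-factors being evaluated at $\xhtw$. The first family of terms is precisely what produces the $\sum_{\ell\le r-1}\|\FD^{\ell}\pahtu f-\FD^{\ell}\pahtw g\|_{L^2(\Odo)}$ contribution, while by Lemma \ref{inversedifflemma} and \eqref{equiv extended norm} each difference factor $\FD^{k_a}\pa_y(\xhtu-\xhtw)$ is controlled by $\|\xveu-\xvew\|_{H^r(\Omega)}$, and by $\sum_{\ell\le r-1}\|D_t^{\ell}(\Vveu-\Vvew)\|_{H^{r-\ell}(\Omega)}$ when a $D_t$ lands on the extension.

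For each resulting product I would then distribute $L^p(\Odo)$-norms as in Lemma \ref{diffellphi}, using that $\Odo$ is three-dimensional. When all the $\xht$-derivative orders are $\le r-3$, those factors go into $L^{\infty}$ by Sobolev embedding \eqref{sob2}, leaving the remaining factor (which is $\FD^{\ell}\pahtu f-\FD^{\ell}\pahtw g$ with $\ell\le r-1$, or else $\FD^{\ell}\pahtw g$ paired with a $\xht$-difference) in $L^2(\Odo)$. When exactly one $\xht$-order reaches $r-2$ or $r-1$, that factor is put in $L^3(\Odo)$, bounded by $\|\T\xveu\|_{H^{(r-1,1/2)}(\Omega)}$ via Sobolev embedding since the top derivatives there are tangential, or by $\sum_{\ell\le r-1}\|D_t^{\ell}\Vveu\|_{H^{r-\ell}(\Omega)}$ if a $D_t$ falls on it, and the paired factor, now of order $\le2$, is put in $L^6(\Odo)$. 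When the order reaches $r$, that factor goes in $L^2(\Odo)$ and the paired order-$0$ factor in $L^{\infty}(\Odo)$; there one reduces $\|\pahtu f-\pahtw g\|_{L^{\infty}(\Odo)}$ to $\sum_{\ell\le1}\|\pa_y^{\ell}(\pahtu f-\pahtw g)\|_{L^6(\Odo)}$ and then, via $\pa_y=\widehat{A}_{\I}\cdot\pahtu$ and Lemma \ref{diffellphi}, to $\|\pahtu^2 f-\pahtw^2 g\|_{L^6(\Odo)}$ plus $\|(\widehat{A}_{\I}-\widehat{A}_{\II})\,\pa_y\pahtw g\|_{L^6(\Odo)}$. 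In each case the $L^{\infty}$ and $L^6$ norms of $\pahtw g$ and $\FD^{\ell}\pahtw g$ occur multiplied precisely by $\|\T(\xveu-\xvew)\|_{H^r(\Omega)}$ (or the velocity-difference norms) or by $\|\xveu-\xvew\|_{H^r(\Omega)}$, and the $L^2(\Odo)$-norms of $\pahtw g$ collect into $\sum_{\ell\le r-1}\|\FD^{\ell}\pahtw g\|_{L^2(\Odo)}$, matching the claimed right-hand side; passage between norms on $\Odo$ and on $\Omega$ is done through Theorem \ref{comparable thm} and \eqref{equiv extended norm}.

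The main obstacle is the top-order bookkeeping: one must check that the terms in which all $r$ derivatives concentrate on a single $\xht$-factor yield only the \emph{tangential} norm $\|\T\xveu\|_{H^r(\Omega)}$ (and, for the difference terms, $\|\T(\xveu-\xvew)\|_{H^r(\Omega)}$) rather than the full $\|\xveu\|_{H^{r+1}}$, and that every piece on which a $D_t$ falls is absorbed into the prescribed $\sum_{\ell\le r-1}\|D_t^{\ell}\Vveu\|_{H^{r-\ell}(\Omega)}$ and $\sum_{\ell\le r-1}\|D_t^{\ell}(\Vveu-\Vvew)\|_{H^{r-\ell}(\Omega)}$ quantities. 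This is forced because $\FD^r$ is built only from the tangential fields $T\in\T$ and from $D_t$: up to the lower-order commutators $[\T,\pa_y]$ handled as in Lemma \ref{inverselemma}, $\FD^r\pa_y\xht$ is either $\pa_y\T^r\xht$, bounded by $\|\T\xveu\|_{H^r(\Omega)}$, or else contains at least one $D_t$ and is controlled by the velocity norms. Once this point is settled, the remaining estimates are routine applications of H\"older's inequality and the Sobolev inequalities, exactly as in the proof of Lemma \ref{diffellphi}.
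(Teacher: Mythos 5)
Your proposal is correct and follows essentially the same route as the paper: you isolate the top-order term where all $r$ derivatives land on a single $\xht$-factor (paired in $L^2$--$L^\infty$), telescope the remaining sum into a difference-in-$(f,g)$ family and a difference-in-$\xht$ family, and distribute norms by the same case analysis ($L^\infty$ for orders $\le r-3$, $L^3$--$L^6$ for orders $r-2,r-1$), with the key observation that $\FD^r\pa_y\xht$ is either purely tangential or carries a $D_t$ and hence is absorbed by the velocity norms. The only superfluous step is your further reduction of $\|\pahtu f-\pahtw g\|_{L^\infty(\Odo)}$ to $L^6$ norms, which the lemma's statement keeps as is and defers to its applications.
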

\begin{proof}
  We start by writing:
\begin{multline}
[\FD^r,\pahtu] f - [\FD^r, \pahtw] g = \underbrace{-\big((\pahtu \FD^r\xhtu)\pahtu f-(\pahtw \FD^r\xhtw)\pahtw g\big)}_{I}
\\+{\sum}_{\ell_1+\cdots+\ell_s=r,\ell_i\leq r-1}\underbrace{
(\pa \FD^{\ell_1}\xhtu)\cdots(\pa \FD^{\ell_{s-1}}\xhtu)(\FD^{\ell_s}\pahtu f)
-(\pa \FD^{\ell_1}\xhtw)\cdots(\pa \FD^{\ell_{s-1}}\xhtw)(\FD^{\ell_s}\pahtw g)}_{II}.
\end{multline}
We have:
\begin{multline}
||I||_{L^2(\Odo)}\leq ||(\pahtu \FD^r\xhtu-\pahtw \FD^r\xhtw)\pahtw g||_{L^2(\Odo)}+||(\pahtu \FD^r\xhtu)(\pahtu f-\pahtw g)||_{L^2(\Odo)}\\
\leq ||\pahtu \FD^r\xhtu-\pahtw \FD^r\xhtw||_{L^2(\Odo)}||\pahtw g||_{L^\infty(\Odo)}+ ||\pahtu \FD^r\xhtu||_{L^2(\Odo)}||\pahtu f-\pahtw g||_{L^\infty(\Odo)}\\
\leq \bigtwo(||\T (\xhtu-\xhtw)||_{H^r(\Odo)}+{\sum}_{\ell\leq r-1}||D_t^{\ell}(\Vhtu-\Vhtw)||_{H^{r-\ell}(\Odo)}\bigtwo)||\pahtw g||_{L^\infty(\Odo)}\\
+\bigtwo(||\T \xhtu||_{H^r(\Odo)}+{\sum}_{\ell\leq r-1}||D_t^{\ell}\Vhtu||_{H^{r-\ell}(\Odo)}\bigtwo)||\pahtu f-\pahtw g||_{L^\infty(\Odo)}.
\end{multline}
In addition, to control $II$ in $L^2$ one only needs to consider
\begin{equation}
II_1=(\pa \FD^{\ell_1}\xhtu)\cdots(\pa \FD^{\ell_{s-1}}\xhtu)(\FD^{\ell_s}\pahtu f-\FD^{\ell_s}\pahtw g),
\end{equation}
\begin{equation}
II_2=(\pa \FD^{\ell_1}\xhtu-\pa \FD^{\ell_1}\xhtw)(\pa \FD^{\ell_2}\xhtw)\cdots(\pa \FD^{\ell_{s-1}}\xhtw)\FD^{\ell_s}\pahtw g.
\end{equation}
When $r\!-\!1\!\geq\! \ell_s\!\geq\! 3$ then $\ell_j\!\leq\! r\!-\!3$ for $j\!\leq\!s\!-\!1$ and we control the terms involving $\xht$ in $L^\infty$. Hence
\begin{equation}
||II_1||_{L^2(\Odo)}\leq C_r'||\FD^{\ell_s}\pahtu f- \FD^{\ell_s}\pahtw g||_{L^2(\Odo)},
\end{equation}
where
$$C_r' = C_r'\big(M', ||\xhtu||_{H^r(\Odo)}, ||\xhtw||_{H^r(\Odo)}, {\tsum}_{\ell\leq r-1}||D_t^\ell\Vhtu||_{H^{r-\ell}(\Odo)}, {\tsum}_{\ell\leq r-1}||D_t^\ell\Vhtw||_{H^{r-\ell}(\Odo)}\big),$$
and
\begin{multline}
||II_2||_{L^2(\Odo)} \leq C_r' ||\pa \FD^{\ell_1}\xhtu-\pa \FD^{\ell_1}\xhtw||_{L^\infty(\Odo)}||\FD^{\ell_s}\pahtw g||_{L^2(\Odo)}\\
\leq C_r'\bigtwo( ||\xhtu - \xhtw||_{H^r(\Odo)}+{\sum}_{\ell\leq r-1}||D_t^{\ell}(\Vhtu-  \Vhtw)||_{H^{r-\ell}(\Odo)}\bigtwo)||\FD^{\ell_s}\pahtw g||_{L^2(\Odo)}.
\end{multline}
Second, when $\ell_j\geq r-2$ for $j=1,\cdots, s-1$, since $r\geq 5$, there is at most one $\ell_j$, say $\ell_1$, can be greater than or equal to $r-2$. In this case, we have:
\begin{multline}
||II_1||_{L^2(\Odo)}\leq C_r'||\pa \FD^{\ell_1}\xhtu||_{L^3(\Odo)}||\FD^{\ell_s}\pahtu f-\FD^{\ell_s}\pahtw g||_{L^6(\Odo)} \\
\leq C_r'\bigtwo(||\T \xht_I||_{L^2(\Odo)}+{\sum}_{\ell\leq r-1}||D_t^{\ell}\Vhtu||_{H^{r-\ell}(\Odo)}\bigtwo)||\FD^{\ell_s}\pahtu f-\FD^{\ell_s}\pahtw g||_{L^6(\Odo)},
\end{multline}
where $\ell_s\leq 2$, and
\begin{multline}
||II_2||_{L^2(\Odo)}\leq C_r'||\pa \FD^{\ell_1}\xhtu-\pa \FD^{\ell_1}\xhtw||_{L^3(\Odo)}||\FD^{\ell_s}\paht_{II} g||_{L^6(\Odo)}\\
\leq C_r'\bigtwo( ||\T(\xhtu - \xhtw)||_{H^r(\Odo)}+{\sum}_{\ell\leq r-1}||D_t^{\ell}(\Vhtu-  \Vhtw)||_{H^{r-\ell}(\Odo)}\bigtwo)||\FD^{\ell_s}\paht_{II} g||_{L^6(\Odo)}.
\end{multline}
This concludes the proof after adapting the Sobolev extension theorem.
\end{proof}

As a consequence, if we take
 $g = 0$, $\xveu = \xvew \equiv \xve$, we have:
\begin{cor}
\label{commutator FD pave lemma}
If $r\!\geq\! 5$
 then there is a constant $C_r\! =\! C_r(M_0, ||\xve||_{H^r(\Omega)}, {\sum}_{\ell\leq r-1}||D_t^\ell V||_{H^{r-\ell}(\Omega)})$ such that:
 \begin{equation}
   ||[\FD^r,\paht] f||_{L^2(\Odo)}\leq C_r\bigtwo(||\T\xve||_{H^r(\Omega)}\big(||\paht f||_{L^\infty(\Odo)}
   +{\sum}_{\ell\leq 2}||\FD^\ell\paht f||_{L^6(\Odo)}\big)
   +{\sum}_{\ell\leq r-1}||\FD^\ell \paht f||_{L^2(\Odo)}\bigtwo).
  \label{commutator FD pave}
 \end{equation}
 Here $\FD^r$ be the mixed space-time tangential derivatives defined in Section \ref{def T and FD}.
 In particular, one has:
 \begin{equation}
   ||[\FD^{r-1}D_t,\paht] f||_{L^2(\hD_t)}\leq C_r\bigtwo(||\paht f||_{L^\infty(\hD_t)}+{\sum}_{\ell\leq 2}||\FD^\ell\paht f||_{L^6(\hD_t)}
   +{\sum}_{\ell\leq r-1}||\FD^\ell \paht f||_{L^2(\hD_t)}\bigtwo).
 \end{equation}
\end{cor}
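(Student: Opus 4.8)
The plan is to deduce Corollary~\ref{commutator FD pave lemma} directly from Lemma~\ref{commutator FD pave lemma diff} by specialization. First I would set $g = 0$, $\xveu = \xvew \equiv \xve$, $\Vu = \Vw \equiv V$ (so $\Vve_\I = \Vve_\II = \ssm V$), $\xhtu = \xhtw \equiv \xht$, and $f_I = f$, $f_{II} = 0$ in the statement of Lemma~\ref{commutator FD pave lemma diff}. With these choices the operator $\pahtw$ equals $\pahtu = \paht$, so the left-hand side $\|[\FD^r, \pahtu] f - [\FD^r, \pahtw] g\|_{L^2(\Odo)}$ becomes exactly $\|[\FD^r, \paht] f\|_{L^2(\Odo)}$. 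On the right-hand side, every term containing a difference of smoothed flows (i.e.\ $\|\T\xveu - \T\xvew\|_{H^r(\Omega)}$, $\|\xveu - \xvew\|_{H^r(\Omega)}$, and the terms $\sum_{\ell\le r-1}\|D_t^\ell(\Vve_\I - \Vve_\II)\|_{H^{r-\ell}(\Omega)}$) vanishes, and every term containing $g$ or $\pahtw g$ vanishes as well since $g = 0$. What survives is precisely
\[
  C_r\Big( {\sum}_{\ell \le r-1} \|\FD^\ell \paht f\|_{L^2(\Odo)}
  + \|\T\xve\|_{H^r(\Omega)}\big\{ \|\paht f\|_{L^\infty(\Odo)}
  + {\sum}_{\ell\le 2}\|\FD^\ell \paht f\|_{L^6(\Odo)}\big\}\Big),
\]
with the constant $C_r$ now depending only on $M_0$ (since $M'$ specializes, and with $g=0$ the relevant bound reduces), $\|\xve\|_{H^r(\Omega)}$ and $\sum_{\ell \le r-1}\|D_t^\ell V\|_{H^{r-\ell}(\Omega)}$, because the two velocity fields coincide. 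This is exactly \eqref{commutator FD pave}; I would note that passing from the $\Odo$-based extended quantities $\|\xht\|_{H^r(\Odo)}$, $\sum\|D_t^\ell \Vht\|_{H^{r-\ell}(\Odo)}$ back to $\|\xve\|_{H^r(\Omega)}$, $\sum\|D_t^\ell V\|_{H^{r-\ell}(\Omega)}$ uses the norm equivalence \eqref{equiv extended norm} (Theorem~\ref{comparable thm}), just as in the last line of the proof of Lemma~\ref{commutator FD pave lemma diff}.

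For the ``in particular'' assertion, I would observe that $\FD^{r-1} D_t$ is one of the operators $\FD^r \in \FD \times \cdots \times \FD$ ($r$ times), since $D_t \in \FD$. Hence the estimate \eqref{commutator FD pave} applied with this particular $T^I = \FD^{r-1}D_t$ gives the claimed bound; when $V$ actually solves the smoothed Euler equations one has $\xht(t,y) = x_0(y)$ and $\Vht = 0$ near $\pa\Odo$, so the domain reduces to $\hD_t$ and the $\|\T\xve\|_{H^r(\Omega)}$ factor can be absorbed into $C_r$ along the lines used throughout Section~\ref{gravitysection} (e.g.\ in the proof of Theorem~\ref{Dtellphi' 1}), yielding the stated form on $\hD_t$.

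The only genuine content here is verifying that Lemma~\ref{commutator FD pave lemma diff} truly specializes in this clean way — in particular checking that no hidden term on its right-hand side fails to vanish or fails to collapse to the $g=0$ form. The main point to be careful about is the $L^\infty$ and $L^6$ terms: in the lemma these appear as $\|\pahtu f - \pahtw g\|_{L^\infty}$ and $\sum_{\ell\le 2}\|\FD^\ell \pahtu f - \FD^\ell \pahtw g\|_{L^6}$, and with $g = 0$, $\pahtu = \paht$ these become $\|\paht f\|_{L^\infty(\Odo)}$ and $\sum_{\ell \le 2}\|\FD^\ell \paht f\|_{L^6(\Odo)}$ respectively — so they match \eqref{commutator FD pave} exactly. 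Since this is a pure specialization, there is essentially no obstacle beyond this bookkeeping; the substantive work was already done in proving Lemma~\ref{commutator FD pave lemma diff}.

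\begin{proof}
Apply Lemma~\ref{commutator FD pave lemma diff} with $g = 0$, $\xveu = \xvew \equiv \xve$ (so $\xhtu = \xhtw \equiv \xht$), $\Vu = \Vw \equiv V$, and $f_I = f$. Then $\pahtu = \pahtw = \paht$, so the left-hand side is $\|[\FD^r,\paht]f\|_{L^2(\Odo)}$; on the right-hand side all terms involving $\xveu - \xvew$, $D_t^\ell(\Vve_\I - \Vve_\II)$, $g$, or $\pahtw g$ vanish, leaving
\[
  \|[\FD^r,\paht] f\|_{L^2(\Odo)} \leq C_r\Big({\sum}_{\ell \le r-1}\|\FD^\ell \paht f\|_{L^2(\Odo)} + \|\T\xve\|_{H^r(\Omega)}\big(\|\paht f\|_{L^\infty(\Odo)} + {\sum}_{\ell\le 2}\|\FD^\ell\paht f\|_{L^6(\Odo)}\big)\Big),
\]
with $C_r = C_r(M_0, \|\xve\|_{H^r(\Omega)}, {\sum}_{\ell\le r-1}\|D_t^\ell V\|_{H^{r-\ell}(\Omega)})$ after using \eqref{equiv extended norm} to replace the $\Odo$-norms of $\xht, \Vht$ by the corresponding $\Omega$-norms of $\xve, V$. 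This is \eqref{commutator FD pave}. Since $D_t \in \FD$ we have $\FD^{r-1}D_t \in \FD^r$, and applying \eqref{commutator FD pave} with $T^I = \FD^{r-1}D_t$, noting that for $V$ solving the smoothed Euler equations $\xht = x_0$ and $\Vht = 0$ near $\pa\Odo$ so that the estimate restricts to $\hD_t$ and the factor $\|\T\xve\|_{H^r(\Omega)}$ is absorbed into $C_r$, gives the final assertion.
\end{proof}
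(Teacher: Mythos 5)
Your derivation of the first inequality \eqref{commutator FD pave} is exactly the paper's approach: the corollary is stated in the paper immediately after the sentence ``As a consequence, if we take $g=0$, $\xveu=\xvew\equiv\xve$, we have,'' which is precisely the specialization you carry out, with the dependence on $\xht,\Vht$ converted to $\xve,V$ via \eqref{equiv extended norm}. That part of your proof is correct.

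Your proof of the ``in particular'' estimate is wrong, however. You write that you ``apply \eqref{commutator FD pave} with $T^I=\FD^{r-1}D_t$'' and then claim the factor $\|\T\xve\|_{H^r(\Omega)}$ is absorbed into $C_r$ because $\xht=x_0$ and $\Vht=0$ near $\pa\Odo$. Neither step works. If you literally apply \eqref{commutator FD pave} to the operator $\FD^{r-1}D_t$, the resulting bound still contains the multiplicative factor $\|\T\xve\|_{H^r(\Omega)}$, which is \emph{not} controlled by $M_0,\|\xve\|_{H^r(\Omega)}$ or $\sum_{\ell\le r-1}\|D_t^\ell V\|_{H^{r-\ell}(\Omega)}$, so it cannot simply be ``absorbed into $C_r$.'' And the fact that $\xht=x_0$ and $\Vht=0$ on $\pa\Odo$ has nothing to do with controlling the interior Sobolev norm $\|\T\xve\|_{H^r(\Omega)}$; that boundary information is used elsewhere (e.g.\ in Lemma \ref{bdy estimate for D_tphi_m'}) but is irrelevant here.

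The second inequality is a genuine \emph{strengthening} of the first for the subfamily $\FD^{r-1}D_t\subsetneq\FD^r$, and the reason the $\T\xve$ factor disappears is structural, not a consequence of the first estimate. When you expand $[\FD^{r-1}D_t,\paht]f$ using \eqref{dtpavecomm}--\eqref{papavecomm}, the only place $\|\T\xve\|_{H^r}$ could arise is the worst-case term with all $r$ derivatives landing on $\xht$, namely $\FD^{r}\xht$. But for the particular operator $\FD^{r-1}D_t$ this top-order term is $\FD^{r-1}D_t\,\xht=\FD^{r-1}\Vht$, which carries at most $r-1$ tangential derivatives of $\Vht$ and is bounded in $L^2(\Odo)$ by $\sum_{\ell\le r-1}\|D_t^\ell\Vht\|_{H^{r-\ell}(\Odo)}\lesssim\sum_{\ell\le r-1}\|D_t^\ell V\|_{H^{r-\ell}(\Omega)}$, a quantity already inside $C_r$. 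This is exactly the observation made in the proof of Lemma \ref{Dtellphi} (``when $k_1=r$, $\FD^{k_1}$ involves at least one $D_t$, and so we control $\FD^{k_1}\pa\xht$ in $L^2$ by $\sum_{k\le r-1}\|D_t^k\Vht\|_{H^{r-k}(\Odo)}$''). To make your proof correct you would need to either redo the commutator expansion for $\FD^{r-1}D_t$ and invoke this structural fact, or at minimum cite it; deducing the second bound from the first as you do is not valid.
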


The following
lemma is similar to the previous one but is better adapted to proving
estimates for the wave equation. As in the previous lemma, the point is that
the commutator between $r$ derivatives and $\pave$ is a differential operator
of order $r$ with coefficients depending on $r+1$ derivatives of $\xve$.

In the following lemma, we will assume that we have the following a priori
bound for $\xveu, \xvew$:
\begin{equation}
 {\tsum}_{|I| + k \leq 3} |D_t^k \pa_y^I \xveu|
 + |D_t^k \pa_y^I \xvew| \leq  M.
 \label{Massumpapp}
\end{equation}
If we are considering vector fields which do not involve time derivatives,
we can instead assume that only:
\begin{equation}
 {\tsum}_{|I| \leq 3} |\pa_y^I \xveu|
 + |D_t^k \pa_y^I \xvew| \leq  M_0.
 \label{M0assumpapp}
\end{equation}
\begin{lemma}
  \label{vectcommlemma}
  Fix $s \geq 0$ and suppose that \eqref{Massumpapp} holds.
  If $\U = \coord, \FD$ or $\U = \T$, there is a constant
   $C_s = C_s(M, ||\U^{s-2} \xveu||_{H^2(\Omega)},
   ||\U^{s-2} \xvew||_{H^2(\Omega)})$ so that if
   $T^J \in \U^s$,
   with notation as in Section \ref{def T and FD},
   then:
  \begin{multline}
   ||[T^J, \paveu] f - [T^J, \pavew] g||_{L^2}
    \leq
C_s\big( ||T^J \xvew||_{H^1} + 1\big)
{\sum}_{j \leq s} ||\U^{j-2} (\paveu f - \pavew g)||_{H^1}
\\ +
C_s\big(
|||T^J(\xveu - \xvew)||_{H^1} +
||\U^{s-1} (\xveu - \xvew)||_{H^1}+
||\U^{2} (\xveu - \xvew)||_{C^1_{y,t}}\big)
{\sum}_{j \leq s} ||\U^{j-2} \pavew g||_{H^1},
\label{vectcomm}
\end{multline}
with $H^s = H^s(\Omega)$ and $||\alpha||_{C^1_{y,t}} =
\sum_{k + |J| \leq 1} ||\pa_y^J D_t^k \alpha||_{L^\infty([0,T] \times \Omega)}$.
If $\U = \{\pa_{y^1}, \pa_{y^2}, \pa_{y^3}\}$, the above estimate holds
assuming \eqref{M0assumpapp} holds with $M$ replaced by $M_0$.
\end{lemma}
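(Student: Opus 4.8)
I would prove this by induction on $s$, reducing the bound for a string $T^J$ of order $s$ to the same bound for strings of order $\le s-1$. The base case $s=0$ is trivial since $[T^J,\pave_i]$ is empty. For the inductive step, the starting point is the first‑order commutator identities \eqref{papavecomm}, \eqref{dtpavecomm}, together with the fact that for $T\in\T$ (or $T=D_t$) the commutator $[T,\pa_a]$ is a differential operator of order $\le 1$ with fixed, $\xve$‑independent smooth coefficients. Iterating these, exactly as in the derivation of \eqref{eq:highercommutatorssec2} and \eqref{dtpaSdefsmooth}, gives an expansion
\[
[T^J,\pave_i]f \;=\; \textstyle\sum c\,\big(T^{J_1}A\big)\cdots\big(T^{J_m}A\big)\,\big(\pave_i T^{J_{m+1}}f\big)\;+\;(\text{l.o.t.}),
\]
where $A$ denotes the Jacobian $A^a_{\m i}$, $m\ge 1$, $|J_1|+\cdots+|J_{m+1}|\le|J|=s$ with each $|J_\ell|\le s-1$, and ``l.o.t.'' collects the terms generated by $[T,\pa_a]$ when $\U=\T$ or $\U=\FD$; these have the same structural form but strictly fewer total derivatives, so they are absorbed at the end by the same estimates.

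\textbf{Subtracting the two flows.} Writing this expansion for $(\paveu,\xveu,f)$ and for $(\pavew,\xvew,g)$ and taking the difference, I would expand each difference of products by the telescoping identity $\prod a_\ell-\prod b_\ell=\sum_k(\prod_{\ell<k}a_\ell)(a_k-b_k)(\prod_{\ell>k}b_\ell)$, leaving terms with exactly one difference factor of one of two kinds. (i) A difference of Jacobian derivatives $T^{J_\ell}A_I-T^{J_\ell}A_{II}$: these are controlled by Lemma \ref{inversedifflemma}, whose estimate \eqref{dinvdiff} bounds $\|T^K(A_{I\,i}^a-A_{II\,i}^a)\|_{L^2}$ in terms of $\xve$‑difference norms of order $|K|+1$; the extreme cases, where $T^{J_\ell}$ carries all $s$ or $s-1$ of the derivatives, produce precisely the terms $\|T^J(\xveu-\xvew)\|_{H^1}$ and $\|\U^{s-1}(\xveu-\xvew)\|_{H^1}$ in \eqref{vectcomm}, while the low‑order differences are swallowed by the $\|\U^2(\xveu-\xvew)\|_{C^1_{y,t}}$ term via Sobolev embedding. (ii) A difference $\pave_i T^{J_{m+1}}f-\pavew_i T^{J_{m+1}}g$ with $|J_{m+1}|\le s-1$: rewrite it as $T^{J_{m+1}}(\paveu_i f-\pavew_i g)+\big([\paveu_i,T^{J_{m+1}}]f-[\pavew_i,T^{J_{m+1}}]g\big)+(A_{I\,i}^a-A_{II\,i}^a)\pa_a T^{J_{m+1}}g$. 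The first term contributes the norms $\|\U^{j-2}(\paveu f-\pavew g)\|_{H^1}$, $j\le s$, appearing on the right of \eqref{vectcomm}; the second is a difference of commutators of order $\le s-1$, handled by the inductive hypothesis; the third is of type (i).

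\textbf{Estimating the products and the main obstacle.} Each remaining product is estimated in $L^2$ by placing all but one factor in $L^\infty$: factors with at most $\min(3,s-3)$ derivatives go into $L^\infty$ by \eqref{regsob}, \eqref{vectsob} (using the a priori bound \eqref{Massumpapp}, or \eqref{M0assumpapp} when $\U=\{\pa_{y^1},\pa_{y^2},\pa_{y^3}\}$, for the lowest ones), Jacobian derivatives are converted to $\U$‑derivatives of $\xve$ via Lemma \ref{inverselemma}/\eqref{app:dxtu}, and the algebra property of $H^2(\Omega)$ lets one multiply several $L^\infty$ factors; the one remaining high‑order factor goes into $L^2$, which is legitimate because — as in the proof of Lemma \ref{inverselemma} — two factors of order $\ge\max(4,s-4)$ would force both $s\le4$ and $s\ge8$, so there is at most one. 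The delicate point, which I expect to be the main obstacle, is the bookkeeping that keeps the ``$\U$‑count'' and the ``ordinary–derivative count'' separate at every step, so that the $\xve$‑difference enters only through $\|T^J(\xveu-\xvew)\|_{H^1}$, $\|\U^{s-1}(\xveu-\xvew)\|_{H^1}$ and the low‑order $C^1_{y,t}$ norm — in particular that at top order no more than one ordinary derivative of $\xveu-\xvew$ survives. This is handled by exploiting $\pave_i\xve^k=\delta^k_i$ when organizing the iterated commutator and by commuting ordinary derivatives into tangential ones near the boundary exactly as in the proof of \eqref{commutewithpa}; for $\U=\{\pa_y\}$ all derivatives coincide, $[T,\pa_a]=0$, and the argument simplifies to the classical one.
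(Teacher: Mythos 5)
Your proposal is correct and follows essentially the same route as the paper: expand the iterated commutator via \eqref{dinv} into products of derivatives of the Jacobian times $T^{J_m}\pave f$, telescope the difference of the two flows so that exactly one difference factor appears (controlled by Lemma \ref{inverselemma}/\ref{inversedifflemma}), and distribute $L^2$ versus $L^\infty$ using Sobolev embedding together with the observation that at most one factor can carry order $\geq\max(4,s-4)$. The only (cosmetic) difference is that you organize this as an induction on $s$ with residual lower-order commutators absorbed by the inductive hypothesis, whereas the paper writes out the fully expanded sum over $J_1+\cdots+J_m=J$ in one step, so no recursion is needed.
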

Before proving this lemma we record a few useful instances of it which will be
used at several points. Taking $g= 0$ and writing $\xve =\xveu$,
 with $C_s' =C_s'(M, ||\T^{s-2} \xve||_{H^2(\Omega)})$, we have:
\begin{alignat}{2}
 ||[\pa_y^I, \pave ] f||_{L^2(\Omega)}
 &\leq C_s(M_0, ||\xve||_{H^{s+1}}) ||\pave f||_{H^{s-1}(\Omega)},
 \quad && |I| = s,\\
 ||[T^J, \pave]f||_{L^2(\Omega)}
 &\leq C_s'
(||T^J \xve||_{H^1(\Omega)} + 1){\sum}_{j \leq s-1}||\T^{j}\pave f||_{H^1(\Omega)},
 &&\quad T^J \!\!\in \T^s\! .
\end{alignat}
\begin{proof}
 Using \eqref{dinv}, we have:
 \begin{multline}
[T^J,\paveu] f - [T^J, \pavew] g =
\underbrace{-\big((\paveu T^J\xveu)\paveu f-( T^J \xvew)\pavew g\big)}_{I}
\\+{\sum}_{J_1+\cdots+J_m = J, |J_i| \leq s-1}\underbrace{
(\pa T^{J_1}\xveu)\cdots(\pa T^{J_{m-1}}\xveu)T^{J_m}\paveu f
-(\pa T^{J_1}\xvew)\cdots(\pa T^{J_{m-1}}\xvew)T^{J_m}\pavew g}_{II},
\label{Istep0}
\end{multline}
We control:
\begin{equation}
 ||(\paveu{}_{\!} T^J \!\xveu)\paveu\! f{}_{\!} - (\pavew {}_{\!} T^J\! \xvew)\pavew g||_{L^2(\Omega)}
 \leq || (\paveu {}_{\!} T^J\! \xveu - \pavew {}_{\!} T^J \!\xvew)\pavew g||_{L^2(\Omega)}
 + ||(\pavew {}_{\!} T^J \!\xvew) (\paveu\! f \!- \pavew g)||_{L^2(\Omega)}.
 \label{firstterm}
\end{equation}
If $|J| \leq 2$, then we control the factors involving $\xveu, \xvew$ in
$L^\infty$ and the result is bounded by the right-hand side of \eqref{vectcomm}.
If instead $|J| \geq 3$, we control the factors involving
$f,g$ in $L^\infty$ and note that since we must have $s \geq 3$, by
the Sobolev estimate \eqref{vectsob}, we have:
\begin{equation}
 ||\pavew g||_{L^\infty(\Omega)}
 \leq C{\sum}_{j \leq 2} ||\U^j \pavew g||_{H^1(\Omega)}
 \leq C{\sum}_{j \leq s} ||\U^j \pavew g||_{H^1(\Omega)},
\end{equation}
which is bounded by the right-hand side of \eqref{vectcomm}. Bounding
$||\paveu f - \pavew g||_{L^\infty(\Omega)}$ in the same way shows
that the left-hand side of \eqref{firstterm} is controlled by
the right-hand side of \eqref{vectcomm}.

To control $II$, it suffices to consider
\begin{equation}
II_1\!=\!(\pa T^{J_1\!}\xveu-\pa T^{J_1\!}\xvew\!)(\pa T^{J_2\!}\xvew\!){}_{\!}\cdots{}_{\!}(\pa T^{J_{m\!-\!1}\!}\xvew\!)
T^{J_m}\pavew g,
\qquad
II_2\!=\!(\pa T^{J_1\!}\xveu\!){}_{\!}\cdots{}_{\!}(\pa T^{J_{m\!-\!1}\!}\xveu\!)T^{J_m\!}(\paveu\! f\!-
\pavew g),
\end{equation}
where $J_1\! +\! ... + \!J_m \!=\! J$ and $|J_{\!1\!}|,..., |J_{m\!}| \leq \!s\!-\!1$.
We will just bound $||II_2||_{L^2(\Omega)}$, since the estimate for
$||II_{1}||_{L^2(\Omega)}$ is similar.
We start by noting that for each
$J_i$ with $|J_i| \leq 2, i \leq m-1$, we control the corresponding
factors of $\xveu$ in $L^\infty$ by the right-hand side of \eqref{vectcomm}.
Rearranging indices, it therefore suffices to control:
\begin{equation}
 ||(\paveu T^{J_1} \xveu)\cdots
 (\paveu T^{J_\ell} \xveu) (T^{J_m} \paveu (f-g))||_{L^2(\Omega)},
 \quad
 |J_1|, ..., |J_\ell| \geq 3, |J_1| + \cdots + |J_\ell| + |J_m| \leq s-1.
\end{equation}
If there are no factors of $\xveu$ present then the result is bounded
by $||\pave T^{J_m} (f-g)||_{L^2(\Omega)}$ and since $J_m \leq s-1$ we control
this by the right-hand side of \eqref{vectcomm}.
If there is at least one factor of $\xveu$ present,
Note that the conditions on the $|J_k|$ force $|J^m| \leq s-4$ and so
we control the last factor in $L^\infty$ by $\sum_{j \leq s-2}
||\V^j \pave (f-g)||_{H^1(\Omega)}$
by the Sobolev estimate \eqref{vectsob}. We now use Holder's inequality
and the Sobolev embedding \eqref{sob2} to control:
\begin{equation}
||\paveu T^{J_{{}_{\!}1}}\xveu\!\cdots \paveu T^{J_\ell} \xveu||_{L^{{}_{\!}2}(\Omega)}
\!\leq C
||\paveu T^{J_1}\xveu||_{L^{{}_{\!}2\ell}(\Omega)}
\!\cdots
||\paveu T^{J_\ell}\xveu||_{L^{{}_{\!}2\ell}(\Omega)}
\!\leq C ||\paveu T^{J_1}\xveu||_{H^{1{}_{\!}}(\Omega)}
\!\cdots ||\paveu T^{J_\ell} \xveu||_{H^{1{}_{\!}}(\Omega)}.
 {}
\end{equation}
We now note that since $|J_1| + ... + |J_\ell| \leq s-1$ and $|J_k| \geq 3$
for each $k = 1,..., \ell$, we in fact have $|J_k| \leq s-2$ for each $k$,
and so each of these factors is controlled by the right-hand side
of \eqref{vectcomm}.
\end{proof}

We also need a version  with pure time derivatives
in the proof of the estimates for the wave equation.
\begin{lemma}
 Fix $s \geq 0$. If \eqref{ubd2} holds, there is a constant
  $C_s = C_s(M, ||\xve||_s, ||V||_{\X^s})$ so that
 \begin{equation}
  || [D_t^{s+1}, \pave] f||_{L^2(\Omega)}
  \leq C_s
  (||\pave f||_{s,0} +
  (||V||_{\X^{s+1}} + 1)||\pave f||_{s-1}).
  \label{vectcommdt}
 \end{equation}
\end{lemma}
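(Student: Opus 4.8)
The plan is to reduce the statement to a product estimate by first expanding the commutator. Iterating the basic identity \eqref{dtpavecomm}, $[D_t,\pave_i] = -(\pave_i\ssm V^j)\pave_j$, exactly $s+1$ times — equivalently, applying \eqref{dtpaSdefsmooth}--\eqref{sepsdef} with $k=s+1$ and subtracting the $\ell=s+1$ term, which equals $\delta_i^j\pave_jD_t^{s+1}$ and so cancels in the commutator — one writes
\begin{equation*}
 [D_t^{s+1},\pave_i]f = {\sum} c\,(\pave D_t^{\ell_1}\ssm V)\cdots(\pave D_t^{\ell_n}\ssm V)\,(\pave D_t^{\ell}f),
\end{equation*}
a finite sum over $n\geq 1$ and $\ell,\ell_1,\dots,\ell_n\geq 0$ with $\ell+\ell_1+\cdots+\ell_n+n = s+1$; in particular $\ell\leq s$. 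It therefore suffices to bound the $L^2(\Omega)$ norm of each such product by the right-hand side of \eqref{vectcommdt}, which I would do by H\"older's inequality together with the Sobolev embeddings \eqref{regsob}, \eqref{sob2}.

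The preliminary bounds I would record are: by \eqref{ubd2} and \eqref{dinv} (so that $|\pa_y^\ell A^a_{\m i}|\leq C(M)$ for $\ell\leq 2$), together with the boundedness of $\ssm$ on $L^\infty$ and on Sobolev spaces from Section \ref{smoothingsec}, every factor with $\ell_k\leq 2$ obeys $||\pave D_t^{\ell_k}\ssm V||_{L^\infty(\Omega)}\leq C(M)$; a factor with $\ell_k\geq 3$ obeys $||\pave D_t^{\ell_k}\ssm V||_{L^2(\Omega)}\leq C(M)||D_t^{\ell_k}V||_{H^1(\Omega)}\leq C(M)||V||_{\X^{\ell_k+1}}\leq C(M)||V||_{\X^{s+1}}$ (using $\ell_k+1\leq s+1$), and $||\pave D_t^{\ell_k}\ssm V||_{L^\infty(\Omega)}\leq C(M)||D_t^{\ell_k}V||_{H^3(\Omega)}\leq C(M)||V||_{\X^{\ell_k+3}}$. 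For the $f$-factor one always has $||\pave D_t^{\ell}f||_{L^2(\Omega)}\leq ||\pave f||_{s,0}$ (since $\ell\leq s$), and whenever $\ell\leq s-3$ also $||\pave D_t^{\ell}f||_{L^\infty(\Omega)}\leq C||\pave D_t^{\ell}f||_{H^2(\Omega)}\leq ||\pave f||_{s-1}$.

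Then I would run a case analysis on the orders appearing in each product. If $\ell_k\leq 2$ for every $k$, put all $V$-factors in $L^\infty$ (bound $\leq C(M,s)$) and the $f$-factor in $L^2$, giving $C_s||\pave f||_{s,0}$. If exactly one index $\ell_{k_0}$ exceeds $2$, put that factor in $L^2$ (contributing $||V||_{\X^{s+1}}$) and everything else in $L^\infty$; since $\ell_{k_0}\geq 3$ forces $\ell\leq s+1-n-\ell_{k_0}\leq s-3$, the $f$-factor is controlled by $||\pave f||_{s-1}$, giving $C_s||V||_{\X^{s+1}}||\pave f||_{s-1}$. If $m\geq 2$ indices exceed $2$ (which the constraint allows only for large $s$), put one of them in $L^2$ and the remaining $m-1$ in $L^\infty$ via their $H^3$ bound; the constraint now gives, for each high index placed in $L^\infty$, $\ell_k\leq s+4-4m\leq s-4$, so $||V||_{\X^{\ell_k+3}}\leq ||V||_{\X^{s-1}}\leq ||V||_{\X^s}$ is absorbed into $C_s$, while $\ell\leq s-7\leq s-3$ again puts the $f$-factor into $||\pave f||_{s-1}$; the total is $C_s||V||_{\X^{s+1}}||\pave f||_{s-1}$. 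Summing the finitely many terms and absorbing combinatorial constants into $C_s$ yields \eqref{vectcommdt}. (This is essentially Lemma \ref{vectcommlemma} with $\U=\{D_t\}$, the one difference being that the low-order $V$-factors are controlled here directly by the a priori bound \eqref{ubd2} rather than by Sobolev embedding.)

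The main obstacle I anticipate is the bookkeeping in this case analysis: one must check in every configuration that the constraint $\ell+\ell_1+\cdots+\ell_n+n=s+1$ forces the $f$-factor to carry at most $s-3$ time derivatives precisely whenever some $V$-factor is too high-order to stay in $L^\infty$, so that the contribution of $f$ lands in $||\pave f||_{s-1}$ (and not $||\pave f||_{s,0}$), and that at most one $V$-factor ever needs the $||V||_{\X^{s+1}}$ norm — the others being of order $\leq s-4$ when $s$ is large — so that the estimate stays linear in $||V||_{\X^{s+1}}$.
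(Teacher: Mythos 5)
Your proposal is correct and follows essentially the same route as the paper: expand the commutator into products of $\pave D_t^{\ell_k}\ssm V$ times a derivative of $f$, control low-order $V$-factors in $L^\infty$ via \eqref{ubd2}, put the single possible very-high-order factor (which is where $\|V\|_{\X^{s+1}}$ enters, from $D_t^{s+1}\xve = D_t^s\ssm V$) in $L^2$, and use the constraint on the orders to push the $f$-factor into $\|\pave f\|_{s-1}$ whenever a $V$-factor leaves $L^\infty$. The only cosmetic differences are that the paper handles several simultaneous high-order $\xve$-factors by H\"older in $L^{2j}$ with $H^1\hookrightarrow L^{2j}$ rather than your $L^2\times L^\infty$ splitting, and writes the $f$-factor as $D_t^{\ell}\pave f$ (matching the norm $\|\pave f\|_{s,0}$ literally) rather than $\pave D_t^{\ell} f$.
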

\begin{proof}
 By \eqref{Istep0} with $g = 0$ and $T^J = D_t^{s+1}$, we have:
 \begin{equation}
  [D_t^{s+1}, \pave_i] f =
  -(\pave D_t^{s+1} \xve)(\pave f)
  +{\sum}_{s_1 + ... + s_m = s+1,\,\, s_m\geq 1} (\pa D_t^{s_1} \xve)
  \cdots (\pa D_t^{s_{m-1}}\xve)
  (D_t^{s_m} \pave f).
  {}
 \end{equation}
 We now argue as in the previous lemma, but we want to
 point out explicitly how the norms of $V$ arise. We write the
 first term as $-(\pave D_t^s \ssm V)(\pave f)$. If $s \leq 2$ then
 we control the first factor in $L^\infty$ since $\ssm: L^\infty \to L^\infty$.
 If instead $s \geq 3$,
 we control the second factor using Sobolev embedding,
 $||\pave f||_{L^\infty(\Omega)} \leq C||\pave f||_{H^2(\Omega)}
 \leq C||\pave f||_{s-1}$, and now we note that
 $||\pave D_t^s \ssm V||_{L^2(\Omega)} \leq C(M) ||V||_{\X^{s+1}}$,
 using that $\ssm:L^2\to L^2$.

 The terms in the sum can be controlled using essentially the same argument
 as in the previous lemma. Rearranging indices it suffices to control:
 \begin{equation}
  ||(\pave D_t^{s_1}\xve)\cdots (\pave D_t^{s_{j}} \xve)
  (D_t^{s_m}\pave f)||_{L^2(\Omega)},
  \quad s_\ell \geq 3, \ell = 1,..., j, s_1 + \cdots s_j + s_m \leq s,
  s_m \geq 1.
  {}
 \end{equation}
 If there are no factors of $\xve$ present then we control this
 by $||D_t^{s_m}\pave f||_{L^2(\Omega)} \leq ||\pave f||_{s,0}$.
 and if there is at least one factor of $\xve$ present then
 we must have $s_m \leq s-3$ and so we can control
 $||D_t^{s_m}\pave f||_{L^\infty(\Omega)}
 \leq C||D_t^{s_m}\pave f||_{H^2(\Omega)}
 \leq C||\pave f||_{s-1}$. When $j = 1$ the result
 is obvious since $||\pave D_t^{s_1} \xve||_{L^2(\Omega)}
 \leq C||\xve||_{s_1 + 1}$ and $s_1 \leq s-1$.
 When $j \geq 2$ we have by Sobolev embedding \eqref{sob2}:
 \begin{equation}
  ||(\pave D_t^{s_1}\xve)\cdots (\pave D_t^{s_j} \xve)||_{L^2(\Omega)}
  \leq C ||\pave D_t^{s_1}\xve||_{L^{2j}(\Omega)}
  \cdots||\pave D_t^{s_j}\xve||_{L^{2j}(\Omega)}
  \leq C ||\pave D_t^{s_1}\xve||_{H^1(\Omega)}
  \cdots ||\pave D_t^{s_j} \xve||_{H^1(\Omega)}.
  {}
 \end{equation}
 Since each
 $s_\ell$ must satisfy $s_\ell \leq s-3$, each of these factors
 is bounded by $C(M)||\xve||_{s-1}$, as required.
\end{proof}
We also need to use the following commutator estimates in $\tD_t$.
\begin{lemma} \label{commutators in Eulerian coordinate}
Let $r \geq 7$ and $k+\ell \leq r+1$ with $k\geq 2$, we have:
\begin{equation}
||[D_t^{k-1}, \pave]\varphi||_{H^\ell(\D_t)} \leq P\big({\tsum}_{s\leq k-2}||D_t^s\ssm V||_{H^{r-s}(\tD_t)}\big){\sum}_{s\leq k-2}||D_t^s\varphi||_{H^{r-s}(\tD_t)}\label{ucommEu},
\end{equation}
and for $k+\ell=r$, we have:
\begin{equation}
||[D_t^{k-1}, \Dve]\varphi||_{H^\ell(\D_t)} \leq P\big({\tsum}_{s\leq k-2}||D_t^s\ssm V||_{H^{r-s}(\tD_t)}\big){\sum}_{s\leq k-2}||D_t^s\varphi||_{H^{r-s}(\tD_t)}\label{gvecommEu}.
\end{equation}
\end{lemma}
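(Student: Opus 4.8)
This final statement is Lemma \ref{commutators in Eulerian coordinate}, giving commutator estimates for $[D_t^{k-1},\pave]$ and $[D_t^{k-1},\Dve]$ acting on a function $\varphi$ in the Eulerian domain $\tD_t$. The plan is to reduce everything to the commutator identities already established and to Sobolev product estimates.

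\medskip

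\emph{Approach.} First I would recall the basic identity $[D_t,\pave_i] = -(\pave_i\ssm V^j)\pave_j$ from \eqref{dtpavecomm}, and iterate it to expand $D_t^{k-1}\pave_i$ as $\pave_i D_t^{k-1}$ plus a sum of terms each of which is a product of factors of the form $D_t^{s_1}\pave(\ssm V),\dots, D_t^{s_{m-1}}\pave(\ssm V)$ multiplied by one factor $D_t^{s_m}\pave\varphi$, with $s_1+\cdots+s_m = k-1$ and each $s_i \le k-2$ (so in particular $s_m\le k-2$). This is the exact analogue of the expansion \eqref{Istep0} in Lemma \ref{vectcommlemma}, but now carried out with pure $D_t$'s rather than tangential fields, and in the $\xve$-coordinates rather than in $\Omega$. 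I would then take the $H^\ell(\tD_t)$ norm of each summand and apply the product estimate \eqref{product} (the Eulerian version, which follows from it by change of variables together with the bound \eqref{ubd2} on $\xve$ and its inverse). Each summand then becomes a product of norms $\|D_t^{s_i}\pave\ssm V\|_{H^{r-s_i}}$-type factors times $\|D_t^{s_m}\pave\varphi\|_{H^{r-s_m}}$, where one uses $\ell + k -1 \le r$ to distribute regularity so that the total number of derivatives on each factor never exceeds $r-s_i$; putting all but the $\varphi$-factor into the polynomial $P(\sum_{s\le k-2}\|D_t^s\ssm V\|_{H^{r-s}})$ and summing over $s_m\le k-2$ gives \eqref{ucommEu}. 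The smoothing operator $\ssm$ is bounded on all the Sobolev spaces involved (Lemma \ref{smproperties}), so $\|D_t^s\ssm V\|_{H^{r-s}}$ may freely be replaced by $\|D_t^s V\|_{H^{r-s}}$ if desired.

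\medskip

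\emph{The Laplacian commutator.} For \eqref{gvecommEu} I would write $\Dve = \delta^{ij}\pave_i\pave_j$ and use $[D_t^{k-1},\Dve]\varphi = \delta^{ij}[D_t^{k-1},\pave_i\pave_j]\varphi$, expanding the latter via the already-established $[D_t,\pave]$ identity applied twice (equivalently via the formula for $D_t^{k-1}\pave_i$ above, applied to $\pave_j\varphi$ and then once more). This produces terms structurally identical to those in the first part except that one has two "$\pave$" slots landing on $\varphi$ (i.e. an extra factor of $\pave$ on $\varphi$ and possibly an extra derivative of $\ssm V$), which is why the bound in \eqref{gvecommEu} is stated with $k+\ell = r$ rather than $k+\ell\le r+1$: the expansion costs one net derivative. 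Again the $H^\ell(\tD_t)$ estimate of each summand follows from \eqref{product}, \eqref{ubd2}, and Lemma \ref{inverselemma} (to control the $A$-factors that appear when one passes from $\pave\pave$ to $\pa_a\pa_b$ and back), and one collects the $\ssm V$ norms into the polynomial prefactor.

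\medskip

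\emph{Main obstacle.} The only real bookkeeping difficulty is to verify that in every summand of the expansion the derivative counts add up correctly: one must check that if $s_1+\cdots+s_m = k-1$ and the term carries $\ell$ further spatial derivatives, the regularity $r-s_i$ allotted to the $i$-th factor genuinely suffices, using Sobolev embedding (Lemma on $\|f\|_{L^\infty}\le C\|f\|_{H^s}$, $s\ge 2$) to put the low-order factors into $L^\infty$ exactly as in the proof of Lemma \ref{vectcommlemma}. Since $r\ge 7$ there is plenty of room, and the argument is a routine if slightly tedious adaptation of the proof of Lemma \ref{vectcommlemma} to pure time derivatives and to the Eulerian frame; I do not expect any essential new difficulty.
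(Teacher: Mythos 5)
Your proof follows the same route as the paper's: iterate the identity $[D_t,\pave_i]=-(\pave_i\ssm V^j)\pave_j$ to expand $[D_t^{k-1},\pave]\varphi$ as a sum of products $(\pave D_t^{s_1}\ssm V)\cdots(\pave D_t^{s_{n-1}}\ssm V)(\pave D_t^{s_n}\varphi)$ (and the analogous two-$\pave$ expansion for $\Dve$), then estimate each term with Sobolev product inequalities and the bound \eqref{ubd2}; the paper's proof is in fact even terser, simply listing the structural form of the terms. One small bookkeeping slip: since each commutation of $D_t$ past $\pave$ consumes one $D_t$, the correct constraint is $s_1+\cdots+s_n=k-n$ (not $k-1$ as you wrote), but this does not affect your argument since you also correctly observe $s_i\le k-2$, which is the fact the derivative-counting actually uses.
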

\begin{proof}
It is not hard to compute that $[D_t^{k-1}, \pave]$ consists of terms of the following forms:
\begin{align}
(\pave D_t^{s_1} S_\ve V)\cdots(\pave D_t^{s_{n-1}} S_\ve V)(\pave D_t^{s_n} \varphi),\quad\text{with}\quad
s_1\!+\cdots+s_n=k-n,\,\,\,n \geq 2,
\end{align}
so \eqref{ucommEu} follows. On the other hand, \eqref{gvecommEu} follows that  $[D_t^{k-1}\!\!, \Dve]$ consists of terms of the following form:
\begin{align}
(\pave D_t^{s_3} S_\ve V)\cdots (\pave D_t^{s_n} S_\ve V)(\pave^2 D_t^{s_1} S_\ve V)(\pave D_t^{s_2} \varphi),
\quad\text{with}\quad {s_1+\cdots+s_n=k-n,\,\,\,n\geq 2},\\
(\pave D_t^{s_3} S_\ve V)\cdots (\pave D_t^{s_n} S_\ve V)(\pave D_t^{s_1} S_\ve V)(\pave^2 D_t^{s_2} \varphi),
\quad\text{with}\quad {s_1+\cdots+s_n=k-n,\,\,\,n\geq 2}.\tag*{\qedhere}
\end{align}
\end{proof}

We now prove some estimates which are used in Sections \ref{waveests}
and \ref{enthsec} to control the terms on the right-hand side
of the various wave equations.
For these estimates we will assume the following bound
for $\varphi$:
\begin{equation}
  {\tsum}_{k + |J| \leq 3}  |D_t^k \pa_y^J \pave \varphi|
  + |D_t^k \varphi| \leq L.
 \label{Lassumpapp}
\end{equation}
\begin{lemma}
  \label{gestlem}
 If the equation of state
 satisfies \eqref{eq:closetoincompressible} for all $j \geq k + \ell \equiv s$,
 then there is a constant $C$ depending only on $c_1, c_2$ and a polynomial $P$
 so that:
 \begin{equation}
   || D_t^{k} (e'(\varphi)D_t^2\varphi ) - e'(\varphi) (D_t^{k+2} \varphi)||_{H^\ell}
  \leq
  C L ||D_t^{k+1} \varphi||_{H^{\ell}} +
  P(L, ||\varphi||_{s}).
  \label{nlinest}
 \end{equation}
\end{lemma}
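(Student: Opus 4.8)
The plan is to expand $D_t^k\big(e'(\varphi)D_t^2\varphi\big)$ by the Leibniz rule and isolate the single term in which all $k$ material derivatives land on $D_t^2\varphi$; that term is exactly $e'(\varphi)D_t^{k+2}\varphi$, so it cancels against the subtracted term, leaving
\[
D_t^k\big(e'(\varphi)D_t^2\varphi\big)-e'(\varphi)D_t^{k+2}\varphi
=\sum_{j=1}^{k}\binom{k}{j}\big(D_t^j e'(\varphi)\big)\big(D_t^{k+2-j}\varphi\big).
\]
By the chain rule (Fa\`a di Bruno), each $D_t^j e'(\varphi)$ is a finite sum of terms $e^{(m+1)}(\varphi)\,(D_t^{j_1}\varphi)\cdots(D_t^{j_m}\varphi)$ with $j_1+\cdots+j_m=j$ and each $j_i\ge 1$. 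The key structural observation is that the only summand above that contains a material derivative of order $k+1$ of $\varphi$ is the $j=1$ one, namely $k\,e''(\varphi)(D_t\varphi)(D_t^{k+1}\varphi)$; every other summand involves only factors $D_t^{s'}\varphi$ with $s'\le k$, multiplied by $e^{(m+1)}(\varphi)$.

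First I would treat the single dangerous term. Setting $f=k\,e''(\varphi)D_t\varphi$ and applying the Leibniz/product estimate \eqref{product} (with $k=0$) to $f\cdot D_t^{k+1}\varphi$ in $H^\ell$, the only contribution in which all $\ell$ spatial derivatives fall on $D_t^{k+1}\varphi$ is $f\,(\pa_y^\ell D_t^{k+1}\varphi)$; here $\|f\|_{L^\infty}\le C\,\|e''(\varphi)\|_{L^\infty}\,L\le C L$, where $C$ depends only on $c_1,c_2$ together with the structural bounds in \eqref{eq:closetoincompressible}--\eqref{eq:closetoincompressible2}, and this is precisely the point at which the closeness-to-incompressible hypothesis on $e''$ is used to keep the coefficient a pure constant times $L$ rather than a polynomial. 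This piece is thus bounded by $CL\,\|D_t^{k+1}\varphi\|_{H^\ell}$. In every remaining piece of the expansion at least one spatial derivative falls on $f$, so the factor built from $D_t^{k+1}\varphi$ carries at most $\ell-1$ spatial derivatives; since $(k+1)+(\ell-1)=s$, such factors are controlled by $\|D_t^{k+1}\varphi\|_{H^{\ell-1}}\le\|\varphi\|_s$, and combining this with H\"older, the Sobolev inequalities of Appendix \ref{tangapp}, a Moser estimate for $\|e''(\varphi)D_t\varphi\|_{H^\ell}$, and the pointwise bound \eqref{Lassumpapp}, each such piece is absorbed into $P(L,\|\varphi\|_s)$.

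For the remaining terms with $j\ge 2$, each is a product of $e^{(m+1)}(\varphi)$ with $(D_t^{j_1}\varphi)\cdots(D_t^{j_m}\varphi)(D_t^{k+2-j}\varphi)$; because $j\ge 2$ and each $j_i\ge 1$ we have $k+2-j\le k$ and each $j_i\le j-1\le k$, so only material derivatives of order $\le k$ appear. By \eqref{product}, the chain rule for $e^{(m+1)}(\varphi)$ (whose derivatives are bounded by fixed constants thanks to \eqref{eq:closetoincompressible} for all relevant orders $\le s$), and \eqref{Lassumpapp}, the $H^\ell$ norm of each such term is bounded by a polynomial in $L$ and $\|\varphi\|_{H^\ell}\le\|\varphi\|_s$, i.e.\ by $P(L,\|\varphi\|_s)$; summing the finitely many contributions yields \eqref{nlinest}. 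I expect the only real obstacle to be the bookkeeping: verifying that the coefficient multiplying $\|D_t^{k+1}\varphi\|_{H^\ell}$ genuinely depends only on $c_1,c_2$ (which forces the use of the smallness of $e''$ rather than a crude Moser bound), and organizing the mixed space--time interpolations so that everything else is manifestly lower order; there is no conceptual difficulty beyond a careful tame estimate.
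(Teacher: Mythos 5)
Your proposal is correct and takes essentially the same route as the paper: both expand $D_t^k(e'(\varphi)D_t^2\varphi)$ by Leibniz and Fa\`a di Bruno, isolate the single top-order term $e''(\varphi)(D_t\varphi)D_t^{k+1}\varphi$ (which is bounded by $CL\|D_t^{k+1}\varphi\|_{H^\ell}$ using the smallness/boundedness of $e''$), and absorb all other summands — products involving only $D_t^{s'}\varphi$ with $s'\le k$ and derivatives $e^{(m)}(\varphi)$ controlled by \eqref{eq:closetoincompressible} — into $P(L,\|\varphi\|_s)$ using Sobolev embedding and the product rule. Your write-up is somewhat more detailed than the paper's (which only treats $\ell=0$ and asserts $\ell\ge 1$ is similar), but the decomposition and the key structural observation are identical.
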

\begin{proof}
  \!\!We just prove the $\ell \!=\! 0$ case since $\ell \!\geq \!1$ is similar.
  The main term in $D_t^{k} (e^{\prime\!}(\varphi)D_t^2\varphi ) - e^{\prime\!}(\varphi) (D_t^{k+2} \varphi)$ is:
\begin{equation}
 e''(\varphi) (D_t \varphi)D_t^{k+1} \varphi,
 \label{maintermg}
\end{equation}
and the remaining terms are of the form
\begin{align}
e^{(m)}(\varphi)D_t^{k_1}\varphi\cdots D_t^{k_m}\varphi,\quad 2\leq m\leq k,
\quad k_1+\cdots+k_m = k+1, \quad \text{ where } k_j \leq k, 1 \leq j \leq m.
\label{remaining}
\end{align}
The term \eqref{maintermg} is bounded by the right-hand side of
\eqref{nlinest}. To bound \eqref{remaining}, we note that if
$k_j \leq 3$ for $1 \leq j \leq m$ all of the terms are bounded by $L$.
If there are any terms with $k_j \leq k-2$ then by Sobolev embedding
we have $||D_t^{k_j} \varphi||_{L^\infty} \leq C ||\varphi||_k$. Therefore
it just remains to consider the case that there is at least one $j$ with
$k_j \geq \max(4, k-1)$ and in fact there can be at most one such term since
we also have $k_j \leq k$ for each $j$. In this case we put the corresponding
factor in $L^2$ and this proves \eqref{nlinest}.
\end{proof}

The following estimate is nearly the same as \eqref{nlinest} but will be
used in Section \ref{wavelwp} to bound quantities of the
form $e'(f) g$ when we know that $f$ is smoother than $g$.
\begin{lemma}
  \label{gest2}
  Under the hypotheses of Lemma \ref{gestlem}, if $k + \ell = s$
  then there is a constant $C$ depending only on $c_1, c_2$
  on is a polynomial $P$ so that if $\varphi$ satisfies \eqref{Lassumpapp}:
  \begin{equation}
   ||D_t^k e'(\varphi)||_{H^\ell}
   \leq C \big( ||D_t^k \varphi||_{H^\ell} + P(L, ||\varphi||_{s-1})).
  \end{equation}
\end{lemma}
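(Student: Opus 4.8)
The plan is to follow the proof of Lemma~\ref{gestlem} almost verbatim, the only change being that we now differentiate $e'(\varphi)$ in time $k$ times (rather than $e'(\varphi)D_t^2\varphi$, $k$ times) so no extra factor of $D_t^2\varphi$ needs to be carried along. First I would expand $D_t^k e'(\varphi)$ by the chain rule (Fa\`a di Bruno):
\begin{equation*}
 D_t^k e'(\varphi) = e''(\varphi)\, D_t^k\varphi + {\sum} c_{m,k_1,\dots,k_m}\, e^{(m+1)}(\varphi)\,(D_t^{k_1}\varphi)\cdots (D_t^{k_m}\varphi),
\end{equation*}
where the sum is over $2\le m\le k$ and $k_1+\cdots+k_m=k$ with each $k_j\ge 1$, hence $k_j\le k-1$. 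The first term is the principal one; every term in the sum is genuinely lower order precisely because $m\ge 2$ forces $k_j\le k-1$ for all $j$.

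Next I would estimate the principal term $e''(\varphi)D_t^k\varphi$ in $H^\ell$ by distributing the (at most $\ell$) spatial derivatives using the Leibniz rule \eqref{product}. The one term in which no spatial derivative hits $e''(\varphi)$ is bounded by $||e''(\varphi)||_{L^\infty}||D_t^k\varphi||_{H^\ell}\le C\,||D_t^k\varphi||_{H^\ell}$, with $C$ coming only from the pointwise bound on $e''$ in \eqref{eq:closetoincompressible2}; this produces the $C\,||D_t^k\varphi||_{H^\ell}$ term in the statement. In every other term at least one spatial derivative lands on $e''(\varphi)$, so at most $\ell-1$ spatial derivatives land on $D_t^k\varphi$; that factor then has total space-time order $\le k+(\ell-1)=s-1$ and is controlled by $||\varphi||_{s-1}$, while $\partial_y^{J}e''(\varphi)$ expanded by the chain rule again consists only of bounded coefficients $e^{(\cdot)}(\varphi)$ times products of purely spatial derivatives of $\varphi$ of total order $\le\ell\le s-1$; placing the factors in suitable $L^p$ spaces by H\"older and Sobolev embedding, and using \eqref{Lassumpapp} for all factors of order $\le 3$, bounds the result by $P(L,||\varphi||_{s-1})$.

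For the remainder terms $e^{(m+1)}(\varphi)(D_t^{k_1}\varphi)\cdots(D_t^{k_m}\varphi)$ I would run the same trichotomy as in Lemma~\ref{gestlem}. Taking $\le\ell$ spatial derivatives and distributing them among the factors, each resulting factor is $\partial_y^{J_0}e^{(m+1)}(\varphi)$ or $D_t^{k_j}\partial_y^{J_j}\varphi$ with $|J_0|+\cdots+|J_m|\le\ell$. If every $\varphi$-factor that appears has combined space-time order $\le 3$, bound the product pointwise by a power of $L$ via \eqref{Lassumpapp}. Otherwise there can be at most one factor of combined order $\ge 4$ (once at least two $\varphi$-factors are present, their orders sum to $\le (k-1)+\ell=s-1$, so two "large" factors is impossible); since $k_j\le k-1$ that factor has total order $\le s-1$, so put it in $L^2$ and bound it by $||\varphi||_{s-1}$, put all other factors in $L^\infty$ by Sobolev embedding, and invoke \eqref{Lassumpapp} for the low ones. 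Combining with H\"older's inequality gives the $P(L,||\varphi||_{s-1})$ bound, and summing the finitely many terms completes the proof.

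The only delicate point — identical to the one in Lemma~\ref{gestlem} — is the bookkeeping: one must check that in the whole expansion exactly one contribution, namely $e''(\varphi)D_t^k\varphi$ estimated in $H^\ell$ with no spatial derivative on $e''$, escapes the $||\varphi||_{s-1}$ bound, and that every other contribution has all $\varphi$-factors of total order $\le s-1$ (with at most one of order $\ge 4$). This rests entirely on the relation $k+\ell=s$ together with $m\ge 2$ in the Fa\`a di Bruno sum; there is no analytic difficulty beyond this combinatorial accounting.
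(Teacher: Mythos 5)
Your overall strategy — expand by the chain rule into products $e^{(m+1)}(\varphi)(\pa_y^{J_1}D_t^{k_1}\varphi)\cdots(\pa_y^{J_m}D_t^{k_m}\varphi)$, isolate the single term $e''(\varphi)\pa_y^I D_t^k\varphi$ as the one contributing $C\,||D_t^k\varphi||_{H^\ell}$, and dispatch everything else by the low/medium/high trichotomy of Lemma \ref{gestlem} — is exactly the paper's (its proof is two lines: expand and "use Sobolev embedding as in the proof of Lemma \ref{gestlem}"). However, the counting claim on which your remainder estimate rests is false as stated. You assert that once $m\ge 2$ the combined orders of the factors sum to at most $(k-1)+\ell=s-1$, so that at most one factor can have order $\ge 4$. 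The correct bound is $\sum_j (k_j+|J_j|) \le k+\ell = s$ (it is each \emph{individual} $k_j$ that is $\le k-1$, not their sum), and consequently two factors of order $\ge 4$ can coexist whenever $s\ge 8$ — e.g. $s=k=8$, $\ell=0$, $m=2$, giving $e'''(\varphi)(D_t^4\varphi)^2$, where neither factor is covered by \eqref{Lassumpapp}. Since the lemma is invoked for $s$ up to $r$ and the paper takes $r\ge 7$ (indeed $r\ge 8$ in Theorem \ref{intenergy}), this case genuinely occurs, so the dichotomy "bounded by $L$, or the unique large factor" does not exhaust the terms.

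The repair is the three-way split actually used in Lemma \ref{gestlem}, and you already have the ingredient in hand when you write "put all other factors in $L^\infty$ by Sobolev embedding": a factor of order $n_j$ with $4\le n_j\le s-3$ satisfies
\begin{equation}
  ||D_t^{k_j}\pa_y^{J_j}\varphi||_{L^\infty(\Omega)} \le C\,||D_t^{k_j}\pa_y^{J_j}\varphi||_{H^2(\Omega)} \le C\,||\varphi||_{n_j+2} \le C\,||\varphi||_{s-1},
\end{equation}
so it is absorbed into $P(L,||\varphi||_{s-1})$ without occupying the $L^2$ slot. After low factors (order $\le 3$, bounded by $L$) and medium factors (order $\le s-3$, bounded as above) are removed, at most one factor of order $\ge\max(4,s-2)$ can remain, since two such would have orders summing to more than $s$; that one goes in $L^2$ and is bounded by $||\varphi||_{s-1}$ because its order is $\le (k-1)+\ell=s-1$. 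With this corrected bookkeeping your argument closes and coincides with the paper's.
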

\begin{proof}
 The proof is similar to the proof of Lemma \ref{gestlem}. If
 $|I|\! =\! \ell$ then $\pa_y^I D_t^k e'(\varphi)$ is a sum of terms
   \begin{equation}
   e^{m}(\varphi) (\pa_y^{I_1}D_t^{k_1}\varphi)\cdots (\pa_y^{I_m}
   D_t^{k_m}\varphi) \varphi),\qquad
   |I_1| + ... + |I_m| =|I|,\quad
   k_1 + ... + k_m=k.
   \end{equation}
   Using Sobolev embedding as in the proof of Lemma \ref{gestlem}
   gives the result.
\end{proof}

We will also need estimates for the derivatives of $\F = \F^1 + \F^2$, where
\begin{align}
 \F^1 = -(\pave_i \ssm V^j)(\pave_j V^i),
 \qquad
 \F^2 = - e''(h) (D_t h)^2-\rho(h).
\end{align}
\begin{lemma} \label{fbdslem}
  If \eqref{ubd2} holds and $h$ satisfies \eqref{Lassumpapp}, then
  for $k \geq 1$:
 \begin{align}
  ||D_t^k \F_1||_{L^2} &\leq C(M, ||\pa V||_{L^\infty})\big(||D_t^k V||_{H^1} + P(||V||_{\X^k})\big),
  \label{festdt}\\
  ||D_t^k \F_2||_{L^2} &\leq C L ||D_t^{k+1} h||_{L^2}
  + P(L, ||h||_{k,0}).
  \label{festdt2}
 \end{align}
 For $k \geq 0$, writing $k + \ell = s$, we also have:
 \begin{align}
  ||D_t^k \F_1||_{H^\ell} &\leq C(M,||\pa V||_{L^\infty})\big(
  ||D_t^k V||_{H^{\ell+1}} +
   ||\xve||_{H^{\ell+1}} + P(||V||_{s}, ||\xve||_{H^{\ell}})\big),
  \label{festmix}\\
  ||D_t^k \F_2||_{H^\ell} &\leq
  CL ||D_t^{k+1} h||_{H^{\ell}} +
  P(L, ||h||_{s}).\label{festmix2}
 \end{align}

\end{lemma}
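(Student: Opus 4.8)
The plan is to expand $D_t^k\F_1$ and $D_t^k\F_2$ (and their further $\pa_y$-derivatives) by the Leibniz rule, push all the $D_t$'s through the operators $\pave$ using the commutator identity \eqref{dtpavecomm}, and then estimate each resulting product by Sobolev embedding, keeping exactly one factor with the top number of derivatives while the remaining factors, which by the constraint on the total order carry only a bounded number of derivatives beyond half the total, are put in $L^\infty$ or absorbed into the stated polynomial error. The tangential smoothing operator $\ssm$ commutes with $D_t$ and is bounded on every $H^s(\Omega)$, so $D_t^k\ssm V=\ssm D_t^k V$ and $\ssm$ can be ignored when counting derivatives.

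For $\F_1=-(\pave_i\ssm V^j)(\pave_j V^i)$, applying \eqref{dtpavecomm} repeatedly (this is exactly \eqref{dtpaSdefsmooth}) shows that $D_t^k\pave_i f$ is a sum of terms $(\pave D_t^{s_1}\ssm V)\cdots(\pave D_t^{s_{m-1}}\ssm V)(\pave D_t^{s_m}f)$ with $s_1+\cdots+s_m=k$, so that $D_t^k\F_1$ is a sum of products of factors of the form $\pave D_t^{s}\ssm V$ and $\pave D_t^{s}V$ with the $D_t$'s totalling $k$. In the top-order term both "extra" $D_t$'s sit on a single factor, giving $\pave D_t^k V$, whose $L^2$ norm is $\le C(M)\|D_t^k V\|_{H^1}$ using \eqref{ubd2} to control $A$; every other factor has at most $k-1$ time derivatives, hence no more than three derivatives for small $k$ and can be placed in $L^\infty$ via \eqref{vectsob} for large $k$, and is therefore bounded by $P(\|V\|_{\X^k})$. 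This gives \eqref{festdt}. For \eqref{festmix} one argues identically but also distributes $\pa_y^I$, $|I|=\ell$, over the factors, expanding $\pave=A\cdot\pa_y$ and using \eqref{app:dxu} (equivalently Lemma \ref{inverselemma}) to bound $\pa_y$-derivatives of $A$ by those of $\xve$; the top-order contributions are $\|D_t^k V\|_{H^{\ell+1}}$ (all derivatives landing on the differentiated $V$) and $\|\xve\|_{H^{\ell+1}}$ (all spatial derivatives landing on $A$), while the remaining products are estimated by the product rule \eqref{product} and Sobolev embedding by $P(\|V\|_s,\|\xve\|_{H^\ell})$.

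For $\F_2=-e''(h)(D_t h)^2-\rho(h)$ the argument is essentially that of Lemma \ref{gestlem} and Lemma \ref{gest2}. Differentiating $e''(h)(D_t h)^2$ with $D_t^k$ produces the single highest-order term $2e''(h)(D_t h)(D_t^{k+1}h)$, whose $L^2$ norm is $\le CL\|D_t^{k+1}h\|_{L^2}$ because $|e''(h)|\le\delta_0/E_0$ and $|D_t h|\le L$ by \eqref{Lassumpapp} and \eqref{eq:closetoincompressible2}; all other terms, and every term coming from $D_t^k\rho(h)$, involve only $D_t$-derivatives of $h$ of order $\le k$ together with derivatives of $e$ and $\rho$, and, exactly as in the proof of Lemma \ref{gestlem}, are bounded by $P(L,\|h\|_{k,0})$ after using Sobolev embedding to place all but one factor in $L^\infty$. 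This gives \eqref{festdt2}, and \eqref{festmix2} follows by additionally distributing the $\ell$ spatial derivatives and invoking \eqref{product}, with the highest-order term again $D_t^{k+1}h$ in $H^\ell$ and the rest controlled by $P(L,\|h\|_s)$.

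The only real work is the combinatorial bookkeeping: verifying that in each product arising from the Leibniz expansion at most one factor carries the top number of derivatives while all the others have few enough derivatives to be controlled in $L^\infty$ (via \eqref{vectsob} or \eqref{sob2}) or folded into the stated polynomials. This is routine but must be carried out with some care so that the loss of one spatial derivative in \eqref{festmix}–\eqref{festmix2} is genuinely confined to the single factor $D_t^k V$, respectively $D_t^{k+1}h$, and so that no term requires $\|\xve\|$ or $\|V\|$ in a norm higher than the ones on the right-hand sides.
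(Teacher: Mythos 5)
Your proposal is correct and follows essentially the same route as the paper's proof: commute the $D_t$'s through $\pave$ via \eqref{dtpavecomm}, isolate the single leading-order factor ($\pave D_t^k\ssm V$ against $\pave V$ in $L^\infty$ for $\F_1$, and $\pa^\alpha A$ against $D_t^k\pa\ssm V$ for the $\|\xve\|_{H^{\ell+1}}$ contribution), and control the remaining products by Sobolev embedding, while the $\F_2$ estimates are exactly the argument of Lemma \ref{gestlem}. The paper likewise proves only \eqref{festmix} in detail and defers \eqref{festdt2}, \eqref{festmix2} to the previous lemma, so your bookkeeping fills in what the paper leaves implicit.
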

\begin{proof}
  We we just prove the estimate \eqref{festmix}. The estimate \eqref{festdt} follows
  in a similar manner and the estimates \eqref{festdt2},
  \eqref{festmix2} follow as in the previous lemma.
The case $k = 0$ can be handled using interpolation and the
estimates \eqref{app:dxu}. When $k\geq 1$, we have:
 \begin{multline}
\pa^{\ell}\! D_t^k\mathcal{F}
=\sum_{l_1+\cdots+ l_n= k, \, n\geq 2} C_{l_1\cdots l_n}^k\pa^\ell\big((\pave_{j_1} D_t^{l_1} S_{\epsilon}V^{j_2})\cdots (\pave_{j_{n-1}} D_t^{l_{n-1}} S_{\epsilon} V^{j_n}) (\pave_{j_n} D_t^{l_n}V^{j_1})\big)\\
=\!\!\!\!\!\!\!\!\!\!\!\!\!\!\!\!\!\!\!\!\!\!\!\!\!\!\!
\sum_{\,\,\,\,\,\,\,\,\,\,\,\,\,\,\,\,\,\,\,\,l_1+\cdots+ l_n= k,\,\,
\sum |\beta_j|+|\gamma_j|=\ell-1\!\!\!\!\!\!\!\!\!} \!\!\!\!\!\!\!\!\!\!\!\!\!\!\!\!\!\!\!\!\!\!\!\!\!\!\!
 \widetilde{C}_{l_{\!1\!}\cdots l_n}^k(\pa^{\beta_{1}}\!\!
 A^{a_1}_{\m j_1}\!)\cdots(\pa^{\beta_{n}{}_{\!}}\! A^{a_n}_{\m j_n})
(\pa_{a_1} \pa^{\gamma_1} \!D_t^{l_1\!} \ssm V^{j_2}\!)\cdots(\pa_{a_{{}_{\!}j_{n\!-\!1}}\!} \pa^{\gamma_{n\!-\!1\!}}D_t^{l_{n\!-\!1\!}} \ssm V^{\!j_n\!})(\pa_{a_n} \pa^{\gamma_n{}_{\!}}D_t^{l_n} V^{j_1}\!),
  \label{fderivexp}
 \end{multline}
where we have used \eqref{dtpavecomm} repeatedly. The leading order term is of the form $$A_{\m i}^a (\pa^\alpha D_t^k \pa_a S_\ve V^j)(\pave_j V^i)+(\pa^\alpha A_{\m i}^a)(D_t^k\pa_aS_\ve V^j) (\pave_j V^j).$$
We bound the first term by:
 \begin{equation}
  C(M')||\pa V||_{L^\infty} ||D_t^k V||_{H^\ell(\Omega)},
  {}
 \end{equation}
and we bound the second term by:
 \begin{equation}
P(||V(t)||_{\X^{s}}, ||\xve(t)||_{H^{r-1}(\Omega)}), \q\text{when}\,\, k\geq 1,
  \quad\text{and}\quad
  C(M')||\pa V||_{L^\infty} ||\xve||_{H^r(\Omega)},\q\text{when}\,\, k=0.
 \end{equation}
The lower order terms in \eqref{fderivexp} is controlled via Sobolev embedding.
\end{proof}

Writing $\F_J\!\!=\!\!-(\pave_i S_\ve V_J^\ell)(\pave_\ell V_J^i)$ for $J\!\! =\!\! I_{\!},\II_{\!}$,
a simple modification of the proof of Lemma \ref{fbdslem} gives:
\begin{lemma} \label{N_r^1,2}
  Suppose that \eqref{ubd2} holds and let $s=k + \ell $. Then
  there is a continuous, positive function
  $C_s = C_s(M, ||V_I||_s,||V_{\II}||_s, ||\xveu||_{H^{s+1}},
  ||\xvew||_{H^{s+1}})$ so that:
\begin{equation}
  ||D_t^k \big(\F^1_{\!I\!} -\! \F^1_{\!I\!I}\big)||_{H^{\ell}}
 \leq C_s \big( ||D_t^k V_{\!I} - D_t^k V_{\!I\!I}||_{H^{\ell+1}}
 + ||\xveu \!-\! \xvew||_{H^{\ell+1}}
 + ||V_{\!I} - V_{\!I\!I}||_{s} +  ||\xveu \!- \! \xvew||_{C^4_{x,t}(\Omega)}\big),\qquad
 \label{fdiff}
\end{equation}
\begin{align}
  ||F^2(h_I) - F^2(h_{\II})||_{s,0} &\leq C_s \big(||h_I - h_{II}||_{s+1, 0}
  + ||h_I - h_{II}||_s + ||h_I - h_{II}||_{C^3_{x,t}} \big),\label{functionalest1}
  \\
  ||F^2(h_I) - F^2(h_{\II})||_{s-1} &\leq C_s \big(||h_I - h_{II}||_{s}
  + ||h_I - h_{II}||_{C^3_{x,t}} \big).
  \label{functionalest2}
\end{align}
\end{lemma}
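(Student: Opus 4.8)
The plan is to derive all three estimates from the proof of Lemma~\ref{fbdslem} by a ``telescope, then expand'' argument: one first rewrites each difference $\F^1_I-\F^1_{\II}$, resp.\ $F^2(h_I)-F^2(h_{\II})$, as a finite sum whose every summand is a product of ``smooth'' factors together with exactly one explicit difference factor, and then runs the multi-index counting of \eqref{fderivexp} on the resulting products.

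For \eqref{fdiff}, I would write
\[
 \F^1_I-\F^1_{\II}
 = -\big(\paveu_i\ssm V_I^\ell\big)\big(\paveu_\ell V_I^i-\pavew_\ell V_{\II}^i\big)
 -\big(\paveu_i\ssm V_I^\ell-\pavew_i\ssm V_{\II}^\ell\big)\big(\pavew_\ell V_{\II}^i\big),
\]
then split each operator difference as $\paveu_i\ssm V_I^\ell-\pavew_i\ssm V_{\II}^\ell=\paveu_i\ssm\big(V_I^\ell-V_{\II}^\ell\big)+(\paveu_i-\pavew_i)\ssm V_{\II}^\ell$ and $\paveu_\ell V_I^i-\pavew_\ell V_{\II}^i=\paveu_\ell\big(V_I^i-V_{\II}^i\big)+(\paveu_\ell-\pavew_\ell)V_{\II}^i$, so that $\F^1_I-\F^1_{\II}$ becomes a sum of terms each carrying exactly one factor from $\{V_I-V_{\II},\ \ssm(V_I-V_{\II})\}$ or one factor whose coefficients are the entries of $A_I-A_{\II}$. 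Applying $D_t^k\pa_y^\ell$, commuting $D_t$ past $\pave$ with \eqref{dtpavecomm}--\eqref{dinv}, and distributing the derivatives by the Leibniz rule produces precisely the kind of sum appearing in \eqref{fderivexp}, which one estimates exactly as in the proof of Lemma~\ref{fbdslem}: the leading terms keep one derivative on the difference factor and are bounded in $H^{\ell+1}$, giving $\|D_t^kV_I-D_t^kV_{\II}\|_{H^{\ell+1}}$ or, via Lemma~\ref{inversedifflemma} and \eqref{dinvdiff}, $\|\xveu-\xvew\|_{H^{\ell+1}}$; if at most three derivatives fall on the difference factor it is put in $L^\infty$, contributing $\|\xveu-\xvew\|_{C^4_{x,t}(\Omega)}$ and $\|V_I-V_{\II}\|_{s}$; and in the remaining cases a single smooth factor carries at least four derivatives, is estimated in $L^2$, and the rest are put in $L^\infty$ via $H^2(\Omega)\hookrightarrow L^\infty(\Omega)$, which is licit since $r\ge7$. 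As $\ssm$ is bounded on every $H^k(\Omega)$ and on $L^\infty(\Omega)$, the smoothing causes no loss, and collecting the terms yields \eqref{fdiff}.

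For \eqref{functionalest1}--\eqref{functionalest2}, with $F^2(h)=-e''(h)(D_th)^2-\rho(h)$ I would split
\[
 F^2(h_I)-F^2(h_{\II})
 = -\big(e''(h_I)-e''(h_{\II})\big)(D_th_I)^2
 - e''(h_{\II})\big(D_th_I+D_th_{\II}\big)D_t(h_I-h_{\II})
 - \big(\rho(h_I)-\rho(h_{\II})\big),
\]
and use the fundamental theorem of calculus, $e''(h_I)-e''(h_{\II})=(h_I-h_{\II})\int_0^1 e'''(h_\theta)\,d\theta$ and $\rho(h_I)-\rho(h_{\II})=(h_I-h_{\II})\int_0^1\rho'(h_\theta)\,d\theta$ with $h_\theta=\theta h_I+(1-\theta)h_{\II}$, so that again every summand carries a single factor $h_I-h_{\II}$ or $D_t(h_I-h_{\II})$. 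Applying $D_t^k\pa_y^\ell$ and the Leibniz rule, the composite factors $e^{(m)}(h_\theta)$ and $\rho^{(m)}(h_\theta)$ are handled by the chain-rule bounds of Lemmas~\ref{gestlem} and \ref{gest2} (the a priori bound \eqref{apriori varphi} on $h_I,h_{\II}$ gives the pointwise control $L$ of $h_\theta$ and its low derivatives), while the difference factor goes into $L^\infty$ --- contributing $\|h_I-h_{\II}\|_{C^3_{x,t}}$ --- whenever it carries at least four derivatives and into a Sobolev norm otherwise. The two right-hand sides then differ only in the top-order term, exactly as in \eqref{festmix2}: for $\|\cdot\|_{s,0}$ the term $e''(h_{\II})\big(D_th_I+D_th_{\II}\big)D_t^{s+1}(h_I-h_{\II})$ forces $\|h_I-h_{\II}\|_{s+1,0}$, whereas for $\|\cdot\|_{s-1}$ at most $s-1$ derivatives act and the worst term costs only $\|h_I-h_{\II}\|_s$.

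The genuinely new bookkeeping over Lemma~\ref{fbdslem} is mild: after telescoping there are several distinct difference factors to track rather than one, and in each branch of the expansion one must check that the difference factor lands on the ``correct'' side of the high--low split so that the stated norms suffice ($H^{\ell+1}$, $\|\cdot\|_s$, $C^4_{x,t}$ for \eqref{fdiff}; $\|\cdot\|_{s+1,0}$, $\|\cdot\|_s$, $C^3_{x,t}$ for \eqref{functionalest1}--\eqref{functionalest2}). This is the main --- and essentially the only --- obstacle, and it is resolved exactly as in Lemma~\ref{fbdslem}: all but one factor of any product can be placed in $L^\infty$ using $H^2(\Omega)\hookrightarrow L^\infty(\Omega)$ once $r\ge7$, and the exceptional factor, which is the unique one carrying more than about $s-4$ derivatives, is estimated in $L^2$; a factor carrying at most three derivatives always sits in $L^\infty$. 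No analytic input beyond \eqref{dinvdiff}, the commutators \eqref{dtpavecomm}--\eqref{dinv}, the Sobolev embeddings of Section~\ref{tangapp}, and the chain-rule Lemmas~\ref{gestlem}--\ref{gest2} is needed.
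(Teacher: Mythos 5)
Your proposal is correct and is essentially the paper's own argument: the paper gives no separate proof of Lemma \ref{N_r^1,2} beyond declaring it ``a simple modification of the proof of Lemma \ref{fbdslem},'' and your telescoping followed by the multi-index counting of \eqref{fderivexp}, together with Lemma \ref{inversedifflemma} for the Jacobian differences and Lemmas \ref{gestlem}--\ref{gest2} for the compositions, is exactly that modification. One sentence in your $F^2$ paragraph transposes the high/low split (a difference factor carrying at least four derivatives must go into $L^2$, not $L^\infty$), but you state the correct rule in your closing paragraph, so this is only a slip of the pen.
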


\section{Existence of a sequence of compatible data for the smoothed problem}
\label{compat}
In this section, our goal is to prove:
\begin{theorem}
  \label{compatthm}
 Suppose that $V_0, h_0\!\in \! H^r\!\!$, $x_0\! \in\! H^{r+1}\!$ satisfy the compatibility conditions for
 Euler's equations \eqref{maincomp0} to order $r\!-\!1 \!\geq \! 7$.
 Then there is a sequence of data $V_0^\ve\!,h_0^\ve\! \in \!H^r\!\!, \, x_0^\ve \!\in\! H^{r+1}\!\!$
 satisfying the compatibility conditions for the smoothed Euler's equations \eqref{smcompatdef} to
  order $r{}_{\!}-{}_{\!}1$, and $(V_0^\ve\!, h_0^\ve, x_0^\ve)\! \to\! (V_0, h_0, x_0)$
 as $\ve\!\to\! 0$.
\end{theorem}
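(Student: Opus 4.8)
The plan is to keep $x_0^\ve=x_0$ fixed and to construct $h_0^\ve=h_0+\psi^\ve$ and $V_0^\ve=V_0+w^\ve$, with $\psi^\ve\in H^{r}(\Omega)\cap H^1_0(\Omega)$ and $w^\ve\in H^{r}(\Omega)$ both tending to $0$ as $\ve\to0$, so that the formal coefficients $h_k^\ve$ produced from $(V_0^\ve,h_0^\ve)$ by the recursion \eqref{vkepsdef}--\eqref{phikepsdef} (with $x_0$ frozen) lie in $H^1_0(\Omega)$ for $k=0,\dots,r-1$. Since $h_0\in H^1_0$ and $\psi^\ve\in H^1_0$, the case $k=0$ is automatic, so the task reduces to arranging $h_k^\ve|_{\pa\Omega}=0$ for $k=1,\dots,r-1$, i.e.\ to killing $r-1$ functions on $\pa\Omega$. (If one also wants the normalization $\det(\pa x_0^\ve/\pa y)=1/\rho(h_0^\ve)$ used elsewhere, one appends a routine prescribed-Jacobian/Dacorogna--Moser step producing $x_0^\ve\to x_0$ in $H^{r+1}$; it is not needed for the compatibility conditions and plays no role below.)

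First I would show the obstructions are small. Write $h_k^\ve=\mathcal H_k^\ve[V_0^\ve,h_0^\ve]$ for the $\ve$-recursion and $h_k^0=\mathcal H_k^0[V_0,h_0]$ for the unsmoothed recursion of Section \ref{compcondsec}. The $\ve$-recursion is built from $\ssm$, from $\pave$ (hence from $\xve=\ssm x_0$ and from $A_\ve$), and from the Newton potential functionals $\phi_\ell^\ve$. By the smoothing estimates $\|\ssm f-f\|_{H^{\ell}(\Omega)}\lesssim\ve\|f\|_{H^{\ell+1}(\Omega)}$ (Section \ref{smoothingsec} and Lemma \ref{smproperties}) one has $\ssm\to\mathrm{Id}$, $\xve\to x_0$, $A_\ve\to A_0$, $\pave\to\pa$, $\Dve\to\Delta$ in the Sobolev norms relevant to each coefficient, and by the elliptic estimates of Section \ref{gravitysection} the $\phi_\ell^\ve$ converge correspondingly. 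Using $r\ge8$ to run the Moser/algebra estimates for the products making up the $\widetilde S$-coefficients and $G_k$, and noting that the most derivative-hungry coefficient $h_{r-1}^\ve$ costs $\lesssim r-1$ derivatives of data in $H^r$, one gets $\mathcal H_k^\ve[V_0,h_0]\to\mathcal H_k^0[V_0,h_0]=h_k$ in $H^{r-k}(\Omega)$. Since $h_k|_{\pa\Omega}=0$ by the assumed compatibility conditions \eqref{maincomp0}, the traces $\tau_k^\ve:=\mathcal H_k^\ve[V_0,h_0]|_{\pa\Omega}\to0$ in $H^{r-k-1/2}(\pa\Omega)$.

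Next I would solve for the corrections by a fixed-point argument exploiting a near-triangular structure. Iterating the enthalpy wave equation \eqref{enth1} and the continuity relation \eqref{resultofproj} at $t=0$ (as in Section \ref{compcondsec}), one reads off that $h_k^\ve|_{\pa\Omega}$ depends on the data through: (a) the normal jet of order $\le k$ of $(h_0^\ve,V_0^\ve)$ at $\pa\Omega$, and (b) the Newton-potential terms $\phi_\ell^\ve$, which are smoothing and hence contribute a compact operator on boundary data; moreover the dependence on the top jet is, for $k$ even, the multiplier $e'(0)^{-1}(\gve^{ab}N_aN_b)^{k/2}$ applied to $\pa_N^{k}h_0^\ve|_{\pa\Omega}$, and, for $k$ odd, a comparable nonvanishing multiplier applied to $\pa_N^{k}(N\cdot V_0^\ve)|_{\pa\Omega}$ modulo tangential pieces; here $\pa_N=N^i\pave_i$ and, crucially, $e'(0)=\rho'(0)/\bar\rho>0$ and $\gve^{ab}N_aN_b>0$ is bounded above and below by the a priori bound on $x_0$, uniformly in $\ve$. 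Parametrize $(\psi^\ve,w^\ve)$ by jets $a_1,\dots,a_{r-1}$ on $\pa\Omega$ ($a_k=\pa_N^{k}h_0^\ve|_{\pa\Omega}$ for $k$ even, $a_k=\pa_N^{k}(N\cdot V_0^\ve)|_{\pa\Omega}$ for $k$ odd), realizing $a_k$ by a correction whose normal jet at $\pa\Omega$ vanishes in all orders $<k$, via the bounded right inverse of the normal-jet trace map $H^{r}(\Omega)\to\prod_{j<r}H^{r-j-1/2}(\pa\Omega)$; then the differential part of the resulting map $(a_k)_k\mapsto(h_k^\ve|_{\pa\Omega})_{k}$ is block-lower-triangular with invertible diagonal, and the full map is this plus the compact Newton-potential part plus the constant term $(\tau_k^\ve)_k$. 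This operator converges as $\ve\to0$ to the unsmoothed one, which is invertible (Fredholm of index $0$ by the triangular-plus-compact structure, and injective by the same transversality that underlies solvability of the unsmoothed compatibility conditions); hence it is invertible for $\ve$ small, and the quantitative implicit function theorem/contraction mapping gives a zero with $\sum_k\|a_k\|\to0$, so $\psi^\ve,w^\ve\to0$ in $H^{r}$. For $\ve$ small $\xve=\ssm x_0$ remains a diffeomorphism so the recursion is legitimate, and $(V_0^\ve,h_0^\ve,x_0^\ve)\to(V_0,h_0,x_0)$, completing the proof.

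The main obstacle, as I see it, is the combination of the two facts the argument rests on: (i) that the $\ve$-recursion is a genuine perturbation of the unsmoothed one \emph{on the scale of the highest jet} appearing in $h_{r-1}^\ve$, which requires careful bookkeeping of exactly which derivatives of $(h_0^\ve,V_0^\ve)$ enter each coefficient and uses the standing regularity $r\ge8$ (Moser estimates for the $\widetilde S$-coefficients and $G_k$, and the elliptic estimates of Section \ref{gravitysection} for the $\phi_\ell^\ve$, with room to spare); and (ii) that the resulting linear map on boundary jets is invertible, i.e.\ that the differential part is lower-triangular with the uniformly invertible diagonal multipliers $e'(0)^{-1}(\gve^{ab}N_aN_b)^{\lfloor k/2\rfloor}$ while the Newton-potential coupling is a compact perturbation that does not destroy injectivity — equivalently, that the chosen jet directions are transverse to the tangent space of the compatible-data set. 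Pinning down (ii) in a form robust enough to survive the $\ve\to0$ perturbation is the delicate point.
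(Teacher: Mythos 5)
Your approach is genuinely different from the paper's, so let me compare. The paper works entirely in the bulk: it sets $V_0^\ve = V_0 + \pa_{x_0}u_{-1}^\ve$, $h_0^\ve = h_0 + u_0^\ve$, poses the elliptic system \eqref{app:ell1}--\eqref{app:bc} for corrections $u_k^\ve$ with \emph{zero Dirichlet data}, and then exploits the algebraic identity $e'(h_0^\ve)h_k^\ve = e'(h_0)h_k + e'(h_0^\ve)u_k^\ve$ to conclude $h_k^\ve|_{\pa\Omega}=0$ immediately from $h_k|_{\pa\Omega}=0$ and $u_k^\ve|_{\pa\Omega}=0$. Solvability of this (a priori overdetermined, since $u_{k+2}^\ve$ appears on the right-hand side of the equation for $u_k^\ve$) is then obtained by an iteration that is a contraction precisely because of the near-incompressibility hypothesis \eqref{eq:closetoincompressible2}: the coupling that makes the system non-triangular carries a factor $e'$, so it is $O(\kappa)$ with $\kappa$ small. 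You instead work on the boundary, parametrizing by normal jets and trying to invert a jet-to-trace map via a Fredholm argument.

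The genuine gap in your argument is the injectivity of the linearized jet-to-trace map. You correctly identify that the differential part of the map $(a_k)\mapsto(h_k^\ve|_{\pa\Omega})$ is lower-triangular with invertible diagonal multipliers, so it is Fredholm of index zero, and you correctly observe that the Newton-potential contribution is smoothing and hence a compact perturbation. But a compact perturbation of an invertible operator, while still Fredholm of index zero, can perfectly well acquire a nontrivial kernel, and you never actually establish injectivity: the phrase ``injective by the same transversality that underlies solvability of the unsmoothed compatibility conditions'' is circular — the hypothesis of the theorem is that the particular data $(V_0,h_0)$ is compatible, not that the compatible set is a transversally-parametrized manifold, and the latter is essentially equivalent to what you are trying to prove. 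You also drop the near-incompressibility hypothesis entirely, yet this is what the paper uses to get a quantitative small parameter $\kappa = \sup|e'|$ and thereby a contraction. Without it, there is no smallness anywhere in your scheme to control the Newton-potential coupling, and the Fredholm alternative alone does not close the argument. (There is also a minor mismatch: by the recursion $e'(h_0^\ve)h_{k+2}^\ve = \Dve h_k^\ve + \cdots$, the diagonal multiplier should be $e'(0)^{-\lfloor k/2\rfloor}(\gve^{ab}N_aN_b)^{\lfloor k/2\rfloor}$, not $e'(0)^{-1}(\gve^{ab}N_aN_b)^{k/2}$, but this does not affect invertibility of the diagonal.) In short, the step you flag yourself as ``the delicate point'' is indeed a gap, and the paper's route — zero Dirichlet data for the corrections, the $e'$-identity to enforce vanishing boundary traces for free, and the small-$\kappa$ contraction — is specifically designed to avoid having to prove such an invertibility statement.
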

In the next section, we prove that if the compatibility conditions to order $r-1$ hold, given
sufficiently regular $V$, the wave equation
\eqref{wavedef} has a solution $h(t,\cdot) \in H^r(\Omega)$ with
$D_t h(t,\cdot) \in H^{r-1}(\Omega),..., D_t^{r-1} h(t,\cdot)
\in H^1_0(\Omega), D_t^r h(t,\cdot) \in L^2(\Omega)$ for $t > 0$.
 We modify the approach of Lindblad-Luo \cite{Lindblad2016}
  to construct functions $u_{-\!1}^\ve, u_0^\ve$ so that with
 $V_0^\ve \!\!=\! V_0\! +\! \pa_{x_0} u_{-\!1}^\ve$,
$h_0^\ve\! =\! h_0 \!+\! u_0^\ve$, and $x_0^\ve = x_0$, the initial data $V_0^\ve\!, h_0^\ve, x_0^\ve$ satisfy
the compatibility conditions \eqref{smcompatdef}. It will be convenient to
reformulate the conditions used in Sections \ref{compcondsec} and \ref{wavecompatcondn}
in a slightly more explicit way.
Suppose that
$\hat{x} \!=\! \!\sum x_k t^k\!/k!$, $\hat{V} \!\!=\! \!\sum V_k t^k\!/k!$, $\hat{h} \!=\! \!\sum V_k t^k\!/k!$ are formal power series
 solutions at $t \!=\! 0$ to
\eqref{mass}-\eqref{mom} with  $\hat{x}|_{t = 0} \!=\! x_0$
and $D_t^{\ell+1} \hat{x}|_{t = 0} \!=\!
 D_t^\ell\hat{V}|_{t = 0}$ and
$\hat{x}_\ve \!=\! \sum x_k^\ve t^k\!/k!$,
$\hat{V}_\ve \!=\! \sum V_k^\ve t^k\! / k!$, $\hat{h}_\ve
\!=\! \sum h_k^\ve t^k\!/k!$ are power series solutions at $t\! = \!0$
to the smoothed problem \eqref{smpbmdef}-\eqref{resultofproj}
with $\hat{x}|_{t = 0\!} \!=\! x_0$ and $
D_t^{\ell\!+\!1\!} \hat{x}^\ve|_{t = 0\!} \!=\! D_t^\ell\hat{V}^\ve|_{t = 0}$.
Define:
\begin{alignat}{3}
 F_k &= \big([D_t^k, \hat{\Delta}] \hat{h} +
  D_t^k  (\hat{\pa}_i \hat{V}^j)(\hat{\pa}_j \hat{V}^i)\big)\big|_{t = 0},
  &\qquad
  G_k &= \big(D_t^{k+1} (e'(\hat{h})D_t \hat{h}) -e'(\hat{h}) D_t^{k+2}\hat{h} +
  D_t^k\rho[\hat{h}]\big)\big|_{t = 0},\\
 F^\ve_k\! &= \!\big([D_t^k, \hat{\Dve}] \hat{h}_\ve \!+\!
  D_t^k  (\hat{\pave}_i \ssm \hat{V}_\ve^j)(\hat{\pave}_j \hat{V}_\ve^i)\big)\big|_{t = 0},
  &\qquad
  G^\ve_k \!&=\! \big(D_t^{k+1} (e'(\hat{h}_\ve)D_t \hat{h}_\ve)\! -e'(\hat{h}_\ve) D_t^{k+2}\hat{h}_\ve +
  D_t^k\rho[\hat{h}_\ve]\big)\big|_{t = 0},
\end{alignat}
as well as:
\begin{equation}
 C_k = [D_t^k, \hat{\pa}] (\hat{h} + \phi[\hat{x},\hat{h}]) |_{t = 0},
 \qquad C_k^\ve = [D_t^k,\hat{\pave}] (\hat{h}_\ve + \phi[\hat{x}_\ve,
 \hat{h}_\ve]|_{t = 0}.
 {}
\end{equation}
Here, we are writing:
\begin{equation}
 \hat{\pa}_i = \frac{\pa {y}^a}{\pa \hat{x}^i} \frac{\pa }{\pa y^a}
 ,\qquad
 \hat{\wpa}_i = \frac{\pa {y}^a}{\pa \hat{\widetilde{x}}{}^i} \frac{\pa }{\pa y^a},
 \qquad \hat{\Delta} = {\sum}_{i = 1}^3 \hat{\pa}^2_i,
 \qquad
 \hat{\Dve} = {\sum}_{i = 1}^3 \hat{\pave}_i{\!\!}^2.
\end{equation}
We are also writing
$\phi[x, h]$ for the map $x, h \mapsto \phi$ defined in \eqref{phidefsec2}.

Taking the divergence of Euler's equation \eqref{mom} at $t = 0$ and subtracting it from
the continuity equation
\eqref{mass} at $t = 0$ and performing the same manipulations to \eqref{smpbmdef}
and \eqref{resultofproj} gives that the coefficients $x_k, V_k, h_k, x_k^\ve, V_k^\ve, h_k^\ve$
must satisfy the relations:
\begin{align}
 x_k = V_{k-1}, \qquad V_k = -\pa_{x_0} H_{k-1} +C_k,
 \qquad
 e'(h_0) h_{k+2} = \Delta h_k + F_k + G_k,\label{vkandvkeps}\\
x^\ve_k = V_{k-1}^\ve, \qquad  V_k^\ve = -\pa_{x_0}  H_{k-1}^\ve + C_k^\ve,
 \qquad
 e'(h_0^\ve) h_{k+2}^\ve = \Delta h_k^\ve + F_k^\ve + G_k^\ve,
 \label{vkandvkeps2}
\end{align}
for $k \!\geq \!1$,
with $H_{k-\!1} \!=\! h_{k-\!1} \!+ \phi_{k-\!1}, H_{k-\!1}^\ve\! = \! h_{k-\!1}^\ve \!+ \phi_{k-\!1}^\ve$,
and where  $\phi_\ell\! =\! D_t^\ell \phi[\hat{x},\hat{h}]|_{t = 0}, \phi_\ell^\ve \!=\!
D_t^\ell \phi[\hat{x}_\ve, \hat{h}_\ve]$.

Expanding out the various definitions and replacing $x_k$ with $V_{k-1}$ for
$k\geq 1$, it follows that:
\begin{alignat}{3}
 F_k &=  F_k[x_0, V_0,..., V_k, h_0, ...., h_{k-1}],
 &\qquad
 G_k &= G_k[h_0,..., h_{k+1}],
 \label{fkgk}
 \\
 \phi_k &= K_k[x_0, V_0,..., V_{k}, h_0,...., h_k],
 &\qquad C_k &= C_k[x_0, V_0,..., V_{k-1}, H_0,..., H_{k-1}].
 \label{phikck}
\end{alignat}
for functionals $K_k$ and where these functionals depend on space derivatives
of their arguments, and similarly:
\begin{alignat}{3}
 F_k^\ve &=  F_k^\ve[x_0, V_0^\ve,..., V_k^\ve, h_0^\ve, ...., h_{k-1}^\ve],
 &\qquad
 G_k^\ve &= G_k^\ve[h_0^\ve,..., h_{k+1}^\ve],\label{fkgkeps}
 \\
 \phi_k^\ve &= K_k^\ve[x_0, V_0^\ve,..., V_{k}^\ve, h_0^\ve,...., h_k^\ve],
 &\qquad C_k^\ve &= C_k^\ve[x_0, V_0^\ve,..., V_{k-1}^\ve, H_0^\ve,..., H_{k-1}^\ve].
\end{alignat}
The formulas \eqref{phikck} combined with
the second identity in \eqref{vkandvkeps} shows that $V_k$ can be
expressed entirely in terms of $x_0, V_0$ and $h_0,..., h_{k-1}$
and similarly $V_k^\ve$ can be expressed entirely in terms of
$x_0, V_0, h_0^\ve,..., h_{k-1}^\ve$. Consequently we will
eliminate $V_k, V_k^\ve$ for $k \geq 1$ from our equations and
abuse notation slightly and write:
\begin{equation}
 F_k = F_k[x_0, V_0, h_0,..., h_{k-1}], \qquad
 F_k^\ve = F_k^\ve[x_0, V_0^\ve, h_0^\ve,..., h_{k-1}^\ve].
\end{equation}

\subsubsection{The perturbative system}
We start by considering the following system,
with $\Delta = \sum_{i = 1}^3 \pa_{i}^2$:
\begin{align}
 &\Delta u_{-1}^\ve = -e'(h_0 + u_0^\ve) u_1^\ve, &&\textrm{ in } \Omega,&
 \label{app:ell1}\\
 &\Delta u_k^\ve = F_k - \widetilde{F}_k^\ve
 + G_k - \widetilde{G}_k^\ve + e'(h_0 + u_0^\ve)u_{k+2}^\ve,
 &&\textrm{ in } \Omega, \textrm{ for } k = 0,..., r-2,\\
 &u_k^\ve = 0, &&\textrm{ on } \pa \Omega, \textrm{ for } k = -1,..., r-2,
 \label{app:bc}
\end{align}
with $\widetilde{F}_k^\ve= \widetilde{F}_k^\ve[u_{-1}^\ve,..., u_k^\ve]
,\widetilde{G}_k^\ve = \widetilde{G}_k^\ve[u_0^\ve,..., u_{k+1}^\ve]$
defined by:
\begin{equation}
 \widetilde{F}_k^\ve
  = F_k^\ve[x_0, V_0 + \pave u_{-1}^\ve,
 h_0 + u_0^\ve,..., h_{k-1} + u_{k-1}^\ve],
 \qquad
 \widetilde{G}_k^\ve
 = G_k^\ve[h_0 +  u_0^\ve,....,
 h_{k+1} +  u_{k+1}^\ve],
 {}
\end{equation}
and with the convention that $u_\ell^\ve = 0$ for $\ell \geq r-1$.

Suppose for the moment that this system has a solution $(u_{-1}^\ve,...,
u_{r-1}^\ve)$.
We claim that with $V_0^\ve = V_0 + \pa_{x_0} u_{-1}^\ve$ and
$ h_0^\ve =
h_0 + u_0^\ve$, the initial data $(V_0^\ve, h_0^\ve)$ satisfy the
compatibility conditions \eqref{hkepsdef} for the smoothed problem to order $r-1$.
Indeed, because $h_0 = 0$ on $\pa \Omega$ and because of the boundary
condition \eqref{app:bc} we have that $h_0^\ve = 0$ on $\pa \Omega$.
To see that $h_1^\ve = 0$ on $\pa \Omega$, we note that by construction:
\begin{equation}
  e'(h_0^\ve)h_1^\ve = - \div V_0^\ve = - \div V_0 -
  \Delta u_{-1}^\ve =  e'(h_0)h_1 + e'(h_0^\ve)u_1^\ve.
\end{equation}
By the compatibilty conditions for $V_0, h_0$, we have $h_1 \!=\! 0$ on
$\pa \Omega$ and by construction $u_1^\ve \!=\! 0$ on $\pa \Omega$
and so the first compatibility condition \eqref{smcompatdef} holds as well.
Using the definitions of $h_2^\ve, h_2$ from \eqref{vkandvkeps},\eqref{vkandvkeps2}, we have:
\begin{equation}
e'(h_0^\ve) h_2^\ve
 = \Delta h_0 + \Delta u_0^\ve + F_0^\ve + G_0^\ve
 = \Delta h_0  + F_0 + G_0
  + e'(h_0^\ve)u_2^\ve
 = e'(h_0)h_2 + e'(h_0^\ve) u_2^\ve,
\end{equation}
By the compatibility
conditions, $h_2 = 0$ on $\pa \Omega$ and this
combined with the boundary condition \eqref{app:bc}
shows that $h_2^\ve = 0$ on $\pa \Omega$ as well.
In general, this construction gives that:
\begin{equation}
 e'(h_0^\ve) h_k^\ve = e'(h_0) h_k + e'(h_0^\ve) u_k^\ve,
 \,\,
  k = 0,..., r-2, \qquad
 e'(h_0^\ve) h_k^\ve = e'(h_0)h_k, \,\,\,
 k = r-1, r,
\end{equation}
from which it immediately follows that the compatibility condition
of order $r-1$ holds for the smoothed problem
so long as the compatibility condition of order $r-1$ holds for the original
problem.

Because $e'$ is assumed to be small,
a simplified model for the above system is the following:
\begin{equation}
  \Delta w_{-1} = \kappa w_{1},\quad
 \Delta w_k = {\sum}_{\ell \leq k-1} A_k^\ell w_\ell + f_k +
 \kappa w_{k+2}, \quad k = 0,..., N, \qquad
 \text{ in } \Omega,
 \label{model}
\end{equation}
with the boundary condition $w_k = 0$ on $\pa \Omega$ for all $k$.
Here, $A_k^\ell, f_k$ are given functions, $\kappa$ is a small
parameter and we are writing $w_\ell = 0$ for $\ell \geq N+1$.
 When $\kappa = 0$, this system is lower-triangular and
can be solved directly
by successively solving for $w_0, w_1,...$.
To solve the model system \eqref{model} for nonzero but small
$\kappa$, one can use the following
iteration: $w^0_k \equiv 0$ for all $k$ and then, given $w^{\nu-1}_k$,
solve the following system for $w^\nu_k$:
\begin{equation}
  \Delta w_{{-1}}^\nu = \kappa w_2^{\nu-1},\quad
 \Delta w_k^{\nu} = {\sum}_{\ell \leq k-1} A_k^\ell w_\ell^\nu + f_k +
 \kappa w_{k+2}^{\nu-1}, \qquad k = 0,..., N-1,
 \label{}
\end{equation}
with $w^\nu_k = 0$ on $\pa \Omega$. Writing $W_k^\nu = w_k^\nu - w_k^{\nu-1}$,
by standard elliptic theory there are estimates of the form:
\begin{equation}
 ||W_k^{\nu}||_{H^{s-k}}
 \leq C\big({\sum}_{\ell \leq k-1} ||W_\ell^{\nu}||_{H^{s-k-2}}
 +    \kappa ||W^{\nu-1}||_{H^{s-k-2}}\big),\qquad k =-1...., N,
\end{equation}
where $C$ depends on norms of the coefficients $A$. Iterating this estimate
leads to an inequality of the form:
\begin{equation}
 ||W_k^\nu||_{s-k} \leq
 {\sum}_{\mu = 1}^{\nu-1} (C\kappa)^\mu.
\end{equation}
For $\kappa$ sufficiently small, the sequence $w^\nu\!$
converges as $\nu \!\to \!\infty$ to a solution $w \!= \!(w_{-1},\dots,w_{N})$
satisfying \eqref{model}.

%To see that the restriction on the size of $\kappa$ is not just an artifact of
%the proof, take $\lambda$ to be a Dirichlet eigenvalue of $\Delta$ on $\Omega$
%and set $\kappa = \lambda, A_k^\ell = 0$ for $\ell \leq k-2$ and $A_k^{k-1} = \lambda$.
%If $w_{-1},..., w_N$ is a solution of \eqref{model}, it follows that
% $W = {\sum}_{k = -1}^N w_k$ satisfies $\Delta W - \lambda W = \sum f_k$,
% which does not have a solution for arbitrary $f_k$.
% To consider large values of $\kappa$ one would need
% to rule out behavior of this type.
\subsubsection{The iteration to solve the system}
In order to solve the system \eqref{app:ell1}-\eqref{app:bc}, we will
use the following iteration. We set $u_k^0 \equiv 0$
in $\Omega$ for $k = -1,..., r$ and for
$\nu \geq 1$, we define $u_{-1}^\nu ,...,u_r^\nu$ by $u_{r-1}^\nu = u_r^\nu = 0$
and:
\begin{align}
 &\Delta u_{-1}^\nu = -e'(h_0 + u_0^{\nu-1}) u_1^{\nu-1},
 &&\textrm{ in } \Omega,&
 \label{app:ell1nu}\\
 &\Delta u_k^\nu = F_k - F_k^{\nu} + G_k-G_k^{\nu-1}+e'(h_0 +
 u_0^{\nu-1}) u_{k+2}^{\nu-1},
 &&\textrm{ in } \Omega, \textrm{ for } k = 0,..., r-2,\label{app:ell2nu}\\
 &u_k^\nu = 0, &&\textrm{ on } \pa \Omega, \textrm{ for } k = -1,..., r-2,
 \label{app:bcnu}
\end{align}
where we are writing:
\begin{equation}
 F_k^{\nu} = \widetilde{F}_k^\ve[u_{-1}^\nu,..., u_{k-1}^\nu],
 \qquad
 G_k^{\nu-1} = \widetilde{G}_k^\ve[u_0^{\nu-1},..., u_{k+1}^{\nu-1}].
\end{equation}
Let $u^\nu = (u_{-1}^\nu ,...,u_r^\nu)$.
To see that this system has a solution $u^\nu$ given $u^{\nu-1}$, one
just uses the fact that it is lower-triangular in $u^\nu$;
first solve \eqref{app:ell1nu} for $u_{-1}^\nu$
and then solve \eqref{app:ell2nu}-\eqref{app:bcnu} successively
for $k = 0, 1,...,r-2$.

We will prove that the sequence
$u^\nu$ is uniformly bounded in $\nu$ in the norm
\begin{equation}
 ||u^\nu||_r = || \pa_{x_0} u_{-1}^\nu||_{H^r(\Omega)}
+ ||u_{-1}^\nu||_{H^r(\Omega)}
 + {\sum}_{k = 0}^{r-2} ||u_k^\nu||_{H^{r-k}(\Omega)}.
\end{equation}

Set $E_0 = ||V_0||_{H^r}^2 + ||h_0||_{H^r}^2 + ||x_0||_{H^{r+1}}^2$.
In the following sections we will prove:
\begin{prop}
  \label{unuthm}
  Fix $r\! \geq\! 8$.
  There is a continuous function $C_r$ so that if $u^\nu$ satisfies  \eqref{app:ell1nu}-\eqref{app:bcnu}, then:
  \begin{equation}
   ||u^\nu||_r \leq C_r(E_0, ||u^{\nu-1}||_{r-1}) (\kappa ||u^{\nu-1}||_r + \ve),
   \label{unu}
  \end{equation}
  and there is a continuous function $D_r$ so that:
  \begin{equation}
   ||u^\nu - u^{\nu-1}||_{r}
   \leq D_r(E_0, ||u^\nu||_{r}, ||u^{\nu-1}||_{r}) \kappa ||u^{\nu-1} - u^{\nu-2}||_r.
   \label{unudiff}
  \end{equation}
\end{prop}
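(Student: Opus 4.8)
## Proof strategy for Proposition~\ref{unuthm}

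The plan is to establish both \eqref{unu} and \eqref{unudiff} by induction on $k$, running through the equations \eqref{app:ell1nu}--\eqref{app:bcnu} in the same triangular order in which the $u_k^\nu$ are constructed, and exploiting the fact that $e'$ is small by hypothesis \eqref{eq:closetoincompressible2} (so $\kappa$ here plays the role of $\delta_0$). First I would record the basic elliptic estimate: since each $u_k^\nu$ solves a Dirichlet problem $\Delta u_k^\nu = (\text{RHS}_k)$ with $u_k^\nu = 0$ on $\pa\Omega$, standard $H^m$ elliptic regularity on the unit ball gives $||u_k^\nu||_{H^{m}(\Omega)} \lesssim ||\text{RHS}_k||_{H^{m-2}(\Omega)}$. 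For $u_{-1}^\nu$ one needs the slightly stronger statement controlling $||\pa_{x_0} u_{-1}^\nu||_{H^r(\Omega)}$ as well, which follows from the same elliptic estimate together with the fact that $x_0 \in H^{r+1}(\Omega)$ and $H^r$ is an algebra (for $r \geq 8$), so multiplication by $\pa_{x_0}$ costs one derivative of $x_0$ and this is affordable.

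Next I would estimate the three types of terms appearing on the right-hand sides. The term $e'(h_0 + u_0^{\nu-1}) u_{k+2}^{\nu-1}$ is handled by bounding $e'(h_0 + u_0^{\nu-1})$ in $H^{r-k-2}(\Omega)$ (using Lemma~\ref{gest2}-type estimates, or directly since $e'$ is a fixed smooth function and $h_0, u_0^{\nu-1}$ are controlled) and then using the algebra property to absorb $u_{k+2}^{\nu-1}$; the key point is that this term carries an explicit factor comparable to $\delta_0 = \kappa$, which is what produces the $\kappa||u^{\nu-1}||_r$ on the right of \eqref{unu}. The term $G_k - G_k^{\nu-1}$ is a difference of the nonlinear functionals $G_k[h_0,\dots,h_{k+1}]$ and $G_k^\ve[h_0+u_0^{\nu-1},\dots,h_{k+1}+u_{k+1}^{\nu-1}]$ appearing in \eqref{fkgk}--\eqref{fkgkeps}; I would split this as (i) the difference caused by replacing $\Delta, \pa$ with $\Dve, \pave$ — which by the smoothing estimates (Lemma on $\sm$, namely $||\sm f - f||_{H^k} \lesssim \ve||f||_{H^{k+1}}$) produces the $O(\ve)$ contribution — plus (ii) the difference caused by the argument shift $h_j \mapsto h_j + u_j^{\nu-1}$, which is Lipschitz in the $u^{\nu-1}_j$ by the mean value theorem and hence contributes a factor $||u^{\nu-1}||_{r-1}$ times lower-order data, all absorbed into the continuous coefficient $C_r(E_0, ||u^{\nu-1}||_{r-1})$. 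The term $F_k - F_k^\nu$ is treated identically, except it involves $u^\nu$ (current iterate) at orders $\leq k-1$, which is exactly why the induction on $k$ closes: $F_k^\nu$ depends only on $u_{-1}^\nu,\dots,u_{k-1}^\nu$, already estimated at the previous induction steps.

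For \eqref{unudiff} I would subtract the equations for $u^\nu$ and $u^{\nu-1}$, obtaining a Dirichlet problem for $W_k := u_k^\nu - u_k^{\nu-1}$ whose right-hand side is: the difference of the $e'(\cdot)u_{k+2}^{(\cdot)}$ terms (which splits as $\kappa$ times $W_{k+2}^{\nu-1}$ plus $(e'(h_0+u_0^{\nu-1}) - e'(h_0+u_0^{\nu-2}))u_{k+2}^{\nu-2}$, the latter being $O(\kappa)\cdot O(||u_0^{\nu-1}-u_0^{\nu-2}||)$ since $e'$ is small); plus $F_k^\nu - F_k^{\nu-1}$ and $G_k^{\nu-1} - G_k^{\nu-2}$, each of which is Lipschitz in the corresponding argument shifts by the same mean value argument, with the $G$ term additionally carrying a factor tied to the smoothing that is bounded uniformly. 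Iterating the elliptic estimate up the triangular system in $k$ converts these into \eqref{unudiff} with $D_r$ depending on the (already uniformly bounded, by \eqref{unu}) norms $||u^\nu||_r, ||u^{\nu-1}||_r$. The main obstacle I anticipate is bookkeeping: keeping careful track of exactly which orders of $u^\nu$ versus $u^{\nu-1}$ enter $F_k^\nu$ versus $G_k^{\nu-1}$, and verifying that every occurrence of $e'$ (or its derivatives) genuinely carries the small factor $\delta_0$ so that the $\kappa$-contraction structure of the model system \eqref{model} is faithfully reproduced; the analytic inputs (elliptic regularity, algebra property of $H^r$, smoothing estimates, Lipschitz dependence of the functionals) are all either standard or already available in the excerpt.
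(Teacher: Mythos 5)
Your high-level strategy — proceed up the triangular system by induction on $k$, use an elliptic estimate at each stage, exploit the smallness of $e'$ via \eqref{eq:closetoincompressible2} to get the $\kappa$-contraction, and get the $\ve$ from comparing the smoothed and unsmoothed functionals — is the same as the paper's, and your treatment of $u_{-1}^\nu$, of the $e'(\cdot)u_{k+2}^{(\cdot)}$ term, and of \eqref{unudiff} by subtracting the equations is essentially correct. However, there are two genuine gaps.

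First, you misattribute the source of the $O(\ve)$ term. You say the $G_k - G_k^{\nu-1}$ difference has a contribution from replacing $\Delta,\pa$ with $\Dve,\pave$. But $G_k$ (defined as the $t=0$ coefficient of $D_t^{k+1}(e'(\hat h)D_t\hat h) - e'(\hat h)D_t^{k+2}\hat h + D_t^k\rho[\hat h]$) involves no spatial derivatives at all; $G_k$ and its $\ve$-analogue have identical functional form, and the difference $G_k - G_k^{\nu-1}$ is \emph{pure} argument shift $h_j\mapsto h_j + u_j^{\nu-1}$, which gives a factor $\kappa$ (from smallness of $e^{(j)}$), not $\ve$. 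The $\ve$ comes only from $F_k - F_k^\nu$, and even there not from $\Delta$ versus $\Dve$ (at $t=0$ the smoothed coordinate $\xve(0,\cdot)=x_0$, so $\Dve|_{t=0}=\Delta_{x_0}$ identically): it comes from the appearance of $\ssm V$ in place of $V$ inside the commutator coefficients and the nonlinear term, via $||V_j - \ssm V_j||_{H^{s+1}}\lesssim\ve||V_j||_{H^{s+2}}$.

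Second, and more substantially, your sketch of the $F_k - F_k^\nu$ estimate as "Lipschitz in $u^{\nu-1}$ by the mean value theorem" omits the middle layer the paper needs to make this precise. The quantities $F_k,F_k^\nu$ are not simple local functions of $u$; they depend on the intermediate coefficients $V_k,V_k^\nu$, which are defined recursively via $V_k^{i\nu} = -\delta^{ii'}\pa_{i'}H_{k-1}^\nu + \sum \delta^{ii'}\tS^{jk,\nu}_{i'\ell}\pa_j H_\ell^\ve$, where $H_{k-1}^\nu = h_{k-1}^\nu + \phi_{k-1}^\nu$ involves the \emph{nonlocal} gravity potential $\phi_{k-1}^\nu$, and $\tS$ is a polynomial in $\pa\ssm V_0^\nu,\dots$. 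Controlling $F_k - F_k^\nu$ therefore requires: (i) separate estimates for the commutator tensors $S_\ell^k,\tS_\ell^k$ and for their difference, using the algebraic identity $S_\ell^k(A) = \tS_\ell^k(A)$ to isolate the $\ssm$-induced error; (ii) a recursive estimate converting $||V_\ell - V_\ell^\nu||$ into $||u^\nu||$-norms; and (iii) elliptic estimates for the differences $\phi_{k-1} - \phi_{k-1}^\nu$ of the gravity potentials (which ultimately rest on the machinery of Section~\ref{gravitysection}). None of this is implied by a bare mean-value argument, and without step (iii) in particular the claimed Lipschitz dependence of $F_k^\nu$ on the perturbations $u^\nu$ is not established.
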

Let us now explain why one should expect estimates of this form.
The estimates \eqref{unu} follow from elliptic estimates applied
to the system \eqref{app:ell1nu}-\eqref{app:bcnu} and will ultimately
follow from estimates for $F_k - F_k^\nu, G_k - G_k^{\nu-1}$ in
Sobolev spaces. Let us consider the $k = 0$ case.
Using that $\hat{x}|_{t = 0} = x_0$ we have:
\begin{equation}
 F_0 - F_0^{\nu}
 = (\pa_i V_0^j)(\pa_j V_0^i) -
 (\pa_i \ssm (V_0^j + \delta^{jk}\pa_k u_{-1}^\nu)
 (\pa_j (V_0^i+\delta^{ik}\pa_k u_{-1}^\nu)).
 {}
\end{equation}
Expanding this out generates several terms but let us just consider
two of them:
\begin{equation}
 \pa_i V_0^j (\delta^{ij}\pa_j \pa_k u^{\nu}_{-1})
 \quad
 \text{ and }
 \quad
 (\pa_i V_0^j - \pa_i \ssm V_0^j)(\pa_j V_0^i).
 {}
\end{equation}
To control the $L^2(\Omega)$ norm, say, of the first term
we use the equation \eqref{app:ell1nu} and standard
elliptic theory to control
$||u^{\nu}_{-1}||_{H^2(\Omega)} \leq C ||e'(u_0^{\nu-1})
u_1^{\nu-1}||_{L^2(\Omega)}$. With $\kappa \geq \sup |e'|$
this type of term can be bounded by the first term in \eqref{unu}.
Also, we have $||V_0 - \ssm V_0||_{H^1(\Omega)} \leq C \ve ||V_0||_{H^2(\Omega)}$
so the second type of term can be bounded by the second term in \eqref{unu}.
Assuming that \eqref{unu}-\eqref{unudiff} hold for the moment, we give the proof:
\begin{proof}[Proof of Theorem \ref{compatthm}]
With the function $C_r$ from Proposition \ref{unuthm}, take
$C_0 = \max_{z \in [0,1]} C_r(E_0, z)$. Also take
$\kappa$ so small that $2\kappa C_0 \leq 1$ and $\ve$ so small
that $2\ve {\sum}_{\mu = 0}^\infty (\kappa C_0)^\mu \leq 1$. Since $u^{0} =0$,
it follows from \eqref{unu} that $||u^1||_r \leq C_0 \ve$. By induction
it then follows that $||u^\nu||_{r} \leq \ve \sum_{\mu = 0}^\nu (\kappa C_0)^\mu$,
and by the assumption on $\kappa$ the sum on the right-hand side is uniformly
bounded as $\nu \to \infty$.

Next, with the function $D_r$ from Proposition \ref{unuthm},
take $D_0 = \max_{z_1,z_2 \in [0,1]} D_r(E_0,z_1,z_2)$.
By induction and \eqref{unudiff} it follows that
$||u^\nu - u^{\nu-1}||_r \leq \ve (\kappa D_0)^{\nu}$. Therefore,
$u^\nu$ is a Cauchy sequence and so it converges to some limit
$u$ which satisfies the perturbative system \eqref{app:ell1}-\eqref{app:bc}
 by construction.
To prove the second point in the theorem, taking $\nu \!\to\! \infty$ in the
estimate for $u^\nu$ that we just proved shows $||u||_r \leq \ve \sum_{\mu = 0}^\infty
(\kappa C_0)^\mu \leq \ve$.
\end{proof}

We will use the following estimate, which is a straightforward consequence
of the elliptic estimate \eqref{sobell} at $t = 0$: If $s \geq 2$, there is
a constant $C_s = C_s(||x_0||_{H^{s+1}})$ so that if $f = 0$ on $\pa \Omega$, then:
\begin{equation}
  ||\pa_{x_0} f||_{H^{s}} \leq C_s ||\Delta_{x_0} f||_{H^{s-1}},
  \qquad
 ||f||_{H^s} \leq C_s ||\Delta_{x_0}f||_{H^{s-2}}.
 \label{compatelliptic}
\end{equation}

The estimates \eqref{unu} and \eqref{unudiff} follow after repeatedly applying
the next lemma:
\begin{lemma}
  There are continuous functions $C_{r,k}$ so that if $u^\nu = (u_{-1}^\nu,..., u_{r-2}^{\nu})$
   satisfies the approximate system \eqref{app:ell1nu}-\eqref{app:bcnu}
   and if the equation of state satisfies \eqref{eq:closetoincompressible2}, then:
 \begin{align}
  ||u_{-1}^\nu||_{H^r} &\leq
  C_{r,-1}(E_0, ||u^{\nu-1}_0||_{H^{r-2}}) \kappa ||u_1^{\nu-1}||_{H^{r-2}},\label{unu1} \\
  ||u_k^\nu||_{H^{r-k}} &\leq C_{r,k}(E_0, ||u^{\nu-1}||_r,
  \tsum_{\ell \leq k-1} ||u^{\nu}_\ell||_{H^{r-\ell-1}})
  \big( {\sum}_{\ell \leq k-1} ||u_\ell^\nu||_{H^{r-k}} + \kappa ||u^{\nu-1}||_r + \ve\big),
  \label{unu2}
 \end{align}
 and there are continuous functions $D_{r,k}$ so that with $U_k^\nu = u_k^\nu - u_k^{\nu-1}$:
 \begin{align}
  ||U_{-1}^\nu||_{H^r} &\leq D_{r,-1}(E_0, ||u_0^{\nu-1}||_r, ||u_0^{\nu-2}||_{r})
   \kappa ||U^{\nu-1}_1||_{H^{r-2}}, \label{Unu1}
   \\
  ||U_{k}^\nu||_{H^{r-k}} &\leq
  D_{r,k}(E_0, ||u^{\nu}||_r, ||u^{\nu-1}||_{r}, ||u^{\nu-2}||_r)
  \big(
  {\sum}_{\ell \leq k-1} ||U_\ell^{\nu}||_{H^{r-\ell}}
  + \kappa ||U_\ell^{\nu-1}||_{H^{r-\ell}}\big).
  \label{Unu2}
 \end{align}
\end{lemma}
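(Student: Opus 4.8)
The plan is to prove the four estimates \eqref{unu1}--\eqref{Unu2} by repeatedly applying the elliptic estimates \eqref{compatelliptic} to the approximate system \eqref{app:ell1nu}--\eqref{app:bcnu}, together with Sobolev product estimates for the right-hand sides $F_k-F_k^\nu$ and $G_k-G_k^{\nu-1}$. First I would dispose of \eqref{unu1}: since $u_{-1}^\nu$ solves \eqref{app:ell1nu} with zero boundary data, \eqref{compatelliptic} gives $\|u_{-1}^\nu\|_{H^r}\leq C_r\|\Delta_{x_0}u_{-1}^\nu\|_{H^{r-2}}=C_r\|e'(h_0+u_0^{\nu-1})u_1^{\nu-1}\|_{H^{r-2}}$, and then I would bound the product using the algebra property of $H^{r-2}(\Omega)$ (which holds since $r\geq 8$, so $r-2\geq 6>3/2$) and Lemma \ref{gest2} to control $\|e'(h_0+u_0^{\nu-1})\|_{H^{r-2}}$ in terms of $E_0$ and $\|u_0^{\nu-1}\|_{H^{r-2}}$. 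Since $e'$ satisfies \eqref{eq:closetoincompressible2} with $e'\leq\delta_0\leq\kappa$, the bound acquires the factor $\kappa$, giving \eqref{unu1}.

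Next, for \eqref{unu2} I would apply the second estimate in \eqref{compatelliptic} to $u_k^\nu$, which solves \eqref{app:ell2nu}: this yields $\|u_k^\nu\|_{H^{r-k}}\leq C_{r}\|\Delta_{x_0}u_k^\nu\|_{H^{r-k-2}}\leq C_r(\|F_k-F_k^\nu\|_{H^{r-k-2}}+\|G_k-G_k^{\nu-1}\|_{H^{r-k-2}}+\|e'(h_0+u_0^{\nu-1})u_{k+2}^{\nu-1}\|_{H^{r-k-2}})$. The last term is $\leq\kappa\|u^{\nu-1}\|_r$ times a polynomial in $\|u_0^{\nu-1}\|_{H^{r-2}}$ by the same product/Lemma \ref{gest2} argument. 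The first two terms require unwinding the definitions of $F_k^\nu=\widetilde F_k^\ve[u_{-1}^\nu,\dots,u_{k-1}^\nu]$ and $G_k^{\nu-1}=\widetilde G_k^\ve[u_0^{\nu-1},\dots,u_{k+1}^{\nu-1}]$: one splits $F_k-F_k^\nu$ into (i) the difference $F_k-F_k^\ve[x_0,V_0+\pave u_{-1}^\nu,h_0+u_0^\nu,\dots]$, which because the only $\ve$-dependence in $F_k^\ve$ is through $S_\ve$, produces terms of the form $(V_0-\ssm V_0)\cdots$ controlled by $\ve\|V_0\|_{H^{r}}$ using the smoothing estimate \eqref{nofreelunch}, and (ii) the genuine perturbative part, which is multilinear in $(u_{-1}^\nu,\dots,u_{k-1}^\nu)$ and whose $H^{r-k-2}$ norm is bounded, via the product rule \eqref{product} and \eqref{app:dxu} for the Jacobian factors, by $\sum_{\ell\leq k-1}\|u_\ell^\nu\|_{H^{r-k}}$ times a continuous function of $E_0$ and lower norms; the key point here, as explained in the excerpt's model system \eqref{model}, is that $u_{-1}^\nu$ always appears with two derivatives falling on it as $\Delta u_{-1}^\nu$, so one can trade $\|u_{-1}^\nu\|_{H^{r-k}}$ for $\|\Delta_{x_0}u_{-1}^\nu\|_{H^{r-k-2}}=\|e'(\cdot)u_1^{\nu-1}\|_{H^{r-k-2}}\leq\kappa\|u^{\nu-1}\|_r$ using \eqref{app:ell1nu}. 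The term $G_k-G_k^{\nu-1}$, since $G$ depends only on $\hat h$ and its time derivatives, is handled by Lemma \ref{gestlem} and Lemma \ref{gest2}: it is a sum of products of $(h_\ell+u_\ell^{\nu-1})$-type factors minus the corresponding products without the $u$'s, hence controlled by $\sum_{\ell\leq k+1}\|u_\ell^{\nu-1}\|_{H^{r-\ell}}\cdot P(E_0,\|u^{\nu-1}\|_{r-1})$, which since $k+1\leq r-1$ is $\leq\kappa\|u^{\nu-1}\|_r+\dots$ — actually, more carefully, the $G$-difference has no small parameter, but it only involves $u^{\nu-1}$ at indices $\leq k+1\leq r-1$, so it is absorbed into the ``$\kappa\|u^{\nu-1}\|_r+\ve$'' after noting that $\widetilde G_k^\ve$ is evaluated at the \emph{previous} iterate and contributes only through terms that, after the $u^0\equiv0$ start and the induction in Proposition \ref{unuthm}, stay of size $\ve\sum(\kappa C_0)^\mu$; for the present lemma one simply keeps the dependence $C_{r,k}(E_0,\|u^{\nu-1}\|_r,\dots)$ explicit.

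The difference estimates \eqref{Unu1}--\eqref{Unu2} follow the same template applied to the equations satisfied by $U_k^\nu=u_k^\nu-u_k^{\nu-1}$, obtained by subtracting \eqref{app:ell1nu}--\eqref{app:ell2nu} at levels $\nu$ and $\nu-1$. Subtracting \eqref{app:ell1nu} gives $\Delta_{x_0}U_{-1}^\nu=-e'(h_0+u_0^{\nu-1})U_1^{\nu-1}-(e'(h_0+u_0^{\nu-1})-e'(h_0+u_0^{\nu-2}))u_1^{\nu-2}$, and the second factor is $\leq C|U_0^{\nu-1}|$ by the mean value theorem plus \eqref{eq:closetoincompressible2}, so \eqref{compatelliptic} and the algebra property give \eqref{Unu1} with the $\kappa$ factor coming again from $\sup|e'|\leq\delta_0\leq\kappa$. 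For \eqref{Unu2} one subtracts \eqref{app:ell2nu}: $\Delta_{x_0}U_k^\nu=-(F_k^\nu-F_k^{\nu-1})-(G_k^{\nu-1}-G_k^{\nu-2})+(e'(h_0+u_0^{\nu-1})u_{k+2}^{\nu-1}-e'(h_0+u_0^{\nu-2})u_{k+2}^{\nu-2})$; each difference is multilinear and telescopes into a sum of terms each containing exactly one factor of the form $U_\ell^\nu$ or $U_\ell^{\nu-1}$, the remaining factors being bounded uniformly by $\|u^\nu\|_r,\|u^{\nu-1}\|_r,\|u^{\nu-2}\|_r$; again $U_{-1}^\nu$ appears only through $\Delta_{x_0}U_{-1}^\nu$ and is traded for $\kappa\|U^{\nu-1}\|_r$ via the $U_{-1}^\nu$-equation, and the $F$-differences at level $\nu$ involving $U^{\nu}_\ell$ with $\ell\leq k-1$ land on the right side of \eqref{Unu2} with coefficient $1$ while those involving $U^{\nu-1}$ carry $\kappa$. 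I expect the main obstacle to be the bookkeeping in step two: correctly tracking, for each monomial in the expansions of $F_k^\nu$ and $\widetilde G_k^\ve$, which index the top-order derivative falls on, so that one never needs $\|u_{-1}^\nu\|_{H^{r-k}}$ or $\|u^\nu\|_r$ at the top level on the right (only $\sum_{\ell\leq k-1}$), and so that the $\ve$-error is genuinely $O(\ve)$ via \eqref{nofreelunch} and not merely $O(1)$; the structural facts \eqref{fkgk}--\eqref{fkgkeps} about which arguments $F_k,G_k,\phi_k,C_k$ depend on are exactly what makes this bookkeeping go through.
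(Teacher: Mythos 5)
Your overall route is the paper's: apply the elliptic estimates \eqref{compatelliptic} to the system \eqref{app:ell1nu}--\eqref{app:bcnu}, use the algebra property of $H^{r-k-2}(\Omega)$, and then estimate the source terms $F_k-F_k^{\nu}$, $G_k-G_k^{\nu-1}$ and the $e'$ factors (the paper delegates these to Proposition \ref{Fkprop} and Lemmas \ref{Gnulem}--\ref{eprimenulem}). Your treatment of \eqref{unu1} and \eqref{Unu1}, of the $e'(h_0+u_0^{\nu-1})u_{k+2}^{\nu-1}$ term, and of the $F$-differences (the $O(\ve)$ error coming from $V_0-\ssm V_0$ via \eqref{nofreelunch}, and the multilinear telescoping for the $\nu$ versus $\nu-1$ differences) all match the paper and are correct. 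Two remarks, one minor and one substantive.

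The minor one: your ``trading'' of $\|u_{-1}^\nu\|_{H^{r-k}}$ for $\kappa\|u^{\nu-1}\|_r$ via the equation \eqref{app:ell1nu} is unnecessary (the sum $\sum_{\ell\le k-1}$ on the right of \eqref{unu2} includes $\ell=-1$, and Proposition \ref{Fkprop} keeps $\|u_{-1}^\nu\|_{H^{r-k}}$ explicitly) and not literally available as stated, since $F_k^\nu$ depends on $V_0+\pa_{x_0}u_{-1}^\nu$ and hence on the full Hessian $\pa^2u_{-1}^\nu$, not only on $\Delta_{x_0}u_{-1}^\nu$. This is harmless because the term is allowed on the right-hand side.

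The substantive gap is your handling of $G_k-G_k^{\nu-1}$. You first write the correct bound $\kappa\|u^{\nu-1}\|_r$, then retract it (``the $G$-difference has no small parameter'') and try to recover by noting that $\widetilde G_k^\ve$ is evaluated at the previous iterate, whose norm ``stays of size $\ve\sum(\kappa C_0)^\mu$''. That step is circular: the bound $\|u^{\nu-1}\|_r\lesssim\ve\sum(\kappa C_0)^\mu$ is the conclusion of Proposition \ref{unuthm} and Theorem \ref{compatthm}, which are proved \emph{from} the present lemma; and a contribution $C(\dots)\|u^{\nu-1}\|_r$ without the factor $\kappa$ cannot be absorbed into $\kappa\|u^{\nu-1}\|_r+\ve$, so the geometric series that closes the fixed-point iteration would fail. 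The correct observation (this is exactly Lemma \ref{Gnulem}) is that the $G$-difference \emph{does} carry the factor $\kappa$: since $G_k=D_t^{k+1}(e'(\hat h)D_t\hat h)-e'(\hat h)D_t^{k+2}\hat h+D_t^k\rho[\hat h]$, every monomial in its expansion --- and, after a mean-value-theorem step for the difference of the $e^{(K)}$ factors, every monomial in $G_k-G_k^{\nu-1}$ --- contains at least one factor $e^{(m)}$ or $\rho^{(m)}$ with $m\ge1$, all of which are bounded by $\kappa$ under \eqref{eq:closetoincompressible2}. With that observation reinstated, your argument for \eqref{unu2} and \eqref{Unu2} goes through; without it the lemma is not established in the form needed downstream.
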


\begin{proof}
 Using the
elliptic estimates \eqref{compatelliptic} and the fact that $H^{r-k-2}(\Omega)$ is an
algebra for $r -k \geq 4$, we have:
\begin{align}
 ||\pa_{x^0}u^\nu_{-1}||_{H^r} &\leq C \big( ||e'(h_0 + u_0^{\nu-1})||_{H^{r-1}} ||u_1^{\nu-1}||_{H^{r-1}}\big),
 \\
 ||u^\nu_k||_{H^{r-k}} &\leq C
 \big(||F_k - F_k^\nu||_{H^{r-k-2}} + ||G_k - G_k^{\nu-1}||_{H^{r-k-2}}
 + ||e'(h_0 + u_0^{\nu-1})||_{H^{r-k-2}} ||u_{k+2}^{\nu-1}||_{H^{r-k-2}}\big),
\end{align}
for $k = 0,..., r-2$,
with the convention that $u_\ell^\nu = 0$ for $\ell \geq r-1$, and with
constants depending on $||x_0||_{H^{r+1}}$.

Because $U^\nu = u^\nu - u^{\nu-1}$
satisfies the following system in $\Omega$:
\begin{align}
 \Delta U_{-1}^\nu &= e'(h_0 + u_0^{\nu-1}) u_1^{\nu-1} - e'(h_0 + u_0^{\nu-2}) u_1^{\nu-2},\\
 \Delta U_k^\nu &= F^{\nu-1}_k - F_k^\nu + G^{\nu-1}_k - G^\nu_k
 +e'(h_0 + u_0^{\nu-1}) u_{k+2}^{\nu-1} - e'(h_0 + u_{0}^{\nu-2}) u_{k+2}^{\nu-2},
 \quad k = 0,..., r-2,
 \label{}
\end{align}
with $U^\nu = 0$ on $\pa \Omega$, we also have:
\begin{equation}
 ||\pa_{x_0}U_{-1}^\nu||_{H^r} \leq
 C \big( ||e'(h_0 + u_0^{\nu-1}) - e'(h_0 + u_0^{\nu-2})||_{H^{r-1}} ||u_1^{\nu-1}||_{H^{r-1}}
 + ||e'(h_0 + u_0^{\nu-2})||_{H^{r-1}} ||U_1^{\nu-1}||_{H^{r-1}}\big),
 \end{equation}
 \begin{multline}
 ||U_k^\nu||_{H^{r-k}} \leq
 C\big( ||F^{\nu-1}_k - F^{\nu}_k||_{H^{r-k-2}}
 + ||G^{\nu-1}_k - G^{\nu}_k||_{H^{r-k-2}}\\
 +||e'(h_0 + u_0^{\nu-1}) - e'(h_0 + u_0^{\nu-2})||_{H^{s-k-2}} ||u_{k+1}^{\nu-1}||_{H^{r-k-2}}
 + ||e'(h_0 + u_0^{\nu-2})||_{H^{r-k-2}} ||U_{k+1}^{\nu-1}||_{H^{r-k-2}}.
\end{multline}
The estimates \eqref{unu1}-\eqref{Unu2} then follow from Proposition
\ref{Fkprop} and Lemmas \ref{Gnulem}-\ref{eprimenulem}.
\end{proof}

It remains to prove estimates for the terms on the right-hand sides
of \eqref{unu1},\eqref{unu2} and \eqref{Unu1},\eqref{Unu2}.
The proposition below is a  consequence of Lemmas \ref{Fklem},
\ref{Vkestlem}, whose proofs we postpone until Section \ref{Fkestsection}
\begin{prop}\label{Fkprop} Set $M_k^\nu = ||\pa_{x_0}u_{-1}^\nu||_{H^{r}}
  +  \tsum_{j \leq k}
  ||u_j^\nu||_{H^{r-j}}$.
There are continuous functions $K_k = K_{k}(E_0, M_{k-1}^\nu),
K_k' = K'_{k}(E_0, M_{k-1}^\nu, M_{k-1}^{\nu-1})$ so that
writing $j = r-k-2$:
\begin{equation}
  ||{}_{\!}F_{\!k} - F_{\!k}^{\nu}||_{{}_{\!}H^j}\! \!\leq\!
 K_{\!k}\big(
 ||u_{-\!1}^\nu\!||_{{}_{\!}H^{j\!+2}}+
 \!{\sum}_{\ell \leq k-\!1\!} ||u_{\ell}^\nu||_{{}_{\!}H^{j\!+2}} +\ve\big),
 \qquad
 ||{}_{\!}F_{\!k}^\nu\!\!- F_{\!k}^{\nu\!-\!1\!}||_{{}_{\!}H^j}
\!\!\leq\! K_{\!k}'
\big( ||U_{\!-\!1\!}^\nu||_{{}_{\!}H^{j\!+2}}
+ \!{\sum}_{\ell \leq k-\!1\!} ||U_{\ell}^\nu||_{{}_{\!}H^{j\!+\!2}}\!\big).
\end{equation}
\end{prop}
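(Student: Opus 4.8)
\textbf{Proof proposal for Proposition \ref{Fkprop}.}

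The plan is to reduce both estimates to the recursive structure of the formula for $F_k$. Recall that $\hat{x}|_{t=0} = x_0$, so $\hat{\pa}_i|_{t=0} = \pa_i$ (the Eulerian derivative induced by $x_0$) is independent of $\nu$ and $\ve$, and only the ``$\ve$-variables'' $V_0 + \pave u_{-1}^\nu$ and $h_0 + u_j^\nu$ (and the smoothing operator $\ssm$ that enters $F_k^\nu$ via $\hat{\pave}_i \ssm \hat{V}$) differ between $F_k$ and $F_k^\nu$. Expanding $F_k = \big([D_t^k, \hat\Delta]\hat h + D_t^k (\hat\pa_i \hat V^j)(\hat\pa_j \hat V^i)\big)|_{t=0}$ and using the commutator expansions from Sections \ref{hocommutators} and \ref{wavecompatcondn}, $F_k$ is a polynomial expression in $x_0$, its spatial derivatives up to some order, and in $V_0, h_0,\dots,h_{k-1}$ and their spatial derivatives; moreover, recalling that $V_\ell$ (resp.\ $V_\ell^\ve$) for $\ell \ge 1$ is itself expressed through $h_0,\dots,h_{\ell-1}$ (resp.\ $h_0^\ve,\dots,h_{\ell-1}^\ve$) and $x_0$ via \eqref{vkandvkeps}--\eqref{vkandvkeps2} together with Theorem \ref{main theorem, ell est D_t phi} for the $\phi_\ell$ contributions, the total number of derivatives of $h_\ell$ appearing is at most $r - \ell - 1 \ge j+2$ when $k \le r-2$. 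So the first step is to record, from these explicit formulas, the precise multilinear form of $F_k - F_k^\nu$: it is a sum of products where each factor is either a spatial derivative of $x_0$, a derivative of $V_0$ or $\ssm V_0$, or a derivative of some $h_\ell + u_\ell^\nu$ with $\ell \le k-1$, and where at least one factor is a ``difference'' factor of the form $V_0 - \ssm V_0$, $\pave u_{-1}^\nu$, $u_\ell^\nu$, or a difference of the enthalpy-dependent coefficients.

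The second step is to estimate these products in $H^j(\Omega)$, $j = r-k-2$, using that $H^j(\Omega)$ is an algebra (valid since $r \ge 8$ forces $j \ge$ the needed threshold in the worst case $k = r-2$, where one uses $H^0 = L^2$ paired with $L^\infty$ Sobolev embeddings instead, exactly as in the proof of Lemma \ref{fbdslem}), combined with the standard product estimate \eqref{product}. The difference factors are handled as follows: for a factor $V_0 - \ssm V_0$ one uses the smoothing estimate $\|V_0 - \ssm V_0\|_{H^{j+1}} \lesssim \ve \|V_0\|_{H^{j+2}} \le \ve\, C(E_0)$ from Lemma 3.2 / \eqref{nofreelunch}, producing the $\ve$ term; for a factor involving $u_\ell^\nu$ with $\ell \ge 0$ one simply has $\|u_\ell^\nu\|_{H^{j+2}} \le \|u_\ell^\nu\|_{H^{r-\ell}}$ which is already among the terms on the right-hand side (after reindexing $j+2 = r-k$); for the factor $\pave u_{-1}^\nu$ one gets $\|\pave u_{-1}^\nu\|_{H^{j+2}} \le C(\|x_0\|_{H^{r+1}}) \|\pa_{x_0} u_{-1}^\nu\|_{H^{j+2}} + \text{l.o.t.} \le C(E_0)\,\|u_{-1}^\nu\|_{H^{r}}$, which after writing $\|u_{-1}^\nu\|_{H^{j+2}}$ contributes to the listed sum as well (here I use $\|\pa_{x_0}u_{-1}^\nu\|_{H^r}$ sitting inside $M_{k-1}^\nu$); and differences of enthalpy-dependent coefficients such as $e^{(m)}(h_0 + u_\ell^\nu) - e^{(m)}(h_0)$ are bounded by $C(\|u_\ell^\nu\|_{L^\infty})\|u_\ell^\nu\|_{H^j}$ using the chain rule and \eqref{eq:closetoincompressible2}, again absorbed into the sum with the $K_k$ dependence on $M_{k-1}^\nu$. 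Collecting these, and letting $K_k$ absorb the $C(E_0, M_{k-1}^\nu)$ constants, gives the first estimate. For the second estimate one argues identically but now every difference factor is of the form $u_\ell^\nu - u_\ell^{\nu-1} = U_\ell^\nu$, $\pave U_{-1}^\nu$, or $e^{(m)}(h_0 + u_\ell^\nu) - e^{(m)}(h_0 + u_\ell^{\nu-1})$ (bounded by $C(\|u_\ell^\nu\|_{L^\infty}, \|u_\ell^{\nu-1}\|_{L^\infty})\|U_\ell^\nu\|_{H^j}$), with no surviving $\ve$ term since the smoothing operators in $F_k^\nu$ and $F_k^{\nu-1}$ are the same; the constant $K_k'$ then depends on both $M_{k-1}^\nu$ and $M_{k-1}^{\nu-1}$ and on $E_0$.

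The main obstacle, and the point that requires care rather than routine bookkeeping, is verifying the derivative count: one must confirm that in the fully expanded formula for $F_k$ no factor $h_\ell$ or $u_\ell^\nu$ ever carries more than $r-\ell-1$ spatial derivatives — equivalently that every such factor lies in $H^{j+2} = H^{r-k}$ when $\ell \le k-1$ — so that the product can be closed in $H^j$ by the algebra/Sobolev argument. This is where the contributions of $\phi_\ell$ (through $V_\ell$) must be tracked using the elliptic gain in Theorem \ref{main theorem, ell est D_t phi}, which is exactly what makes $\pave\phi_\ell$ one derivative smoother than naively expected; and it is where the hypothesis $r \ge 8$ is genuinely used, to guarantee $j \ge 2$ so that at worst one factor goes in $L^2$ and the rest in $L^\infty$ via \eqref{regsob}. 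Once the counting is in place, the rest is an application of \eqref{product}, the smoothing bounds, and \eqref{compatelliptic}, organized exactly as in Lemmas \ref{fbdslem} and \ref{N_r^1,2}. I would state the counting as a separate sub-lemma (Lemma \ref{Fklem} below) and then deduce Lemma \ref{Vkestlem} and the Proposition from it.
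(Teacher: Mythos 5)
Your proposal is correct and follows essentially the same route as the paper: the paper also reduces the Proposition to a multilinear decomposition of $F_k - F_k^{\nu}$ (Lemma \ref{Fklem}, using the cancellation $S^k_\ell = \widetilde S^k_\ell$ on common arguments so that only difference factors and the $\ve\|V_j\|_{H^{s+2}}$ smoothing error survive) combined with the control of $V_\ell - V_\ell^\nu$ by the $u_\ell^\nu$ via the $\phi_\ell$ elliptic estimates (Lemmas \ref{Vkestlem}, \ref{phikestimates}). Your derivative-counting concern and the $H^j$-algebra versus $L^2$--$L^\infty$ split for small $j$ are exactly the points the paper's proof addresses.
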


\begin{lemma}
 \label{Gnulem}
 There are continuous functions $K= K(E_0,
 ||u^{\nu-1}||_{r-1})$, $K' = K'(E_0,||u^{\nu-1}||_r, ||u^{\nu-2}||_{r})$ so that
 if $\sup_{r' \leq r+1} |e^{(r')}| \leq \kappa$ then:
\begin{equation}
 ||G_k - G_k^{\nu-1}||_{H^{r-k-2}} \leq
 \kappa K||u^{\nu-1}||_{r},
 \qquad
 ||G_k^{\nu-1} - G_k^{\nu-2}||_{H^{r-k-2}} \leq \kappa
 K' ||u^{\nu-1} - u^{\nu-2}||_r
\end{equation}
\end{lemma}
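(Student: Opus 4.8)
The plan is to prove the two inequalities of Lemma~\ref{Gnulem} by unwinding the definitions of $G_k$ and $G_k^{\nu-1}$ and exploiting the fact that every term in the difference carries at least one factor of a derivative of $e$, which by the hypothesis $\sup_{r'\leq r+1}|e^{(r')}|\leq\kappa$ produces the gain of $\kappa$. First I would recall from \eqref{fkgk} and the construction in this appendix that $G_k=G_k[h_0,\dots,h_{k+1}]$ and $G_k^{\nu-1}=\widetilde G_k^\ve[h_0+u_0^{\nu-1},\dots,h_{k+1}+u_{k+1}^{\nu-1}]$, where by the definition of $G_k$ above,
\[
 G_k = \big(D_t^{k+1}(e'(\hat h)D_t\hat h) - e'(\hat h)D_t^{k+2}\hat h + D_t^k\rho[\hat h]\big)\big|_{t=0},
\]
and similarly for $G_k^{\nu-1}$ with $\hat h$ replaced by $\hat h_\ve$ and $e'$ interpreted with the perturbed data. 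Carrying out the $D_t$ differentiations as in Lemma~\ref{gestlem}, $G_k$ is a sum of terms of the form $e^{(m)}(h_0)\,h_{k_1}\cdots h_{k_m}$ with $2\leq m\leq k+1$, $k_1+\cdots+k_m=k+2$, $k_j\leq k+1$, together with the contribution of $D_t^k\rho[\hat h]$, which is $\rho^{(m)}(h_0)h_{k_1}\cdots h_{k_m}$ — and since $\rho=\exp\circ e$, every such term again carries a factor $\rho^{(1)}=\rho\cdot e'$ or higher, hence a factor of some $e^{(r')}$.

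The key step is then to write $G_k-G_k^{\nu-1}$ as a telescoping sum in which each summand is a product of at most $r+1$ factors, each factor being either some $h_j$, some $u_j^{\nu-1}$, or a difference $e^{(m)}(h_0)-e^{(m)}(h_0+u_0^{\nu-1})$ (which by the mean value theorem and the bound on $e^{(m+1)}$ is bounded pointwise by $\kappa|u_0^{\nu-1}|$), and at least one factor is a single power of $u_j^{\nu-1}$ or one of the above $e$-differences. Using that $H^{r-k-2}(\Omega)$ is an algebra for $r-k\geq 4$ (which holds since $k\leq r-2$ and $r\geq 8$), one distributes the $H^{r-k-2}$ norm over the factors: the factors involving $h_j$ and $h_0$ are bounded in terms of $E_0$; the factors involving $u_j^{\nu-1}$ with $j\leq r-1$ are bounded by $\|u^{\nu-1}\|_{r-1}$ for all but possibly one of them, and the last one by $\|u^{\nu-1}\|_r$; and each summand is multiplied by a factor $\kappa$ coming from the $e^{(r')}$ present. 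This yields $\|G_k-G_k^{\nu-1}\|_{H^{r-k-2}}\leq \kappa K(E_0,\|u^{\nu-1}\|_{r-1})\,\|u^{\nu-1}\|_r$, which is the first claim. For the second claim, one applies the same expansion to both $G_k^{\nu-1}$ and $G_k^{\nu-2}$ and subtracts, again telescoping: now each summand of $G_k^{\nu-1}-G_k^{\nu-2}$ either contains a factor $U_j^{\nu-1}:=u_j^{\nu-1}-u_j^{\nu-2}$, or a difference $e^{(m)}(h_0+u_0^{\nu-1})-e^{(m)}(h_0+u_0^{\nu-2})$ bounded by $\kappa|U_0^{\nu-1}|$, with the remaining factors bounded in terms of $E_0,\|u^{\nu-1}\|_r,\|u^{\nu-2}\|_r$; since every surviving term still carries an $e^{(r')}$, one extracts the overall $\kappa$, giving $\|G_k^{\nu-1}-G_k^{\nu-2}\|_{H^{r-k-2}}\leq \kappa K'(E_0,\|u^{\nu-1}\|_r,\|u^{\nu-2}\|_r)\|u^{\nu-1}-u^{\nu-2}\|_r$.

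I expect the main obstacle to be bookkeeping rather than anything deep: one must verify that in $G_k$ the highest time-index coefficient that appears is $h_{k+1}$ (not $h_{k+2}$), so that the difference $G_k-G_k^{\nu-1}$ only involves $u_j^{\nu-1}$ for $j\leq k+1\leq r-1$, keeping everything within the $\|\cdot\|_{r-1}$ and $\|\cdot\|_r$ norms and avoiding $u_\ell$ with $\ell\geq r$ (which are set to zero); this is why the statement splits the $\kappa$-small factor onto $\|u^{\nu-1}\|_r$ but lets the continuous-function prefactor depend only on $\|u^{\nu-1}\|_{r-1}$. A secondary point requiring care is the $D_t^k\rho[\hat h]$ term and, more subtly, whether $\rho$ itself (as opposed to its derivatives) contributes a term with no $e^{(r')}$ factor; but since $\rho'=\rho\, e'$ and every $D_t$ applied to $\rho(\hat h)$ brings down at least one $e'(\hat h)$ (or a higher derivative thereof), and in the difference $\rho(h_0)-\rho(h_0+u_0^{\nu-1})$ one again uses the mean value theorem with $\rho'=\rho e'$, the $\kappa$ gain is preserved — I would check this explicitly using $e(h)=\log\rho(h)$ and the assumptions \eqref{eq:closetoincompressible2} on $e^{(k)}$. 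Once these index counts are nailed down, the estimates follow from the algebra property of $H^{r-k-2}(\Omega)$, Sobolev embedding for the single low-regularity factor when needed, and the elementary mean value estimate for the $e$-differences.
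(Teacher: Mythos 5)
Your approach is essentially the paper's: expand $G_k$ and $G_k^{\nu-1}$ via Fa\`a di Bruno into products of $e^{(m)}$ against coefficients $h_{k_j}$ (resp.\ $h_{k_j}+u_{k_j}^{\nu-1}$), telescope the difference so every summand carries either a factor $u_j^{\nu-1}$ or a difference $e^{(m)}(h_0)-e^{(m)}(h_0+u_0^{\nu-1})$ handled by the mean value theorem, extract the factor $\kappa$ from the bound $\sup_{r'\le r+1}|e^{(r')}|\le\kappa$, and estimate the remaining products in $H^{r-k-2}$ by putting all but one factor in $L^\infty$ via Sobolev embedding and the last one in $L^2$. Your separate check of the $D_t^k\rho[\hat h]$ contribution via $\rho'=\rho\,e'$ is a point the paper's write-up omits, and it is worth making explicit.

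There is, however, one concrete error you must fix. You write that $H^{r-k-2}(\Omega)$ is an algebra because ``$r-k\ge 4$, which holds since $k\le r-2$ and $r\ge 8$.'' This implication is false: $k$ ranges up to $r-2$, so $r-k$ can be as small as $2$, i.e.\ $H^{r-k-2}$ can be $H^1(\Omega)$ or even $L^2(\Omega)$ for $k=r-3,r-2$, neither of which is an algebra in three dimensions. If you literally rely on the algebra property to distribute the $H^{r-k-2}$ norm, the argument breaks for those two values of $k$. The fix is exactly what you already gesture at in your closing sentence and what the paper does: do not invoke the algebra property at all; instead, after applying $\partial_y^I$ with $|I|\le r-k-2$, bound each factor $\partial_y^{J_\ell}h_{k_\ell}$ (or $\partial_y^{J_\ell}u_{k_\ell}^{\nu-1}$) with $|J_\ell|+k_\ell\le r-3$ in $L^\infty$ by Sobolev embedding (controlled by $E_0$ or $\|u^{\nu-1}\|_{r-1}$), and observe that since $r\ge 5$ there can be at most one factor with $|J_\ell|+k_\ell\ge r-2$, which is placed in $L^2$ and bounded by $\|u^{\nu-1}\|_{r}$. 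This $L^\infty/L^2$ distribution works uniformly in $k=0,\dots,r-2$ and makes the erroneous algebra claim unnecessary. Drop the parenthetical (or replace it by a case split stating that the algebra property is used only when $k\le r-4$, with the $L^\infty/L^2$ trick covering the remaining cases), and the rest of your argument is sound.
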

\begin{proof}
 Write $h_k^\nu = h_k + u_k^{\nu-1}$. Expanding out
 the definition of $G_k, G_k^{\nu}$ and applying $\pa_y^I$ for a multi-index
 $I$ with $|I| = r' \leq r-k - 2$, we see that $\pa_y^I (G_k - G_k^{\nu})$
 is a sum of terms of the form:
 \begin{equation}
  e^{(K)}(h_0^{\nu-1}) (\pa_y^{J_1} h_{k_1}^{\nu-1})
  \cdots (\pa_y^{J_j} h_{k_j}^{\nu-1}) -
  e^{(K)}(h_0) (\pa_y^{J_1} h_{k_1})
  \cdots (\pa_y^{J_j} h_{k_j}),
  {}
 \end{equation}
 with $|J_1| + \cdots |J_j| = r-k-2, k_1 + \cdots k_j = k+1, K \leq r-1$.
 Performing the usual manipulations, rearranging terms, and using that
 $h_k^{\nu-1} \!\!- h_k = u_k^{\nu-1}\!\!$, it suffices to
 control the $L^2(\Omega)$ of a sum of terms of the forms:
 \begin{equation}
  (e^{(K)}(h_0^{\nu-1}) - e^{(K)}(h_0))
  (\pa_y^{J_1}h^{\nu-1}_{k_1})
  \cdots
  (\pa_y^{J_j}h^{\nu-1}_{k_j}),
  \quad
  \text{ and }
  \quad
  e^{(K)}(h_0^{\nu-1}) (\pa_y^{J_1}u_{k_1}^{\nu-1})
  (\pa_y^{J_2}h^{\nu-1}_{k_2})
  \cdots
  (\pa_y^{J_j}h^{\nu-1}_{k_j}),
  {}
 \end{equation}
 the remaining terms being similar but with some of the factors
 of $h^{\nu-1}_\ell$ replaced by $h_\ell$. Let us just bound the second type
 of term here, the first type being identical after using the estimate
 $|e^{(K)}(h_0^{\nu-1}) - e^{(K)}(h_0)| \leq |\sup e^{(K+1)}| |u_0^{\nu-1}|$.
 For each $\ell$ with $|J_\ell| + k_\ell \leq r-3$, we bound the resulting
 term in $L^\infty$ by Sobolev embedding to get either
 $||\pa_y^{J_\ell} u^{\nu-1}_{k_\ell}||_{L^\infty(\Omega)}
 \leq C|| \pa_y^{J_\ell}u^{\nu-1}_{k_\ell}||_{H^2(\Omega)}$
 or $C(||\pa_y^{J_\ell} h_{k_\ell}||_{H^2(\Omega)}
 + ||\pa_y^{J_\ell} u_{k_\ell}^{\nu-1}||_{H^2(\Omega)})$.
 Since $|J_\ell|+ k_\ell + 2 \leq r-1$, the result can be bounded by
 $||u^{\nu-1}||_{s-1}$ or $||u^{\nu-1}||_{r-1} + E_0$, respectively.
 It therefore remains to handle terms with $|J_\ell| + k_\ell \geq r-2$.
 Since $r \geq 5$ there is at most one such term and so it is bounded by
 either $||u_{k_\ell}^{\nu\!-\!1}||_{H^{|\!J_\ell\!|}(\Omega)}
 \leq ||u^{\nu\!-\!1}||_{r-\!1}$ or
 $||h_{k_\ell}||_{H^{|\!J_\ell\!|}(\Omega)} + ||u_{k_\ell}^{\nu\!-\!1}||_{H^{|\!J_\ell\!|}(\Omega)}
 \leq E_0 + ||u^{\nu\!-\!1}||_{r-\!1}$, as required. The estimate
 for $G^{\nu-1} - G^{\nu-2}$ is similar.
\end{proof}

\begin{lemma}
 \label{eprimenulem}
 There are continuous functions $K'' \!\!= \!K''(E_0,\! ||u_0^{\nu-\!1\!}||_{H^{r\!-\!1}}\!)$,
 $K^{\prime\prime\prime} \!\!= \!K^{\prime\prime\prime}(E_0, ||u_0^{\nu-\!1\!}||_{H^{r\!-\!1}\!}, \!||u_0^{\nu-2\!}||_{H^{r\!-\!1}\!})$
 so that if
 $\sup_{k \leq r+1} |e^{(k)}| \leq \kappa$ then:
 \begin{equation}
  ||e'(h_0^{\nu-1})||_{H^{r}}
  \leq \kappa K''
  ||u_0^{\nu-1}||_{H^r(\Omega)},
  \qquad
  ||e'(h_0^{\nu-1}) - e'(h_0^{\nu-2})||_{H^r}\leq
  \kappa K''' ||u_0^{\nu-1} - u_0^{\nu-2}||_{H^r}.
  \label{eprimenuest}
 \end{equation}
\end{lemma}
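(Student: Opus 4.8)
The plan is to prove both bounds by the standard chain rule (Moser-type) estimates for compositions with a smooth function in Sobolev spaces, the extra factor $\kappa$ being produced by the hypothesis that all the derivatives of $e'$ that appear are bounded by $\kappa$ in sup norm. Write $h_0^{\nu-1} = h_0 + u_0^{\nu-1}$. Since $r-1 \ge 7 \ge 2$, Sobolev embedding gives $\|h_0^{\nu-1}\|_{L^\infty(\Omega)} \le C(E_0, \|u_0^{\nu-1}\|_{H^{r-1}})$, so every evaluation of $e'$ and its derivatives below takes place on a fixed bounded set controlled by $E_0$ and $\|u_0^{\nu-1}\|_{H^{r-1}}$; in particular the supremum of $|e^{(m)}(h_0^{\nu-1})|$ over $\Omega$ is $\le \kappa$ for each $m$ in the range allowed by the hypothesis. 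This is exactly the setup already used in Lemmas \ref{gestlem}, \ref{gest2} and \ref{Gnulem}, so those arguments apply essentially verbatim.

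For the first estimate I would expand $\pa_y^I e'(h_0^{\nu-1})$ for $|I| \le r$ by the Fa\`a di Bruno formula: it is a finite sum of terms $e^{(m)}(h_0^{\nu-1})\,(\pa_y^{J_1} h_0^{\nu-1})\cdots(\pa_y^{J_m} h_0^{\nu-1})$ with $1 \le m \le |I|$, $|J_1| + \cdots + |J_m| = |I|$ and each $|J_\ell| \ge 1$. Every such term carries a factor $e^{(m)}(h_0^{\nu-1})$ with $m\ge 1$, bounded by $\kappa$, together with at least one factor $\pa_y^{J_\ell} h_0^{\nu-1}$; since $r \ge 8$, at most one $|J_\ell|$ can exceed $r-4$, so I would bound that factor in $L^2$ by $\|h_0^{\nu-1}\|_{H^r} \le C(E_0)(1 + \|u_0^{\nu-1}\|_{H^r})$ and all remaining factors in $L^\infty$ by $C(E_0, \|u_0^{\nu-1}\|_{H^{r-1}})$ via Sobolev embedding. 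This yields $\|e'(h_0^{\nu-1})\|_{H^r} \le \kappa K''(E_0, \|u_0^{\nu-1}\|_{H^{r-1}})(1 + \|u_0^{\nu-1}\|_{H^r})$, which is the form in which the estimate enters the elliptic bounds for $u^\nu_{-1}$ and $u^\nu_k$ leading to \eqref{unu1}.

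For the difference estimate I would write, by the fundamental theorem of calculus, $e'(h_0^{\nu-1}) - e'(h_0^{\nu-2}) = (u_0^{\nu-1} - u_0^{\nu-2})\,\Psi$ with $\Psi = \int_0^1 e''\bigl(h_0 + t u_0^{\nu-1} + (1-t)u_0^{\nu-2}\bigr)\,dt$, apply the expansion of the previous paragraph to $\Psi$ (now every term in $\pa_y^I \Psi$ contains an $e^{(m)}$ with $m \ge 2$, hence a factor bounded by $\kappa$), obtaining $\|\Psi\|_{H^r(\Omega)} + \|\Psi\|_{L^\infty(\Omega)} \le \kappa\, C(E_0, \|u_0^{\nu-1}\|_{H^{r-1}}, \|u_0^{\nu-2}\|_{H^{r-1}})$, and then use the Sobolev product estimate (as in \eqref{product}) $\|fg\|_{H^r} \lesssim \|f\|_{H^r}\|g\|_{L^\infty} + \|f\|_{L^\infty}\|g\|_{H^r}$ with $f = u_0^{\nu-1} - u_0^{\nu-2}$, $g = \Psi$ to conclude $\|e'(h_0^{\nu-1}) - e'(h_0^{\nu-2})\|_{H^r} \le \kappa K''' \|u_0^{\nu-1} - u_0^{\nu-2}\|_{H^r}$.

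The only real content is bookkeeping, and I expect it to be routine: one must check that in every term of the chain-rule expansion a derivative of $e'$ of order in the range covered by the hypothesis genuinely appears (so that the prefactor is $\kappa$ and not merely a constant depending on $c_1$), and that the distribution of $L^2$ and $L^\infty$ norms leaves exactly one high-order factor, so the resulting constant depends on $E_0$ and the claimed lower-order norms of the iterates and only linearly on $\|u_0^{\nu-1}\|_{H^r}$ through that factor. Both points are handled exactly as in the proofs of Lemmas \ref{gestlem}--\ref{Gnulem}, and the hypothesis $r \ge 8$ (so that $H^{r-1}(\Omega) \hookrightarrow C^2(\overline\Omega)$ and $H^2(\Omega)$ is an algebra) leaves ample room.
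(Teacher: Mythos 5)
Your proof is correct and follows essentially the same route as the paper: a chain-rule (Fa\`a di Bruno) expansion of $\pa_y^I e'(h_0+u_0^{\nu-1})$, the sup bound $|e^{(m)}|\le\kappa$ to extract the small prefactor, and Sobolev embedding with at most one high-order factor placed in $L^2$; the paper's proof of the difference bound is only stated as ``similar,'' and your integral-remainder representation $e'(h_0^{\nu-1})-e'(h_0^{\nu-2})=(u_0^{\nu-1}-u_0^{\nu-2})\Psi$ is a clean way to carry it out, staying within the hypothesis since $\pa_y^I\Psi$ only involves $e^{(m)}$ with $m\le r+1$. Your observation that the first estimate naturally comes out as $\kappa K''(1+\|u_0^{\nu-1}\|_{H^r})$ rather than $\kappa K''\|u_0^{\nu-1}\|_{H^r}$ is accurate (the zeroth-order term $e'(h_0^{\nu-1})$ itself does not vanish with $u_0^{\nu-1}$), and this form is exactly what is used downstream, so no harm is done.
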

\begin{proof}
 By the chain rule, if $I\!$ is a multi-index with $|I|\! = \!r' \!\!\leq \!r$,
 $\pa_y^I (e'(h_0 \!+\! u_0^{\nu-1}))\!$ is a sum of terms of the form:
  \begin{equation}
    e^{(K)}(h_0 + u_0^{\nu-1})
    (\pa_y^{J_1} h_0 + u_0^{\nu-1})
    \cdots
    (\pa_y^{J_j} h_0 + u_0^{\nu-1}) ,
    \qquad \tsum |J_j| = r', K\leq r'.
 \end{equation}
 We want to control the $L^2(\Omega)$ norm of this.
 For each $\ell$ with $|J_\ell|\! \leq \!r\!-\! 3$ we control the
 $L^\infty$ norm of the resulting factor by Sobolev embedding
 which shows that any such term is bounded by
 $C (||h_0||_{H^{r-1}(\Omega)} + ||u_0^{\nu-1}||_{H^{r-1}(\Omega)})$.
 To handle terms with $|J_\ell| \!\geq\! r\!-\!2$, note that since $r\! \geq\! 5$
 there can be at most one such term and we control it by
 $||u_0^{\nu-1}||_{H^r(\Omega)}$. Since
 $|e^{(K)}| \leq \kappa$ this gives the first estimate \eqref{eprimenuest}
 and the second is similar.
\end{proof}

\subsection{Estimates for $F_k - F_k^\nu$ and $F_k^\nu - F_k^{\nu-1}$}
\label{Fkestsection}
For these estimates it will be convenient to first state
the results in terms of the coefficients $V_k, V_k^\nu$ before
relating these to $h_k,u_k^\nu$, because they depend on each other
in a complicated way.
Recall the definitions of $S, \tilde{S}$ from \eqref{eq:highercommutatorssec2exp},
\eqref{sepsdef}. Given power series in time $t$ $\hat{V}\!, \hat{V}_\ve$ as in the beginning of this
section and evaluating at $t \!= \!0$, the $S$, $\tS$ are polynomials in the following arguments:
\begin{equation}
 S_\ell^k = S_\ell^k(\pa V_0, ..., \pa V_{k-\ell-1}),
 \qquad
 \tS_\ell^k = \tS_\ell^k(\pa \ssm V_0^\ve,..., \pa \ssm V_{k-\ell-1}^\ve).
 \label{}
\end{equation}
We note for later use that in fact we have:
\begin{equation}
  S_\ell^k(\pa V_0, ..., \pa V_{k-\ell-1})=
  \tS_\ell^k(\pa V_0,..., \pa V_{k-\ell-1}),
 \label{StSidentity}
\end{equation}
which follows from the formulas \eqref{eq:highercommutatorssec2exp}, \eqref{sepsdef}.
We then have the following formula for the $V_k$:
\begin{equation}
 V_k^i = -\delta^{ii'}\pa_{i'} H_{k-1} + {\sum}_{\ell \leq k-2}
 \delta^{ii'}S_{i'\ell}^{jk} \pa_j H_\ell, \qquad i = 1,2,3,
 \label{Vkdef}
\end{equation}
and, given $V_0^\nu$ we recursively define $V_k^\nu$ by:
\begin{equation}
 V_k^{i \nu} = -\delta^{ii'}\pa_{i'} H_{k-1}^\nu
 + {\sum}_{\ell \leq k-2} \delta^{ii'}\tS^{jk,\nu}_{i' \ell} \pa_j H_\ell^\ve,
 \qquad i = 1,2,3,
 \label{Vknudef}
\end{equation}
where, with $\tilde{S}^{jk}_{i\ell}$ defined in \eqref{sepsdef}, we are writing:
\begin{equation}
 \tS^{j k,\nu}_{i\ell} = \tS^{j k}_{i\ell}(
  \pa\ssm V_0^\nu,...,
  \pa\ssm V_{k-\ell-1}^\nu).
 \label{}
\end{equation}
We are also writing $H_\ell = h_\ell + \phi_\ell$ and
$H_\ell^\nu = h_\ell^\nu + \phi_\ell^\nu$ with
$\phi_\ell^\nu = D_t^\ell\phi[x,\hat{h}^\nu]|_{t = 0}$, where
$\hat{h}^\nu(t) = \sum h^\nu t^k\!/k!$.

If $T$ is a (2,2) tensor then we write:
\begin{equation}
 ||T_\ell^k||_{H^m}^2
 = {\sum}_{0 \leq |I| \leq m}
 \int_\Omega \delta^{ii'}\delta_{jj'}
 (\pa_y^I T_{i \ell}^{jk})(\pa_y^I T_{ i' \ell}^{j'k}) dy,
 {}
\end{equation}
and we have the following lemma which will be used repeatedly to control the commutators
$S, \tS$:
\begin{lemma}
  \label{Sestimates}
  Let $e_0^s = E_0 + \sum_{j \leq s} ||V_j||_{H^{s-j}}$.
 If $s \geq 2$ then there are continuous functions $C_{k,\ell}$ so that
 \begin{equation}
  ||S_\ell^k||_{H^{s}}
  \leq C_{k,\ell}(e_0^{s+1}),
  \qquad
  ||\widetilde{S}_{\ell}^{k,\nu}||_{H^{s}}
  \leq C_{k,\ell}(e_0^{s+1},m_{k-\ell+1,s+1}^{\nu}),\quad\text{where}\quad
  m_{r,s+1}^\nu = \tsum_{j \leq r} ||V_j^\nu||_{H^{s+1}},
 \end{equation}
 and there are continuous functions $D_{k,\ell}, D_{k,\ell}'$ so that:
 \begin{align}
  ||S_\ell^k - \widetilde{S}_\ell^k||_{H^s}
  &\leq D_{k,\ell}(e_0^{s+1}, m_{k-\ell+1,s+1}^\nu)
  ({\sum}_{j \leq k-\ell-1}||V_{j} - V_{j}^{\nu-1}||_{H^{m+1}}
   +\ve e_0^{s+2}),\label{Sdiffs1}
  \\
  ||\widetilde{S}_\ell^{k,\nu} - \widetilde{S}_\ell^{k,\nu-1}||_{H^{s}}
  &\leq D_{k,\ell}'(e_0^{s+1}, m_{k-\ell+1,s+1}^\nu, m_{k-\ell+1,s+1}^{\nu-1})
  {\sum}_{j \leq k-\ell-1}
  ||V_j^{\nu} - V_{j}^{\nu-1}||_{H^{s+1}}.
  \label{Sdiffs2}
\end{align}
\end{lemma}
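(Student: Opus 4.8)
The plan is to exploit the explicit polynomial form of $S^{jk}_{i\ell}$ and $\tS^{jk}_{i\ell}$ given in \eqref{eq:highercommutatorssec2exp} and \eqref{sepsdef}: each is a sum over $n$ and multi-indices of products of terms $(\pa_i D_t^{\ell_1} V^{i_2})\cdots (\pa_{i_n} D_t^{\ell_n} V^j)$ (respectively with $\tV=\ssm V$ in place of $V$), where $\ell_1+\cdots+\ell_n = k-\ell-n$. Since $D_t^{\ell_m} V|_{t=0} = V_{\ell_m}$ by the setup at the start of this section, after evaluating at $t=0$ these become sums of products of $\pa_y V_{\ell_m}$ (resp. $\pa_y \ssm V_{\ell_m}$), with each $\ell_m \leq k-\ell-1$. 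First I would fix a multi-index $I$ with $|I|\le s$, apply $\pa_y^I$ by the Leibniz rule, and distribute the derivatives among the $n$ factors. For the bound on $\|S^k_\ell\|_{H^s}$, I would then run the standard product estimate argument used repeatedly in the paper (e.g. in Lemma \ref{fbdslem} and Lemma \ref{inverselemma}): put all but one factor in $L^\infty$ via Sobolev embedding $\|\cdot\|_{L^\infty}\lesssim\|\cdot\|_{H^2}$ when the number of derivatives on that factor is $\le s-2$, and put the remaining, possibly top-order, factor in $L^2$; since there can be at most one factor carrying $\ge s-1$ derivatives (because $s\ge 2$ and two such factors would force $s$ small), this produces a bound by a polynomial in $\|V_0\|_{H^{s+1}},\dots,\|V_{k-\ell-1}\|_{H^{s+1}}$, hence by $C_{k,\ell}(e_0^{s+1})$. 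The bound for $\tS^{k,\nu}_\ell$ is identical, replacing $V_j$ by $\ssm V_j^\nu$ and using that $\ssm:H^m\to H^m$ is bounded, which gives a bound by $C_{k,\ell}(e_0^{s+1}, m^\nu_{k-\ell+1,s+1})$.

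For the difference estimates \eqref{Sdiffs1}--\eqref{Sdiffs2}, I would write $S^k_\ell - \tS^k_\ell$ and $\tS^{k,\nu}_\ell - \tS^{k,\nu-1}_\ell$ as telescoping sums over the $n$ factors, replacing one factor at a time. For \eqref{Sdiffs2} each term in the telescoping sum is a product where exactly one factor is $\pa_y(\ssm V_j^\nu - \ssm V_j^{\nu-1}) = \ssm\,\pa_y(V_j^\nu - V_j^{\nu-1})$ and the rest are $\pa_y \ssm V_{j'}^\nu$ or $\pa_y\ssm V_{j'}^{\nu-1}$; applying $\pa_y^I$, distributing derivatives, and running the same $L^\infty$/$L^2$ splitting (with the difference factor always placed so its norm is controlled in $H^{s+1}$) gives the stated bound with constant depending on $m^\nu,m^{\nu-1}$ and $e_0^{s+1}$. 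For \eqref{Sdiffs1}, the key point is the identity \eqref{StSidentity}, $S^k_\ell(\pa V_0,\dots) = \tS^k_\ell(\pa V_0,\dots)$: this lets me write
\[
 S^k_\ell - \tS^{k,\nu-1}_\ell
 = \tS^k_\ell(\pa V_0,\dots,\pa V_{k-\ell-1})
 - \tS^k_\ell(\pa \ssm V_0^{\nu-1},\dots,\pa\ssm V_{k-\ell-1}^{\nu-1}),
\]
and then telescope, replacing $\pa V_j$ by $\pa\ssm V_j^{\nu-1}$ one factor at a time. Each replacement produces either a factor $\pa(V_j - \ssm V_j)$, whose $H^s$ norm is $\lesssim \ve\|V_j\|_{H^{s+2}}$ by the smoothing estimate \eqref{nofreelunch} (giving the $\ve e_0^{s+2}$ term), or a factor $\pa\ssm(V_j - V_j^{\nu-1})$, whose $H^{s+1}$ norm is $\lesssim \|V_j - V_j^{\nu-1}\|_{H^{s+1}}$ (giving the sum over $j\le k-\ell-1$). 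The remaining factors are placed in $L^\infty$ or $L^2$ as before.

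The main obstacle I anticipate is bookkeeping rather than conceptual: keeping careful track, across the telescoping sums and the $\pa_y^I$ Leibniz expansion, of exactly which factor is allowed to carry top-order derivatives, so that in every term at most one factor needs an $H^{s+1}$ (or $H^{s+2}$) norm and all others fall under $\|\cdot\|_{H^2}$ via Sobolev embedding; this is precisely the combinatorial counting in the proof of Lemma \ref{inverselemma}, and the constraint $\ell_1+\cdots+\ell_n = k-\ell-n$ together with $s\ge 2$ is what makes it work. A secondary point is ensuring the smoothing operator $\ssm$ is handled correctly — it commutes with $\pa_y$ only up to lower-order terms in the tangential directions, but since here we only need $L^2$-boundedness of $\ssm$ and of $[\pa_y,\ssm]$ on Sobolev spaces (Lemma \ref{smproperties}, \eqref{smoothingproperty}), this causes no real difficulty.
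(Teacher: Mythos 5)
Your proposal is correct and follows essentially the same route as the paper: the first two bounds come from the polynomial structure of $S,\tS$ together with Sobolev product estimates (the paper simply invokes the algebra property of $H^s(\Omega)$ for $s\ge 2$ rather than redoing the $L^\infty$/$L^2$ splitting by hand), and the difference estimates use exactly the key identity \eqref{StSidentity} to cancel $S^k_\ell(A)-\tS^k_\ell(A)$, followed by the polynomial/telescoping argument, the splitting $V_j-\ssm V_j^{\nu}=(V_j-\ssm V_j)+\ssm(V_j-V_j^{\nu})$, the smoothing estimate \eqref{nofreelunch}, and boundedness of $\ssm$ on Sobolev spaces. No gaps.
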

\begin{proof}
  Because $s \geq 2, H^s(\Omega)$ is an algebra and so the first
  two estimates follow because $S, \tS$ are polynomials in their arguments
 (see \eqref{eq:highercommutatorssec2exp} and \eqref{sepsdef}).

To prove \eqref{Sdiffs1}, set
$A \!=\! (\pa V_0,..., \pa V_{k-\ell-1})$
and $ \widetilde{A}^\nu \!=\! (\pa \ssm V_0^\nu,..., \pa \ssm V_{k-\ell-1}^\nu)$.
Abusing notation, we write:
\begin{equation}
 S_\ell^k - \widetilde{S}^{k,\nu}_\ell
 = S^k_\ell(A) - \widetilde{S}_\ell^k(\widetilde{A}^\nu)
= S_\ell^k(A) - \widetilde{S}_\ell^k(A)
+ (\tS_\ell^k(\widetilde{A}^\nu) - \tS^k_\ell(A)).
 \label{}
\end{equation}
By \eqref{StSidentity}, the first two terms cancel. Since $\widetilde{S}$
is a polynomial in its arguments, we have:
\begin{equation}
 ||\tS_\ell^k(\widetilde{A}^\nu)
 - \tS_\ell^k(A) ||_{H^s}
 \leq C' {\sum}_{j \leq k-\ell-1} ||V_j - \ssm V_j^\nu||_{H^{s+1}},
 \label{}
\end{equation}
with $C = C(E_0^s, ||A||_{H^{s}}, ||A^\nu||_{H^{s}})$,
after additionally using that $\ssm$ is bounded on Sobolev spaces.
Now we write $||V_j - \ssm V_j^\nu||_{H^{s+1}} \leq ||V_j - \ssm V_j||_{H^{s+1}}
+ ||\ssm(V_j -  V_j^\nu)||_{H^{s+1}}$. Since $||V_j - \ssm V_j||_{H^{s+1}}
\leq C \ve ||V_j||_{H^{s+2}}$ by \eqref{nofreelunch}, this concludes the proof of the third estimate.
The proof of \eqref{Sdiffs2} is similar.
\end{proof}

We have the following technical estimate for $F_k -F_k^\nu$ and $F_k^\nu - F_k^{\nu-1}$ in
terms of $V_k, V_k^\nu$:
\begin{lemma}
 \label{Fklem}
 Set $m_k^\nu = ||V_0^\nu||_{H^{r}} + \tsum_{0 \leq j \leq k} ||V_j^\nu||_{H^{r-j-1}}$.
 There are continuous functions $K = K_{r,k}(E_0, m_{k}^\nu),
 K' = K'_{r,k}(E_0, m_k^\nu, m_k^{\nu-1})$
 so that, with $V_k, V_k^\nu$ defined by
 \eqref{Vkdef}-\eqref{Vknudef} and $j = r-k-2$:
 \begin{equation}
   ||F_k - F_k^{\nu}||_{H^j} \leq
  K\big(
  ||u_{-1}^\nu||_{H^{j+2}}+
  {\sum}_{\ell \leq k} ||V_\ell - V_\ell^\nu||_{H^{j+1}}
  + ||u^\nu_{\ell-1}||_{H^{j+2}}
  +\ve\big),
\end{equation}
\begin{equation}
  ||F_k^\nu - F_k^{\nu-1}||_{H^j}
 \leq K'
 \big( ||u_{-1}^\nu - u_{-1}^{\nu-1}||_{H^{j+2}}
 + {\sum}_{\ell \leq k} ||V_\ell^\nu - V_\ell^{\nu-1}||_{H^{j+1}}
 + ||u_\ell^\nu - u_\ell^{\nu-1}||_{H^{j+2}}\big).
\end{equation}
\end{lemma}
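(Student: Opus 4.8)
The plan is to expand both $F_k$ and $F_k^\nu$ into finite sums of elementary terms which are polynomial in a short list of ``building blocks'', and then estimate the difference term by term via a telescoping–product argument, in the same spirit as the proofs of Lemma~\ref{fbdslem} and Lemma~\ref{Sestimates}.

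First I would expand $F_k|_{t=0}$. Applying the higher–order commutator identity \eqref{eq:highercommutatorssec2}--\eqref{eq:highercommutatorssec2exp} to the second–order operator $\hat{\Delta}$, and using Leibniz together with $[D_t,\hat{\pa}_i]=-(\hat{\pa}_i\hat{V}^\ell)\hat{\pa}_\ell$ on the quadratic term $D_t^k((\hat{\pa}_i\hat{V}^j)(\hat{\pa}_j\hat{V}^i))$, one writes $F_k$ as a finite sum of products of the following factors: the Jacobian $A^a_{\m i}=\pa y^a/\pa x_0^i$ of the \emph{fixed} map $x_0$ together with its $y$–derivatives, which are controlled in Sobolev norms by $\|x_0\|_{H^{r+1}}\le E_0$ via \eqref{app:dxu}; factors $\pa_y^\alpha V_\ell$ with $\ell\le k$, coming either from the commutator coefficients (polynomials in $\pa_yV_0,\dots,\pa_yV_{k-1}$, cf.\ Section~\ref{hocommutators} and Lemma~\ref{Sestimates}) or from the quadratic term; and factors $\pa_y^\beta h_\ell$ with $\ell\le k-1$ and $|\beta|\le|I|+2$, coming from $[D_t^k,\hat{\Delta}]\hat{h}$ — here the crucial point is that at least one $D_t$ always falls on the coefficients of $\hat{\Delta}$, so $\hat{h}$ carries at most $k-1$ time and $2$ extra spatial derivatives. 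Expanding $F_k^\nu=F_k^\ve[x_0,V_0+\pave u_{-1}^\nu,h_0+u_0^\nu,\dots,h_{k-1}+u_{k-1}^\nu]$ the same way produces the same polynomial structure, with two changes: each velocity factor $V_\ell$ is replaced by $\ssm V_\ell^\nu$ (in the outermost slot by $\ssm(V_0+\pave u_{-1}^\nu)$, so that the reconstructed coefficients are exactly the $V_\ell^\nu$ of \eqref{Vknudef}), each $h_\ell$ is replaced by $h_\ell+u_\ell^\nu$, and every factor of $\ssm$ present in $F_k^\nu$ is absent in $F_k$.

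Next I would estimate $F_k-F_k^\nu$ term by term. Writing $\prod a_i-\prod b_i=\sum_m(\prod_{i<m}b_i)(a_m-b_m)(\prod_{i>m}a_i)$ and distributing $|I|\le j:=r-k-2$ derivatives, I place the $H^{j}$ norm on the difference factor $a_m-b_m$, an $H^{j}$ norm on at most one remaining high–order factor, and $L^\infty$ norms on the rest (via Sobolev embedding $H^2\hookrightarrow L^\infty$, available since $r\ge 8$). Three types of difference factor occur: (a) $\ssm f-f$ for a velocity factor $f$, bounded by $C\ve\|f\|_{H^{|I|+1}}\le C\ve\|f\|_{H^{r-k-1}}$ using \eqref{nofreelunch}, which generates the $\ve$ term; (b) $\ssm V_\ell-\ssm V_\ell^\nu$, bounded by $C\|V_\ell-V_\ell^\nu\|_{H^{j+1}}$ using boundedness of $\ssm$ on Sobolev spaces and, inside the commutator coefficients, Lemma~\ref{Sestimates}, with the $\ell=0$ contribution carrying the outer $\pave$ from $\pave u_{-1}^\nu$ and hence producing $\|u_{-1}^\nu\|_{H^{j+2}}$; (c) $-u_\ell^\nu$ carrying up to $j+2$ derivatives, producing the $\|u_{\ell-1}^\nu\|_{H^{j+2}}$ terms. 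The remaining factors are bounded by continuous functions of $E_0$ (for $x_0$–, $V_\ell$– and $h_\ell$–factors, using $V_\ell,h_\ell\in H^{r-\ell}$) and of $m_k^\nu$ (for the smoothed velocity factors of $F_k^\nu$), which is exactly the dependence allowed in $K_{r,k}$; the index bookkeeping closes because $j+1=r-k-1\le r-\ell$ and $j+2=r-k\le r-\ell+1$ for all $\ell\le k$, so every norm that appears is finite for the iterates.

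The estimate for $F_k^\nu-F_k^{\nu-1}$ is identical in structure but cleaner: both are the same polynomial with the same pattern of $\ssm$'s, so the difference factors are only of the types $\ssm V_\ell^\nu-\ssm V_\ell^{\nu-1}$ (bounded by $\|V_\ell^\nu-V_\ell^{\nu-1}\|_{H^{j+1}}$, with the $\ell=0$ piece giving $\|u_{-1}^\nu-u_{-1}^{\nu-1}\|_{H^{j+2}}$ via Lemma~\ref{Sestimates}) and $u_\ell^\nu-u_\ell^{\nu-1}$ carrying up to $j+2$ derivatives; no $\ve$ term arises, and the constants now depend on $E_0$ and on both $m_k^\nu,m_k^{\nu-1}$, as stated. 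I expect the only delicate point to be the derivative count that separates the $H^{j+1}$–slots (velocity factors, one built–in $\pa$) from the $H^{j+2}$–slots ($h$– and $u_{-1}^\nu$–factors, one built–in $\pa^2$ resp.\ $\pave$), and in particular verifying that the $\ve$–error is produced \emph{solely} by the mismatch between smoothed and unsmoothed factors and not by the $V_\ell\leftrightarrow V_\ell^\nu$ replacement; the rest is the standard product–estimate bookkeeping already used in this paper.
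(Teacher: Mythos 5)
Your proposal is correct and follows essentially the same route as the paper's proof: you rely on the same crucial structural fact — that $S^{jk}_{i\ell}$ and $\widetilde{S}^{jk}_{i\ell}$ are the \emph{same} polynomials evaluated at $\pa V_\ell$ and $\pa \ssm V_\ell^\nu$ respectively (cf.\ \eqref{StSidentity}), so the only sources of discrepancy are the three replacement types you list, with the $\ve$ entering solely through $V_\ell - \ssm V_\ell$ via \eqref{nofreelunch}. The paper packages the telescoping and Sobolev bookkeeping inside Lemma~\ref{Sestimates} after first splitting $F_k - F_k^\nu = f_k^\nu + g_k^\nu$ (quadratic-velocity part plus $[\hat\Delta,D_t^k]$ part), whereas you unfold everything into explicit monomials and telescope directly, but the derivative count and the use of $H^2\hookrightarrow L^\infty$ are identical.
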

\begin{proof}

  We start by writing $F_k - F_k^\nu$ more explicitly in terms of the $S$
  and $\widetilde{S}$. With $V_k^\nu$ defined in \eqref{Vknudef} and with
  $h^\nu_k = h_k + u_k^\nu$, let
  $\hat{V}^\nu(t) = \sum_{k = 0}^N V_k^\nu t^k/k!$ and
  $\hat{h}^\nu(t) = \sum_{k = 0}^N h_k^\nu t^k/k!$, we write:
  \begin{equation}
    F_k - F_k^\nu = D_t^k \big( (\hat{\pa} \hat{V})(\hat{\pa} \hat{V})
    - (\hat{\pave} \ssm \hat{V}^\nu)(\hat{\pave} \hat{V}^\nu) \big)|_{t = 0}
    + \big([D_t^k, \hat{\Delta}] \hat{h} - [D_t^k, \hat{\Dve}] \hat{h}^\nu\big)|_{t = 0}
    \equiv f_k^\nu + g_k^\nu.
   \label{}
  \end{equation}

  For matrices $a_i^j, b_i^j$, write $a\cdot b
  = a_i^j b_j^i$ and if $T$ is a (2,2) tensor, write $T_\ell^{k} a$
  for the matrix with components $(T^{k}_{\ell} a)_m^n
  = T^{jk}_{m\ell} a_j^n$. We then have the following expression:
  \begin{equation}
    f_{k}^\nu =
   {\sum}_{k_1 + k_2 = k}
   {\sum}_{\ell \leq k_1}{\sum}_{\ell' \leq k_2} S_{\ell}^{k_1} \pa V_{\ell}\cdot
   S_{\ell'}^{ k_2}
   \pa V_{\ell'}
   - \widetilde{S}^{k_1,\nu}_{\ell}
   \pa \tV^{\nu}_{\ell}\cdot
    \widetilde{S}^{k_2,\nu}_{\ell'}
   \pa V^{\nu}_{\ell'},\qquad \tV^{\nu}_k = \ssm V^{\nu}_k.
  \end{equation}

  Using the commutator formulas \eqref{eq:highercommutatorssec2},\eqref{sepsdef}
  twice, we have:
  \begin{multline}
   g_k^\nu = {\sum}_{\ell \leq k-1} \delta^{ij}\big(\pa_i S^{i'k}_{j\ell} \pa_{i'} h_\ell
   -  \pa_i \tS^{i'k,\nu}_{j\ell}\pa h_\ell^\nu
   + S^{i'k}_{j\ell} \pa^2_{ii'} h_\ell
   -\tS^{i'k,\nu}_{j\ell} \pa^2_{ii'}h_\ell^{\nu}\big)\\
   + {\sum}_{\ell \leq k-1} {\sum}_{\ell' \leq \ell-1}
   \delta^{ij}\big(
   S_{i\ell}^{i'k} \pa_{i'} S_{j\ell'}^{j'\ell} \pa_{j'} h_{\ell'}
   -\tS_{i\ell}^{i'k,\nu} \pa_{i'} \tS_{j\ell'}^{j'\ell,\nu} \pa_{j'} h^\nu_{\ell'}
   + S_{i\ell}^{i'k} S^{j'\ell}_{j\ell} \pa_{i'j'}^2 h_{\ell'}
   - \tS_{i\ell}^{i'k,\nu} \tS^{j'\ell,\nu}_{j\ell} \pa_{i'j'}^2h_{\ell'}^\nu\big),
  \end{multline}
  where here we are writing $\pa = \pa_{x_0}$. Similarly, we have
  $F_k^{\nu} - F_k^{\nu-1} = f_k^{\nu,\nu-1} + g_k^{\nu,\nu-1}$, where:
    \begin{equation}
      f_{k}^{\nu,\nu-1} = f_k^\nu - f_k^{\nu-1} =
     {\sum}_{k_1 + k_2 = k}
     {\sum}_{\ell \leq k_1}{\sum}_{\ell' \leq k_2} \tS_{\ell}^{k_1,\nu} \pa \tV_{\ell}^\nu\cdot
     \tS_{\ell'}^{ k_2,\nu}
     \pa V^\nu_{\ell'}
     -\tS_{\ell}^{k_1,\nu-1} \pa \tV_{\ell}^{\nu-1}\cdot
     \tS_{\ell'}^{ k_2,\nu-1}
     \pa V^{\nu-1}_{\ell'},
    \end{equation}
    and
    \begin{multline}
     g_k^{\nu,\nu-1} = g_k^{\nu} - g_k^{\nu-1} = {\sum}_{\ell \leq k-1} \pa \tS^{k,\nu}_{\ell} \pa h^\nu_\ell-
      \pa \widetilde{S}^{k,\nu-1}_\ell \pa h_\ell^{\nu-1}
     + \tS^{k,\nu}_\ell \pa^2 h^\nu_\ell
     -\tS^{k,\nu-1}_\ell \pa^2 h_\ell^{\nu-1}\\
     + {\sum}_{\ell \leq k-1} {\sum}_{\ell' \leq \ell}
     \tS^{\ell,\nu}_{k-1} \pa \tS^{\ell',\nu}_\ell \pa h^\nu_{\ell'}
     -
     \tS^{\ell,\nu-1}_{k-1} \pa \tS^{\ell',\nu-1}_\ell \pa h^{\nu-1}_{\ell'}
     + \tS^{\ell,\nu}_{k-1} \tS^{\ell',\nu}_{\ell} \pa^2 h^\nu_{\ell'}
     - \tS^{\ell,\nu-1}_{k-1} \tS^{\ell',\nu-1}_{\ell} \pa^2 h_{\ell'}^{\nu-1}.
     \label{ghorrible}
    \end{multline}

    We first consider the case $r-k-2 \geq 2$.
After performing the usual manipulations and using that
$H^{r-k-2}$ is an algebra, to control $||f_k^\nu||_{H^{r-k-2}}$, it suffices
to prove that for $k' \leq k, \ell \leq k$, writing $ j =r-k-2$
\begin{equation}
 ||S^{k'}_\ell||_{H^{j}}
  + ||\tS^{k',\nu}_\ell||_{H^{j}}
  + ||\pa V_\ell||_{H^{j}}
  + {\sum}_{\alpha = 0,1} ||\pa \ssm^\alpha V_\ell||_{H^j}
  \leq K(E_0, m_{k}^\nu),
 \label{coefficients1}
\end{equation}
and that, with $K' = K'(E_0, m_{k}^\nu, m_k^{\nu-1})$:
\begin{equation}
 ||S^{k'}_\ell - \tS^{k',\nu}_\ell||_{H^{j}} +
 ||\pa V_\ell - \pa V_\ell^\nu||_{H^{j}} +
 ||\pa V_\ell - \pa \tV^{\nu}_\ell||_{H^{j}}
 \leq K'
 \big( {\sum}_{\ell \leq k} ||V_\ell - V_\ell^\nu||_{H^{j+1}}
 + ||u_{-1}^\nu||_{H^{j+2}} + \ve E_0\big).
 \label{differences1}
\end{equation}
 The first two terms in \eqref{coefficients1} are bounded by the right
 side of \eqref{coefficients1}
by Lemma \ref{Sestimates} and the other terms are
bounded  by the right side using the definition of the
$m_k^\nu$ and  the fact that
$\ssm$ is bounded on Sobolev spaces.

The first term in \eqref{differences1} is bounded
by the right-hand side of \eqref{differences1} by
Lemma \ref{Sestimates}, and the second term is directly
bounded by $||V_\ell - V_\ell^\nu||_{H^{r-k-1}}$. To control the
third term, we write $\tV_\ell^\nu = \ssm V_\ell^\nu
= \ssm V_\ell + \ssm (V_\ell^\nu - V_\ell)$. Using that
$||(1 - \ssm) \pa V_\ell||_{H^{r-k-2}} \leq C\ve ||V_\ell||_{H^{r-k}}$
and $||\ssm (V_\ell - V_\ell^\nu)||_{H^{s-k-1}} \leq C||V_\ell - V_\ell^\nu||_{H^{r-k-1}}$
gives the bound for $f_k^\nu$. The bound for $f_k^{\nu,\nu-1}$ follows in a nearly
identical way. The case $r -k-2 \leq 1$ is similar and follows the same lines
as the proof of e.g. \eqref{fbdslem}.

 We now bound $g_k^\nu$. We just prove estimates for the terms on the first line
 of \eqref{ghorrible} as the terms on the second line can be bounded in a similar
 manner. It suffices to prove that for $\ell \leq k-1$ with $j = r-k-2$:
 \begin{equation}
   {\sum}_{m = 0,1}
  ||\pa^m S^k_\ell||_{H^{j}} + ||\pa^m \tS^{k,\nu}_\ell||_{H^{j}}
  + ||\pa^{m+1} h_\ell||_{H^{j}} + ||\pa^{m+1} h_\ell^\nu||_{H^{j}}
  \leq K(E_0, m_k^\nu),
 \end{equation}
\begin{equation}
  {\sum}_{m = 0,1}
 ||\pa^m {}_{\!}S^k_\ell \!- \pa^m{}_{\!}\tS^{k,\nu}_\ell\!||_{H^{j}}\!
 + ||\pa^{m+1}\! h_\ell\! - \pa^{m+1} \!h_\ell^\nu||_{H^{j}}
 \!\leq \! K'\big( {\sum}_{\ell' \leq k}
 ||V_{\!\ell'} \!- \!V_{\!\ell'}^{\!\nu}||_{H^{j+1}} + ||u_k^{\nu}||_{H^{j+2}}
 + ||u_{-1}^{\nu}\!||_{H^{j+2}}\!\big).
\end{equation}
These estimates follow from
Lemma \ref{Sestimates} since $h_k^\nu \!= h_k + u_k^\nu$.
The estimates for $g_k^\nu \!- g_k^{\nu-1}\!\!$ are similar.
\end{proof}

To complete the proof of the estimates for $F_k-F_k^\nu$, we need
the following two estimates to relate $V_k, V_k^\nu$ to the initial data
$V_0, h_0$ and the perturbations $u^\nu$. We need a bit more notation.
Given a diffeomorphism $X:\Omega \to X(\Omega)$ and
a function $f:\Omega \to \R$, let $\Phi[X,f] = \phi\circ X^{-1}$, where
$\phi$ is defined by:
\begin{equation}
  (X, f) \mapsto \phi(x) = \int_{X(\Omega)} |x-x'|^{-1} \rho(f(x'))\,
  dx', \quad x \in \R^3.
\end{equation}
Set $\hat{x} = x_0 + t \sum_{k \geq 0}V_k t^k/(k+1)!$,
$\hat{x}^\nu(t) = x_0 + t \sum_{k \geq 0} V_k^\nu t^k/(k+1)!$
and write $x_\ell = D_t^\ell \hat{x}|_{t = 0},
x_\ell^\nu = D_t^\ell \hat{x}^\nu|_{t = 0}$.
Set
$\phi_\ell = D_t^\ell \Phi[\hat{x},\hat{h}]|_{t = 0}$ and $\phi_\ell^\nu =
D_t^\ell \Phi[\hat{x}^\nu, \hat{h}^\nu]|_{t = 0}$.
Then:
\begin{lemma}
  \label{Vkestlem}
  With notation as in the previous lemma, for each $k = -1,..., r-2$,
 there are continuous functions $K_0 = K_0(E_0), K_0' = K_0'(E_0,  ||u_{-1}^\nu||_{H^{r-k+1}},
 \tsum_{\ell \leq k-1} ||u^\nu_{\ell}||_{H^{r-k+1}})$,
 $K_0'' = K_0''(E_0, ||u^\nu||_r, ||u^{\nu-1}||_r)$ so that:
  \begin{equation}
  ||V_k||_{H^{r-k}} \leq K_0, \quad
  ||V_k^\nu||_{H^{r-k}} \leq K_0',
  \quad
  ||V_k - V_k^\nu||_{H^{r-k}} \leq K_0' \big( ||u^\nu_{-1}||_{H^{r-k+1}}
  + {\sum}_{\ell \leq k-1}
  ||u^\nu_\ell||_{H^{r-k+1}}\big),
\end{equation}
and with $U^\nu = u^\nu - u^{\nu-1}$:
\begin{equation}
  ||V_k^\nu - V_k^{\nu-1}||_{H^{r-k}} \leq
  K_0'' \big( ||U^\nu_{-1}||_{H^{r-k+1}} +
  {\sum}_{\ell \leq k-1}
  ||U^\nu_\ell||_{H^{r-k+1}}\big).
\end{equation}
\end{lemma}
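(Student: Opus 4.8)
The plan is to prove all four estimates simultaneously by induction on $k$, starting from $k=-1$ and working upwards. The key structural observation is the recursion \eqref{Vkdef}--\eqref{Vknudef}, which expresses $V_k$ (resp.\ $V_k^\nu$) in terms of $H_{k-1} = h_{k-1} + \phi_{k-1}$, the lower-order $H_\ell$ for $\ell \leq k-2$, and the commutator coefficients $S^{jk}_{i\ell}$ (resp.\ $\tS^{jk,\nu}_{i\ell}$). So the inductive step naturally splits into two tasks: (i) control $h_k, h_k^\nu$ and their differences, which is immediate since $h_k^\nu = h_k + u_k^\nu$ by construction, so $\|h_k - h_k^\nu\|_{H^{r-k}} = \|u_k^\nu\|_{H^{r-k}}$ and $\|h_k^\nu - h_k^{\nu-1}\|_{H^{r-k}} = \|U_k^\nu\|_{H^{r-k}}$ with $U^\nu = u^\nu - u^{\nu-1}$; and (ii) control the gravitational coefficients $\phi_k, \phi_k^\nu$ and their differences.

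First I would handle the base case $k = -1$, which is trivial: $V_{-1}$ does not appear (or one takes $x_0 = x_0^\nu$ so the difference vanishes), and the estimates reduce to bounding $\|u_{-1}^\nu\|_{H^{r+1}}$ and $\|U_{-1}^\nu\|_{H^{r+1}}$ which are among the quantities on the right-hand side. For $k = 0$ one has $V_0^\nu = V_0 + \pa_{x_0} u_{-1}^\nu$ directly by definition, so $\|V_0 - V_0^\nu\|_{H^{r}} \leq C\|\pa_{x_0} u_{-1}^\nu\|_{H^{r}} \leq C(E_0)\|u_{-1}^\nu\|_{H^{r+1}}$ using the elliptic bound \eqref{compatelliptic}, and similarly for $V_0^\nu - V_0^{\nu-1}$. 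The bounds $\|V_0\|_{H^r} \leq K_0(E_0)$ and $\|V_0^\nu\|_{H^r} \leq K_0'$ follow the same way.

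For the inductive step $k \geq 1$, I would use \eqref{Vkdef}--\eqref{Vknudef}. The term $-\delta^{ii'}\pa_{i'} H_{k-1}$ is controlled by $\|h_{k-1}\|_{H^{r-k+1}} + \|\phi_{k-1}\|_{H^{r-k+1}}$; for $\phi_{k-1}$ one uses Theorem \ref{main theorem, ell est D_t phi} (the version where $C_r'$ is replaced by a function of $\|\xve\|_{H^r}$ and $\sum_{s \leq k-1}\|D_t^s V\|_{H^{r-s}}$, applied at $t = 0$ with $\xve$ replaced by $x_0$), which by the inductive hypothesis is bounded by $K_0'$. The commutator terms $S^{jk}_{i\ell}\pa_j H_\ell$ with $\ell \leq k-2$ are controlled using Lemma \ref{Sestimates} together with the inductive bounds on $\|V_j\|_{H^{r-j}}$ (note $e_0^{s+1}$ in that lemma is exactly $E_0 + \sum_{j \leq s}\|V_j\|_{H^{s-j}}$, which the induction controls). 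For the differences $V_k - V_k^\nu$ and $V_k^\nu - V_k^{\nu-1}$ I would subtract the two recursions term by term: the $\pa H_{k-1}$ terms contribute $\|u_{k-1}^\nu\|_{H^{r-k+1}}$ (resp.\ $\|U_{k-1}^\nu\|$) plus a $\phi$-difference controlled by Theorem \ref{diffellphi full thm} at $t = 0$, and the $S - \tS^\nu$ terms use \eqref{Sdiffs1}--\eqref{Sdiffs2} of Lemma \ref{Sestimates}, which yield $\sum_{j \leq k-1}\|V_j - V_j^\nu\|_{H^{r-k+1}} + \ve$ — bounded by the inductive hypothesis — plus the $\ve$ factor that accounts for the smoothing mismatch $\|(1-\ssm)V_j\|$.

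The main obstacle I anticipate is bookkeeping the regularity indices carefully: $V_k$ is estimated in $H^{r-k}$, and the recursion involves $\pa H_{k-1} \in H^{r-k}$, so one needs $H_{k-1} \in H^{r-k+1}$, i.e.\ $h_{k-1}$ and $\phi_{k-1}$ at one level above where the induction nominally tracks them — this forces the inductive hypothesis to carry $u_\ell^\nu$ in $H^{r-\ell+1}$ rather than $H^{r-\ell}$, hence the appearance of $\|u_{-1}^\nu\|_{H^{r-k+1}}$ and $\sum_{\ell \leq k-1}\|u_\ell^\nu\|_{H^{r-k+1}}$ on the right-hand sides (consistent with the statement, since $r-k+1 \leq r-\ell$ for $\ell \leq k-1$, using $r$ large). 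A secondary subtlety is that the $\phi$-estimates from Section \ref{gravitysection} are stated on $\tD_t$ with the smoothed flow, and must be specialized to $t=0$ where $\xve(0,\cdot) = x_0 = \xht(0,\cdot)$, so the ``$\xht$ vs.\ $\xve$'' distinction collapses and the extended-domain machinery applies verbatim; I would note this once and then cite the theorems directly. Everything else is the now-routine pattern of Sobolev product estimates, algebra property of $H^m(\Omega)$ for $m = r - k - 2 \geq 2$ (which holds since $k \leq r-2$ and $r \geq 8$), and Sobolev embedding to place low-index factors in $L^\infty$.
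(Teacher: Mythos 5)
Your proposal is correct and follows essentially the same route as the paper's (very terse) proof, which simply invokes the recursions \eqref{Vkdef}--\eqref{Vknudef} together with Lemma \ref{Sestimates} and Lemma \ref{phikestimates} for the commutator coefficients and gravitational terms respectively; you make the implicit joint induction on $k$ (needed because the $\phi_{k-1}$ estimates in Lemma \ref{phikestimates} themselves require control of $V_\ell, V_\ell^\nu$ for $\ell \le k-1$) explicit, and bypass the intermediate Lemma \ref{phikestimates} by citing Theorems \ref{main theorem, ell est D_t phi} and \ref{diffellphi full thm} directly, which is what that lemma's proof does anyway. One remark: you correctly flag the $\ve$ smoothing-mismatch contribution arising from \eqref{Sdiffs1} (since $\tS^{k,\nu}$ is built from $\ssm V_j^\nu$ while $S^k$ uses the unsmoothed $V_j$, a nonzero difference persists even when $u^\nu = 0$); that $\ve$ term is not written in the stated bound for $\|V_k - V_k^\nu\|$ here, but it does resurface in Proposition \ref{Fkprop}, so your observation is consistent with how the lemma is actually used.
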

\begin{proof}
 These estimates all follow from the definitions of $V_k, V_k^\nu$ in
 \eqref{Vkdef},\eqref{Vknudef}, the estimates for $S, \widetilde{S}$ from
 Lemma \ref{Sestimates}, and the estimates for $\phi_{k-1}, \phi_{k-1}^\nu$
 in Lemma \ref{phikestimates}.
\end{proof}
It still remains to control $\phi_{k-1}, \phi_{k-1}^\nu$. We shall not use this
observation to prove estimates, but we remark that we have
the following explicit representation formula for $\phi_{k-1}$:
\begin{equation}
 \phi_{k-1}(y)
 = {\sum}_{k_1 + \dots k_j + k' \leq k-1}
 \int_{\Omega} K_{k_1,\dots, k_j}(y,y') J_{k'}(y')\, dy',
\end{equation}
where, for some constants $d_{k_1\cdots k_j}$:
\begin{equation}
 K_{k_1,\dots, k_j}(y,y')
 =d_{k_{\!1}{\!}\dots{\!} k_{\!j}}\frac{\!\!(\delta  x_{k_1\!}\cdot_{\!} \delta x_{k_2\!})
 \cdots
  (\delta x_{k_{\!j\!-\!1}}\cdot\delta x_{k_j})\!\! }{|x_0(y)- x_0(y')|},
 \qquad
 J_{k'} = D_t^{k'} (\rho(\hat{h}) \hat{\kappa})|_{t = 0},
\end{equation}
with $\hat{\kappa} = \det (\pa y/ \pa\hat{x})$, and where we are
writing $\delta W(y,y)
= (W(y) -W(y'))/|x_0(y)- x_0(y')|$. Similarly:
\begin{equation}
 \phi^\nu_{k-1}(y)
 = {\sum}_{k_1+ \cdots k_j + k' \leq k-1}
 \int_{\Omega} K_{k_1,..., k_j}^\nu(y,y')
 J_{k'}^\nu(y')\, dy',
\end{equation}
where
\begin{equation}
  K_{k_1,\dots, k_j}^\nu(y,y')
  = d_{k_{\!1}{\!}\dots{\!} k_{\!j}}
  \frac{\!\!(\delta  x^\nu_{k_1\!}\cdot_{\!} \delta x^\nu_{k_2\!})
  \cdots
   (\delta x^\nu_{k_{\!j\!-\!1}}\cdot\delta x^\nu_{k_j})\!\! }{|x_0(y)- x_0(y')|},
  \qquad
  J_{k'}^\nu = D_t^{k'} (\rho(\hat{h^\nu}) \hat{\kappa^\nu})|_{t = 0},
  \qquad \widehat{\kappa}^\nu = \det (\pa \hat{x}^\nu/\pa y).
\end{equation}

\begin{lemma}
\label{phikestimates}
  With notation as in Lemma \ref{Fklem}, for each $k = 0,..., r-2$,
  there are continuous functions $ K_0 = K_0(E_0)$, $K_0' = K_0'(E_0, m_{k-1}^\nu)$,
  $K_0'' = K_0''(E_0, ||u^\nu||_s, ||u^{\nu-1}||_s)$
  so that:
 \begin{equation}
  ||\phi_{k-1}||_{H^{r-k+1}} \leq K_0, \qquad
  ||\phi_{k-1}^\nu||_{H^{r-k+1}} \leq K_0'.
   \end{equation}
 \begin{equation}
||\phi_{k-1} - \phi_{k-1}^\nu||_{H^{r-k+1}} \leq K_0'
  {\sum}_{\ell \leq k-1}
  ||u_{\ell}^\nu||_{H^{r-k}},
  \qquad
  ||\phi_{k-1}^\nu - \phi_{k-1}^{\nu-1}||_{H^{r-k+1}} \leq K_0^{''}
  {\sum}_{\ell \leq k-1} ||U_{\ell}^\nu||_{H^{r-k}}.
  \label{}
 \end{equation}
\end{lemma}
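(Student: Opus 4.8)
The plan is to estimate the Newtonian potential coefficients $\phi_{k-1}, \phi_{k-1}^\nu$ by recognizing that they are built out of the flow coefficients $x_0, x_1, \dots, x_{k-1}$ (equivalently $V_0, \dots, V_{k-2}$) and the density coefficients $h_0, \dots, h_{k-1}$ via the integral representation exhibited just before the statement. Concretely, $\phi_{k-1} = \Phi_{k-1}[x_0, V_0, \dots, V_{k-2}, h_0, \dots, h_{k-1}]$ is a sum over $k_1 + \dots + k_j + k' \le k-1$ of integrals against the kernels $K_{k_1,\dots,k_j}(y,y')$ with densities $J_{k'}(y')$. Since $K_{k_1,\dots,k_j}$ has the same $|x_0(y)-x_0(y')|^{-1}$ singularity as the Green's function for the flat Laplacian — smoothed by the bounded difference quotients $\delta x_{k_i}$ — the operator $f \mapsto \int K_{k_1,\dots,k_j}(\cdot,y') f(y')\,dy'$ gains two derivatives, just like the ordinary Newton potential. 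The first thing I would do is record this: for any $m$, $\|\int K_{k_1,\dots,k_j}(\cdot,y') f(y')\,dy'\|_{H^{m+1}(\Omega)} \le C(\|x_0\|_{H^{m+2}}, \|V_0\|_{H^{m}},\dots)\,\|f\|_{H^{m-1}(\Omega)}$, which follows from the interior elliptic gain (Theorem \ref{main theorem, ell est of phi} and the estimates of Section \ref{gravitysection} specialized to $t=0$) together with the algebra property of $H^{m}(\Omega)$ applied to the $\delta x_{k_i}$ factors, using $\|\delta W\|_{H^m} \le C\|W\|_{H^{m+1}}$.

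Granting that, the bound $\|\phi_{k-1}\|_{H^{r-k+1}} \le K_0(E_0)$ is immediate: each summand has $k_1+\dots+k_j+k' \le k-1$, each $\delta x_{k_i}$ costs $\|x_{k_i}\|_{H^{r-k+3}}$ which by $x_{k_i} = V_{k_i-1}$ and Lemma \ref{Vkestlem} (the un-perturbed bound $\|V_\ell\|_{H^{r-\ell}} \le K_0(E_0)$) is controlled by $E_0$, and the density $J_{k'} = D_t^{k'}(\rho(\hat h)\hat\kappa)|_{t=0}$ is a polynomial in $h_0, \dots, h_{k'}$, $\partial x_0$, $\partial V_0, \dots, \partial V_{k'-1}$ and hence lies in $H^{r-k}(\Omega)$ with norm controlled by $E_0$ (using that $e'$ and $\rho$ are smooth, $H^{r-k}$ is an algebra, and $r$ is large). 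The gain of two derivatives in the kernel operator then lands us in $H^{r-k+1}$. The estimate $\|\phi_{k-1}^\nu\|_{H^{r-k+1}} \le K_0'$ is the same computation with $x_\ell^\nu, h_\ell^\nu = h_\ell + u_\ell^\nu$ in place of $x_\ell, h_\ell$, picking up the dependence on $m_{k-1}^\nu = \|V_0^\nu\|_{H^r} + \sum_{\ell \le k-1}\|V_\ell^\nu\|_{H^{r-\ell-1}}$ through the $\delta x_{k_i}^\nu$ factors (bounded via Lemma \ref{Vkestlem}).

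For the difference estimates I would write $\phi_{k-1} - \phi_{k-1}^\nu$ as a telescoping sum: replace $K_{k_1,\dots,k_j}$ by $K_{k_1,\dots,k_j}^\nu$ one $\delta x_{k_i}$ factor at a time, and replace $J_{k'}$ by $J_{k'}^\nu$. Each term in which we change a kernel factor contributes $\|\delta(x_{k_i} - x_{k_i}^\nu)\|_{H^{r-k+2}} \le C\|x_{k_i}-x_{k_i}^\nu\|_{H^{r-k+3}} = C\|V_{k_i-1}-V_{k_i-1}^\nu\|_{H^{r-k+3}}$, which by the difference bound in Lemma \ref{Vkestlem} is controlled by $K_0'(\sum_{\ell\le k_i-2}\|u_{-1}^\nu\|_{H^{\cdots}} + \sum_{\ell\le k_i-2}\|u_\ell^\nu\|_{H^{\cdots}})$, and since $k_i \le k-1$ this is bounded by $K_0' \sum_{\ell \le k-1}\|u_\ell^\nu\|_{H^{r-k}}$ after absorbing the extra two derivatives into the gain; the term changing $J_{k'}$ contributes $\|J_{k'} - J_{k'}^\nu\|_{H^{r-k}}$, and since $J_{k'}^\nu - J_{k'}$ is a smooth-function-difference involving $h_\ell^\nu - h_\ell = u_\ell^\nu$ and $x_\ell^\nu - x_\ell = V_{\ell-1}^\nu - V_{\ell-1}$, this too is controlled by $K_0' \sum_{\ell\le k-1}\|u_\ell^\nu\|_{H^{r-k}}$ via Lemma \ref{Vkestlem} and the algebra property. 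The second difference, $\|\phi_{k-1}^\nu - \phi_{k-1}^{\nu-1}\|_{H^{r-k+1}}$, is identical with $u_\ell^\nu$ replaced by $U_\ell^\nu = u_\ell^\nu - u_\ell^{\nu-1}$ everywhere, the constants now depending on both $\|u^\nu\|_s$ and $\|u^{\nu-1}\|_s$ through the two families of flow coefficients.

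The main obstacle I expect is bookkeeping the derivative counts so that the two-derivative gain in the kernel operator exactly compensates for the derivatives lost in the $\delta x$ factors and in $J_{k'}$: one must check that in every summand the total regularity demanded of $x_0$ (namely $r-k+3 \le r+1$, using $k\ge 2$, plus whatever the lowest-order $\delta x$ terms with $k_i$ large need) stays within $H^{r+1}$ and that the density sits in $H^{r-k}$, which requires $r$ large enough — this is why the hypothesis $r \ge 7$ (or $8$) enters. A secondary technical point is justifying the kernel-operator gain-of-two estimate at $t=0$: rather than re-derive it, I would note that $\phi_{k-1}$ and its perturbations solve Poisson-type equations $\Delta_{x_0}\phi_{k-1} = (\text{explicit source in } x_0, V_0,\dots, h_0,\dots)$ obtained by commuting $D_t^{k-1}$ through $\widehat\Delta\phi = -\rho\widehat\kappa$ and evaluating at $t=0$, and then apply the elliptic estimate \eqref{sobell}/\eqref{compatelliptic} directly. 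Once that reduction is made, all the estimates are routine applications of the Sobolev algebra property and the already-established bounds on $V_k, V_k^\nu$ and their differences.
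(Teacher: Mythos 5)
Your proposal is correct and takes essentially the same route as the paper, which gives a one‑line proof citing the Newtonian potential estimates Theorem \ref{main theorem, ell est D_t phi} (the mixed $D_t^{k-1}\pave\phi$ version, evaluated at $t=0$) and its difference counterpart Theorem \ref{diffellphi full thm}, together with the bounds on $V_\ell, V_\ell^\nu$ from Lemma \ref{Vkestlem}. Your closing reduction — commuting $D_t^{k-1}$ through $\widetilde\Delta\phi = -\rho\widetilde\kappa$ and invoking the already‑proven elliptic estimates — is exactly what the paper's citation amounts to, and is cleaner than the kernel‑level derivative bookkeeping you attempt in the main body, which would essentially re‑prove the content of Section \ref{gravitysection} and where the precise derivative counts on the $\delta x_{k_i}$ factors are delicate (you would need $\|V_{k_i-1}\|_{H^{r-k_i+1}}$ from Lemma \ref{Vkestlem}, and your claimed $H^{r-k+3}$ requirement is off by one when $k_i$ is close to $k-1$; the cited theorems handle this automatically).
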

\begin{proof}
 The estimates  follows from Theorem \ref{main theorem, ell est D_t phi}, respectively Theorem \ref{diffellphi full thm}.
\end{proof}

\section{Existence and estimates for the wave equations}
\label{waveequationsection}

\subsection{Existence for the linear wave equations}
\label{linwaveexist}
Fixing $r \geq 7, T > 0,  V \in \X^{r+1}(T)$ and defining
$\Dve = \Dve[V]$ as in \eqref{laplsm},
the goal of this section is to solve the  linear wave equation:
\begin{align}
 D_t^2 \varphi - \eprimee \Dve \varphi
  &= \F,  \quad\textrm{ in }
 [0,T]\times \Omega,\quad\text{with}\quad
 \varphi=0, \textrm{ on } [0,T]\times \pa\Omega,\label{appwave1}\\
 \varphi(0,y) &= \varphi_0(y),\quad
 D_t\varphi(0,y) = \varphi_1(y),
 \quad \textrm{ in } \Omega\label{appwave3},
\end{align}
where  $\eprime = \eprime(t,y)$ satisfies
$0 < c_0 < \eprime  \leq c_1$ for some constants
$c_0, c_1$. We will omit the dependence on $c_0, c_1$ in what follows.
Compared with the estimates in Section
\ref{waveests}, we have divided by $\eprimee$ and abused notation
slightly to make the following computations simpler.

As in Section \ref{wavecompatcondn}, there are compatibility conditions
for \eqref{appwave1}-\eqref{appwave3}.\!\! We say $\varphi_0, \varphi_1$
satisfy the compatibility condition to order $s$ if there is a formal
power series in $t$, $\hat{\varphi} = \sum t^k \varphi_k$, satisfying
\eqref{appwave1}-\eqref{appwave3} and:
\begin{equation}
 \varphi_k \in H^1_0(\Omega), \qquad
 k = 0,...,s.
 \label{varphicc}
\end{equation}
Note that since $\hat{\varphi}$ satisfies \eqref{appwave1}, the coefficients
$\varphi_k$ can be computed recursively from $\varphi_0, \varphi_1$ and time
derviatives of $F$ at $t = 0$:
\begin{equation}
 \varphi_k = D_t^{k-2} \big( \sigma \Dve \widehat{\varphi} + F\big)\big|_{t = 0}.
 \label{}
\end{equation}

We will control solutions to \eqref{appwave1}-\eqref{appwave3}
using the quantities:
\begin{align}
 Y_s(t) = \Big({\sum}_{k \leq s}
 \int_\Omega  |D_t^{k+1} \varphi|^2 +
 \eprimee\delta^{ij} (D_t^{k}\pave_i \varphi)(D_t^{k}\pave_j \varphi) \, dy\Big)^{1/2}.
\end{align}

The main result we need is:
\begin{prop}
  \label{linearwavelwp}
  Fix $r\! \geq \!7$ and
  $T \geq 0$, and suppose that $V\! \!\in\! \X^{r+{}_{\!}1}(T)\!$ satisfies
  \eqref{admissible}. Suppose also that:
  \begin{align}
   \xve &\in L^\infty([0,T] ; H^r(\Omega)),\label{llwpxassump} \\
   D_t \xve \in L^\infty([0,T]; H^r(\Omega)),\quad \text{and}\quad
   D_t^k D_t \xve &\in L^\infty([0,T]; H^{r-k+1}(\Omega)),
    &&k = 1,..., r+1,\label{llwpvassump}\\
    D_t^k\eprime &\in L^\infty([0, T];H^{r-k}(\Omega)),
   &&k = 0,..., r\label{llwpsigassump},
\end{align}
and that the bound \eqref{ubd2} holds.
Also assume that for some $s$ with $0 \leq s \leq r$,
\begin{align}
   \F \in L^\infty([0,T]; H^{s-1}(\Omega)),\quad \text{and}\quad
   D_t^k \F &\in L^\infty([0,T]; H^{s-k}(\Omega)),
   &&k = 1,..., s,\label{llwpfassump}
  \end{align}
  and that the compatibility condition \eqref{varphicc} holds for $k = 0,...,s$.
  Take $K = K_{s,r}$ so that
  \begin{equation}
    {\sup}_{\,0 \leq t \leq T\,}\big( ||\xve(t)||_{r}
  + ||V(t)||_{r} + ||D_t V(t)||_r
  + ||\eprime(t)||_r +
  ||F(t)||_{s-1}\big) \leq K.
  \label{lwpkbd}
  \end{equation}
  Then the problem
 \eqref{appwave1}-\eqref{appwave3} has a unique solution
 $\varphi$ satisying:
 \begin{align}
  D_t^{s+1} \varphi \in L^\infty([0,T]; L^2(\Omega)),
  \qquad
  D_t^{\ell} \pave \varphi \in L^\infty([0,T];
  H^{s-\ell}(\Omega)),
  &&\ell = 0,..., s,
  \label{llwpreg}
 \end{align}
 and there are continuous functions $\C_s$ depending
 on $M, Y_{s-1}(0), T, $ and $K$ so that:
 \begin{equation}
   {\sup}_{\,0 \leq t \leq T} Y_s(t) \leq
   \C_s\Big(Y_s(0) +  \int_0^T ||\F(\tau)||_{s,0}
   \,d\tau \Big),
    \label{appwaveen}
   \end{equation}
 and  for $0 \leq t \leq T$
 \begin{align}
  ||\pave \varphi(t)||_{s}
  \leq \C_s ( Y_s(t) +
  ||\F(t)||_{s-1} ),
  \qquad
  ||\pave \varphi(t)||_{r}
  \leq \C_r \big( Y_r(t)  +
  ||\F(t)||_{r-1} +
  \ve^{-1}(||\sm x(t)||_r + 1)Y_{r-1}(t) \big).
  \label{appwaveell2}
 \end{align}
\end{prop}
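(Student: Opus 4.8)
The plan is to construct the solution by a Galerkin approximation, drawing the quantitative bounds from the energy estimates of Section~\ref{waveests} and the Dirichlet elliptic estimates of Appendix~\ref{elliptic}.

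I would start by recording the regularity of the Taylor coefficients. Under \eqref{varphicc}, differentiating \eqref{appwave1} at $t=0$ gives $(k+2)(k+1)\varphi_{k+2}=\sum_{a+b+c=k}\sigma_a\Dve_b\varphi_c+F_k$, where $\sigma_a$, $\Dve_b$ are the Taylor coefficients at $t=0$ of $\sigma$ and of the second order operator $\Dve$ — well defined because $V$ satisfies \eqref{admissible} — and $F_k=D_t^kF|_{t=0}/k!$. Isolating the top term $\sigma_0\Dve(0)\varphi_k$ yields $\Dve(0)\varphi_k=\sigma_0^{-1}(c_k\varphi_{k+2}-F_k-R_k)$ with $R_k$ a second order spatial operator applied to $\varphi_c$, $c\le k-1$; starting from $\varphi_k\in H^1_0$ for $k\le s$ and iterating the estimate of Proposition~\ref{app:sobdirich} (each pass gaining two derivatives on the worst index), this upgrades to $\varphi_j\in H^{s+1-j}(\Omega)$ for $0\le j\le s$ and $\varphi_{s+1}\in L^2(\Omega)$. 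This is the claimed interior regularity \eqref{llwpreg} at $t=0$, and it is exactly what controls $Y_s(0)$.

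For the approximation, let $\{f_j\}$ be an $L^2(\Omega)$-orthonormal basis of $H^1_0(\Omega)$ obtained by Gram--Schmidt from the finitely many functions $\varphi_0,\dots,\varphi_{s+1}$ followed by the Dirichlet eigenfunctions of the flat Laplacian $\Delta_y$, and set $V^N=\mathrm{span}\{f_1,\dots,f_N\}$. Seek $\varphi^N(t)=\sum_{j\le N}c^N_j(t)f_j$ with $\varphi^N(0)=P^N\varphi_0$, $D_t\varphi^N(0)=P^N\varphi_1$ ($P^N$ the $L^2$ projection onto $V^N$) solving $(D_t^2\varphi^N-\sigma\Dve\varphi^N-F,v)_{L^2(\Omega)}=0$ for all $v\in V^N$; this is a linear second order ODE system for $(c^N_j)$ with coefficients $t\mapsto(\sigma(t)\Dve(t)f_k,f_j)$ continuous in $t$ by \eqref{llwpxassump}--\eqref{llwpsigassump}, hence it has a unique solution on all of $[0,T]$. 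Since $\varphi_k\in V^N$ for $k\le s+1$ once $N\ge s+2$, successive $t$-differentiation of the Galerkin identity at $t=0$ together with the recursion above shows $D_t^k\varphi^N(0)=D_t^k\varphi(0)$ for $k\le s+1$, so $Y_s^N(0)=Y_s(0)$ uniformly in $N$. Testing the Galerkin equation against $D_t\varphi^N$ and integrating by parts in space (the boundary term vanishes since $V^N\subset H^1_0$) gives $\tfrac{d}{dt}Y_0^N\le C(M)(Y_0^N+\|F\|_{L^2})$; differentiating the Galerkin equation $k$ times in $t$ — legitimate because $V^N$ is a fixed subspace and the $f_j$ are $t$-independent — and testing against $D_t^{k+1}\varphi^N$ reproduces, term by term, the proof of Lemma~\ref{dtW} in the linear case (no nonlinear forcing $\F_2$, and \eqref{Lassumpwave} not needed), so induction on $k$ yields $\sup_{[0,T]}Y^N_s\le\C_s(Y_s(0)+\int_0^T\|F\|_{s,0})$ with $\C_s$ as in the statement, using \eqref{lwpkbd}.

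With these uniform bounds I would extract a subsequence converging weakly-$*$, identify the limit $\varphi$ as a distributional solution of \eqref{appwave1} (the Dirichlet condition persists because $D_t^k\varphi(t)\in H^1_0$, and $\varphi(0)=\varphi_0$, $D_t\varphi(0)=\varphi_1$ follow from the convergence of the data), and obtain uniqueness immediately from the $s=0$ energy estimate applied to the difference of two solutions. Since the limit — unlike $\varphi^N$ — satisfies $\Dve D_t^k\varphi=D_t^k(\sigma^{-1}(D_t^2\varphi-F))-[D_t^k,\Dve]\varphi$, iterating the elliptic estimate \eqref{sobellmix} (commutators controlled by Lemma~\ref{vectcommdt}, forcing by \eqref{llwpfassump}) gives $D_t^k\pave\varphi\in L^\infty([0,T];H^{s-k}(\Omega))$ for $k\le s$ together with the pointwise bound $\|\pave\varphi(t)\|_s\le\C_s(Y_s(t)+\|F(t)\|_{s-1})$, and — replacing $\|\T\xve\|_r$ by $\ve^{-1}(\|\sm x\|_r+1)$ via $\xve=\sm^2x$ and $\|\sm g\|_{H^{r+1}}\lesssim\ve^{-1}\|g\|_{H^r}$ — the $H^r$ bound of \eqref{appwaveell2}; \eqref{appwaveen} transfers to $\varphi$ by weak-$*$ lower semicontinuity. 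The step I expect to be the real obstacle is the uniform control of the Galerkin data: the $L^2$ projection $P^N$ is bounded on $L^2$ and $H^1_0$ but not on $H^m$, $m\ge2$, so naive projections of $\varphi_0,\varphi_1$ would not keep $D_t^k\varphi^N(0)$ — which, through the Galerkin equations at $t=0$, involves up to $k$ spatial derivatives of the data — bounded; this is what forces both the elliptic bootstrap of the second paragraph and the enlargement of $V^N$ by the $\varphi_k$'s. Everything else amounts to organizing estimates already established in Section~\ref{waveests} and Appendix~\ref{elliptic}.
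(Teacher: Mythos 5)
Your overall architecture (Galerkin approximation, higher-order energy estimates in time, elliptic regularity recovered for the limit via Proposition \ref{app:sobdirich} and \eqref{sobellmix}) is the same as the paper's, and your device of adjoining the Taylor coefficients to the Galerkin space is a legitimate alternative to the paper's reliance on convergence of the $L^2$-projections of the data. One small repair there: $\varphi_{s+1}$ is in general only in $L^2(\Omega)$, not $H^1_0(\Omega)$, so it cannot be placed in a trial space contained in $H^1_0(\Omega)$; you only need $\varphi_0,\dots,\varphi_s$ in the basis, after which $D_t^{s+1}\varphi^N(0)=P^N\big((s+1)!\,\varphi_{s+1}\big)$ and hence $Y^N_s(0)\le Y_s(0)$, which suffices.

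The genuine gap is the claim that testing the $k$-times differentiated Galerkin identity against $D_t^{k+1}\varphi^N$ "reproduces, term by term, the proof of Lemma \ref{dtW}" so that induction on $k$ closes. The proof of Lemma \ref{dtW} produces, through the commutators $[D_t^{k},\pave_j]\pave_i\varphi$ (equivalently $[D_t^k,\Dve]\varphi$), the quantity $\|\pave\varphi\|_{k}$ — mixed norms carrying an extra \emph{spatial} derivative of $\pave\varphi$ — and closes only by invoking the elliptic estimate \eqref{alldt1}, which uses that $\eprime D_t^2\varphi-\Dve\varphi=F$ holds pointwise. The approximation $\varphi^N$ satisfies only the projected equation, so \eqref{alldt1} is unavailable for it and $\|\pave\varphi^N\|_{k}$ is not controlled by the energy $W^N_k$; your two-phase plan (all Galerkin energy estimates first, all elliptic regularity for the limit at the end) therefore does not close. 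The paper avoids this by interleaving the two steps: at stage $n$ it rewrites the differentiated equation in the divergence form \eqref{ode5}, so the top-order commutator contributions are integrated by parts onto the test function and the remainder $F^n$ is bounded (Lemma \ref{existenceannoying}) by $\|\pave\varphi_m\|_{n-1}$ only, a quantity supplied by the elliptic regularity of the limit at stage $n-1$ together with convergence of $\varphi_m$ in those norms; only then does one pass to the limit and run the elliptic step at order $n$. Relatedly, the paper's first elliptic step (Lemma \ref{weak2}) tests against $\Delta_y\varphi_m$, which lies in the trial space and vanishes on $\pa\Omega$ precisely because the basis consists of Dirichlet eigenfunctions of $\Delta_y$; your enlarged basis destroys this, so you are committed to extracting the $H^2$ regularity from the limit equation via Proposition \ref{app:sobdirich} — which is viable, but it must be done before, not after, the order-two energy estimate.
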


This result is well-known (see e.g. \cite{Hormander2007} or
\cite{Evans2010}) and will follow from a Galerkin method.
However, we will need to be careful about the
regularity of $\xve$ and we will use our elliptic estimates from
Section \ref{elliptic}
 in place of ``standard'' elliptic estimates. We do not claim
 that this result is optimal with respect to the total number of
 derivatives of $\xve, V, D_tV,\eprimee$ required
 and in many of the following results it is obvious that one can
 do with much weaker assumptions on these variables.
We start by constructing weak solutions to the system
\eqref{appwave1}-\eqref{appwave3}.
Let $\{e_k\}_{k = 0}^\infty$ be the $L^2$-normalized eigenfunctions
in $H^1_0(\Omega)$ of the
Dirichlet Laplacian in the $y$-coordinates $\Delta_y \!= \pa_{1}^2\! + \pa_{2}^2\!
+ \pa_{3}^2$.
Let $\dm^k\! \in\! C^2([0,T])$, $k \!=\! 1,..., m$ solve the following system:
\begin{align}
 D_t^2 \dm^k +
 B_{k}(\dm)
 &= \int_\Omega \F e_k\,dy,
 && k =1,..., m, \label{ode}\\
 \dm^k(0) = (\varphi_0, e_k),  \qquad
 D_t \dm^k(0) &= (\varphi_1, e_k),  && k = 1,..., m,
\end{align}
where:
\begin{equation}
B_{k}(\dm)
=
{\sum}_{\ell = 1}^k
 \dm^{\,\ell} \int_\Omega \eprimee \delta^{ij}
 (\pave_i e_\ell)
 (\pave_j e_k)\, dy.
\end{equation}
 Define:
\begin{equation}
 \varphim(t) = {\sum}_{k = 1}^m \dm^k(t) e_k.
 \label{varphim}
\end{equation}
Multipling \eqref{ode} by $\dm^k$, summing over $k \leq m$ and using
\eqref{varphim}, we have:
\begin{align}
  \int_\Omega  D_t^2 \varphim e_k \, dy
  + \int_\Omega \eprimee \delta^{ij}
  (\pave_i \varphim )(\pave_j e_k)\,  dy =
  \int_\Omega \F e_k\,  dy, && k=1,..., m.
 \label{weak}
\end{align}
 We now prove the basic energy estimate:
  \begin{lemma}
   If $\varphim$ is as above, there is a constant $C_0 = C_0(M, K)$ so that:
   \begin{multline}
    {\max}_{\,0 \leq t \leq T} \big(||D_t \varphim(t)||_{L^2(\Omega)}
    + ||\pave \varphim(t)||_{L^2(\Omega)}\big)
    + || D_t^2 \varphim||_{L^2(0,T; H^{-1}(\Omega))}\\
    \leq C_0\big( ||\varphi_0||_{H^1(\Omega)} + ||\varphi_1||_{L^2(\Omega)}
    + ||\F||_{L^2(0,T;L^2(\Omega))}\big).
    \label{waveunif}
   \end{multline}
  \end{lemma}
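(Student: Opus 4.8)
The plan is to derive the standard energy estimate for the Galerkin approximations $\varphi^m$ defined in \eqref{varphim}, following the classical argument (cf. \cite{Evans2010}). First I would multiply \eqref{ode} by $D_t d_m^k$ and sum over $k \leq m$; since $\varphi^m = \sum_k d_m^k e_k$ and $D_t \varphi^m = \sum_k (D_t d_m^k) e_k$, this produces the identity
\begin{equation}
 \int_\Omega (D_t^2 \varphi^m)(D_t \varphi^m)\, dy
 + \int_\Omega \eprimee \delta^{ij}(\pave_i \varphi^m)(\pave_j D_t \varphi^m)\, dy
 = \int_\Omega \F (D_t \varphi^m)\, dy.
\end{equation}
The first term is $\tfrac12 \tfrac{d}{dt}\|D_t \varphi^m\|_{L^2(\Omega)}^2$. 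For the second term I would write $\int_\Omega \eprimee \delta^{ij}(\pave_i \varphi^m)(\pave_j D_t \varphi^m)\, dy = \tfrac12 \tfrac{d}{dt}\int_\Omega \eprimee \delta^{ij}(\pave_i \varphi^m)(\pave_j \varphi^m)\, dy - \tfrac12 \int_\Omega (D_t \eprimee)\delta^{ij}(\pave_i \varphi^m)(\pave_j \varphi^m)\, dy - \int_\Omega \eprimee \delta^{ij}([D_t,\pave_i] \varphi^m)(\pave_j \varphi^m)\, dy$, using the commutator formula \eqref{dtpavecomm} so that $[D_t, \pave_i]\varphi^m = -(\pave_i \ssm V^k)\pave_k \varphi^m$. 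Here is where the a priori bounds enter: by \eqref{ubd2} and \eqref{lwpkbd}, together with Sobolev embedding, $\|D_t \eprimee\|_{L^\infty(\Omega)}$ and $\|\pave \ssm V\|_{L^\infty(\Omega)}$ are bounded by a continuous function of $M$ and $K$, so both of these terms are $\leq C_0(M,K)\|\pave \varphi^m\|_{L^2(\Omega)}^2$.

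Next I would introduce the quantity $G_m(t) = \|D_t \varphi^m(t)\|_{L^2(\Omega)}^2 + \int_\Omega \eprimee \delta^{ij}(\pave_i \varphi^m)(\pave_j \varphi^m)\, dy$, which by the lower bound $\eprimee \geq c_0$ is comparable to $\|D_t \varphi^m\|_{L^2(\Omega)}^2 + \|\pave \varphi^m\|_{L^2(\Omega)}^2$. The identity above gives $\tfrac{d}{dt} G_m \leq C_0 G_m + 2\|\F\|_{L^2(\Omega)}\|D_t \varphi^m\|_{L^2(\Omega)} \leq C_0' G_m + \|\F\|_{L^2(\Omega)}^2$. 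Since $G_m(0) \leq C_0(\|\varphi_0\|_{H^1(\Omega)}^2 + \|\varphi_1\|_{L^2(\Omega)}^2)$ — using that $\varphi^m(0)$ is the $L^2$-projection of $\varphi_0$ onto $\mathrm{span}\{e_1,\dots,e_m\}$, which is bounded in $H^1(\Omega)$ since the $e_k$ are the eigenfunctions of $\Delta_y$, and likewise $D_t\varphi^m(0)$ is the $L^2$-projection of $\varphi_1$ — Gr\"onwall's inequality yields ${\max}_{0\leq t\leq T} G_m(t) \leq C_0(M,K,T)(\|\varphi_0\|_{H^1(\Omega)}^2 + \|\varphi_1\|_{L^2(\Omega)}^2 + \|\F\|_{L^2(0,T;L^2(\Omega))}^2)$, which controls the first two terms on the left of \eqref{waveunif} (after also invoking Poincar\'e's inequality and the boundedness of $A$ to pass between $\|\pave \varphi^m\|_{L^2(\Omega)}$ and $\|\varphi^m\|_{H^1(\Omega)}$).

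For the term $\|D_t^2 \varphi^m\|_{L^2(0,T;H^{-1}(\Omega))}$, I would fix $v \in H^1_0(\Omega)$ with $\|v\|_{H^1(\Omega)} \leq 1$ and split $v = v^1 + v^2$ with $v^1 \in \mathrm{span}\{e_1,\dots,e_m\}$ and $v^2$ orthogonal in $L^2(\Omega)$; then $\|v^1\|_{H^1(\Omega)} \leq C\|v\|_{H^1(\Omega)}$ since the $e_k$ diagonalize $\Delta_y$. Using \eqref{weak} and the fact that $(D_t^2\varphi^m, v^2)_{L^2(\Omega)} = 0$, I get $\langle D_t^2 \varphi^m, v\rangle = -\int_\Omega \eprimee \delta^{ij}(\pave_i \varphi^m)(\pave_j v^1)\, dy + \int_\Omega \F v^1\, dy$, and this is bounded by $C_0(M)(\|\pave \varphi^m\|_{L^2(\Omega)} + \|\F\|_{L^2(\Omega)})$. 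Squaring, integrating in $t$ over $[0,T]$, and using the bound already obtained for $\|\pave \varphi^m\|_{L^2(\Omega)}$ gives the third term in \eqref{waveunif}. The main obstacle is simply the bookkeeping of which norms of $\xve$, $V$, $\eprimee$, and $\F$ are needed at each step — the commutator terms coming from $[D_t,\pave]$ and from differentiating the coefficient $\eprimee$ must be controlled in $L^\infty$, which is exactly why \eqref{ubd2} and the bound \eqref{lwpkbd} with $r \geq 7$ are assumed, and one must take care that the Galerkin projections of the initial data are bounded uniformly in $m$ in the appropriate spaces, which works precisely because the $e_k$ are eigenfunctions of the flat Laplacian.
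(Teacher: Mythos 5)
Your proposal is correct and follows essentially the same route as the paper: the same energy identity from multiplying \eqref{ode} by $D_t d_m^k$, the same handling of the $D_t\eprimee$ and $[D_t,\pave]$ terms via \eqref{dtpavecomm} and the a priori bounds, Gr\"onwall (the paper works with $Y_{(m)}$ rather than its square, an immaterial difference), and the same projection argument $v = v^1 + v^2$ for the $H^{-1}$ bound on $D_t^2\varphim$. No gaps.
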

\begin{proof}
  We multiply \eqref{weak} by $D_t d_m^k$ and sum over $k = 1,..., m$ to get:
  \begin{equation}
   \int_\Omega (D_t^2 \varphim) (D_t \varphim)
   \, dy
   + \int_\Omega \eprime \delta^{ij} (\pave_i \varphim) (\pave_j D_t \varphim)\,
    dy
   = \int_\Omega \F D_t \varphim dy.
   \label{enidentm}
  \end{equation}
  This first term is
  ${2}^{-1}{d} \,|| D_t \varphim||_{L^2(\Omega)}/{dt}$.
  We use \eqref{dtpavecomm} and write the second term as:
  \begin{multline}
   \int_\Omega \eprimee  \delta^{ij}(\pave_i \varphim)D_t (\pave_j\varphim) dy
   - \int_\Omega \eprimee \delta^{ij}(\pave_jV^\ell) (\pave_i \varphim)(\pave_\ell \varphim)
   dy\\
   =
   \frac{1}{2} \frac{d}{dt} ||\sqrt{\eprimee}\pave \varphim||_{L^2(\Omega)}^2
   - \int_\Omega \delta^{ij} (\pave_i \varphim)
   \big(\eprimee (\pave_j \ssm V^\ell)(\pave_\ell \varphi) + D_t \eprimee
   \pave_j \varphim\big)\, dy,
   {}
  \end{multline}
  and we can bound this last term by $C(M)(1 + ||D_t \eprime||_{L^\infty(\Omega)})
  ||\pave \varphim||_{L^2}^2$.

  Writing $Y_{(m)}(t) = ||D_t \varphim||_{L^2}
  + ||\sqrt{\eprimee} \pave \varphim||_{L^2}$, we have shown:
  \begin{equation}
   \frac{1}{2} \frac{d}{dt} Y_{(m)}^2
   \leq C(M) \Big( (1 +||D_t \eprime||_{L^\infty(\Omega)} ) Y_{(m)}^2
   +  ||\F||_{L^2} Y_{(m)}\Big),
   {}
  \end{equation}
  and so using that ${d}(Y_{(m)})^2 \!/{dt} =
  2 Y_{(m)} {d} Y_{(m)}/{dt}$, dividing both sides
  by $Y_{(m)}$ and multiplying by the integrating
  factor $e^{-C(M)(1 +||D_t \eprime||_{L^\infty(\Omega)}) t}$, we get:
  \begin{equation}
   {\sup}_{\,0 \leq t \leq T}
   \big(|| D_t \varphim(t)||_{L^{{}_{\!}2}}{}_{\!}
   + ||\sqrt{\eprimee}\pave \varphim(t)||_{L^{{}_{\!}2}} \!\big)
   \!\leq\!
   C \Big(
   ||  D_t\varphim(0)||_{L^{{}_{\!}2}}{}_{\!}
   + ||\!\sqrt{\!\eprime}\pave \varphim(0)||_{L^{{}_{\!}2}}{}_{\!}
   + \!\!\int_0^T \!\!\!\! \!||\F(\tau)||_{L^{{}_{\!}2}} d\tau\Big).
   {}
  \end{equation}
  where $C = C(M,\sup_{0 \leq t \leq T}||\xve||_{r} +
  ||\eprime(t)||_{L^\infty}, T)$.
  Using the orthogonality of the $e_k$, we have
  $$|| D_t \varphim(0)||_{L^2(\Omega)} + ||\sqrt{\eprime} \pa\varphim(0)||_{L^2(\Omega)}
  \leq ||\varphi_1||_{L^2(\Omega)} + ||\sqrt{\eprime} \varphi_0||_{H^1(\Omega)},$$
  which proves the first part of \eqref{waveunif}.
  We now control
  $||D_t^2 \varphim||_{H^{-1}(\Omega)}$.

  Let $v \in H^1_0(\Omega)$ so that $||v||_{H^1(\Omega)} = 1$, and
  split $v = v^1 \!\!+ v^2$ with $v^1$ in the span of $e_1,..., e_m$. Then we have:
  \begin{equation}
   \langle D_t^2 \varphim, v\rangle
   = \langle  D_t^2\varphim, v^1\rangle
   = ( D_t^2\varphim, v^1)_{L^2} =
   -(\eprimee \pave \varphim, \pave v^1)_{L^2}
    + (\F, v^1)_{L^2}.
   {}
  \end{equation}
  The right-hand side is bounded by
  $C(M)(
  ||D_t\varphim||_{L^2} + c_0||\pave \varphim||_{L^2(\Omega)}
  + ||\F||_{L^2})||v^1||_{H^1(\Omega)}$. Noting that
  $||v^1||_{H^1(\Omega)} \leq ||v||_{H^1(\Omega)} = 1$ and integrating
  in time gives the bound for $||D_t^2 \varphim||_{L^2(0,T;H^{-1}(\Omega))}$.
\end{proof}

\begin{lemma}
  \label{waveexist1}
 With assumptions as in Proposition
 \ref{linearwavelwp}, there is a unique $\varphi\! \in\! C([0,T];H^1_0(\Omega))$  satisfying \eqref{appwave1}-\eqref{appwave3}
 with
 \begin{equation}
  D_t \varphi \in L^\infty(0,T;L^2(\Omega)),
  \qquad
  D_t^2 \varphi \in L^\infty(0,T; H^{-1}(\Omega)).
 \end{equation}
\end{lemma}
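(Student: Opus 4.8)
The proof is the classical Galerkin construction of a weak solution; the approximations $\varphim$ of \eqref{varphim}, the ODE system \eqref{ode}, and the uniform bound \eqref{waveunif} are already in place, so the plan is to (i) extract a limit, (ii) identify it as a solution with the correct initial data, and (iii) prove uniqueness and strong continuity in time. For (i), I would first note that the coefficients $\int_\Omega \eprimee\,\delta^{ij}(\pave_i e_\ell)(\pave_j e_k)\,dy$ and $\int_\Omega \F e_k\,dy$ in \eqref{ode} depend continuously on $t$ by \eqref{llwpsigassump}, \eqref{llwpfassump} and \eqref{ubd2}, so \eqref{ode} is a linear second order system of ODE with continuous coefficients and hence has a unique solution $\dm\in C^2([0,T])$; thus $\varphim$ is well defined. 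By \eqref{waveunif} the families $\{\varphim\}$, $\{D_t\varphim\}$, $\{D_t^2\varphim\}$ are bounded in $L^\infty(0,T;H^1_0(\Omega))$, $L^\infty(0,T;L^2(\Omega))$, $L^2(0,T;H^{-1}(\Omega))$ respectively, so along a subsequence there is $\varphi$ with $\varphim\to\varphi$ weakly-$*$ in $L^\infty(0,T;H^1_0(\Omega))$, $D_t\varphim\to D_t\varphi$ weakly-$*$ in $L^\infty(0,T;L^2(\Omega))$, and $D_t^2\varphim\to D_t^2\varphi$ weakly in $L^2(0,T;H^{-1}(\Omega))$.

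For (ii), I would fix $N$ and a test function $v(t,y)=\sum_{k=1}^N c_k(t)e_k(y)$ with $c_k\in C^1([0,T])$. Multiplying \eqref{weak} by $c_k$, summing over $k\le N$ and integrating in $t$ gives, for every $m\ge N$,
$$\int_0^T \Big(\langle D_t^2\varphim,v\rangle + \int_\Omega \eprimee\,\delta^{ij}(\pave_i\varphim)(\pave_j v)\,dy\Big)\,dt = \int_0^T\!\!\int_\Omega \F v\,dy\,dt.$$
The bilinear form $a(t)[u,v]=\int_\Omega \eprimee\,\delta^{ij}(\pave_i u)(\pave_j v)\,dy$ is symmetric, bounded, and coercive on $H^1_0(\Omega)$ uniformly in $t$ (using $\eprimee\ge c_0>0$, the uniform ellipticity of $\gve^{ab}$ from \eqref{ubd2}, and Poincar\'e), and its coefficients lie in $L^\infty([0,T]\times\Omega)$, so the weak convergences from (i) allow passage to the limit $m\to\infty$; by density of such $v$ in $L^2(0,T;H^1_0(\Omega))$ we conclude that $\varphi$ satisfies \eqref{appwave1} weakly (in $L^2(0,T;H^{-1}(\Omega))$), the boundary condition being encoded in $\varphi(t,\cdot)\in H^1_0(\Omega)$. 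To recover the initial conditions I would argue as in \cite{Evans2010}: since $D_t\varphi\in L^2(0,T;L^2(\Omega))$ and $D_t^2\varphi\in L^2(0,T;H^{-1}(\Omega))$, after modifying on a null set $\varphi\in C([0,T];L^2(\Omega))$ and $D_t\varphi\in C([0,T];H^{-1}(\Omega))$; integrating the weak formulation by parts in $t$ against $v$ with $v(T,\cdot)=0$ but $v(0,\cdot)\ne0$ and using $\dm^k(0)=(\varphi_0,e_k)$, $D_t\dm^k(0)=(\varphi_1,e_k)$ identifies $\varphi(0)=\varphi_0$, $D_t\varphi(0)=\varphi_1$.

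For (iii), suppose $\varphi$ solves \eqref{appwave1}--\eqref{appwave3} with $\varphi_0=\varphi_1=0$, $\F=0$. This is the main obstacle, because $D_t^2\varphi$ only lies in $H^{-1}(\Omega)$, so one cannot pair the equation directly with $D_t\varphi$ to get an energy identity. The remedy is the classical antiderivative test function: fix $s\in(0,T]$ and set $\psi(t,\cdot)=\int_t^s\varphi(\tau,\cdot)\,d\tau$ for $0\le t\le s$ and $\psi(t,\cdot)=0$ for $s\le t\le T$, so $\psi\in C([0,T];H^1_0(\Omega))$, $D_t\psi=-\varphi$ on $[0,s]$, and $\psi(s,\cdot)=0$. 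Pairing the equation with $\psi$, integrating over $[0,s]$ and integrating by parts in $t$ yields an identity of the form
$$\int_\Omega|\varphi(s)|^2\,dy + a(0)[\psi(0,\cdot),\psi(0,\cdot)] = \int_0^s R(t)\,dt,$$
where $R(t)$ collects the lower order terms in which $D_t$ falls on $\eprimee$ or on the coefficients $A^a_{\,i}$ of $\pave$, and is bounded by $C(M,K)\big(\|\varphi(t,\cdot)\|_{L^2(\Omega)}^2+\|\pave\psi(t,\cdot)\|_{L^2(\Omega)}^2\big)$ using \eqref{ubd2} and \eqref{llwpvassump}; introducing $w(t,\cdot)=\int_0^t\varphi(\tau,\cdot)\,d\tau$ (so $\psi(t,\cdot)=w(s,\cdot)-w(t,\cdot)$) converts this into a form to which Gronwall's inequality applies, forcing $\varphi\equiv0$ and hence uniqueness. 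Finally, applying the same integration by parts to $\varphi$ itself, with data and forcing retained, produces the energy identity for $Y_0$; together with the weak continuity of $\varphi$ and $D_t\varphi$ from (ii) and the continuity in $t$ of the right-hand side, this upgrades them to strong continuity, giving $\varphi\in C([0,T];H^1_0(\Omega))$ and $D_t\varphi\in C([0,T];L^2(\Omega))$ and completing the proof.
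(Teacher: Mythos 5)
Your proposal is correct and, for the existence part, follows essentially the same route as the paper: the Galerkin approximants $\varphim$ of \eqref{varphim}, the uniform bound \eqref{waveunif}, weak-$*$ extraction of a limit, passage to the limit in the weak formulation against test functions spanned by finitely many $e_k$, and identification of the initial data by integrating by parts in $t$ in both the limiting and the approximate weak formulations. Where you go beyond the paper is in points (iii): the paper's proof of this lemma establishes only existence and the continuity $\varphi\in C([0,T];L^2(\Omega))$, $D_t\varphi\in C([0,T];H^{-1}(\Omega))$, leaving the uniqueness asserted in the statement (and the stronger continuity $\varphi\in C([0,T];H^1_0(\Omega))$) to the standard theory cited from \cite{Evans2010}. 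Your antiderivative test function $\psi(t)=\int_t^s\varphi(\tau)\,d\tau$ is exactly the right device for uniqueness when $D_t^2\varphi$ is only $H^{-1}$-valued, and your treatment of the commutator terms where $D_t$ falls on $\eprimee$ or on $A^a_{\m i}$ is consistent with the a priori bounds \eqref{ubd2} and \eqref{llwpvassump}. One small caveat: the final upgrade to strong continuity in $H^1_0(\Omega)$ is not literally "the same integration by parts applied to $\varphi$ itself" — deriving the energy identity for a solution with only $D_t^2\varphi\in L^2(0,T;H^{-1}(\Omega))$ requires a mollification in time (or the Lions regularization argument) before one may pair the equation with $D_t\varphi$; this is standard but should be acknowledged rather than folded into the uniqueness computation.
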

\begin{proof}
  By the uniform estimate \eqref{waveunif} and
 Alaoglu's theorem, passing to a subsequence we see that there is
a $\varphi \in L^2(0,T;
H^1_0(\Omega))$ with $D_t \varphi \in L^2(0,T; L^2(\Omega)),
 D_t^2 \varphi \in L^2(0,T;
H^{-1}(\Omega))$ so that $\pave \varphim \to \pave\varphi$
weakly in $L^2(0,T; L^2(\Omega))$,
$D_t \varphim \to D_t \varphi$ weakly in $L^2(0,T; L^2(\Omega))$
 and $D_t^2 \varphim \to \varphi$ weakly
in $L^2(0,T; H^{-1}(\Omega))$. Concretely, this means that if $v \in H^1_0(\Omega)$ then:
\begin{equation}
 \int_0^T \!\!\!\!\int_{\Omega}\! (\pave^k\! \varphim(t,y)) \pave^k\!v(t,y)\, dydt
  \to\!\! \int_0^T\!\!\!\! \int_\Omega\!
 (\pave^k \!\varphi(t,y)) \pave^k\! v(t,y) \, dydt,\qquad k\!=\!0,\!1, \quad\text{and}\quad  \langle D_t^2 \varphim\!\!, v \rangle \to \langle D_t^2 \varphi, v \rangle.
\end{equation}

Now, given $v \in C^1([0,T]; H^1_0(\Omega))$
of the form:
\begin{equation}
 v(t) = {\sum}_{k = 1}^M v_k(t) e_k,
\end{equation}
we multiply the weak formulation \eqref{weak} by $v_k(t)$,
sum over $k$ and integrate over $[0,T]$ to get that:
\begin{equation}
 \int_0^T \langle   D_t^2\varphim, v\rangle\,
 dt
 + \int_0^T \int_\Omega \eprimee \delta^{ij}(\pave_i \varphim)(\pave_j v)
 \,
  dy dt
 = \int_0^T \int_\Omega \F v  \,dy dt.
 \label{stillweak}
\end{equation}
Taking $m \to \infty$ and using the above limits, we get that for
$v$ of the above form:
\begin{equation}
 \int_0^T \langle D_t^2 \varphi, v\rangle\,dt
 + \int_0^T \int_\Omega \eprimee\delta^{ij}(\pave_i \varphi) (\pave_j v)
 \, dy dt
 = \int_0^T \int_\Omega \F v \, dy dt.
 \label{lessweak}
\end{equation}
Since such $v$ are dense in $L^2(0,T; H^1_0(\Omega))$ this holds for
any $v$ in this space. Hence for almost every $t$
\begin{equation}
 \langle  D_t^2 \varphi(t), v(t)\rangle
 +  \int_\Omega \eprimee \delta^{ij}(\pave_i \varphi(t))(\pave_j v(t))
 \,dy
 =  \int_\Omega \F(t) v(t) dy,\qquad v \in H^1_0(\Omega).
\end{equation}

By an approximation argument
and the fundamental theorem of calculus, using that
$\varphi$ and its time derivatives are all in $L^2$ in time,
we also get that $\varphi \in C([0,T];L^2(\Omega))$ and
$D_t\varphi \in C([0,T];H^{-1}(\Omega))$ (see \cite{Evans2010}).
Hence \eqref{appwave3} makes sense.
We now have to check that $\varphi(0) = \varphi_0$ and $D_t \varphi(0) =
\varphi_1$. Let $v \in C^2([0,T];
H^1_0(\Omega))$ be such that $v(T) = D_t v(T) = 0$ and integrate by
parts twice in time in \eqref{lessweak} to get:
\begin{equation*}
 \int_0^T\!\!\! \int_\Omega\!\!
  D_t^2 v(t) \varphi(t) \,dydt
 + \int_0^T\!\!\! \int_\Omega\!\! \eprimee \delta^{ij}(\pave_i \varphi(t))( \pave_j v(t))\,
 dy dt
 \\
 = \int_0^T\!\!\! \int_\Omega\!\! \F(t) v(t) \, dy dt +
  \int_\Omega\!\!
 v(0) D_t\varphi(0) \! - \!
  D_t v(0) \varphi(0) \,  dy.
 {}
\end{equation*}
On the other hand we can integrate by parts twice in \eqref{stillweak}
and take $m \to \infty$ to get also:
\begin{equation*}
 \int_0^T \!\!\! \int_\Omega \!\! D_t^2 v (t) \varphi(t)\, dy dt
 + \int_0^T\!\!\! \int_\Omega\!\! \eprimee \delta^{ij}(\pave_i \varphi(t))
 ( \pave_j v(t))
 \,dy dt
 = \int_0^T\!\!\! \int_\Omega\!\! \F(t) v(t)\,
  dy dt +
  \int_\Omega\!\!
 v(0) \varphi_1 \!-\!
  D_t v(0) \varphi_0 \, dy.
 {}
\end{equation*}
Comparing these expressions  using that $v(0)$ and $D_t v(0)$ are arbitrary,
we have $\varphi(0)\! = \!\varphi_0$ and $D_t\varphi\! = \!\varphi_1$.
\end{proof}

We now want to show that we get improved regularity of $\varphi$ when
$\varphi_0, \varphi_1$ and $\F$ are more regular. The first step
is to show that the coefficients $d_m^\ell$ are more regular in this case
and for this we take time derivatives of the equation \eqref{appwave1}.
We apply $n \leq r-1$ time derivatives to \eqref{appwave1} and write
 $D_t \Dve \varphi = \delta^{ij} \pave_j (D_t \pave_i \varphi)
- \delta^{ij} (\pave_j V^\ell)\pave_\ell \pave_i \varphi$.
We write the result as:
\begin{equation}
 D_t^{n+2} \varphi
 -\delta^{ij}\pave_j (\eprime D_t^n \pave_i \varphi )
 -\delta^{ij}\pave_{j} (D_t \eprime D_t^{n-1} \pave_i\varphi)
 + \delta^{ij}\pave_{j'}\big( (\pave_j \ssm V^{j'})D_t^{n-1} \pave_i \varphi\big)
 = F^n,
 \label{ode5}
\end{equation}
where:
\begin{multline}
 F^n = D_t^n F
 - {\sum}_{s = 2}^{n}
 (D_t^s \eprime)(D_t^{n-s} \Dve \varphi)\\
 +
 {\sum}_{s = 1}^n
 \delta^{ij}(D_t^s A_{\m j}^b) (D_t^{n-s} \pa_b \pave_i \varphi)
 + \delta^{ij}(\pave_j \eprime + \pave_j D_t \eprime) D_t^n \pave_i\varphi
 - \delta^{ij} (\pave_{j'} \pave_j \ssm V^{j'}) D_t^{n-1}\pave_i \varphi.
 \label{Fkterm}
\end{multline}
We write \eqref{ode5} like this because the third and fourth terms
have as many space derivatives of $\varphi$ as the second term but fewer
time derivatives, and so we will need to integrate by parts in space and
time to handle them. The terms in $F^k$ will be lower-order and can
be bounded in $L^2$ directly.

Multiplying this by arbitrary $v \in H^1_0(\Omega)$ and integrating
by parts leads to the equation:
\begin{equation}
\int_\Omega (D_t^{n+2} \varphi) v\, dy
 + \int_\Omega \eprimee \delta^{ij} (D_t^n \pave_i \varphi)
 (\pave_j v)\, dy+
\int_\Omega \delta^{ij} (D_t^{n-1}\pave_i \varphi)
 \big( D_t \eprimee (\pave_j v)
\pave_j \ssm V^{j'})(\pave_{j'} v)\big)\, dy
 = \int_\Omega F^n v\, dy.
 {}
\end{equation}
With $d_m^\ell$ defined by \eqref{ode}, suppose that $d_m^\ell \in C^n([0,T])$
for some $n \geq 1$
and define:
\begin{equation}
 B_k^n  = B_k^n(d_m,..., D_t^n d_m) = {\sum}_{\ell \leq k} \int_\Omega \eprimee \delta^{ij}
 D_t^n(d_m^\ell \pave_i e_\ell)\pave_j e_k\, dy,
\end{equation}
\begin{equation}
 C_k^n = C_k^n(d_m,..., D_t^{n-1} d_m)  = {\sum}_{\ell \leq k} \int_{\Omega}
 \delta^{ij} D_t^{n-1}( d_m^\ell \pave_i e_\ell)
 \big( D_t \eprimee \pave_j e_k + (\pave_j \ssm V^{j'})\pave_{j'} e_k\big)\, dy.
 \label{}
\end{equation}
Also let $F^n(d_m)$ be $F^n$ with $\varphi$ replaced by
$\varphim = {\sum}_{k \leq m}  d_m^k e_k$. Let $\dot{d}_m^1,..., \dot{d}_m^k$
solve the ODE:
\begin{equation}
 D_t \dot{d}_m^k + B_k^n + C_k^n = \int_{\Omega} F^n(d_m) e_k\, dy,
 \quad
 \dot{d}_m^k(0) = (\varphi_{n}, e_k)_{L^2(\Omega)}, \quad k = 1,..., m,
 \label{odedot}
\end{equation}
where $\varphi_{n}$ is defined by \eqref{varphicc}. By the existence and uniqueness theorem for ODE, it follows that
$\dd^{\,k}(t)\! =\! D_t^{n} d_{m}^{\,k}(t)$ for $0\! \leq \! t\! \leq \!T$
and this implies that $d_{m}^{\,k} \!\!\in \!C^{n+1}(0, \!T)$.

Before proving that the sequence $\varphim$ converges in stronger
topologies, we will need to ensure that $\varphi$ satisfies the equation \eqref{appwave1}
almost everywhere. We start with:

\begin{lemma}
  \label{weakdt}
  Suppose that the
 hypotheses of Proposition \ref{linearwavelwp} hold.
 Let $\varphi$ be as in Lemma \ref{waveexist1}.
 If
 $\varphi_0 \in H^2(\Omega),\varphi_1 \in H^1_0(\Omega)$
 and $D_t\F \in L^2(0,T;L^2(\Omega))$, then we have the
 improved regularity:
 \begin{equation}
   D_t \varphi \in C([0, T]; H^1_0(\Omega)),\qquad
  D_t^2 \varphi \in L^\infty([0,T]; L^2(\Omega)),\qquad
  D_t^3\varphi \in L^\infty([0,T]; H^{-1}(\Omega)),
  \label{weakdtbds}
 \end{equation}
\end{lemma}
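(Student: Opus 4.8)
The plan is to promote the regularity of the weak solution $\varphi$ produced in Lemma \ref{waveexist1} by differentiating the equation once in time, running the Galerkin energy argument at that level, passing to the limit, and finally upgrading the resulting $L^\infty$-in-time bound on $D_t\varphi$ to continuity. Throughout I would work with the $n=1$ instance of \eqref{ode5}: writing $\widetilde\varphi=D_t\varphi$ and using that $\pa_{y^a}$ and $D_t$ commute in Lagrangian coordinates, so that $D_t\big(\kve\Dve\varphi\big)=\pa_a\big(\kve\gve^{ab}\pa_b D_t\varphi\big)+\pa_a\big((D_t(\kve\gve^{ab}))\pa_b\varphi\big)$, one sees that $\widetilde\varphi$ solves a wave equation of the same form with right-hand side $F^1$ as in \eqref{Fkterm}, the key feature being that the only term carrying two spatial derivatives of $\varphi$ is kept in \emph{divergence form}.

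Next I would take the Galerkin approximants $\varphi_m$ from \eqref{varphim}, whose first time derivatives $\dot d_m^k=D_t d_m^k$ solve the ODE system \eqref{odedot} with $n=1$, so that $d_m^k\in W^{2,\infty}(0,T)$ and $\varphi_m$ is smooth in $\Omega$ with $D_t^2\varphi_m=0$ on $\pa\Omega$. Multiplying the $D_t$-differentiated equation for $\varphi_m$ by $D_t^2\varphi_m$ and integrating over $\Omega$ produces, exactly as in the proof of \eqref{waveunif}, the quantities $\tfrac12\tfrac{d}{dt}\!\int |D_t^2\varphi_m|^2\,dy$ and $\tfrac12\tfrac{d}{dt}\!\int \sigma\,\delta^{ij}(\pave_i D_t\varphi_m)(\pave_j D_t\varphi_m)\,\kve\,dy$, up to commutators $[D_t,\pave]$, $D_t\kve$ and $D_t\sigma$ which are bounded in $L^\infty$ using \eqref{ubd2}, \eqref{llwpvassump} and \eqref{llwpsigassump}. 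The divergence-form term is handled by integrating by parts once in space, which uses $D_t^2\varphi_m|_{\pa\Omega}=0$ and leaves $\int \sigma(\pa_a D_t^2\varphi_m)(D_t(\kve\gve^{ab})\pa_b\varphi)\,dy$, and then once in time, which converts this into $\tfrac{d}{dt}$ of a term bounded by $C\|\pave\varphi(t)\|_{L^2}\|\pave D_t\varphi_m(t)\|_{L^2}$ plus integrals in which each factor carries at most one derivative; the time-boundary term is absorbed, its $\|\pave D_t\varphi_m\|_{L^2}^2$ part into the energy with a small constant and its $\|\pave\varphi\|_{L^2}^2$ part using the basic estimate already proven in Lemma \ref{waveexist1}. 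The remaining terms of $F^1$ are genuinely lower order and are estimated in $L^2(\Omega)$ directly, using $D_t F\in L^2(0,T;L^2(\Omega))$ and Sobolev embedding ($r\ge 7$) to place all coefficients in $L^\infty(\Omega)$; no bound on $\|\varphi\|_{H^2}$ or on $\pa D_t^2\varphi$ is ever needed. Since $\varphi_0\in H^2(\Omega)$ and $\varphi_1\in H^1_0(\Omega)$ the initial value $\|D_t^2\varphi_m(0)\|_{L^2}+\|\sqrt{\sigma}\,\pave D_t\varphi_m(0)\|_{L^2}$ is finite, $\varphi_2=(\sigma\Dve\varphi+F)|_{t=0}$ lying in $L^2(\Omega)$ by the assumptions on $\varphi_0$ and $F$, and Gronwall gives a bound on $\sup_{0\le t\le T}\big(\|D_t^2\varphi_m(t)\|_{L^2}+\|\pave D_t\varphi_m(t)\|_{L^2}\big)$ uniform in $m$.

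Finally I would pass to the limit: by the uniform bound and Alaoglu's theorem, along a subsequence $D_t^2\varphi_m\rightharpoonup D_t^2\varphi$ weak-$*$ in $L^\infty(0,T;L^2(\Omega))$, $\pave D_t\varphi_m\rightharpoonup \pave D_t\varphi$ weak-$*$ in $L^\infty(0,T;L^2(\Omega))$ and $D_t^3\varphi_m\rightharpoonup D_t^3\varphi$ weak-$*$ in $L^\infty(0,T;H^{-1}(\Omega))$, the limit agreeing with the solution of Lemma \ref{waveexist1} by uniqueness; since each $D_t\varphi_m\in H^1_0(\Omega)$ the limit satisfies $D_t\varphi\in L^\infty(0,T;H^1_0(\Omega))$, which together with $D_t^2\varphi\in L^\infty(0,T;L^2(\Omega))$ gives $D_t\varphi\in C([0,T];L^2(\Omega))$ and weak continuity into $H^1_0(\Omega)$. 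To upgrade this to $D_t\varphi\in C([0,T];H^1_0(\Omega))$ I would note that the energy identity for the differentiated equation holds in the limit with both sides continuous in $t$ (as $F^1\in L^1(0,T;L^2(\Omega))$), so $t\mapsto\int \sigma\,\delta^{ij}(\pave_i D_t\varphi(t))(\pave_j D_t\varphi(t))\,\kve\,dy$ is continuous; combined with weak continuity into $H^1_0$ and the lower bound $\sigma\kve\ge c>0$ this forces strong continuity. The main obstacle is the bookkeeping in the second paragraph: the top-order spatial term of the differentiated equation must be organized so that integration by parts in space and time never generates $\pa D_t^2\varphi$ or requires an $H^2$ bound on $\varphi$, neither of which is available at this stage; once the divergence structure is used and the $\tfrac{d}{dt}$-trick applied, everything else is routine Sobolev estimation with the coefficient bounds already in hand.
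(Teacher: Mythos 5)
Your proposal is correct and follows essentially the same route as the paper: differentiate the Galerkin system once in time, test against $D_t^2\varphi_m$, handle the cross term that would otherwise carry $\pave D_t^2\varphi_m$ by writing it as a total time derivative (the paper's $C_1[\varphi_m]$) and absorbing the resulting boundary-in-time term with a small constant, then close by Gr\"onwall and pass to the weak-$*$ limit. Your final step upgrading $L^\infty([0,T];H^1_0)$ to $C([0,T];H^1_0)$ via continuity of the energy plus weak continuity is a standard argument that the paper leaves implicit, so no gap there.
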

\begin{proof}
  Take $n = 1$,
  multiply \eqref{odedot} by $D_t^2 d_{m}^{\,k}$, use $\dot{d}_m^k= D_t d_m^{\,k}$
  and write:
  \begin{equation}
   \pave_j e_k D_t^2 d_{m}^{\,k}
   = \pave_j D_t^2 \varphim
   = D_t^2 \pave_j \varphim
   - (D_t^2 A_{\m j}^a) \pa_a \varphim
   - 2 (D_t A_{\m j}^a)D_t \pa_a \varphim
   \equiv D_t^2 \pave \varphim + R^1_j,
  \end{equation}
  which gives:
  \begin{multline}
   B^1_k D_t^2 d_{m}^{\,k}
   = \int_\Omega \eprimee \delta^{ij} (D_t \pave_i \varphim)
   (\pave_j e_k D_t^2 d_{m}^k)\,  dy
   = \frac{1}{2}\frac{d}{dt}\Big(\int_{\Omega}
   \eprime \delta^{ij} (D_t \pave_i \varphim)( D_t\pave_j \varphim)\,  dy\Big)\\
   -\int_\Omega (D_t \eprimee ) \delta^{ij} (D_t \pave_i \varphim)
   (D_t \pave_j \varphim)\,  dy
   -\int_\Omega \eprimee \delta^{ij} D_t \pave_i\varphim
   \big( (D_t^2 A_{\m j}^a) \pa_a \varphim - 2 (D_t A_{\m j}^a)D_t \pa_a\varphim\big)
   \, dy.
  \end{multline}
  and similarly:
  \begin{multline}
   C^1_k D_t^2 d_{m}^{\,k} =
   \frac{dC_1}{dt}\\
   - \int_\Omega (\pave_i \varphim)\big((D_t^2 \eprimee)
   (D_t \pave_j \varphim)
   - D_t \big( \eprimee (\pave_j \ssm V^{j'})\big)
   (D_t \pave_{j'} \varphim)\big)\, dy
   -\int_\Omega
   \eprime \delta^{ij}
   (D_t\pave_i \varphim)\big(R_j^1
-  (\pave_j \ssm V^{j'})R_{j'}^1 \big)
   \, dy,
   {}
  \end{multline}
  where:
  \begin{equation}
   C_{1} = C_{1}[\varphim]=  \int_\Omega (D_t \eprimee)\delta^{ij}
    (\pave_i \varphim ) (D_t \pave_j\varphim)
    -\delta^{ij}\eprimee (\pave_j \ssm V^{j'})
    (\pave_i\varphim)( D_t \pave_{j'}\varphim) dy.
   {}
  \end{equation}

  By Sobolev embedding and \eqref{dinv}:
  \begin{align}
   ||D_t^2 A_{\m j}^a||_{L^\infty(\Omega)}
   + ||D_t A_{\m j}^a||_{L^\infty(\Omega)}
   + ||D_t \pave_j\ssm V^\ell||_{L^\infty(\Omega)}
   &\leq C(M) ||\xve||_r,\\
   ||D_t \eprime||_{L^\infty(\Omega)}
   + ||\pave D_t \eprime |||_{L^\infty(\Omega)}
   + ||D_t^2 \eprime||_{L^\infty(\Omega)}
   &\leq C ||\eprime||_r.
   {}
  \end{align}
  With $Y^1_{\!\!m}\! = \! ||D_t^2 \varphim||_{L^2(\Omega)} \!+\!
  ||\sqrt{\eprime} D_t \pave \varphim||_{L^2(\Omega)}$, the above calculation
  shows that:
  \begin{equation}
   \frac{d}{dt} \Big((Y^1_{m})^2 -
   C_1[\varphim]\Big)
   \leq C(M, ||\xve||_r) (1 + ||\eprime||_r)\Big(
   Y^1_{m} + Y_{m} + ||F_{m}^1||_{L^2(\Omega)}\Big)Y^1_{m}.
   {}
  \end{equation}
  Multiplying both sides by the integrating factor
  $e^{-C(M, ||\xve||_r)(1 + ||\eprime||_r) t}$, integrating, and then using that
  $C[\varphim] \leq C(M) ||\xve||_r (\delta (Y^1_{m})^2 + \delta^{-1} Y_{m}^2)$
  for any $\delta > 0$, this implies that:
  \begin{equation}
   Y^1_{m}(t)^2
   \leq C(M, ||\xve||_r, ||\eprime||_r) \Big(
   Y^1_{m}(0)^2
    + Y_{m}(t)^2
   + \int_0^t Y_{m}^1(\tau)^2 +
    Y_{m}(\tau)^2 + ||D_t F_{m}^1(\tau)||_{L^2(\Omega)}^2\, d\tau\Big),
   {}
  \end{equation}
  and so by Gr\"{o}nwall's integral inequality, this implies:
  \begin{equation}
   {\sup}_{\,0 \leq t \leq T}
  Y^1_{m}(t) \leq
   C \Big( Y_{m}^1(0)
   +{\sup}_{\,0 \leq t \leq T} Y_{m}(t)
   + \int_0^T (1 + ||\eprime||_r)Y_{m}(\tau) +
   ||F_{m}^1(\tau)||_{L^2(\Omega)}\, d\tau\Big).
   {}
  \end{equation}
  Arguing as in the previous lemma, this implies that the sequence
  $D_t \varphim$ has limit $\dot{\varphi}$ with:
  \begin{equation}
   D_t \dot{\varphi} \in L^\infty(0,T;L^2(\Omega)),\qquad
   D_t^2 \dot{\varphi} \in L^\infty(0, T; H^{-1}(\Omega)).
   {}
  \end{equation}
  Since also $\varphim \to \dot{\varphi}$ in $L^2$
  by the previous lemma, it
  follows that $\varphi = \dot{\varphi}$ and in particular
  we get the first two statements in \eqref{weakdtbds}. To get that
  $D_t^3 \varphi \in L^\infty([0,T]; H^{-1}(\Omega))$, we argue
  as in the previous lemma. Also, since the compatibility conditions
  \eqref{varphicc} hold, we have that $Y^1_{m}(0) \to Y^1(0) =
  ||\varphi_2||_{L^2(\Omega)} + ||\sqrt{\eprimee(0)} \pave \varphi_1||_{L^2(\Omega)}$.
\end{proof}

We can now prove that $\varphi$ has enough regularity that
the elliptic estimates from the Section \ref{elliptic} hold:
\begin{lemma}
  \label{weak2}
 If $\varphi_0\! \in \!H^2(\Omega), \varphi_1 \!\in\! H^1_0(\Omega)$
and \eqref{ubd2},\eqref{lwpkbd} hold,
there is a constant $C_1\! \!=\! C_1( M, K, T)$ so that:
 \begin{multline}
  \esssup_{\,0 \leq t \leq T\,}
  \big( ||\varphi(t)||_{H^2(\Omega)} +
  ||D_t \varphi(t)||_{H^1_0(\Omega)}
  + || D_t^2 \varphi(t)||_{L^2(\Omega)}\big)
  + || D_t^3 \varphi||_{L^2(0,T;H^{-1}(\Omega))}
  \\ \leq
  C_1
  \big( ||\F||_{H^1(0,T; L^2(\Omega)} +
  ||\varphi_0||_{H^2(\Omega)} + ||\varphi_1||_{H^1(\Omega)}\big).
  \label{weaken2}
 \end{multline}
\end{lemma}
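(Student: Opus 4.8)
The plan is to split \eqref{weaken2} into its temporal content (the bounds on $D_t\varphi$, $D_t^2\varphi$, $D_t^3\varphi$) and its spatial content (the bound on $||\varphi(t)||_{H^2(\Omega)}$), handling the first by reprising the proof of Lemma \ref{weakdt} quantitatively and the second by feeding the equation into an elliptic estimate.

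For the temporal part, Lemma \ref{weakdt} already supplies the regularity $D_t\varphi\in C([0,T];H^1_0(\Omega))$, $D_t^2\varphi\in L^\infty([0,T];L^2(\Omega))$ and $D_t^3\varphi\in L^\infty([0,T];H^{-1}(\Omega))$ under the present hypotheses (the assumption $F\in H^1(0,T;L^2(\Omega))$ encodes $D_t\F\in L^2(0,T;L^2(\Omega))$, and the compatibility conditions \eqref{varphicc} hold). Since that lemma is stated only qualitatively, I would re-extract the estimate from its proof: the Galerkin quantity $Y^1_m(t)=||D_t^2\varphim(t)||_{L^2(\Omega)}+||\sqrt{\eprime}\,D_t\pave\varphim(t)||_{L^2(\Omega)}$ obeys a Gr\"onwall inequality whose constants depend only on $M$, $K$, $T$, and whose data satisfy $Y^1_m(0)\to ||\varphi_2||_{L^2(\Omega)}+||\sqrt{\eprime(0)}\,\pave\varphi_1||_{L^2(\Omega)}$. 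By the compatibility relation \eqref{varphicc}, $\varphi_2=\eprime(0)\Dve\varphi_0+F(0)$, so $||\varphi_2||_{L^2(\Omega)}\le C(M)(||\varphi_0||_{H^2(\Omega)}+||F(0)||_{L^2(\Omega)})$ and $||F(0)||_{L^2(\Omega)}\le C||F||_{H^1(0,T;L^2(\Omega))}$ by $H^1(0,T)\hookrightarrow C([0,T])$; combined with the basic bound \eqref{waveunif} on $Y_m$ and passage to the limit as in Lemma \ref{waveexist1}, this controls $\esssup_{0\le t\le T}(||D_t\varphi(t)||_{H^1_0(\Omega)}+||D_t^2\varphi(t)||_{L^2(\Omega)})+||D_t^3\varphi||_{L^2(0,T;H^{-1}(\Omega))}$ by the right-hand side of \eqref{weaken2}.

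For the spatial part, I would use the equation \eqref{appwave1} directly: $\Dve\varphi(t)=\eprime(t)^{-1}(D_t^2\varphi(t)-F(t))$, so $\Dve\varphi(t)\in L^2(\Omega)$ with $||\Dve\varphi(t)||_{L^2(\Omega)}\le C(||D_t^2\varphi(t)||_{L^2(\Omega)}+||F(t)||_{L^2(\Omega)})$, the constant depending only on the lower and upper bounds on $\eprime$. Since $\varphi(t)\in H^1_0(\Omega)$ and $\xve\in H^r(\Omega)$ with $r\ge 7$, Proposition \ref{app:sobdirich} with $\ell=1$ then gives $\pave\varphi(t)\in H^1(\Omega)$ and $||\pave\varphi(t)||_{H^1(\Omega)}\le C(M,K)(||\Dve\varphi(t)||_{L^2(\Omega)}+||\varphi(t)||_{L^2(\Omega)})$; equivalently one may apply the elliptic estimate Lemma \ref{crosstermlem} to the Galerkin approximants $\varphim$, which lie in $H^2(\Omega)\cap H^1_0(\Omega)$, obtain a bound uniform in $m$, and pass to the limit. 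To pass from $\pave\varphi$ to $\varphi$, write $\pa_a\varphi=A^a_{\m i}\pave_i\varphi$ and note that \eqref{ubd2} together with \eqref{dinv} gives $|\pa_y^\ell A^a_{\m i}|\le C(M)$ for $\ell\le 2$, so the $H^1(\Omega)$ product rule yields $||\varphi||_{H^2(\Omega)}\le C(M)(||\pave\varphi||_{H^1(\Omega)}+||\varphi||_{H^1(\Omega)})$. Finally $||\varphi(t)||_{H^1(\Omega)}\le ||\varphi_0||_{H^1(\Omega)}+\int_0^t||D_t\varphi(\tau)||_{H^1(\Omega)}\,d\tau$ is bounded by the right-hand side of \eqref{weaken2} via the temporal part, and $\sup_t||F(t)||_{L^2(\Omega)}\le C||F||_{H^1(0,T;L^2(\Omega))}$; collecting these bounds gives \eqref{weaken2}.

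The main obstacle I anticipate is avoiding a circular use of elliptic regularity: the natural $H^2$ estimate presupposes $\varphi\in H^2(\Omega)$, which is exactly what we want to establish, so one must either invoke Proposition \ref{app:sobdirich}, whose proof builds the interior and tangential regularity from difference quotients without assuming it, or carry the whole argument out for the Galerkin approximants $\varphim$ (automatically in $H^2(\Omega)\cap H^1_0(\Omega)$) and obtain bounds uniform in $m$ before taking the limit. The only other point requiring care is that Lemma \ref{weakdt} is stated qualitatively, so the quantitative temporal bounds must be re-derived from its proof; this is routine since the constants there depend only on $M$, $K$, and $T$.
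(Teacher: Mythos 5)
Your plan is essentially correct, and when you take the second of the two routes you sketch at the end (working with the Galerkin approximants $\varphi^m$ and using Lemma \ref{crosstermlem}) it is exactly the paper's proof: the paper multiplies the Galerkin ODE \eqref{ode} by $\lambda_\ell d_m^\ell$, sums, integrates by parts using $\Delta_y\varphi^m|_{\pa\Omega}=0$, invokes the tailored cross-term estimate \eqref{crossterm}, and then passes to the weak limit, feeding in the temporal bounds already furnished by Lemma \ref{weakdt}.

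Your first route --- applying Proposition \ref{app:sobdirich} with $\ell=1$ to $\varphi$ itself --- is a genuine alternative, but it has a gap you pass over too quickly. You write ``$\Dve\varphi(t)=\eprime(t)^{-1}(D_t^2\varphi(t)-F(t))$, so $\Dve\varphi(t)\in L^2(\Omega)$''; but at this stage of the argument the equation is only known to hold in the weak (distributional) sense, so this identity does not come for free. To make it rigorous you would have to unwind the bilinear form: from the weak formulation one gets $\pa_b(\eprime g^{ab}\pa_a\varphi)=D_t^2\varphi-F$ in $\mathcal{D}'(\Omega)$, then peel off the commutator terms $\pa_b(\eprime g^{ab})\pa_a\varphi\in L^2$, multiply the remaining principal part $\eprime g^{ab}\pa_a\pa_b\varphi$ by the bounded function $\eprime^{-1}$, and only then conclude $\Dve\varphi\in L^2(\Omega)$, after which Proposition \ref{app:sobdirich} applies and is indeed self-contained (its difference-quotient proof does not presuppose $\varphi\in H^2_{loc}$). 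The paper explicitly steps around this --- ``we do not yet know that the wave equation holds almost everywhere so we cannot use the elliptic estimate \eqref{sobell}'' --- which is precisely why it stays at the Galerkin level, where $\varphi^m\in H^2\cap H^1_0$ holds trivially and the multiplier $\lambda_\ell d_m^\ell$ produces the cross pairing $(\Delta_y\varphi^m,\Dve\varphi^m)_{L^2}$ that Lemma \ref{crosstermlem} is designed to exploit. Both routes reach the same estimate; the Galerkin route avoids any distributional bookkeeping, while yours would require spelling out the $\Dve\varphi\in L^2$ step before Proposition \ref{app:sobdirich} can be invoked. The rest of your argument --- extracting the quantitative temporal bound from Lemma \ref{weakdt}'s proof and Gr\"onwall, converting between $\pave$- and $\pa_y$-derivatives via $A^a_{\m i}$ and \eqref{dinv}, and using $H^1(0,T)\hookrightarrow C([0,T])$ for $\sup_t\|F(t)\|_{L^2}$ --- is sound.
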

\begin{proof}
  By the previous lemma, we already have the second, third and
  fourth estimates in \eqref{weaken2} and it just remains to
  bound the first term. The point is that we do not yet know that
  the wave equation \eqref{appwave1} holds almost everywhere
  so we cannot use the elliptic estimate \eqref{sobell}.
  As in \cite{Evans2010},
   will instead prove an elliptic estimate for the
  approximate solution $\varphim$. We let $\{\lambda_\ell\}_{\ell = 0}^\infty$
  be the eigenvalues of $\Delta$ on $H^1_0(\Omega)$. Multiplying both
  sides of \eqref{ode} by $\lambda_\ell d_{(m)}^{\,\ell}$ and summing from
  $\ell = 1$ to $m$, we get that:
  \begin{equation}
   \int_\Omega \eprimee \delta^{ij}
   (\pave_i \varphim) (\pave_j \Delta \varphim)\, dy
   = \int_\Omega (\F - D_t^2 \varphim) \Delta \varphim\, dy.
   {}
  \end{equation}
  Since $\Delta \varphim = 0$ on $\pa \Omega$, we integrate by parts in the
  left-hand side and use the estimate \eqref{crossterm}, which gives:
  \begin{equation}
   ||\pave \varphim||_{H^1(\Omega)}
   \leq C(M) \big( ||D_t^2 \varphim||_{L^2(\Omega)}
   + ||D_t \pave \varphim||_{L^2(\Omega)}
   + ||\pave \varphim||_{L^2(\Omega)} +
   ||\varphim||_{L^2(\Omega)}\big).
   {}
  \end{equation}

Since $\varphi \in H^2(\Omega)$, we now have that $\varphi$
 solves the equation \eqref{appwave1}-
\eqref{appwave3} a.e. in $[0,T]\times\Omega$.
\end{proof}

\begin{proof}[Proof of Proposition \ref{linearwavelwp}]

  We argue by induction. We have just shown that the
  theorem holds for $s = 0,1$.
  We suppose that the theorem holds for
  $s = 1,..., n-1 \leq r-1$ and we now assume that
  the compatibility conditions \eqref{varphicc}
  hold for $s = 0,..., n$.
  By the inductive assumption, there is a unique $\varphi$ satisfying the
  equation \eqref{appwave1}-\eqref{appwave3} in the weak sense
  so that:
  \begin{equation}
    D_t^{n} \varphi \in L^\infty([0,T] ; L^2(\Omega)),\qquad
    D_t^{n - \ell} \pave \varphi \in L^\infty([0,T]; H^{\ell-1}(\Omega)),
    \qquad \ell = 0,...,n.
   \label{llwpind}
  \end{equation}
Moreover, with $\varphim$ as defined above, we have that
$D_t^n \varphim(t) \to D_t^s \varphi(t)$ in $L^2(\Omega)$ and
$D_t^{n-\ell}\pave\varphim(t) \to D_t^{n-\ell} \pave\varphi(t)$ in $H^{\ell-1}(\Omega)$
for $\ell = 0,...,n$ and $0 \leq t \leq T$.
We
multiply \eqref{ode} by $D_t^{n+1} d_{m}^\ell$ and write:
\begin{equation}
 (\pave_j e_k )D_t^{n+1} d^k_{m} = D_t^{n+1} \pave_j \varphim
 - {\sum}_{s = 1}^{n+1} (D_t^s A_{\m j}^a)D_t^{n+1-s}\pa_a \varphim
 \equiv D_t^{n+1} \pave_j \varphim - R^n_j,
 \label{Rjterm}
\end{equation}
and this leads to:
\begin{equation}
 B_{k}^n D_t^{n+1}\! d_{m}^k\!
 = \frac{1}{2} \frac{d}{dt}
 \int_\Omega \!\!\eprime \delta^{ij} (D_t^n \pave_i \varphim) D_t^n \pave_j \varphim
 \,dy
 - \int_\Omega\! (D_t \eprimee)\delta^{ij} (D_t^n \pave_i \varphim)
 D_t^n \pave_j \varphim\,dy
 - \int_\Omega \!\!\eprimee \delta^{ij} (D_t^n \pave_i\varphim )R_j^n\, dy,
\end{equation}
\begin{multline}
 C_{k}^n D_t^{n+1} d_{m}^k\!
 = \frac{dC_n}{dt}
 -\int_\Omega(D_t^{n-1} \pave_i \varphim)\big(
 (D_t^2 \eprime) (D_t^{n}\pave_j \varphim)
 - D_t \big(\eprimee (\pave_j \ssm V^\ell)\big)
 (D_t^{n} \pave_\ell \varphim)\big)\, dy\\
 -\int_\Omega \eprimee \delta^{ij} (D_t^n \pave_i \varphim)\big(R_j^n
 -  (\pave_j \ssm V^{j'}) R_{j'}^n\big)\, dy,
 {}
\end{multline}
where
\begin{equation}
 C_{n} =
 \int_\Omega (D_t \eprimee) \delta^{ij} (D_t^{n-1} \pave_i \varphim)
 (D_t^{n} \pave_j \varphim)
 - \delta^{ij}\eprime(\pave_j \ssm V^\ell)(D_t^{n-1} \pave_i \varphim)
 (D_t^{n}\pave_\ell\varphim)\,dy.
 {}
\end{equation}

Using Lemma \ref{existenceannoying} to control $R^n, F^n$
and arguing as in
the proof of Lemma \ref{weakdt}, we get:
\begin{align}
 \frac{d}{dt} \Big(Y^n_{m}(t)
 -C_n[\varphim]\Big) \leq C(M, ||\xve||_r, ||D_t \xve||_r, ||D_t^2 \xve||_r,
 ||\eprimee||_r)
 \Big( Y^n_{m} + Y_{m}^{n-1} +
 ||F^n_{m}||_{L^2(\Omega)}\Big),
 {}
\end{align}
and so applying the inductive assumption \eqref{llwpind}
and arguing as in the previous lemma, we get the result.
\end{proof}

\begin{lemma}
  \label{existenceannoying}
  Fix $r \geq 7$.
  Let $F^n_{m}$ be $F^n$ (defined in \eqref{Fkterm}) with
  $\varphi$ replaced by $\varphim$ and $R^{n}$ be as
  in
  \eqref{Rjterm}. There are constants $C_r$ depending on
  $
   M, ||\xve||_r, ||D_t \xve||_r, ||D_t^2 \xve||_r,
   ||\eprimee||_r,
  $
  so that if $k \leq r$,
  then:
  \begin{align}
   ||F^n_m||_{L^2(\Omega)}
   +
   ||R_{}^{n}||_{L^2(\Omega)}
   &\leq
   C_r\big(||\varphim||_{n} + ||\pave \varphim||_{n}
   + ||D_t^n F||_{L^2(\Omega)}\big).
   \label{annoyingest}
  \end{align}
\end{lemma}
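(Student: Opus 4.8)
The plan is as follows. Since $\varphim$ is a finite linear combination of the smooth eigenfunctions $e_k$, every quantity on the left of \eqref{annoyingest} is finite; the content of the lemma is that these quantities are bounded \emph{uniformly in $m$}, in terms of $||\varphim||_n$, $||\pave\varphim||_n$ and $||D_t^nF||_{L^2(\Omega)}$ only. The tools are the product rule \eqref{product}, the Sobolev embedding \eqref{regsob}, the formulas \eqref{dinv} for time- and space-derivatives of $A^a_{\m i}$, and the regularity assumptions \eqref{llwpxassump}--\eqref{llwpsigassump} (together with $D_t\xve=\ssm V$, which reduces all derivatives of $\ssm V$ to derivatives of $D_t\xve$). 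In every term the principle is the same: estimate a product by placing in $L^\infty(\Omega)$ the factor carrying the fewest derivatives and the other in $L^2(\Omega)$, splitting into cases according to how the at most $n+1$ derivatives are distributed between the coefficient ($A$, $\eprime$, or $\ssm V$) and $\varphim$.

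First I would estimate $R^n_j$ from \eqref{Rjterm}. The key preliminary step is to rewrite $\pa_a=A^i_{\m a}\pave_i$, so that $D_t^{n+1-s}\pa_a\varphim$ becomes a sum of terms $(D_t^kA^i_{\m a})(D_t^{n+1-s-k}\pave_i\varphim)$; thereby every $y$-derivative of $\varphim$ is converted into a $\pave$-derivative, and since $s\geq1$ at most $n$ time derivatives ever fall on $\pave\varphim$. When $s\leq3$, the factor $D_t^sA^a_{\m j}$ is controlled in $L^\infty(\Omega)$ by a constant of the allowed form using \eqref{dinv}, \eqref{regsob} and \eqref{llwpxassump}--\eqref{llwpvassump}, and the remaining factor is bounded in $L^2(\Omega)$ by $||\pave\varphim||_{n,0}\leq||\pave\varphim||_n$. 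When $s\geq4$ one has $n+1-s\leq n-3$, so the $\varphim$-factor carries at most two $y$-derivatives and at most $n-3$ time derivatives, hence lies in $L^\infty(\Omega)$ by \eqref{regsob} with norm $\lesssim||\pave\varphim||_{n-1}$, while $D_t^sA^a_{\m j}$ is placed in $L^2(\Omega)$, where by \eqref{dinv} it is bounded in terms of $||D_t^s\xve||_{H^1(\Omega)}$ and lower-order quantities, all finite by \eqref{llwpvassump}. This gives the bound for $||R^n||_{L^2(\Omega)}$.

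Next I would estimate $F^n$ term by term, following \eqref{Fkterm}. The term $D_t^nF$ contributes $||D_t^nF||_{L^2(\Omega)}$ directly. For $\sum_{s=2}^n(D_t^s\eprime)(D_t^{n-s}\Dve\varphim)$, I would expand $\Dve=\delta^{ij}\pave_i\pave_j$ via \eqref{laplsm} and distribute $D_t^{n-s}$; writing $\pave^2\varphim=\pave\big(D_t^{n-s}\pave\varphim\big)+(\text{commutators})$ shows $||D_t^{n-s}\Dve\varphim||_{L^2(\Omega)}\lesssim C(M,||\xve||_r)\,||\pave\varphim||_{n-s+1}\leq||\pave\varphim||_{n-1}$ since $s\geq2$; the companion $D_t^s\eprime$ is in $L^\infty(\Omega)$ by \eqref{llwpsigassump} and \eqref{regsob} whenever $s$ is not too large, and for the few values of $s$ where its Sobolev index drops below $L^\infty$ the companion $\Dve$-factor then carries only $0$ or $1$ time derivatives and so can itself be placed in $L^\infty(\Omega)$ via \eqref{regsob} (using $||\pave\varphim||_n$, with $n\geq3$) against $D_t^s\eprime\in L^2(\Omega)$. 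The sum $\sum_{s=1}^n\delta^{ij}(D_t^sA^b_{\m j})(D_t^{n-s}\pa_b\pave_i\varphim)$ is handled exactly as $R^n$ above, again after rewriting $\pa_b\pave_i\varphim=\pa_b(A^a_{\m i}\pa_a\varphim)$ in terms of $\pave$-derivatives. Finally, $(\pave_j\eprime+\pave_jD_t\eprime)D_t^n\pave_i\varphim$ and $(\pave_{j'}\pave_j\ssm V^{j'})D_t^{n-1}\pave_i\varphim$ each consist of a coefficient in $L^\infty(\Omega)$ by \eqref{regsob} (using $\eprime,D_t\eprime\in H^r(\Omega)$, resp.\ $\pave^2\ssm V=\pave^2D_t\xve$ with $D_t\xve\in H^r(\Omega)$ and $r\geq7$) times $D_t^k\pave_i\varphim$ with $k\leq n$, bounded in $L^2(\Omega)$ by $||\pave\varphim||_{n,0}\leq||\pave\varphim||_n$.

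The argument is entirely routine; there is no conceptual obstacle, only bookkeeping. The one point requiring care — and the reason the statement is phrased with $||\varphim||_n+||\pave\varphim||_n$ rather than a single Sobolev norm of $\varphim$ — is to ensure that, after converting every $\pa_y$-derivative of $\varphim$ into a $\pave$-derivative, no term ever demands $||D_t^{n+1}\varphim||_{L^2(\Omega)}$ or $||\varphim||_{n+1}$: this holds because in each of the six terms of $F^n$, and in each term of $R^n$, at least one derivative (in time or in space) necessarily falls on the coefficient $A$, $\eprime$, or $\ssm V$ rather than on $\varphim$. The second point to verify is that all the coefficient norms arising — up to $D_t^{n+1}\pa_y\xve$, $D_t^n\pave\eprime$, and $\pave^2\ssm V$ — are finite and bounded by constants of the allowed form; this follows from \eqref{dinv}, the relation $D_t\xve=\ssm V$, and the hypotheses \eqref{llwpxassump}--\eqref{llwpsigassump} together with $r\geq7$.
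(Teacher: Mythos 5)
Your proof is correct and takes essentially the same approach as the paper: both estimate $F^n_m$ and $R^n$ term by term by splitting on how the $n+1$ derivatives are distributed between the coefficient ($A$, $\eprime$, $\ssm V$) and the $\varphi_m$ factor, placing the factor carrying fewer derivatives in $L^\infty(\Omega)$ via Sobolev embedding and the other in $L^2(\Omega)$. The paper differs only in minor bookkeeping---it defers the conversion $\pa_b\varphi_m=A^j_{\m b}\pave_j\varphi_m$ to the end via the product rule \eqref{product} rather than rewriting at the start, and splits at $s\le r-3$ versus $s\ge r-2$ rather than your $s\le 3$ versus $s\ge 4$---but the argument, including the observation that the case $s=n+1$ in $R^n$ is what forces $||D_t^2\xve||_r$ into the constants, is the same.
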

We remark that unlike the estimates in Section \ref{waveests}, these estimates
depend on $||D_t^2 \xve||_{r}$. This is because the estimates in that section
are all in terms of $\pave \varphi$ i.e. we estimate
$||D_t^k \pave \varphi||_{L^2(\Omega)}$, but in the above proof we are forced to
consider what amounts to $||\pave D_t^k \varphim||_{L^2(\Omega)}$. The error
term this generates can be dealt with since in the application we have in
mind, $D_t^2 \xve = D_t \ssm V$ behaves like $\pave \varphi$.
\begin{proof}
  First, we control the first two terms in $F^n$ with $\varphi$ replaced
  by $\varphim$. When $s \leq r-2$, we
  have:
  \begin{equation}
   ||D_t^s \eprime||_{L^\infty(\Omega)} ||D_t^{n-s}\Dve \varphim||_{L^2(\Omega)}
   \leq ||\eprimee||_r ||D_t^{n-s} \Dve \varphim||_{L^2(\Omega)}.
   {}
  \end{equation}
  To control this second term, we use the commutator estimate \eqref{vectcomm}:
  \begin{equation}
   ||D_t^{n-s} \Dve \varphim||_{L^2(\Omega)}
   \leq C(M, ||\xve||_r) ||D_t^{n-s} \pave \varphim||_{H^1(\Omega)}.
   {}
  \end{equation}
  Since $s \geq 2$, we have $||D_t^{n-s} \pave \varphim||_{H^1(\Omega)}
  \leq ||\pave \varphim||_{n-1}$. If instead $s = r-1, r$, the result is bounded
  by:
  \begin{equation}
   ||D_t^s\eprimee||_{L^2(\Omega)} ||D_t^{n-s} \Dve \varphim||_{L^\infty(\Omega)}
   \leq C ||\eprimee||_r ||D_t^{n-s} \Dve \varphim||_{H^2(\Omega)},
   {}
  \end{equation}
  and so again applying the commutator estimate, this term is bounded
  by the right-hand side of  \eqref{annoyingest} provided
  $||\pave \varphim||_{n - s + 3} \leq
  ||\pave \varphim||_{n}$, and this follows since $r \geq n \geq s$,
  $s = r-1, r$ and $r \geq 7$.

  We now control the remaining terms from the definition of
  $F^n\!\!$. The last two terms are clearly bounded by the
  right-hand side of \eqref{annoyingest} so we just bound
  the terms in the sum. When $s \!\leq\! r\!-\!3$, we bound the terms by:
  \begin{equation}
   ||D_t^s A_{\m j}^b||_{L^\infty} ||D_t^{n-s} \pa_b \pave_i\varphim||_{L^2(\Omega)}
   \leq ||A_{\m j}^b||_{s+2} ||D_t^{n-s} \pave \varphim||_{H^1(\Omega)}
   \leq C(M)||\xve||_r ||\pave \varphim||_{n-s+1},
   {}
  \end{equation}
  and since $s \geq 2$, we have $n-s+1 \leq n-1$ as required.

  We now consider the remaining cases $r\!-\!2\!\leq \!s\!\leq\!r$.
  In these cases we instead bound the summands by:
  \begin{equation}
   ||D_t^s A_{\m j}^b||_{L^2(\Omega)}
   ||D_t^{n-s} \pa_b \pave_i \varphim||_{L^\infty(\Omega)}
   \leq C(M, ||\xve||_r)||D_t \xve||_r ||D_t^{n-s} \pave \varphim||_{H^3(\Omega)},
   {}
  \end{equation}
  and since in this case $n -s + 3 \leq n-1$ (because
  $r-2 \leq s \leq n$ and $r \geq 7$), this second factor is
  bounded by the right-hand side of \eqref{annoyingest} as well,
  and this completes the proof of the bounds for $F^n_{m}$.

  We now control $R^{n}_{}$. This follows in the same way as the bounds
  we have just proved but note that we also need to consider the
  case $s = r+1$. This is the reason that
  $||D_t^2 \xve||_r$ enters into the estimates. When $s \leq r-3$ we argue as above and the result is
  that:
  \begin{equation}
   ||D_t^s A_{\m j}^b||_{L^\infty(\Omega)}
   ||D_t^{n+1-s} \pa \varphim||_{L^2(\Omega)}
   \leq C ||A_{\m j}^b||_{s+2} ||\pa_b \varphim||_{n+1-s}
   \leq C(M, ||\xve||_r) ||\pa_b \varphim||_{n+1-s}.
   {}
  \end{equation}
  The remaining cases are $s = r-2,r-1,r,r+1$ and for these we bound the
  result by:
  \begin{equation}
   ||D_t^s A_{\m j}^b||_{L^2(\Omega)}
    ||D_t^{n+1-s} \pa_b \varphim||_{L^\infty(\Omega)}
    \leq C(M) ||D_t^2 \xve||_r
    ||\pa_y \varphim||_{n},
   {}
  \end{equation}
  where in the last step we used that $n+1-s \leq n$ when
  $s \geq r-2$ for $r \geq 7$. We now need to re-write
  $\pa_b \varphim = A_{\m b}^j \pave_j \varphim$ and we note that
  by similar arguments to the above we have:
  \begin{equation}
   ||A_{\m b}^j \pave_j \varphim||_n \leq C(M, ||\xve||_r)
   ||D_t \xve||_r ||\pave \varphim||_n.
   {}\tag*{\qedhere}
  \end{equation}
 \end{proof}

\subsection{Existence for a nonlinear wave equation}
\label{wavelwp}
We assume that
\eqref{eq:closetoincompressible2} hold and
that $e\!:\! (0,\infty)\! \to\! \R$ is a  function satisfying
\eqref{eq:closetoincompressible2}.
In this section we prove that the nonlinear wave equation:
\begin{align}
 e'(\varphi) D_t^2 \varphi - \Dve \varphi &= \F
\textrm{ in } [0,T]\times\Omega,\qquad\text{with}\qquad
 \varphi = 0 \textrm{ on } [0,T] \times \pa\Omega,\label{nl1}\\
 \varphi(0,y) &= \varphi_0(y) ,\qquad D_t \varphi(0,y) = \varphi_1(y)\textrm{ on } \Omega,
 \label{nl3}
\end{align}
has a unique strong solution $\varphi$ satisfying \eqref{llwpreg}.
We will construct a solution  so that for some $L=L[\varphi]<\infty$:
\begin{equation}
  {\tsum}_{k + |J| \leq 3} |D_t^k \pa_y^J \pave \varphi|
  +|D_t^k \varphi| \leq L, \quad \text{ in } [0,T]\times \Omega.
 \label{appendixLbd}
\end{equation}
We assume that $\F \!= \!\F_1\! + \F_2$ where
$\F_1\! = \F_1(t,y)$ is a function and $\F_2 \!= \F_2[\varphi,D_t\varphi](t,y)$
is a functional so that there are continuous functions $N_{\!s}\!=\!N_{\!s}(L[\varphi],\!||\varphi||_{s-\!1},||\varphi||_{s,0})$,
$ N_{\!s}'\!=\!N_{\!s}'(L[\varphi],\!||\varphi||_{s-\!1}\!)$
so that:
\begin{equation}
 ||D_t^s\! F_2[\varphi,D_t\varphi]||_{L^2(\Omega)}\!
 \leq N_{s}(||D_t^{s+1} \! \varphi||_{L^2(\Omega)}\!
 + ||\varphi||_{L^2(\Omega)}),
 \qquad
 ||\F_2[\varphi,D_t\varphi]||_{s\!-\!1}\!
 \leq N_s'||\varphi||_{s}.
  \label{F2estapp}
\end{equation}
 We will additionally assume that if $\varphi,\psi$ satisfy \eqref{appendixLbd}
 there are continuous functions $\overline{N}_s, \overline{N}_s^{\prime}$
 depending on $L[\varphi],L[\psi], ||\varphi||_{s}, ||\psi||_{s}$ with $\overline{N}_s$
 depending also on $||\varphi||_{s+1,0}, ||\psi||_{s+1,0}$ so that with $\dot{\varphi}\!=\!D_t \varphi$, $\dot{\psi}\!=\!D_t \psi$:
 \begin{equation}
  ||D_t^s \F_2[\varphi,\dot{\varphi}] - D_t^s \F_2[\psi,\dot{\psi}]||_{L^2(\Omega)}
  \leq \overline{N}_s||\varphi-\psi||_{s+1,0},
  \qquad\quad
  ||\F_2[\varphi,\dot{\varphi}] - \F_2[\psi,\dot{\psi}]||_{s-1}
  \leq \overline{N}_s^{\prime} ||\varphi-\psi||_{s}.
  \label{F2diffsapp}
\end{equation}
 In Section \ref{enthsec} we  take $F_2\! = e''(\varphi) (D_t\varphi)^2\!
 + \rho[\varphi]$, which satisfies these estimates.
 The energies we use are:
\begin{equation}
 Y_s(t) = \Big(\frac{1}{2}{\sum}_{ k \leq s}
 \int_\Omega e'(\varphi) |D_t^{k+1 }\varphi|^2
 + \delta^{ij}(D_t^k \pave_i  \varphi)(D_t^k\pave_j  \varphi)
 \, \kve dy\Big)^{1/2}.
\end{equation}

The initial data $\varphi_0, \varphi_1$
satisfy the compatibility conditions to order $s$ for the problem \eqref{nl1}-\eqref{nl3}
if there is a formal power series solution $\widehat{\varphi} = \sum t^k \varphi_k$
to \eqref{nl1} which additionally satisfies
\begin{equation}
 \varphi_k \in H^1_0(\Omega), \qquad
 k = 0,..., s
 \label{nlincc'}
\end{equation}

\begin{theorem}
 \label{nllwpthm}
 Fix $r \!\geq\! 7$ and suppose that
 $V\! \!\in\! \X^{r+{}_{\!}1}(T_1)$ for some $T_1 \!>\!0$ satisfies
 \eqref{admissible} and that the bound \eqref{ubd2} holds.
 Take $K$ so that
 \begin{equation}
   {\sup}_{\,0 \leq t \leq T_1\,}\big( ||\xve(t)||_{r} + ||V(t)||_{r}
   + ||D_t V(t)||_{r} +
  ||D_t \F_1(t)||_{r-1} + ||\F_1(t)||_{r-1}\big) \leq K.
 \end{equation}
 Suppose that
 \eqref{llwpxassump}-\eqref{llwpfassump}, \eqref{eq:closetoincompressible2}
 and the compatibility conditions \eqref{nlincc'} hold for
 some $s \leq r$.
 Let $L_0$ satisfy:
 \begin{equation}
  {\tsum}_{k + |J| \leq 3}
  ||\pa_y^L \pave \varphi_k||_{L^\infty(\Omega)}
  + ||\varphi_k||_{L^\infty(\Omega)} \leq L_0.
  {}
 \end{equation}
 There is a continuous function $G_r'$ so that if $T$ satisfies:
 \begin{equation}
  T G_r'(M, L_0, L_0^{-1}, Y_r(0),  K, T_1) \leq 1,\quad \text{and} \quad
  T \leq T_1,
  {}
 \end{equation}
  the problem
   \eqref{nl1}-\eqref{nl3} has a unique solution
 $\varphi$ satisfying:
 \begin{equation}
   D_t^s \varphi \in L^\infty([0,T]; L^2(\Omega)),\qquad
  D_t^{s+1-\ell} \pave \varphi \in L^\infty([0,T]; H^{\ell-1}(\Omega)),
 \qquad  \ell = 0,..., s+1,
  {}
 \end{equation}
 and there are constants $\C_s$ depending on
$M, L_0,\,Y_{s}(0), K$, and $T$
  so that the following estimates hold:
 \begin{equation}
  Y_s(t) \leq \C_s \Big( Y_s(0) + \int_0^t ||\F_1(\tau)||_{s,0}
  + ||\F_1(\tau)||_{s-1}
  \, d\tau\Big),\qquad
  ||\pave \varphi(t)||_{s} \leq  \C_s \big( Y_s(t)
  + ||F_1(t)||_{s-1}\big),
  \label{nlwaveenest}
 \end{equation}
 for $0 \leq t \leq T$ and
 \begin{equation}
  {\tsum}_{k + \ell \leq 2} |\pa^\ell D_t^k \varphi(t,y)|
  \leq 2 L_0, \qquad \textrm{ in } [0,T] \times \Omega.
  \label{Lbdapp}
 \end{equation}
\end{theorem}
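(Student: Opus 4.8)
The plan is to solve \eqref{nl1}--\eqref{nl3} by a Picard iteration in which the quasilinear coefficient $e'(\varphi)$ is frozen at the previous step, so that at every stage one only has to invoke the linear existence theory of Section~\ref{linwaveexist}. First I would choose $\varphi^{(0)}(t,y)=\sum_{k\le s+1}\varphi_k(y)\,t^k/k!$, with the $\varphi_k$ the coefficients of the formal power series solution appearing in the compatibility conditions \eqref{nlincc'}; this guarantees $D_t^k\varphi^{(0)}|_{t=0}=\varphi_k$ and hence that the compatibility conditions to order $s$ hold for the linear problem at each step. Given $\varphi^{(n)}$, define $\varphi^{(n+1)}$ to be the solution of the linear wave equation
\begin{equation}
 D_t^2 \varphi^{(n+1)} - \frac{1}{e'(\varphi^{(n)})}\Dve \varphi^{(n+1)}
 = \frac{1}{e'(\varphi^{(n)})}\bigl( \F_1 + \F_2[\varphi^{(n)}, D_t\varphi^{(n)}]\bigr),
\end{equation}
with the same initial data and the Dirichlet boundary condition. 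By \eqref{eq:closetoincompressible2} the coefficient $\sigma^{(n)}=1/e'(\varphi^{(n)})$ satisfies $0<1/\delta_0\le \sigma^{(n)}\le 1/c_1$, and by Lemma~\ref{gest2} its time derivatives are controlled in $H^{r-k}(\Omega)$ by norms of $\varphi^{(n)}$; thus the hypotheses of Proposition~\ref{linearwavelwp} are met, producing $\varphi^{(n+1)}$ with the regularity \eqref{llwpreg} and the energy and elliptic bounds \eqref{appwaveen}--\eqref{appwaveell2}, where the contribution of $\F_2[\varphi^{(n)}]$ to the forcing is handled via \eqref{F2estapp}.

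Next I would show that, for $T$ small depending only on the data, the iterates stay in a fixed ball. The linear energy estimate \eqref{appwaveen} together with \eqref{F2estapp} gives $Y_s(\varphi^{(n+1)})(t)\le \C_s\bigl(Y_s(0)+\int_0^t \|\F_1\|_{s,0}+\|\F_1\|_{s-1}+N_s(\ldots)\,d\tau\bigr)$, so as long as $\varphi^{(n)}$ obeys a uniform bound on the relevant $\X^{s+1}$-type norm, the same holds for $\varphi^{(n+1)}$ after shrinking $T$. The pointwise bound \eqref{appendixLbd}, which is what makes the constants in Proposition~\ref{linearwavelwp} (through $\sigma^{(n)}$) uniform, is propagated exactly as in the bootstrap argument of Corollary~\ref{bootstrapped}: Sobolev embedding and integration in time turn a bound at $t=0$ into a bound on $[0,T]$, at the cost of a smallness condition $T\,Q_r(M,L_0,\dots)\le 1$. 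Running this uniformly in $n$ keeps $L[\varphi^{(n)}]\le 2L_0$ and $Y_s(\varphi^{(n)})$ bounded, which feeds back into the coefficient estimates and closes the a priori bound; the conclusions \eqref{nlwaveenest} and \eqref{Lbdapp} then follow from these uniform estimates and the limit passage below.

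For convergence I would estimate the difference $\psi^{(n)}=\varphi^{(n+1)}-\varphi^{(n)}$, which has zero data and satisfies
\begin{equation}
 e'(\varphi^{(n)}) D_t^2 \psi^{(n)} - \Dve \psi^{(n)}
 = \bigl( \F_2[\varphi^{(n)}] - \F_2[\varphi^{(n-1)}]\bigr)
 + \bigl( e'(\varphi^{(n-1)}) - e'(\varphi^{(n)})\bigr) D_t^2 \varphi^{(n)}.
\end{equation}
Applying Lemma~\ref{dtphidiffnew} in the special case $\xveu=\xvew=\xve$ (only the coefficients $e'(\varphi^{(n)})$ and $e'(\varphi^{(n-1)})$ differ), together with the difference bounds \eqref{F2diffsapp} for $\F_2$ and the elementary estimate $\|e'(\varphi^{(n)})-e'(\varphi^{(n-1)})\|\lesssim\|\varphi^{(n)}-\varphi^{(n-1)}\|$ (same type of argument as Lemma~\ref{gest2}), one obtains $\sup_{[0,T]}$ of the difference energy for $\psi^{(n)}$ bounded by $T\,D_s$ times the corresponding norm of $\psi^{(n-1)}$, measured one order lower; hence the sequence is Cauchy in the one-order-weaker norm once $T$ is small. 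Since the top norms are uniformly bounded, interpolation gives a limit $\varphi$ with the stated regularity, and passing to the limit in the equation for $\varphi^{(n+1)}$ (strong convergence in the weaker norm, weak-$\ast$ compactness in the stronger one, continuity of $e'$) shows $\varphi$ solves \eqref{nl1}--\eqref{nl3}; uniqueness follows from the same difference estimate applied to two solutions. The main obstacle is precisely the loss of one derivative in the last term of the difference equation, where $D_t^2\varphi^{(n)}$ sits at top order: this genuine quasilinear loss forces the contraction into a weaker norm, so one must juggle the bootstrap constraint on $T$ coming from \eqref{appendixLbd} against the contraction constraint, and one must keep the compatibility conditions \eqref{nlincc'} satisfied along the iteration so that Proposition~\ref{linearwavelwp} applies at each step.
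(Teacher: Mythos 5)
Your proposal follows essentially the same route as the paper's proof: the same Picard iteration with $e'(\varphi^{(n)})$ frozen, the same choice of polynomial initial iterate so that compatibility is preserved (the paper verifies $\varphi^\nu_k = \varphi_k$ for all $\nu$), the same bootstrap to propagate the pointwise bound uniformly in $n$, and the same observation that the term $(e'(\varphi^{(n-1)})-e'(\varphi^{(n)}))D_t^2\varphi^{(n)}$ costs a derivative so the contraction only closes in the weaker $X^0_T$ norm. The one cosmetic difference is that you invoke Lemma~\ref{dtphidiffnew} for the difference estimate whereas the paper applies the basic $s=0$ linear energy bound \eqref{appwaveen} directly to $\psi$ together with \eqref{F2diffsapp} and the product estimate, but these amount to the same thing here.
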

We will construct a solution to \eqref{nl1}-\eqref{nl3} by considering the
sequence $\varphi^\nu$, $\nu = 0,1,...$, defined by:
\begin{align}
  \varphi^0 &= {\sum}_{k = 0}^s \varphi_k {t^k}\!/{k!} ,\\
  D_t^2 \varphi^\nu
  -{e'(\varphi^{\nu-1})^{-1}} \Dve \varphi^\nu &= {e'(\varphi^{\nu-1})^{-1}}
  \F^{\nu-1},
 \textrm{ in } [0,T] \times \Omega,\quad\text{with}\quad
  \varphi^\nu = 0\textrm{ on } [0,T] \times \pa \Omega,
  \label{nlina1}\\
  \varphi^\nu(0,y) &= \varphi_0(0, y),\qquad
  D_t\varphi^\nu(0,y) = \varphi_1(0,y),
 \textrm{ on } \Omega,
 \label{nlina3}
\end{align}
with $\F^{\nu-1} = \F_1 + \F_2[\varphi^{\nu-1}]$
and where $\widehat{\varphi} = \sum t^k \varphi_k$ is a given formal power series
solution to \eqref{nl1}.
Note that with this choice of $\varphi^0$, we have that
$D_t^j \varphi^0|_{t = 0} = \varphi_j^\ve$, $j \leq s$.
This system also has compatibility conditions which must
be satisfied to construct a sufficiently regular solution.
Given $\varphi^{\nu-1}$,
let $\widehat{\varphi^\nu} = \sum t^k \varphi^\nu_k$ be a formal power series solution
to \eqref{nlina1}. Taking time derivatives of \eqref{nlina1} and
restricting to $t = 0$, we see that the coefficients $\varphi^\nu_k$ must satisfy:
\begin{equation}
 \varphi^\nu_k = \big( {e'(\varphi^{\nu-1})^{-1}}
 \big(D_t^{k-2} \Dve \widehat{\varphi^\nu} + D_t^{k-2}\F^{\nu-1}
 + \G_k[\widehat{\varphi^\nu}, \varphi^{\nu-1}]\big)\big|_{t = 0},
 \label{nlcompatdef}
\end{equation}
where we are writing:
\begin{equation}
 \G_k[\widehat{\varphi^\nu}, \varphi^{\nu-1}] =
 D_t^{k-2} \big( e'(\varphi^{\nu-1}) D_t^2 \widehat{\varphi^\nu}
 \big) - e'(\varphi^{\nu-1}) D_t^{k}\widehat{\varphi^\nu}.
 {}
\end{equation}
The compatibility conditions for the system \eqref{nlina1}-\eqref{nlina3}
are then the requirement that:
\begin{align}
 \varphi^\nu_k \in H^1_0(\Omega), k = 0,...,s
 \label{nlincc}
\end{align}
Since $\varphi^\nu_0 = \varphi_0,
\varphi^\nu_1 = \varphi_1$ and  both of
these sequences are defined recursively, from
\eqref{varphicc} and \eqref{nlcompatdef} it follows that
$\varphi^\nu_k = \varphi_k$ for all $\nu \geq 0$ and so
the compatibility conditions for the approximate problem
\eqref{nlina1}-\eqref{nlina3} are satisfied so long
as the compatibility conditions \eqref{nlincc'} for the nonlinear problem
\eqref{nl1}-\eqref{nl3} hold.

We now argue by induction to show that the above problem
has a unique solution with bounds that hold uniformly
in $\nu$. Let $X^r_T$ be closure of $C^\infty([0,T]; C^\infty(\Omega))$
with respect to the norm:
\begin{equation}
 ||\varphi||_{X^r_T} = {\sup}_{\,0 \leq t \leq T}
 {\sum}_{s =0}^{r} ||D_t^{s+1} \varphi(t)||_{L^2(\Omega)}
 + ||D_t^{s}\pave \varphi(t)||_{H^{r-s}(\Omega)}.
 {}
\end{equation}
Assume that for $\nu \geq 1$ we have a solution
$\varphi^{\nu-1} \in X^r_T$ which moreover satisfies
 \eqref{Lbdapp}.
 Writing:
\begin{equation}
 Y^{\nu-1}_s(t)
 = {\sum}_{k \leq s} \Big(\frac{1}{2} \int_\Omega
 e'(\varphi^{\nu-2}) |D_t^{k+1} \varphi^{\nu-1}(t)|^2
 + |D_t^s \pave \varphi^{\nu-1}|^2\, \kve dy\Big)^{1/2},
 {}
\end{equation}
by Proposition \ref{linearwavelwp},
we have the estimate:
\begin{equation}
 Y^{\nu-1}_s(t) \leq \C_s \Big(Y_s(0)
 + \int_0^t ||D_t^{s-1} \F_1(\tau)||_{L^2(\Omega)}\, d\tau\Big),
 \qquad
 ||\pave \varphi^{\nu-1} ||_s
 \leq \C_s \big( Y_s + ||\F_1||_{s-1}\big),
 \quad s = 0 ,..., r.
 \label{iterateen}
\end{equation}
where here $\C_s$ depends on $M, Y_{s-1}(0)$, $K$ and
$\sup_{0 \leq t \leq T} ||e'(\varphi^{\nu-1}(t))||_{r}$.
Note that we are using that $Y_s^{\nu-1}(0) = Y_s(0)$
in \eqref{iterateen}.
By these estimates, \eqref{F2estapp}
and Lemma \ref{gest2} to control $e'(\varphi^{\nu-1})$, we have:
\begin{equation}
 ||\F^{\nu-1}||_{s, 0}
 + ||\F^{\nu-1}||_{s-1} + ||\eprime(\varphi^{\nu-1})||_{s,0}
 + ||\eprime(\varphi^{\nu-1})||_{r}
 \leq C_s(M, L_0, Y_r(0), K).
\end{equation}
By
\eqref{linearwavelwp},
there is a unique $\varphi^\nu\! \in X^r_T$ satisfying
\eqref{nlina1}-\eqref{nlina3}
and so that \eqref{llwpreg} holds. By the above estimates
and the inductive assumption we also have:
\begin{equation}
 Y_s^\nu(t) \leq \C_s \Big( Y_s(0)
 + \int_0^T ||\F_1(\tau)||_{s,0} +
 ||\F_1(\tau)||_{s-1} \, d\tau\Big),
 \label{ysnu}
\end{equation}
where $\C_{{}_{\!}s} \!=_{\!}\C_{{}_{\!}s}(\!M_{\!},_{\!} L_0, \!Y_{\!r}(_{\!}0_{\!}),_{\!} K_{\!})$ and we again are using that $Y_{\!s}(0)$ is independent of $\nu_{\!}$.
We note that by Sobolev
embedding, the estimate \eqref{ysnu} and the estimate \eqref{dtpsibd},
just as in the proof of Corollary \ref{bootstrapped}, we have that:
\begin{equation}
 L^{\!\nu}\!(t) \equiv \!{\tsum}_{k +_{\!} |_{\!}J| \leq 3} |\pa_y^J\! D_t^k \pave \varphi^\nu\!(t,_{\!}y)|
 + |D_{\!t\,}^k \!\varphi^\nu\!(t,_{\!}y)|
 \leq L_0 +  T P_0^\nu,\qquad\text{where}\quad P_0^\nu \!\equiv P_0^{\nu\!}(M_{\!}, {\sup}_{0 \leq t \leq T} L^{\!\nu\!}(t),\! Y_5^{\!\nu\,}\!(_{\!}0_{\!}),_{\!}K),
\end{equation}
and so a continuity argument
(see the proof of Corollary \ref{bootstrapped}) gives that $\sup_{0 \leq t \leq T}
L^\nu(t) \leq 2L_0$
provided that $T (2L_0)^{-1}P_0^\nu(M, 2L_0, Y_5^\nu(0), K) \leq 1$. Note
that in fact $P_0$ is independent of $\nu$ since $Y_5^\nu(0)$ is.

The sequence $\varphi^{\nu}$ is therefore uniformly bounded
in $X_{T_0}^r$ for a fixed  $T_0\! > \! 0$,
and therefore there is a $\varphi\!\in\!  X_{T_0}^r$ so that $\varphi^\nu\! \!\!  \to \! \varphi$
weakly. We now show that there is $\!T^*\!\! =\! T^*(M, L_0, Y_{\!r}(0), K) \!\leq T_0$
so that if $T_1\!\! \leq\! T^*\!$ then
\begin{equation}
 ||\varphi^{\nu} - \varphi^{\nu-1}||_{X^0_{T_1}}
 \leq 2^{-1} ||\varphi^{\nu-1} - \varphi^{\nu-2}||_{X^{0}_{T_1}}.
 \label{cauchy}
\end{equation}
Assuming that this holds for the moment, it follows that the
sequence $\varphi^\nu$ is a Cauchy sequence in $X_{T_1}^0$ and so converges
strongly to
some $\tilde{\varphi} \in X_{T_1}^{0}$. This limit has to coincide
with the $\varphi$ above and in particular this shows that the
$\varphi^\nu$ converges strongly to $\varphi$, and so $\varphi$ satisfies
the nonlinear equation \eqref{nl1}.

To prove \eqref{cauchy}, we  take $T^* \!\leq \! T_0$ and
set $\psi = \varphi^\nu \!- \varphi^{\nu-1}\!$ and note that with $\F^{\nu,\nu-1}\! = \F_2^\nu\! - \F_2^{\nu-1}\!$ we have:
\begin{equation}
 e'(\varphi^\nu) D_t^2 \psi - \Dve \psi
 = \F^{\nu,\nu-1} + (e'(\varphi^\nu) - e'(\varphi^{\nu-1})) D_t^2 \varphi^{\nu-1},
 \quad\text{with}\quad \psi|_{[0,T]\times\pa\Omega} = 0, \,
 \quad \psi|_{t = 0} = D_t\psi|_{t = 0} = 0.
 {}
\end{equation}
By the estimates
\eqref{F2diffsapp},
the estimate \eqref{appwaveen}, and the product estimate \eqref{product},
we have that:
\begin{equation}
 Y_0^{\nu,\nu-1}(t)\equiv \Big(\frac{1}{2}\int_\Omega e'(\varphi^\nu) |D_t
 \psi|^2 + | \pave \psi|^2\, \kve dy \Big)^{1/2} \leq \C_0
 \int_0^T ||\varphi^{\nu-1} - \varphi^{\nu-2}||_{1}\, dt
 \leq \C_0 T ||\varphi^{\nu-1} -\varphi^{\nu-2}||_{X_T^0},
 {}
\end{equation}
where $\C_{{}_{\!}s} \!=_{\!}\C_{{}_{\!}s}(\!M_{\!},_{\!} L_0, \!Y_{\!r}(_{\!}0_{\!}),_{\!} K_{\!})$.\!
Since $||\varphi^{\nu \!}\!\!- \!\varphi^{\nu\!-\!1\!}||_{X^0_{T}} \! \!\lesssim
_{\!}\sup_{\,0 \leq t \leq T\!} Y_0^{\nu_{\!},\nu-\!1\!}\!\!$,
 taking $T$ sufficiently small gives
\eqref{cauchy}.

\subsection{Proof of estimates for the wave equation}\label{sec:F3}
\begin{lemma}
  \label{dtWF3}
  For each $s \geq 0$,
  there is a continuous function $G'_s(t) \!=\! G'_s( M,
  ||\xve(t)||_s, ||V(t)||_{\X^{s}},W_{s-1}(t))$ and a polynomial $P$ so that
  if \eqref{phiwave}-\eqref{Lassumpwave} hold, then:
  \begin{equation}
   \frac{d}{dt} W_s\leq G'_s \Big(
   W_s + ||F_1||_{s,0} + ||F_1||_{s-1} + ||V||_{\X^{s+1}}  +
    P(L, W_{s-1}, ||F||_{s-2})W_{s}\Big).
   \label{dtWestF}
  \end{equation}
\end{lemma}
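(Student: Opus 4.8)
The plan is to run the standard variable-coefficient energy argument for the wave equation \eqref{phiwave}, differentiated $k$ times in $D_t$ for each $k\le s$. First I would apply $D_t^k$ to $\eprime D_t^2\varphi-\Dve\varphi=\F$ and rewrite the result in the form
$$\eprime D_t^2(D_t^k\varphi)-\Dve(D_t^k\varphi)=D_t^k\F+\mathcal{R}_k,$$
where $\mathcal{R}_k=\bigl(\eprime D_t^{k+2}\varphi-D_t^k(\eprime D_t^2\varphi)\bigr)+[D_t^k,\Dve]\varphi$ collects the two commutators. By (a straightforward modification of) Lemma \ref{gestlem}, together with the fact that $\eprime$ is a smooth function of $\varphi$ and the a priori bound \eqref{Lassumpwave}, the first bracket has $L^2(\Omega)$-norm bounded by $CL\,\|D_t^{k+1}\varphi\|_{L^2(\Omega)}+P(L,\|\varphi\|_{k,0})$; by the commutator estimate \eqref{gvecommEu} (transported to $\Omega$ via \eqref{dinv}) the term $[D_t^k,\Dve]\varphi$ is controlled in $L^2(\Omega)$ by $C(M)\|V\|_{\X^{s+1}}\|\pave\varphi\|_{s-2,0}$ plus products of lower-order norms of $\xve,V$ and $\pave\varphi$.

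Next I would multiply the differentiated equation by $D_t^{k+1}\varphi\,\kve$, integrate over $\Omega$, and integrate by parts in the $\Dve$-term using Green's formula \eqref{Green's formula}; the boundary integral vanishes because $\varphi=0$ on $[0,T]\times\pa\Omega$ forces $D_t^{k+1}\varphi=0$ there. Summing over $k\le s$ and using $[D_t,\pave_i]=-(\pave_i\ssm V^j)\pave_j$ to commute $D_t$ past $\pave$ in the kinetic term, the principal part reorganizes into $\tfrac{d}{dt}W_s^2=2W_s\tfrac{d}{dt}W_s$, while the remaining contributions are: (i) $\tfrac12\int(D_t\eprime)|D_t^{k+1}\varphi|^2\kve$, controlled via the chain rule and \eqref{Lassumpwave} by $P(L)W_s^2$; (ii) the terms $\tfrac12\int(\cdot)\,D_t\kve$, controlled using $D_t\kve=\kve\,\div\ssm V$ and \eqref{ubd2} by $C(M)W_s^2$; (iii) $\int\delta^{ij}(\pave_i D_t^k\varphi)(\pave_j\ssm V^\ell)(\pave_\ell D_t^k\varphi)\kve$, bounded by $C(M)W_s^2$; and (iv) the source/remainder term $\int(D_t^k\F+\mathcal{R}_k)\,D_t^{k+1}\varphi\,\kve$, bounded by $C(M)\,W_s\,(\|D_t^k\F\|_{L^2(\Omega)}+\|\mathcal{R}_k\|_{L^2(\Omega)})$. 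Dividing the resulting inequality for $\tfrac{d}{dt}W_s^2$ by $2W_s$ will yield \eqref{dtWestF} once the $L^2$-norms of the source terms are reduced to the stated quantities.

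It remains to bound $\|D_t^k\F\|_{L^2(\Omega)}$ and the parts of $\|\mathcal{R}_k\|_{L^2(\Omega)}$ not already of the stated form. Writing $\F=\F_1+\F_2$, the first piece contributes $\|\F_1\|_{s,0}$ directly; for $\F_2$ I would use the first estimate in \eqref{F2est}, so $\|D_t^k\F_2\|_{L^2(\Omega)}\le P_1(\|\varphi\|_{s+1,0}+\|\varphi\|_s)$, and the second estimate in \eqref{F2est} controls the analogous pieces of $\mathcal{R}_k$. The final task is therefore to absorb $\|\varphi\|_{s+1,0}$, $\|\varphi\|_s$ and the low-order norm $\|\varphi\|_{s-1}$ entering $P_1,P_2$ into $W_s$, $W_{s-1}$ and $\|F_1\|_{s-2}$ (plus fixed data): the top time derivative $D_t^{s+1}\varphi$ and the quantities $D_t^k\pave\varphi$, $k\le s$, are directly controlled by $W_s$; the remaining $D_t^k\varphi$, $k\le s$, are controlled by Poincar\'e's inequality together with \eqref{vectcommdt}; and the genuinely mixed norms are controlled by combining the elliptic estimate \eqref{sobellmix} with the identity $\Dve D_t^k\varphi=\eprime D_t^{k+2}\varphi-D_t^k\F-\mathcal{R}_k$, which lowers the order by enough to set up an induction on $s$. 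This last reduction — keeping the bookkeeping so that no norm stronger than $W_s$ (and, at lower order, $W_{s-1}$ and $\|F_1\|_{s-2}$) appears, rather than the a priori unavailable $\|\varphi\|_{s+1}$ — is the main obstacle, and is exactly the place where the smallness/structure of $\eprime$ as a function of $\varphi$ and the a priori bound \eqref{Lassumpwave} are needed to keep the coefficients of the nonlinear terms under control. Since all constants produced in this way depend only on $M$, $\|\xve\|_s$, $\|V\|_{\X^s}$ and $W_{s-1}$, as required by the statement, collecting the estimates for (i)--(iv) gives \eqref{dtWestF}.
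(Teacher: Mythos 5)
Your proposal follows essentially the same route as the paper's proof: take $D_t^k$ of the equation (or equivalently compute $\tfrac{d}{dt}W_s^2$ directly), integrate against $D_t^{k+1}\varphi\,\kve$, integrate by parts using that $D_t^{k+1}\varphi$ vanishes on $\pa\Omega$, then bound the commutators with $\eprime D_t^2$ via Lemma~\ref{gestlem}, the commutators with $\Dve$ via the $[D_t,\pave]$ machinery, the source via \eqref{F2est}, and finally absorb the resulting mixed norms $\|\varphi\|_s$, $\|\pave\varphi\|_s$ using the elliptic estimate (\eqref{alldt1}, which the paper derives from \eqref{sobellmix}). Two minor inaccuracies worth noting: (a) you cite \eqref{gvecommEu} for the $[D_t^k,\Dve]\varphi$ commutator, but that estimate is stated only for $k+\ell=r$, $k\ge 2$; the paper instead applies the general-purpose estimate \eqref{vectcommdt} with $f=\pave_i\varphi$, which is the right tool for all $s$; and (b) your intermediate bound "$C(M)\|V\|_{\X^{s+1}}\|\pave\varphi\|_{s-2,0}$" undercounts spatial derivatives — the actual worst term is $\|\pave^2\varphi\|_{k-1,0}$, which the paper then reduces using \eqref{product} and \eqref{alldt1} — but since you explicitly flag the need to close the loop via \eqref{sobellmix} and the equation, the overall plan is unaffected.
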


\begin{proof}
  We start by showing:
  \begin{equation}
   \frac{d}{dt} W_s^2 \leq G_s'' \Big( W_s + ||F||_{s,0}
   + ||\pave \varphi||_{s} + ||V||_{\X^{s+1}}
   + P(L,||\varphi||_s) W_s\Big) W_s,
   \label{showsthat}
  \end{equation}
  for a continuous function $G_s'' = G''_s(M, ||\xve||_s, ||V||_{\X^{s}})$.
 We have:
 \begin{multline}
  \frac{d}{dt} W_{\!\!s}^2 \!=\!\!
  ={\sum}_{k \leq s}
  \int_\Omega \eprime (D_t^{k+2}\varphi)(D_t^{k+1} \varphi)
  + \delta^{ij} (D_t^k \pave_i \varphi)\pave_j ( D_t^{k+1}\varphi)
  \, \kve dy\\
  +\!{\sum}_{k \leq s}\!\Big( \int_\Omega \delta^{ij} (D_t^k \pave_i \varphi) [\pave_j, D_t^{k+1}] \varphi\, \kve dy
  + \frac{1}{2}\int_\Omega (D_t \eprime) (D_t^{k+1} \varphi)^2
  + (D_t\log \kve)\big( (D_t^{k+1}\varphi)^2
  +|D_t^k \pave \varphi|^2)\, \kve dy \Big).
 \end{multline}
 The last line is bounded by $C(M)(1 + L) (W_s)^2$.
 Integrating by parts, the terms on the first line are:
 \begin{equation}
  \int_\Omega \big(\eprime D_t^{k+2} \varphi
  - \delta^{ij} (\pave_j D_t^k \pave_i \varphi)\big)(D_t^{k+1} \varphi)
  = \int_\Omega \big( \eprime D_t^{k+2} \varphi - D_t^k \Dve \varphi\big)(D_t^{k+1} \varphi)
  + \int_\Omega \delta^{ij}([D_t^k, \pave_j]\pave_i \varphi)( D_t^{k+1} \varphi)\,
  \kve dy.
 \end{equation}
 By Lemma \ref{gestlem}, we have:
 \begin{equation}
  ||\eprime D_t^{k}(D_t^2\varphi) -D_t^k (\eprime D_t^2 \varphi)||_{L^2(\Omega)} \leq
  P(L, ||\varphi||_{k-1})||\varphi||_{k},
 \end{equation}
 and by the commutator estimate \eqref{vectcommdt}:
 \begin{alignat}{2}
  ||[D_t^{k+1}, \pave_j] \varphi||_{L^2(\Omega)} &\leq
  C_k(M, ||\xve||_k, ||V||_{\X^{k}})
  \big( ||\pave f||_{k, 0} + (||V||_{\X^{k+1}} + 1) ||\pave f||_{k-1}\big),
  \\
  ||[D_t^k, \pave_j] \pave_i \varphi||_{L^2(\Omega)} &\leq
  C_k(M, ||\xve||_k, ||V||_{\X^k})
  (||\pave^2 f||_{k-1, 0} + ||\pave^2 f||_{k-2}).
\end{alignat}
 By \eqref{product}, $||\pave^{2\!\!} f||_{k-1\!,0}\! \leq \! C(M,\! ||V||_k)(1\! +\!
 ||V\!||_{\X^{k\!+\!1}} )||\pave f||_{k}$
 and since $D_t^k(\eprime D_t^{2} \varphi) - D_t^k \!\Dve \varphi \!=\! D_t^k \! F\!$,
 using \eqref{F2est} to control $D_t^kF$, we have \eqref{showsthat}.
 To prove \eqref{dtWestF} from \eqref{showsthat},
 we want to re-write $||\varphi||_s$ in terms of $||\varphi||_{s,0}$
 and $||\pave \varphi||_{s-1}$, and for this we re-write
 $\pa_a \varphi \!=\! A_{\m a}^i\pave_i \varphi$ and use \eqref{product}
 and Lemma \ref{inverselemma} to get:
 $
  || A_{\m a}^i\pave_i \varphi||_{s-1} \!\leq \! C(M)||\xve||_{s} ||\pave \varphi||_{s-1}.
 $
 This implies that
 $
  ||\varphi||_s \!\leq \! C_s \big(||\varphi||_{s,0} \!+ \!||\pave \varphi||_{s-1}\!\big),
 $
 and so inserting this into \eqref{showsthat}, applying \eqref{alldt1}
 and bounding $||V||_s \!\leq\! ||V||_{\X^{s+1}}$ and
 using that ${d}W_{\!s}^2\!/{dt} \!= \! 2 W_{\!s} {d}W_{\!s}/{dt} $
 gives \eqref{dtWestF}.
\end{proof}

\begin{lemma}
 There is a continuous function
  $G_s'' \!=\! G_s''(M,{}_{\!} ||\xve||_s)$ and $P_{\!s}$
  so that if
  \eqref{phiwave}-\eqref{Lassumpwave} hold, then:
\begin{equation}
 ||\pave \varphi||_s \leq G_s''(||\T \xve||_{H^s} + ||V||_{s})
  \big( ||\varphi||_{s+1,0} +
 ||\pave \varphi||_{s,0}
 + ||F||_{s-1}
 + P_s(L,||\varphi||_{s,0},
 ||\pave \varphi||_{s-1,0}, ||F||_{s-2})\big).
 \label{alldt1F}
\end{equation}
\end{lemma}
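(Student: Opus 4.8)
The plan is to deduce \eqref{alldt1F} from the mixed elliptic estimate \eqref{sobellmix} applied with $f = D_t^k\varphi$ for each $k$ with $k + \ell = s$, by summing the resulting bounds over $k$ and $\ell$. For a fixed such pair, \eqref{sobellmix} gives
\begin{equation}
 ||\pave D_t^k \varphi||_{k,\ell} \leq C_s'\big(
 ||\Dve D_t^k\varphi||_{k,\ell-1} + (||D_t\xve||_s + ||\xve||_s)||D_t^k(D_t^k\varphi)||_{L^2(\Omega)}\big),
\end{equation}
but since I actually want $||\pave\varphi||_s = \sum_{k+\ell\le s}||D_t^k\pave\varphi||_{H^\ell}$, I would instead apply the elliptic estimate \eqref{sobell}/\eqref{sobellmix} directly to $\varphi$ in the mixed norm with $\ell \ge 1$, using the commutator estimate \eqref{leibpave} (or \eqref{vectcommdt}) to exchange $\pa_y$-derivatives for $\pave$-derivatives. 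The first term $||\Dve\varphi||_{k,\ell-1}$ is the crucial one: here I substitute the wave equation \eqref{phiwave}, which reads $\Dve\varphi = \eprime D_t^2\varphi - F$, so $D_t^k\Dve\varphi = D_t^k(\eprime D_t^2\varphi) - D_t^k F$. Lemma \ref{gestlem} bounds $||D_t^k(\eprime D_t^2\varphi) - \eprime D_t^{k+2}\varphi||_{H^{\ell-1}}$ by $CL||D_t^{k+1}\varphi||_{H^{\ell-1}} + P(L,||\varphi||_{s-1})$, and the leading term $\eprime D_t^{k+2}\varphi$ contributes $||D_t^{k+2}\varphi||_{H^{\ell-1}}$, which is controlled by $||\varphi||_{s+1,0}$ when $\ell \ge 1$ (since $k + (\ell-1) + 2 = s+1$ and we are putting all but $\ell-1$ of the derivatives in time). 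For $\ell = 0$ we use the definition of $W_s$ directly, as $||D_t^{k}\pave\varphi||_{L^2}$ for $k\le s$ is part of $W_s$ and $\pave\varphi$ in the $(s,0)$ norm appears explicitly on the right-hand side of \eqref{alldt1F}.

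The second term in the elliptic estimate, $(||D_t\xve||_s + ||\xve||_s)||D_t^k\varphi||_{L^2(\Omega)}$, is handled using $||D_t^k\varphi||_{L^2}\le ||\varphi||_{s,0}$ and the fact that $||D_t\xve||_s = ||\ssm V||_s \le C(M)||V||_s$ by the boundedness of $\ssm$ on Sobolev spaces; the $||\xve||_s$-dependence is absorbed into $G_s''=G_s''(M,||\xve||_s)$. To get the sharper factor $(||\T\xve||_{H^s}+||V||_s)$ rather than $||\xve||_{H^{s+1}}$, I first apply the tensor estimate \eqref{sobtensor} to $\wta = \pave\varphi$, which replaces full $y$-derivatives of $\pave\varphi$ by the divergence, curl, and tangential derivatives of $\pave\varphi$. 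The divergence is $\div\pave\varphi = \Dve\varphi$, handled as above; the curl of $\pave\varphi = \pave(\cdot)$ applied to a function is governed by the commutator $[\pave_i,\pave_j]\varphi$, which by \eqref{papavecomm} is lower order; and the tangential derivatives $\T^j\pave\varphi$ are controlled after commuting $\T^j$ past $\pave$ (picking up $\T$-derivatives of $\xve$, i.e. $||\T\xve||_{H^s}$) and then using $||\T^j\pave\varphi||_{L^2}\le ||\varphi||_{s+1,0} + ||\pave\varphi||_{s,0}$ plus the wave equation again. The product estimate \eqref{product} and the fractional tools from Lemma \ref{fracalg} handle the various nonlinear terms, and the polynomial $P_s(L,||\varphi||_{s,0},||\pave\varphi||_{s-1,0},||F||_{s-2})$ collects all the lower-order remainders arising from Lemma \ref{gestlem} and from the commutator expansions; the $||F||_{s-1}$ term on the right of \eqref{alldt1F} absorbs $||D_t^k F||_{H^{\ell-1}}$ with $k+\ell = s$.

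I expect the main obstacle to be bookkeeping the derivative count carefully so that no term requires $||\xve||_{H^{s+1}}$ or $||V||_{\X^{s+1}}$ — only $||\T\xve||_{H^s}$, $||V||_s$, and the lower-order quantities appearing in $P_s$ are allowed. In particular, the terms arising from $[D_t^k,\pave]$-type commutators via \eqref{vectcommdt} nominally involve $||V||_{\X^{s+1}}$ multiplied by $||\pave\varphi||_{s-1}$, and one must check that either the $s-1$ index makes this lower order (absorbable into $P_s$ with $L$ replacing the sup bound on $\varphi$ via \eqref{Lassumpwave}) or that the structure of the commutator places the top-order $\xve$-derivative in a tangential direction. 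This is the same delicate splitting used in the proof of Lemma \ref{lowerorder} and in Section \ref{higherordersec}, and I would follow that pattern: whenever more than three derivatives fall on $\xve$ or $V$, place the other factors in $L^\infty$ via Sobolev embedding \eqref{vectsob}, and when exactly $s$ tangential derivatives would fall on $\pave$, rewrite using the wave equation to trade them for $D_t$-derivatives. The estimate then closes by induction on $s$, exactly as in the proof of \eqref{easycoer} in Lemma \ref{coercivitylem1}, using the $s$-th estimate to control the $P_s$ remainder at stage $s$.
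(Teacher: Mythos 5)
Your overall plan is sound and in fact it is essentially the paper's proof: apply the mixed elliptic estimate \eqref{sobellmix} (or \eqref{sobell} when $k=0$), substitute the wave equation $\Dve\varphi = \eprime D_t^2\varphi - F$, invoke Lemma \ref{gestlem}/\eqref{nlinest} to peel off $\eprime D_t^{k+2}\varphi$ plus lower-order remainders, and absorb the $||\T\xve||,\ ||\xve||,\ ||D_t\xve||$-factors into $G_s''(M,||\xve||_s)(||\T\xve||_{H^s}+||V||_s)$. Your worries about needing to ``first apply \eqref{sobtensor}'' are unnecessary, since the estimates \eqref{sobell}--\eqref{sobellmix} are already stated with the $||\T\xve||$-factor built in; that part of your discussion adds nothing and is a bit of a red herring, but it does no harm.

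There is, however, one concrete gap that prevents the argument from closing. You claim that the leading term $||D_t^{k+2}\varphi||_{H^{\ell-1}}$ ``is controlled by $||\varphi||_{s+1,0}$ when $\ell\ge 1$ (since $k+(\ell-1)+2 = s+1$).'' That is only correct for $\ell=1$. For $\ell\ge 2$ the quantity $||D_t^{k+2}\varphi||_{H^{\ell-1}}$ carries $\ell-1\ge 1$ genuine spatial derivatives, and $||\varphi||_{s+1,0}=\sum_{j\le s+1}||D_t^j\varphi||_{L^2}$ carries none, so the former cannot be bounded by the latter; the fact that the total derivative count is $s+1$ is irrelevant since the mixed norm $||\cdot||_{s+1,0}$ is not the same as $||\cdot||_{s+1}$. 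Your single outer induction on $s$ does not repair this either, since after rewriting spatial derivatives via $\pa_a\varphi = A^i_{\m a}\pave_i\varphi$ one lands on $||\pave\varphi||_{k+2,\ell-2}$, which with $(k+2)+(\ell-2)=s$ is still a top-order part of $||\pave\varphi||_s$, not of $||\pave\varphi||_{s-1}$.

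The paper fixes exactly this by running a \emph{second, inner} induction on $\ell$ (for fixed $k+\ell=s=n$): assume the claimed bound for $\ell=0,\dots,\ell'-1$; apply the elliptic estimate at level $\ell'$, substitute the wave equation, and use \eqref{nlinest} to reduce to $C||\varphi||_{n-\ell'+2,\ell'-1}+P(L,||\varphi||_{n-1})$. When $\ell'=1$ this is directly $||\varphi||_{n+1,0}$. When $\ell'\ge 2$, one writes $\pa_a\varphi=A^i_{\m a}\pave_i\varphi$ and applies the product rule \eqref{product} together with Lemma \ref{inverselemma} to obtain $||\varphi||_{n-\ell'+2,\ell'-1}\le C(M,||\xve||_n)(||\pave\varphi||_{n-\ell'+2,\ell'-2}+||\varphi||_{n-1})$; the first term has $\ell''=\ell'-2<\ell'$ so the inner hypothesis applies, and the second term is lower order. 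Supplying this inner induction (or an equivalent recursive reduction) would close the gap in your argument.
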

\begin{proof}
For $s = 0$ there is nothing to prove and so we assume that
\eqref{alldt1F} holds for $s = 0, 1,..., n-1$. To prove that it holds
for $s = n$, we will show that if $k + \ell = n$ then:
\begin{equation}
 ||D_t^k \pave \varphi||_{H^{\ell}}
 \leq G''_{\!n} \big( ||\varphi||_{n+1,0} + ||\pave\varphi||_{n,0}
 + ||F||_{n-1} +  P(L, ||\varphi||_{n,0}, ||\pave \varphi||_{n-1,0},
 ||F||_{n-2})\big),
 \label{alldt1step}
\end{equation}
with $G_n'' = G_n''(M, ||\xve||_n)$.
There is nothing to prove if $\ell = 0$ and so we assume that this estimate holds
for $\ell = 0, ..., \ell'-1$. To prove that it holds for $\ell = \ell'$,
 we use the estimate \eqref{sobell} when $ \ell' = n$:
\begin{equation}
 ||\pave \varphi||_{{}_{\!}H^{n{}_{\!}}}
 \!\leq \!C'_{\!n} {}_{\!}\big( ||\Delta \varphi||_{{}_{\!}H^{n\!-\!1}}\!
 + (||\T{}_{\!} \xve||_{{}_{\!}H^n}\! + ||\xve||_{{}_{\!}H^n}\!)||\varphi||_{{}_{\!}L^2}\!\big)
 \!\leq\!
 C'_{\!n} {}_{\!}\big( ||\eprime \!D_t^2 \varphi||_{{}_{\!}H^{\ell'\!\!-\!1}}\!
 + ||F||_{{}_{\!}H^{\ell'\!\!-\!1}}\!
 + (||\T {}_{\!} \xve||_{{}_{\!}H^n}\! + ||\xve||_{{}_{\!}H^n}\!)||\varphi||_{{}_{\!}L^2}\!\big),
\end{equation}
and the estimate \eqref{sobellmix} when $n-\ell' \geq 1$:
\begin{multline}
 ||D_t^{n-\ell'}\! \pave \varphi||_{H^{\ell'{}_{\!}}\!(\Omega)}
 \!\leq C'_n \big( ||\Delta \varphi||_{n-\ell'\!, \ell'\!-1}\!
 + (||D_t \xve||_n + ||\xve||_{n})||D_t^{n-\ell'}\!\! \varphi||_{L^2(\Omega)}\big)\\
 \leq
 C'_n \big( ||\eprime D_t^2 \varphi||_{n-\ell'\!, \ell'\!-1}\!
 + ||F||_{n-\ell'\!, \ell'\!-1}
 + (||D_t \xve||_n + ||\xve||_{n})||D_t^{n-\ell'}\!\! \varphi||_{L^2(\Omega)}\big),
\end{multline}

Using \eqref{nlinest},
the first term here is bounded by $C ||\varphi||_{n-\ell'+2, \ell'-1}
+  P(L, ||\varphi||_{n-1})$ and this second term can be bounded by
the right-hand side of \eqref{alldt1step} by the inductive assumption.
If $\ell' = 1$ then we have just proven
\eqref{alldt1step}. If $\ell' \geq 2$ we write $\pa\varphi/\pa {y^a} = A_{\m a}^i \pave_i \varphi$
and use the product estimate \eqref{product} and Lemma \ref{inverselemma}:
\begin{equation}
  ||\varphi||_{n-\ell'+2, \ell'-1} \leq
 ||\pa_y \varphi||_{n-\ell'+2, \ell'-2}
 + ||\varphi||_{n-\ell'+2, \ell'-2}
 \leq C(M, ||\xve||_n) (||\pave \varphi||_{n-\ell'+2, \ell'-2}
 + ||\varphi||_{n-1}),
\end{equation}
and noting that $||\xve||_n \leq C (||\xve||_{H^n} + ||V||_{n-1})$, this implies \eqref{alldt1step}.
\end{proof}

\begin{proof}[Proof of Lemma \ref{dtphidiffnew}]
We will show that:
\begin{equation}
 \frac{d}{dt\!\!}\, W_s^{{}_{\!}I\!,\II}
 \!\!\leq\! D_{\!{}_{\!}s}' \!\bigtwo(\! ||F_{\!I} \!-\! F_{\II\!}||_{s,0} \!+_{\!} ||F_{\!I} \!- \!F_{\II\!}||_{s-\!1}\!
 +_{\!} W_s^{{}_{\!}I\!,\II} \!\!
 +_{\!} ||\Vu \!-\! \Vw\!||_{\X^{{}_{\!}s+_{\!}1\!}} \big(||\varphi_{\II\!}||_{s+_{\!}2,0}\!
 +_{\!} ||\pavew \varphi_{\II\!}||_{s, 1}\!
 +_{\!} ||\varphi_{\II\!}||_{s+_{\!}1_{\!},0} \!+_{\!} ||\pavew \varphi_{\II\!}||_{s\!} \big)\! \bigtwo)_{\!},
 \label{y12goal}
\end{equation}
where $D_{\!s}'$ depends on $M, L, W_s^{I_{\!},\II\!}(t)$ and
$||\pave_J \varphi_{J\!}||_s, ||\varphi_{J\!}||_{s+1,0}$ for $J_{\!} \!=\! I_{\!},\II{}_{\!}$.
Arguing as in the proof of Theorem \ref{mainwavethm} and using
\eqref{dtpsibd} and \eqref{allpsibd}, this implies \eqref{energy esti
y^12}. Using \eqref{phijwave} and
\eqref{dtW}, it just remains to prove that the $L^2$ norms of
$D_t^s (\Dveu \!- \!\Dvew) \varphi_{\II}$ and $D_t^s ( (\eprime_I \!- \eprime_{\II})
D_t^2 \varphi_{\II})$ are bounded by  \eqref{y12goal}.
These terms are the reason that we lose derivatives of $\varphi_{\II}$ relative
to $\psi$ and why the coefficients $D_{\!s}$ will depend on $||\Vu||_{\X^{s+2}},
||\Vw||_{\X^{s+2}}$.

  We start by controlling $D_t^s (\Dveu - \Dvew)\varphi_{\II}$ in $L^2$.
  We write:
  \begin{equation}
    D_t^s \big( \wpa_{I i} \wpa_{I j} - \wpa_{\II i}\wpa_{\II i}\big)
   \varphi_{\II}
  \! =\!\big( \wpa_{I i} D_t^s\wpa_{I j} - \wpa_{\II i}D_t^s\wpa_{\II i}\big)
   \varphi_{\II}
   \!+\! \big( [\wpa_{I i},D_t^s ]\wpa_{I j} - [\wpa_{\II i},D_t^s]\wpa_{\II i}\big)
    \varphi_{\II}.
  \end{equation}
  The first term is bounded in $L^2(\Omega)$ by $C(M)
  ||(\paveu \!- \pavew{}_{\!})\varphi_{II}||_{s,1}$.
  By the product rule
  \eqref{product} this term is bounded by $C(M{}_{\!}, {}_{\!}||\Vu||_{\X^{s+1}},
  ||\Vw||_{\X^{s+1}}{}_{\!})||\Vu \!- \!\Vw||_{\X^{s+1}}||\pavew \varphi_{\II}||_{s+1}.$
  Using the commutator estimate \eqref{vectcomm}, the $L^2$
  norm of the second term  is bounded by the right-hand side of \eqref{y12goal}.

 To control $D_t^s (\eprime_I - \eprime_{\II}) D_t^2 \varphi_{\II}$, we use
  \eqref{product}:
  \begin{equation}
   ||D_t^s ( (\eprime_I - \eprime_{II} )D_t^2 \varphi_{\II})||_{L^2(\Omega)}
   \leq D_s'' ||\varphi_I - \varphi_{\II}||_{s,0} ||\varphi_{\II}||_{s+2,0},
  \end{equation}
  where $D_s''$ depends on $L,$ and $||\varphi_J||_{s-1}$ for $J = I, \II$.

  The estimate \eqref{elliptic y^12} follows in the same way as
  \eqref{allpsibd} using the elliptic estimate
  \eqref{ibpmixedelliptic} in place of \eqref{sobellmix} and:
  \begin{equation}
   \Dveu \varphi_I -\Dvew \varphi_{\II}
   = \eprime_I D_t^2 \psi + (\eprime_I - \eprime_{\II}) D_t^2 \varphi_{\II}
   +\F_I - \F_{\II}.
   {}\tag*{\qedhere}
  \end{equation}
\end{proof}

\section*{Acknowledgements}
The authors would like to thank Jeffrey Marino and Junyan Zhang for reading the first draft of our paper.

\bibliographystyle{abbrv}

\end{document}